\newtheorem{thm}{Theorem}[section]
\newtheorem{prop}[thm]{Proposition}
\newtheorem{lem}[thm]{Lemma}
\newtheorem{lemA}{Lemma}[chapter] 							%Lemmas in the appendix.
\newtheorem{cor}[thm]{Corollary}
\newtheorem{asm}{Assumption}
\newtheorem{op}[thm]{Question}
\theoremstyle{remark}
\theoremstyle{definition}
\newtheorem{defn}[thm]{Definition}
\newcommand{\ra}{\rightarrow}
\newcommand{\Ra}{\Rightarrow}
\newcommand{\N}{\mathbb N}     % For Natural numbers
\newcommand{\Q}{\mathbb Q}     % For Rational numbers
\newcommand{\R}{\mathbb R}     % For Real numbers
\newcommand{\Z}{\mathbb Z}     % For Integers
\renewcommand{\a}{\alpha}
\renewcommand{\b}{\beta}
\renewcommand{\d}{\delta}
\newcommand{\e}{\varepsilon}
\newcommand{\del}{\partial}
\newcommand{\w}{\omega}
\renewcommand{\l}{\lambda}
\newcommand{\bL}{\overline{\Lambda}}
\newcommand{\s}{\sigma}
\renewcommand{\P}{\mathbb{P}}   %Annealed Probability
\newcommand{\bP}{\overline{\mathbb{P}}}
\newcommand{\E}{\mathbb{E}}   %Annealed Expectation
\newcommand{\bE}{\overline{\mathbb{E}}}
\renewcommand{\S}{\bar{S}}
\newcommand{\bw}{\bar{\w}}  %environment for a ``success''
\newcommand{\nt}{\lfloor{nt}\rfloor}
\newcommand{\limn}{\underset{n\rightarrow\infty}{\longrightarrow}} 
\newcommand{\limN}{\underset{N\rightarrow\infty}{\longrightarrow}} 
\newcommand{\limdw}{\overset{\mathcal{D}_\w}{\longrightarrow}}
\newcommand{\bigo}{\mathcal{O}}
\author{Jonathon Peterson}
\title{Limiting Distributions and Large Deviations for Random Walks in Random Environments}
\date{June 17, 2008}
\begin{document}

%\thispagestyle{empty}
%\begin{center}
%UNIVERSITY OF MINNESOTA

%\vfill
%This is to certify that I have examined this copy of a doctoral dissertation by

%\vfill
%Jonathon Robert Peterson

%\vfill
%and have found that it is complete and satisfactory in all respects,\\
%and that any and all revisions required by the final \\
%examining committee have been made.

%\vspace{1in}
%\hrulefill\\
%Ofer Zeitouni

%\vspace{1in}
%\hrulefill\\
%Signature

%\vspace{1in}
%\hrulefill\\
%Date

%\vfill
%GRADUATE SCHOOL
%\end{center}

\thispagestyle{empty}
%\maketitle
\begin{center}
 \Huge
Limiting Distributions and Large Deviations for Random Walks in Random Environments

\normalsize 
\vfill
A DISSERTATION\\ SUBMITTED TO THE FACULTY OF THE GRADUATE SCHOOL\\ OF THE UNIVERSITY OF MINNESOTA\\ BY

\vspace{1in}
\LARGE
Jonathon Robert Peterson
\normalsize

\vspace{1in}
IN PARTIAL FULFILLMENT OF THE REQUIREMENTS\\ FOR THE DEGREE OF\\ DOCTOR OF PHILOSOPHY

\vfill
Adviser: Ofer Zeitouni

\vspace{.5in}
July, 2008

\end{center}

\newpage
\thispagestyle{empty}
\hfill
\vspace{7in}
\begin{center}
 \copyright Jonathon Robert Peterson 2008
\end{center}

\newpage
\pagenumbering{roman}
\begin{center}
 \textbf{Acknowledgments}
\end{center}

First of all, I would like to thank God.
%He has blessed me in so many ways, even in the past five years of graduate school. 
%I know that he has guided my life until this point, and I trust that he will guide me safely home. 
%Until that time, my goal is to honor Him by making the most of the abilities that He has given me.
I dare not, in pride, pretend that any accomplishment I have achieved is of my own doing. 
My life has been blessed in so many ways by things that are beyond my control.
I know that God has guided my life until this point, and I trust that He will guide me safely home. 
Until that time, my intent is to honor Him by making the most of the abilities that He has given me.

\vspace{.1in}

I would like to thank my wife, Jana, for keeping my life balanced and for bringing me back to reality when I "lose track of time."  
As I finish this milestone and begin on a new journey, it is so wonderful to know that I will have your support, encouragement, and understanding along the way. 

\vspace{.1in}

I would also like to thank my adviser, Ofer Zeitouni, for teaching me so much over the past few years. I only hope that I have absorbed a small portion of your knowledge of probability theory. Thank you for guiding me to such a great thesis topic, and for all your help and advice throughout the past few years.

\newpage
\begin{center}
 \textbf{Dedication}
\end{center}

\noindent This thesis is dedicated to the teachers and professors who challenged me to do more:
\vspace{.2in}

\noindent \underline{Mr. Dan Halberg} -- high school teacher and math team coach. \\
He gave me my first math research problem (which I later realized was "discovering" the multinomial coeffiecients), and he helped me see that it was okay to think math was fun.
\vspace{.2in}

\noindent \underline{Dr. Stephen Ratliff} -- undergraduate physics and differential equations professor. \\
The first person to encourage me to consider going to graduate school.
\vspace{.2in}

\noindent \underline{Prof. Don Corliss} -- undergraduate abstract algebra professor. \\
He shocked me by writing me a letter that said he had no doubt I could obtain a Ph.D. in mathematics.  Without his strong encouragement and vote of confidence, I may never have achieved so much.

%Who took the time to encourage me that I had the ability to obtain a Ph.D. in mathematics. 

%Who is responsible for convincing me to pursue a Ph.D. in mathematics.   

\tableofcontents

%\newpage
%\layout

\begin{chapter}{Introduction}\label{Thesis_Introduction} \pagenumbering{arabic}

\begin{section}{RWRE: Notation and Terminology}
A simple random walk $\{X_n\}$ in $\Z^d$ is most easily described as the sum of i.i.d. $\Z^d$-valued random variables, $\xi_i$, where $X_0=0$ and $X_n=\xi_1+\cdots + \xi_n$. Alternatively, it can be described as a time-homogeneous Markov chain on $\Z^d$ with transition probabilities given by $P(X_{n+1}=x | X_n=y) = P(\xi_1 = x-y)$. 
%A more general notion of a random walk on $\Z^d$ is given by assigning a vector of transition probabilities $\w(x,\cdot)$ to each $x\in \Z^d$ (In the case of the simple random walk, the $\w_x$ are the same for all points). The random walk then has transition probabilites $P(X_{n+1}=x | X_n = y) = \w(y,x)$.
While random walks have long been studied, a more recent area of research is random walks in random environments (RWRE). A RWRE consists of two parts: choosing an \emph{environment} according to a specified distribution, and then performing a random walk on that environment.

Specifically, let $\mathcal{M}(\Z^d)$ be the collection of all probability distributions on $\Z^d$. Then, we define an environment to be an element $\w = \{ \w(x,x+\cdot) \}_{x\in\Z^d} \in\mathcal{M}(\Z^d)^{\Z^d} =: \Omega$. 
$\mathcal{M}(\Z^d)$ with the weak topology is a Polish space, and thus $\Omega$ is a Polish space as well (since it is the countable product of Polish spaces). Let $P$ be a probability distribution on $(\Omega, \mathcal{F})$, where $\mathcal{F}$ is the $\s-$field generated by the cylinder sets of $\Omega$.
Given an environment $\w\in \Omega$, one can define a random walk in the environment $\w$ to be a time-homogeneous Markov chain on $\Z^d$ with transition probabilities given by
\[
P(X_{n+1}=x | X_n=y) = \w(y,x).
\]
Let $P_\w^x$ be the law of a random walk in environment $\w$ started at the point $X_0 = x$. For each $\w$, $P_\w^x$ is a probability distribution on the space of paths $\left((\Z^d)^\N,\mathcal{G}\right)$, where $\mathcal{G}$ is the $\s$-field generated by the cylinder sets of $(\Z^d)^{\N}$. Now, given any $A\in \mathcal{G}$, each $P_\w^x(A):(\Omega, \mathcal{F})\ra [0,1]$ is a measurable function of $\w$. Thus, we can define a probability measure $\P^x:= P\otimes P^x_\w$ on $(\Omega\times(\Z^d)^\N,\mathcal{F}\times\mathcal{G})$ by the formula
\[
\P^x(F\times G) := \int_F P_\w^x(G) P(d\w),\quad F\in\mathcal{F},G\in\mathcal{G}.
\]

Generally, the events that we are interested in concern only the path of the RWRE and not the specific environment chosen (i.e., events of the form $ \Omega\times G$). Thus, with a slight abuse of notation, $\P^x$ can also be used to denote the marginal on $(\Z^d)^\N$. Expectations under $P_\w^x$ and $\P^x$ will be denoted $E_\w^x$ and $\P^x$, respectively. Also, since generally the RWRE starts at the origin, $P_\w, E_\w, \P$ and $ \E$ will be understood to mean $P_\w^0, E_\w^0, \P^0$ and $ \E^0$, respectively. 

It is important to understand the different probability measures and the differences between them. Thus we give a quick review:
\begin{itemize}
\item $P$ is a probability measure on the space of environments.
\item For a \textit{fixed} environment $\w$, $P_\w^x$ is a
probability distribution of a random walk. However, for fixed $A\in
\mathcal{G}$, $P_\w^x(A)$ is a random variable. Statements
involving $P_\w^x$ are called \textit{quenched}, and since
$P_\w^x(A)$ is a random variable, a statement such as
$P_\w^x(A)=0$ is only true $P-a.s.$ 
\item $\P^x$ is the
probability of observing an event in the RWRE \textit{without}
first observing the environment. For $A\in\mathcal{G}$, $\P^x(A)$ is deterministic and
not a random variable. Probabilistic statements involving $\P^x$
are called \textit{annealed}. 
\item The random walk $\{X_n\}$ is a
Markov chain under the measure $P_\w^x$, but not under $\P^x$, and
it is stationary (in space) under $\P^x$ but not under $P_\w^x$.
\item The relationship between $P_\w^x$ and $\P^x$ is given by
$\P^x(A)=E_P(P_\w^x(A))$ for $A\in \mathcal{G}$. 
\end{itemize}
We end this section with a few further definitions of types of commonly studied RWRE. 
\begin{enumerate}
\item \textbf{Nearest neighbor:} A nearest neighbor RWRE is such that $\w(x,y)=0$ whenever $\|x-y\|_1 \neq 1$. 
%We will only consider nearest neighbor RWRE in this paper. 
\item \textbf{i.i.d. environment:} The collection of vectors $\w(x,x+\cdot)$ are independent and identically distributed under the distribution $P$. 
This assumption generally simplifies the analysis of RWRE because the independence of disjoint portions of the environment makes random walks restricted to disjoint subsets of $\Z^d$ independent.  
%This is usually much easier to analyze than the more general ergodic/mixing case because the independence of disjoint portions of the environment makes random walks restricted to disjoint portions of the environment independent.  
\item \textbf{Elliptic / uniformly elliptic:} A nearest neighbor RWRE is called \emph{elliptic} if $P(\w(0,e) > 0) = 1$ for all $\|e\|_1=1$, and \emph{uniformly elliptic} if there exists a $\kappa>0$ such that $P(\w(0,e) > \kappa ) = 1$ for all $\|e\|_1=1$. 
\end{enumerate}
\end{section}

\begin{section}{Structure of the Thesis}\label{Thesis_structure}
The thesis is divided into two major parts:
%Chapters \ref{1dlimitingdist}-\ref{Thesis_AppendixBallistic} concern limiting distributions for RWRE on $\Z$, and 
%Chapter \ref{mdLDP} which concerns large deviations for multidimensional RWRE. 

\noindent\textbf{Part I:} Chapters \ref{1dlimitingdist}-\ref{Thesis_AppendixBallistic} --- Limiting Distributions for RWRE on $\Z$.

Chapter \ref{1dlimitingdist} begins with a review of some of the standard results for RWRE on $\Z$, such as criteria for recurrence/transience and a law of large numbers. 
This review affords us the opportunity to introduce some of the notation and methods that will be used in later chapters. In particular, formulas for hitting probabilities and formulas for the expectation and variance of hitting times are all provided in Section \ref{1dprelim}. 

Section \ref{AnnealedLimits} is a review of known annealed limiting distribution results for transient RWRE on $\Z$. In contrast with random walks in constant environments, random walks in random environments do not always satisfy a central limit theorem. Theorem \ref{Thesis_annealedstable} is a classical result of Kesten, Kozlov, and Spitzer \cite{kksStable}, which classifies the annealed limiting distribution of a transient RWRE according to a parameter $s$ of the distribution $P$ on environments. If $s>2$, then a central limit theorem holds, but if $s<2$, the limiting distributions are related to a stable distribution of index $s$. In Section \ref{AnnealedLimits}, we give a brief overview of the different approaches used in proving variations of Theorem \ref{Thesis_annealedstable}. We give particular attention to the approach used by Enriquez, Sabot, and Zindy \cite{eszStable} in providing a new proof of Theorem \ref{Thesis_annealedstable} when $s<1$, since, in Chapters \ref{Thesis_AppendixZeroSpeed} and \ref{Thesis_AppendixBallistic}, we use similar methods to analyze the quenched limiting distributions. 

The main results of the first part of the thesis, concerning quenched limiting distributions for transient RWRE, are stated in Section \ref{mainresults}. When $s>2$, we obtain a quenched functional central limit theorem with a random (depending on the environment) centering. When $s<2$, however, there is no quenched limiting distribution for the RWRE. In fact, with probability one, there exist two different sequences (depending on the environment) along which different limiting distributions hold. In Section \ref{mainresults} we provide a sketch of these results on quenched limiting distributions, but the full proofs are given in Chapters \ref{Thesis_AppendixQCLT}-\ref{Thesis_AppendixBallistic}.

In Chapter \ref{Thesis_AppendixQCLT}, we give the full proof of the quenched functional central limit theorem when $s>2$. We first prove a quenched functional central limit for the hitting times of the random walk using the Lindberg-Feller condition for triangular arrays of random variables. Then, we transfer this result to a quenched functional central limit theorem for the random walk. The main difficulty in Chapter \ref{Thesis_AppendixQCLT} is to obtain a centering term for the random walk which only depends only on the environment. 

Chapters \ref{Thesis_AppendixZeroSpeed} and \ref{Thesis_AppendixBallistic} consist of two recent articles which contain the proofs of the quenched results for $s<2$ that were stated in Chapter \ref{1dlimitingdist}. In order to keep Chapters \ref{Thesis_AppendixZeroSpeed} and \ref{Thesis_AppendixBallistic} consistent and self-contained, these articles are left relatively unchanged from their original format. Thus, the introductory sections of Chapters \ref{Thesis_AppendixZeroSpeed} and \ref{Thesis_AppendixBallistic} repeat some of the material from Chapter \ref{1dlimitingdist}. 
%The main results in both chapters are that there exist two different sequences $t_k$ and $t_k'$ (depending on the environment) along which the quenched limiting distributions of the random walk are different.

Chapter \ref{Thesis_AppendixZeroSpeed} concerns the case $s<1$, which is the zero-speed regime (i.e., $\lim_{n\ra\infty} \frac{X_n}{n} = 0$). 
Our main result for $s<1$ is that, with probability one, there exist two different sequences $t_k$ and $t_k'$ (depending on the environment) along which the quenched limiting distributions of the random walk are different. 
Along the sequence $t_k$, the random walk is localized in an interval of size $(\log t_k)^2$, and along the sequence $t_k'$ the random walk has scaling of order $(t_k')^{s}$ (which is the annealed scaling in Theorem \ref{Thesis_annealedstable} when $s<1$).
 
In Chapter \ref{Thesis_AppendixBallistic}, we consider the case $s\in(1,2)$. In this regime, the random walk is \emph{ballistic}:  That is, $\lim_{n\ra\infty} \frac{X_n}{n} =: v_P > 0$. As in the case $s<1$, our main result in Chapter \ref{Thesis_AppendixBallistic} is that there exist two different sequences $t_k$ and $t_k'$ (depending on the environment) along which the quenched limiting distributions of the random walk are different. 
However, when $s\in(1,2)$, the existence of a positive speed for the random walk allows for a more precise description of the quenched limiting distributions along the sequences $t_k$ and $t_k'$. 
Along the sequence $t_k$, the limiting distribution is the negative of a centered exponential distribution, and along the sequence $t_k'$ the limiting distribution is Gaussian.

\noindent\textbf{Part II:} Chapter \ref{mdLDP} --- Large Deviations for RWRE on $\Z^d$.

After reviewing some of the basics of multidimensional RWRE in Section \ref{mdprelims}, in Section \ref{RWRELDP} we review the known large deviation results for RWRE. In particular, Theorems \ref{annealedLDP} and \ref{Thesis_zeroset} are large deviation results of Varadhan for multidimensional RWRE, but these results provide much less information about the quenched and annealed rate functions than is known for the rate functions of one-dimensional RWRE. In Section \ref{LDPnewresults}, we study properties of the annealed rate function $H(v)$. 
Our main result is that, when the distribution on environments $P$ is \emph{non-nestling}, the rate function is analytic in a neighborhood of the limiting velocity $v_P := \lim_{n\ra\infty} \frac{X_n}{n}$.  
Our strategy is to first define a function $\bar{J}(v)$ as a possible alternative formulation of $H(v)$. Then, we show that $\bar{J}(v)$ is analytic in a neighborhood of $v_P$ and that $\bar{J}(v)=H(v)$ in a neighborhood of $v_P$. 
We end Section \ref{LDPnewresults} by showing that when $d=1$, $H(v)=\bar{J}(v)$ wherever $\bar{J}(v)$ is defined. 

\end{section}

\end{chapter}

\begin{chapter}{Limiting Distributions for Transient RWRE on $\Z$}\label{1dlimitingdist}
\begin{section}{Preliminaries for RWRE on $\Z$}\label{1dprelim}
In this section, we will review some of the standard results for nearest neighbor RWRE on $\Z$. This will also serve as an introduction to some of the notation and techniques that will be used in proving our main results. In particular, the main results depend heavily on a few explicit formulas that we will derive in this section. 

For a nearest neighbor RWRE on $\Z$, $\w(x,x-1) = 1-\w(x,x+1)$, and so we can define an environment by only specifying the probability of moving to the right at each location. For ease of notation, let $\w_x:=\w(x,x+1)$ so that $1-\w_x=\w(x,x-1)$. Unless we specifically state that the environments are i.i.d., we will only be assuming that the distribution $P$ on $[0,1]^{\Z}$ is ergodic with respect to the spatial shift $(\theta \w)_n := \w_{n+1}$.

\begin{subsection}{Hitting Probabilities and Recurrence / Transience}
The feature of RWRE in one dimension that makes them much easier to analyze than in higher dimensions is the fact that, for any elliptic environment (i.e., for environments with $\w_i \in (0,1)$ for all $i\in \Z$), the random walk is a reversible Markov chain. In fact, any irreducible Markov chain on a tree is reversible. The fact that the quenched law of the RWRE is reversible allows us to represent certain quenched probabilities and expectations with explicit formulas in terms of the environment.
To make these formulas more compact, we introduce the following notation:
\begin{equation}
\rho_i := \frac{1-\w_i}{\w_i}, \qquad
\Pi_{i,j} := \prod_{k=i}^j \rho_k\,, \label{Thesis_rhodef}
\end{equation}
\begin{equation}
W_{i,j} := \sum_{k=i}^j \Pi_{k,j}, \qquad W_j := \sum_{k\leq j} \Pi_{k,j}\,, \label{Thesis_Wdef}
\end{equation}
and
\begin{equation}
R_{i,k} := \sum_{j=i}^k \Pi_{i,j}, \qquad R_i:=
\sum_{j=i}^\infty \Pi_{i,j}. \label{Thesis_Rdef}
\end{equation}
Using this notation, we have for any $i \leq x \leq j$ that 
%\cite[formula (2.1.4)]{zRWRE} takes the form
\begin{equation} \label{Thesis_hittingprob}
P_\w^x( T_j < T_i ) = \frac{ R_{i,x-1} }{R_{i,j-1}}, \quad\text{and}\quad P_\w^x( T_i < T_j ) = \frac{ \Pi_{i,x-1}R_{x,j-1} }{R_{i,j-1}} \, ,
\end{equation}
where $T_j := \inf\{ n\geq 0: X_n = j \}$ is the hitting time of site $j$. 
These formulas also appear in \cite[formula (2.1.4)]{zRWRE}, but with different notation.
To see that \eqref{Thesis_hittingprob} holds, note that for any fixed $i<j$, letting $h(x) := P_\w^x( T_j < T_i )$, we have that $h(i)=0$, $h(j)=1$ and $h(x) = \w_x h(x+1) + (1-\w_x) h(x-1)$ for $i<x<j$. It is easy to check that the first formula in \eqref{Thesis_hittingprob} satisfies these relations and that this solution is unique (since any such $h(x)$ is a discrete harmonic function with prescribed boundary values). 

The following criterion for recurrence/transience follows from \eqref{Thesis_hittingprob}:
\begin{thm}[Solomon \cite{sRWRE}]\label{1drt}
$E_P(\log \rho_0)$ determines the recurrence/transience of
the RWRE:
\begin{enumerate}
\item $E_P(\log \rho_0)<0 \quad \Ra \quad \lim_{n\ra\infty} X_n =
+\infty, \quad \P-a.s. $ \item $E_P(\log \rho_0)>0 \quad \Ra \quad
\lim_{n\ra\infty} X_n = -\infty, \quad \P-a.s. $ \item $E_P(\log
\rho_0)=0 \quad \Ra \quad \liminf_{n\ra\infty} X_n =
-\infty, \: \limsup_{n\ra\infty} X_n =+\infty, \quad \P-a.s.$
\end{enumerate}
\end{thm}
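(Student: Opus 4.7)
The plan is to reduce the trichotomy to the asymptotics of an explicit escape probability derived from \eqref{Thesis_hittingprob} and then apply Birkhoff's ergodic theorem to the stationary sequence $\{\log\rho_i\}_{i\in\Z}$. Specifically, \eqref{Thesis_hittingprob} with $x=0$, $i=-N$, $j=1$ rearranges (dividing top and bottom by $\Pi_{-N,-1}$) to
\[
P_\w(T_1<T_{-N})=\frac{\Sigma_N}{\Sigma_N+\rho_0},\qquad \Sigma_N:=\frac{R_{-N,-1}}{\Pi_{-N,-1}}=\sum_{k=0}^{N-1}\exp\!\Big(-\sum_{j=1}^{k}\log\rho_{-j}\Big),
\]
so letting $N\to\infty$ gives $P_\w(T_1<\infty)=1$ iff $\Sigma_N\to\infty$, and $P_\w(T_1<\infty)<1$ otherwise.

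Suppose first $\mu:=E_P(\log\rho_0)<0$. Birkhoff applied to the stationary ergodic sequence $\{\log\rho_{-j}\}_{j\ge 1}$ gives $\tfrac{1}{k}\sum_{j=1}^{k}\log\rho_{-j}\to\mu$ $P$-a.s., so the summands of $\Sigma_N$ grow like $e^{-k\mu}$ and $\Sigma_N\to\infty$ $P$-a.s.; hence $P_\w(T_1<\infty)=1$ $P$-a.s. Shifting by $n$ and intersecting over all $n\ge 0$, stationarity of $P$ yields $P_\w^n(T_{n+1}<\infty)=1$ simultaneously for every $n\ge 0$, $P$-a.s., and the strong Markov property gives $\limsup X_n=+\infty$ $\P$-a.s. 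To upgrade this to $X_n\to+\infty$, note that Birkhoff also gives $R_1=\sum_{j\ge 1}\Pi_{1,j}<\infty$ $P$-a.s., so \eqref{Thesis_hittingprob} applied at $x=1$, $i=0$, $j\to\infty$ yields $P_\w^1(T_0<\infty)=R_1/(1+R_1)<1$ $P$-a.s.; hence $0$ is transient for the quenched chain, and by stationarity every integer is. Transience combined with $\limsup X_n=+\infty$ forces $X_n\to+\infty$ $\P$-a.s. The case $\mu>0$ then follows by the reflection $\w\mapsto 1-\w$ (under which $\rho_i\mapsto\rho_i^{-1}$ and the walk becomes $-X_n$).

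For the recurrent case $\mu=0$, Birkhoff alone only says $S_k:=\sum_{j=1}^{k}\log\rho_{-j}=o(k)$, so a finer input is required to conclude $\Sigma_N=\sum_k e^{-S_k}\to\infty$. When $\log\rho$ is not $P$-a.s.\ constant, the centered stationary-ergodic sum $S_k$ satisfies $\liminf_k S_k=-\infty$ $P$-a.s.\ (Chung--Fuchs recurrence in the i.i.d.\ case, or Atkinson's recurrence theorem for general centered stationary sums), so $S_k\le 0$ infinitely often and $\Sigma_N\to\infty$; when $\log\rho\equiv 0$ one has $\Sigma_N=N\to\infty$ trivially. Thus $P_\w(T_1<\infty)=1$, giving $\limsup X_n=+\infty$, and the symmetric argument applied to $\w\mapsto 1-\w$ (also mean zero) yields $\liminf X_n=-\infty$. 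This recurrent regime is the main obstacle, as it is precisely where the first-moment ergodic theorem alone is insufficient and one must supplement it with a recurrence statement for centered stationary sums.
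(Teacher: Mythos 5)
Your proof is correct and is exactly the argument the paper gestures at when it says the theorem ``follows from \eqref{Thesis_hittingprob}'': extract $P_\w(T_1<\infty)$ from the hitting-probability formula, apply Birkhoff to $\{\log\rho_{-j}\}$, upgrade $\limsup X_n=+\infty$ to $X_n\ra+\infty$ via quenched transience of each site ($R_1<\infty$), and handle the other cases by the reflection $\w_x\mapsto 1-\w_{-x}$. One small caution in the critical case: for a general ergodic $P$ the claim $\liminf_k S_k=-\infty$ can fail (e.g.\ if $\log\rho$ is a bounded coboundary), but Atkinson's recurrence theorem still gives $S_k\leq 1$ infinitely often, which is all that is needed for $\Sigma_N\ra\infty$, so the argument stands.
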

\end{subsection}

\begin{subsection}{Recursions for Hitting Times and a Law of Large Numbers.}
For each $i\geq 1$, define
\[
 \tau_i := T_i-T_{i-1}
\]
to be the amount of time it takes for the random walk to reach $i$ after first reaching $i-1$. In this section, we will show how simple recursions allow us to compute an explicit formula (depending on the environment) for the quenched mean $E_\w \tau_i$. To this end, note that 
\begin{equation}
\tau_1 = 1 + \mathbf{1}_{X_1=-1}( \tau_0'+\tau_1'), \label{Thesis_taudecomposition}
\end{equation}
where $\tau_0'$ is the time it takes to reach $0$ after first hitting $-1$, and $\tau_1'$ is the time it takes to go from $0$ to $1$ after first hitting $-1$. Taking quenched expectations of both sides in \eqref{Thesis_taudecomposition} and using the strong Markov property, we have that 
\[
 E_\w \tau_1 = 1 + (1-\w_0)\left( E_{\w}^{-1} T_0 + E_\w^0 T_1 \right) = 1 + (1-\w_0)\left( E_{\theta^{-1} \w} \tau_1 + E_\w \tau_1 \right).
\]
Assuming for the moment that the environment is elliptic (i.e., $\w_i \in (0,1)$ for all $i$) and that $E_\w \tau_1 < \infty$ (which by ellipticity implies that $E_{\theta^{-1}\w} \tau_1 < \infty$ as well), we can solve the above equation for $E_\w \tau_1$ to get
\[
 E_\w \tau_1 = \frac{1}{\w_0} + \rho_0 E_{\theta^{-1}\w} \tau_1.
\]
Iterating this equation, we get that for any $m\geq1$,
\begin{equation}
 E_\w \tau_1 = \frac{1}{\w_0} + \frac{1}{\w_{-1}} \rho_0 +  \cdots + \frac{1}{\w_{-m}} \Pi_{-m+1,0} + \Pi_{-m,0} E_{\theta^{-m-1}\w} \tau_1. \label{Thesis_Etauiteration}
\end{equation}
From this it is not hard to see that 
\begin{equation}
E_\w \tau_1 = \bar{S}(\w):=\frac{1}{\w_0} + \sum_{i=1}^{\infty}
\frac{1}{\w_{-i}}\Pi_{-i+1,0} = 1+ 2W_0,  \label{Thesis_QET}
\end{equation}
where $W_0$ is defined in \eqref{Thesis_Wdef}. 
In fact, it can be shown that \eqref{Thesis_QET} holds even if $E_\w \tau_1 = \infty$ or if the environment is allowed to have $\w_i =1$ for some $i\leq 0$ (in which case the last term in \eqref{Thesis_Etauiteration} is eventually zero). 
We will omit the details of this argument since they can be found in \cite{zRWRE}, and since the details of a similar argument are provided in the computation of the quenched variance of $\tau_1$ in Appendix \ref{Thesis_qvarformula}.

If $\limsup_{n\ra\infty} X_n = +\infty$,
%so that $\tau_i < \infty$ $\P -a.s.$, 
the ergodicity of the law $P$ on the environments implies that the sequence $\{\tau_i\}_{i=1}^\infty$ is ergodic under $\P$ (see \cite{sRWRE} or \cite[Lemma 2.1.10]{zRWRE}).
Then, Birkoff's ergodic theorem yields
\begin{equation}
\frac{T_n}{n}=\frac{1}{n}\sum_{i=1}^n \tau_i \limn \E \tau_1 =
E_P( \bar{S}(\w)) .  \label{Thesis_barSdef}
\end{equation}
Moreover, a standard argument changing the index from space to time shows that the 
convergence $\frac{T_n}{n}\limn \frac{1}{v}$ implies $ \frac{X_n}{n}\limn v$. Therefore, one obtains the
following theorem:
\begin{thm}[Solomon \cite{sRWRE}] Assume that $E_P \log \rho_0 < 0$. Then
\[ 
E_P(\bar{S}(\w))< \infty \Longrightarrow \lim_{n\ra\infty}
\frac{X_n}{n} = \frac{1}{E_P(\bar{S}(\w))},
\] 
and
\[
E_P(\bar{S}(\w)) = \infty \Longrightarrow \lim_{n\ra\infty}
\frac{X_n}{n} = 0.
\]
\end{thm}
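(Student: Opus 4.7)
The plan is to assemble tools already present in the excerpt. By Theorem \ref{1drt}, the hypothesis $E_P \log \rho_0 < 0$ yields $X_n \to +\infty$ $\P$-a.s., so every hitting time $T_k$ is finite and the sequence $\{\tau_i\}$ is well-defined, $\P$-stationary, and ergodic. Equation \eqref{Thesis_barSdef} already records the crucial consequence of Birkhoff's ergodic theorem, namely that $T_n/n \to E_P(\bar{S}(\w))$ $\P$-a.s. In Case 2, where $E_P(\bar{S}(\w))=\infty$, I would invoke the version of Birkhoff's theorem valid for non-negative (possibly non-integrable) stationary ergodic sequences, so that the almost sure limit is allowed to be $+\infty$. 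All that remains is to invert this space-indexed LLN into the time-indexed statement about $X_n/n$.

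Write $N_n := \max_{m \leq n} X_m$ for the running maximum. Since $X_n \to +\infty$, also $N_n \to \infty$, and by definition of the hitting times we have the sandwich
\[ T_{N_n} \leq n < T_{N_n+1}. \]
Evaluating the LLN for $T_k/k$ along the (random) subsequence $N_n$, both $T_{N_n}/N_n$ and $T_{N_n+1}/(N_n+1)$ converge $\P$-a.s.\ to $E_P(\bar{S}(\w))$.

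In Case 1, with $v := 1/E_P(\bar{S}(\w)) \in (0,\infty)$, dividing the sandwich by $n$ and using the two subsequence limits yields $N_n/n \to v$. To upgrade this to $X_n/n \to v$, I would observe that by the nearest-neighbor property, travelling from position $N_n$ at time $T_{N_n}$ down to position $X_n$ at time $n$ requires at least $N_n - X_n$ steps, so
\[ N_n - X_n \leq n - T_{N_n} \leq \tau_{N_n+1}. \]
Since $T_k/k \to 1/v$ forces $\tau_k/k \to 0$, the bound above gives $(N_n - X_n)/n \to 0$, hence $X_n/n \to v$. For Case 2 the sandwich combined with $T_{N_n+1}/(N_n+1) \to \infty$ already gives $N_n/n \to 0$; since $X_n \to +\infty$ eventually forces $0 \leq X_n \leq N_n$, we conclude $X_n/n \to 0$ directly.

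The main obstacle is the lower bound in Case 1: the sandwich only controls the running maximum $N_n$, and a priori the current position $X_n$ could lag far behind it. The decisive input is the sublinear growth of individual excursion lengths $\tau_k/k \to 0$; together with transience to $+\infty$, this bounds $N_n - X_n$ by the length of the ongoing excursion and rules out any persistent lag.
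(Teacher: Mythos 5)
Your proof is correct and follows exactly the route the paper sketches: Birkhoff's ergodic theorem applied to the stationary ergodic sequence $\{\tau_i\}$ to get $T_n/n \to E_P(\bar{S}(\w))$ (allowing the value $+\infty$), followed by the standard space-to-time inversion via the running maximum $N_n$ and the sandwich $T_{N_n} \leq n < T_{N_n+1}$, with the lag $N_n - X_n$ controlled by $\tau_{N_n+1}$. One cosmetic remark: in Case 2 the useful half of the sandwich is $T_{N_n} \leq n$, which gives $N_n/n \leq N_n/T_{N_n} \to 0$ directly (the inequality $n < T_{N_n+1}$ points the wrong way there), but this does not affect the validity of the argument.
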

For general ergodic distributions on environments, $E_P \S(\w)$ is difficult to calculate. However, if the environment is i.i.d, recalling the definition of $W_0$ in \eqref{Thesis_Wdef}, we have that
\[
E_P(\bar{S}(\w))= 1 + 2 E_P W_0 = 1 + 2 \sum_{k\leq 0} E_P \Pi_{k,0} =  1 + 2 \sum_{k=1}^\infty (E_P \rho_0)^k .
\]
Thus, if $P$ is i.i.d., the condition $E_P(\bar{S}(\w))<\infty$ is equivalent to $E_P \rho_0 < 1$.
%, and in that case $\frac{X_n}{n}\limn \frac {1-E_P(\rho_0)}{1+E_P(\rho_0)},\quad \P-a.s$. 
We therefore obtain the following corollary:
\begin{cor}[Solomon \cite{sRWRE}]\label{Thesis_LLNiid} If $P$ is an i.i.d. product measure on $\Omega$ and $E_P \log \rho_0 < 0$, then 
\begin{align*}
&(a)\quad E_P(\rho_0) < 1  &\Longrightarrow\quad &\lim_{n\ra\infty} \frac{X_n}{n} = \frac {1-E_P(\rho_0)}{1+E_P(\rho_0)} > 0,&\quad \P-a.s.\\
&(b)\quad E_P(\rho_0) \geq  1  &\Longrightarrow\quad &\lim_{n\ra\infty} \frac{X_n}{n} = 0, &\quad \P-a.s.
\end{align*}
\end{cor}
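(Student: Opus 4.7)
The plan is to derive this corollary directly from the preceding theorem of Solomon (the ergodic law of large numbers stated just above it) by evaluating $E_P \bar{S}(\w)$ in closed form under the i.i.d.\ hypothesis. Since an i.i.d.\ product measure on $\Omega$ is in particular ergodic with respect to the spatial shift $\theta$, that theorem applies verbatim, and the only remaining task is to identify when $E_P \bar{S}(\w) < \infty$ and, in that case, to compute its value.

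Starting from the formula $\bar{S}(\w) = 1 + 2\sum_{k\leq 0} \Pi_{k,0}$ recorded in \eqref{Thesis_QET}, I would apply Tonelli's theorem (the summands are nonnegative) to exchange sum and expectation, and then use the independence of the $\rho_i$ together with the fact that $\Pi_{k,0} = \prod_{i=k}^{0} \rho_i$ has $1-k$ factors to obtain $E_P \Pi_{k,0} = (E_P \rho_0)^{1-k}$ for each $k \leq 0$. Reindexing via $j = 1-k$ gives
\[
E_P \bar{S}(\w) = 1 + 2 \sum_{j=1}^\infty (E_P \rho_0)^j,
\]
which is precisely the identity already noted in the paragraph preceding the corollary. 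This geometric series converges if and only if $E_P \rho_0 < 1$, and in that case summing it yields
\[
E_P \bar{S}(\w) = 1 + \frac{2\,E_P \rho_0}{1 - E_P \rho_0} = \frac{1+E_P \rho_0}{1 - E_P \rho_0}.
\]
Plugging this into the first conclusion of the previous theorem gives part (a). When $E_P \rho_0 \geq 1$ the series diverges, so $E_P \bar{S}(\w) = \infty$ and the second conclusion of the theorem gives part (b).

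There is no real analytic obstacle here: once the ergodic LLN and the explicit formula for $E_\w \tau_1$ are in hand, the corollary reduces to a geometric-series computation under independence. The only small point worth checking is the consistency of the two hypotheses—by Jensen's inequality $\log E_P \rho_0 \geq E_P \log \rho_0$, so the standing assumption $E_P \log \rho_0 < 0$ is compatible with either $E_P \rho_0 < 1$ or $E_P \rho_0 \geq 1$, and both cases of the corollary are nonvacuous.
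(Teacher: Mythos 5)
Your proposal is correct and follows essentially the same route as the paper: apply the preceding ergodic LLN of Solomon, and use independence to evaluate $E_P \bar{S}(\w) = 1 + 2\sum_{k=1}^{\infty}(E_P\rho_0)^k$ as a geometric series, which converges precisely when $E_P\rho_0<1$ and then equals $\frac{1+E_P\rho_0}{1-E_P\rho_0}$. The paper records exactly this computation in the paragraph immediately preceding the corollary, so there is nothing to add.
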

For the remainder of the thesis we will denote $v_P:=\lim_n \frac{X_n}{n}$ whenever the limit exists and is constant $\P-a.s$.

Variances under the law $P_\w$ will be denoted by $Var_\w$. That is, $Var_\w \tau_1 := E_\w( \tau_1 - E_\w \tau_1 )^2$. 
In a manner similar to the derivation \eqref{Thesis_Etauiteration} of $E_\w \tau_1$, one obtains
%The following formula for the quenched variance can also be computed using \eqref{Thesis_taudecomposition} in a similar manner to the derivation of the above formula for $E_\w \tau_1$:
\begin{align}
Var_\w\tau_1  = \S(\w)^2 - \S(\w) + 2 \sum_{n=1}^\infty \Pi_{-n+1,0}\S(\theta^{-n}\w)^2 = 4(W_{0}+ W_{0}^2)+ 8 \sum_{i<0}
\Pi_{i+1,0}(W_{i}+W_{i}^2).   \label{Thesis_qvar}
\end{align}
This formula also appears (with different notation) in \cite{aRWRE} and \cite{gQCLT}, but for completeness, we will provide a proof in Appendix \ref{Thesis_qvarformula}. 

\end{subsection}

\end{section}

\begin{section}{Review of Annealed Limit Laws for Transient RWRE on $\Z$} \label{AnnealedLimits}
%Next we will examine the limiting distributions (annealed and quenched) for transient RWRE on $\Z$. The annealed limiting distributions were known previously, but the results concerning the quenched behavior are new results. In this section, we will give outlines of the proofs of our main results. The full proofs can be found in the appendices. 
%Some of our results hold in environments with certain mixing properties, but for the sake of clarity we will restrict ourselves to i.i.d. environments in this section. 
In this section, we review known results on annealed limiting distributions for transient RWRE. This will also serve as an introduction to some of the techniques we will use later in deriving quenched limiting distributions. 
%A useful parameter to help distinguish between various regimes of limiting distributions will be 
%\begin{equation}
% s = s(P) := \sup\{ \gamma > 0: E_P \rho_0^\gamma < 1 \}.  \label{Thesis_sdef}
%\end{equation}
%
%Recall that a random variable $\xi$ is lattice if there exist constants $\a,\b\in \R$ such that $P(\xi \in \a + \b \Z) = 1$. 
The following theorem of Kesten, Kozlov, and Spitzer was the first result on annealed limiting distributions of transient RWRE in $\Z$. 
\begin{thm}[Kesten, Kozlov, and Spitzer \cite{kksStable}] \label{Thesis_annealedstable}
Let $X_n$ be a nearest neighbor, one-dimensional RWRE with an i.i.d. measure $P$ on environments
such that $E_P \log \rho_0 < 0$. Further, assume that there exists an $s>0$ such that $E_P \rho_0^s =1$ and $E_P \rho_0^s \log \rho_0 < \infty$ and that the distribution of $\log \rho_0$ is non-lattice
(i.e., the support of $\log \rho_0$ is not contained in $\a + \b\Z$ for any $\a,\b\in \R$).
%(recall a random variable $\xi$ is non-lattice if $P(\xi \in \a + \b\Z) < 1$ for all $\a,\b\in \R$)
Then, there exists a constant $b>0$ such that
\begin{align}
\begin{array}{lllcl}
(a)\quad s\in(0,1) & \Ra & \lim_{n\ra\infty} \P\left( \frac{X_n}{n^{s}} \leq x \right) &= &1 - L_{s,b}(x^{-1/s}) \\
(b)\quad s\in(1,2) & \Ra &  \lim_{n\ra\infty} \P\left( \frac{ X_n - n v_P }{n^{1/s}} \leq x \right) &= &1 - L_{s,b}(-x) \\
(c)\quad s>2 & \Ra & \lim_{n\ra\infty} \P\left( \frac{ X_n - n v_P }{ b \sqrt{n} } \leq x \right) &= &\Phi(x) ,
\end{array} \label{Thesis_stablelimits}
\end{align}
where $L_{s,b}$ is the distribution function for the stable law of index $s$ with characteristic function
\[
\int e^{i t x} L_{s,b}(dx) = \exp\left\{ -b |t|^s \left( 1 - i \frac{t}{|t|} \tan(\pi s/2) \right)   \right\},
\]
and $\Phi(x)$ is the cumulative distribution function for a standard Gaussian distribution. 
\end{thm}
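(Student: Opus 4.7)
The plan is to follow the approach of Kesten, Kozlov, and Spitzer: reduce the problem to the hitting times $T_n$, use a branching-process-in-random-environment (BPRE) representation of $T_n$, establish a power-law tail for the summands, and conclude via classical domain-of-attraction theorems for heavy-tailed sums.

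\textbf{From $X_n$ to $T_n$.} Since $E_P \log \rho_0 < 0$ forces $X_n \to +\infty$ almost surely, the $T_n$ are a.s.\ finite and increasing. The elementary identity $\{X_n \ge k\} = \{T_k \le n\}$ combined with a monotonicity/Slutsky argument converts a limit law for $(T_n - c_n)/a_n$ into the corresponding limit law for $X_n$: a $T_n$-scaling of $n^{1/s}$ becomes an $X_n$-scaling of $n^s$ in case (a), and stays $n^{1/s}$ (resp.\ $\sqrt{n}$) after centering by $n/v_P$ in case (b) (resp.\ (c)). It therefore suffices to prove the three corresponding limit laws for $T_n$.

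\textbf{BPRE representation and its tail.} Let $U_k^{(n)}$ denote the number of jumps from $k$ to $k-1$ made by the walk before time $T_n$, so that
\[
T_n = n + 2\sum_{k \le n-1} U_k^{(n)}.
\]
Indexed backwards in $k$, the sequence $V_j := U_{n-j}^{(n)}$ is, conditionally on the environment, a Galton--Watson process in the i.i.d.\ random environment $\{\omega_{n-j}\}$, whose offspring distribution at generation $j$ is geometric on $\{0,1,2,\dots\}$ with parameter $\omega_{n-j}$ (so that the mean number of offspring is $\rho_{n-j}$). As $n \to \infty$ one passes to a stationary BPRE whose total progeny $Z$ satisfies (annealed) a distributional fixed-point equation of the form $Z \stackrel{d}{=} \rho_0 Z + \eta$ with $\eta$ an independent noise term. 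The key analytic input, and the main obstacle of the proof, is to show that
\[
\P(Z > x) \sim K x^{-s}, \qquad x \to \infty,
\]
for some explicit $K > 0$. This follows from Kesten's renewal theorem for random difference equations; the hypotheses $E_P \rho_0^s = 1$, $E_P \rho_0^s \log \rho_0 < \infty$, and the non-lattice condition on $\log \rho_0$ are precisely those required by that theorem.

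\textbf{Stable and Gaussian limits.} With $\P(Z > x) \sim K x^{-s}$ in hand, $T_n$ is (modulo smaller-order boundary corrections) a sum of $n$ stationary random variables with tails of index $s$. A truncation-and-mixing argument, exploiting that $\{\omega_i\}$ is i.i.d., couples these variables with truly i.i.d.\ copies, after which classical domain-of-attraction theorems produce: a one-sided $s$-stable limit for $T_n/n^{1/s}$ when $s < 1$ (no centering, since $\E \tau_1 = \infty$); a totally asymmetric $s$-stable limit for $(T_n - n/v_P)/n^{1/s}$ when $s \in (1,2)$; and a central limit theorem when $s > 2$, the finite variance being guaranteed by the formula \eqref{Thesis_qvar} together with $E_P \rho_0^2 < 1$ (which holds for $s > 2$ by log-convexity of $t \mapsto E_P \rho_0^t$). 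Inverting as in the first step then gives the three conclusions of the theorem, with the constant $b$ arising from the tail constant $K$ and the Jacobian of the inversion.
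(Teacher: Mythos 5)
Your proposal follows exactly the route that the paper attributes to Kesten, Kozlov, and Spitzer for this theorem (which the paper cites rather than re-proves): pass to the hitting times $T_n$, represent $T_n = n + 2\sum_{k} U_k^{(n)}$ via a branching process in random environment (strictly, a branching process \emph{with immigration} --- the unit immigration at each level is what keeps the process alive and should not be dropped), extract the $x^{-s}$ tail from Kesten's theorem on random difference equations under precisely the stated hypotheses, apply stable/Gaussian limit theory for the resulting heavy-tailed sums, and invert back to $X_n$. This is correct in outline and is essentially the same approach as the one the paper describes.
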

\noindent\textbf{Remarks:} \\
\textbf{1. } Annealed limiting distributions were also obtained in \cite{kksStable} for the borderline cases $s=1$ and $s=2$. For simplicity, we will not discuss those results since we will only obtain quenched results when 
$s\in (0,1)\cup (1,2)\cup (2,\infty)$. \\
\textbf{2. } The significance of the parameter $s$ is that $\E T_1^\gamma$ and $E_P(E_\w T_1)^\gamma$ are finite if $\gamma < s$. The fact that $E_P(E_\w T_1)^\gamma < \infty$ follows from the explicit formula for $E_\w T_1$ given in \eqref{Thesis_QET} and the fact that $E_P \rho^\gamma < 1$ for $\gamma < s$. The proof that $\E T_1^\gamma < \infty$ is more difficult and is based on a representation of $T_1$ as a branching process in a random environment (see \cite[Lemma 2.4]{dpzTE1D} for details). Also, note that $E_P \rho_0 < 1$ if and only if $s>1$. Therefore, from Corollary \ref{Thesis_LLNiid}, we have that $s\leq 1$ implies that $v_P = 0$ (the zero-speed regime) and $s>1$ implies $v_P > 0$ (the ballistic regime).

The approach of Kesten, Kozlov, and Spitzer was to first obtain annealed stable limit laws for the hitting times $T_n$ and then to transfer the results to $X_n$. 
For instance, the first line in \eqref{Thesis_stablelimits} follows from 
\[
 \quad s\in(0,1) \quad \Ra \quad \lim_{n\ra\infty} \P\left( \frac{T_n}{n^{1/s}} \leq x \right) =  L_{s,b}(x).
\]
The approach used in \cite{kksStable} to derive stable limit laws for $T_n$ was to relate $T_n$ to a branching process in a random environment, and then to prove stable limit laws for the related branching process. 
%A key step in the proof of showing that the tails of $T_n$ are regularly varying was to show that there exists a $K$ such that
%\begin{equation}
%P( E_\w \tau_1 > x ) \sim K x^{-s}, \qquad x\ra\infty. \label{Thesis_EwTtails}
%\end{equation}
The same approach was used in \cite{mrzStable} to extend Theorem \ref{Thesis_annealedstable} to certain mixing environments that are generated by a Markov chain. 

Recently, Enriquez, Sabot, and Zindy \cite{eszStable} provided a new proof of part $(a)$ of Theorem \ref{Thesis_annealedstable} which allowed for a probabilistic representation of the constant $b$ (and in fact an exact calculation for $b$ when the environment is i.i.d. with Dirichlet distribution).
We will provide here a brief discussion of their techniques since we will use similar methods in analyzing the quenched distributions later\footnote{With the exception of the method for analyzing the quenched Laplace transform of the crossing time of a large block, our work and \cite{eszStable} were developed independently.}.
 Their approach differs from that of \cite{kksStable} in that they prove the annealed stable limit laws for $T_n$ by analyzing the \emph{potential} $V(x)$ of the environment as it was defined by Sinai in his analysis of recurrent RWRE \cite{sRRWRE}. That is,
\begin{equation}
V(x):= 
\begin{cases}
\sum_{i=0}^{x-1} \log \rho_i & \text{if } x \geq 1,\\
0                            & \text{if }  x = 0,\\
-\sum_{i=x}^{-1} \log \rho_i & \text{if } x \leq -1.
\end{cases} 
%= 
%\begin{cases}
%\log \Pi_{0,x-1} & \text{if } x \geq 1,\\
%0                            & \text{if }  x = 0,\\
%-\log \Pi_{x,-1} & \text{if } x \leq -1.
%\end{cases} 
\end{equation}
Since $E_P \log \rho < 0$, $V(x)$ is decreasing ``on average''. However, there are sections of the environment (traps) where the potential is increasing (which means the random walk is more likely to move left than right). 
It turns out that the key to analyzing the hitting times $T_n$ is understanding the amount of time it takes to cross the longest sections of the environment where the potential is increasing. To this end, define the ``ladder locations'' $\nu_i$ of the environment by
\begin{align}
\nu_0 = 0, \quad\text{and}\quad \nu_i =
\inf\{n > \nu_{i-1}: V(n) < V(\nu_{i-1}) \} \quad\text{for all }  i \geq 1.
%= \inf\{n > \nu_{i-1}: \Pi_{\nu_{i-1},n-1} < 1 \} \quad\text{for all }  i \geq 1.
\label{Thesis_nudef1}
\end{align}
%Note that since $V(j)-V(i) = \log \Pi_{i,j-1}$ we could also have defined $\nu_i$ by 
%$\nu_i = \inf\{n > \nu_{i-1}: \Pi_{\nu_{i-1},n-1} < 1 \} $. 
We will refer to the sections
of the environment between $\nu_{i-1}$ and $\nu_i - 1$ as the ``blocks'' of the environment.
The exponential height of a block is given by
\begin{equation}
M_k:=\max\{\Pi_{\nu_{k-1}, j} : \nu_{k-1}\leq j < \nu_k \} = \max\left\{ e^{V(j)-V(\nu_{k-1})}: \nu_{k-1} < j \leq \nu_k \right\}. \label{Thesis_Mdef}
\end{equation}
Note that the $M_k$ are i.i.d. since the environment is $i.i.d.$ 
Since $P$ is i.i.d. and $E_P \log \rho_0 < 0$, the potential $V$ is a random walk with negative drift. Thus, a result of Iglehart on excursions of random walks with negative drift \cite[Theorem 1]{iEV} implies that there exists a constant $C_5>0$ such that
\begin{equation}
Q(M_1 > x)\sim C_5 x^{-s}, \qquad \text{as } x\ra\infty, \label{Thesis_Mtail}
\end{equation}
where, as usual, $f(x)\sim g(x)$ as $x\ra\infty$ means that $\lim_{x\ra\infty} f(x)/g(x) = 1$. 
Therefore, the largest exponential height amongst the first $n$ blocks will be roughly of order $n^{1/s}$. 
Enriquez, Sabot, and Zindy show in \cite{eszStable} that in analyzing $T_n$, only the crossing times of the blocks in $[0,n]$ with $M_k \geq \frac{1-\e}{s}$ are relevant (the sum of the crossing times of all ``small blocks'' is $o(n)$). The limiting distribution for $T_n$ is then obtained by analyzing the annealed Laplace transform of the time to cross a ``large block.'' The analysis of the latter is accomplished in two steps: first by showing that the quenched Laplace transform is approximately the Laplace transform of an exponential random variable with a random (depending on the environment) parameter, and then by analyzing the tails of this random parameter. 
We will use this analysis of the crossing time of a large block later in our analysis of the quenched distribution of $T_n$. Corollary \ref{Thesis_explimit} contains a precise statement of our approximation of the quenched Laplace transform. 
%The approximation of the crossing of large blocks by exponential random variables will also be used in our analysis of the quenched distribution of $T_n$, and in Corollary \ref{Thesis_explimit} we will give a more precise approximation of the quenched laplace transform of large blocks than was needed in \cite{eszStable}.

%While the methods we use in our analysis of the quenched distribution of the RWRE are very similar to the methods used in \cite{eszStable}, it should be emphasized that (with the exception of the method for analyzing the quenched laplace transform of the crossing time of a large block) these methods were developed independently. 

There have been a number other approaches to proving an annealed central limit theorem (i.e., part (c) of Theorem \ref{Thesis_annealedstable}) under different assumptions, such as non-i.i.d. environments. Zeitouni \cite[Theorem 2.2.1]{zRWRE} gives an annealed central limit theorem for certain non-i.i.d. environments. 
Following an approach of Kozlov \cite{kARW} and Molchanov \cite{mLRM}, Zeitouni uses homogenization, i.e., the point of view of the particle, to first derive a quenched central limit theorem for the martingale $\overline{M}_n := X_n - n v_P + h(X_n, \w)$ where 
\[
h(x,\w)= 
\begin{cases}
\sum_{j=0}^{x-1} (v_P \S(\theta^j \w) - 1 ) & \text{if } x > 0 ,\\
0 & \text{if } x=0 ,\\
-\sum_{j=x}^{-1} (v_P \S(\theta^j \w) - 1 ) & \text{if } x < 0 .
\end{cases}
\]
An annealed CLT is then obtained by analyzing the fluctuations of the harmonic correction $h(X_n,\w)$. In particular, writing 
\begin{equation}
Z_n=Z_n(\w):= \sum_{j=1}^{\lfloor n v_P \rfloor} ( v_P \S(\theta^j \w) - 1 ), \label{Thesis_ZnDef}
\end{equation}
he shows that $\frac{1}{\sqrt{n}}( Z_n - h(X_n,\w))$ tends to zero in $\P$-probability, and that $Z_n$ satisfies a central limit theorem. Since $Z_n$ depends only on the environment, this can be combined with the quenched central limit theorem for the martingale $\overline{M}_n$ to derive an annealed central limit theorem for $X_n$ with deterministic centering $n v_P$. 

The argument in \cite{zRWRE} gives a quenched CLT for $\overline{M}_n$ in which $X_n$ is centered by a function of both the environment and the position of the random walk. One would like to replace $h(X_n,\w)$ by $Z_n(\w)$ to get a quenched CLT with random centering depending only on the environment. However, the argument of the proof in \cite{zRWRE} only shows that 
\begin{equation}
P_\w\left( \frac{X_n-nv_P +
Z_n}{\s_{P,1}\sqrt{n}} > x  \right) \limn \Phi(-x), \quad\text{in } P-\text{probability.} \label{Thesis_qCLTip}
\end{equation}
%$P_\w\left( \frac{X_n-nv_P +
%Z_n}{\s_{P,1}\sqrt{n}} > x  \right) \limn \Phi(-x)$ in $P$-probability (not $P-a.s.$).
A second approach to proving an annealed CLT was given by Alili in \cite{aRWRE}. Alili's approach was to first use the Lindberg-Feller condition for triangular arrays to prove a quenched CLT for the hitting times $T_n$. However, in order to translate this result to $X_n$ Alili needed to make restrictive assumptions which essentially forced $Z_n(\w)$ to be bounded (which can only happen for certain non-i.i.d. environments) so that the quenched (and therefore annealed) central limit theorem holds with deterministic centering $n v_P$. 
In Section \ref{Thesis_quenchedCLT}, we extend this approach to prove a quenched central limit theorem (with random centering) for i.i.d. and strongly mixing environments. That is, we show that the convergence in \eqref{Thesis_qCLTip} holds for $P-$almost every environment $\w$. 

It should be noted that the random centering necessary for a quenched central limit theorem is unique to one-dimensional RWRE. 
Recent results by Berger and Zeitouni \cite{bzQCLT} and Rassoul-Agha and Sepp\"al\"ainen \cite{rsQCLT} show that, for RWRE in i.i.d. environments on $\Z^d$ with $d\geq 2$, if the random walk has non-zero limiting velocity (i.e. $v_P \neq 0$) and an annealed central limit theorem holds (and some other mild assumptions), a quenched central limit theorem also holds with the same (deterministic) centering. 

Limiting distributions have also been studied for RWRE on a strip $\Z \times \{1,2,\ldots, m\}$ which is a generalization of RWRE on $\Z$ with bounded jump size (identify elements $(x,i) \in \Z \times \{1,2,\ldots, m\}$ with $x m + i \in \Z$). Roitershtein \cite{rTRWS} has used homogenization methods to give sufficient conditions for an annealed central limit theorem for transient RWRE on the strip for environments with certain mixing properties. A recent result of Bolthausen and Goldsheid \cite{bgLRW} shows that recurrent RWRE on $\Z$ with bounded jump size either has scaling of order $(\log t)^2$ (as was shown by Sinai in the nearest neighbor case \cite{sRRWRE}) or satisfies a central limit theorem. The latter is shown to hold if and only if the random walk is a martingale (i.e. the environment has zero drift at each location). Also, Goldsheid \cite{gQCLT,gSCLT} has given quenched central limit theorems for RWRE on $\Z$ and on a strip\footnote{Goldsheid's results were obtained independently from ours below. However, while Goldsheid is able to prove a quenched central limit theorem for ergodic environments, we are restricted to strongly mixing environments but are able to prove a functional central limit theorem.}. 
\end{section}

\begin{section}{Quenched Limits for Transient RWRE on $\Z$} \label{mainresults}
In this section, we consider the quenched limiting distributions of transient RWRE on $\Z$. As the previous section showed, there are many results for annealed limiting distributions of transient RWRE on $\Z$. Until now, however, there have been very few results on quenched limiting distributions. 
Alili \cite{aRWRE} and Rassoul-Agha and Sepp\"al\"ainen \cite{rsBFD} have obtained quenched central limit theorems, but under assumptions on the environment which do not include the case of nearest neighbor RWRE on $\Z$ in i.i.d. environments. 
%As mentioned in the previous section, Alili \cite{aRWRE} proved a quenched central limit theorem with deterministic centering under restrictive assumptions on the environment which do not allow for i.i.d. environments. Rassoul-Agha and Sepp\"al\"ainen proved a quenched central limit theorem with random centering for one-dimensional RWRE with bounded jumps and a forbidden direction (i.e., $X_n$ is monotone increasing). 
In this section, we will state our main results on quenched limits for transient nearest neighbor RWRE on $\Z$, and we will also give brief sketches of the proofs. The full proofs of the main results are contained in the Chapters \ref{Thesis_AppendixQCLT}-\ref{Thesis_AppendixBallistic}. 
Previously, no quenched limiting distribution results were known for nearest neighbor RWRE in i.i.d. environments\footnote{As mentioned above, Goldsheid \cite{gQCLT} has obtained a quenched central limit theorem similar to ours below, but this work was done independently and at the same time as our results.}.

Our analysis of the quenched limits for transient, nearest neighbor RWRE is divided into the three different cases that appear in Theorem \ref{Thesis_annealedstable}, depending on the value of the parameter $s$. These are respectively, the Gaussian regime ($s>2$), the ballistic, sub-Gaussian regime ($s\in (1,2)$), and the zero-speed regime ($s\in(0,1)$). The case $s>2$ is handled in Subsection \ref{Thesis_quenchedCLT}, while the cases $s\in(0,1)$ and $s\in(1,2)$ are handled in Subsection \ref{Thesis_quenchedsl2}.

\begin{subsection}{$s>2$: Quenched Central Limit Theorem} \label{Thesis_quenchedCLT}
In this section we will give an outline of the proof of a quenched functional central limit theorem for certain nearest neighbor, one-dimensional RWRE. The full proof is contained in Chapter \ref{Thesis_AppendixQCLT}.  
%First a quenched functional central limit theorem is proved for the hitting times $T_n$ and then the result is translated to a functional central limit theorem for the random walk. 
%\textbf{Remark:} The argument above almost gives a quenched CLT as
%well. However, without doing
%more work we only get that $P_\w\left( \frac{X_n-nv_P +
%Z_n}{\s_{P,1}\sqrt{n}} > x  \right) \limn \Phi(-x)$ in probability
%(not $P-a.s.$).\\
%Another approach to proving a quenched CLT, which I have pushed to a full quenched CLT, uses the hitting times defined in \eqref{Thesis_TnDef}. I first prove a quenched functional CLT for the hitting times, and then use the duality between hitting times and the random walk to transfer this to a quenched functional CLT on the random walk. The main difficulty is in transferring the random centering (depending on $\w$) of the hitting times to a random centering of the random walk. The following is an outline of the proof. The full proof is contained in an appendix. 
To prove a functional CLT for the RWRE we will make the following
assumptions:
% (note that these are the similar to the assumptions made in Section 2.4 of \cite{zRWRE}):
\begin{asm} \label{Thesis_uelliptic}
The environment is uniformly elliptic. That is, $\exists \kappa >0$
such that $\w \in [\kappa, 1-\kappa]^\Z$, P-a.s.
\end{asm}
\begin{asm}
$E_P \log \rho_0 < 0$. That is, the RWRE is transient to the right. 
\end{asm}
%\begin{asm}
%$E_P(\bar{S}(\w))<\infty$. Thus, the random walk is transient to
%the right with positive speed $v_P:= \lim_{n\ra\infty}
%\frac{X_n}{n} = \frac{1}{E_P(\bar{S}(\w))} > 0$.
%\end{asm}
\begin{asm}\label{Thesis_mixing} $P$ is $\a$-mixing, with $\a(n) = e^{-n\log(n)^{1+\eta}}$ for some $\eta>0$.
That is, for any $l$-separated measurable functions $f_1,
f_2 \in \{ f: \|f\|_\infty \leq 1 \}$, 
\[
E_P(f_1(\w)f_2(\w))\leq E_P(f_1(\w))E_P(f_2(\w)) + \a(l). 
\]
\end{asm}
Assumption \ref{Thesis_mixing} was also made in \cite[Section 2.4]{zRWRE} in the context of studying certain large deviations of one-dimensional RWRE. 
As noted in \cite[Section 2.4]{zRWRE}, the above assumptions imply that $\frac{1}{n} \sum_{i=0}^n \log \rho_i$ satisfies a large deviation principle with a rate function $J(x)$ (see \cite{bdLDSM}). 
%Define $\bar{R}_n:=\frac{1}{n} \sum_{i=0}^{n-1}
%\log(\rho_i)$. Then, (see \cite{bdLDSM}) the assumptions above imply that $\bar{R}_n$ satisfies a LDP with a good rate
%function $J(x)$. 
%(Recall that this means that $J(x)$ is l.s.c.,
%$\{x:J(x)\leq M\}$ is compact for all $M$, and
%\[
%- \inf_{x\in A^\circ} J(x) \leq \liminf_{n\ra\infty}
%\frac{1}{n}\log\left(P(\bar{R}_n \in A)\right) \leq
%\limsup_{n\ra\infty} \frac{1}{n}\log\left(P(\bar{R}_n \in
%A)\right) \leq - \inf_{x\in \bar{A}} J(x)
%\]
%for all borel sets $A$).
%
%The final critical assumption is then:

For our final assumption, we wish to restrict our attention to the regime where there is an annealed CLT. When the environment is i.i.d., Theorem \ref{Thesis_annealedstable} shows that this is the case when $s>2$, where $s$ is the unique positive solution to $E_P \rho_0^s = 1$. Since we are not assuming i.i.d. environments, we need to define the parameter $s$ differently. 
%Let $s:= \min_{y>0} y^{-1} J(y)$, where $J(y)$ is the rate function mentioned above. Varadhan's Lemma \cite[Theorem 4.3.1]{dzLDTA} implies that $s$ is the smallest non-negative solution to $\lim_{n\ra \infty} \frac{1}{n} \log E_P \Pi_{0,n-1}^s = 0$. 
\begin{asm} \label{Thesis_sg2}
$J(0)>0$ and $s:=\min_{y > 0} \frac{1}{y}J(y) > 2$, where $J(x)$ is the large deviation rate function for $\frac{1}{n} \sum_{i=0}^{n-1} \log \rho_i$.
\end{asm}
Note that Varadhan's Lemma \cite[Theorem 4.3.1]{dzLDTA} implies that the parameter $s$ defined in Assumption \ref{Thesis_sg2} is also the smallest non-negative solution of $\lim_{n\ra \infty} \frac{1}{n} \log E_P \Pi_{0,n-1}^s = 0$. Therefore, the above definition of $s$ is consistent with the previous definition of $s$ in the case of i.i.d. environments. 
%In the case of i.i.d. environments, the parameter $s$ can also be
%defined as the smallest positive solution to $E_P\rho_0^s = 1$ (as in Theorem \ref{Thesis_annealedstable}).
Assumption \ref{Thesis_sg2} is the crucial assumption that we need for a central limit theorem, since it implies that
$\E\tau_1^\gamma < \infty$ for some $\gamma>2$. In fact, $\E\tau_1^\gamma<\infty $ for all $\gamma< s$ (see
\cite[Lemma 2.4.16]{zRWRE}). 

Let $D[0,\infty)$ be the space of real valued functions on $[0,\infty)$ which are right continuous and which have limits from the left, equipped with the Skorohod topology. 
%For any environment $\w$ and any random walk $X_n$, let
%\[
% B_t^{\w,n} := \frac{X_{\lfloor nt \rfloor}-nt v_P+Z_{nt}(\w)}{v_P^{3/2}\s\sqrt{n}},
%\]
%where $Z_{nt}$ is defined as in \eqref{Thesis_ZnDef}. 
%We now state the quenched functional central limit theorem, which is the main result of this Subsection.
Our main result in this Subsection is the following theorem:
\begin{thm}
 Assume that Assumptions \ref{Thesis_uelliptic}-\ref{Thesis_sg2} hold, and let 
\[
 B_t^{n} := \frac{X_{\lfloor nt \rfloor}-nt v_P+Z_{nt}(\w)}{v_P^{3/2}\s\sqrt{n}},
\]
where $\s^2 = \E \tau_1^2 - E_P \S(\w)^2 < \infty$ and $Z_{nt}$ is defined as in \eqref{Thesis_ZnDef}. Then, for $P-a.e.$ environment $\w$, the random variables $B^n_\cdot \in D[0,\infty)$ converge in quenched distribution as $n\ra\infty$ to a standard Brownian motion. 
\end{thm}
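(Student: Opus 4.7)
The plan is to go through hitting times, in the spirit of Alili \cite{aRWRE}, but now with a random centering depending on the environment. Write $T_n = \sum_{i=1}^n \tau_i$ where $\tau_i := T_i - T_{i-1}$. Under the quenched law $P_\omega$, the strong Markov property at the hitting times $T_{i-1}$ makes the sequence $\{\tau_i\}_{i\geq 1}$ independent (but not identically distributed), with $E_\omega \tau_i = \bar S(\theta^{i-1}\omega)$ and $Var_\omega \tau_i$ given by the shift of \eqref{Thesis_qvar}. This independence is the engine that drives the argument.

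First I would prove a quenched functional CLT for the hitting times, namely that
\[
\widetilde B^n_t := \frac{T_{\lfloor nt\rfloor} - \sum_{i=0}^{\lfloor nt\rfloor -1}\bar S(\theta^i\omega)}{\sigma \sqrt n}
\]
converges in $D[0,\infty)$ to a standard Brownian motion, for $P$-a.e.\ $\omega$. By the ergodic theorem applied to the stationary sequences $\bar S(\theta^i\omega)^2$ and $Var_\omega\tau_1\circ\theta^i$, the normalized variance converges to $E_P Var_\omega\tau_1 = \E\tau_1^2 - E_P\bar S(\omega)^2 = \sigma^2$ (finiteness of $\E\tau_1^2$ uses Assumption \ref{Thesis_sg2}, which actually gives $\E\tau_1^\gamma<\infty$ for some $\gamma>2$). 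The Lindberg condition is verified by the standard Lyapunov route: bound the truncated second moments by $(\epsilon\sigma\sqrt n)^{\gamma-2} E_\omega|\tau_i-E_\omega\tau_i|^\gamma$, then apply the ergodic theorem to conclude $\tfrac1n\sum_{i\le n} E_\omega|\tau_i-E_\omega\tau_i|^\gamma \to E_P E_\omega|\tau_1-E_\omega\tau_1|^\gamma<\infty$, $P$-a.s., so the whole expression is $O(n^{1-\gamma/2})\to0$. Tightness in $D[0,\infty)$ follows from the same $(2+\delta)$ moment bound applied to the quenched independent increments (e.g.\ via Billingsley-type criteria for partial sums of independent variables).

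Next I would invert to pass from $T_n$ to $X_n$. Set $m=\lfloor nt\rfloor$ and $N=X_m$; because the walk is transient to the right, $T_N\le m < T_{N+1}$, and since $\E\tau_1^2<\infty$ the gap $m-T_N$ is negligible on the $\sqrt n$ scale. Substituting into the hitting-time FCLT yields
\[
m - \sum_{i=0}^{N-1}\bar S(\theta^i\omega) = \sigma\sqrt{N}\,\widetilde B^n_{T_N/n}+o(\sqrt n),
\]
and multiplying by $v_P$ converts the sum on the left into $N + Z_m(\omega)$ up to an $O(1)$ boundary correction (using the identity $v_P\sum_{i=0}^{N-1}\bar S(\theta^i\omega) = N + Z_{m}(\omega)+O(1)$, which just rewrites the definition of $Z_m$). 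Since $N/m\to v_P$ uniformly on compact time intervals by the quenched LLN, one obtains
\[
-\bigl(X_m - v_P m + Z_m(\omega)\bigr) = v_P^{3/2}\sigma\sqrt{m}\,\widetilde B^n_{\cdot} + o(\sqrt n),
\]
and dividing by $v_P^{3/2}\sigma\sqrt n$ gives $B^n_t \to -\widetilde B_t =:W_t$, another standard Brownian motion. Continuity of the inversion map on $D[0,\infty)$ promotes the one-dimensional statement to an honest functional convergence.

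I expect the main obstacle to be the inversion step, specifically the bookkeeping needed to replace the deterministic-looking centering $\sum_{i<N}\bar S(\theta^i\omega)$ (which depends on the random index $N=X_m$) by the purely environment-dependent centering $m v_P - Z_m(\omega)$, and to do so uniformly in $t$ over compact intervals while keeping the error $o(\sqrt n)$ $P$-a.s. Controlling fluctuations of $\sum_{i=0}^{N-1}\bar S(\theta^i\omega) - \sum_{i=0}^{\lfloor mv_P\rfloor -1}\bar S(\theta^i\omega)$ on the event $|N-mv_P|\lesssim \sqrt{m}$ will require a modulus-of-continuity estimate for the random centering, presumably obtained from another application of the ergodic/mixing structure and the $\a$-mixing assumption \ref{Thesis_mixing}. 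The remaining delicate point is verifying the Lindberg condition truly $P$-a.s.\ rather than in $P$-probability; this is where Assumption \ref{Thesis_sg2} giving $\gamma>2$ moments (rather than exactly $2$) is essential.
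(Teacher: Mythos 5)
Your overall architecture matches the paper's: first a quenched functional CLT for the hitting times via the Lindeberg condition (using the $\gamma>2$ moments from Assumption \ref{Thesis_sg2} and the ergodic theorem, exactly as in the paper), then an inversion to pass to $X_n$, with the centering replacement identified as the crux. The first step is fine. The gap is in the second step: you state that controlling
$\sum_{i=0}^{N-1}\bar S(\theta^i\omega)-\sum_{i=0}^{\lfloor mv_P\rfloor-1}\bar S(\theta^i\omega)$
``will require a modulus-of-continuity estimate for the random centering, presumably obtained from'' the mixing assumption — but this estimate is the entire technical content of the theorem, and you have not supplied it. The ergodic theorem gives only $\frac1n\sum_{i\le n}(\bar S(\theta^i\omega)-\tfrac1{v_P})\to0$; it says nothing about $\max_{j,k\in[1,n],\,|k-j|<n^\a}\bigl|\frac{1}{\sqrt n}\sum_{i=j}^k(\bar S(\theta^i\omega)-\tfrac1{v_P})\bigr|$, which is what is actually needed. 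The paper obtains this (Lemmas \ref{Thesis_block}, \ref{Thesis_sum}, \ref{Thesis_Z}) by a genuinely nontrivial route: introduce reflected walks $\tau_i^{(n)}$ to localize the dependence, split into $2K_n$ blocks of $K_n$-separated variables, use the $\a$-mixing rate to replace them by independent copies, apply the Zygmund--Marcinkiewicz and Kolmogorov inequalities, and then run a Borel--Cantelli argument along the subsequence $N^m$ to upgrade to almost-sure convergence. Moreover, the estimate is proved for $\sum(\tau_i-\tfrac1{v_P})$ under $\P$ and transferred to $\sum(E_\omega\tau_i-\tfrac1{v_P})$ by writing $E_\omega\tau_i-\tfrac1{v_P}=(\tau_i-\tfrac1{v_P})-(\tau_i-E_\omega\tau_i)$ and handling the second piece with the hitting-time FCLT; none of this machinery appears in your sketch.

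There is also a circularity in your reduction: you propose to work ``on the event $|N-mv_P|\lesssim\sqrt m$,'' but the quenched LLN only gives $|X_m-mv_P|=o(m)$, not $O(\sqrt m)$ or even $O(m^\a)$ for $\a<1$. Obtaining $P_\omega\bigl(\sup_{t\le1}|X^*_{nt}-ntv_P|\ge n^\a\bigr)\to0$ for some $\a\in(\tfrac12,1)$ (the paper's Corollary \ref{Thesis_nu2}) is itself a consequence of the very fluctuation estimate on $\sum(\tau_i-\tfrac1{v_P})$ that you are deferring, so it cannot be assumed as an input to the centering replacement. Until the window-fluctuation lemma is proved, both the restriction to $|N-mv_P|\le m^\a$ and the subsequent control of the centering increment remain unestablished, and the theorem is not proved.
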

%\begin{thm}
%Assume that Assumptions \ref{Thesis_uelliptic}-\ref{Thesis_sg2} hold, and let 
%\[
% B_t^{n} := \frac{X_{\lfloor nt \rfloor}-nt v_P+Z_{nt}(\w)}{v_P^{3/2}\s\sqrt{n}},
%\]
%where $\s^2 = \E \tau_1^2 - E_P \S(\w)^2 < \infty$ and $Z_{nt}$ is defined as in \eqref{Thesis_ZnDef}.
% %Then, $P-a.s.$, the quenched law of $B_t^n$ converges weakly as $n\ra\infty$ in the space of measures on $D[0,\infty)$ to the law of a standard Brownian motion. 
%Then, as $n\ra\infty$, $B_t^{n}$ converges in quenched distribution in the space $D[0,\infty)$ to a standard Brownian motion, $P-a.s$.  
%\end{thm}
\begin{proof}[Sketch of proof]
Since the hitting times are the sum of independent (quenched) random variables, we can use the Lindberg-Feller condition to prove a quenched functional CLT for the hitting times. In particular, as elements of $D[0,\infty)$,
\[
\frac{1}{\s\sqrt{n}}\sum_{k=1}^{\nt} (\tau_k-E_\w\tau_k) \limdw W_t,
\]
where $\s^2 = E_P Var_\w \tau_1 = \E\tau_1^2 - E_P(\bar{S}(\w)^2)$, $W_t$ is standard Brownian motion, and $\limdw$ signifies convergence in distribution (in the space $D[0,\infty)$) as $n\ra\infty$ of the quenched law for $P-a.e.$ environment $\w$. (Note: although $\limdw$ signifies convergence in distributin of random functions of $t$, in the above and subsequent uses of $\limdw$ we will keep the index $t$ of the functions for clarity).
To transfer the CLT to the random walk, we first introduce the random variable $X^*_t:= \max_{k\leq t} X_k$ which is the farthest to the right the random walker has gone by time $t$. The mixing properties of $P$ and the fact that $X_n \ra +\infty$ with positive speed $v_P$, are enough to show that $X^*_n$ is very close to $X_n$ (in particular, eventually $X^*_n-X_n < \log^2(n)$ for all $n$ large enough). Then, a standard random time change argument ($t\mapsto \frac{X^*_{nt}}{n}$) implies that $\frac{1}{\s\sqrt{n}}\sum_{k=1}^{X^*_{nt}} (\tau_k-E_\w\tau_k)\limdw W_{v_P\cdot t}$. Next, the definition of $X^*_t$ implies that 
\[
\frac{1}{\s\sqrt{n}}\sum_{k=1}^{X^*_{nt}} (\tau_k-E_\w\tau_k) 
\leq \frac{1}{\s\sqrt{n}}\left(nt - \sum_{k=1}^{X^*_{nt}} E_\w\tau_k\right)  
\leq \frac{1}{\s\sqrt{n}}\sum_{k=1}^{X^*_{nt}} (\tau_k-E_\w\tau_k) + \frac{\tau_{X^*_{nt}+1}}{\s\sqrt{n}} \, .
\]
Therefore, since we can prove that $\frac{\tau_{X^*_{nt}+1}}{\sqrt{n}}$ is negligible, we obtain that
\begin{equation}
\frac{-1}{\s\sqrt{n}}\left(nt - \sum_{k=1}^{X^*_{nt}} E_\w\tau_k\right) 
= \frac{1}{v_P\s\sqrt{n}}\left(X^*_{nt} - ntv_P + \sum_{k=1}^{X^*_{nt}}(v_P E_\w\tau_k - 1)\right) 
\limdw W_{v_P\cdot t} \, .\label{Thesis_nuCLT}
\end{equation}
All that remains in order to obtain a quenched CLT for $X_n$ is to replace $X^*_{nt}$ by $X_{nt}$ (which is easy since the difference is of order $\log^2(n)$) and then replace the centering $\sum_{k=1}^{X^*_{nt}}(v_P E_\w\tau_k - 1)$ by $\sum_{k=1}^{ntv_P}(v_P E_\w\tau_k - 1) = Z_{nt}(\w)$ which depends only on the environment. (This is the same $Z_n$ as defined above in \eqref{Thesis_ZnDef}.) This replacement is the hardest part of the proof, and is accomplished by first proving that for $\a<1$,
\[
\max_{j,k\in[1,n];|k-j|<n^\a} \left|\frac{1}{\sqrt{n}} \sum_{i=j}^k (\tau_i - \frac{1}{v_P}) \right|\limn 0,\quad \P-a.s.
\]
and then, using this, to showing for any $\frac{1}{2}<\a$,  
\[
P_\w\left( \sup_{0\leq t \leq 1} |X^*_{nt} - ntv_P| \geq  n^\a\right) \limn 0, \quad P-a.s. 
\]
Finally, since
\begin{align*}
P_\w\left(\sup_{0\leq t\leq 1} \frac{1}{\sqrt{n}}\left| \sum_{k=1}^{X^*_{nt}}( E_\w\tau_k - \frac{1}{v_P})  \right.\right.- & \left.\left.\sum_{k=1}^{ntv_P}( E_\w\tau_k - \frac{1}{v_P}) \right| \geq \d\right) \\
& \leq P_\w\left( \sup_{0\leq t\leq 1} |X^*_{nt} - ntv_P| \geq n^\a\right) \\
&\quad\quad + P_\w\left(\max_{j,k\in[1,n];|k-j|<n^\a} \left|\frac{1}{\sqrt{n}} \sum_{i=j}^k (\tau_i - \frac{1}{v_P}) \right| \geq \frac{\d}{2} \right) \\
&\quad\quad + P_\w\left(\max_{j,k\in[1,n];|k-j|<n^\a} \left|\frac{1}{\sqrt{n}} \sum_{i=j}^k (\tau_i - E_\w\tau_i) \right| \geq \frac{\d}{2} \right) 
\end{align*}
for any $\a \in(\frac{1}{2},1)$, the above estimates imply that the first two terms on the right go to zero, and the quenched functional CLT for hitting times implies that the third term goes to zero also. Thus, all the replacements in \eqref{Thesis_nuCLT} discussed above are valid, and we get the quenched functional CLT for the random walk:
\[
\frac{X_{\lfloor nt \rfloor }-ntv_P + Z_{nt}}{v_P^{3/2}\s\sqrt{n}} \limdw W_t, \quad P-a.s.
\]
\end{proof}
\end{subsection}

\begin{subsection}{Quenched Limits when $s<2$}\label{Thesis_quenchedsl2}
In this section we will give an outline of our results on the quenched limiting distributions of transient nearest neighbor RWRE on $\Z$ in the annealed sub-Gaussian regime (i.e., $s<2$). The full proofs are contained in Chapters \ref{Thesis_AppendixZeroSpeed} and \ref{Thesis_AppendixBallistic}. 

For our main results in this section, we will
make the following assumptions:
\begin{asm} \label{Thesis_essentialasm}
$P$ is an i.i.d. product measure on $\Omega$ such that
\begin{equation}
E_P \log\rho < 0 \quad\text{and}\quad E_P \rho^s = 1 \text{ for
some } s>0 . \label{Thesis_zerospeedregime}
\end{equation}
\end{asm}
\begin{asm}
The distribution of $\log \rho$ is non-lattice under
$P$ and $E_P \rho^s \log\rho<\infty$.  \label{Thesis_techasm}
\end{asm}
Assumption \ref{Thesis_essentialasm}
contains the essential assumption necessary for the walk to be transient. 
Note that $E_P \rho^\gamma$ is a convex function of $\gamma$, and thus $E_P \rho^\gamma < 1$ for all $\gamma \in (0,s)$. Corollary \ref{Thesis_LLNiid} then gives that $s\leq 1$ if and only if $ v_P = 0$. 
Assumption \ref{Thesis_techasm} is a technical condition that was also invoked in \cite{kksStable} for the proof of the annealed limit laws and is used here to give that certain random variables have regularly varying tails. Our main results, however, seem to depend only on much rougher tail asymptotics. Thus, we suspect that in fact Assumption \ref{Thesis_techasm} is not needed for Theorems 
\ref{Thesis_local} - \ref{Thesis_qEXP}. 
%\ref{Thesis_local}, \ref{Thesis_nonlocal}, \ref{Thesis_qCLT}, and \ref{Thesis_qEXP}. 
However, Assumption \ref{Thesis_techasm} is probably necessary for Theorem \ref{Thesis_qETVarStable} which is interesting in its own right and which greatly simplifies the proofs of Theorems \ref{Thesis_local} - \ref{Thesis_qEXP}.

As was shown above, when $s>2$, the limiting distribution for $X_n$ is Gaussian in both the annealed and quenched cases (with a random centering in the quenched case). Therefore, when $s<2$, one could possibly expect the quenched limiting distributions to also be of the same type as in Theorem \ref{Thesis_annealedstable}. Somewhat surprisingly, this turns out not to be the case. In fact, when $s<2$, there are no quenched limiting distributions for $X_n$ (or for its hitting times $T_n$). Moreover, we are able to prove that for almost any environment $\w$ there exist two different random (depending on the environment) sequences along which different quenched limiting distributions hold. We divide our analysis of the quenched limiting distributions when $s<2$ into two subcases: $s\in (0,1)$ and $s\in(1,2)$. 

When $s\in(0,1)$ our main results are the following:
\begin{thm}\label{Thesis_local}
Let Assumptions \ref{Thesis_essentialasm} and \ref{Thesis_techasm} hold, and let $s<1$. Then,
$P$-a.s., there exist random subsequences $t_m=t_m(\w)$ and
$u_m=u_m(\w)$ such that, 
%for any $\d> 0$,
%\[
%\lim_{m\ra\infty} P_\w\left( \frac{X_{t_m} - u_m}{(\log t_m)^2}
%\in [-\d, \d] \right) = 1.
%\]
\[
\lim_{m\ra\infty} \frac{X_{t_m} - u_m}{(\log t_m)^2} = 0 , \qquad \mbox{in $P_\w$-probability.}
\]
\end{thm}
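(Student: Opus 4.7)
The plan is to exploit the heavy-tailed distribution of trap heights $M_k$ when $s<1$ to locate times at which the walker is stuck at the base of an exceptionally dominant trap. At such times the walker is confined to the interior of that trap, which has only logarithmic width, giving localization on the scale $(\log t_m)^2$. The random time $t_m$ will fall in the middle of the long crossing of some block $k_m$, and $u_m := \nu_{k_m-1}$ will be the base of that block.

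Using Iglehart's tail estimate \eqref{Thesis_Mtail}, $Q(M_1>x)\sim C_5 x^{-s}$, together with the independence of $\{M_k\}$, a careful extremal-value / Borel--Cantelli argument produces $P$-a.s.\ a random sequence of block indices $k_m\to\infty$ along which $M_{k_m}$ dominates the cumulative contribution of all earlier blocks, in the sense that $\sum_{j<k_m}M_j = o(M_{k_m})$. Choose $t_m$ at an intermediate scale, for instance $t_m := \bigl(M_{k_m}\cdot\textstyle\sum_{j<k_m}M_j\bigr)^{1/2}$, and set $u_m := \nu_{k_m-1}$. Applying \eqref{Thesis_QET} block by block one obtains $E_\w T_{\nu_{k_m-1}} = \bigo\bigl(\sum_{j<k_m}M_j\bigr) = o(t_m)$, so $P_\w(T_{\nu_{k_m-1}}>t_m)\to 0$ by Markov's inequality. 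On the other side, the quenched Laplace-transform approximation in Corollary \ref{Thesis_explimit} shows that $T_{\nu_{k_m}}-T_{\nu_{k_m-1}}$ is, to leading order, exponentially distributed with mean of order $M_{k_m}$, so
\begin{equation*}
 P_\w\bigl(T_{\nu_{k_m}}-T_{\nu_{k_m-1}}\le t_m\bigr) \;=\; \bigo\bigl(t_m/M_{k_m}\bigr) \;\longrightarrow\; 0 .
\end{equation*}
Hence, with $P_\w$-probability tending to $1$, the walker is inside the $k_m$-th block at time $t_m$, so $|X_{t_m}-u_m|\le \nu_{k_m}-\nu_{k_m-1}$. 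The block width is the horizontal length of an excursion of the negatively-drifted potential $V$ above a ladder minimum and thus has exponential tails, so a further Borel--Cantelli argument along $k_m$ yields $\nu_{k_m}-\nu_{k_m-1} = \bigo(\log k_m) = o\bigl((\log t_m)^2\bigr)$, completing the bound.

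The main obstacle is the quenched exponential approximation for the trap-crossing time underlying the second probability bound above: the annealed Laplace-transform analysis of Enriquez--Sabot--Zindy \cite{eszStable} must be upgraded to a $P$-a.s.\ quenched statement that is uniformly valid along the random subsequence $k_m$, and this is precisely the content of the forthcoming Corollary \ref{Thesis_explimit}. A secondary difficulty is the extremal-value argument used to extract the dominant-trap subsequence: the ratio $\sum_{j\le n}M_j \big/ \max_{j\le n}M_j$ converges in distribution to a nondegenerate random variable, so one must locate exceptional $n$ where the maximum dominates the sum by a factor tending to infinity, which requires a delicate Borel--Cantelli construction based on the tail \eqref{Thesis_Mtail} and the independence of $\{M_k\}$. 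Finally, ruling out substantial backward excursions from $\nu_{k_m-1}$ into earlier blocks (which could invalidate the Markov bound on $T_{\nu_{k_m-1}}$) is handled routinely using the hitting probability formulas \eqref{Thesis_hittingprob} together with the transience of the walk to $+\infty$.
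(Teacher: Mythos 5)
Your overall architecture coincides with the paper's: isolate a single dominant trap via a Borel--Cantelli construction along widely separated scales (exploiting independence of the $M_k$ and the tail \eqref{Thesis_Mtail}), choose $t_m$ intermediate between the time to reach the trap and the time to cross it, set $u_m$ equal to the trap's base, and absorb the block width and backtracking into the $(\log t_m)^2$ scale. Two local substitutions differ from the paper and deserve comment. First, you dominate $\sum_{j<k_m}M_j$ rather than the expected crossing times themselves; the paper's Lemma \ref{Thesis_QBB} works directly with the event $M_k \geq C\sum_{j\neq k}E_\w^{\nu_{j-1}}\bar{T}^{(n)}_{\nu_j}$, which avoids your intermediate claim that $E_\w T_{\nu_{k_m-1}}=\bigo(\sum_{j<k_m}M_j)$. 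That claim is not literally correct: from \eqref{Thesis_QET} one gets $E_\w^{\nu_{j-1}}T_{\nu_j}\leq (\nu_j-\nu_{j-1})+2(\nu_j-\nu_{j-1})(\nu_j-\nu_{j-1}+W_{\nu_{j-1}-1})M_j$, so the comparison holds only up to the polylogarithmic factors coming from the block widths and the $W_{\nu_{j-1}-1}$ prefactors (both have exponential tails under $Q$, by Lemmas \ref{nutail} and \ref{Wtail}). Since your domination factor tends to infinity this is harmless, but it must be stated and absorbed. Second, for the lower bound on the crossing time of the dominant block you invoke the quenched exponential approximation of Corollary \ref{Thesis_explimit}, which is a considerably heavier tool (and in the thesis is proved via the Laplace-transform analysis of Chapter \ref{Thesis_AppendixBallistic}, along the specific dyadic windows and under the condition $M_{i_k}>d_k^{(1-\e)/s}$, so its applicability to your $k_m$ has to be checked). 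The paper gets the same conclusion elementarily: decomposing the crossing into excursions from $\nu_{j_m-1}$ and using \eqref{Thesis_hittingprob} to bound the per-excursion success probability by $1/M_{j_m}$ gives $P_\w(T_{\nu_{j_m}}>t_m)\geq (1-1/M_{j_m})^{t_m}\to 1$ whenever $t_m=o(M_{j_m})$, with no limit theorem needed. Your route is workable, but the paper's excursion count is both simpler and free of the extra hypotheses of Corollary \ref{Thesis_explimit}.
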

\begin{thm} \label{Thesis_nonlocal}
Let Assumptions \ref{Thesis_essentialasm} and \ref{Thesis_techasm} hold, and let $s<1$. Then,
$P$-a.s., there exists a random subsequence $n_{k_m}=n_{k_m}(\w)$
of $n_k=2^{2^k}$ and a random sequence $t_m=t_m(\w)$ such that,
\begin{equation}
\lim_{m\ra\infty} \frac{\log t_m}{\log n_{k_m} }= \frac{1}{s} \label{Thesis_scalingfactor}
\end{equation}
and
\[
 \lim_{m\ra\infty}
P_\w\left(\frac{X_{t_m}}{n_{k_m}} \leq x \right) =
\begin{cases}
0&\quad \mbox{if } x \leq 0, \\
\frac{1}{2}&\quad \mbox{if } 0<x<\infty.
\end{cases}
\]
\end{thm}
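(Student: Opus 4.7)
The plan follows the ladder-variable framework of Enriquez--Sabot--Zindy sketched in Section~\ref{AnnealedLimits}. For $P$-a.e.\ environment $\w$, I want to identify a random subsequence $k_m(\w)$ along which exactly one block in $[0,n_{k_m}]$ has an exceptionally large exponential height $M_{j^{*}}$, and to set $t_m$ equal to the median of the (nearly exponential) quenched crossing time of that block. This should produce a $\tfrac12$--$\tfrac12$ dichotomy: with $P_\w$-probability close to $1/2$ the walk is still stuck before the dominant block at a position much smaller than $n_{k_m}$, and with $P_\w$-probability close to $1/2$ it has escaped far past $n_{k_m}$.

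For each $k$ define the event $A_k=A_k(\w)$ on the environment by the three requirements: (a) there exists a block index $j^{*}\le n_k$ with ladder position $\nu_{j^{*}-1}\le \epsilon_k n_k$ for a chosen $\epsilon_k\to 0$; (b) $M_{j^{*}}\in [n_k^{1/s}g(k),\,2 n_k^{1/s}g(k)]$ for a slowly diverging $g(k)\to\infty$; and (c) $M_j\le \tfrac12 M_{j^{*}}$ for every other $j\le n_k$. The tail estimate \eqref{Thesis_Mtail} and the i.i.d.\ structure of the $M_j$ give $P(A_k)\asymp \epsilon_k g(k)^{-s}$, so taking $\epsilon_k=k^{-1/2}$ and $g(k)=k^{1/(2s)}$ makes $\sum_k P(A_k)=\infty$. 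Because the very rapid spacing $n_k=2^{2^k}$ allows the $A_k$'s to be realized on essentially disjoint ranges of block indices, they are asymptotically independent and the second Borel--Cantelli lemma yields $P(A_k\text{ i.o.})=1$. Let $k_m$ enumerate those $k$'s and set $t_m:=(\log 2)\mu_m$ with $\mu_m:=E_\w[T_{\nu_{j^{*}_m}}-T_{\nu_{j^{*}_m-1}}]$ (computable via \eqref{Thesis_QET}). Since $\mu_m\asymp M_{j^{*}_m}\asymp n_{k_m}^{1/s}g(k_m)$ and $\log g(k_m)=o(\log n_{k_m})$, the scaling \eqref{Thesis_scalingfactor} ($\log t_m/\log n_{k_m}\to 1/s$) is automatic.

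Applying the quenched Laplace-transform approximation of Enriquez--Sabot--Zindy (the precursor of Corollary~\ref{Thesis_explimit}) to block $j^{*}_m$ shows that, under $P_\w$, its crossing time is close to an exponential with mean $\mu_m$, while by (c) and a summation against the conditional density of $M$ the total quenched mean crossing time of all other blocks in $[0,n_{k_m}]$ is $o(t_m)$. Together these give $P_\w(T_{\nu_{j^{*}_m}}>t_m)\to 1/2$. On the ``stuck'' event $\{T_{\nu_{j^{*}_m}}>t_m\}$ one has $X_{t_m}\le \nu_{j^{*}_m}\le \epsilon_{k_m}n_{k_m}+(\nu_{j^{*}_m}-\nu_{j^{*}_m-1})$; since a single ladder block has width $O((\log n_{k_m})^2)$ by Iglehart's excursion estimate, $X_{t_m}/n_{k_m}\to 0$, accounting for the mass $1/2$ at $0^{+}$.

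The main obstacle is the complementary escape event $\{T_{\nu_{j^{*}_m}}\le t_m\}$, on which I must show $X_{t_m}/n_{k_m}\to\infty$ in $P_\w$-probability. Heuristically, conditional on crossing before $t_m$ the residual time $t_m-T_{\nu_{j^{*}_m}}$ is itself of order $t_m$, so by the annealed scaling $T_N\sim N^{1/s}$ the walk reaches a further distance of order $t_m^s=n_{k_m}\,g(k_m)^s\gg n_{k_m}$. Turning this into the sharp quenched statement
\[
P_\w\bigl(T_{\lceil x n_{k_m}\rceil}\le t_m\bigr)\longrightarrow \tfrac12 \qquad \text{for every fixed }x>0
\]
is the technical heart of the argument: a crude union bound would lose a constant factor in the probability, so the upper tails of the dominant-block crossing and of the post-dominant hitting time must be coupled---for example by conditioning on the $P_\w$-value of $T_{\nu_{j^{*}_m}}$ and reapplying the Laplace-transform bound to the environment in $[\nu_{j^{*}_m},\lceil x n_{k_m}\rceil]$, in which, by construction, no block is large enough to dominate at the time scale $t_m$.
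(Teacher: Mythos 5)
Your route is genuinely different from the paper's. The paper builds its subsequence out of stretches of environment where \emph{many} ($2a_k$) blocks have comparable, maximal-order crossing times and no single block dominates; Lindeberg--Feller then gives a Gaussian limit for the centered hitting times, and the value $\tfrac12$ appears as $\Phi(0)$ together with the estimate that $E_\w T_{n_{k_m}}-E_\w T_{xn_{k_m}}$ is negligible relative to $\sqrt{v_{m,\w}}$ for every fixed $x>0$. You instead engineer a \emph{single} dominant block of super-typical height $n_k^{1/s}g(k)$, use the exponential approximation of Corollary \ref{Thesis_explimit} for its crossing time, and read off $\tfrac12$ as the median of an exponential. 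This is essentially the construction the paper uses for Theorem \ref{Thesis_qEXP} in the ballistic regime, transplanted to $s<1$ and evaluated at the median, and in principle it can be made to work; its cost is the extra factor $g(k)\to\infty$ needed so that the dominant block beats the stable-law fluctuations of everything else, which makes the event rarer and forces the $\epsilon_k,g(k)$ bookkeeping.

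There are, however, genuine gaps as written. First, your event $A_k$ controls only the heights $M_j$, but what must be dominated is $\sum_{j\neq j^*}E_\w^{\nu_{j-1}}T_{\nu_j}$ (and, for $x>1$, the expected crossing time out to $xn_{k_m}$). The quenched mean of a block is not a function of its height --- by \eqref{Thesis_QET} it depends on the entire environment to its left --- so condition (c) alone does not yield ``total mean crossing time of the other blocks is $o(t_m)$''; one needs the stable-law control of Theorem \ref{refstable}/\ref{Thesis_qETVarStable} and must build an event of the type $U_{\d,n,c}$ in \eqref{Thesis_Udncdef} into $A_k$ (this only costs a constant factor in $P(A_k)$, but it is not optional). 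For the same reason the $A_k$ are not independent: they share the initial segment $[0,n_{k-1}]$, condition (c) ranges over all $j\le n_k$, and the $\mu_{j}$'s are dependent through the common left tail of the environment; the paper's remedy is to localize the distinguished blocks to $(n_{k-1},n_k]$, pass to the reflected walk $\bar T^{(n)}$, and take every other $k$, none of which you invoke. Second, the escape half --- your displayed claim $P_\w(T_{\lceil xn_{k_m}\rceil}\le t_m)\to\tfrac12$ --- is left open, and it is the crux of the theorem. Your stated worry about losing a constant in a union bound is misplaced: the standard argument conditions on $T_{\nu_{j^*_m}}$, notes that the residual travel time has quenched mean $o(t_m)$ (once the environment event above is in place) and hence is $o_{P_\w}(t_m)$ by Markov, and then uses the continuity of the exponential law at its median; no coupling of upper tails is needed. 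But as it stands this step is a heuristic invoking the annealed scaling $T_N\sim N^{1/s}$, which is not a quenched statement, so the proof is incomplete at precisely the point where the $\tfrac12$ must be produced.
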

\noindent\textbf{Remarks:}\\
\textbf{1. } Theorem \ref{Thesis_local} is a strong localization result. Recall the definition of the ladder locations $\nu_i$ in \eqref{Thesis_nudef1}.  In the proof of Theorem \ref{Thesis_local}, we prove that, with probability tending to $1$, the distribution of the random walk at time $t_m$ is concentrated near a single block. Since the block lengths $\nu_{i}-\nu_{i-1}$ are i.i.d. with exponential tails, the longest of the first $n$ blocks is on the order of $\log n$.\\
\textbf{2. } Theorem \ref{Thesis_nonlocal} shows that the strong localization in Theorem \ref{Thesis_local} does not always occur. Note that \eqref{Thesis_scalingfactor} implies that the scaling is roughly of order $t_m^s$, which is what the annealed scaling is when $s\in(0,1)$ in Theorem \ref{Thesis_annealedstable}. 

We now state our main results in the case where $s\in(1,2)$. 
When $s\in(1,2)$, the existence of a positive speed for the random walk allows us to get a more straightforward description of two different limiting distributions along different random sequences. 
Let $\Phi(x)$ and $\Psi(x)$ be the distribution functions for a Gaussian and exponential random variable, respectively. That is, 
\[
\Phi(x):= \int_{-\infty}^x \frac{1}{\sqrt{2\pi}} e^{-y^2/2} dy \quad\text{and}\quad \Psi(x):= \begin{cases} 0 & x < 0, \\ 1-e^{-x} & x\geq 0. \end{cases} 
\]
%Then, when $s\in(1,2)$ we prove the following theorems:
\begin{thm}\label{Thesis_qCLT}
Let Assumptions \ref{Thesis_essentialasm} and \ref{Thesis_techasm} hold, and let $s\in(1,2)$. Then, $P-a.s.$, there exists a random subsequence $n_{k_m}=n_{k_m}(\w)$ of $n_k=2^{2^k}$ and non-deterministic random variables $v_{k_m,\w}$ such that 
\[
\lim_{m\ra\infty} P_\w\left( \frac{ T_{n_{k_m}} - E_\w T_{n_{k_m}} }{ \sqrt{v_{k_m,\w}} } \leq x \right) = \Phi(x), \qquad \forall x\in\R,
\]
and 
\[
\lim_{m\ra\infty} P_\w\left( \frac{ X_{t_m} - n_{k_m} }{v_P \sqrt{v_{k_m,\w}} } \leq x \right) = \Phi(x), \qquad \forall x\in\R,
\]
where $t_m=t_m(\w):= \left\lfloor E_\w T_{n_{k_m}} \right\rfloor$.
\end{thm}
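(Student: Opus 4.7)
The plan is to prove a quenched CLT for $T_{n_{k_m}}$ directly via the Lindberg--Feller theorem applied to the independent (under $P_\w$) increments $\tau_i = T_i - T_{i-1}$, and then to transfer the result to $X_{t_m}$ by an inversion argument analogous to the one used above in the $s > 2$ case. Throughout, I take $v_{k,\w} := Var_\w T_{n_k} = \sum_{i=1}^{n_k} Var_\w \tau_i$, with $Var_\w \tau_i$ given by the explicit formula \eqref{Thesis_qvar}.

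First I analyze the size of $v_{k,\w}$. When $s \in (1,2)$, Assumption \ref{Thesis_techasm} together with the Iglehart tail estimate \eqref{Thesis_Mtail} gives that, after passing to the ladder-block decomposition, $P(Var_\w \tau_1 > x)$ is regularly varying of index $s/2 \in (1/2,1)$; in particular $E_P Var_\w \tau_1 = \infty$, so $v_{k,\w}$ admits no law of large numbers, and at a typical scale $n_k$ one or two exceptionally tall blocks carry nearly all of the variance. The reason for restricting to the sparse skeleton $n_k = 2^{2^k}$ is that there is enough independence between scales to run a Borel--Cantelli argument on the event ``no ladder block in $[0,n_k]$ has height $M$ close to the maximum allowed by \eqref{Thesis_Mtail},'' which has nonvanishing probability at each $k$. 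This produces a $P$-a.s.\ random subsequence $n_{k_m}$ along which the variance is spread across many moderately tall blocks, so that $\max_{i \leq n_{k_m}} Var_\w \tau_i / v_{k_m,\w} \to 0$.

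Second, on that subsequence I run Lindberg--Feller. The summands $\tau_i - E_\w \tau_i$, after normalization by $\sqrt{v_{k_m,\w}}$, have zero mean and unit total variance by construction, so only the Lindberg condition
\[
\frac{1}{v_{k_m,\w}} \sum_{i=1}^{n_{k_m}} E_\w\!\left[(\tau_i - E_\w \tau_i)^2 \mathbf{1}_{\{|\tau_i - E_\w \tau_i| > \e \sqrt{v_{k_m,\w}}\}}\right] \longrightarrow 0
\]
remains. This is where the approximately exponential quenched tail for the crossing time of a single block, from Corollary \ref{Thesis_explimit} and the block analysis of \cite{eszStable}, plays its role: the defining property of the subsequence (no single block dominates), combined with exponential concentration inside each block, bounds the tail contribution block-by-block by the required $o(1)$.

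Third, I transfer the CLT from $T_{n_{k_m}}$ to $X_{t_m}$. Set $y_m(x) := n_{k_m} + \lfloor v_P \sqrt{v_{k_m,\w}}\, x \rfloor$. The duality $\{X_t < y\} = \{T_y > t\}$ rewrites $P_\w(X_{t_m} \leq n_{k_m} + v_P\sqrt{v_{k_m,\w}}\, x)$ as $P_\w(T_{y_m(x)} > t_m)$. Using $E_\w T_{n+k} - E_\w T_n = \sum_{i=n+1}^{n+k} \S(\theta^{i-1}\w) \approx k/v_P$ for $k$ of order $\sqrt{v_{k_m,\w}} \ll n_{k_m}$ (Birkhoff's ergodic theorem applied to $\S$) and that $Var_\w T_{y_m(x)}/v_{k_m,\w} \to 1$, the first CLT translates directly into the second; the rounding error $t_m - E_\w T_{n_{k_m}} = O(1)$ is absorbed into the limiting Gaussian.

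The main obstacle is the construction of the good random subsequence in the first step. Because $Var_\w \tau_1$ has a regularly varying tail of index $s/2 \in (1/2,1)$, at most scales the sum $v_{k,\w}$ is dominated by a single summand (which produces an exponential rather than Gaussian limit, as in the companion Theorem \ref{Thesis_qEXP}), and the good scales occur with only a fixed positive probability at each $k$. Enough independence between different $k$'s must be extracted from the ladder-block decomposition, using the rapid growth $n_k = 2^{2^k}$, to apply the second Borel--Cantelli lemma and obtain $P$-a.s.\ infinitely many good $k$.
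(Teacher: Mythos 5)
Your overall architecture matches the paper's: a quenched Lindberg--Feller argument for the hitting times along a random subsequence where no single block dominates the variance, followed by a time-to-space inversion (the paper's Proposition \ref{Thesis_generalprop}). But the construction of the subsequence, which is the crux of the whole proof, does not work as you describe it. You propose a single event per scale ("no ladder block in $[0,n_k]$ has height close to the maximum allowed by \eqref{Thesis_Mtail}") of \emph{fixed} positive probability, and then conclude that along the resulting subsequence $\max_i Var_\w\tau_i / v_{k_m,\w} \ra 0$. A fixed event occurring infinitely often can only give you a fixed bound on that ratio, never a vanishing one: if the good event asserts, say, that the largest block carries at most half the mass, then on the good subsequence the ratio is at most $1/2$, not $o(1)$, and the Lindberg condition fails. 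To force the ratio to vanish you must impose increasingly stringent conditions as $m\ra\infty$ --- in the paper, that $2a_{k}$ blocks with $a_k=\lfloor\log\log k\rfloor\ra\infty$ all have $\mu_{i,d_k,\w}^2\in[d_k^{2/s},2d_k^{2/s})$ while everything else is smaller --- and the probabilities of these events necessarily tend to zero. The entire content of Lemma \ref{ballistic_smbklemma} is the combinatorial lower bound $\frac{(\eta C_\e)^{2a_k}}{(2a_k)!}(1+o(1))L_{s/2,b}(1)\gtrsim 1/k$, which decays but is still non-summable, together with the independence across scales needed for the second Borel--Cantelli lemma; that independence is itself manufactured by the reflected walks $\bar{T}^{(d_k)}$ so that the good event at scale $k$ depends only on the environment in $(\nu_{n_{k-1}-b_{d_k}},\nu_{n_k}]$. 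None of this survives if the good events have probability bounded below, and your proposal gives no substitute for it.

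Two secondary points. First, in the transfer step you invoke Birkhoff's theorem to get $E_\w T_{n+k}-E_\w T_n \approx k/v_P$ for $k$ of order $\sqrt{v_{k_m,\w}}\approx n^{1/s}$; Birkhoff controls averages over $[0,n]$, not over windows of sublinear length at position $n$, and the paper needs a maximal-inequality/moderate-deviation estimate (Lemmas \ref{ballistic_ETaverage} and \ref{ballistic_maxLD}, via Marcinkiewicz--Zygmund and the doubly exponential growth of $n_k$) to justify exactly this. Second, your verification of the Lindberg condition cites Corollary \ref{Thesis_explimit}, which is a distributional limit along a single sequence of large blocks; what is actually needed is a uniform exponential tail bound $P_\w^{\nu_{i-1}}(\bar{T}^{(d_k)}_{\nu_i}>x\mu_{i,d_k,\w})\le 2e^{-x/4}$ simultaneously over all large blocks (the paper's Lemma \ref{momentbound} via Kac's moment formula), together with a separate treatment of the small blocks via Lemma \ref{Vsmall}. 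These two items are repairable; the subsequence construction is the genuine gap.
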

\begin{thm}\label{Thesis_qEXP}
Let Assumptions \ref{Thesis_essentialasm} and \ref{Thesis_techasm} hold, and let $s\in(1,2)$. Then, $P-a.s.$, there exists a random subsequence $n_{k_m}=n_{k_m}(\w)$ of $n_k=2^{2^k}$ and non-deterministic random variables $v_{k_m,\w}$ such that 
\[
\lim_{m\ra\infty} P_\w\left( \frac{ T_{n_{k_m}} - E_\w T_{n_{k_m}} }{ \sqrt{v_{k_m,\w}} } \leq x \right) = \Psi(x+1), \qquad \forall x\in\R,
\]
and 
\[
\lim_{m\ra\infty} P_\w \left( \frac{X_{t_m} - n_{k_m}}{v_P \sqrt{v_{k_m,\w}} } \leq x \right) = 1-\Psi(-x+1), \qquad \forall x\in\R,
\] 
where $t_m=t_m(\w):= \left\lfloor E_\w T_{n_{k_m}} \right\rfloor$.
\end{thm}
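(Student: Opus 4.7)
The plan is to build on the framework of Enriquez--Sabot--Zindy \cite{eszStable} described in Section~\ref{AnnealedLimits}: decompose $T_n = \sum_{k} \beta_k$ where $\beta_k := T_{\nu_k} - T_{\nu_{k-1}}$ is the crossing time of the $k$-th block, which are independent under $P_\w$. Because the exponential block heights $M_k$ have regularly varying tails with index $s \in (1,2)$ (see \eqref{Thesis_Mtail}), the sum $\sum_{k \leq n} M_k^2$ is dominated by its largest summand with positive probability. The quenched crossing time of a block with $M_k$ large is, by Corollary~\ref{Thesis_explimit} (the ESZ approximation), close in distribution under $P_\w$ to an exponential random variable whose random parameter is determined by the environment inside the block. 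Hence, if along some random subsequence $n_{k_m}$ the first $n_{k_m}$ blocks contain one block whose $M^2$ overwhelmingly dominates the sum of the other $M_j^2$, then after centering, $T_{n_{k_m}}$ is essentially just the centered crossing time of that one dominant block, producing the shifted exponential limit $\Psi(x+1)$.

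The first step is to define the random subsequence: choose $n_k = 2^{2^k}$ (the doubly exponential spacing matches the scaling built into Assumption~\ref{Thesis_essentialasm}) and declare $k$ to be \emph{good} if, within $[0,n_k]$, there is a unique block whose $M_j^2$ exceeds the sum of all the other $M_j^2$ by a factor tending to infinity (say by a factor $\geq r_k$ for some slowly increasing $r_k$). A Poisson process limit for the heavy-tailed order statistics of $M_1,\dots,M_{n_k}$ shows that this event has $P$-probability bounded below by some $\delta > 0$, uniformly in $k$. Because the blocks contributing to the largest value in $[0,n_k]$ are typically located in $[n_{k-1},n_k]$ (by the $n^{1/s}$-scaling of the maximum), the ``good'' events at different scales $k$ are approximately independent, so a second-moment Borel--Cantelli argument yields infinitely many good indices $k_m$, $P$-a.s. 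The main obstacle sits here: making the approximate-independence across scales precise enough to apply Borel--Cantelli, and quantifying how well a single block dominates.

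On the good subsequence, a quenched analysis proceeds as follows. Denote by $k^*$ the index of the dominant block and by $v_{k_m,\w} := Var_\w T_{n_{k_m}} = \sum_k Var_\w \beta_k$ the quenched variance; using the explicit formula \eqref{Thesis_qvar} for $Var_\w \tau_1$ summed over each block, the variance contribution of each block is, up to constants, comparable to $M_k^2$, so the dominance event translates into $Var_\w \beta_{k^*} / v_{k_m,\w} \to 1$. Chebyshev then shows that $(T_{n_{k_m}} - E_\w T_{n_{k_m}} - (\beta_{k^*} - E_\w \beta_{k^*}))/\sqrt{v_{k_m,\w}} \to 0$ in $P_\w$-probability. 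Applying Corollary~\ref{Thesis_explimit} (the ESZ quenched Laplace-transform estimate, valid precisely on blocks with $M_{k^*}$ large) to the single block $k^*$, one obtains that the quenched law of $\beta_{k^*}$ is arbitrarily well approximated by an exponential with mean $E_\w \beta_{k^*}$, and $Var_\w \beta_{k^*} \approx (E_\w \beta_{k^*})^2$, which after centering and normalization gives exactly the shifted exponential $\Psi(x+1)$.

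Finally, to transfer the hitting-time statement to the random walk statement, use the duality $\{X_t \leq y\} \approx \{T_y > t\}$ together with the fact that $E_\w T_{n+j} - E_\w T_n = j/v_P + O(1)$ for $j = o(n)$, which follows from \eqref{Thesis_QET} and Birkhoff's theorem on the stationary sequence $\bar{S}(\theta^j \w)$. Setting $t_m = \lfloor E_\w T_{n_{k_m}}\rfloor$ and $y = n_{k_m} + x v_P \sqrt{v_{k_m,\w}}$ gives
\[
P_\w\bigl(X_{t_m} \leq n_{k_m} + x v_P \sqrt{v_{k_m,\w}}\bigr) \;=\; P_\w\!\left(\frac{T_y - E_\w T_y}{\sqrt{v_{y,\w}}} > -x + o(1)\right),
\]
where one checks that $v_{y,\w} = v_{k_m,\w}(1+o(1))$ because the additional $O(\sqrt{v_{k_m,\w}})$ blocks contribute a negligible variance compared to the single dominant block retained from $[0, n_{k_m}]$. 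The right-hand side converges to $1 - \Psi(-x+1)$ by the hitting-time statement applied at the shifted level, giving the stated limit for $X_{t_m}$.
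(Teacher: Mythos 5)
Your overall architecture is the same as the paper's: approximate the crossing time of a single large block by an exponential via the quenched Laplace transform (Corollary \ref{Thesis_explimit}), find a random subsequence of scales at which one block dominates the total quenched variance, kill the remaining blocks by Chebyshev, and transfer to $X_{t_m}$ by the time--space duality together with the averaging of $E_\omega T_{n+j}-E_\omega T_n$ (this last step is exactly Proposition \ref{Thesis_generalprop}). However, two steps as you have written them do not go through. First, your dominance event is phrased in terms of $M_j^2$, and you pass to dominance of $Var_\omega\beta_{k^*}$ over $v_{k_m,\omega}$ by asserting that each block's variance is ``up to constants comparable to $M_k^2$.'' That upper bound is false without further control: by \eqref{Thesis_qvar} the variance of a block involves polynomial factors in the block lengths and cross terms $M_jM_l$ from neighboring blocks, and the sum $\sum_{j\neq k^*}Var_\omega\beta_j=Var_\omega T_{\nu_n}-Var_\omega\beta_{k^*}$ itself has stable fluctuations of the same order $n^{2/s}$ as $\max_j M_j^2$. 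The paper avoids this by defining the good event directly as $M_i^2\geq C\sum_{j\neq i}\sigma_{j,n,\omega}^2$ and lower-bounding its probability using the independence (after adding reflections) of $M_i$ from the distant $\sigma_{j,n,\omega}^2$ together with the stable limit law for $Var_\omega T_{\nu_n}$ (Theorem \ref{Thesis_qETVarStable}); you would need an analogous argument, not a pointwise comparison of $Var_\omega\beta_j$ with $M_j^2$.

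Second, your subsequence construction is internally inconsistent: if the dominance factor $r_k$ tends to infinity, the probability of the good event at scale $k$ tends to zero, so it cannot be ``bounded below by some $\delta>0$ uniformly in $k$.'' The standard fix (and the paper's) is to prove $\liminf_k P(\mathcal{D}_{k,C})>0$ for each \emph{fixed} $C>1$, obtain infinitely many occurrences by Borel--Cantelli, and then extract a further subsequence along which the factor is $m$ at the $m$-th term. Relatedly, the paper gets exact independence across scales for free by restricting the candidate dominant block to a window inside $(n_{k-1},n_k]$ and using the reflected walk, so that the events at scales $2k$ depend on disjoint stretches of environment; your ``approximate independence plus second-moment Borel--Cantelli'' can be made to work but is genuinely harder, and you correctly flag it as the main obstacle without resolving it. Finally, in the transfer step, $E_\omega T_{n+j}-E_\omega T_n=j/v_P+O(1)$ is not what Birkhoff gives (the error is of order $j^{1/s}$, not $O(1)$), and along the doubly exponential subsequence one needs a quantitative maximal estimate uniform in $j\leq\sqrt{v_{k,\omega}}$, which the paper supplies via a Marcinkiewicz--Zygmund martingale bound rather than the ergodic theorem.
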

\noindent\textbf{Remarks:}\\
\textbf{1.} The choice of Gaussian and exponential distributions in Theorems \ref{Thesis_qCLT} and \ref{Thesis_qEXP} represent the two extremes of the quenched limiting distributions that can be found along random subsequences. In fact, it will be shown in Corollary \ref{Thesis_explimit} that $T_n$ is approximately the sum of a finite number of exponential random variables with random (depending on the environment) parameters. 
The exponential limits in Theorem \ref{Thesis_qEXP} are obtained when one of the exponential random variables has a much larger parameter than all the others. 
The Gaussian limits in Theorem \ref{Thesis_qCLT} are obtained when the exponential random variables with the largest parameters all have roughly the same size. 
We expect, in fact, that any distribution which is the sum of (or limit of sums of) exponential random variables can be obtained as a quenched limiting distribution of $T_n$ along a random subsequence. \\
\textbf{2.} The sequence $n_k=2^{2^k}$ in Theorems \ref{Thesis_qCLT} and \ref{Thesis_qEXP} is chosen only for convenience. In fact, for any sequence $n_k$ growing sufficiently fast, $P-a.s.$ there will be a random subsequence $n_{k_m}(\w)$ such that the conclusions of Theorems \ref{Thesis_qCLT} and \ref{Thesis_qEXP} hold. \\
\textbf{3.} The definition of $v_{k_m,\w}$ is given below in \eqref{Thesis_dkvkdef}, 
%and similar to Theorem \ref{Thesis_Varstable}, it can be shown that 
and it can be shown in a manner similar to the proof of Theorem \ref{Thesis_qETVarStable} below that 
$\lim_{n\ra\infty} P\left( n_k^{-2/s} v_{k,\w} \leq x \right) = L_{\frac{s}{2},b}(x)$ for some $b>0$. 
Also, from \eqref{Thesis_barSdef} we have that $t_m \sim \E T_1 n_{k_m}$. Thus, the scaling in Theorems \ref{Thesis_qCLT} and \ref{Thesis_qEXP} is of the same order as the annealed scaling, but cannot be replaced by a deterministic scaling. 

%We now turn to a brief discussion of the proofs of Theorems \ref{Thesis_local} - \ref{Thesis_qEXP}. 
Before turning to the proofs of Theorems \ref{Thesis_local} - \ref{Thesis_qEXP}, we need to introduce some notation and state some preliminary results that will be used the the proofs of Theorems \ref{Thesis_local} - \ref{Thesis_qEXP}. 
As was the case when $s>2$, we study the quenched distributions of the location of the random walk by first studying the quenched distributions for the hitting times. The hitting times are then studied by examining the crossing times of the blocks of the environment $T_{\nu_i} - T_{\nu_{i-1}}$, where the ladder locations $\nu_i$ are defined in \eqref{Thesis_nudef1}. We now introduce some more notation that will help us deal with a couple of difficulties that arise in the analysis of $T_{\nu_n}$. 

A major difficulty in analyzing $T_{\nu_n}$ is that the crossing time from $\nu_{i-1}$ to $\nu_i$ depends on the entire environment to the left of $\nu_i$. Thus $E_\w^{\nu_{i-1}} T_{\nu_i}$ and $E_\w^{\nu_{j-1}} T_{\nu_j}$ (and similarly $Var_\w (T_{\nu_i} - T_{\nu_{i-1}})$ and $Var_\w (T_{\nu_j} - T_{\nu_{j-1}})$) are not independent even if $|i-j|$ is large. However, it can be shown that  the RWRE generally will not backtrack too far (in fact, Lemma \ref{Thesis_nu} implies that $X_n^* - X_n = o(\log^2 n), \; \P-a.s.$). Thus, the dependence of $E_\w^{\nu_{i-1}} T_{\nu_i}$ and $E_\w^{\nu_{j-1}} T_{\nu_j}$ is quite weak when $|i-j|$ is large. (The explicit formulas for the quenched mean and variance of hitting times \eqref{Thesis_QET} and \eqref{Thesis_qvar} make this dependence precise.)
Thus, with minimal probabilistic cost, we can modify the environment of the RWRE to make crossing times of blocks that are far apart independent. For
$n=1,2,\ldots$, let $b_n:= \lfloor \log^2(n) \rfloor$.
Let $\bar{X}_t^{(n)}$ be the random walk that is the same as $X_t$
with the added condition that after reaching $\nu_k$ the
environment is modified by setting $\w_{\nu_{k-b_n}} = 1 $ , i.e.,
never allow the walk to backtrack more than $\log^2(n)$ ladder
times (that is, we deal with a dynamically changing environment). 
We couple $\bar{X}_t^{(n)}$ with the random walk $X_t$ in such a way that $\bar{X}_t^{(n)} \geq X_t$, with equality holding until
the first time $t$ when the walk $\bar{X}_t^{(n)}$ reaches a modified environment location. 
Denote by $\bar{T}_{x}^{(n)}$ the corresponding hitting
times for the walk $\bar{X}_t^{(n)}$.
It will be shown below in Chapter \ref{Thesis_AppendixZeroSpeed} that $\lim_{n\ra\infty} P_\w( T_{\nu_n} \neq \bar{T}_{\nu_n}^{(n)} ) = 0$, $P-a.s.$, so that the added reflections don't affect the quenched limiting distribution. 

A second difficulty is that, under $P$, the environment is not stationary under shifts by the ladder locations. However, if we define a new measure on environments by $Q(\cdot\:) = P(\cdot \: | V(x) > 0 , \: \forall x < 0)$, then under $Q$ the environment is stationary under those shifts. In particular, $\{ E_\w^{\nu_{i-1}} T_{\nu_i} \}_{i=1}^\infty$ and $\{ Var_\w (T_{\nu_i} - T_{\nu_{i-1}}) \}_{i=1}^\infty$ are stationary under $Q$. It should be noted that events only depending on the environment to the right of the origin have the same probability under $Q$ and $P$. In particular, if we let
\[
\mu_{i,n,\w} := E_\w^{\nu_{i-1}} \bar{T}^{(n)}_{\nu_i}, \quad\text{and}\quad \s_{i,n,\w}^2 := Var_\w \left( \bar{T}^{(n)}_{\nu_i} - \bar{T}^{(n)}_{\nu_{i-1}} \right),
\]
then $\mu_{i,n,\w}$ and $\s_{i,n,\w}^2$ have the same distribution under $P$ and $Q$ when $i > \log^2 n$.  

One of the main preliminary results that we obtain is the following annealed stable limit law:
\begin{thm}\label{Thesis_qETVarStable}
If $s<1$, there exists a constant $b'>0$ such that 
\[
\lim_{n\ra\infty} Q\left( \frac{ E_\w T_{\nu_n}}{n^{1/s}} \leq x \right) = L_{s,b'}(x).
\]
If $s<2$, then there exists a constant $b''>0$ such that 
\[
\lim_{n\ra\infty} Q\left( \frac{ Var_\w T_{\nu_n}}{n^{2/s}} \leq x \right) = L_{s/2,b''}(x).
\]
\end{thm}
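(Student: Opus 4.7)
The plan is to write $E_\w T_{\nu_n} = \sum_{i=1}^n \mu_i$ and $Var_\w T_{\nu_n} = \sum_{i=1}^n \s_i^2$, where $\mu_i := E_\w^{\nu_{i-1}}(T_{\nu_i} - T_{\nu_{i-1}})$ and $\s_i^2 := Var_\w(T_{\nu_i} - T_{\nu_{i-1}})$, verify that under $Q$ each sequence is stationary with a regularly varying marginal, reduce to a short-range-dependent situation by truncation, and conclude by the classical stable limit theorem for sums of heavy-tailed random variables. Summing \eqref{Thesis_QET} over the sites in the first block gives $\mu_1 = \sum_{0 < j \leq \nu_1}(1 + 2W_{j-1})$, and \eqref{Thesis_qvar} yields a similar (but bulkier) representation for $\s_1^2$ in terms of the $W_j$ and the products $\Pi_{k,j}$.

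The key input is the marginal asymptotic
\[
Q(\mu_1 > x) \sim C' x^{-s}, \qquad Q(\s_1^2 > x) \sim C'' x^{-s/2} \qquad (x \to \infty).
\]
Heuristically, both $\mu_1$ and $\s_1^2$ are dominated by the contribution from the site $j^* \in (0, \nu_1]$ at which the potential $V$ attains its maximum on the first block, so that $\mu_1$ is of order $e^{V(j^*)} = M_1$ and $\s_1^2$ is of order $M_1^2$. To make this precise I would use Iglehart's tail estimate \eqref{Thesis_Mtail} together with a renewal analysis of the excursion of $V$ above its running minimum over the first block, in the spirit of \cite{eszStable}. The non-lattice hypothesis in Assumption \ref{Thesis_techasm} upgrades the rough $\asymp$ to honest regular variation with positive constants $C', C''$, while the moment condition $E_P \rho^s \log \rho < \infty$ ensures that subdominant contributions (from smaller peaks within the block and from $W_k$ with $k \leq 0$) are of strictly smaller order.

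Since $\mu_i$ depends on the environment to the left of $\nu_{i-1}$ through terms $\Pi_{k, j-1}$ with $k < \nu_{i-1}$, the sequence $\{\mu_i\}$ is not i.i.d.\ under $Q$. I would replace $\mu_i$ and $\s_i^2$ by the truncated versions $\mu_{i,n,\w}$ and $\s_{i,n,\w}^2$ associated with the modified walk $\bar{X}^{(n)}$, which forbids backtracking beyond $b_n = \lfloor \log^2 n \rfloor$ ladder blocks. Under $Q$, $\{\mu_{i,n,\w}\}_{i > b_n}$ is stationary and $b_n$-dependent, and for $i > b_n$ its one-dimensional law agrees with its $P$-law and has marginal tail matching that of $\mu_1$ to leading order (and similarly for $\s_{i,n,\w}^2$). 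The discarded contributions are controlled by tails of the form $\sum_{k > b_n} E_P \Pi_{-k,0}$, which decay geometrically because $E_P \log \rho < 0$, so that $\sum_i(\mu_i - \mu_{i,n,\w}) = o(n^{1/s})$ and $\sum_i(\s_i^2 - \s_{i,n,\w}^2) = o(n^{2/s})$ in $Q$-probability. A standard blocking argument (long blocks of size $\ell_n$ with $b_n \ll \ell_n \ll n$, separated by short buffers of size $b_n$) then makes the contributions of distinct long blocks asymptotically independent, and the classical L\'evy stable limit theorem for i.i.d.\ positive regularly varying summands of index $s<1$ (respectively $s/2 < 1$) delivers the limits $L_{s,b'}$ and $L_{s/2,b''}$; the totally-skewed form of these laws is automatic from $\mu_i, \s_i^2 \geq 0$. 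The main obstacle is the tail analysis of the second paragraph: obtaining the exact asymptotic $\sim$ (rather than merely a two-sided bound $\asymp$) requires the detailed excursion analysis of $V$ over the first ladder block, and it is here that Assumption \ref{Thesis_techasm} is indispensable, closely paralleling \cite{eszStable}.
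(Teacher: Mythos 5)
Your overall architecture (marginal tail asymptotics, reflection/truncation to a $b_n$-dependent stationary array, stable limit theorem for weakly dependent heavy-tailed summands) is the same as the paper's, but there is one genuine gap in your control of the truncation error, and it occurs precisely in the regime $s<1$ of the first assertion. You claim that the discarded contributions are controlled by $\sum_{k>b_n} E_P \Pi_{-k,0}$, ``which decay geometrically because $E_P\log\rho<0$.'' This inference is false: $E_P\Pi_{-k,0}=(E_P\rho)^{k+1}$, and geometric decay requires $E_P\rho<1$, which by convexity of $\gamma\mapsto E_P\rho^\gamma$ and $E_P\rho^s=1$ is equivalent to $s>1$. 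For $s<1$ one has $E_P\rho>1$, so your bound diverges geometrically and the first-moment argument collapses exactly where you need it. The correct control (Lemma \ref{ETdiff}) works with probabilities rather than expectations: by Cram\'er's theorem for $\frac1k\sum\log\rho_i$ one has $Q\bigl(\Pi_{\nu_{-b_n},-1}>e^{-cb_n}\bigr)\le A'e^{-\d_c b_n}$, and on the complementary event the error term $2(1+W_{\nu_{-b_n}-1})\Pi_{\nu_{-b_n},-1}R_{0,\nu-1}$ from \eqref{reflectexpand} is at most $e^{-cb_n}$ times a quantity with a polynomial tail of index $s$ (Theorem \ref{Tnutail}), which yields $Q(E_\w T_\nu-E_\w\bar T_\nu^{(n)}>x)\le B(x^{-s}\vee 1)e^{-\d'b_n}$. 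A first-moment bound cannot be repaired here because $E_Q(E_\w T_\nu)=\infty$ when $s\le 1$. A secondary point: your ``standard blocking argument'' must also verify an anti-clustering condition within blocks (two $b_n$-separated indices cannot both carry a value of order $n^{1/s}$); this is condition \eqref{mncond} in the paper and is checked using the independence of $M_1$ and $M_{k+1}$, not merely stationarity.

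On the route itself: for the marginal tails you propose an excursion analysis of the potential $V$ in the spirit of \cite{eszStable}, using Iglehart's estimate for $M_1$ as the leading input. The paper instead derives $Q(E_\w T_\nu>x)\sim K_\infty x^{-s}$ from the explicit formula $E_\w T_\nu=\nu+2W_{-1}R_{0,\nu-1}+2\sum_i R_{i,\nu-1}$ together with Kesten's renewal theorem for random difference equations ($P(R_i>x)\sim Kx^{-s}$), isolating the dominant term $2W_{\s_A-1}(1+R_{\s_A,\nu-1})$ via the stopping time $\s_A$; Iglehart's bound on $M_1$ is used only to show that blocks with $M_1$ small contribute negligibly. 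The variance tail is then obtained by comparing $Var_\w T_\nu$ with $(E_\w T_\nu)^2$ rather than by a separate excursion analysis. Either route can work, but yours leaves the hard constant-identification step entirely deferred, whereas it is the technical heart of the theorem. Finally, the paper replaces your long-block/buffer construction by restricting to the sparse set of indices with $M_i>n^{(1-\e)/s}$ and invoking Kobus's stable limit theorem for $\a$-mixing triangular arrays; this is equivalent in spirit but packages the dependence control more cleanly.
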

\begin{proof}[Sketch of proof:]
We first derive the tail asymptotics of $E_\w T_\nu$ and $Var_\w T_\nu$ under $Q$. In particular, we prove that there exists a constant $K_\infty\in (0,\infty)$ such that 
\begin{equation}
\lim_{x\ra\infty} x^s Q( E_\w T_\nu > x ) = K_\infty,
\quad\text{and}\quad
\lim_{x\ra\infty} x^{s/2} Q( Var_\w T_\nu > x) = K_\infty. \label{Thesis_Qtailasym}
\end{equation}
The proof of the tail asymptotics of $E_\w T_\nu$ is similar to the proof of tail asymptotics in \cite{kksStable} and is based on the explicit formula $E_\w T_\nu = \nu + 2 \sum_{j=0}^{\nu-1} W_j$ from \eqref{Thesis_QET} and a result of Kesten \cite{kRDE} stating that there exists a constant $K$ such that $P(W_i > x) = P(R_i > x) \sim K x^{-s}$. The tail asymptotics of $Var_\w T_\nu$ are then derived by using the explicit formulas in \eqref{Thesis_QET} and \eqref{Thesis_qvar} to compare $Var_\w T_\nu$ to $(E_\w T_\nu)^2$. 

Now, if $\{E_\w^{\nu_{i-1}} T_{\nu_i} \}_{i=1}^\infty$ and $\{ Var_\w T_{\nu_i} - T_{\nu_{i-1}} \}_{i=1}^\infty$ were i.i.d. sequences, then \eqref{Thesis_Qtailasym} would be enough to prove the stable limit laws. Instead, we introduce some independence by adding reflections and restricting ourselves to large blocks. 
%For ease of notation we define 
%\[
%\mu_{i,n,\w} := E_\w^{\nu_{i-1}} \bar{T}^{(n)}_{\nu_i}.
%, \quad\text{and}\quad \s_{i,n,\w}^2 := Var_\w \left( \bar{T}^{(n)}_{\nu_i} - \bar{T}^{(n)}_{\nu_{i-1}} \right). 
%\]
Recall the definition of $M_i$ in \eqref{Thesis_Mdef}. 
Then, for any $\e>0$, we may re-write
\begin{equation}
\frac{1}{n^{1/s}} E_\w T_{\nu_n} = \frac{1}{n^{1/s}} E_\w( T_{\nu_n} - \bar{T}_{\nu_n}^{(n)} ) + \frac{1}{n^{1/s}} \sum_{i=1}^n \mu_{i,n,\w}\mathbf{1}_{M_i \leq n^{(1-\e)/s}} + \frac{1}{n^{1/s}} \sum_{i=1}^n \mu_{i,n,\w}\mathbf{1}_{M_i > n^{(1-\e)/s}}. \label{Thesis_divideETnun}
\end{equation}
The explicit representation of $E_\w T_1$ in \eqref{Thesis_QET} can be used to show that $n^{-1/s}  E_\w( T_{\nu_n} - \bar{T}_{\nu_n}^{(n)} ) $ converges to zero in $Q$-probability. Also, \eqref{Thesis_QET} can be used to show that $\mu_{i,n,\w}$ cannot be too much larger than $M_i$, and then, since the $M_i$ are i.i.d., \eqref{Thesis_Mtail} can be used to approximate the number of $i\leq n$ with $M_i \in (n^{(1-\e')/s}, n^{(1-\e'')/s}]$ for any $\e',\e''>0$. Therefore, the second term on the right in \eqref{Thesis_divideETnun} also converges to zero in $Q$-probability. Finally, it can be shown that the tails of $E_\w T_\nu$ are not affected by the added reflections and restrictions to ``large blocks'' with $M_i > n^{(1-\e)/s}$. That is, we can show $\lim_{n\ra\infty} n Q( \mu_{i,n,\w} > x n^{1/s}, \; M_i \geq n^{(1-\e)/s} ) = K_\infty x^{-s}$. 
Then, \eqref{Thesis_Mtail} implies that the ``large blocks'' are far enough apart so that $\{ \mu_{i,n,\w}\mathbf{1}_{M_i > n^{(1-\e)/s}} \}_{i=1}^\infty$ is mixing enough to be able to apply a result of Kobus \cite{kGPD} to prove a stable limit law for the last term in \eqref{Thesis_divideETnun}.
\end{proof}

We now turn to a brief discussion of the proofs of our main results, Theorems \ref{Thesis_local} - \ref{Thesis_qEXP}. 

\begin{subsubsection}{Proofs of the Main Results when $s<1$}

\begin{proof}[\textbf{Sketch of proof of Theorem \ref{Thesis_local}:}]$\left.\right.$\\
The idea of the proof of Theorem \ref{Thesis_local} is to find a subsequence of the ladder locations $\nu_{j_m}$ such that the expected time to cross from $\nu_{j_m -1}$ to $\nu_{j_m}$ is much larger than the expected time to first reach $\nu_{j_m-1}$. From this, we can then find a sequence of times $t_m$ such that, with probability tending to one, $X_{t_m} \in[ \nu_{j_m-1} , \nu_{j_m} )$. The main result needed to find this subsequence is given by the following lemma:
\begin{lem}\label{Thesis_QBB}
Assume $s<1$. Then, for any $C>1$,
\[
\liminf_{n\ra\infty} Q\left( \exists k \in[1, n/2]: M_k \geq C \!\!\!\!\!\! \sum_{j\in[1,n]\backslash \{k\}} \!\!\!\!\!\! E_\w^{\nu_{j-1}} \bar{T}_{\nu_j}^{(n)}\right) > 0\,.
\]
\end{lem}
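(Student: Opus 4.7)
The plan is to exploit the single-block dominance that characterizes the $s<1$ regime: with positive $Q$-probability a single summand accounts for essentially the entire total $\sum_{j}\mu_{j,n,\w}$, and the corresponding $M_k$ is comparable to that summand. I will produce such a block in $[1,n/2]$ and then deduce the stated inequality.

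First, I will show that the ``shape factor'' $Y_k := \mu_{k,n,\w}/M_k$ is tight under $Q$, uniformly in $n$ and in $k\geq b_n+1$. Writing the reflected-walk version of \eqref{Thesis_QET} with the normalization $V(\nu_{k-1})=0$ gives
\[
\mu_{k,n,\w} = (\nu_k-\nu_{k-1}) + 2\sum_{x=\nu_{k-1}}^{\nu_k-1} e^{V(x+1)} \sum_{i=\nu_{k-b_n}+1}^{x} e^{-V(i)}.
\]
Under $Q$ the ladder descents $V(\nu_{\ell-1})-V(\nu_\ell)$ are iid and positive, so $\sum_{i<\nu_{k-1}} e^{-V(i)}$ decays geometrically in $k-\ell$; within the block, the positivity $V(i)\geq 0$ combined with the (tilted) positive drift of the excursion forces $\sum_{i=\nu_{k-1}}^{x} e^{-V(i)}=O(1)$, while the same excursion concentrates its exponential near the peak, giving $\sum_{x=\nu_{k-1}}^{\nu_k-1} e^{V(x+1)} = O(M_k)$. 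Combining these, $\mu_{k,n,\w}\leq Y_k M_k$ with $Y_k$ a tight random variable under $Q$.

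Second, by the tail asymptotic \eqref{Thesis_Qtailasym} combined with the asymptotic independence of crossing times over blocks separated by more than $b_n$ ladder indices, the rescaled point process $\{n^{-1/s}\mu_{k,n,\w}\}_{k\leq n}$ converges weakly under $Q$ to a Poisson point process on $(0,\infty)$ of intensity $sK_\infty x^{-s-1}\,dx$. Since $s<1$, its atoms $P_1>P_2>\cdots$ are summable, and conditioning on $P_1=p$ large yields a residual PPP on $(0,p)$ with expected total mass $O(p^{1-s})=o(p)$, so $P(P_1>(1-\delta)\sum_i P_i)>0$ for every $\delta>0$. Setting $k^*:=\arg\max_{j\in[1,n]}\mu_{j,n,\w}$ and using exchangeability of the iid blocks to place $k^*$ in $[1,n/2]$ with asymptotic probability at least $\tfrac{1}{2}$ yields
\[
\liminf_{n\to\infty} Q\!\left( \mu_{k^*,n,\w} > (1-\delta)\sum_{j\in[1,n]}\mu_{j,n,\w},\; k^*\in[1,n/2] \right) > 0.
\]

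To conclude, fix $C>1$. By the tightness from Step 1, choose $K_0<\infty$ with $Q(Y_{k^*}\leq K_0)\geq 1-\eta$ for some small $\eta>0$ and all large $n$; then pick $\delta\in(0,1)$ so that $\delta K_0/(1-\delta)\leq 1/C$. On the intersection of the events above with $\{Y_{k^*}\leq K_0\}$,
\[
\sum_{j\neq k^*}\mu_{j,n,\w} = \sum_{j} \mu_{j,n,\w} - \mu_{k^*,n,\w} \leq \frac{\delta}{1-\delta}\mu_{k^*,n,\w} \leq \frac{\delta K_0}{1-\delta}M_{k^*} \leq \frac{1}{C}M_{k^*},
\]
so $M_{k^*}\geq C\sum_{j\neq k^*}\mu_{j,n,\w}$ with $k^*\in[1,n/2]$, which is the desired event. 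The main obstacle is Step 1: establishing uniform tightness of the shape factor $Y_k$ under $Q$, particularly at the argmax block where the conditioning biases the excursion toward large height, requires careful excursion-theoretic control of $V$ and an argument that the dynamic reflections do not inflate the $\mu/M$ ratio on the leading blocks.
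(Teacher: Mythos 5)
Your strategy is genuinely different from the one used in the paper, and while the skeleton is sound, two of its load-bearing steps are asserted rather than proved, so as written there is a real gap. First, the Poisson point process convergence of $\{n^{-1/s}\mu_{k,n,\w}\}_{k\le n}$ under $Q$ is a strictly stronger statement than the stable limit law of Theorem \ref{refstable}; it is not established anywhere in the paper and you do not prove it. Moreover, even granting the PPP limit, the event you need is dominance of the \emph{full} sum $\sum_{j\le n}\mu_{j,n,\w}$ by its largest term, whereas the PPP only controls the atoms above a fixed threshold $\e n^{1/s}$; you must separately show the sub-threshold blocks contribute $o(n^{1/s})$ (this is Lemma \ref{smallblocks}, which you do not invoke). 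Second, and as you yourself concede, the tightness of $Y_{k^*}=\mu_{k^*,n,\w}/M_{k^*}$ \emph{at the argmax block} is the crux and is left unresolved. The unconditional bound $\mu_{k,n,\w}\le \nu+2\nu^2M_k+2\nu M_k W'$ (with $W'$ a $W_{-1}$-type quantity, exponentially tight under $Q$ by Lemma \ref{Wtail}) does give uniform tightness of $Y_k$, but at the argmax the block is conditioned to have a high peak, which biases $\nu_k-\nu_{k-1}$; you need a quantitative joint tail bound there. There is also a quiet circularity in your last step: $K_0$ depends on $\eta$, $\d$ depends on $K_0$ and $C$, and the positive probability of the dominance event depends on $\d$, so one must check that $Q(Y_{k^*}>K_0)$ decays faster in $K_0$ than the dominance probability decays in $\d(K_0)$ — true, since the former is stretched-exponential and the latter is only polynomial in $\d$, but it needs to be said.

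For comparison, the paper's proof sidesteps all three difficulties. Since $C>1$ and $\mu_{k,n,\w}\ge M_k$, at most one $k$ can satisfy the inequality, so the probability of the union is a \emph{sum} over $k$ of identical terms; each term is bounded below by exploiting the exact independence (forced by the reflections) of $M_k$ from $\mu_{j,n,\w}$ for $j<k$ or $j>k+b_n$, handling the $b_n$ intermediate blocks on the typical events $\{\nu_j-\nu_{j-1}\le b_n\}$ and $\{M_j\le n^{(1-\e)/s}\}$ so that their total contribution is $O(b_n^4 n^{(1-\e)/s})=o(n^{1/s})$. The factorization then reduces everything to \eqref{Thesis_Mtail} (giving $Q(M_k>Cn^{1/s})\sim C_5C^{-s}n^{-1}$) and Theorem \ref{refstable} (giving $Q(E_\w\bar{T}^{(n)}_{\nu_n}\le n^{1/s})\ra L_{s,b'}(1)>0$), and summing over $k\le n/2$ yields the positive liminf. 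No point process limit and no control of $\mu/M$ at a biased block is ever needed. If you want to salvage your route, the cleanest fix is to replace your Step 2 by exactly this independence-plus-typical-events factorization, at which point Step 1 becomes unnecessary.
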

\begin{proof}[Sketch of proof:]
Note that, since $C>1$ and $E_\w^{\nu_{k-1}} \bar{T}^{(n)}_{\nu_k} \geq M_k$, there can only be at most one $k\leq n$ with $M_k \geq C \sum_{k\neq j \leq n} E_\w^{\nu_{j-1}} \bar{T}_{\nu_j}^{(n)}$. Therefore,
\begin{equation}
Q\left( \exists k\in[1,n/2]: M_k \geq  C \!\!\!\!\!\! \sum_{j\in[1,n]\backslash \{k\}} \!\!\!\!\!\! E_\w^{\nu_{j-1}} \bar{T}^{(n)}_{\nu_j} \right) = \sum_{k=1}^{n/2} Q\left( M_k \geq C \!\!\!\!\!\! \sum_{j\in[1,n]\backslash \{k\}} \!\!\!\!\!\! E_\w^{\nu_{j-1}} \bar{T}_{\nu_j}^{(n)}\right) . \label{Thesis_onebigblock}
\end{equation}
Now, $E_\w^{\nu_{j-1}} \bar{T}_{\nu_j}^{(n)}$ depends on the environment between $\nu_{k-1}$ and $\nu_k$ for $k < j \leq k+b_n$. However, it can be shown that $\sum_{j=k+1}^{k+b_n} E_\w^{\nu_{j-1}} \bar{T}_{\nu_j}^{(n)} = o(n^{-1/s})$ with $Q-$probability tending to one. Then, since $E_\w^{\nu_{j-1}} \bar{T}_{\nu_j}^{(n)}$ is independent of $M_k$ for all $j<k$ or $j> k+b_n$, 
\begin{align}
&Q\left( M_k \geq C \!\!\!\!\!\! \sum_{j\in[1,n]\backslash \{k\}} \!\!\!\!\!\! E_\w^{\nu_{j-1}} \bar{T}_{\nu_j}^{(n)}\right) \nonumber \\
&\qquad \geq \left( Q(M_k > C n^{1/s}) + o(1/n) \right) \left( Q\left(E_\w \bar{T}_{\nu_{k-1}}^{(n)} + E_\w^{\nu_{k+b_n}} \bar{T}_{\nu_n} \leq (1+o(1))n^{1/s}\right) + o(1) \right) \nonumber \\
&\qquad \geq \left( Q(M_k > C n^{1/s}) + o(1/n) \right) \left( Q\left(E_\w \bar{T}_{\nu_{n}}^{(n)}  \leq (1+o(1))n^{1/s}\right) + o(1) \right). \label{uniflbolb}
\end{align}
Now, \eqref{Thesis_Mtail} implies that $Q(M_k > C n^{1/s}) = P(M_1 > C n^{1/s}) \sim C_5 C^{-s} \frac{1}{n}$ as $n\ra\infty$, and Theorem \ref{Thesis_qETVarStable} implies that 
$\lim_{n\ra\infty} Q\left(E_\w \bar{T}_{\nu_{n}}^{(n)}  \leq (1+o(1))n^{1/s}\right) = L_{s,b'}(1)$. Therefore, \eqref{Thesis_onebigblock} and \eqref{uniflbolb} imply that
\[
\liminf_{n\ra\infty} Q\left( \exists k \in[1, n/2]: M_k \geq C \!\!\!\!\!\! \sum_{j\in[1,n]\backslash \{k\}} \!\!\!\!\!\! E_\w^{\nu_{j-1}} \bar{T}_{\nu_j}^{(n)}\right) \geq 
%\liminf_{n\ra\infty} \frac{n}{2} \left( Q(M_k > C n^{1/s}) + o(1/n) \right) \left( Q\left(E_\w \bar{T}_{\nu_{n}}^{(n)}  \leq (1+o(1))n^{1/s}\right) + o(1) \right) = 
\frac{1}{2} C_5 C^{-s} L_{s,b'}(1) > 0. 
\]
\end{proof}
Lemma \ref{Thesis_QBB} can then be used to prove that, for $P-$almost every environment $\w$, there exists a sequence $j_m=j_m(\w)$ such that
$M_{j_m} \geq m^2 \mu_{j_m, j_m, \w}$.
%$ M_{j_m} \geq m^2 E_\w \bar{T}^{(j_m)}_{\nu_{j_m-1}}$. 
Now, let $t_m=t_m(\w):= \frac{1}{m} M_{j_m}$ and $u_m=u_m(\w):= \nu_{j_m-1}$. Since $X_t^* - X_t = o(\log^2 t)$ and $\max_{i\leq t} \nu_i - \nu_{i-1} = o(\log^2 t)$, it is enough to show that 
\[
 \lim_{m\ra\infty} P_\w\left( X_{t_m}^* \in [ \nu_{j_m-1}, \nu_{j_m} ) \right) = 1. 
\]
However, 
\[
 P_\w\left( X_{t_m}^*  < \nu_{j_m-1} \right) = P_\w\left( T_{\nu_{j_m-1}} > t_m
\right) \leq P_\w\left( T_{\nu_{j_m-1}} \neq
\bar{T}_{\nu_{j_m-1}}^{(j_m)} \right) +
P_\w\left( \bar{T}_{\nu_{j_m-1}}^{(j_m)} > t_m \right) 
\,.
\]
The first term on the right tends to zero, and, by Chebychev's inequality and the definition of $t_m$, the second term is bounded above by 
\[
 \frac{1}{t_m} \mu_{j_m,j_m,\w} = \frac{m \mu_{j_m,j_m,\w}}{M_{j_m}} \leq \frac{1}{m} . 
\]
On the other hand
\begin{align*}
P_\w\left( X_{t_m}^* < \nu_{j_m} \right)
= P_\w( T_{\nu_{j_m}} > t_m )
\geq P_\w^{\nu_{j_m -1}}\left( T_{\nu_{j_m}} > \frac{1}{m} M_{j_m} \right)
\geq P_\w^{\nu_{j_m -1}} \left( T_{\nu_{j_m-1}}^+ <
T_{\nu_{j_m}} \right) ^{\frac{1}{m} M_{j_m} } ,
\end{align*}
where $T_x^+:= \min\{n > 0:  X_n =x \}$ is the first return time to $x$. Then, \eqref{Thesis_hittingprob} can be used to show that this last term is larger than
$\left(1-1/M_{j_m} \right)^{\frac{1}{m} M_{j_m}}$ which tends to $1$ as $m\ra\infty$. 
\end{proof}

\begin{proof}[\textbf{Sketch of proof of Theorem \ref{Thesis_nonlocal}:}]$\left. \right.$\\
Theorem \ref{Thesis_nonlocal} represents the opposite extreme of Theorem \ref{Thesis_local}. Therefore, in contrast to the proof of Theorem \ref{Thesis_local}, the key to proving Theorem \ref{Thesis_nonlocal} is to find sections of the environment where none of crossing times of a block is much larger than all the others. 

To this end, let
\[
\mathcal{S}_{\d,n,a} := \bigcup_{ \substack{ I\subset [1,\d n] \\ \#(I) = 2a } } \!\! \left( \bigcap_{i\in I} \left\{  \mu_{i,n,\w}^2 \in[n^{2/s},2n^{2/s}) \right\} \bigcap_{j\in[1,\d n]\backslash I} \left\{ \mu_{j,n,\w}^2 < n^{2/s} \right\} \right) 
\, ,
\]
%\[
%\mathcal{S}_{\d,n,a} := \bigcup_{ \tiny{\begin{array}{c} I\subset [1,\d n] \\ \#(I) = 2a \end{array}}} \!\! \left( \bigcap_{i\in I} \left\{  \mu_{i,n,\w}^2 \in[n^{2/s},2n^{2/s}) \right\} \bigcap_{j\in[1,\d n]\backslash I} \left\{ \mu_{j,n,\w}^2 < n^{2/s} \right\} \right) 
%\, ,
%\]
and 
\begin{equation}
 U_{\d,n,c} := \left\{ \sum_{i=\d n +1}^{cn} \mu_{i,n,\w} \leq 2 n^{1/s} \right\}.  \label{Thesis_Udncdef}
\end{equation}
On the event $\mathcal{S}_{\d,n,a}$, $2a$ of the first $\d n$ blocks have roughly the same size crossing times and the rest are all smaller. On the event $\mathcal{S}_{\d,n,a} \cap U_{\d,n,c}$, we have additionally that the total expected crossing time from $\nu_{\d n}$ to $\nu_{cn}$ is smaller than the large expected crossing times in the first $\d n$ blocks. By Theorem \ref{Thesis_qETVarStable}, $U_{\d,n,c}$ is a typical event in the sense that $Q(U_{\d,n,c})$ tends to a non-zero constant as $n\ra\infty$. If the $\mu_{i,n,\w}$ were independent, an easy lower bound for $Q(\mathcal{S}_{\d,n,a})$ would be
\[
 \binom{\d n}{ 2 a } Q\left( \mu_{1,n,\w}^2 \in [n^{2/s},2n^{2/s}) \right)^{2a} Q\left( E_\w T_{\nu_{\d n}} \leq n^{1/s} \right) .
\]
(In Chapter \ref{Thesis_AppendixZeroSpeed}, we account for the dependence of the $\mu_{i,n,\w}$ to get a slightly different lower bound. However, the difference between the true lower bound and the lower bound given above is negligible for the purposes of our argument here.) 
Now $Q\left( E_\w T_{\nu_{\d n}} \leq n^{1/s} \right)$ has a non-zero limit as $n\ra\infty$ by Theorem \ref{Thesis_qETVarStable} and for $\d$ small and $a$ and $n$ large, we have that $\binom{\d n}{ 2 a } Q\left( \mu_{1,n,\w}^2 \in [n^{2/s},2n^{2/s}) \right)^{2a}$ is approximately 
\[
 \frac{(\d n)^{2a}}{ (2a)!} (K_\infty n^{-1} )^{2a} = \frac{(\d K_\infty)^{2a}}{(2a)!}. 
\]
This lower bound is good enough to ensure that events like $\mathcal{S}_{\d,n,a} \cap U_{\d,n,c}$ happen infinitely often along a sparse enough subsequence. 
The definitions of $\mathcal{S}_{\d,n,a}$ and $ U_{\d,n,c}$ imply that, along this subsequence, there are many large blocks whose expected crossing times are approximately the same, and all the other blocks have smaller expected crossing times. 
We then apply the Lindberg-Feller condition for triangular arrays to show that the limiting distribution of hitting times along this subsequence is Gaussian. In particular, let $n_k:=2^{2^k}$ and $d_k:=n_k-n_{k-1}$. Then, for $P-$almost every environment $\w$, there exists a random sequence $n_{k_m}=n_{k_m}(\w)$ and $\a_m < \b_m < \gamma_m$ with $\a_m = n_{k_m-1}$, $\b_m = o(n_{k_m})$ and $\lim_{m\ra\infty} \gamma_m / n_{k_m} = \infty$, such that for any $x_m \in[\nu_{\b_m}, \nu_{\gamma_m}]$, 
\[
\lim_{m\ra\infty} P_\w \left( \frac{T_{x_m} - E_\w T_{x_m}}{\sqrt{v_{m,\w}}} \leq y  \right) = \Phi(y), \quad\text{where}\quad v_{m,\w} := \sum_{i=\a_m + 1}^{\b_m} \mu_{i,d_{k_m},\w}^2 .
\]
Moreover, the subsequence is chosen so that $\lim_{m\ra\infty} v_{m,\w} / d_{k_m}^{2/s} = \infty$ and $\lim_{m\ra\infty} E_\w^{\nu_{\b_m}} T_{\nu_{\gamma_m}} / d_{k_m}^{1/s} \leq 2$. Finally, letting $t_m = t_m(\w) := E_\w T_{n_{k_m}}$, we have for any $x>0$, 
\begin{align*}
P_\w\left( \frac{X_{t_m}^*}{n_{k_m}} < x \right) 
%&= P\left(X_{t_m}^* < x n_{k_m} \right)
= P_\w\left( T_{x n_{k_m}} > t_m \right)
= P_\w\left( \frac{T_{x n_{k_m}} - E_\w T_{x
n_{k_m}}}{\sqrt{v_{m,\w}}}
> \frac{ E_\w T_{n_{k_m}}-E_\w T_{x n_{k_m}}}{\sqrt{v_{m,\w}}} \right)
\,.\end{align*}
Then, since for all $m$ large enough $\nu_{\b_m} < n_{k_m} < x n_{k_m} < \nu_{\gamma_m}$,
%(note that this also uses the fact that $\nu_n/n \ra E_P \nu$, $P-a.s.$),  
\[
\frac{T_{x n_{k_m}} - E_\w T_{x n_{k_m}}}{\sqrt{v_{m,\w}}} \limdw Z\sim N(0,1) \quad\text{and}\quad \frac{ E_\w T_{n_{k_m}}-E_\w T_{x n_{k_m}}}{\sqrt{v_{m,\w}}} \leq \frac{E_\w^{\nu_{\b_m}} T_{\nu_{\gamma_m}}}{\sqrt{v_{m,\w}}} \underset{m\ra\infty}{\longrightarrow} 0.
\] 
Therefore, $\lim_{m\ra\infty} P_\w\left( \frac{X_{t_m}^*}{n_{k_m}} < x \right) = \frac{1}{2}$ for any $x>0$.
The proof of Theorem \ref{Thesis_nonlocal} is then finished by first showing that
$\lim_{m\ra\infty} \frac{\log t_m}{\log n_{k_m}} = \lim_{m\ra\infty} \frac{\log E_\w T_{n_{k_m}} }{\log n_{k_m}} = \frac{1}{s}$, 
and then recalling that $X_t^* - X_t = o(\log^2 t)$.
%Finally, moderate deviation bounds of the form
%\[
%P\left( E_\w T_{\nu_n} \geq n^{(1+\e)/s} \right) = o( n^{-\e/2} ) \quad\mbox{and}\quad P\left( E_\w T_{\nu_n} \leq n^{(1-\e)/s} \right) \leq e^{-n^{\e/2}},
%\]
%can be used to show that $\lim_{m\ra\infty} \frac{\log t_m}{\log n_{k_m}} = \lim_{m\ra\infty} \frac{\log E_\w T_{n_{k_m}} }{\log n_{k_m}} = \frac{1}{s}$. Recalling that $X_t^* - X_t = o(\log^2 t)$ finishes the proof of Theorem \ref{Thesis_nonlocal}. 
\end{proof}

\end{subsubsection}

\begin{subsubsection}{Proofs of the Main Results when $s\in(1,2)$}

%When $s>1$ we can more easily transfer limiting distribution results from $T_n$ to $X_n$. 
It turns out to be much easier to transfer limiting distributions from $T_n$ to $X_n$ when $s>1$ than it was when $s<1$. 
This is due to the fact that, first, the walk moves with a linear speed $n v_P$, and, second, the fluctuations of the variance are of order $n^{1/s} = o(n)$. A key to proving Theorems \ref{Thesis_qCLT} and \ref{Thesis_qEXP} is the following proposition:
\begin{prop} \label{Thesis_generalprop}
Let Assumptions \ref{Thesis_essentialasm} and \ref{Thesis_techasm} hold, and let $s\in(1,2)$. Also, let $n_k$ be a sequence of integers growing fast enough so that $\lim_{k\ra\infty} \frac{n_k}{n_{k-1}^{1+\d}} = \infty$ for some $\d>0$, and let
\begin{equation}
d_k:= n_k-n_{k-1},\quad\text{and}\quad 
v_{k,\w} := 
\sum_{i=n_{k-1}+1}^{n_k} \s_{i,d_k,\w}^2 = 
Var_\w \left( \bar{T}^{(d_{k})}_{\nu_{n_k}} - \bar{T}^{(d_{k})}_{\nu_{n_{k-1}}} \right) \, . \label{Thesis_dkvkdef}
\end{equation}
Assume that $F$ is a continuous distribution function for which $P-a.s.$ there exists a subsequence $n_{k_m}= n_{k_m}(\w)$ such that, for $\a_m:= n_{k_m-1}$,
\[
\lim_{m\ra\infty} P_\w^{\nu_{\a_m}}\left( \frac{\bar{T}^{(d_{k_m})}_{x_m} - E_\w^{\nu_{\a_m}}\bar{T}^{(d_{k_m})}_{x_m} }{\sqrt{v_{k_m,\w}}} \leq y \right) = F(y), \quad \forall y\in \R,
\]
for any sequence $x_m \sim n_{k_m}$. Then, $P-a.s.$, for all $y\in \R$,
\begin{equation}
\lim_{m\ra\infty} P_\w\left( \frac{T_{x_m} - E_\w T_{x_m} }{\sqrt{v_{k_m,\w}}} \leq y \right) = F(y), \label{Thesis_Tlim}
\end{equation}
for any $x_m\sim n_{k_m}$, and 
\begin{equation}
\lim_{m\ra\infty} P_\w\left( \frac{X_{t_m} - n_{k_m} }{ v_P\sqrt{v_{k_m,\w}}} \leq y \right) = 1-F(-y),   \label{Thesis_Xlim}
\end{equation}
where $t_m:= \left\lfloor E_\w T_{n_{k_m}} \right\rfloor$. 
\end{prop}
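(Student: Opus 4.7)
The plan is to first establish \eqref{Thesis_Tlim} by transferring the assumed convergence from the walk started at $\nu_{\a_m}$ to the walk started at $0$, and then to deduce \eqref{Thesis_Xlim} from \eqref{Thesis_Tlim} via the duality $\{X^*_t \geq x\}=\{T_x \leq t\}$ together with the known bound $X^*_t - X_t = o(\log^2 t)$.

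For \eqref{Thesis_Tlim}, I would decompose $T_{x_m} = T_{\nu_{\a_m}} + (T_{x_m} - T_{\nu_{\a_m}})$; by the strong Markov property the second summand has, conditionally on reaching $\nu_{\a_m}$, the law of $T_{x_m}$ under $P_\w^{\nu_{\a_m}}$. The first summand is negligible: $Var_\w T_{\nu_{\a_m}}$ is of order $n_{k_m-1}^{2/s}$ while $v_{k_m,\w}$ is of order $d_{k_m}^{2/s}\sim n_{k_m}^{2/s}$, and the growth assumption $n_{k_m}/n_{k_m-1}^{1+\d}\to\infty$ together with Chebyshev's inequality gives $(T_{\nu_{\a_m}}-E_\w T_{\nu_{\a_m}})/\sqrt{v_{k_m,\w}}\to 0$ in $P_\w$-probability. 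For the second summand I would use that $P_\w^{\nu_{\a_m}}(T_{x_m}\ne \bar T^{(d_{k_m})}_{x_m})\to 0$ (a fact already established in Chapter \ref{Thesis_AppendixZeroSpeed}) together with the companion estimate $|E_\w^{\nu_{\a_m}}T_{x_m} - E_\w^{\nu_{\a_m}}\bar T^{(d_{k_m})}_{x_m}| = o(\sqrt{v_{k_m,\w}})$ coming from the explicit formula \eqref{Thesis_QET} and the same reflection analysis. Combining these reductions, the hypothesis on $\bar T^{(d_{k_m})}_{x_m}$ transfers to $(T_{x_m}-E_\w T_{x_m})/\sqrt{v_{k_m,\w}}$, yielding \eqref{Thesis_Tlim}.

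To obtain \eqref{Thesis_Xlim}, fix $y\in\R$ and set $z_m := \lceil n_{k_m} + v_P y\sqrt{v_{k_m,\w}}\rceil$; since $\sqrt{v_{k_m,\w}}$ is of order $n_{k_m}^{1/s}$ with $1/s<1$, one has $z_m \sim n_{k_m}$, so \eqref{Thesis_Tlim} applies to $z_m$. Using $X^*_t - X_t = o(\log^2 t) = o(\sqrt{v_{k_m,\w}})$, the event $\{X_{t_m} \leq n_{k_m}+v_P y\sqrt{v_{k_m,\w}}\}$ differs negligibly from $\{X^*_{t_m} \leq n_{k_m}+v_P y\sqrt{v_{k_m,\w}}\}=\{T_{z_m} > t_m\}$ (up to rounding by one). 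By the strong Markov property, $t_m - E_\w T_{z_m} = -E_\w^{n_{k_m}}T_{z_m} + O(1)$, so it suffices to prove $E_\w^{n_{k_m}}T_{z_m}/\sqrt{v_{k_m,\w}}\to y$; given this, the continuity of $F$ and \eqref{Thesis_Tlim} yield $P_\w(T_{z_m}\leq t_m)\to F(-y)$, whence \eqref{Thesis_Xlim}.

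The main obstacle is this last centering step: one needs $E_\w^{n_{k_m}}T_{z_m} = (z_m-n_{k_m})/v_P + o(\sqrt{v_{k_m,\w}})$ for an increment $z_m-n_{k_m}$ of order only $n_{k_m}^{1/s}$, far smaller than $n_{k_m}$. Applied to $E_\w^{n_{k_m}}T_{z_m} = \sum_{j=n_{k_m}}^{z_m-1}\S(\theta^j\w)$, the ordinary Birkhoff ergodic theorem only yields the mean with $o(z_m-n_{k_m}) = o(n_{k_m}^{1/s})$ error, a priori of the same order as $\sqrt{v_{k_m,\w}}$ itself. The resolution will invoke the heavy-tail asymptotic $P(\S(\w) > x)\sim K x^{-s}$ implicit in \eqref{Thesis_Mtail}--\eqref{Thesis_QET}, which forces the centered partial sums of $\S(\theta^j\w)-E_P\S$ over a window of length $N$ to grow only like $N^{1/s}$; with $N = z_m-n_{k_m}\sim n_{k_m}^{1/s}$ the fluctuations are of order $n_{k_m}^{1/s^2} = o(n_{k_m}^{1/s}) = o(\sqrt{v_{k_m,\w}})$, precisely because $s>1$. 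The remainder is routine bookkeeping using the preliminaries already assembled.
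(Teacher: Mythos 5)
Your proposal is correct and follows essentially the same route as the paper's proof: control the initial segment $T_{\nu_{\a_m}}$ by Chebyshev using $Var_\w T_{\nu_{\a_m}} = o(v_{k_m,\w})$, remove the reflections via $E_\w^{\nu_{\a_m}}(T_{x_m}-\bar T^{(d_{k_m})}_{x_m})\to 0$, pass from $T$ to $X$ through $X^*$ and the duality $\{X^*_{t}< z\}=\{T_z>t\}$, and reduce everything to the centering estimate $E_\w^{n_{k_m}}T_{z_m}=(z_m-n_{k_m})\E T_1+o(\sqrt{v_{k_m,\w}})$, which is exactly the paper's Lemma \ref{ballistic_ETaverage}. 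Two small caveats on the last step: the almost-sure fluctuation of the centered sums over a window of length $N$ is $o(N^{1/\gamma})$ for each $\gamma<s$ (or the paper's $N^{1-\d'}$ with high probability), not $O(N^{1/s})$ — the latter fails a.s. since $E_P\S(\w)^s=\infty$ — but any exponent strictly below $1$ closes the gap exactly as you indicate; and the ``routine bookkeeping'' is in fact the content of Lemma \ref{ballistic_maxLD}, a maximal inequality requiring the reflection decoupling, an arithmetic-progression splitting, Doob--Kolmogorov and Marcinkiewicz--Zygmund, and a Borel--Cantelli argument exploiting the doubly exponential growth of $n_k$, so it is the least routine part of the whole proof.
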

\begin{proof}[Sketch of proof]
As mentioned previously,
there is not much difference between the distributions of $ \bar{T}_{x_m}^{(d_{k_m})}$ and $T_{x_m}$. 
In particular, we can show that 
\[
 \lim_{m\ra\infty} P_\w^{\nu_{\a_m}}\left( T_{x_m} \neq \bar{T}_{x_m}^{(d_{k_m})} \right) = 0 \quad\text{and}\quad \lim_{m\ra\infty} E_\w^{\nu_{\a_m}}\left( T_{x_m} - \bar{T}_{x_m}^{(d_{k_m})} \right) = 0, \quad P-a.s.
\]
Thus, to prove \eqref{Thesis_Tlim}, it is enough to show that 
%the time to reach $\nu_{\a_m}$ is negligible. That is, we need to show
\begin{equation}
 \lim_{m\ra\infty} P_\w \left( \left| \frac{ T_{\nu_{\a_m}} - E_\w T_{\nu_{\a_m}}}{\sqrt{v_{k_m,\w}}} \right| \geq \e \right) = 0, \quad P-a.s. \label{Thesis_startatam}
\end{equation}
However, $T_{\nu_{\a_m}} - E_\w T_{\nu_{\a_m}}$ is roughly of the order $\a_m^{1/s} \approx (E_P \nu_1) n_{k_m-1}^{1/s}$, whereas $\sqrt{v_{k_m,\w}}$ is roughly of the order $n_{k_m}^{1/s}$. The conditions on the rate of growth of $n_k$ are enough to show that \eqref{Thesis_startatam} holds. Also, note that the convergence in \eqref{Thesis_Tlim} must be uniform in $y$ since $F$ is continuous. 

Since $X_t^* - X_t = o(\log^2 t)$, it is enough to prove \eqref{Thesis_Xlim} for $X_{t_m}^*$ in place of $X_{t_m}$. For any $y \in \R$, let $ x_m(y):= \left \lceil n_{k_m}+ y \, v_P \sqrt{v_{k_m,\w}} \right \rceil$. 
Then,
\begin{align}
P_\w \left( \frac{X_{t_m}^* - n_{k_m}}{v_P \sqrt{v_{k_m,\w}} } < y \right) &= P_\w \left( X_{t_m}^* < x_m(y) \right) \nonumber = P_\w \left( T_{ x_m(y) } > t_m \right) \nonumber \\
&= P_\w \left( \frac{ T_{ x_m(y) } - E_\w T_{ x_m(y) } }{ \sqrt{v_{k_m,\w}} } > \frac{t_m - E_\w T_{  x_m(y)  } }{\sqrt{v_{k_m,\w}}} \right)  \label{Thesis_timespace}
\end{align}
Since the scaling $\sqrt{v_{k_m,\w}}$ is roughly of the order $n_{k_m}^{1/s} = o(n_{k_m})$, we have that $x_m(y) \sim n_{k_m}$. Therefore, recalling that the convergence in \eqref{Thesis_Tlim} is uniform in $y$, it is enough to show that 
\begin{equation}
 \lim_{m\ra\infty} \frac{t_m - E_\w T_{  x_m(y)  } }{\sqrt{v_{k_m,\w}}} = -y. \label{Thesis_avgofET}
\end{equation}
Assuming that $y>0$ (a similar argument works for $y<0$), we may re-write 
\[
 \frac{t_m - E_\w T_{  x_m(y)  } }{\sqrt{v_{k_m,\w}}} = \frac{-1}{\sqrt{v_{k_m,\w}}} \sum_{i=n_{k_m}+1}^{ n_{k_m}+ y \, v_P \sqrt{v_{k_m,\w}} } E_\w^{i-1} T_i.
\]
Since $s>1$, this should be close to $-y v_P \E T_1 = -y$. In fact, it can be shown that the sequence $n_k$ grows fast enough to ensure that \eqref{Thesis_avgofET} holds for any $y\in\R$. 
\end{proof}

\begin{proof}[\textbf{Sketch of proof of Theorem \ref{Thesis_qCLT}:}]$\left.\right.$\\
As in the proof of Theorem \ref{Thesis_nonlocal}, the key is to first find a random sequence along which the hitting times have Gaussian limiting distribution. The sequence can be chosen in such a way so that Proposition \ref{Thesis_generalprop} can be used to give Gaussian limits for the random walk along a random subsequence. The proof of the existence of Gaussian limits for hitting times is almost identical to its analogue in the proof of Theorem \ref{Thesis_nonlocal}. The main difference is that, instead of using the set $U_{\d,n,c}$ from \eqref{Thesis_Udncdef}, we instead use
%\[
% U_{\d,n} := \left\{ \sum_{i= \lfloor \eta n \rfloor + 1}^n Var_\w \left( \bar{T}^{(n)}_{\nu_i} - \bar{T}^{(n)}_{\nu_{i-1}} \right) < 2 n^{2/s} %\right\}. 
%\]
\[
 U_{\d,n} := \left\{ \sum_{i= \lfloor \eta n \rfloor + 1}^n \s_{i,n,\w}^2 < 2 n^{2/s} \right\}. 
\]
%In fact, the proof of the first statement of Theorem \ref{Thesis_qCLT} (Gaussian limits for hitting times along a random sequence) holds for $s\in(0,1]$ as well. However, Theorem \ref{Thesis_nonlocal} cannot be deduced from the random sequence given in this proof (nor can Theorem \ref{Thesis_qCLT} be deduced from the random sequence of hitting times given in the proof of Theorem \ref{Thesis_nonlocal}). This is because in the case $s\in(0,1)$, a different approach for transferring the result from $T_n$ to $X_n$ was necessary and therefore the random sequence of hitting times needed to have different properties. 
\end{proof}

\begin{proof}[\textbf{Sketch of proof of Theorem \ref{Thesis_qEXP}:}]$\left.\right.$\\
%Theorem \ref{Thesis_qEXP} represents the opposite extreme of the quenched limits that were found in Theorem \ref{Thesis_qCLT}. 
%First, in a similar manner to the proof of Theorem \ref{Thesis_local} we find a subsequence of locations where the crossing time of a single block is the dominant term. Then, we prove that time to cross a large block is approximately exponentially distributed by approximating the quenched laplace transform of $\bar{T}^{(n)}_\nu$ when $M_1$ is large. The proof of Theorem \ref{Thesis_qEXP} is then finished by an application of Proposition \ref{Thesis_generalprop}. 
First, we need to show that the crossing time of a large block is approximately exponentially distributed. 
As mentioned above, we follow an idea from \cite{eszStable} in computing the quenched Laplace transform of $\bar{T}^{(n)}_\nu$. The strategy is to decompose $\bar{T}^{(n)}_\nu$ into a series of excursions away from 0. An excursion is considered a ``failure'' if the random walk returns to zero before hitting $\nu$ (i.e., if $T_\nu > T_0^+:= \min\{ k > 0: X_k = 0 \}$) and a ``success'' if the random walk reaches $\nu$ before returning to zero. Let $p_\w:=P_\w ( T_\nu<T^+_0)$, and let $N$ be a geometric random variable with parameter $p_\w$ (i.e., $P(N=k) = p_\w (1-p_\w)^k$ for $k\in \N$). Also, let $\{F_i\}_{i=1}^\infty$ be an i.i.d. sequence (also independent of $N$), with $F_1$ having the same distribution as $\bar{T}_\nu^{(n)}$ conditioned on $\left\{ \bar{T}^{(n)}_\nu > T_0^+ \right\}$, and let $S$ be a random variable with the same distribution as $T_\nu$ conditioned on $\left\{ \bar{T}^{(n)}_\nu < T_0^+ \right\}$ and independent of everything else. Thus, 
\begin{equation}
\bar{T}^{(n)}_\nu \stackrel{Law}{=} S + \sum_{i=1}^N F_i \qquad\text{(quenched).} \label{Thesis_Tdec}
\end{equation}
Consequently,
\begin{align*}
E_\w e^{-\l \bar{T}^{(n)}_\nu} = E_\w e^{-\l S} E_\w\left[ \left(E_\w e^{-\l F_1}\right)^N \right] &= E_\w e^{-\l S} \frac{p_\w}{1-(1-p_\w)\left(E_\w e^{-\l F_1}\right)} \\
&= E_\w e^{-\l S} \frac{1}{1+E_\w N (1- E_\w e^{-\l F_1} )} , \quad \forall \l\geq 0,
\end{align*}
where, in the last equality, we used $E_w N = \frac{1-p_\w}{p_\w}$. Therefore, since $1-x \leq e^{-x} \leq 1 \wedge (1-x+\frac{x^2}{2})$ for any $x\geq 0$,
\[
 \frac{1-\l E_\w S}{1+\l (E_\w N)(E_\w F_1)} \leq E_\w e^{-\l \bar{T}^{(n)}_\nu} \leq \frac{1}{1 + \l (E_\w N)( E_\w F_1)  + \frac{\l^2}{2} (E_\w N)(E_\w F_1^2 )}.
\]
Then, replacing $\l$ by $\frac{\l}{\mu_{1,n,\w}}$ and noting that $\mu_{1,n,\w} = (E_\w N)(E_\w F_1) + E_\w S$, 
\[
 \frac{1-\l \frac{ E_\w S}{\mu_{1,n,\w} }}{1+\l \left( 1 - \frac{E_\w S}{\mu_{1,n,\w}} \right)} \leq E_\w e^{-\l \frac{\bar{T}^{(n)}_\nu}{\mu_{1,n,\w} }} \leq \frac{1}{1 + \l \left( 1 - \frac{E_\w S}{\mu_{1,n,\w}} \right)  + \frac{\l^2}{2} (E_\w N)(E_\w F_1^2 )}. 
\]
Now, the failures and excursions $F_1$ and $S$ can be represented as random walks in certain modified environments, and therefore we can use the formulas \eqref{Thesis_QET} and \eqref{Thesis_qvar} (which hold for \emph{any} environment) to get bounds on $E_\w S$ and $E_\w F_1^2$ when $M_1$ is large. 
Thus, we can show that, with probability close to one, $E_\w e^{-\l \frac{\bar{T}^{(n)}_\nu}{\mu_{1,n,\w} }}$ is approximately $\frac{1}{1+\l}$ when $M_1$ is large. In particular, letting $\phi_{i,n}(\l):= E_\w^{\nu_{i-1}} \exp \left\{ -\l \frac{\bar{T}_{\nu_i}^{(n)}}{\mu_{i,n,\w}} \right\}$ be the scaled, quenched Laplace transforms, we are able to show:
\begin{lem} \label{Thesis_mgflem}
Assume $\e<\frac{1}{8}$, and let $\e':= \frac{1-8\e}{5} > 0$. Then,
\[
Q\left( \exists \l \geq 0: \phi_{1,n}(\l) \notin \left[ \frac{1-\l n^{-\e /s}}{1+\l} , \frac{1}{1+\l-\left(\l+\frac{3 \l^2}{2}\right)n^{-\e/s} } \right],\: M_1 > n^{(1-\e)/s} \right) = o\left( n^{-1-\e'} \right). 
\]
%\[
%Q\left( \phi_{1,d_k}(\l) \notin \left[ \frac{1-\l\frac{1}{k}}{1+\l} , \frac{1}{1+\l-\left(\l+3\l^2/2\right) \frac{1}{k}} \right] \right) = o\left( d_k^{-\e/2} \right). 
%\]
\end{lem}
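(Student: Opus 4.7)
The plan has two stages: a deterministic reduction from the uniform-in-$\l$ sandwich to two scalar tail conditions on the environment, and then probabilistic tail estimates using the explicit formulas for quenched hitting-time moments.

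\textbf{Deterministic reduction.} From the decomposition \eqref{Thesis_Tdec} and the geometric distribution of $N$, one has
\[
\phi_{1,n}(\l) = \frac{E_\w e^{-\l S/\mu_{1,n,\w}}}{1 + E_\w N\bigl(1 - E_\w e^{-\l F_1/\mu_{1,n,\w}}\bigr)}.
\]
Applying $1-x \leq e^{-x} \leq 1 \wedge (1 - x + x^2/2)$ to the two inner Laplace transforms, and using the identity $\mu_{1,n,\w} = E_\w S + E_\w N \cdot E_\w F_1$, yields, for every $\l \geq 0$,
\[
\frac{1-\l\alpha_\w}{1+\l(1-\alpha_\w)} \;\leq\; \phi_{1,n}(\l) \;\leq\; \frac{1}{1+\l(1-\alpha_\w)-\tfrac{\l^2}{2}\beta_\w},
\]
where $\alpha_\w := E_\w S/\mu_{1,n,\w}$ and $\beta_\w := (E_\w N)(E_\w F_1^2)/\mu_{1,n,\w}^2$. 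A short algebraic check (treating large $\l$ separately by noting that the target lower endpoint is nonpositive and the target upper endpoint is taken to be $+\infty$ when its denominator is nonpositive) shows that on the good event $\{\alpha_\w \leq n^{-\e/s}\} \cap \{\beta_\w \leq 3 n^{-\e/s}\}$, this sandwich is at least as tight as the one in the lemma statement, uniformly in $\l \geq 0$. Hence the lemma reduces to
\[
Q\bigl(\{\alpha_\w > n^{-\e/s}\} \cup \{\beta_\w > 3 n^{-\e/s}\},\; M_1 > n^{(1-\e)/s}\bigr) \;=\; o(n^{-1-\e'}).
\]

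\textbf{Tail estimates.} Both $S$ and $F_1$ can be realized as hitting times of random walks in suitably modified environments: $S$ as the walk from $0$ to $\nu$ conditioned on $\{T_\nu < T_0^+\}$, equivalently a reflection at $0$; and $F_1$ as the first return to $0$ for a walk reflected at $\nu$. Feeding these modified environments into \eqref{Thesis_QET} and \eqref{Thesis_qvar} expresses $E_\w S$, $E_\w F_1$, $E_\w F_1^2$, and $E_\w N = (\mu_{1,n,\w} - E_\w S)/E_\w F_1$ in terms of the $W_j$, $R_j$, and $\Pi_{i,j}$ on the block $[0,\nu)$, together with the preceding $b_n$ ladder blocks that persist under the reflection at $\nu_{-b_n}$. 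Since $M_1 \leq \mu_{1,n,\w}$, the event $\{M_1 > n^{(1-\e)/s}\}$ forces $\mu_{1,n,\w} \geq n^{(1-\e)/s}$, so $\alpha_\w > n^{-\e/s}$ entails $E_\w S > n^{(1-2\e)/s}$, and $\beta_\w > 3 n^{-\e/s}$ entails an analogous lower bound on $(E_\w N)(E_\w F_1^2)/M_1^2$. Each of these in turn forces an exceptional environment profile (atypically large $\nu$, or anomalously large $W_j, R_j$ within or adjacent to the block); such profiles have $Q$-probability controlled by combining the exponential tails of the block length $\nu$ with Kesten's polynomial tail $P(W_0 > x) \sim K x^{-s}$, via a truncation at $\nu \leq C \log n$ followed by high-moment Markov inequalities. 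Careful tracking of the exponent gives the rate $o(n^{-1-\e'})$, with $\e' = (1-8\e)/5$ emerging from the optimal balance between the polynomial decay in the Kesten tail and the gain $n^{-\e/s}$ one is extracting.

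\textbf{Main obstacle.} The delicate step is the $\beta_\w$ bound, because $(E_\w N)(E_\w F_1^2)$ couples the inverse crossing probability $p_\w^{-1}$ with the second moment of failed-excursion times, both highly sensitive to the fine shape of the potential in the block (and in the reflected left portion). Disentangling the contributions from short bounces, long failed excursions that reach near the peak of the block, and excursions that leak into preceding ladder blocks, and then controlling their cumulative second-moment mass against the $\mu_{1,n,\w}^2$-scale (dominated by $M_1$) so as to extract the specific gain $n^{-\e/s}$ with the precise multiplicative constant $3$, is the technical heart of the argument.
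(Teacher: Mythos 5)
Your deterministic reduction is correct and is the same one the paper uses: the sandwich
\[
\frac{1-\l\alpha_\w}{1+\l(1-\alpha_\w)} \;\leq\; \phi_{1,n}(\l) \;\leq\; \frac{1}{1+\l(1-\alpha_\w)-\tfrac{\l^2}{2}\beta_\w}
\]
follows from \eqref{Thesis_Tdec}, $E_\w N = (1-p_\w)/p_\w$, $\mu_{1,n,\w}=E_\w S + (E_\w N)(E_\w F_1)$, and the elementary bounds on $e^{-x}$, and on the event $\{\alpha_\w\le n^{-\e/s}\}\cap\{\beta_\w\le 3n^{-\e/s}\}$ it does imply the stated interval uniformly in $\l\ge 0$. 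Your treatment of $\alpha_\w$ also matches the paper: since $E_\w \bar{T}^{(n)}_\nu \geq M_1$, the event $\alpha_\w > n^{-\e/s}$ forces $E_\w S > n^{(1-2\e)/s}$, and the paper controls $E_\w S$ by passing to the $h$-transformed environment $\bar\w_i = \w_i R_{0,i}/R_{0,i-1}$ and bounding $E_\w S \le \nu + 2\nu^2\bar M_1 \le \nu + 2\nu^4 M^+/(M^-)^3$; the five factors in that product are exactly where the denominator $5$ in $\e'=(1-8\e)/5$ comes from, via the union bound in Lemma \ref{ballistic_Mpmtail}.

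The genuine gap is in your treatment of $\beta_\w$. You propose to estimate $(E_\w N)(E_\w F_1^2)$ directly by writing $F_1$ as a hitting time in a modified environment and expanding its second moment, and you yourself flag this as the unexecuted ``technical heart.'' As written, that step is not a proof: you give no mechanism for extracting the gain $n^{-\e/s}$ relative to $\mu_{1,n,\w}^2$ from the second moment of a failure excursion, and the constant $3$ is asserted rather than derived. The paper never performs this computation. Instead it uses the exact decomposition of the quenched variance,
\[
Var_\w \bar{T}^{(n)}_\nu = (E_\w N)(E_\w F_1^2) + (E_\w N)^2(E_\w F_1)^2 + Var_\w S,
\]
to write $(E_\w N)(E_\w F_1^2) \le Var_\w \bar{T}^{(n)}_\nu - \left(E_\w\bar{T}^{(n)}_\nu - E_\w S\right)^2$, so that
\[
\beta_\w \;\le\; \left(\frac{Var_\w \bar{T}^{(n)}_\nu}{(E_\w\bar{T}^{(n)}_\nu)^2}-1\right) + 2\alpha_\w - \alpha_\w^2 .
\]
Thus $\beta_\w \le 3n^{-\e/s}$ is an automatic consequence of $\alpha_\w\le n^{-\e/s}$ together with $Var_\w\bar{T}^{(n)}_\nu/(E_\w\bar{T}^{(n)}_\nu)^2 - 1 \le n^{-\e/s}$, and the latter event (intersected with $\{M_1>n^{(1-\e)/s}\}$) has probability $o(n^{-2+4\e})$ by Lemma \ref{ballistic_VarET2compare}, which is nothing but the already-established comparison $(E_\w\bar{T}^{(n)}_\nu)^2 - D^+(\w) \le Var_\w\bar{T}^{(n)}_\nu \le (E_\w\bar{T}^{(n)}_\nu)^2 + 8R_{0,\nu-1}D^-(\w)$ combined with the tail bounds $Q(D^+(\w)>x)=o(x^{-s+\e''})$ and $Q(R_{0,\nu-1}D^-(\w)>x)=o(x^{-s+\e''})$ from the zero-speed chapter. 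To complete your argument you should either import this identity (which reduces the second-moment control to quantities whose tails are already known) or actually carry out the direct excursion computation you sketch; in its current form the $\beta_\w$ half of the lemma is unproved.
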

%Again defining $n_k := 2^{2^k}$ we then obtain the following corollary:
\begin{cor} \label{Thesis_explimit}
Assume $\e<\frac{1}{8}$, and let $n_k:=2^{2^k}$. Then, $P-a.s.$, for any sequence $i_k=i_k(\w)$ such that $i_k\in (n_{k-1},n_k]$ and $M_{i_k} > d_k^{(1-\e)/s}$, we have
\begin{equation}
\lim_{k\ra\infty} \phi_{i_k,d_k}(\l) = \frac{1}{1+\l},\quad \forall \l\geq 0, \label{Thesis_mgfbounds}
\end{equation}
and thus
\begin{equation}
\lim_{k\ra\infty} P_\w^{\nu_{i_k-1}} \left( \bar{T}^{(d_k)}_{\nu_{i_k}} > x \mu_{i_k,d_k,\w} \right) =  \Psi(x), \quad \forall x\in \R. \label{Thesis_expl}
\end{equation}
%\begin{equation}
%\lim_{k\ra\infty} P_\w^{\nu_{i_k-1}} \left( \bar{T}^{(d_k)}_{\nu_{i_k}} > x \mu_{i_k,d_k,\w} \right) =  e^{-x}, \quad \forall x\geq 0. \label{Thesis_expl}
%\end{equation}
\end{cor}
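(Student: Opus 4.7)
The plan is to combine Lemma \ref{Thesis_mgflem} with a union bound over $i \in (n_{k-1}, n_k]$ and the Borel--Cantelli lemma to obtain deterministic control on $\phi_{i,d_k}(\l)$ simultaneously for \emph{every} such $i$ with a large block and \emph{every} $\l \geq 0$. Passing $k \to \infty$ then gives the pointwise Laplace-transform limit \eqref{Thesis_mgfbounds}, and the continuity theorem for Laplace transforms converts this into the distributional convergence \eqref{Thesis_expl}.

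For each $k$, set $I_n(\l) := \left[\frac{1-\l n^{-\e/s}}{1+\l},\ \frac{1}{1+\l-(\l+\frac{3}{2}\l^2)n^{-\e/s}}\right]$ (the interval from Lemma \ref{Thesis_mgflem}) and define the bad event
\[
A_k := \bigcup_{i=n_{k-1}+1}^{n_k} \Bigl\{ M_i > d_k^{(1-\e)/s},\ \exists\, \l \geq 0 : \phi_{i,d_k}(\l) \notin I_{d_k}(\l) \Bigr\}.
\]
For $k$ large enough that $n_{k-1} > b_{d_k}$, each summand depends only on the environment in the window $[\nu_{i-1-b_{d_k}}, \nu_i)$, which lies strictly to the right of the origin, so such events have the same probability under $P$ and $Q$. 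By the ladder-shift stationarity of $Q$ and Lemma \ref{Thesis_mgflem},
\[
P(A_k) \leq \sum_{i=n_{k-1}+1}^{n_k} Q\bigl(M_1 > d_k^{(1-\e)/s},\ \phi_{1,d_k} \text{ is bad}\bigr) \leq d_k \cdot o\bigl(d_k^{-1-\e'}\bigr) = o\bigl(d_k^{-\e'}\bigr).
\]
Since $d_k \geq n_k/2 = 2^{2^k - 1}$ grows doubly exponentially, $\sum_k P(A_k) < \infty$, so Borel--Cantelli yields that $P$-a.s.\ only finitely many $A_k$ occur.

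Consequently, for $P$-a.e.\ $\w$ there exists $k_0 = k_0(\w)$ such that for every $k \geq k_0$ and every $i \in (n_{k-1}, n_k]$ with $M_i > d_k^{(1-\e)/s}$,
\[
\frac{1-\l d_k^{-\e/s}}{1+\l} \leq \phi_{i,d_k}(\l) \leq \frac{1}{1+\l-(\l+\tfrac{3}{2}\l^2)d_k^{-\e/s}} \qquad \forall\, \l \geq 0.
\]
Evaluating along the given sequence $i_k(\w)$ and letting $k \to \infty$, both bounds converge to $\frac{1}{1+\l}$ for each fixed $\l \geq 0$, establishing \eqref{Thesis_mgfbounds}. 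Since $\l \mapsto \frac{1}{1+\l}$ is the Laplace transform of the $\mathrm{Exp}(1)$ law, the continuity theorem for Laplace transforms on $[0,\infty)$ promotes this pointwise convergence to weak convergence of the laws of $\bar{T}_{\nu_{i_k}}^{(d_k)}/\mu_{i_k,d_k,\w}$ under $P_\w^{\nu_{i_k-1}}$ to $\mathrm{Exp}(1)$, and continuity of the exponential distribution function upgrades this to pointwise convergence of the distribution functions, giving \eqref{Thesis_expl}.

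The main obstacle is making the union bound summable in $k$: this forces the probability estimate in Lemma \ref{Thesis_mgflem} to be \emph{uniform in} $\l \geq 0$ (so that the bad event encompassing all $\l$ still has probability $o(n^{-1-\e'})$), and the doubly exponential growth $n_k = 2^{2^k}$ is exactly what lets the cost $d_k$ of the union bound be absorbed by the reserve decay $d_k^{-\e'}$.
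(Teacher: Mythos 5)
Your proposal is correct and follows essentially the same route as the paper's own proof: a union bound over $i\in(n_{k-1},n_k]$ combined with the ladder-shift stationarity of $Q$ (and the fact that for large $k$ the relevant events see only the environment to the right of the origin, so $P$ and $Q$ agree), Lemma \ref{Thesis_mgflem} to make the bound $o(d_k^{-\e'})$, Borel--Cantelli using the doubly exponential growth of $d_k$, and finally the continuity theorem for Laplace transforms. Your explicit remark that the $\l$-uniformity of the bad event in Lemma \ref{Thesis_mgflem} is what makes the union bound work is exactly the point the paper relies on as well.
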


Assuming Corollary \ref{Thesis_explimit}, we can then complete the proof of Theorem \ref{Thesis_qEXP}. In a manner similar to the proof of Theorem \ref{Thesis_local}, we find random subsequences $n_{k_m}=n_{k_m}(\w)$ and $i_m = i_m(\w) \in( n_{k_m-1}, n_{k_m}]$, such that the time to cross the first $n_{k_m}$ blocks is dominated by $\bar{T}^{(d_{k_m})}_{\nu_{i_m}} - \bar{T}^{(d_{k_m})}_{\nu_{i_m-1}}$, which by Corollary \ref{Thesis_explimit} is approximately exponentially distributed. The proof of Theorem \ref{Thesis_qEXP} is then completed by an application of Proposition \ref{Thesis_generalprop}. 

\end{proof}

\end{subsubsection}
\end{subsection}

\end{section}

%\newpage
%Our analysis of the quenched distributions for the random walk $X_n$ will be derived by first examining the quenched distributions for the hitting times $T_n$. The transfer of results from $T_n$ to $X_n$ is then accomplished via the equality $\{ T_x > t \} = \{ X_t^* < x \}$, where $X_t^* := \max \{ X_n : n \leq t \}$, and the following Lemma:
%\begin{lem}
% Assume $E_P \log \rho_0 < 0$. Then,
%\[
% \lim_{n\ra\infty} \frac{ X_n^* - X_n}{(\log n)^2} = 0, \qquad \P-a.s.
%\]
%\end{lem}
%\begin{proof}
% By the Borel-Cantelli Lemma it is enough to show that $\sum_{n} \P( X_n^* - X_n \geq \d (\log n)^2) < \infty$ for any $\d>0$. However, the event $ \{ X_n^* - X_n \geq \d (\log n)^2 \}$ implies that for some $x<n$ after first reaching $x$ the random walk then backtracks a distance of $\d (\log n)^2$. Therefore
%\[
% \P( X_n^* - X_n \geq \d (\log n)^2) \leq \sum_{x=0}^{n-1} \P^x( T_{x-\lfloor \d (\log n)^2 \rfloor} < \infty)  = n\P( T_{-\lfloor \d (\log n)^2 \rfloor} < \infty),
%\]
%where the last equality is due to the shift invariance of the law $P$ on environments. However, the explicit formulas for hitting times \eqref{Thesis_hittingprob} can be used to show that there exists a constant $C_0$ such that $\P( T_{-x} < \infty ) \leq e^{-C_0 x}$ (see \cite[Lemma 3.3]{gsMVSS} for a proof). 
%\end{proof}

\end{chapter}

\begin{chapter}{Quenched Functional CLT} \label{Thesis_AppendixQCLT}
In this chapter, we provide a full proof of the quenched functional central limit theorem (CLT) stated in Chapter \ref{1dlimitingdist}. 
To keep the chapter self-contained, we repeat the assumptions that were stated in Subsection \ref{Thesis_quenchedCLT}:
%To prove a functional CLT for the RWRE I will make the following
%assumptions (these are the same assumptions made on Section 2.4 of
%\cite{zRWRE}):
\begin{asm} \label{uelliptic}
The environment is uniformly elliptic. That is, $\exists \kappa >0$
such that $\w \in [\kappa, 1-\kappa]^\Z$, P-a.s.
\end{asm}
%\begin{asm}
%$E_P \log \rho_0 < 0$. That is, the RWRE is transient to the right. 
%\end{asm}
\begin{asm}
$E_P(\bar{S}(\w))<\infty$. Thus, the random walk is transient to
the right with positive speed $v_P:= \lim_{n\ra\infty}
\frac{X_n}{n} = \frac{1}{E_P(\bar{S}(\w))} > 0$.
\end{asm}
\begin{asm}\label{mixing} $P$ is $\a$-mixing, with $\a(n) = e^{-n\log(n)^{1+\eta}}$ for some $\eta>0$.
That is, for any $l$-separated measurable functions $f_1,
f_2 \in \{ f: \|f\|_\infty \leq 1 \}$, 
\[
E_P(f_1(\w)f_2(\w))\leq E_P(f_1(\w))E_P(f_2(\w)) + \a(l). 
\]
\end{asm}
As noted in \cite[Section 2.4]{zRWRE}, the above assumptions imply that $\frac{1}{n} \sum_{i=0}^n \log \rho_i$ satisfies a large deviation principle with a good rate function $J(x)$. 
%Define $\bar{R}_n:=\frac{1}{n} \sum_{i=0}^{n-1}
%\log(\rho_i)$. Then, (see \cite{bdLDSM}) the assumptions above imply that $\bar{R}_n$ satisfies a LDP with a good rate
%function $J(x)$. 
(Recall that a non-negative function $J(x)$ is a \emph{good rate function} if $J(x)$ is lower semi-continuous and 
$\{x:J(x)\leq M\}$ is compact for all $M$.)
%, and
%\[
%- \inf_{x\in A^\circ} J(x) \leq \liminf_{n\ra\infty}
%\frac{1}{n}\log\left(P(\bar{R}_n \in A)\right) \leq
%\limsup_{n\ra\infty} \frac{1}{n}\log\left(P(\bar{R}_n \in
%A)\right) \leq - \inf_{x\in \bar{A}} J(x)
%\]
%for all borel sets $A$).
The final critical assumption is then
\begin{asm} \label{sg2}
$J(0)>0$ and $s:=\min_{y > 0} \frac{1}{y}J(y) > 2$, where $J(x)$ is the large deviation rate function for $\frac{1}{n} \sum_{i=0}^{n-1} \log \rho_i$.
\end{asm}
Recall that when $P$ is i.i.d., the parameter $s$ can also be
defined as the smallest positive solution to $E_P\rho_0^s = 1$ (as in Theorem \ref{Thesis_annealedstable}). 
Assumption \ref{sg2} is the crucial assumption needed for a central limit theorem, since it implies that
$\E\tau_1^\gamma < \infty$ for some $\gamma>2$. 
In fact,
that $\E\tau_1^\gamma<\infty $ for all $\gamma< s$ (see
\cite[Lemma 2.4.16]{zRWRE}). Since we will use this repeatedly, we
fix such a $\gamma \in (2,s)$ for the remainder of the Chapter.

\begin{section}{Quenched CLT for Hitting Times}
The first step in proving a quenched functional CLT for the RWRE is to prove
a quenched functional CLT for the hitting times. 
%\begin{lem}\label{Thesis_tau}
%For any $\e>0$, $\mathbb{P}\left(\tau_k\geq \e\sqrt{k}\text{ infinitely often}\right)=0$, and consequently \\$P_\omega\left(\tau_k\geq \e\sqrt{k}\text{ infinitely often}\right)=0$, $P-a.s.$
%\end{lem}
%\begin{proof}
%By Chebychev, $\mathbb{P}\left(\tau_k\geq \e\sqrt{k}\right)=\mathbb{P}\left(\tau_1\geq \e\sqrt{k}\right)\leq \frac{1}{\e^\gamma k^{\gamma/2}}\mathbb{E}(\tau_1^\gamma)$. \\
%Thus $\sum_k \mathbb{P}\left(\tau_k\geq \e\sqrt{k}\right) < \infty$ and so by Borel-Cantelli the lemma holds.
%\end{proof}
Recall that $D[0,\infty)$ is the space of real valued functions on $[0,\infty)$ which are right continuous and which have limits from the left, equipped with the Skorohod topology. 
For any environment $\w$, let $Z^n_\cdot \in D[0,\infty)$ be defined by 
\[
 Z_t^n:=\frac{1}{\s\sqrt{n}}\sum_{i=1}^{\nt} (\tau_i-E_\w \tau_i)=\frac{T_{\nt}-E_\w T_{\nt}}{\s\sqrt{n}},
\]
where $\s^2=\E(\tau_1^2)-E_P\left(\bar{S}(\w)^2\right)$.
\begin{thm}\label{Thesis_HTCLT}
The hitting times $T_n$ satisfy a quenched functional CLT. That is, for $P-a.e.$ environment $\w$, the random variables $Z^n_\cdot \in D[0,\infty)$ converge in quenched distribution as $n\ra\infty$ to a standard Brownian motion. 
\end{thm}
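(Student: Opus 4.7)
The plan is to apply the Lindeberg--Feller CLT for triangular arrays to the partial sums $Z^n_t$. The crucial structural observation is that, by the strong Markov property under $P_\w$, the increments $\tau_1,\tau_2,\ldots$ are \emph{independent} (though not identically distributed), with $\tau_i$ distributed under $P_\w$ as the hitting time of $1$ from $0$ under $P_{\theta^{i-1}\w}$. In particular $E_\w \tau_i = \S(\theta^{i-1}\w)$ and $Var_\w \tau_i = \Gamma(\theta^{i-1}\w)$, where $\Gamma(\w) := Var_\w \tau_1$ is the explicit function of the environment given by \eqref{Thesis_qvar}. Thus $Z^n$ is a rescaled partial sum of independent mean-zero quenched variables, and the task reduces to verifying the Lindeberg--Feller conditions and a tightness bound for $P$-a.e.\ $\w$.

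For the finite-dimensional distributions, the variance condition $\frac{1}{\s^2 n}\sum_{i=1}^{\nt} Var_\w \tau_i \limn t$ holds $P$-a.s.\ by Birkhoff's ergodic theorem applied to the ergodic shift on $\Omega$, together with the identification $E_P\Gamma = E_P(E_\w \tau_1^2) - E_P(\S(\w)^2) = \E\tau_1^2 - E_P(\S^2) = \s^2$. The Lindeberg condition
\[
\frac{1}{\s^2 n}\sum_{i=1}^{\nt} E_\w\bigl[(\tau_i - E_\w\tau_i)^2 \mathbf{1}_{|\tau_i - E_\w\tau_i| > \e\s\sqrt{n}}\bigr] \limn 0
\]
is verified by fixing $\gamma \in (2,s)$ for which $\E\tau_1^\gamma < \infty$ (by Assumption \ref{Thesis_sg2} and \cite[Lemma 2.4.16]{zRWRE}), bounding $x^2\mathbf{1}_{|x|>a} \leq a^{2-\gamma}|x|^\gamma$, and noting that the ergodic theorem gives $\frac{1}{n}\sum_{i=1}^{\nt} E_\w|\tau_i - E_\w\tau_i|^\gamma \to t\cdot E_P(E_\w|\tau_1 - \S(\w)|^\gamma) < \infty$ $P$-a.s. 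The resulting bound is of order $n^{1-\gamma/2}\to 0$. Convergence of the joint finite-dimensional distributions to those of a standard Brownian motion then follows by applying the same estimates to the independent increments $Z^n_{t_j} - Z^n_{t_{j-1}}$ and invoking Cram\'er--Wold.

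For tightness in $D[0,\infty)$, Rosenthal's inequality applied to the independent centered summands gives
\[
E_\w |Z^n_t - Z^n_s|^\gamma \leq C_\gamma \s^{-\gamma} n^{-\gamma/2}\left[\Bigl(\sum_{i=\lfloor ns\rfloor+1}^{\nt} \Gamma(\theta^{i-1}\w)\Bigr)^{\gamma/2} + \sum_{i=\lfloor ns\rfloor+1}^{\nt} E_\w|\tau_i - E_\w\tau_i|^\gamma\right].
\]
By the ergodic theorem, for $P$-a.e.\ $\w$ and all sufficiently large $n$, the right side is bounded by $C(|t-s|^{\gamma/2} + n^{1-\gamma/2}|t-s|)$, which for $|t-s| \geq 1/n$ is at most $C|t-s|^{\gamma/2}$. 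Since $\gamma/2 > 1$, Billingsley's moment criterion (Theorem 13.5 of \emph{Convergence of Probability Measures}) delivers tightness of $\{Z^n_\cdot\}$ in $D[0,T]$ for every $T>0$, hence in $D[0,\infty)$.

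The main obstacle is really a bookkeeping one: all of the almost-sure statements (variance limit, Lindeberg average, Rosenthal bound) must hold simultaneously on a common full-$P$-measure set of environments. This is handled by applying the ergodic theorem once to each of the integrable environment functions $\Gamma(\w)$ and $E_\w|\tau_1 - \S(\w)|^\gamma$, which yields almost-sure convergence of $\frac{1}{n}\sum_{i=1}^n (\cdot)$ uniformly in the scaled time parameter via the monotonicity of the partial sums and $\nt = nt + O(1)$. The only nontrivial integrability inputs, $E_P\Gamma < \infty$ and $E_P(E_\w\tau_1^\gamma) = \E\tau_1^\gamma < \infty$, are both consequences of Assumption \ref{Thesis_sg2}.
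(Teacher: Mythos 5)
Your finite-dimensional argument is sound and is essentially the route the paper takes: quenched independence of the $\tau_i$ via the strong Markov property, the variance limit from Birkhoff applied to $Var_\w\tau_1$, and the Lindeberg condition from the bound $x^2\mathbf{1}_{|x|>a}\le a^{2-\gamma}|x|^\gamma$ together with $\E\tau_1^\gamma<\infty$ for some $\gamma\in(2,s)$. (This is exactly how the Lindeberg condition is verified in Alili's proof, which the paper cites.)

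The tightness step, however, has a genuine gap. The claim that for $P$-a.e.\ $\w$ and all large $n$ one has $E_\w|Z^n_t-Z^n_s|^\gamma\le C|t-s|^{\gamma/2}$ whenever $|t-s|\ge 1/n$ does not follow from the ergodic theorem, and is in fact false. The ergodic theorem controls averages of $\Gamma(\theta^{i}\w)$ and $E_\w|\tau_{i+1}-E_\w\tau_{i+1}|^\gamma$ over windows of \emph{macroscopic} length, but Billingsley's moment criterion must be checked for all pairs $s<t$, including windows containing only $O(1)$ indices. For such a window containing a single index $i_0$ the first Rosenthal term is of order $n^{-\gamma/2}\Gamma(\theta^{i_0-1}\w)^{\gamma/2}$, and the target bound $C|t-s|^{\gamma/2}=Cn^{-\gamma/2}$ would force $\Gamma(\theta^{i_0-1}\w)\le C^{2/\gamma}$ uniformly in $i_0\le nT$. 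But under Assumption \ref{Thesis_sg2} the quenched variance $\Gamma$ is unbounded with polynomial tails (of index $s/2$ under $P$), so $\max_{i\le n}\Gamma(\theta^{i}\w)$ diverges $P$-a.s.\ (indeed grows polynomially in $n$), and the bound fails along these exceptional blocks. The same problem afflicts the second Rosenthal term. This is precisely the point where the non-i.i.d.\ (quenched) setting differs from Donsker's theorem, where single-increment moments are constant in $i$.

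The repair is to avoid the moment criterion altogether and invoke the functional Lindeberg--Feller theorem for triangular arrays (Billingsley, \emph{Convergence of Probability Measures}, Theorem 18.2), whose internal tightness argument uses maximal inequalities of Ottaviani/L\'evy type rather than increment moment bounds, and which requires exactly the two conditions you have already verified --- with the one addition that the variance convergence $\frac{1}{\s^2 n}\sum_{i\le\nt}Var_\w\tau_i\to t$ must hold uniformly on compact $t$-intervals. That upgrade is free: the prelimit functions are nondecreasing in $t$ and the limit is continuous, so pointwise convergence implies uniform convergence on compacts. This is the paper's proof.
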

%\begin{thm}\label{Thesis_HTCLT}
%The hitting times $T_n$ satisfy a quenched functional CLT. That is, as $n\ra\infty$, 
%$Z_t^n$ converges in quenched distribution in the space $D[0,\infty)$ to a standard Brownian motion, $P-a.s$.
%\end{thm}
\begin{proof}
Alili proves a quenched CLT for the hitting times $T_n$ in \cite[Theorem 5.1]{aRWRE}. The proof here is a minor modification of Alili's proof that implies a functional CLT. 
First, note that by the remarks after Assumption \ref{sg2}, $\s^2 < \infty$. 
Then, a version of the Lindberg-Feller condition for triangular arrays of random functions \cite[Theorem 18.2]{bCOPM} implies that it is enough to show the following:
%(Theorem 18.1, pages 193-194 in Billingsley's \textit{Convergence of Probability Measures}):
\begin{equation}\label{Thesis_CLT1}
\lim_{n\ra\infty} \sup_{0\leq t \leq T} \left( \frac{1}{n} \sum_{k=1}^{\nt} E_\w\left(\tau_k-E_\w\tau_k\right)^2 - \s^2 t \right) = 0,\quad \forall T<\infty,\quad  P-a.s.,
\end{equation}
and
\begin{equation}\label{Thesis_CLT2}
\lim_{n\ra\infty} \frac{1}{n} \sum_{k=1}^{\lfloor n T \rfloor} E_\w\left((\tau_k-E_\w\tau_k)^2\mathbf{1}_{\{|\tau_k-E_\w\tau_k|>\e\sqrt{n}\}}\right)=0,\quad \forall T<\infty,\quad P-a.s.
\end{equation}
The proof of \eqref{Thesis_CLT2} can be found in the proof of Theorem 5.1 in \cite{aRWRE} and depends on the ergodic theorem and the fact that $E_P\left[ E_\w (\tau_1 - E_\w \tau_1)^2 \right] = \s^2 < \infty$.
To prove \eqref{Thesis_CLT1} we re-write
$E_\w\left(\tau_k-E_\w(\tau_k)\right)^2=E_\w\tau_k^2-(E_\w \tau_k)^2  = E_{\theta^{k-1}\w}\tau_1^2-(E_{\theta^{k-1}\w} \tau_1)^2 $.
Then, since $P$ is an ergodic distribution on environments, we have that for any $t$,
\begin{align*}
\lim_{n\ra\infty} \frac{1}{n} \sum_{k=1}^{\nt} \left(E_{\theta^{k-1}\w}\tau_1^2-(E_{\theta^{k-1}\w} \tau_1)^2 \right)
= E_P(E_\w\tau_1^2 - (E_\w\tau_1)^2) t 
%= \left(\E\tau_1^2 - E_P(\bar{S}(\w)^2) \right)t
=\s^2 t,\quad P-a.s.
\end{align*}
%\begin{align*}
%\lim_{n\ra\infty} \frac{1}{n} \sum_{k=1}^{\nt} \left(E_{\theta^{k-1}\w}\tau_1^2-(E_{\theta^{k-1}\w} \tau_1)^2 \right)
%&= E_P(E_\w\tau_1^2 - (E_\w\tau_1)^2) t ,\quad P-a.s.\\
%&= \left(\E\tau_1^2 - E_P(\bar{S}(\w)^2) \right)t,\quad P-a.s.\\
%&=\s^2 t,\quad P-a.s.
%\end{align*}
Thus, $\frac{1}{n}\sum_{k=1}^{\nt} E_\w(\tau_k-E_\w\tau_k)^2$ converges pointwise to $\s^2 t$. However, since both functions are monotone in $t$ and the limit function is continuous, convergence is therefore uniform on compact intervals. 
Thus, we have finished the proof of \eqref{Thesis_CLT1} and, therefore, the proof of the theorem.
\end{proof}
\end{section}

%%%%%%%%%%%%%%%%%%%%%%% RW CLT %%%%%%%%%%%%%%%%%%%%%%%%%%%%
\begin{section}{A Random Time Change}
In this section we will use a random time change argument to convert the
quenched CLT for the hitting times into one for the position of
the RWRE. We begin with a few definitions, with $\s$ defined as in Theorem
\ref{Thesis_HTCLT}:
\begin{align*}
&X^*_t:=\max \{X_n:n\leq t\}=\max\left\{l:\sum_{i=1}^l \tau_i \leq t\right\}\\
%&Z_t^n:=\frac{1}{\s\sqrt{n}}\sum_{i=1}^{\nt} (\tau_i-E_\w \tau_i)=\frac{T_{\nt}-E_\w T_{\nt}}{\s\sqrt{n}}\\
&Y_t^n:=\frac{1}{\s\sqrt{n}}\sum_{i=1}^{X^*_{tn}}(\tau_i-E_\w\tau_i)\\
&R_t^n:=\frac{1}{\s\sqrt{n}} \left(nt-\sum_{i=1}^{X^*_{nt}} E_\w \tau_i\right)
\end{align*}
The following lemma shows that we do not lose much by working with
$X^*_n$ instead of $X_n$:
\begin{lem} \label{Thesis_nu}
For all $\d>0$, $\P\left( \sup_{0\leq t \leq 1} X^*_{nt}-X_{\lfloor n t \rfloor}  \geq \d \log^2(n)\quad\text{i.o.} \right)=0$.
%(ie. with $\P$-probability 1, $X^*_n-X_n\leq \log^2(n)$ for all $n$ large enough)
\end{lem}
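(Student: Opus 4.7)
My plan is to prove this by the Borel--Cantelli lemma. Since $X^*_t$ is constant on each interval $[k,k+1)$, the supremum reduces to a maximum over integer times: $\sup_{0\leq t\leq 1}(X^*_{nt}-X_{\lfloor nt\rfloor}) = \max_{0\leq k\leq n}(X^*_k - X_k)$. Writing $L_n := \delta\log^2 n$, it therefore suffices to show
\[
\sum_{n\geq 1}\P\Bigl(\max_{0\leq k\leq n}(X^*_k - X_k)\geq L_n\Bigr)<\infty.
\]

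The first step will be a union bound over the running maximum. If the drawdown exceeds $L_n$ at some time $k\leq n$, set $x := X^*_k$; then the walk hits $x$ by time $n$ and subsequently visits $x - L_n$. Truncating at $M_n := \lceil 2 v_P n\rceil$, the strong Markov property (quenched) together with spatial stationarity of $P$ yields
\[
\P\Bigl(\max_{k\leq n}(X^*_k - X_k)\geq L_n\Bigr) \;\leq\; \P(X^*_n>M_n) + (M_n+1)\,\P(T_{-L_n}<\infty).
\]
I would handle the tail $\P(X^*_n>M_n)$ by noting that $X^*_n > M_n$ forces $T_{M_n}\leq n$ while $M_n/v_P \sim 2n$. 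Markov's inequality applied to $|T_{M_n}-M_n/v_P|^\gamma$, combined with $\E\tau_1^\gamma<\infty$ for the fixed $\gamma\in(2,s)$ (Assumption \ref{sg2}) and a Rosenthal-type moment bound for the stationary weakly dependent sequence $(\tau_i-\E\tau_i)$, then gives $\P(X^*_n>M_n)\leq Cn^{-\gamma/2}$, which is summable.

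The core step is establishing the exponential decay $\P(T_{-L}<\infty)\leq Ce^{-cL}$. From \eqref{Thesis_hittingprob}, letting the right boundary go to infinity, I obtain
\[
P_\w(T_{-L}<\infty) = \frac{\Pi_{-L,-1}\,R_0}{R_{-L,-1}+\Pi_{-L,-1}\,R_0} \;\leq\; \frac{1-\kappa}{\kappa}\,\Pi_{-L,-1}\,R_0,
\]
where uniform ellipticity (Assumption \ref{uelliptic}) provides $R_{-L,-1}\geq \rho_{-L}\geq \kappa/(1-\kappa)$. Applying H\"older with $p\in(s/(s-1),s)$ (possible because $s>2$) and conjugate $q=p/(p-1)\in(1,s)$,
\[
\P(T_{-L}<\infty) \leq C_\kappa\,\bigl(E_P \Pi_{-L,-1}^p\bigr)^{1/p}\bigl(E_P R_0^q\bigr)^{1/q}.
\]
The pressure $\L(p):=\lim_L \tfrac{1}{L}\log E_P\Pi_{-L,-1}^p$ exists and is convex under Assumption \ref{mixing}, and by Varadhan's lemma combined with Assumption \ref{sg2} it vanishes at $p=0$ and $p=s$, so $\L(p)<0$ on $(0,s)$; this yields exponential decay of the first factor. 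The second factor is finite because $R_0$ has regularly varying tails of index $s$ under uniform ellipticity and mixing. Putting everything together with $L=L_n$ gives the summable bound $Cn^{-\gamma/2}+C''n\,e^{-c\delta\log^2 n}$, and Borel--Cantelli concludes.

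The main obstacle will be this exponential decay of $\P(T_{-L}<\infty)$ under $\alpha$-mixing rather than i.i.d.\ environments. The H\"older decomposition succeeds precisely because $s>2$ leaves room for a pair of conjugate exponents $(p,q)$ that simultaneously makes $E_P\Pi_{-L,-1}^p$ exponentially small and keeps $E_P R_0^q$ finite; a weaker moment condition would force a different strategy for handling the correlation between the environments on $\{-L,\dots,-1\}$ and $\{0,1,\dots\}$.
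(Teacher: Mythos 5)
Your proof is correct, and its skeleton is the same as the paper's: a union bound over the value of the running maximum, an exponential bound on $\P(T_{-L}<\infty)$ derived from the explicit hitting-probability formula \eqref{Thesis_hittingprob} together with Varadhan's lemma applied to $E_P \Pi_{0,k}$, and Borel--Cantelli. Two of your steps are detours worth flagging. First, the truncation at $M_n=\lceil 2v_Pn\rceil$ and the ensuing moment estimate for $\P(X^*_n>M_n)$ are unnecessary: the walk is nearest neighbor, so $X^*_n\leq n$ deterministically and the union bound can simply run over $x\in\{0,\dots,n-1\}$, which is what the paper does. Second, to control $E_P[\Pi_{-L,-1}R_0]$ the paper does not decouple via H\"older; it rewrites $\Pi_{-L,-1}R_0/R_{-L}\leq \Pi_{-L+1,0}(1+R_1)=\sum_{j\geq0}\Pi_{-L+1,j}$ and takes expectations term by term, so only the exponential decay of $E_P\Pi_{0,k}$ (i.e.\ $\sup_x(x-J(x))<0$, which needs only $s>1$) is used. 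Your H\"older split is valid but genuinely requires $s>2$ to fit a conjugate pair $(p,q)$ with $q<s$, and it requires $E_PR_0^q<\infty$ in the mixing setting; your appeal to regularly varying tails of $R_0$ is an i.i.d.\ fact (Kesten), and under Assumption \ref{mixing} you should instead obtain it from Minkowski's inequality and the exponential decay of $(E_P\Pi_{0,j}^q)^{1/q}$ for $q<s$. Neither point is a gap, but the paper's version of this step is shorter and valid under weaker hypotheses, so your closing claim that the argument hinges on $s>2$ is an artifact of your route rather than of the lemma.
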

\begin{proof}
First, note that the formulas for hitting times \eqref{Thesis_hittingprob} imply that
\[
 P_\w(T_{-M}< \infty)
= \frac{\Pi_{-M,-1}R_0}{R_{-M}} = \frac{ \Pi_{-M+1,0}(1+R_1)}{1+R_{-M+1}} \leq \Pi_{-M+1,0} (1+ R_1) = \sum_{j=0}^\infty \Pi_{-M+1,j}.
\]
%First, an easy electrical network argument shows that
%\begin{align*}
%P_\w(T_{-M}< \infty)
%&= \frac{\sum_{i=0}^\infty \prod_{j=-M+1}^i \rho_j}{\sum_{i=0}^\infty \prod_{j=-M+1}^{-M+i}\rho_j}\\
%&\leq \rho_{-M+1}\rho_{-M+2}\cdots\rho_0\left( 1+\rho_1+\rho_1\rho_2+\cdots \right)
%\end{align*}
Therefore, by the shift invariance of $P$,
\begin{equation}
\P(T_{-M}< \infty) \leq \sum_{k=M-1}^\infty E_P(\Pi_{0,k}) .\label{productpi}
%= \sum_{k=M}^\infty E_P(e^{k\bar{R}_k})
\end{equation}
Now, since $\rho_k$ is bounded (by Assumption \ref{uelliptic}), and since $J$ is a good rate function we may apply Varadhan's
Lemma \cite[Lemma 4.3.6]{dzLDTA} to get that 
\[
 \lim_{k\ra\infty}
\frac{1}{k} \log E_P \Pi_{0,k-1} = \lim_{k\ra\infty}
\frac{1}{k} \log E_P e^{k \left(\frac{1}{k} \sum_{i=0}^{k-1} \log \rho_i  \right)} = \sup_x (x - J(x))<0,
\]
where the last inequality is due to Assumption \ref{sg2} and the fact that $J(x)$ is non-negative and lower semi-continuous. 
Thus, there exists an $n_0$ such that $ E_P(\Pi_{0,k-1})\leq e^{\frac{k}{2}\sup_x (x-J(x))}$, for all  $n\geq n_0$.
Then, \eqref{productpi} implies that there exists a constant $\d_1>0$ such that $\P(T_{-M} < \infty) < e^{-\d_1 M }$ for all $M$ large enough. 
Therefore, for all $n$ large enough,
\begin{align*}
\P\left( \sup_{0\leq t \leq 1} X^*_{nt}-X_{\lfloor n t \rfloor} \geq \d \log^2(n)\right) & \leq \sum_{x=0}^{n-1} \P^x \left( X_k \leq x-\d\log^2(n), \quad \text{for some } k\leq n \right)\\
%&\leq n \P\left(X_k \leq -\d \log^2(n) \quad\text{for some } k\leq n \right) \\
&\leq n \P(T_{- \lceil \d \log^2(n) \rceil } < \infty)\\
&\leq n e^{-\d_1 \d \log^2(n)}.
\end{align*}
This last term is summable, and thus the lemma holds by the Borel-Cantelli Lemma.
\end{proof}
An immediate consequence of this last lemma is that $\lim_{n\ra\infty} \frac{X^*_{n}}{n}= \lim_{n\ra\infty}\frac{X_n}{n} =  v_P,\quad \P-a.s$. Letting $\phi^n(t):= \frac{X^*_{nt}}{n}$ and $\phi(t):= t\cdot v_P$ for $t\geq 0$, this implies that $\phi^n(t)$ converges to $\phi(t)$ pointwise. However, since each $\phi^n$ is monotone in $t$ and $\phi$ is monotone and continuous, the convergence is uniform on compact subsets.

\begin{lem}\label{Thesis_R}
For $P-a.e.$ environment $\w$, the random variables $R^n_\cdot \in D[0,\infty)$ converge in quenched distribution as $n\ra\infty$ to $W_{v_P \cdot}$, where $W_\cdot$ is a standard Brownian motion.
%$R^n_t$ converges in (quenched) distribution to $W_{v_P\cdot t}$,
%where $W$ is Brownian motion.
\end{lem}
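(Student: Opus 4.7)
The plan is to bound $R^n$ by $-Y^n$ modulo a small error, prove a quenched functional limit for $Y^n$ via Theorem~\ref{Thesis_HTCLT} and a random time change, and check that the error is negligible on compact time intervals.

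By the defining property $\sum_{i=1}^{X^*_{nt}}\tau_i \leq nt < \sum_{i=1}^{X^*_{nt}+1}\tau_i$ of $X^*_{nt}$, we obtain the deterministic sandwich
\[
-Y^n_t \;\leq\; R^n_t \;\leq\; -Y^n_t + \frac{\tau_{X^*_{nt}+1}}{\s\sqrt{n}}.
\]
Since $-W_{v_P \cdot}$ has the same law as $W_{v_P \cdot}$ in $D[0,\infty)$, the lemma reduces to showing, for $P$-a.e.\ $\w$, that (i) $Y^n \limdw W_{v_P \cdot}$ under $P_\w$, and (ii) $\sup_{t\leq T}\tau_{X^*_{nt}+1}/\sqrt{n}\to 0$ in $P_\w$-probability for every $T<\infty$.

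For (i), note that $Y^n_t = Z^n_{\phi^n(t)}$ where $\phi^n(t):=X^*_{nt}/n$. Theorem~\ref{Thesis_HTCLT} gives the quenched convergence $Z^n \limdw W$ in $D[0,\infty)$. The law of large numbers $X_n/n\to v_P$ (valid under the assumptions of this chapter and passing from $\P$-a.s.\ to $P_\w$-a.s.\ for $P$-a.e.\ $\w$ by Fubini), together with Lemma~\ref{Thesis_nu}, gives $X^*_n/n\to v_P$ in the same sense. Since $t\mapsto X^*_{nt}$ is non-decreasing and the candidate limit $\phi(t):=v_P t$ is continuous, this pointwise convergence upgrades to uniform convergence on compact intervals by Dini's theorem. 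Because $\phi$ is continuous and $W$ is a.s.\ continuous, the composition map $(f,g)\mapsto f\circ g$ on $D[0,\infty)\times C[0,\infty)$ is continuous at $(W,\phi)$ almost surely in $W$, so the continuous mapping theorem (applied to the joint convergence $(Z^n,\phi^n)\limdw (W,\phi)$, which follows from the quenched weak convergence of $Z^n$ and the deterministic limit $\phi$ of $\phi^n$) yields $Y^n = Z^n\circ \phi^n \limdw W\circ \phi = W_{v_P\cdot}$.

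For (ii), Lemma~\ref{Thesis_nu} allows us to replace the random upper index by a linear one: $X^*_{nT}\leq 2nv_P T$ holds with quenched probability tending to $1$, so it suffices to control $\max_{k\leq Cn}\tau_k/\sqrt{n}$ with $C:=2v_P T$. Fix $\gamma\in(2,s)$; then Chebyshev, the union bound, and Birkhoff's ergodic theorem applied to the stationary sequence $E_\w\tau_k^\gamma = E_{\theta^{k-1}\w}\tau_1^\gamma$ give
\[
P_\w\!\left(\max_{k\leq Cn}\tau_k > \e\sqrt{n}\right) \;\leq\; \frac{1}{(\e\sqrt{n})^\gamma}\sum_{k=1}^{Cn} E_\w\tau_k^\gamma \;=\; O\!\left(n^{1-\gamma/2}\right) \;\longrightarrow\; 0,\qquad P\text{-a.s.}
\]
The hard part is step (i): one must carefully justify that the random, possibly discontinuous time change $\phi^n$ can be composed with the Skorohod-convergent processes $Z^n$ to yield weak convergence of the composition. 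This is what forces the uniform-on-compacts (rather than merely pointwise) convergence of $\phi^n$ to the continuous limit $\phi$.
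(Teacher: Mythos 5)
Your overall strategy is the same as the paper's: sandwich $R^n$ using the defining property of $X^*_{nt}$, obtain the quenched limit of the time-changed process $Z^n\circ\phi^n$ from Theorem \ref{Thesis_HTCLT} together with the uniform-on-compacts convergence of $\phi^n(t)=X^*_{nt}/n$ to $\phi(t)=v_Pt$, and show the overshoot term is negligible. The one error is the sign in your sandwich: from $\sum_{i=1}^{X^*_{nt}}\tau_i\leq nt<\sum_{i=1}^{X^*_{nt}+1}\tau_i$ one gets $Y^n_t\leq R^n_t< Y^n_t+\tau_{X^*_{nt}+1}/(\s\sqrt{n})$, because $R^n_t-Y^n_t=\frac{1}{\s\sqrt{n}}\bigl(nt-\sum_{i=1}^{X^*_{nt}}\tau_i\bigr)$ lies in $[0,\tau_{X^*_{nt}+1}/(\s\sqrt{n}))$; the lower bound $-Y^n_t\leq R^n_t$ that you assert is false in general, since $R^n_t+Y^n_t=(R^n_t-Y^n_t)+2Y^n_t$ has no definite sign once $Y^n_t$ is appreciably negative. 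With the correct signs your appeal to the symmetry of Brownian motion becomes unnecessary: $R^n$ inherits the limit of $Y^n$ directly. Everything else is sound and matches the paper — the paper invokes Billingsley's lemma on the continuity of $(x,\psi)\mapsto x\circ\psi$ for exactly your step (i) — with one mild difference in step (ii): the paper proves $\max_{i\leq n}\tau_i/\sqrt{n}\to 0$ $\P$-a.s.\ by an annealed Chebychev/Borel--Cantelli argument using $\E\tau_1^\gamma<\infty$ for some $\gamma>2$, whereas your quenched union bound combined with Birkhoff's theorem yields only convergence in $P_\w$-probability, which is nonetheless sufficient for the weak-convergence conclusion.
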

\begin{proof}
For any $T\in(0,\infty)$, let $D[0,T]$ be the space of all real valued funtions on $[0,T]$ which are right continous and which have limits from the left, equipped with the Skorohod topology. Then, it is enough to show that $R^n_\cdot \in D[0,T]$ converges in quenched distribution to $W_{v_P \cdot}$ in the space $D[0,T]$ for all $T<\infty$. 

For the remainder of the chapter, we will use $\eta_n \limdw \eta$ to mean that $\eta_n$ converges in quenched distribution to $\eta$ as $n\ra\infty$. 
Note that the remarks preceeding the theorem imply that $\phi^n \limdw \phi$ in $D[0,T]$ for any $T<\infty$. Also, recall that
Theorem \ref{Thesis_HTCLT} implies that $Z^n \limdw W$.
Also, note that $Y^n=Z^n\circ \phi^n$ by definition. Therefore, by \cite[lemma on p. 151]{bCOPM},
$Y^n \in D[0,T]$ converges in distribution to $W\circ \phi$. 
(This is just a consequence of the continuous mapping theorem for Polish spaces and the fact that the mapping $(x,\psi)\mapsto x\circ \psi$ is a continuous mapping from $D[0,T]\times D_0$ to $D[0,T]$, where $D_0\subset D[0,T]$ is the subset non-decreasing functions with values between 0 and 1.)

It follows from the definition of $X^*_{nt}$, that
%Now, by the definition of $X^*_{nt}$ we have
$\sum_{i=1}^{X^*_{nt}} \tau_i \leq nt < \sum_{i=1}^{X^*_{nt}+1}
\tau_i$. Thus, 
\[
Y_t^n \leq R_t^n<
Y_t^n+\frac{1}{\s\sqrt{n}}\tau_{X^*_{nt}+1}.
\] 
For any $\e>0$, Chebychev's inequality implies that $\P(\tau_k \geq \e \sqrt{k} ) \leq \e^{-\gamma} \E \tau_1^\gamma k^{-\gamma/2}$. Since $\gamma/2 > 1$, the Borel-Cantelli Lemma implies that $ \lim_{k\ra\infty} \frac{\tau_k}{\sqrt{k}} = 0$, $\P-a.s$.
This can be used to show that $\max_{i\leq n} \frac{\tau_i}{\sqrt{n}}$
converges almost surely to 0, and thus
$\frac{1}{\s\sqrt{n}}\tau_{X^*_{nt}+1}$ converges uniformly to 0
for $t\in[0,1]$ as $n\ra\infty$. 
Thus, $R_\cdot^n$ is
squeezed between two sequences of functions that both converge in
distribution to $W_{v_P\cdot }$.
\end{proof}
While it may not be immediately apparent, Lemma \ref{Thesis_R} is not far from a quenched functional CLT for the random walk. To see this, note that 
\[
R_t^n = \frac{-1}{\s\sqrt{n}}\left(nt - \sum_{k=1}^{X^*_{nt}} E_\w\tau_k\right) 
= \frac{1}{v_P\s\sqrt{n}}\left(X^*_{nt} - ntv_P + \sum_{k=1}^{X^*_{nt}}(v_P E_\w\tau_k - 1)\right).
\]
By Lemma \ref{Thesis_nu}, we may replace $X_{nt}^*$ above by $X_{\lfloor nt \rfloor}$ without changing the limiting distribution. Thus, to obtain a quenched functional CLT for the random walk, we only need to replace $\sum_{k=1}^{X^*_{nt}}(v_P E_\w\tau_k - 1)$ by something that only depends on the environment. In order to accomplish this, we first need to make a few technical estimates. 

\end{section}

\begin{section}{A Few Technical Estimates}
For the following Lemmas we will need to define a few additional random variables in order to take advantage of the mixing properties of the environment. Consider a RWRE modified by never allowing it to backtrack a distance of $\log^2(n)$ from its farthest excursion to the right. That is, after first hitting $i$ the environment is changed so that $\w_{i-\lceil \log^2 n \rceil} = 1$.  Let $T_i^{(n)}$ be the hitting time of the point $i$ for such a walk, and then let $\tau_i^{(n)}:=T_i^{(n)}-T_{i-1}^{(n)}$. Also let $\E\tau_1^{(n)}=:\frac{1}{v_P^{(n)}}$.
Note, the argument given in Lemma \ref{Thesis_nu} shows that $\P(\tau_1^{(n)} \neq \tau_1) \leq \P(T_{-\lceil \log^2(n) \rceil} < \infty) \leq e^{-\d_1\log^2(n)}=n^{-\d_1\log(n)}$ for all $n$ large enough. Using this and the Cauchy-Schwartz inequality, it follows that that
\begin{align*}
\E(\tau_1-\tau_1^{(n)}) \leq \left( \E(\tau_1-\tau_1^{(n)})^2\P(\tau_1\neq \tau_1^{(n)}) \right)^{1/2} \leq \sqrt{\E \tau_1^2 }n^{\frac{-\d_1}{2}\log(n)}.
\end{align*}
Thus, there exist positive constants $A$ and $B$ depending \emph{only} on the law of the environment $P$, such that $\E(\tau_1 - \tau_1^{(n)}) \leq A n^{-B\log(n)}$ for all $n$. These constants $A$ and $B$ appear in the statement of the following lemma, which provides a crucial estimate:
\begin{lem}\label{Thesis_block}
For any $x>0$ and any integers $k$ and $n$,
\begin{align*}
&\P\left( \max_{1\leq j \leq k} \left| \sum_{i=1}^j
(\tau_i-\frac{1}{v_P}) \right| \geq x \right) \\
&\qquad \leq \frac{3k^2}{x}
A n^{-B\log{n}} +D_\gamma \frac{K_n^{1+\gamma}}{x^\gamma}
\left\lceil \frac{k}{K_n} \right\rceil^{\gamma/2}
+(k+2K_n)\left(\frac{\E \tau_1^\gamma}{n^\gamma} +
n\a(K_n)\right)+\mathbf{1}_{\{A n^{-B\log(n)}\geq \frac{x}{3k}\}} ,
\end{align*}
where $K_n:=\lceil \log^2(n) \rceil$, $A$ and $B$ are positive constants depending only on the distribution $P$, and $D_\gamma$ is a positive constant depending only on $P$ and $\gamma$.
\end{lem}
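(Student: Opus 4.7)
\textbf{Proof proposal for Lemma \ref{Thesis_block}.} The plan is to bound the partial sums by first replacing the correlated $\tau_i$'s with the modified hitting times $\tau_i^{(n)}$ (which depend on the environment only in a window of length $K_n$), and then exploiting the approximate independence of these modified variables via the $\alpha$-mixing assumption and a Rosenthal-type moment bound. Write
\[
\tau_i - \tfrac{1}{v_P} = \left( \tau_i^{(n)} - \tfrac{1}{v_P^{(n)}} \right) + (\tau_i - \tau_i^{(n)}) - \left( \tfrac{1}{v_P} - \tfrac{1}{v_P^{(n)}} \right),
\]
and note that $\tau_i \geq \tau_i^{(n)}$ and $0 \leq \tfrac{1}{v_P} - \tfrac{1}{v_P^{(n)}} = \E(\tau_1 - \tau_1^{(n)}) \leq A n^{-B\log n}$. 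Consequently the maximum over $j \leq k$ of the ``error'' part is bounded above by $\sum_{i=1}^k (\tau_i - \tau_i^{(n)}) + k A n^{-B\log n}$. If $k A n^{-B\log n} \geq x/3$, the indicator term in the lemma already swallows the left-hand side; otherwise Markov's inequality applied to the non-negative sum $\sum_{i=1}^k (\tau_i - \tau_i^{(n)})$, whose annealed expectation is at most $k A n^{-B\log n}$, will yield the first term of the lemma's bound.

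It then remains to bound $\P(\max_{j \leq k} |S_j^{(n)}| \geq x/3)$, where $S_j^{(n)} := \sum_{i=1}^j (\tau_i^{(n)} - \tfrac{1}{v_P^{(n)}})$. Here I would truncate further by replacing $\tau_i^{(n)}$ with $\tau_i^{(n)} \wedge n$; since the $\tau_i$'s are annealed-stationary with $\E \tau_1^\gamma < \infty$, the probability that some $\tau_i^{(n)}$ in the range $\{1, \ldots, k+2K_n\}$ exceeds $n$ is bounded by $(k + 2K_n) \E \tau_1^\gamma / n^\gamma$ (Markov), which will contribute to the third summand of the lemma bound.

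For the remaining truncated and centered sum, partition $\{1,\ldots,k\}$ into $L := \lceil k/K_n \rceil$ consecutive blocks of length $K_n$. The crucial observation is that $\tau_i^{(n)}$ is measurable with respect to the portion of the environment in an interval of length $K_n$ to the left of $i$, so non-adjacent blocks are separated by a distance of at least $K_n$ in the environment. Splitting the blocks into even-indexed and odd-indexed subcollections and applying Assumption \ref{mixing} repeatedly (at a cost $L \cdot n \alpha(K_n)$ after accounting for the $n$-truncation, giving the $n\alpha(K_n)$ contribution in the third summand) reduces the problem to bounding the partial sum maximum of a sequence of independent, centered, uniformly bounded block-sum increments. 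On this i.i.d. sequence, a combination of Doob's maximal inequality with a Rosenthal / Bahr-Esseen $\gamma$-th moment bound (using $\gamma \in (2, s)$) produces the estimate $D_\gamma K_n^{1+\gamma} L^{\gamma/2} / x^\gamma$, which is precisely the second summand.

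The main obstacle is the careful bookkeeping in this decoupling step: I have to convert the mixing assumption (which controls annealed correlations between separated environment functionals) into a statement about block sums of the $\tau_i^{(n)}$'s, while simultaneously replacing the quenched centering $E_\w \tau_i^{(n)}$ by the annealed constant $1/v_P^{(n)}$ and handling the truncation at level $n$. Each of these swaps introduces an error term that must be shown to fit into one of the four summands in the statement, and the indicator term $\mathbf{1}_{\{An^{-B\log n} \geq x/(3k)\}}$ is what absorbs the otherwise-forbidden regime where the deterministic error $k A n^{-B\log n}$ already exceeds $x/3$ and Markov cannot be used.
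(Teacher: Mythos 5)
Your proposal follows essentially the same route as the paper's proof: decompose $\tau_i - \frac{1}{v_P}$ through the reflected times $\tau_i^{(n)}$, absorb $\sum_i(\tau_i-\tau_i^{(n)})$ via Markov's inequality and the bound $\E(\tau_1-\tau_1^{(n)})\leq A n^{-B\log n}$, absorb the deterministic centering shift $\frac{1}{v_P}-\frac{1}{v_P^{(n)}}$ into the indicator term, and treat the main term by truncating at level $n$, decoupling $K_n$-separated functionals via the $\a$-mixing assumption, and applying a maximal inequality combined with a Marcinkiewicz--Zygmund/Rosenthal moment bound for $\gamma\in(2,s)$ --- exactly the paper's steps. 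The only divergence is in the blocking: you partition $[1,k]$ into consecutive intervals of length $K_n$ and split by parity, whereas the paper distributes the indices into $2K_n$ arithmetic progressions modulo $2K_n$ and union-bounds over these classes at threshold $x/(6K_n)$; the paper's choice has the minor advantage that $\sum_{i=1}^j = \sum_m \sum_{i\in B_m\cap[1,j]}$ holds exactly for every $j$, so the running maximum passes through the decomposition with no extra work, while your consecutive-block scheme additionally requires controlling the partial sums at indices $j$ interior to a block (a fixable but unaddressed point in your sketch).
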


\begin{proof}
First, note that the probability in the statement of the lemma is less than
\begin{align} \label{Thesis_split}
\P\left( \max_{1\leq j \leq k} \left| \sum_{i=1}^j (\tau_i-\tau_i^{(n)}) \right|  \geq \frac{x}{3}\right)
&+\P\left( \max_{1\leq j \leq k} \left| \sum_{i=1}^j (\tau_i^{(n)}-\frac{1}{v_P^{(n)}}) \right| \geq \frac{x}{3}\right)\\
&\quad +\P\left( \max_{1\leq j \leq k} \left| \sum_{i=1}^j (\frac{1}{v_P}-\frac{1}{v_P^{(n)}}) \right| \geq \frac{x}{3} \right). \nonumber
\end{align}
By Chebychev's inequality, the first probability in \eqref{Thesis_split} is less than
\[
k \P\left( \tau_1-\tau_1^{(n)} \geq \frac{x}{3k} \right)
\leq \frac{3k^2}{x} \E|\tau_1-\tau_1^{(n)}|
\leq \frac{3k^2}{x} A n^{-B\log(n)}.
\]
The third probability in \eqref{Thesis_split} is either 0 or 1, since it involves no random variables. Also, $\tau_i^{(n)}\leq \tau_i$ for any $n$, and so $\frac{1}{v_P}\leq \frac{1}{v_P^{(n)}}$. Thus, the maximum in the third term is obtained when $j=k$. Since,
\begin{align*}
\sum_{i=1}^k (\frac{1}{v_P}-\frac{1}{v_P^{(n)}}) \geq \frac{x}{3}
\Longrightarrow \E \left( \sum_{i=1}^k \tau_i - \tau_i^{(n)} \right) \geq  \frac{x}{3}
\Longrightarrow k \E(\tau_1-\tau_1^{(n)}) \geq \frac{x}{3}
 \Longrightarrow  A n^{-B\log(n)} \geq \frac{x}{3k},
\end{align*}
it follows that $\P\left( \left| \sum_{i=1}^k (\frac{1}{v_P}-\frac{1}{v_P^{(n)}}) \right| \geq \frac{x}{3} \right) \leq \mathbf{1}_{\{ A n^{-B\log(n)} \geq \frac{x}{3k} \}}$.

To get an upper bound on the second probability in \eqref{Thesis_split},
we will break the sum inside the probability into ``blocks'' of
exponentially mixing random variables. Let $K_n:=\lceil \log^2(n)
\rceil$. Now, $\tau_i^{(n)}$ and $\tau_j^{(n)}$ are $K_n$-separated
if $|i-j| > 2\log^2(n)$. We will break the set of integers into
$2K_n = 2\lceil \log^2(n) \rceil$ blocks: $B_0 = \{\ldots, 0,
2K_n, 4K_n,\ldots  \}$, $B_1 = \{\ldots,1,1+2K_n,1+ 4K_n,\ldots
\}$, $B_2=\{\ldots,2,2+2K_n, 2+4K_n,\ldots \}$, and so on. Then,
\begin{align}
\P\left( \max_{1\leq j \leq k} \left| \sum_{i=1}^j (\tau_i^{(n)}-\frac{1}{v_P^{(n)}}) \right| \geq \frac{x}{3} \right)
&\leq \P\left( \max_{1\leq j \leq k}\sum_{m=1}^{2K_n} \left| \sum_{i\in B_m \cap [1,j]} ( \tau_i^{(n)} - \frac{1}{v_P^{(n)}} ) \right|  \geq \frac{x}{3} \right) \nonumber \\
&\leq 2 K_n \P\left( \max_{1\leq j \leq k} \left| \sum_{i\in B_1\cap[1,j]} ( \tau_i^{(n)}-\frac{1}{v_P^{(n)}} )  \right| \geq \frac{x}{6K_n}  \right). \label{Thesis_blk}
\end{align}
Now, let $\bar\tau_i^{(n)}$ be i.i.d. random variables that are independent
of $\tau_i^{(n)}$, but with the same distribution. Then, the mixing
properties of Assumption \ref{mixing} allow us to substitute
$\bar\tau_i^{(n)}$ for $\tau_i^{(n)}$ with a small probabilistic cost. In
particular:
\begin{align}
&\P\left( \max_{1\leq j \leq k} \left| \sum_{i\in B_1\cap[1,j]} (
\tau_i^{(n)}-\frac{1}{v_P^{(n)}} )  \right| \geq x \right) \nonumber \\
&\qquad \leq
\left\lceil \frac{k}{2K_n} \right\rceil \left(\frac{\E
\tau_1^\gamma}{n^\gamma} + n\a(K_n)\right) + \P\left( \max_{1\leq
j \leq k} \left| \sum_{i\in B_1\cap[1,j]} (
\bar\tau_i^{(n)}-\frac{1}{v_P^{(n)}} ) \right| \geq x
\right). \label{Thesis_switch}
\end{align}
To see this, we first substitute in $\bar\tau_1^{(n)}$ for $\tau_1^{(n)}$. For ease of notation, let $\xi_i:=\tau_i^{(n)}-\frac{1}{v_P}$ and $\bar\xi_i := \bar\tau_i^{(n)}-\frac{1}{v_P}$. Then,
\begin{align*}
&\P\left( \max_{1\leq j \leq k} \left| \sum_{i\in B_1\cap[1,j]} \xi_i  \right| \geq x \right)
=E_P\left[ P_\w\left( \max_{1\leq j \leq k} \left| \sum_{i\in B_1\cap[1,j]}  \xi_i  \right| \geq x \right) \right]\\
&\qquad=E_P\left[\sum_{m=1}^\infty P_\w(\tau_1^{(n)} = m)P_\w\left( \max_{1\leq j \leq k} \left| (m-\frac{1}{v_P}) + \sum_{i\in B_1\cap[2K_n+1,j]}  \xi_i   \right| \geq x \right)  \right]\\
&\qquad\leq \sum_{m\leq n} E_P\left[ P_\w(\tau_1^{(n)} = m)P_\w\left( \max_{1\leq j \leq k} \left| (m-\frac{1}{v_P}) + \sum_{i\in B_1\cap[2K_n+1,j]}  \xi_i   \right| \geq x \right) \right]
 + \P(\tau_1^{(n)}> n) \\
&\qquad\leq \sum_{m\leq n} \P(\bar\tau_1^{(n)} = m)\P\left( \max_{1\leq j \leq k} \left| (m-\frac{1}{v_P}) + \sum_{i\in B_1\cap[2K_n+1,j]}  \xi_i   \right| \geq x \right) 
+ n\a(K_n) + \P(\tau_1^{(n)}> n)\\
&\qquad\leq \P\left(\max_{1\leq j\leq k} \left|\bar\xi_1 + \sum_{i\in
B_1\cap[2K_n+1,j]} \xi_i \right|\geq x \right)+
n\a(K_n)+\frac{\E\tau_1^\gamma}{n^\gamma} .
\end{align*}
Iterating this argument proves \eqref{Thesis_switch}. Then, \eqref{Thesis_switch} and \eqref{Thesis_blk} imply
\begin{align}
&\P\left( \max_{1\leq j \leq k} \left| \sum_{i=1}^j (\tau_i^{(n)}-\frac{1}{v_P^{(n)}}) \right| \geq \frac{x}{3} \right) \nonumber \\
&\qquad \leq (k+2K_n)\left(\frac{\E \tau_1^\gamma}{n^\gamma} + n\a(K_n)\right) + 2K_n\P\left( \max_{1\leq j \leq k} \left| \sum_{i\in B_1\cap[1,j]} ( \bar\tau_i^{(n)}-\frac{1}{v_P^{(n)}} )  \right| \geq \frac{x}{6K_n} \right)\nonumber \\
&\qquad \leq  (k+2K_n)\left(\frac{\E \tau_1^\gamma}{n^\gamma} +
n\a(K_n)\right) + \frac{2\cdot 6^\gamma
K_n^{1+\gamma}}{x^\gamma}\E\left|\sum_{i\in
B_1\cap[1,k]}(\bar\tau_i^{(n)}-\frac{1}{v_P^{(n)}})\right|^\gamma .
\label{Thesis_blk2}
\end{align}
The second inequality above follows from the Kolmogorov inequality for
martingales, since the random variables $(\bar\tau_i^{(n)}-\frac{1}{v_P^{(n)}})$ are i.i.d. and have zero mean. 

The Zygmund-Marcinkiewicz inequality \cite[Theorem 2]{cPTIIM} says that for any $p\geq 1$, there exists a universal constant $C_p$ such that $E|\sum_{i=1}^k \xi_i|^p < C_p E|\sum_{i=1}^k
\xi_i^2|^{p/2}$, for any independent, zero-mean random variables
$\xi_i$. If, in addition, $p\geq 2$, then by Jensen's
inequality $|\sum_{i=1}^k \xi_i^2|^{p/2}\leq
k^{p/2-1}\sum_{i=1}^k |\xi_i|^p$, which implies $E|\sum_{i=1}^k
\xi_i|^p \leq C_p k^{p/2-1}E\left( \sum_{i=1}^k |\xi_i|^p\right)$.
Furthermore, if the $\xi_i$ are also identically distributed then
this last term equals $C_p k^{p/2} E|\xi_1|^p$. Thus, since the
random variables $\bar\tau_i^{(n)} - \frac{1}{v_P^{(n)}}$ are
i.i.d., we can apply the Zygmund-Marcinkiewicz inequality to obtain
\begin{align*}
\E\left|\sum_{i\in
B_1\cap[1,k]}(\bar\tau_i^{(n)}-\frac{1}{v_P^{(n)}})\right|^\gamma
\leq C_\gamma \left\lceil \frac{k}{2K_n} \right\rceil^{\gamma/2}
\E\left| \bar\tau_1^{(n)} - \frac{1}{v_P^{(n)}}  \right|^\gamma 
\leq C_\gamma \left\lceil \frac{k}{2K_n} \right\rceil^{\gamma/2}
2^{\gamma-1}\E\left(|\tau_1|^\gamma +\frac{1}{v_P^\gamma}\right).
\end{align*}
Combining this with \eqref{Thesis_blk2} gives
\[
\P\left( \max_{1\leq j \leq k} \left| \sum_{i=1}^j
(\tau_i^{(n)}-\frac{1}{v_P^{(n)}}) \right| \geq \frac{x}{3}
\right) \leq (k+2K_n)\left(\frac{\E \tau_1^\gamma}{n^\gamma} +
n\a(K_n)\right)\\ + \frac{K_n^{1+\gamma}}{x^\gamma} D_\gamma
\left\lceil \frac{k}{2K_n} \right\rceil^{\gamma/2},
\]
where $D_\gamma$ is a constant depending only on $P$ and $\gamma$. 
\end{proof}

The following lemma is the essential step in proving a quenched
CLT:
%When it is used later, either $\a$ or $\b$ will usually be 1.
\begin{lem}\label{Thesis_sum}
For any $\a<\beta < \gamma$, 
\[
\max_{ j,k\in[1,n^\b];\, |k-j|<n^\a } \left|\frac{1}{n^{\b/2}}
\sum_{i=j}^k (\tau_i - \frac{1}{v_P}) \right| \limn 0, \quad
\P-a.s.
\]
\end{lem}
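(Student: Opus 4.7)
The plan is to reduce the maximum over all short intervals $[j,k] \subset [1,n^\b]$ to a maximum of ``anchored'' partial sums over a sparse grid, apply Lemma \ref{Thesis_block} to each anchor, and conclude by Borel--Cantelli along the subsequence $n_k = 2^k$. The key geometric observation is that any interval $[j,k] \subset [1,n^\b]$ with $|k-j|<n^\a$ is contained in $[m,\, m + 2\lfloor n^\a \rfloor]$ for some $m$ equal to a multiple of $\lfloor n^\a \rfloor$, so that
\[
\sum_{i=j}^{k}\bigl(\tau_i - \tfrac{1}{v_P}\bigr) \;=\; \sum_{i=m}^{k}\bigl(\tau_i - \tfrac{1}{v_P}\bigr)\;-\;\sum_{i=m}^{j-1}\bigl(\tau_i - \tfrac{1}{v_P}\bigr).
\]
The target maximum is therefore dominated by $2 \max_m \max_{l \in [m,\,m+2\lfloor n^\a \rfloor]} \bigl|\sum_{i=m}^{l}(\tau_i - 1/v_P)\bigr|$, the outer maximum running over the $O(n^{\b-\a})$ anchors $m$ in $[0,n^\b]$.

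Next I would use the stationarity of $(\tau_i)_{i \ge 1}$ under $\P$ (which follows from the strong Markov property under $P_\w$ together with the shift-invariance of $P$, via the identification of the time-shift $i \mapsto i+m$ with the environment-shift $\w \mapsto \theta^m \w$): each anchored block-maximum has the same annealed distribution as $\max_{1 \le l \le 2\lfloor n^\a \rfloor}\bigl|\sum_{i=1}^{l}(\tau_i - 1/v_P)\bigr|$. A union bound over the $O(n^{\b-\a})$ anchors combined with Lemma \ref{Thesis_block}, applied with $k = 2\lfloor n^\a \rfloor$ and $x = \e n^{\b/2}/2$, yields as its dominant contribution
\[
O\!\left( (\log n)^{2+\gamma}\, n^{(\b-\a)(1-\gamma/2)}\right),
\]
which is polynomially small in $n$ precisely because Assumption \ref{sg2} furnishes $\gamma > 2$. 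The remaining three summands in Lemma \ref{Thesis_block} are either super-polynomially small (the two terms carrying the factor $n^{-B\log n}$, and the mixing remainder $n\a(K_n)$) or of order $n^{\b - \gamma}$, hence polynomially small by the hypothesis $\b < \gamma$.

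Because the dominant exponent $(\b-\a)(1-\gamma/2)$ depends on $\b-\a$ and can be arbitrarily close to zero, a direct Borel--Cantelli in $n$ is not guaranteed. Instead I would evaluate the bound along the geometric subsequence $n_k = 2^k$, on which all contributions are summable in $k$, and conclude that the $n_k^{\b/2}$-rescaled maximum vanishes almost surely along $n_k$. Upgrading to arbitrary $n$ is then essentially free: for $n \in [2^k, 2^{k+1}]$ the index set $[1,n^\b]$ and the constraint $|k-j| < n^\a$ only widen at $n = 2^{k+1}$, so the maximum at $n$ is bounded by the maximum at $2^{k+1}$, while the normalization $n^{\b/2}$ changes by at most the bounded factor $2^{\b/2}$.

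The main obstacle is the tight balance between the combinatorial cost $n^{\b-\a}$ of the union bound and the Zygmund--Marcinkiewicz-type tail $(n^\a)^{\gamma/2}/n^{\gamma\b/2}$ in Lemma \ref{Thesis_block}: their product decays as a power of $n$ \emph{only} when $\gamma > 2$, which is exactly the content of Assumption \ref{sg2}. Everything else -- the annealed stationarity of $(\tau_i)$, the passage from $n_k = 2^k$ to arbitrary $n$, and the handling of the exceptional terms in Lemma \ref{Thesis_block} -- is routine once this balance is in place.
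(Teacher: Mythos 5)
Your proposal is correct and follows essentially the same route as the paper: block/anchor decomposition of the maximum, a union bound over $O(n^{\b-\a})$ anchors, Lemma \ref{Thesis_block} with $k\asymp n^\a$ and $x\asymp n^{\b/2}$, and then Borel--Cantelli along a subsequence followed by interpolation via monotonicity of the maximum. The only (harmless) difference is the subsequence: you use the geometric sequence $2^k$, along which any polynomial decay is summable, whereas the paper uses $N^m$ for a fixed integer $m$ chosen large enough that the exponents drop below $-1$; both interpolation steps work for the same reason.
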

\begin{proof}
By dividing the interval $[1,n^\b]$ into blocks of length $n^\a$,
we get that for any $\d>0$,
\[
\P\left(\max_{j,k\in[1,n^\b];\, |k-j|<n^\a}
\left|\frac{1}{n^{\b/2}} \sum_{i=j}^k (\tau_i - \frac{1}{v_P})
\right| \geq \d \right) \leq \lceil n^{\b-\a} \rceil \P\left(
\max_{1\leq k < n^\a} \left|\frac{1}{n^{\b/2}} \sum_{i=1}^k
(\tau_i-\frac{1}{v_P}) \right| \geq \frac{\d}{3} \right).
\]
Now, choose an integer $m$ large enough so that
$\min\{(\gamma-\beta),(\frac{\gamma}{2}-1)(\b-\a)\}>\frac{1}{m}$.
Then, letting $N^m$ take the place of $n$ above and applying Lemma
\ref{Thesis_block} (with $k=N^{m\a}$, $x=\frac{\d}{3}N^{m\beta/2}$ and $n
= N^m$),
\begin{align*}
\P \Biggl( \max_{j,k\in[1,N^{m\b}];\, |k-j|<N^{m\a}}  &
\left|\frac{1}{N^{m\b/2}} \sum_{i=j}^k (\tau_i - \frac{1}{v_P})
\right| \geq \d \Biggr)  \\
&\leq \lceil N^{m(\b-\a)} \rceil
\P\left( \max_{1\leq k < N^{m\a}} \left|\sum_{i=1}^k
(\tau_i-\frac{1}{v_P})
\right| \geq \frac{\d}{3}N^{m\b/2} \right)\\
&=\lceil N^{m(\b-\a)} \rceil \left( \bigo\left(N^{m\gamma(\a-\beta)/2}\log(N)^{2+\gamma}\right)+\bigo\left(N^{m(\a-\gamma)}\right) \right) \\
&=\bigo\left(N^{m(\frac{\gamma}{2}-1)(\a-\beta)}\log(N)^{2+\gamma}\right)
+ \bigo\left( N^{m(\beta-\gamma)} \right).
\end{align*}
Our choice of $m$ makes both of the exponents of $N$ in the last
line less than $-1$ so that the last line is summable. Thus, the
Borel-Cantelli Lemma implies that
\begin{equation}
\max_{j,k\in[1,N^{m\b}];\, |k-j|<N^{m\a}}
\left|\frac{1}{N^{m\b/2}} \sum_{i=j}^k (\tau_i - \frac{1}{v_P})
\right| \limN 0 ,\quad \P-a.s. \label{Thesis_subseq}
\end{equation}
This essentially says that the limit in the statement of the lemma
converges to 0 along the subsequence $n^m$. It turns out this
subsequence is dense enough to get convergence of the original
sequence. We re-write the original sequence to be able to apply
\eqref{Thesis_subseq}:
\begin{align*}
&\max_{j,k\in[1,n^\b];\, |k-j|<n^\a} \left|\frac{1}{n^{\b/2}}
\sum_{i=j}^k (\tau_i - \frac{1}{v_P}) \right| \\
&\qquad \leq
\max_{j,k\in[1,\lceil n^{1/m} \rceil^{m\b}];\, |k-j|<\lceil
n^{1/m} \rceil^{m\a}} \left|\frac{1}{n^{\b/2}} \sum_{i=j}^k
(\tau_i - \frac{1}{v_P})\right|\\
&\qquad =\frac{\lceil
n^{1/m}\rceil^{m\b/2}}{n^{\b/2}}\max_{j,k\in[1,\lceil n^{1/m}
\rceil^{m\b}];\, |k-j|<\lceil n^{1/m} \rceil^{m\a}}
\left|\frac{1}{\lceil n^{1/m}\rceil^{m\b/2}} \sum_{i=j}^k (\tau_i
- \frac{1}{v_P})\right|.
\end{align*}
Since $\lceil n^{1/m}\rceil^{m\b /2}\sim n^{\b/2}$, we may apply
\eqref{Thesis_subseq}, with $N=\lceil n^{1/m} \rceil$, to finish the proof of the lemma.
\end{proof}

\begin{cor}\label{Thesis_nu2}
For any $\b> 1$ and any $\d>0$, 
\[\lim_{n\ra\infty} P_\w\left( \sup_{0\leq t \leq 1} |X^*_{nt}-ntv_P| \geq \d n^{\b/2} \right) = 0,\quad P-a.s.\]
\end{cor}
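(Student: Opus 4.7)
The plan is to reduce the quenched estimate on $X^*_{nt}$ to the annealed estimate from Lemma \ref{Thesis_sum} on partial sums of $\tau_i - 1/v_P$, then use Fubini to pass from $\P$-a.s.\ convergence to quenched convergence in $P_\w$-probability. First, the case $\b > 2$ is trivial, since $|X^*_{nt} - nt v_P| \leq n < \d n^{\b/2}$ for all large $n$. Assume $\b \in (1,2]$ henceforth.

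The first key step is a deterministic event inclusion based on the identity $X^*_{nt} \geq m \iff T_m \leq nt$. On the event $\sup_{0\leq t \leq 1} |X^*_{nt} - nt v_P| \geq \d n^{\b/2}$, one of two cases holds. Either $X^*_{nt} \geq nt v_P + \d n^{\b/2}$ for some $t$, in which case setting $m := \lceil nt v_P + \d n^{\b/2}\rceil$ gives $T_m \leq nt$, whence
\[
 \sum_{i=1}^m \left(\tau_i - \tfrac{1}{v_P}\right) \;\leq\; nt - \tfrac{m}{v_P} \;\leq\; -\tfrac{\d}{v_P}\, n^{\b/2};
\]
or $X^*_{nt} \leq nt v_P - \d n^{\b/2}$ for some $t$, in which case $m := \lfloor nt v_P - \d n^{\b/2}\rfloor + 1$ satisfies $T_m > nt$, giving a symmetric lower bound. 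Since $m \leq Cn$ for $C := v_P + \d + 1$, we conclude that on this event there exists $m \in [1, Cn]$ with $\bigl|\sum_{i=1}^m(\tau_i - 1/v_P)\bigr| \geq (\d/2v_P)\, n^{\b/2}$ for all sufficiently large $n$.

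It therefore suffices to show $\max_{1 \leq m \leq Cn} \bigl|\sum_{i=1}^m(\tau_i - 1/v_P)\bigr| = o(n^{\b/2})$, $\P$-a.s. Since $\b > 1$ and $\gamma > 2$, we may choose exponents $1 < \a < \b_0 < \min(\b, \gamma)$. Apply Lemma \ref{Thesis_sum} with $\a$ and $\b_0$ playing the roles of $\a$ and $\b$ there, and specialize to $j = 1$ and $k = m \in [1, Cn]$: because $\a > 1$ we have $Cn \leq n^\a$ for large $n$, so the condition $|k - j| = m - 1 < n^\a$ holds for all such $m$. This gives $\max_{1 \leq m \leq Cn}\bigl|\sum_{i=1}^m(\tau_i - 1/v_P)\bigr| = o(n^{\b_0/2}) = o(n^{\b/2})$, $\P$-a.s. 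By Fubini, the same $o(n^{\b/2})$ bound holds $P_\w$-a.s., and hence in $P_\w$-probability, for $P$-a.e.\ $\w$. Combining this with the event inclusion above completes the proof.

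The main observation is that Lemma \ref{Thesis_sum} already does all the heavy lifting, so the argument reduces to bookkeeping. The crucial role of the hypothesis $\b > 1$ is that it permits the choice $\a > 1$, under which a single window of length $n^\a$ in Lemma \ref{Thesis_sum} covers the full range $[1, Cn]$ of partial sums at once; no iterated block decomposition is needed.
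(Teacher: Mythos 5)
Your proof is correct and follows essentially the same route as the paper's: convert a deviation of $X^*_{nt}$ into a deviation of the partial sums $\sum_{i=1}^m(\tau_i - \frac{1}{v_P})$ via the duality $X^*_{nt}\ge m \iff T_m \le nt$, and then invoke Lemma \ref{Thesis_sum}. If anything, your final step is more careful than the paper's, whose last display takes the maximum over the overly generous range $k\le n^\b$ (where Lemma \ref{Thesis_sum} with $j=1$ does not directly apply, since the window length would need to be $n^\b$ rather than $n^\a$ with $\a<\b$); restricting to $m\le Cn$ and choosing $1<\a<\b_0<\b$ as you do is exactly the right fix.
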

\begin{proof}
We may assume without loss of generality that $\b<2$. It follows that
\begin{align*}
& P_\w \left( \sup_{0\leq t\leq 1} |X^*_{nt}-ntv_P| \geq \d n^{\b/2} \right) \\
&\qquad \leq P_\w\left(\exists t\in[0,1]: X^*_{nt}>ntv_P + \d n^{\b/2} \right) + P_\w \left( \exists t\in[0,1]: X^*_{nt}<ntv_P- \d n^{\b/2} \right)\\
&\qquad \leq P_\w \left(\exists t\in[0,1]:  \sum_{i=1}^{\lceil ntv_P +\d n^{\b/2} \rceil } \tau_i \leq nt \right)
      + P_\w \left(\exists t\in[0,1]:  \sum_{i=1}^{\lceil ntv_P - \d n^{\b/2} \rceil } \tau_i > nt \right)\\
&\qquad \leq P_\w \left( \inf_{0\leq t \leq 1} \sum_{i=1}^{\lceil ntv_P +\d n^{\b/2} \rceil } (\tau_i - \frac{1}{v_P}) < \frac{-\d n^{\b/2}}{v_P} \right) \\
&\qquad \qquad + P_\w \left( \sup_{0\leq t \leq 1} \sum_{i=1}^{\lceil ntv_P -\d n^{\b/2} \rceil } (\tau_i - \frac{1}{v_P}) > \frac{\d n^{\b/2} - 1}{v_P} \right)\\
&\qquad \leq 2 P_\w \left( \max_{1\leq k\leq n^\b} \left| \frac{1}{n^{\b/2}}\sum_{i=1}^k (\tau_i-\frac{1}{v_P})  \right| \geq \frac{\d}{2 v_P} \right), %\quad \text{ for large enough } n.
\end{align*}
for all $n$ sufficiently large. 
Then, Lemma \ref{Thesis_sum} implies that the last line tends to zero as $n\ra\infty$.
\end{proof}

\begin{cor}
For any $\b>1$ and $\d>0$,
\[\lim_{n\ra\infty} P_\w \left(\sup_{0\leq t \leq 1} |X_{nt} - ntv_p| \geq \d n^{\b/2}\right) = 0, \quad  P-a.s.\]
\end{cor}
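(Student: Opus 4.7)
The plan is to reduce this to Corollary \ref{Thesis_nu2} by controlling the gap between $X^*_{nt}$ and $X_{\lfloor nt \rfloor}$. By the triangle inequality,
\[
\sup_{0\leq t\leq 1}|X_{\lfloor nt\rfloor} - nt v_P| \;\leq\; \sup_{0\leq t\leq 1}|X^*_{nt}-nt v_P| \;+\; \sup_{0\leq t\leq 1}\bigl(X^*_{nt}-X_{\lfloor nt\rfloor}\bigr),
\]
where the second supremum needs no absolute values since $X^*_{nt}\geq X_{\lfloor nt\rfloor}$ by definition of $X^*$. The first term on the right is already handled by Corollary \ref{Thesis_nu2}, so the work is entirely in the second term.

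First I would upgrade Lemma \ref{Thesis_nu} from its annealed form to a quenched statement. Writing $A_n:=\bigl\{\sup_{0\leq t\leq 1}(X^*_{nt}-X_{\lfloor nt\rfloor})\geq \log^2 n\bigr\}$, Lemma \ref{Thesis_nu} (with $\d=1$) says $\P(A_n\text{ i.o.})=0$. Since $\P(A_n\text{ i.o.})=E_P\bigl[P_\w(A_n\text{ i.o.})\bigr]$ by Fubini, for $P$-a.e.\ $\w$ we get $P_\w(A_n\text{ i.o.})=0$. Because $\b>1$ implies $\log^2 n = o(n^{\b/2})$, for $P$-a.e.\ $\w$ and all sufficiently large $n$, the bound $\sup_{0\leq t\leq 1}(X^*_{nt}-X_{\lfloor nt\rfloor})<\tfrac{\d}{2}n^{\b/2}$ holds $P_\w$-a.s.

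Combining these ingredients with a union bound,
\[
P_\w\!\left(\sup_{0\leq t\leq 1}|X_{\lfloor nt\rfloor}-ntv_P|\geq\d n^{\b/2}\right) \leq P_\w\!\left(\sup_{0\leq t\leq 1}|X^*_{nt}-ntv_P|\geq \tfrac{\d}{2} n^{\b/2}\right) + P_\w\!\left(\sup_{0\leq t\leq 1}(X^*_{nt}-X_{\lfloor nt\rfloor})\geq \tfrac{\d}{2} n^{\b/2}\right).
\]
The first term on the right vanishes as $n\to\infty$ by Corollary \ref{Thesis_nu2} (applied with $\d/2$ in place of $\d$), and the second term is identically zero for all $n$ large enough, for $P$-a.e.\ $\w$, by the discussion above. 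This yields the claim.

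There is no real obstacle here; the corollary is a cleanup step that promotes the "farthest-to-the-right" estimate of Corollary \ref{Thesis_nu2} to an estimate for the random walk itself by absorbing the $O(\log^2 n)$ overshoot into the $n^{\b/2}$ tolerance afforded by $\b>1$. The only mildly nontrivial point is the annealed-to-quenched conversion of Lemma \ref{Thesis_nu}, which is immediate from Fubini.
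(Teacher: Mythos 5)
Your proposal is correct and follows essentially the same route as the paper: a triangle-inequality split into $\sup_t|X^*_{nt}-ntv_P|$ (handled by Corollary \ref{Thesis_nu2}) and the gap $\sup_t(X^*_{nt}-X_{\lfloor nt\rfloor})$ (handled by Lemma \ref{Thesis_nu}), which is exactly the paper's two-line proof. One small imprecision: from $P_\w(A_n\ \text{i.o.})=0$ you cannot conclude that the second term is \emph{identically zero} for all large $n$ (there need not be a deterministic threshold beyond which $P_\w(A_n)=0$); the correct deduction is $\limsup_n P_\w(A_n)\leq P_\w(\limsup_n A_n)=0$ by reverse Fatou, so $P_\w(A_n)\to 0$, which is all the corollary requires.
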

\begin{proof}
First, note that
\begin{align*}
&P_\w \left(\sup_{0\leq t \leq 1} |X_{nt} - ntv_p| \geq \d n^{\b/2} \right) \\
&\qquad \leq P_\w \left( \sup_{0\leq t \leq 1} |X_{nt} - X^*_{nt}| \geq \frac{\d n^{\b/2}}{2}\right) + P_\w \left( \sup_{0\leq t \leq 1} |X^*_{nt} - ntv_p| \geq \frac{\d n^{\b/2}}{2}\right).
\end{align*}
Then, the proof of the corollary follows from Lemma \ref{Thesis_nu} and Corollary \ref{Thesis_nu2}.
\end{proof}
\end{section}

\begin{section}{Quenched CLT for the Random Walk}
For $t>0$, let
\[
Z_{nt}(\w):= \sum_{i=1}^{\lfloor nt v_P \rfloor } (v_P E_\w \tau_i - 1).
\]
$Z_{nt}$ will be the random centering that appears in the quenched
CLT for the random walk. The following lemma is a consequence of the technical estimates of the last section:
\begin{lem}\label{Thesis_Z}
For any $\d>0$ and any $t$,
\[
\lim_{n\ra\infty} P_\w \left(\sup_{0\leq t\leq 1}
\frac{1}{\sqrt{n}}\left|\sum_{i=1}^{X^*_{nt}} (E_\w \tau_i
-\frac{1}{v_P}) - \frac{1}{v_P}Z_{nt} \right| \geq \d \right) =
0, \quad P-a.s.
\]
\end{lem}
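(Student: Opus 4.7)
The idea is to replace the random upper limit $X^*_{nt}$ by the deterministic upper limit $\lfloor ntv_P\rfloor$, via the three–term bound sketched in Subsection \ref{Thesis_quenchedCLT}. First observe that
\[
\frac{1}{v_P}Z_{nt}(\w) \;=\; \sum_{i=1}^{\lfloor ntv_P\rfloor}\!\Big(E_\w\tau_i - \tfrac{1}{v_P}\Big),
\]
so the quantity inside the absolute value equals $\pm\sum_{i=a_n(t)+1}^{b_n(t)}(E_\w\tau_i - \tfrac{1}{v_P})$ with $a_n(t) := X^*_{nt}\wedge\lfloor ntv_P\rfloor$ and $b_n(t) := X^*_{nt}\vee\lfloor ntv_P\rfloor$. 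Fix $\a \in (1/2,1)$. By Corollary \ref{Thesis_nu2} (applied with $\b = 2\a > 1$), the event $B_n := \{\sup_{0\leq t\leq 1}|X^*_{nt} - \lfloor ntv_P\rfloor| > n^\a\}$ satisfies $P_\w(B_n) \to 0$ for $P$-a.e.\ $\w$. On $B_n^c$ we have $b_n(t) - a_n(t) \leq n^\a$ and $a_n(t), b_n(t) \in [0,Cn]$ with $C := v_P+2$. Decomposing $E_\w\tau_i - \tfrac{1}{v_P} = (\tau_i - \tfrac{1}{v_P}) - (\tau_i - E_\w\tau_i)$, the displayed quantity in the lemma is bounded on $B_n^c$ by $M_n^{(1)} + M_n^{(2)}$, where
\[
M_n^{(\ell)} \;:=\; \frac{1}{\sqrt n}\max_{\substack{j,k\in[0,Cn]\\ 0\leq k-j\leq n^\a}}\Big|\sum_{i=j+1}^k f_i^{(\ell)}\Big|, \qquad \ell=1,2,
\]
with $f_i^{(1)} := \tau_i - \tfrac{1}{v_P}$ and $f_i^{(2)} := \tau_i - E_\w\tau_i$. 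It thus suffices to show $M_n^{(1)},M_n^{(2)}\to 0$ in $P_\w$-probability for $P$-a.e.\ $\w$.

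The term $M_n^{(1)}$ is immediate from Lemma \ref{Thesis_sum}. Pick any $\a' \in (\a,1)$; since $\a' < 1 < \gamma$, the lemma applies with the triple $(\a',1,\gamma)$ and with $n$ replaced by $n' := \lceil Cn\rceil$, yielding that $(n')^{-1/2}$ times the analogous maximum over $[1,n']$ with window $(n')^{\a'}$ vanishes $\P$-a.s. Since $n^\a \leq (n')^{\a'}$ for all large $n$ and $(n')^{1/2} \asymp \sqrt n$, this gives $M_n^{(1)}\to 0$ $\P$-a.s., hence $P_\w$-a.s.\ (so in $P_\w$-probability) for $P$-a.e.\ $\w$.

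The term $M_n^{(2)}$ is handled by the quenched functional CLT for hitting times. In terms of $Z^n_t := \tfrac{1}{\s\sqrt n}(T_{\lfloor nt\rfloor} - E_\w T_{\lfloor nt\rfloor})$, writing the summation index as $j,k\in[0,Cn]$ with $s := j/n, t := k/n$ gives
\[
M_n^{(2)} \;\leq\; \s\,\sup_{\substack{0\leq s,t\leq C\\ |t-s|\leq n^{\a-1}+1/n}}|Z^n_t - Z^n_s| \;=:\; \s\, w_{\d_n}(Z^n),
\]
with $\d_n := n^{\a-1}+1/n\to 0$. By Theorem \ref{Thesis_HTCLT}, for $P$-a.e.\ $\w$ we have $Z^n\limdw W$ in $D[0,C]$ under $P_\w$, and the limit is continuous. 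Hence $\{Z^n\}$ is tight in the Skorohod topology, and the standard tightness characterization in $D[0,C]$ (equivalently, the Portmanteau theorem applied to the continuous functional $w_\d$ on uniform paths together with Skorohod$\,\to\,$uniform convergence for continuous limits) yields: for every $\e>0$ there exists $\d>0$ with $\limsup_n P_\w(w_\d(Z^n) > \e) < \e$. Since $\d_n\to 0$, we deduce $M_n^{(2)}\to 0$ in $P_\w$-probability for $P$-a.e.\ $\w$.

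The hard part is the estimate on $M_n^{(2)}$: it is a maximum, uniform in the starting index, of quenched partial sums of the centered variables $\tau_i - E_\w\tau_i$ over windows of sublinear length $n^\a$. Invoking tightness from Theorem \ref{Thesis_HTCLT} is the cleanest route and is available precisely because the quenched CLT for hitting times has already been established. An alternative self-contained route would be to rerun the mixing/blocking argument of Lemma \ref{Thesis_sum} directly on the centered variables $(\tau_i - E_\w\tau_i)$, exploiting that these are uniformly bounded in $L^\gamma$ under $\P$ by the same moment bound that gives $\E\tau_1^\gamma<\infty$.
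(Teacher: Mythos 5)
Your proof is correct and follows essentially the same route as the paper's: the same rewriting of $Z_{nt}/v_P$ as a partial sum up to $\lfloor ntv_P\rfloor$, the same three-way bound via Corollary \ref{Thesis_nu2}, Lemma \ref{Thesis_sum}, and the quenched functional CLT of Theorem \ref{Thesis_HTCLT}. The only difference is that you spell out the tightness/modulus-of-continuity step for the $(\tau_i - E_\w\tau_i)$ term, which the paper leaves implicit.
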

\begin{proof}
Let $\frac{1}{2} < \a < 1$. Then, 
\begin{align}
&P_\w \left(\sup_{0\leq t\leq 1}
\frac{1}{\sqrt{n}}\left|\sum_{i=1}^{X^*_{nt}} (E_\w \tau_i
-\frac{1}{v_P}) - \frac{Z_{nt}}{v_P} \right| \geq \d \right) \nonumber \\
&\qquad = P_\w \left( \sup_{0\leq t\leq 1} \frac{1}{\sqrt{n}}\left|
\sum_{i=1}^{X^*_{nt}} (E_\w \tau_i -\frac{1}{v_P}) -
\sum_{i=1}^{ntv_P} (E_\w \tau_i -\frac{1}{v_P}) \right| \geq \d
\right) \nonumber \\ 
&\qquad \leq P_\w\left( \sup_{0\leq t\leq 1} |X^*_{nt} - ntv_P| \geq n^{\a}  \right) + P_\w\left(\max_{j,k\in[1,n];\,|j-k|<n^\a} \left|
\frac{1}{\sqrt{n}} \sum_{i=j}^{k} (E_\w \tau_i
-\frac{1}{v_P}) \right| \geq \d \right). \label{replacecentering}
\end{align}
By Corollary \ref{Thesis_nu}, the first term in \eqref{replacecentering} tends
to 0 as $n\ra\infty,\; P-a.s$. The second term in \eqref{replacecentering} is bounded above by
\begin{align*}
%&P_\w\left(\max_{j,k\in[1,n];\,|j-k|<n^\a} \left|\frac{1}{\sqrt{n}} \sum_{i=j}^{k} (E_\w \tau_i-\frac{1}{v_P}) \right| \geq \d \right) \\
& P_\w\left(\max_{j,k\in[1,n];\,|j-k|<n^\a} \left|
\frac{1}{\sqrt{n}} \sum_{i=j}^{k} (\tau_i
-\frac{1}{v_P}) \right| \geq \frac{\d}{2} \right) \\
&\qquad + P_\w\left(\max_{j,k\in[1,n];\,|j-k|<n^\a} \left|
\frac{1}{\sqrt{n}} \sum_{i=j}^{k} (\tau_i - E_\w \tau_i
) \right| \geq \frac{\d}{2} \right).
\end{align*}
Since $\a<1$, Lemma \ref{Thesis_sum} shows that, $P-a.s.$, the first term above goes to 0 as
$n\ra\infty$. Also, the quenched functional CLT for hitting times, Theorem \ref{Thesis_HTCLT}, shows that, $P-a.s.$, the
second term above goes to 0 as $n\ra\infty$. Therefore, $P-a.s.$, the second term in \eqref{replacecentering} tends to zero as $n\ra 0$. 
\end{proof}
We can now prove a quenched functional CLT for the random walk.
\begin{thm}
 Assume that Assumptions \ref{uelliptic}-\ref{sg2} hold, and let 
\[
 B_t^{n} := \frac{X_{\lfloor nt \rfloor}-nt v_P+Z_{nt}(\w)}{v_P^{3/2}\s\sqrt{n}},
\]
where $\s$ is defined in Theorem \ref{Thesis_HTCLT}.
Then, for $P-a.e.$ environment $\w$, the random variables $B^n_\cdot \in D[0,\infty)$ converge in quenched distribution as $n\ra\infty$ to a standard Brownian motion. 
\end{thm}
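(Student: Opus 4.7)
The plan is to assemble the theorem from the lemmas already in hand: Lemma \ref{Thesis_R} (quenched functional CLT for the ``inverse'' process $R^n_t$), Lemma \ref{Thesis_Z} (replacement of the walk-dependent centering by $Z_{nt}$), Lemma \ref{Thesis_nu} (negligibility of $X^*_{nt}-X_{\lfloor nt\rfloor}$), and the scaling identity $W_{v_P\,\cdot}\stackrel{d}{=}\sqrt{v_P}\,\widetilde W_\cdot$. No new probabilistic input is required; the work is purely algebraic rearrangement plus collecting $o_{P_\w}(1)$ error terms.

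First I would isolate the key identity. Writing $\sum_{i=1}^{X^*_{nt}}E_\w\tau_i = \tfrac{X^*_{nt}}{v_P}+\sum_{i=1}^{X^*_{nt}}(E_\w\tau_i-\tfrac{1}{v_P})$ gives
\[
-R^n_t \;=\; \frac{X^*_{nt}-ntv_P}{v_P\,\s\sqrt n} \;+\; \frac{1}{\s\sqrt n}\sum_{i=1}^{X^*_{nt}}\!\Big(E_\w\tau_i-\tfrac{1}{v_P}\Big).
\]
By Lemma \ref{Thesis_Z}, the second term differs from $\frac{Z_{nt}}{v_P\s\sqrt n}$ by a quantity whose supremum over $t\in[0,1]$ tends to $0$ in $P_\w$-probability for $P$-a.e.\ $\w$. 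Substituting this in and multiplying through by $v_P^{-1/2}$ yields, uniformly on $[0,1]$ in $P_\w$-probability,
\[
\frac{X^*_{nt}-ntv_P+Z_{nt}}{v_P^{3/2}\s\sqrt n} \;=\; -\frac{R^n_t}{\sqrt{v_P}} \;+\; o_{P_\w}(1).
\]

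Next I would pass from $X^*_{nt}$ to $X_{\lfloor nt\rfloor}$. By Lemma \ref{Thesis_nu}, $\sup_{0\le t\le 1}\big(X^*_{nt}-X_{\lfloor nt\rfloor}\big)=O(\log^2 n)$ almost surely, so the difference divided by $\sqrt n$ vanishes uniformly and contributes another $o_{P_\w}(1)$ term. Combining this with the display above gives
\[
B^n_t \;=\; -\frac{R^n_t}{\sqrt{v_P}} \;+\; o_{P_\w}(1) \qquad \text{uniformly in } t\in[0,1],\ P\text{-a.s.}
\]
Finally, Lemma \ref{Thesis_R} asserts $R^n_\cdot\limdw W_{v_P\,\cdot}$ in $D[0,T]$ for every $T<\infty$; since $W_{v_P t}/\sqrt{v_P}$ is itself a standard Brownian motion, so is its negative, and Slutsky's theorem (in $D[0,T]$, via the uniform $o_{P_\w}(1)$ control) delivers $B^n_\cdot\limdw W_\cdot$ on $[0,T]$ for every $T$, which is the claim.

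The only delicate point is the uniformity in $t$ needed to apply Slutsky in the Skorohod space; this is precisely what Lemmas \ref{Thesis_nu} and \ref{Thesis_Z} were designed to supply (the statement in Lemma \ref{Thesis_Z} already carries a supremum in $t$), so there is no genuine obstacle beyond bookkeeping. In short, the theorem reduces to an algebraic identity expressing $B^n_t$ as $-R^n_t/\sqrt{v_P}$ up to three controllable error terms: the gap $X^*_{nt}-X_{\lfloor nt\rfloor}$, the random-to-deterministic centering replacement of Lemma \ref{Thesis_Z}, and the distributional identity for time-scaled Brownian motion.
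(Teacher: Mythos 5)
Your proposal is correct and follows essentially the same route as the paper's own proof: the identity $-R^n_t=\frac{X^*_{nt}-ntv_P}{v_P\s\sqrt n}+\frac{1}{\s\sqrt n}\sum_{i=1}^{X^*_{nt}}(E_\w\tau_i-\tfrac{1}{v_P})$, the centering replacement via Lemma \ref{Thesis_Z}, the $X^*\to X$ replacement via Lemma \ref{Thesis_nu}, and the Brownian scaling $W_{v_P\cdot}\stackrel{d}{=}\sqrt{v_P}\,W_\cdot$ are exactly the steps the paper uses. The only difference is presentational (explicit $o_{P_\w}(1)$ bookkeeping and an appeal to Slutsky in $D[0,T]$), which is sound.
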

%\begin{thm}
%Assume that Assumptions \ref{uelliptic}-\ref{sg2} hold, and let 
%\[
% B_t^{n} := \frac{X_{\lfloor nt \rfloor}-nt v_P+Z_{nt}(\w)}{v_P^{3/2}\s\sqrt{n}},
%\]
%where $\s$ is defined in Theorem \ref{Thesis_HTCLT}.
% %Then, $P-a.s.$, the quenched law of $B_t^n$ converges weakly as $n\ra\infty$ in the space of measures on $D[0,\infty)$ to the law of a standard Brownian motion. 
%Then, as $n\ra\infty$, $B_t^{n}$ converges in quenched distribution in the space $D[0,\infty)$ to a standard Brownian motion, $P-a.s$.  
%\end{thm}
%%%
%\begin{thm}
%Assume that Assumptions \ref{uelliptic}-\ref{sg2} hold. Then,
%\[
%\frac{X_n-nt v_P+Z_{nt}(\w)}{v_P^{3/2}\s\sqrt{n}} \limdw W_t,
%\]
%where $W_t$ is a standard Brownian motion, and $\s$ is defined in Theorem \ref{Thesis_HTCLT}.
%\end{thm}
\begin{proof}
%For the remainder I will use $\eta_n \limd \eta$ to mean that $\eta_n$ converges in quenched distribution to $\eta$. \\
As noted in the proof of Lemma \ref{Thesis_R}, it is enough to prove convergence in quenched distribution in the space $D[0,T]$ for all $T<\infty$. We will handle the case when $T=1$ since the proof is the same for any $T<\infty$. For the remainder of the proof, when denoting convergence in distribution of random functions in $D[0,1]$, we will keep the index $t$ for clarity. That is, we will write $Z_t^n\limdw W_{v_P t}$ instead of $Z^n_\cdot \limdw W_{v_P \cdot}$. 

Recall that Lemma \ref{Thesis_R} implies
\begin{align}
R^n_t = \frac{nt-\sum_{i=1}^{X^*_{nt}}E_\w \tau_i}{\s\sqrt{n}} =\frac{nt-\frac{X^*_{nt}}{v_P}-\sum_{i=1}^{X^*_{nt}}(E_\w \tau_i-
\frac{1}{v_P})}{\s\sqrt{n}} \limdw W_{t v_P} . \label{Thesis_l1}
\end{align}
Also, Lemma \ref{Thesis_Z} shows that, as elements of $D[0,1]$,
\begin{equation}
\frac{\sum_{i=1}^{X^*_{nt}}(E_\w \tau_i-
\frac{1}{v_P})-\frac{1}{v_P}Z_{nt}(\w)}{\sqrt{n}} \limdw 0.
\label{Thesis_l2}
\end{equation}
Combining \eqref{Thesis_l1} and \eqref{Thesis_l2},
\[
\frac{nt-\frac{X^*_{nt}}{v_P}-\frac{1}{v_P}Z_{nt}(\w)}{\s\sqrt{n}}
\limdw W_{t v_P},
\]
or equivalently (since $W_t$ is symmetric),
\[
\frac{X^*_{nt}-ntv_P+Z_{nt}(\w)}{v_P^{3/2}\s\sqrt{n}} \limdw W_t,
\]
in the space $D[0,1]$.
Finally, Lemma \ref{Thesis_nu} implies that
$\frac{X^*_{nt}-X_{nt}}{\sqrt{n}}\limdw 0$. So,
\[
\frac{X_{nt}-ntv_P+Z_{nt}(\w)}{v_P^{3/2}\s\sqrt{n}} \limdw W_t,
\]
in the space $D[0,1]$.
%which is the statement of the theorem.
\end{proof}
\end{section}

\end{chapter}

\begin{chapter}{Quenched Limits: Zero Speed Regime}\label{Thesis_AppendixZeroSpeed}

This chapter consists of the article \emph{Quenched Limits for Transient, Zero Speed One-Dimensional Random Walk in Random Environment}, by Jonathon Peterson and Ofer Zeitouni, which was recently accepted for publication by the Annals of Probability.
This article contains the full proofs of Theorems \ref{Thesis_local} and \ref{Thesis_nonlocal} and the first part of Theorem \ref{Thesis_qETVarStable} (sketches of these proofs were provided in Chapter \ref{1dlimitingdist}). 

%As mentioned in Section \ref{Thesis_structure}, 
In order to keep this chapter self-contained, the above mentioned article has been left relatively unchanged. Therefore, much of the introductory material in Section \ref{sl1_Introduction} has already appeared in Chapters \ref{Thesis_Introduction} and \ref{1dlimitingdist}. 
The notation used in this chapter is consistent with the notation in Chapters \ref{Thesis_Introduction} and \ref{1dlimitingdist}. 

While the main results of this chapter are for the case when the parameter $s\in(0,1)$, many of the preliminary results are true in greater generality. Since some of these preliminary results will be referenced in Chapter \ref{Thesis_AppendixBallistic}, which concerns the case $s\in(1,2)$, if no mention is made of bounds on $s$, then it is to be understood that the statement holds for all $s>0$.

\newpage

\begin{section}{Introduction and Statement of Main Results} \label{sl1_Introduction}
Let $\Omega = [0,1]^\Z$ and let $\mathcal{F}$ be the Borel $\s-$algebra on $\Omega$. A random environment is an $\Omega$-valued random variable $\w = \{\w_i\}_{i\in\Z}$ with distribution $P$. We will assume that the $\w_i$ are i.i.d.
The \emph{quenched} law $P_\w^x$ for a random walk $X_n$ in the environment $\w$ is defined by
\[
P_\w^x( X_0 = x ) = 1 \quad \text{and} \quad
P_\w^x\left( X_{n+1} = j | X_n = i \right) =
\begin{cases}
\w_i &\quad \text{if } j=i+1, \\
1-\w_i &\quad \text{if } j=i-1.
\end{cases}
\]
$\Z^\N$ is the space for the paths of the random walk $\{X_n\}_{n\in\N}$,
and $\mathcal{G}$ denotes the $\s-$algebra generated by the cylinder sets.
Note that for each $\w \in \Omega$, $P_\w$ is a probability measure
on $\mathcal{G}$, and for each $G\in \mathcal{G}$,
$P_\w^x(G):(\Omega, \mathcal{F}) \ra [0,1]$ is a measurable
function of $\w$.  Expectations under the law $P_\w^x$ are denoted $E_\w^x$.
The \emph{annealed} law for the random walk in random
environment $X_n$ is defined by
\[
\P^x(F\times G) = \int_F P_\w^x(G)P(d\w),
\quad F\in \mathcal{F},  G\in \mathcal{G}\!.
\]
For ease of notation, we will use $P_\w$ and $\P$ in place
of $P_\w^0$ and $\P^0$ respectively. We will also use $\P^x$ to
refer to the marginal on the space of paths, i.e., $\P^x(G)=
\P^x(\Omega\times G) = E_P\left[ P^x_\w(G) \right]$ for
$G\in \mathcal{G}$. Expectations under the law $\P$ will be written $\E$.

A simple criterion for recurrence and a formula for the speed of
transience was given by Solomon in \cite{sRWRE}. For any integers
$i\leq j$, let
\begin{equation}
\rho_i := \frac{1-\w_i}{\w_i}, \quad \text{and}\quad
\Pi_{i,j} := \prod_{k=i}^j \rho_k\,, \label{rhodef}
\end{equation}
and for $x\in \Z$, define the hitting times
\[
T_x:= \min\{n \geq 0:X_n=x\}\,.
\]
Then, $X_n$ is transient to the right (resp. to the left)
if $E_P(\log \rho_0) < 0$ (resp. $E_P \log \rho_0 > 0$) and recurrent
if $E_P (\log \rho_0) = 0$. (henceforth we will write $\rho$ instead of
$\rho_0$ in expectations involving only $\rho_0$.) In the case where
$E_P \log\rho < 0$ (transience to the right),
Solomon established the following law of large numbers
\[
v_P:= \lim_{n\ra\infty} \frac{X_n}{n} =
\lim_{n\ra\infty} \frac{n}{T_n} = \frac{1}{\E T_1}, \quad \P-a.s.
\]
For any integers $i<j$, let
\begin{equation}
W_{i,j} := \sum_{k=i}^j \Pi_{k,j}, \quad \text{and}
\quad W_j := \sum_{k\leq j} \Pi_{k,j}\,. \label{Wdef}
\end{equation}
When $E_P \log \rho< 0$, it was shown in \cite{sRWRE},\cite[remark following Lemma 2.1.12]{zRWRE}
that
\begin{equation}
E_\w^j T_{j+1} = 1+2W_j < \infty, \quad P-a.s., \label{QET}
\end{equation}
and thus $v_P =
1/(1+2E_P W_0)$. Since $P$ is a product measure, $E_P W_0 =
\sum_{k=1}^\infty \left(E_P \rho\right)^k$. In particular, $v_P =
0$ if
$E_P \rho \geq 1$.

Kesten, Kozlov, and Spitzer \cite{kksStable} determined the
annealed limiting distribution of a RWRE with $E_P \log \rho < 0$, i.e.,
transient to the right. They derived the limiting distributions for the walk by first establishing a stable
limit law of index $s$ for $T_n$, where $s$ is defined by the equation
\[
E_P\rho^s = 1\,.
\]
In particular, they showed that when $s<1$, there exists a $b>0$ such that
$$\lim_{n\ra\infty} \P\left( \frac{T_n}{n^{1/s}} \leq x \right) =
L_{s,b}(x)\,,$$
and
\begin{equation}
\lim_{n\ra\infty} \P\left( \frac{X_n}{n^s} \leq x
\right) = 1-L_{s,b}(x^{-1/s}), \label{annealedstable}
\end{equation}
where $L_{s,b}$ is the distribution function for a stable random
variable with characteristic function
\begin{equation}
\hat{L}_{s,b}(t)= \exp\left\{ -b|t|^s \left(
1-i\frac{t}{|t|}\tan(\pi s/2)  \right) \right\}. \label{char}
\end{equation}
The value of $b$ was recently identified \cite{eszStable}.
While the annealed limiting distributions for transient
one-dimensional RWRE have been known for quite a while, the corresponding
quenched limiting distributions have remained largely unstudied
until recently. 
In Chapter \ref{Thesis_AppendixQCLT} we proved that when $s>2$ a quenched CLT holds with a
random (depending on the environment)
centering.
%Goldsheid \cite{gQCLT} and Peterson \cite{pThesis}
%independently proved that when $s>2$, a quenched CLT holds with a
%random (depending on the environment)
%centering. 
A similar result was given by Rassoul-Agha and Sepp\"al\"ainen in \cite{rsBFD} under different assumptions on the environment.
Previously, in \cite{kmCLT} and \cite{zRWRE},
it was shown that the limiting statement for
the quenched CLT with random centering holds in probability rather
than almost surely. No other results of quenched limiting
distributions are known
when $s\leq 2$.

In this chapter, we analyze the quenched limiting distributions of a
one-dimensional transient RWRE in the case $s<1$. One could expect
that the quenched limiting distributions are of the same type as
the annealed limiting distributions since annealed probabilities
are averages of quenched probabilities. However, this turns out not to be the
case. In fact, a consequence of our main results,
Theorems \ref{refstable}, \ref{local}, and \ref{nonlocal} below, is that
the annealed stable behavior of $T_n$ comes from
fluctuations in the environment.

Throughout the chapter, we will
make the following assumptions:
\begin{asm} \label{essentialasm}
$P$ is an i.i.d. product measure on $\Omega$ such that
\begin{equation}
E_P \log\rho < 0 \quad\text{and}\quad E_P \rho^s = 1 \text{ for
some } s>0 . \label{zerospeedregime}
\end{equation}
\end{asm}
\begin{asm}
The distribution of $\log \rho$ is non-lattice under
$P$ and $E_P \rho^s \log\rho<\infty$.  \label{techasm}
\end{asm}
\noindent\textbf{Note:} Since $E_P \rho^\gamma$ is a convex function of
$\gamma$, the two statements in \eqref{zerospeedregime} give that
$E_P \rho^\gamma < 1$ for all
$\gamma<s$ and $E_P \rho^\gamma > 1$ for all $\gamma > s$.
Assumption \ref{essentialasm}
contains the essential assumption necessary for the walk to be transient. The main results of this chapter are for $s<1$ (the zero-speed regime), but many statements hold for $s\in(0,2)$ or even $s\in(0,\infty)$. If no mention is made of bounds on $s$, then it is assumed that the statement holds for all $s>0$. We recall that the technical conditions contained in
Assumption \ref{techasm} were also invoked in \cite{kksStable}.

Define the ``ladder locations'' $\nu_i$ of the environment by
\begin{align}
\nu_0 = 0, \quad\text{and}\quad \nu_i =
\begin{cases}
\inf\{n > \nu_{i-1}: \Pi_{\nu_{i-1},n-1} < 1\}, &\quad  i \geq 1,\\
\sup \{j < \nu_{i+1}: \Pi_{k,j-1}<1,\quad \forall k<j \}, &\quad  i \leq -1
\,.\end{cases}
\label{nudef}
\end{align}
%\begin{figure}[htb]
%\begin{center}
%\input{nupic.epic}
%\end{center}
%\caption{Plot of
%$\log \Pi_{0,i-1}$ demonstrating ladder locations of the environment}
%\end{figure}\\
%Really, the function plotted is
%$V(i) = \begin{cases}
%\log \Pi_{0,i-1},&\quad i > 0\\
%0, &\quad i=0\\
%\log \Pi_{i,-1},&\quad i<0
%\end{cases}$
%Should I put this in the caption above?
Throughout the remainder of the chapter, we will let $\nu=\nu_1$. We
will sometimes refer to sections of the environment between
$\nu_{i-1}$ and $\nu_i -1$ as ``blocks'' of the environment. Note
that the block between $\nu_{-1}$ and $\nu_0 -1$ is different from
all the other blocks between consecutive ladder locations. Define
the measure $Q$ on environments by $Q(\cdot)=P(\cdot|\mathcal{R})$, where the
event
\[
\mathcal{R}:=\{ \w\in\Omega: \Pi_{-k,-1} < 1,\quad\forall k \geq 1\}.
\]
Note that $P(\mathcal{R}) > 0$ since $E_P \log \rho < 0$. 
$Q$ is defined so that the blocks of the environment between ladder locations are i.i.d.
under $Q$, all with distribution the same as that of the block
from $0$ to $\nu -1$ under $P$. In Section \ref{stablecrossing}, we
prove the following annealed theorem:
\begin{thm}\label{refstable}
Let Assumptions \ref{essentialasm} and \ref{techasm} hold, and let $s<1$. Then,
there exists a $b'>0$ such that
\[
\lim_{n\ra\infty} Q\left( \frac{ E_\w T_{\nu_n} }{n^{1/s}} \leq x
\right) = L_{s,b'}(x).
\]
\end{thm}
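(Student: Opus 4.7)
The plan is to establish first a single-block tail asymptotic of the form $Q(E_\w T_\nu > x) \sim K_\infty x^{-s}$ as $x\ra\infty$, and then extract a stable limit for the $n$-block sum $E_\w T_{\nu_n} = \sum_{i=1}^n E_\w^{\nu_{i-1}} T_{\nu_i}$ while bypassing the fact that its summands are not independent under $Q$.

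For the one-block step I would expand $E_\w T_\nu$ via \eqref{QET} as $\nu + 2\sum_{j=0}^{\nu-1} W_j$, and invoke Kesten's renewal-theoretic tail result for random difference equations, which under Assumption \ref{techasm} yields $P(W_0 > x) \sim K x^{-s}$ for an explicit $K>0$. The heaviest contribution to the sum comes from a single term, essentially $W_{\nu-1}$, whose tail is of the same order; tracking prefactors yields the constant $K_\infty$. Since $Q$ and $P$ coincide on events depending only on the environment to the right of the origin, the asymptotic transfers to $Q$.

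The main obstacle is the dependence of $\mu_{i,\w} := E_\w^{\nu_{i-1}} T_{\nu_i}$ on the environment arbitrarily far to the left of $\nu_{i-1}$, which prevents any direct appeal to i.i.d.\ stable limit theory. My plan is to break this dependence by introducing a reflected walk $\bar{X}^{(n)}$ that is forbidden from backtracking more than $b_n := \lfloor \log^2 n \rfloor$ ladder blocks, with truncated crossing means $\mu_{i,n,\w} := E_\w^{\nu_{i-1}} \bar{T}^{(n)}_{\nu_i}$. These depend on the environment only in the block $[\nu_{i-1}, \nu_i)$ and its $b_n$ predecessors, so under $Q$ the variables $\mu_{i,n,\w}$ and $\mu_{j,n,\w}$ are independent whenever $|i-j|>b_n$. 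A direct estimate using \eqref{QET} together with the geometric decay of the $Q$-probability of backtracking $b_n$ ladder blocks shows that $n^{-1/s} E_\w(T_{\nu_n} - \bar{T}^{(n)}_{\nu_n}) \to 0$ in $Q$-probability, so it suffices to prove a stable limit for $\sum_{i=1}^n \mu_{i,n,\w}$.

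Fix $\e>0$ and let $M_i := \max_{\nu_{i-1} \leq j < \nu_i} \Pi_{\nu_{i-1}, j}$ denote the exponential height of the $i$-th block. I would split the sum according to whether $M_i \leq n^{(1-\e)/s}$ (``small blocks'') or $M_i > n^{(1-\e)/s}$ (``large blocks''). By \eqref{QET}, $\mu_{i,n,\w}$ is bounded by a constant multiple of $M_i$ times the block length, and the regularly varying tail of $M_1$, inherited from that of $W_0$ via a classical excursion estimate of Iglehart for random walks with negative drift, forces the small-block sum to be $o(n^{1/s})$ in $Q$-probability. For the large blocks I would establish $n\,Q(\mu_{i,n,\w} > x n^{1/s},\, M_i > n^{(1-\e)/s}) \to K_\infty x^{-s}$, inheriting the one-block asymptotic since reflections do not disturb the tail once $M_i$ is large. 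The tail of $M_1$ makes large blocks sparse, so that they are typically separated by gaps vastly exceeding $b_n$; the large-block $\mu_{i,n,\w}$ then form a weakly dependent triangular array with regularly varying marginals, and a stable limit theorem for such arrays due to Kobus \cite{kGPD} produces the $s$-stable limit with some $b'>0$. The delicate step will be verifying the mixing and negligibility hypotheses required by that theorem; this is where the logarithmic choice of $b_n$ is critical, being large enough to decouple distant large blocks yet small enough to keep the truncation cost negligible.
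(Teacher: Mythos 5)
Your proposal is correct and follows essentially the same route as the paper: the single-block tail asymptotic $Q(E_\w T_\nu > x)\sim K_\infty x^{-s}$ (Theorem \ref{Tnutail}), the reflected walk with $b_n=\lfloor\log^2 n\rfloor$ and the negligibility of both the reflection correction and the small-block sum, the large-block tail asymptotic of Lemma \ref{reftail}, and the application of Kobus's stable limit theorem for mixing triangular arrays. The only point to flag is that the hypotheses of \cite[Theorem 5.1(III)]{kGPD} you defer to the end include a truncated-expectation condition whose verification uses $\int_0^\d x^{-s}\,dx<\infty$, i.e.\ precisely where $s<1$ enters.
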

We then use Theorem \ref{refstable} to prove the following two theorems
which show that $P-a.s.$ there exist two different random sequences of
times (depending on the environment) where the random walk has
different limiting behavior. These are the main results of the chapter.
\begin{thm}\label{local}
Let Assumptions \ref{essentialasm} and \ref{techasm} hold, and let $s<1$. Then,
$P$-a.s., there exist random subsequences $t_m=t_m(\w)$ and
$u_m=u_m(\w)$ such that, for any $\d> 0$,
\[
\lim_{m\ra\infty} P_\w\left( \frac{X_{t_m} - u_m}{(\log t_m)^2}
\in [-\d, \d] \right) = 1.
\]
\end{thm}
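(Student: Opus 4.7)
The plan is to exhibit, $P$-a.s., a random sequence of ladder-block indices $j_m = j_m(\w)$ such that the $j_m$-th block is so tall that the expected crossing time from $\nu_{j_m-1}$ to $\nu_{j_m}$ dwarfs the expected time to reach $\nu_{j_m-1}$. Setting $t_m := M_{j_m}/m$ and $u_m := \nu_{j_m-1}$, the walk will have enough time to reach $\nu_{j_m-1}$ but will, quenched-probably, still be stuck inside the block $[\nu_{j_m-1},\nu_{j_m})$. Since block lengths $\nu_i-\nu_{i-1}$ are i.i.d.\ under $Q$ with exponential tails (an easy consequence of Assumption \ref{essentialasm}), the length of the $j_m$-th block is $O(\log j_m) = o((\log t_m)^2)$; combined with the backtracking estimate $X^*_n - X_n = o(\log^2 n)$ (derivable from \eqref{productpi} and a Borel--Cantelli argument), this delivers the claimed $(\log t_m)^2$-localization as soon as $X_{t_m}$ is known to lie in the block.

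The first step is to upgrade Lemma \ref{Thesis_QBB} to a $P$-a.s.\ existence statement. Fix $m$ and apply Lemma \ref{Thesis_QBB} with $C = m^2$ along a sequence of disjoint ladder-index windows $I_\ell = (N_{\ell-1}, N_\ell]$ with $N_\ell$ growing fast enough that (i) the reflection radius $b_{|I_\ell|} = \lfloor \log^2 |I_\ell| \rfloor$ underlying the walks $\bar X^{(n)}$ makes the dominance events in different windows essentially independent, and (ii) the aggregate contribution from outside a given window is $o(|I_\ell|^{1/s})$ with high $Q$-probability, using the tail estimate $Q(E_\w T_\nu > x) \sim K_\infty x^{-s}$ that underlies Theorem \ref{refstable}. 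The Lemma then gives, per window, a lower bound of order $m^{-2s}$, so Borel--Cantelli II produces infinitely many windows containing an index $j$ with $M_j \geq m^2 E_\w \bar T^{(|I_\ell|)}_{\nu_{j-1}}$. A countable intersection over $m$ yields the desired random subsequence $j_m = j_m(\w)$.

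Once $j_m$ is fixed, it remains to show that $P_\w(X^*_{t_m} \in [\nu_{j_m-1}, \nu_{j_m})) \to 1$. For the lower end, decompose
\[
P_\w(T_{\nu_{j_m-1}} > t_m) \leq P_\w\bigl(T_{\nu_{j_m-1}} \neq \bar T^{(j_m)}_{\nu_{j_m-1}}\bigr) + \frac{E_\w \bar T^{(j_m)}_{\nu_{j_m-1}}}{t_m};
\]
the first summand is $o(1)$ by the standard reflection-coupling cost, and the second is bounded by $m \cdot m^{-2} = 1/m$ by the defining property of $j_m$. For the upper end, Markov applied at successive returns to $\nu_{j_m-1}$ and the exact formula \eqref{Thesis_hittingprob} give
\[
P_\w^{\nu_{j_m-1}}(T_{\nu_{j_m}} > t_m) \geq \bigl(P_\w^{\nu_{j_m-1}}(T^+_{\nu_{j_m-1}} < T_{\nu_{j_m}})\bigr)^{t_m} \geq \bigl(1 - 1/M_{j_m}\bigr)^{M_{j_m}/m},
\]
which tends to $e^{-1/m}$ and hence to $1$ as $m \to \infty$.

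The principal obstacle is the second step: Lemma \ref{Thesis_QBB} only furnishes a liminf of positive probability for a single $n$, so extracting a random subsequence that works simultaneously for every $m$ demands a careful decoupling between the ladder windows $I_\ell$ through the reflected dynamics $\bar X^{(n)}$, together with quantitative control of the $Q$-tails of $E_\w T_\nu$ developed in the preliminaries to Theorem \ref{refstable}. Once that is in place, the remaining pieces are essentially routine: exponential-tail estimates for block lengths and direct computation with the explicit formulas \eqref{Thesis_hittingprob} and \eqref{Thesis_QET}.
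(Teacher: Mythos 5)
Your proposal is correct and follows essentially the same route as the paper: the same dominance lemma (Lemma \ref{Thesis_QBB} with $C=m^2$), the same upgrade to an a.s.\ statement via reflections creating independent ladder-index windows along a fast-growing sequence plus control of the prefix crossing time (the paper's Corollary \ref{subseq1} and Lemma \ref{mdevu}), the same choice $t_m=M_{j_m}/m$, $u_m=\nu_{j_m-1}$, and the same two-sided hitting-time argument, with $\bigl(1-1/M_{j_m}\bigr)^{M_{j_m}/m}\to 1$ for the upper end. The only differences are presentational, so no further comment is needed.
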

\begin{thm} \label{nonlocal}
Let Assumptions \ref{essentialasm} and \ref{techasm} hold, and let $s<1$. Then,
$P$-a.s., there exists a random subsequence $n_{k_m}=n_{k_m}(\w)$
of $n_k=2^{2^k}$ and a random sequence $t_m=t_m(\w)$ such that
\[
\lim_{m\ra\infty} \frac{\log t_m}{\log n_{k_m} }= \frac{1}{s}
\]
and
\[
 \lim_{m\ra\infty}
P_\w\left(\frac{X_{t_m}}{n_{k_m}} \leq x \right) =
\begin{cases}
0&\quad \text{if } x \leq 0,\\
\frac{1}{2}&\quad \text{if } 0<x<\infty.
\end{cases}
\]
\end{thm}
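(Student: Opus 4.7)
The strategy is complementary to the proof of Theorem \ref{local}: rather than seeking times at which a single block dominates, I produce times at which a growing number of blocks contribute comparably, so that a Lindberg--Feller CLT applies to $T_{n_{k_m}}-E_\w T_{n_{k_m}}$; the symmetry of the Gaussian about its mean then forces the $\tfrac12$ in the statement. For small $\d>0$, fixed $c>1$, and an integer $a\geq 1$, introduce the events
\[
\mathcal{S}_{\d,n,a}:=\bigcup_{\substack{I\subset[1,\d n]\\|I|=2a}}\Bigl(\,\bigcap_{i\in I}\{\mu_{i,n,\w}^2\in[n^{2/s},2n^{2/s})\}\cap\bigcap_{j\in[1,\d n]\setminus I}\{\mu_{j,n,\w}^2<n^{2/s}\}\Bigr),
\]
\[
U_{\d,n,c}:=\Bigl\{\sum_{i=\d n+1}^{cn}\mu_{i,n,\w}\leq 2n^{1/s}\Bigr\}.
\]
On $\mathcal{S}_{\d,n,a}\cap U_{\d,n,c}$, exactly $2a$ of the first $\d n$ blocks carry comparably large quenched crossing means (each $\asymp n^{1/s}$), no block in $[1,cn]$ carries a larger one, and the aggregate tail $\sum_{\d n<i\leq cn}\mu_{i,n,\w}$ is $O(n^{1/s})$.

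The first technical input is the tail estimate $Q(E_\w T_\nu>x)\sim K_\infty x^{-s}$ as $x\ra\infty$, derivable from the explicit formula $E_\w T_\nu=\nu+2\sum_{j=0}^{\nu-1}W_j$ together with the Kesten renewal bound $P(W_0>x)\sim Kx^{-s}$ (this is the first half of the proof of Theorem \ref{refstable}). Using the reflected walk $\bar T^{(n)}_\cdot$ to decouple the $\mu_{i,n,\w}$ for $|i-j|>\log^2 n$ at vanishing total-variation cost, and combining with Theorem \ref{refstable} to bound $Q(U_{\d,n,c})$ from below, a standard binomial lower bound gives
\[
\liminf_{n\ra\infty}Q(\mathcal{S}_{\d,n,a}\cap U_{\d,n,c})\;\geq\;\frac{(\d K_\infty(1-2^{-s/2}))^{2a}}{(2a)!}\,L_{s,b'}(2)\;>\;0
\]
for every fixed $a$. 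Applied on the disjoint environment segments corresponding to gaps $d_k:=n_k-n_{k-1}$ with $n_k=2^{2^k}$, these events are independent across $k$; the second Borel--Cantelli lemma, combined with a standard diagonal argument that lets $a=a_m\ra\infty$ slowly, produces, $P$-a.s., a random subsequence $n_{k_m}=n_{k_m}(\w)$ along which $\mathcal{S}_{\d,d_{k_m},a_m}\cap U_{\d,d_{k_m},c}$ holds on the block $(n_{k_m-1},cn_{k_m}]$.

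Set $\a_m:=n_{k_m-1}$, $\b_m\approx\a_m+\d d_{k_m}$, $\gamma_m:=cn_{k_m}$, and $v_{m,\w}:=\sum_{i=\a_m+1}^{\b_m}\s_{i,d_{k_m},\w}^2$. The construction ensures $v_{m,\w}\geq 2a_m\,d_{k_m}^{2/s}$, so $v_{m,\w}/d_{k_m}^{2/s}\ra\infty$, while every summand is at most of order $d_{k_m}^{2/s}$, giving a Lindberg ratio of size $O(1/a_m)\ra 0$. Applied to the independent (under the reflection modification) quenched variables $\{\tau_i-E_\w\tau_i\}_{\nu_{\a_m}<i\leq\nu_{\b_m}}$, the Lindberg--Feller theorem delivers the quenched CLT $(T_{x_m}-E_\w T_{x_m})/\sqrt{v_{m,\w}}\limd\mathcal{N}(0,1)$ for every $x_m\in[\nu_{\b_m},\nu_{\gamma_m}]$, after absorbing the initial segment $T_{\nu_{\a_m}}$ (of stable order $n_{k_m-1}^{1/s}=o(\sqrt{v_{m,\w}})$ by the doubly-exponential growth of $n_k$). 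Taking $t_m:=\lfloor E_\w T_{n_{k_m}}\rfloor$ and using $X_t^*-X_t=o(\log^2 t)$ $P$-a.s., for $x>0$ and all large $m$ (so that $\nu_{\b_m}<n_{k_m}<xn_{k_m}<\nu_{\gamma_m}$),
\[
P_\w(X_{t_m}^*<xn_{k_m})=P_\w\!\left(\frac{T_{xn_{k_m}}-E_\w T_{xn_{k_m}}}{\sqrt{v_{m,\w}}}\;>\;\frac{E_\w T_{n_{k_m}}-E_\w T_{xn_{k_m}}}{\sqrt{v_{m,\w}}}\right),
\]
and the deterministic right-hand side is bounded in absolute value by $E_\w^{\nu_{\b_m}}T_{\nu_{\gamma_m}}/\sqrt{v_{m,\w}}=O(1/\sqrt{a_m})\ra 0$ on $U_{\d,d_{k_m},c}$. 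The Gaussian CLT then yields the limit $\tfrac12$; the case $x\leq 0$ follows since $P_\w(X_{t_m}^*\leq 0)\ra 0$ by transience, and $\log t_m/\log n_{k_m}\ra 1/s$ is immediate from Theorem \ref{refstable}, which forces $n_{k_m}^{-1/s}E_\w T_{n_{k_m}}$ to be bounded away from $0$ and $\infty$ along the selected subsequence.

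\textbf{Main obstacle.} The decisive step is the lower bound on $Q(\mathcal{S}_{\d,n,a}\cap U_{\d,n,c})$: the combinatorial factor $\binom{\d n}{2a}(K_\infty/n)^{2a}$ is essentially sharp, so one must control the weak $b_n$-step dependence of the $\mu_{i,n,\w}$ induced by the shared environment without eroding the bound below the threshold required for the Borel--Cantelli step, and the diagonalization in $a_m$ must be compatible with this bound. Once this is in place, the Lindberg--Feller application and the transfer from $T_n$ to $X_n$ are essentially routine, relying respectively on the uniform comparability of the selected $\mu_{i,d_{k_m},\w}$ and on the by-now-standard estimate $X_t^*-X_t=o(\log^2 t)$.
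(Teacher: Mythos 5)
Your strategy is the same as the paper's: the events $\mathcal{S}_{\d,n,a}$ and $U_{\d,n,c}$, the binomial lower bound, independence across the doubly-exponential scales, a Lindberg--Feller CLT for $T$ along the resulting subsequence, and the transfer to $X$ via $t_m=\lfloor E_\w T_{n_{k_m}}\rfloor$ and $X_t^*-X_t=o(\log^2 t)$. However, two steps as written have genuine gaps.

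First, you fix $c>1$. Then $\gamma_m=cn_{k_m}$ gives $\nu_{\gamma_m}\sim \bar\nu c\, n_{k_m}$ with $\bar\nu=E_P\nu$, so the requirement $xn_{k_m}<\nu_{\gamma_m}$ fails for every $x\geq \bar\nu c$, and your argument only yields the limit $\tfrac12$ for $x<\bar\nu c$ rather than for all $x\in(0,\infty)$. One needs $\gamma_m/n_{k_m}\ra\infty$, which forces $c=c_k\ra\infty$ along the subsequence; this in turn must be reconciled with keeping $Q(U_{\d,d_k,c_k})$ bounded below well enough that the Borel--Cantelli sum still diverges (the paper takes $c_k$ to be the largest $c$ with $Q(E_\w T_{\nu_{cd_k}}\leq d_k^{1/s})\geq 1/\log k$, capped by a slowly growing function of $a_k$). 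Second, your verification of the Lindberg condition only checks that the maximal individual variance is $O(1/a_m)$ times the total variance. That is necessary but not sufficient: the Lindberg condition requires the truncated second moments $E_\w[(\bar T^{(d_{k_m})}_{\nu_i}-\mu_{i,d_{k_m},\w})^2\mathbf{1}_{|\cdot|>\e\sqrt{v_{m,\w}}}]$ to vanish after summation, and the block crossing times are heavy-tailed under the annealed law, so this needs a quenched tail bound on each large block. The paper supplies this via Kac's moment formula, which gives $E_\w^{\nu_{i-1}}(\bar T^{(n)}_{\nu_i})^j\leq j!\,2^j\mu_{i,n,\w}^j$ with high probability and hence a quenched exponential moment for $\bar T^{(n)}_{\nu_i}/\mu_{i,n,\w}$; without some such input the Lindberg step does not close. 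A smaller point: $\log t_m/\log n_{k_m}\ra 1/s$ does not follow from the distributional statement of Theorem \ref{refstable} alone; it requires almost-sure two-sided bounds, obtained from a moderate-deviation estimate for $E_\w T_{\nu_n}$ (upper bound) and from $E_\w^{\nu_{i-1}}T_{\nu_i}\geq M_i$ together with the tail of $M_1$ (lower bound), each combined with Borel--Cantelli.
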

Note that Theorems \ref{local} and \ref{nonlocal}
preclude the possibility of a quenched analogue of the annealed
statement \eqref{annealedstable}.
It should be noted that in \cite{gsMVSS}, Gantert and Shi prove
that when $s\leq 1$, there exists a random sequence of times $t_m$ at which
the local time of the random walk at a single site
is a positive fraction of $t_m$. This is related to the
statement of Theorem \ref{local}, but we do not see a simple
argument which directly implies Theorem \ref{local} from the
results of \cite{gsMVSS}.

As in \cite{kksStable}, limiting distributions for $X_n$ arise from first
studying limiting distributions for $T_n$. Thus,
to prove Theorem \ref{nonlocal}, we first prove that there
exists random subsequences $x_m=x_m(\w)$ and $v_{m,\w}$ in which
\[
\lim_{m\ra\infty} P_\w \left( \frac{T_{x_m} - E_\w T_{x_m}}{\sqrt{v_{m,\w}}}
\leq y  \right) = \int_{-\infty}^y
\frac{1}{\sqrt{2\pi}} e^{-t^2/2} dt =: \Phi(y)\,.
\]
We actually prove a stronger statement than this in Theorem \ref{gaussianT}
below,
where we prove that all $x_m$ ``near'' a subsequence $n_{k_m}$
of $n_k=2^{2^k}$ have the same Gaussian behavior (What we mean by ``near''
the subsequence $n_{k_m}$ is made precise in the statement of the theorem.)

The structure of the chapter is as follows:
In Section \ref{introlemmas} we prove some introductory lemmas
which will be used throughout the chapter. Section \ref{stablecrossing} is
devoted to proving Theorem \ref{refstable}. In Section \ref{localization},
we use the latter to prove Theorem \ref{local}. In Section \ref{gaussian},
we prove the existence of random subsequences $\{n_k\}$
where $T_{n_k}$ is approximately
Gaussian, and use this fact to prove Theorem \ref{nonlocal}.
Section \ref{tailofTnu} contains the proof of the following technical
theorem which is used throughout the chapter:
\begin{thm}\label{Tnutail}
Let Assumptions \ref{essentialasm} and \ref{techasm} hold. Then, there exists a constant $K_\infty\in (0,\infty)$ such that
\[
Q(E_\w T_\nu > x ) \sim K_\infty x^{-s}.
\]
\end{thm}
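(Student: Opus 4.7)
The plan is to start from the explicit representation provided by \eqref{QET}, namely
\[
E_\w T_\nu \;=\; \nu + 2\sum_{j=0}^{\nu-1} W_j,
\]
and combine it with Kesten's renewal theorem \cite{kRDE}, which gives $P(W_0 > x) \sim K x^{-s}$ for an explicit constant $K > 0$. Since the potential $V(n) := \sum_{i=0}^{n-1}\log\rho_i$ is a random walk with negative drift and $\nu$ is its first strict descent time, $\nu$ has exponential tails; moreover $\nu$ depends only on $(\w_i)_{i \geq 0}$, so its $Q$-distribution equals its $P$-distribution. Consequently the additive $\nu$ term contributes only $o(x^{-s})$, and the theorem reduces to showing $Q\bigl(\sum_{j=0}^{\nu-1} W_j > x\bigr) \sim C_\infty x^{-s}$.

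Each $W_j$ satisfies the recursion $W_{j+1} = \rho_{j+1}(1+W_j)$, which propagates Kesten's tail to every $W_j$ and shows that large values of $W_j$ within a ladder block come from the large excursions of $V$ above $0$. In the regime $s<1$, the tail of a sum of regularly varying summands is governed by its largest term (the ``one big jump'' principle). First I would truncate on $\{\nu \leq A\log x\}$, whose complement has $Q$-probability $o(x^{-s})$ by the exponential tails of $\nu$, and then use Kesten's estimate to show that the event $\{\exists\, j_1 \neq j_2 \le \nu-1 : W_{j_1} \wedge W_{j_2} > x^{1-\e}\}$ is also $o(x^{-s})$. Off this negligible event, $\sum_{j=0}^{\nu-1} W_j$ can exceed $x$ only because a single index $j^*$ carries a value $W_{j^*}$ of order $x$.

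Conditionally on such a $j^*$, unrolling the recursion gives for $1 \leq k \leq \nu - 1 - j^*$
\[
W_{j^*+k} \;=\; \Pi_{j^*+1,\,j^*+k}\bigl(W_{j^*}+1\bigr) + \sum_{i=2}^{k}\Pi_{j^*+i,\,j^*+k},
\]
so on the event that $W_{j^*}$ dominates,
\[
\sum_{j=0}^{\nu-1} W_j \;=\; W_{j^*}\Bigl(1 + \sum_{k=1}^{\nu-1-j^*}\Pi_{j^*+1,\,j^*+k}\Bigr) + \text{(lower order)}.
\]
The correction factor in parentheses depends only on the environment strictly to the right of $j^*$ within the block. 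Integrating Kesten's exact tail for $W_{j^*}$ against the joint $Q$-distribution of $j^*$ and this correction, via dominated convergence, produces a finite positive constant $K_\infty$ and yields $Q(E_\w T_\nu > x) \sim K_\infty x^{-s}$.

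The main obstacle is that both the summation range $\nu$ and the summands $W_j$ are strongly correlated, since they all live on a single random ladder block rather than coming from an i.i.d.\ sequence. The proposed resolution is precisely the single-big-jump reduction: restricting to $\{\nu \leq A\log x\}$ and isolating a single large $W_{j^*}$ decouples the tail of the full sum into the tail of one $W_{j^*}$ (controlled by Kesten) multiplied by a bounded multiplicative correction determined by the remaining block structure, from which $K_\infty$ emerges as an explicit renewal-type expectation.
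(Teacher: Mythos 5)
Your high-level picture---one large potential excursion per ladder block drives the tail, the sum $\sum_{j<\nu}W_j$ collapses to one large $W$ times a multiplicative correction, and $K_\infty$ is a renewal-type expectation---matches the paper's strategy in spirit, but two steps in the reduction are genuinely flawed. First, the claim that $\{\exists\, j_1\neq j_2\le \nu-1: W_{j_1}\wedge W_{j_2}>x^{1-\e}\}$ has $Q$-probability $o(x^{-s})$ is false: the recursion $W_{j+1}=\rho_{j+1}(1+W_j)$ makes consecutive $W_j$'s comparable, so the probability that two neighboring indices both exceed $x^{1-\e}$ is of order $x^{-s(1-\e)}$, not $o(x^{-s})$. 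The $W_j$'s are not weakly dependent regularly varying summands; the single big jump lives in the excursion of the potential (equivalently in $M_1$), and an entire stretch of indices near its top is large simultaneously. Your factorization $\sum_j W_j\approx W_{j^*}\bigl(1+\sum_k\Pi_{j^*+1,j^*+k}\bigr)$ implicitly concedes this, but it is then inconsistent with the ``only one large $W_j$'' reduction and leaves $j^*$ undefined (argmax? first exceedance?), which matters for what follows.

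Second, and more seriously, the concluding step---integrating ``Kesten's exact tail for $W_{j^*}$'' against the joint law of $j^*$ and the correction---needs $W_{j^*}$ to retain the clean tail $Kx^{-s}$ conditionally on the correction factor and on the event that identifies $j^*$ inside the block. It does not: any natural $j^*$ is determined by the whole block, the correction $\sum_k\Pi_{j^*+1,j^*+k}$ (whose range is $\nu-1-j^*$) depends on the same environment that makes $W_{j^*}$ large, and conditioning on $j^*<\nu$ distorts the tail. The paper runs the argument in the opposite direction: it introduces the stopping time $\s=\s_A=\inf\{n\ge 1:\Pi_{0,n-1}\ge A\}$, isolates the dominant term $2W_{\s-1}(1+R_{\s,\nu-1})$, and attributes the heavy tail to the \emph{forward} variable $R_\s$, which conditionally on $\mathcal{F}_{\s-1}$ is an exact independent copy of $R_0$ with tail $Kx^{-s}$; the past-measurable multiplier $W_{\s-1}\mathbf{1}_{\s<\nu}$ then only needs a finite $s$-th moment (Lemma \ref{finitemoment}). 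This also exposes a quantitative point you gloss over: the cross term $2\sum_{\s\le i<\nu}R_{i,\nu-1}$ is \emph{not} $o(x^{-s})$, only $\le\d x^{-s}$ for $A$ large (Lemma \ref{fourth}), so $K_\infty$ emerges only after the double limit $x\ra\infty$ followed by $A\ra\infty$. Finally, the theorem is needed for all $s>0$ (it feeds the $s<2$ analysis through Theorem \ref{qVartail}), so an argument pinned to the $s<1$ one-big-jump regime would not suffice as stated.
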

\noindent
The proof of Theorem \ref{Tnutail} is based on
results from \cite{kRDE} and mimics the proof of tail asymptotics
in \cite{kksStable}.
%\\

%\noindent
%{\bf Acknowledgment}
%We thank the referee for a very thorough and careful reading of the 
%paper and for the useful suggestions. 
\end{section}

%%%%%%%%%%%%%%%%%%%%% INTRODUCTORY LEMMAS %%%%%%%%%%%%%%%%%%%%%%%%%%%%%%

\begin{section}{Introductory Lemmas}\label{introlemmas}
Before proceeding with the proofs of the main theorems we mention a few easy lemmas which will be used throughout the rest of the chapter. Recall the definitions of $\Pi_{1,k}$ and $W_i$ in \eqref{rhodef} and \eqref{Wdef}.
\begin{lem}\label{nutail}
For any $c< -E_P \log\rho$, there exist $\d_c, A_c>0$ such that
\begin{equation}
 P(\Pi_{1,k} > e^{-c k })  = P \left( \frac{1}{k}\sum_{i=1}^k \log \rho_i > -c  \right) \leq A_ce^{-\d_c k}. \label{LDPrho}
\end{equation}
Also, there exist constant $C_1,C_2>0$ such that $P(\nu > x) \leq C_1
e^{-C_2 x}$ for all $x\geq 0$.
\end{lem}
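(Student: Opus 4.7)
The strategy is to obtain the first estimate from a standard Chernoff bound, exploiting that $\log\rho$ has strictly negative mean together with the finiteness of $E_P\rho^\lambda$ on $[0,s]$ (which follows from log-convexity of $\lambda \mapsto E_P\rho^\lambda$ and the endpoint values $E_P\rho^0 = E_P\rho^s = 1$), and then to read off the tail bound on $\nu$ as an immediate consequence.

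For the first part, the displayed equality is the tautology $\Pi_{1,k} = \exp(\sum_{i=1}^k \log\rho_i)$. For the inequality, fix $\lambda \in (0, s)$ and apply Markov's inequality together with independence of the $\rho_i$:
\[
P\left(\tfrac{1}{k}\sum_{i=1}^k \log\rho_i > -c\right) \leq e^{\lambda c k}\left(E_P\rho^\lambda\right)^k = \exp(k\,h(\lambda)),
\]
where $h(\lambda) := \lambda c + \log E_P\rho^\lambda$. The function $h$ is convex with $h(0) = 0$, and its right derivative at the origin equals $c + E_P\log\rho$, which is strictly negative by the hypothesis $c < -E_P\log\rho$. (Differentiation of $\lambda \mapsto E_P\rho^\lambda$ under the expectation at $\lambda = 0^+$ is justified via Assumption \ref{techasm}, which gives $E_P\rho^s|\log\rho| < \infty$.) Thus $h(\lambda^*) < 0$ for some $\lambda^* \in (0, s)$, and the bound holds with $A_c = 1$ and $\delta_c := -h(\lambda^*) > 0$.

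For the second part, the definition of $\nu$ in \eqref{nudef} forces $\Pi_{0, k-1} \geq 1$ on the event $\{\nu > x\}$ with $k := \lceil x \rceil$. Pick any $c^* \in (0, -E_P\log\rho)$ and use the stationarity of $P$ together with $e^{-c^* k} < 1$:
\[
P(\nu > x) \leq P(\Pi_{0, k-1} \geq 1) = P(\Pi_{1, k} \geq 1) \leq P(\Pi_{1, k} > e^{-c^* k}) \leq A_{c^*} e^{-\delta_{c^*} k} \leq A_{c^*} e^{-\delta_{c^*} x},
\]
where the penultimate inequality is the first part applied with $c = c^*$. Setting $C_1 := A_{c^*}$ and $C_2 := \delta_{c^*}$ completes the proof.

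The whole argument is a textbook application of Cram\'er-type bounds to the log-potential random walk $V$; the only mildly delicate point is verifying that $h$ has a well-defined negative right-derivative at the origin, which is handled by Assumption \ref{techasm}.
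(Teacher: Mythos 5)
Your proof is correct and follows essentially the same route as the paper's: the paper simply cites Cramér's theorem for \eqref{LDPrho} where you write out the Chernoff bound explicitly (using finiteness of $E_P\rho^\lambda$ on $(0,s)$), and both then deduce the $\nu$-tail from $\{\nu>x\}\subset\{\Pi_{0,\lfloor x\rfloor-1}\geq 1\}$. The only slip is the choice $k=\lceil x\rceil$ in the last step — for non-integer $x$ the event $\{\nu>x\}$ only forces $\Pi_{0,\lfloor x\rfloor-1}\geq 1$, so you should take $k=\lfloor x\rfloor$ (a harmless off-by-one absorbed into $C_1$).
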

\begin{proof}
First, note that due to Assumption \ref{essentialasm}, $\log \rho$
has negative mean and finite exponential moments in a
neighborhood of zero.
If
 $c < - E_P \log \rho $,
Cram\'{e}r's
Theorem \cite[Theorem 2.2.3]{dzLDTA} then yields
\eqref{LDPrho}.
By the definition of $\nu$ we have $P(\nu>x) \leq P(\Pi_{0,\lfloor
x \rfloor -1} \geq 1)$, which together with
\eqref{LDPrho} completes the proof of the lemma.
\end{proof}
\noindent
From  \cite[Theorem 5]{kRDE}, there exist constants
$K,K_1>0$ such that for all $i$
\begin{equation}
P(W_i > x) \sim K x^{-s},\quad\text{and}\quad P(W_i > x) \leq K_1
x^{-s}\,. \label{PWtail}
\end{equation}
The tails of $W_{-1}$, however,  are different (under the measure $Q$),
as the following lemma shows.
\begin{lem}\label{Wtail}
There exist constants $C_3,C_4>0$ such that $Q(W_{-1} > x) \leq
C_3 e^{-C_4  x}$ for all $x\geq 0$.
\end{lem}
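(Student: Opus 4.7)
My plan is to exploit the fact that under $Q$ the potential at negative locations is forced to be strictly positive, so every summand of $W_{-1}$ is individually bounded by $1$, and divergence can only come from the tail. Recalling the potential $V$ defined in Section \ref{AnnealedLimits}, we have $\Pi_{k,-1}=e^{-V(k)}$ for $k\leq -1$, hence
\[
W_{-1} \;=\; \sum_{n=1}^{\infty} e^{-V(-n)},
\]
and on $\mathcal{R}$ we have $V(-n)>0$ for every $n\geq 1$, so each term lies in $(0,1)$ under $Q$.

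The exponential decay will come from the positive drift $c:=-E_P\log\rho>0$ of the i.i.d.\ sum $V(-n)=\sum_{k=1}^{n}(-\log\rho_{-k})$. Applying the Cram\'er-type bound \eqref{LDPrho} of Lemma \ref{nutail} with the parameter there taken to be $c/2$ yields constants $\delta,A>0$ (independent of $n$) such that $P(V(-n)\leq cn/2)\leq A e^{-\delta n}$. Since $P(\mathcal{R})$ is a fixed positive constant, the same estimate transfers to $Q$ up to multiplication by $1/P(\mathcal{R})$.

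The core step is a splitting argument at a level $N$ of order $x$. On the good event $G_N:=\bigcap_{n\geq N}\{V(-n)>cn/2\}$ I bound the tail geometrically, $\sum_{n\geq N} e^{-V(-n)} \leq \sum_{n\geq N} e^{-cn/2} = O(e^{-cN/2})$, and the head trivially, $\sum_{n=1}^{N-1}e^{-V(-n)}<N-1$, using $V(-n)>0$ under $Q$; for $N$ sufficiently large, combining these gives $W_{-1}\leq N$ on $G_N$. A union bound over $n\geq N$ combined with the previous paragraph yields $Q(G_N^c)\leq (A/P(\mathcal{R}))\sum_{n\geq N}e^{-\delta n} = O(e^{-\delta N})$. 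Setting $N:=\lfloor x\rfloor$ for $x$ large then gives $Q(W_{-1}>x)\leq Q(W_{-1}>N)\leq Q(G_N^c)=O(e^{-\delta x})$, and small $x$ is handled trivially by enlarging $C_3$.

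I do not anticipate a serious obstacle in carrying out this plan. The only mildly delicate point is the passage from a $P$-estimate to a $Q$-estimate in the large deviation step, but this is automatic because $P(\mathcal{R})>0$ --- the potential $V$ has positive drift leftward, so staying positive on $(-\infty,-1]$ has positive probability. Everything else is a clean split into a head of length $N$ and a geometric tail, which works precisely because $Q$ simultaneously provides the termwise bound $e^{-V(-n)}<1$ and, via the drift, a linear lower bound on $V(-n)$ with high probability.
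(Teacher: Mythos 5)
Your proof is correct and is essentially the paper's own argument: both split $W_{-1}$ at a cutoff of order $x$, bound the first block termwise by $1$ using $\Pi_{i,-1}<1$ under $Q$ (equivalently $V<0$ on the negative half-line... i.e.\ $V(-n)>0$), and control the tail by a geometric series on the high-probability event that $\Pi_{-n,-1}\leq e^{-cn/2}$, with the complementary event estimated by the Cram\'er bound \eqref{LDPrho} transferred to $Q$ via division by $P(\mathcal{R})$. The only differences are cosmetic (split point $x$ versus $x/2$, and phrasing in terms of the potential $V$ rather than the products $\Pi_{-n,-1}$).
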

\begin{proof}
Since $\Pi_{i,-1} < 1,\quad Q-a.s.$ we have $W_{-1} < k + \sum_{i
< -k} \Pi_{i,-1}$ for any $k>0$. Also, note that from
\eqref{LDPrho} we have $Q(\Pi_{-k,-1} > e^{-c k} ) \leq
{A_c} e^{-\d_c k}/
P(\mathcal{R})$. Thus,
\begin{align*}
Q(W_{-1} > x) &\leq Q\left( \frac{x}{2} +
\sum_{k=\frac{x}{2}}^\infty e^{-ck} > x \right) +
Q\left( \Pi_{-k,-1}> e^{-ck} \text{, for some } k\geq \frac{x}{2} \right)\\
& \leq \mathbf{1}_{\frac{x}{2} + \frac{1}{1-e^{-c}} > x} +
\sum_{k=\frac{x}{2}}^\infty Q(\Pi_{-k,-1}>e^{-ck})
%\\
%&\leq
\leq \mathbf{1}_{\frac{1}{1-e^{-c}} > \frac{x}{2}} +
\bigo\left(e^{-\d_c x/2}\right)\,.
\end{align*}
\end{proof}
\noindent
We also need a few more definitions
that will be used throughout the chapter. For any $i\leq k$,
\begin{equation}
R_{i,k} := \sum_{j=i}^k \Pi_{i,j},\quad\text{and}\quad R_i:=
\sum_{j=i}^\infty \Pi_{i,j}. \label{Rdef}
\end{equation}
Note that since $P$ is a product measure,
$R_{i,k}$ and $R_i$ have the same distributions as $W_{i,k}$ and
$W_i$ respectively. In particular with $K,K_1$ the same as
in \eqref{PWtail},
\begin{equation}
P(R_i > x) \sim K x^{-s},\quad\text{and}\quad P(R_i > x) \leq K_1
x^{-s}\,. \label{Rtail}
\end{equation}
\end{section}

%%%%%%%%%%%%%%%%%%%% STABLE LIMIT FOR EXPECTED CROSSING TIME %%%%%%%%%%%%

\begin{section}{Stable Behavior of Expected Crossing Time}\label{stablecrossing}
Recall from Theorem \ref{Tnutail} that there exists
$K_\infty>0$ such that $Q(E_\w T_\nu > x) \sim K_\infty x^{-s}$.
Thus $E_\w T_\nu$ is in the domain of attraction of a
stable distribution. Also, from the comments after the definition of
$Q$ in the introduction it is evident that under $Q$, the environment
$\w$ is stationary under shifts of the ladder times
$\nu_i$. Thus, under $Q$, $\{ E_\w^{\nu_{i-1}} T_{\nu_i} \}_{i\in \Z}$
is a stationary sequence of random variables. Therefore, it is reasonable
to expect that $n^{-1/s}E_\w T_{\nu_n} = n^{-1/s}
\sum_{i=1}^n E_\w^{\nu_{i-1}} T_{\nu_i}$ converge in distribution
to a stable distribution of index $s$. The main obstacle to proving
this is that the random variables $E_\w^{\nu_{i-1}} T_{\nu_i}$ are
not independent. This dependence, however, is rather weak. The strategy
of the proof of Theorem \ref{refstable} is to first show that we need
only consider the blocks where the expected crossing time
$E_\w^{\nu_{i-1}} T_{\nu_i}$ is relatively large. These blocks will
then be separated enough to make the expected crossing times
essentially independent.

For every $k\in \Z$, define
\begin{equation}
	M_k:=\max
\{\Pi_{\nu_{k-1}, j} : \nu_{k-1}\leq j < \nu_k \}. \label{Mdef}
\end{equation}
Theorem 1 in
\cite{iEV} gives that there exists a constant $C_5>0$ such that
\begin{equation}
Q(M_1 > x)\sim C_5 x^{-s}.\label{Mtail}
\end{equation}
Thus $M_1$ and $E_\w T_\nu$ have similar tails under $Q$. We will now show
that $E_\w T_\nu$ cannot be too much larger than $M_1$.
From \eqref{QET} we have that
\begin{equation}
E_\w T_\nu = \nu + 2 \sum_{j=0}^{\nu-1} W_j = \nu+2
W_{-1}R_{0,\nu-1} + 2 \sum_{i=0}^{\nu-1} R_{i,\nu-1}.
\label{ETnuexpand}
\end{equation}
From the definitions of $\nu$ and $M_1$ we have that $R_{i,\nu-1}
\leq (\nu - i) M_1 \leq \nu M_1$ for any $0\leq i < \nu$.
Therefore, $E_\w T_\nu \leq \nu + 2 W_{-1}\nu M_1 + 2 \nu^2 M_1$.
Thus, given any $0<\a<\b$ and $\d>0$ we have
\begin{align}
\label{TbigMsmall}
Q(E_\w T_\nu > \d n^{\b}, M_1 \leq n^{\a})
&\leq Q(\nu +2 W_{-1}\nu n^{\a}  + 2\nu^2 n^{\a} > \d n^{\b})  \\
&\leq Q(W_{-1} > n^{(\b-\a)/2}) + Q\left(\nu^2 > n^{(\b-\a)/2}\right)
= o\left(e^{-n^{(\b-\a)/5}}\right),
\nonumber
%&= o\left(e^{-n^{(\b-\a)/5}}\right), \nonumber
\end{align}
where the second inequality holds for all $n$ large enough and the
last equality is a result of Lemmas \ref{nutail} and \ref{Wtail}.
We now show that only the ladder times with $M_k>n^{(1-\e)/s}$
contribute to the limiting distribution of $n^{-1/s} E_\w
T_{\nu_n}$.
\begin{lem}\label{smallblocks}
Assume $s<1$. Then for any $\e>0$ and any $\d>0$ there exists an $\eta> 0$ such that
\[
\lim_{n\ra\infty} Q\left( \sum_{i=1}^n (E_\w^{\nu_{i-1}} T_{\nu_i}
) \mathbf{1}_{M_i \leq n^{(1-\e)/s}}
> \d n^{1/s} \right) = o(n^{-\eta})
\,.\]
\end{lem}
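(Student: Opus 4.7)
The plan is to use a two-step truncation followed by a Markov bound. Under $Q$, the sequence $\{E_\w^{\nu_{i-1}} T_{\nu_i}\}_{i\geq 1}$ is stationary, and each term has the same distribution as $E_\w T_\nu$. So all bounds for a single block transfer by stationarity and a union bound.

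First I would refine the event $\{M_i \leq n^{(1-\e)/s}\}$ by also truncating the crossing time itself. Set $y_n := n^{(1-\e/2)/s}$. Split the probability according to whether some index $i$ satisfies the ``bad'' event
\[
A_{i,n} := \{E_\w^{\nu_{i-1}} T_{\nu_i} > y_n,\; M_i \leq n^{(1-\e)/s}\}.
\]
By the stationarity of $Q$ and estimate \eqref{TbigMsmall} (applied with $\b = (1-\e/2)/s$, $\a = (1-\e)/s$, so that $\b - \a = \e/(2s)$), one has $Q(A_{i,n}) = o(e^{-n^{\e/(10s)}})$, so a union bound gives $Q\bigl(\bigcup_{i=1}^n A_{i,n}\bigr) = o(e^{-n^{\e/(11s)}})$, which is much smaller than any $n^{-\eta}$.

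On the complementary event, every contributing term is bounded by $y_n$, so it suffices to bound
\[
Q\!\left( \sum_{i=1}^n E_\w^{\nu_{i-1}} T_{\nu_i}\, \mathbf{1}_{\{E_\w^{\nu_{i-1}} T_{\nu_i} \leq y_n\}} > \d n^{1/s} \right)
\leq \frac{n}{\d n^{1/s}}\, E_Q\!\left[E_\w T_\nu\, \mathbf{1}_{\{E_\w T_\nu \leq y_n\}}\right],
\]
using Markov's inequality and stationarity. The truncated expectation is controlled via the tail bound $Q(E_\w T_\nu > t) \leq C t^{-s}$ from Theorem \ref{Tnutail}:
\[
E_Q\!\left[E_\w T_\nu\, \mathbf{1}_{\{E_\w T_\nu \leq y_n\}}\right] \leq \int_0^{y_n} Q(E_\w T_\nu > t)\,dt \leq C' + \frac{C}{1-s}\, y_n^{1-s},
\]
where finiteness of the integral uses $s < 1$ crucially.

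Substituting $y_n = n^{(1-\e/2)/s}$ gives an upper bound of order $n^{1 - 1/s}\, y_n^{1-s} = n^{-\e(1-s)/(2s)}$, which is $n^{-\eta}$ for $\eta := \e(1-s)/(2s) > 0$. Combining with the exponentially small contribution from $\bigcup_i A_{i,n}$ yields the claim. The only real subtlety is the two-step truncation: truncating only by $M_i$ is not enough because the expected crossing time depends on $W_{\nu_{i-1}-1}$ (the environment to the left), so one needs the deterministic a priori bound coming from \eqref{TbigMsmall} to reduce to truncating by $E_\w^{\nu_{i-1}} T_{\nu_i}$ itself, where Markov's inequality together with Theorem \ref{Tnutail} can be applied.
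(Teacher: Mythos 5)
Your argument is correct, and it takes a genuinely different route from the one in the paper. Both proofs share the same first reduction: use \eqref{TbigMsmall} (via stationarity of $Q$ under shifts by ladder locations and a union bound over $i\leq n$) to replace the indicator $\mathbf{1}_{M_i\leq n^{(1-\e)/s}}$ by an indicator truncating $E_\w^{\nu_{i-1}}T_{\nu_i}$ itself at a slightly larger power of $n$, at a super-polynomially small probabilistic cost. After that the two arguments diverge. The paper fixes $C\in(1,\frac{1}{s})$, slices the range of values of $E_\w^{\nu_{i-1}}T_{\nu_i}$ into geometrically spaced bands $(n^{(1-C^k\e)/s},n^{(1-C^{k-1}\e)/s}]$, and applies Markov's inequality to the \emph{cardinality} of each band, using the tail bound of Theorem \ref{Tnutail} to control the expected count; the finitely many bands are then summed and the last band is handled deterministically. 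You instead apply Markov's inequality once, directly to the truncated sum, and compute the truncated first moment $E_Q[E_\w T_\nu\,\mathbf{1}_{E_\w T_\nu\leq y_n}]\lesssim y_n^{1-s}$ from the same tail bound; the hypothesis $s<1$ enters through the sublinear growth $y^{1-s}$ of this truncated mean, exactly where the paper uses it to choose $C<\frac{1}{s}$. Your computation of the final exponent is right: $n^{1-1/s}y_n^{1-s}=n^{-\e(1-s)/(2s)}$, and neither step requires any independence across blocks, only stationarity and linearity of expectation. Your version is shorter and more transparent for this particular statement; the paper's band-counting scheme is slightly more flexible (it is reused essentially verbatim for the second-moment analogue in Lemma \ref{Vsmall}, where the same template applies with $s/2$ in place of $s$), but for Lemma \ref{smallblocks} as stated your proof is complete.
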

\begin{proof}
First note that
\begin{align*}
Q \left( \sum_{i=1}^n (E_\w^{\nu_{i-1}} T_{\nu_i} ) \mathbf{1}_{
M_i \leq n^{(1-\e)/s}} > \d n^{1/s} \right) &\leq Q\left(
\sum_{i=1}^n (E_\w^{\nu_{i-1}} T_{\nu_i} )
\mathbf{1}_{E_\w^{\nu_{i-1}} T_{\nu_i}
 \leq n^{(1-\frac{\e}{2} )/s}} > \d n^{1/s}  \right) \\
&\quad\quad + n Q\left( E_\w T_{\nu}
 > n^{(1-\frac{\e}{2})/s}, M_1 \leq n^{(1-\e)/s} \right)
\,.
\end{align*}
By \eqref{TbigMsmall}, the last term above decreases
faster than any power of $n$. Thus it is enough to prove that for
any $\d,\e>0$ there exists an $\eta>0$ such that
\[
Q\left( \sum_{i=1}^n (E_\w^{\nu_{i-1}} T_{\nu_i} )
\mathbf{1}_{E_\w^{\nu_{i-1}} T_{\nu_i}
 \leq n^{(1-\e )/s}} > \d n^{1/s}  \right) = o( n^{-\eta} )\,.
\]
Next, pick $C\in \left(1, \frac{1}{s} \right)$ and let
$J_{C,\e,k,n}:= \left\{ i\leq n : n^{(1-C^k \e)/s} <
E_\w^{\nu_{i-1}} T_{\nu_i} \leq n^{(1-C^{k-1}\e)/s} \right\}$. Let
$k_0=k_0(C,\e)$ be the smallest integer such that $(1-C^k\e) \leq
0$. Then for any $k < k_0$ we have
\begin{align*}
Q \left( \sum_{i\in J_{C,\e,k,n} }  E_\w^{\nu_{i-1}} T_{\nu_i}  >
\d n^{1/s} \right)
 &\leq Q \left( \# J_{C,\e,k,n} > \d n^{1/s-(1-C^{k-1}\e)/s} \right)\\
&\leq \frac{n Q( E_\w T_{\nu}  > n^{(1-C^{k}\e)/s})}{\d
n^{C^{k-1}\e/s}} \sim \frac{K_\infty}{\d}
n^{-C^{k-1}\e(\frac{1}{s}-C)}
\,,\end{align*}
where the asymptotics in the last line above is from Theorem
\ref{Tnutail}. Letting $\eta=
\frac{\e}{2}\left(\frac{1}{s}-C\right)$ we have for any $k < k_0$
that
\begin{equation}
Q \left( \sum_{i\in J_{C,\e,k,n} } E_\w^{\nu_{i-1}} T_{\nu_i}  >
\d n^{1/s} \right) = o(n^{-\eta}). \label{klessk0}
\end{equation}
Finally, note that
\begin{equation}
Q\left( \sum_{i=1}^n (E_\w^{\nu_{i-1}} T_{\nu_i})
\mathbf{1}_{E_\w^{\nu_{i-1}} T_{\nu_i} \leq n^{(1-C^{k_0-1}\e)/s}}
\geq \d n^{1/s} \right) \leq
\mathbf{1}_{n^{1+(1-C^{k_0-1}\e)/s} \geq \d n^{1/s}}. \label{k0}
\end{equation}
However, since $C^{k_0} \e \geq 1 > Cs$ we have $C^{k_0-1}\e
> s $, which
implies that
%$1+(1-C^{k_0-1}\e)/s < \frac{1}{s}$. Therefore,
the right side of \eqref{k0} vanishes for all $n$ large enough.
Therefore, combining \eqref{klessk0} and \eqref{k0} we have
\begin{align*}
Q\left( \sum_{i=1}^n (E_\w^{\nu_{i-1}} T_{\nu_i})
\mathbf{1}_{E_\w^{\nu_{i-1}} T_{\nu_i}
%T_{\nu_i} \leq n^{(1-C^{k_0-1}\e)/s}}
 \leq n^{(1-\e )/s}} > \d n^{1/s} \right)
&\leq  \sum_{k=1}^{k_0-1} Q \left( \sum_{i\in J_{C,\e,k,n} }
E_\w^{\nu_{i-1}} T_{\nu_i}  > \frac{\d}{k_0}
n^{1/s} \right) \\
&
\!\!\!\!\!
\!\!\!\!\!
\!\!\!\!\!
\!\!\!\!\!
\!\!\!\!\!
\!\!\!\!\!
\!\!\!\!\!
\!\!\!\!\!
\!\!\!\!\!
\!\!\!\!\!
+ Q\left( \sum_{i=1}^n (E_\w^{\nu_{i-1}} T_{\nu_i})
\mathbf{1}_{E_\w^{\nu_{i-1}} T_{\nu_i} \leq n^{(1-C^{k_0-1}\e)/s}}
\geq \frac{\d}{k_0}
n^{1/s} \right)
= o(n^{-\eta}).
\end{align*}
\end{proof}
In order to make the crossing times of the significant blocks essentially
independent, we introduce some reflections to the RWRE. For
$n=1,2,\ldots$, define
\begin{equation}
b_n:= \lfloor \log^2(n) \rfloor. \label{bdef}
\end{equation}
Let $\bar{X}_t^{(n)}$ be the random walk that is the same as $X_t$
with the added condition that after reaching $\nu_k$ the
environment is modified by setting $\w_{\nu_{k-b_n}} = 1 $ , i.e.
never allow the walk to backtrack more than $\log^2(n)$ ladder
times. 
We couple $\bar{X}_t^{(n)}$ with the random walk $X_t$ in such a way that $\bar{X}_t^{(n)} \geq X_t$ with equality holding until
the first time $t$ when the walk $\bar{X}_t^{(n)}$ reaches a modified environment location.
%We couple $\bar{X}_t^{(n)}$ with the random walk $X_t$ in such a way that the two walks are equal until the first time when the walk $X_t^{(n)}$ reaches a modified environment location. 
%
%That is, letting 
%\[
% \tau^{(n)} := \inf \{ t\geq 0: \exists k\geq 0 , T_{\nu_k} < t,  X_t=\nu_{k-b_n} \}
%\]
%we have $X_t=X_t^{(n)}$ for all $t \leq \tau^{(n)}$. 
Denote by $\bar{T}_{x}^{(n)}$ the corresponding hitting
times for the walk $\bar{X}_t^{(n)}$. The following lemmas show that we can add reflections to
the random walk without changing the expected crossing time by
very much.
\begin{lem} \label{ETdiff}
There exist $B,\d' > 0$ such that for any $x>0$
\[
Q\left( E_\w T_\nu - E_\w \bar{T}_\nu^{(n)} > x \right) \leq B
(x^{-s}\vee 1) e^{-\d' b_n}.
\]
\end{lem}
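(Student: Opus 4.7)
The plan is to first derive a closed-form expression for the difference $E_\w T_\nu - E_\w \bar{T}_\nu^{(n)}$, and then to bound each resulting factor separately. Starting from \eqref{QET}, I have $E_\w T_\nu = \sum_{j=0}^{\nu-1}(1+2W_j)$. The reflection at $\nu_{-b_n}$ (that is, setting $\w_{\nu_{-b_n}} = 1$) amounts to replacing each $W_j$ by $\bar{W}_j := \sum_{k=\nu_{-b_n}+1}^{j}\Pi_{k,j}$, so $W_j - \bar{W}_j = \sum_{k \leq \nu_{-b_n}}\Pi_{k,j}$. Factoring $\Pi_{k,j} = \Pi_{k,\nu_{-b_n}-1}\,\Pi_{\nu_{-b_n},j}$ for $k < \nu_{-b_n}$ collapses this sum to $\Pi_{\nu_{-b_n},j}(1+W_{\nu_{-b_n}-1})$, and using $\Pi_{\nu_{-b_n},j} = \Pi_{\nu_{-b_n},-1}\,\Pi_{0,j}$ for $j \geq 0$, summation over $j$ yields
\[
 E_\w T_\nu - E_\w \bar{T}_\nu^{(n)} \;=\; 2\,\Pi_{\nu_{-b_n},-1}\,(1+W_{\nu_{-b_n}-1})\,R_{0,\nu-1}.
\]

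Next I would bound the three factors on the right. By the stationarity of $\w$ under ladder shifts with respect to $Q$, the middle factor $W_{\nu_{-b_n}-1}$ has the same $Q$-distribution as $W_{-1}$, so Lemma \ref{Wtail} gives $Q(W_{\nu_{-b_n}-1} > b_n) \leq C_3 e^{-C_4 b_n}$. The last factor $R_{0,\nu-1}$ depends only on $\w_j$ for $j \geq 0$, hence has the same law under $Q$ as under $P$; since $R_{0,\nu-1} \leq R_0$, it inherits the power-law bound $Q(R_{0,\nu-1} > y) \leq K_1 y^{-s}$ from \eqref{Rtail}. The main obstacle is the first factor $\Pi_{\nu_{-b_n},-1}$, which I expect to be exponentially small in $b_n$. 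Using the ladder-renewal structure of the environment under $Q$, I can write $\Pi_{\nu_{-b_n},-1} = \prod_{i=1}^{b_n} \Pi_{\nu_{-i},\nu_{-i+1}-1}$ as a product of $b_n$ i.i.d.\ factors under $Q$, each distributed as $\Pi_{0,\nu-1}$ and each strictly less than $1$ by the definition of $\nu$. Since $\log \Pi_{0,\nu-1}$ is nonpositive with strictly negative mean $-\mu := E_Q[\log \Pi_{0,\nu-1}] < 0$, and the moment generating function $E_Q[\Pi_{0,\nu-1}^\l]$ is trivially $\leq 1$ for every $\l \geq 0$, a Chernoff bound with $\l > 0$ small yields
\[
 Q\!\left(\Pi_{\nu_{-b_n},-1} > e^{-\mu b_n/2}\right) \leq e^{-\delta_1 b_n}
\]
for some $\delta_1 > 0$.

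Combining these estimates by a union bound, the difference exceeds $x$ only if $\Pi_{\nu_{-b_n},-1} > e^{-\mu b_n/2}$, or $W_{\nu_{-b_n}-1} > b_n$, or $R_{0,\nu-1} > x\,e^{\mu b_n/2}/(2(1+b_n))$. This gives
\[
 Q\!\left(E_\w T_\nu - E_\w \bar{T}_\nu^{(n)} > x\right) \leq e^{-\delta_1 b_n} + C_3 e^{-C_4 b_n} + K_1 \!\left(\tfrac{2(1+b_n)}{x\,e^{\mu b_n/2}}\right)^{\!s}.
\]
The first two terms are $O(e^{-\delta b_n})$, and the third is at most $K_1\,(2(1+b_n))^s\,x^{-s}\,e^{-s\mu b_n/2}$; the polynomial prefactor $(2(1+b_n))^s$ can be absorbed into the exponential at the cost of taking any $\delta' < s\mu/2$. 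Taking the trivial maximum with $1$ (needed only when $x \leq 1$, where $x^{-s} \geq 1$) then delivers the claimed bound $B(x^{-s}\vee 1)\,e^{-\delta' b_n}$.
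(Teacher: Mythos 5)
Your proposal is correct, and the first step coincides exactly with the paper's: the identity $E_\w T_\nu - E_\w \bar{T}_\nu^{(n)} = 2\,\Pi_{\nu_{-b_n},-1}(1+W_{\nu_{-b_n}-1})R_{0,\nu-1}$ is precisely \eqref{reflectexpand}. Where you diverge is in how the three factors are controlled. For $\Pi_{\nu_{-b_n},-1}$ the paper uses $\nu_{-b_n}\leq -b_n$ and a union bound over $k\geq b_n$ of the Cram\'er estimate \eqref{LDPrho} for $\Pi_{-k,-1}$; you instead exploit the ladder-renewal structure to write $\Pi_{\nu_{-b_n},-1}$ as a product of exactly $b_n$ i.i.d.\ sub-unit block factors and apply a Chernoff bound. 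Both work; note only that ``$E_Q[\Pi_{0,\nu-1}^\l]\leq 1$'' is not by itself enough for your Chernoff step — you need the strict inequality $e^{\l\mu/2}E_Q[\Pi_{0,\nu-1}^\l]<1$ for small $\l>0$, which follows from $\frac{d}{d\l}E_Q[\Pi_{0,\nu-1}^\l]\big|_{\l=0^+}=E_Q[\log\Pi_{0,\nu-1}]=-\mu<-\mu/2$ (and one should replace $\mu$ by a finite constant if that expectation is $-\infty$); this is worth a line in a written version. For the remaining factor $2(1+W)R_{0,\nu-1}$, the paper shifts $W_{\nu_{-b_n}-1}$ to $W_{-1}$, dominates $2(1+W_{-1})R_{0,\nu-1}$ by $E_\w T_\nu$ via \eqref{ETnuexpand}, and then invokes the tail asymptotics of Theorem \ref{Tnutail}; you instead split $W$ and $R$ with a union bound, using the elementary exponential tail of $W_{-1}$ (Lemma \ref{Wtail}) and Kesten's power tail \eqref{Rtail} for $R_{0,\nu-1}\leq R_0$. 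Your route is more self-contained, since it avoids the (deep, separately proved) Theorem \ref{Tnutail}, at the cost of a harmless polynomial prefactor $(2(1+b_n))^s$ that you correctly absorb into the exponential.
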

\begin{proof}
First, note that for any $n$ the formula for $E_\w
\bar{T}_\nu^{(n)}$ is the same as for $E_\w T_\nu$ in
\eqref{ETnuexpand} except with $\rho_{\nu_{-b_n}} = 0$. Thus $E_\w
T_\nu$ can be written as
\begin{equation}
E_\w T_\nu = E_\w \bar{T}_\nu^{(n)} + 2 (1+W_{\nu_{-b_n}-1})
\Pi_{\nu_{-b_n } , -1}R_{0,\nu-1}. \label{reflectexpand}
\end{equation}
Now, since $\nu_{-b_n} \leq -b_n$ we have
\[
Q\left(
\Pi_{\nu_{-b_n},-1}
> e^{-c b_n} \right) \leq \sum_{k=b_n}^\infty Q\left( \Pi_{-k,-1}
> e^{-c k} \right) \leq \sum_{k=b_n}^\infty
\frac{1}{P(\mathcal{R})} P\left( \Pi_{-k,-1} > e^{-c k} \right).
\]
Applying \eqref{LDPrho},
we have that for any $0<c<-E_P \log \rho$
there exist $A',\d_c > 0$ such that 
\[
 Q\left( \Pi_{\nu_{-b_n},-1}
> e^{-c b_n} \right) \leq A' e^{-\d_c b_n} .
\]
Therefore, for any
$x>0$,
\begin{align}
Q\left(  E_\w T_{\nu} - E_\w \bar{T}_{\nu}^{(n)} > x \right) &\leq
Q\left( 2(1+W_{\nu_{-b_n}-1}) \Pi_{\nu_{-b_n},-1} R_{0,\nu-1} > x
\right) \nonumber \\
& \leq Q\left( 2(1+W_{\nu_{-b_n}-1}) R_{0,\nu-1} >
x e^{c b_n} \right) + A' e^{-\d_c b_n} \nonumber \\
&=  Q\left( 2(1+W_{-1}) R_{0,\nu-1} > x e^{c b_n} \right) + A'
e^{-\d_c b_n},  \label{ETd1}
\end{align}
where the equality in the second line is due to the fact that the
blocks of the environment are i.i.d under $Q$. Also, from
\eqref{ETnuexpand} and Theorem \ref{Tnutail} we have
\begin{equation}
Q\left( 2(1+W_{-1}) R_{0,\nu-1} > x e^{c b_n} \right) \leq Q\left(
E_\w T_\nu > x e^{c b_n} \right) \sim K_\infty x^{-s} e^{-c s
b_n}. \label{ETd2}
\end{equation}
Combining \eqref{ETd1} and \eqref{ETd2} finishes the proof.
\end{proof}
\begin{lem}\label{reftail}
For any $x>0$ and $\e>0$ we have that 
\begin{equation}
\lim_{n\ra\infty} n Q\left(E_\w \bar{T}_\nu^{(n)} > x n^{1/s}, M_1 > n^{(1-\e)/s} \right)
= K_\infty x^{-s}. \label{refTnutail}
\end{equation}
\end{lem}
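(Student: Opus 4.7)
The plan is to combine Theorem \ref{Tnutail} (which gives the tail asymptotics of $E_\w T_\nu$ under $Q$) with the two control estimates \eqref{TbigMsmall} and Lemma \ref{ETdiff}, using a squeezing argument. The starting point is that by Theorem \ref{Tnutail},
\[
n \, Q(E_\w T_\nu > x \, n^{1/s}) \to K_\infty x^{-s} .
\]
First I would remove the event $\{M_1 \leq n^{(1-\e)/s}\}$. Applying \eqref{TbigMsmall} with $\b = 1/s$ and $\a = (1-\e)/s$ gives $Q(E_\w T_\nu > x \, n^{1/s}, \, M_1 \leq n^{(1-\e)/s}) = o(e^{-n^{\e/(5s)}})$, which is $o(1/n)$. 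Thus
\[
\lim_{n\to\infty} n \, Q\bigl(E_\w T_\nu > x \, n^{1/s}, \, M_1 > n^{(1-\e)/s}\bigr) = K_\infty x^{-s} . \tag{$\ast$}
\]

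Next I would transfer this to the reflected expectation $E_\w \bar T_\nu^{(n)}$. Because \eqref{reflectexpand} exhibits the difference $E_\w T_\nu - E_\w \bar T_\nu^{(n)}$ as a non-negative quantity, we always have $E_\w \bar T_\nu^{(n)} \leq E_\w T_\nu$, so the upper bound
\[
n \, Q\bigl(E_\w \bar T_\nu^{(n)} > x \, n^{1/s}, \, M_1 > n^{(1-\e)/s}\bigr) \leq n \, Q\bigl(E_\w T_\nu > x \, n^{1/s}, \, M_1 > n^{(1-\e)/s}\bigr)
\]
together with $(\ast)$ gives $\limsup \leq K_\infty x^{-s}$. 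For the matching lower bound, fix $\d > 0$ and write
\[
\bigl\{ E_\w T_\nu > (x+\d)n^{1/s}, \, M_1 > n^{(1-\e)/s}\bigr\} \subseteq \bigl\{ E_\w \bar T_\nu^{(n)} > x \, n^{1/s}, \, M_1 > n^{(1-\e)/s}\bigr\} \cup \bigl\{E_\w T_\nu - E_\w \bar T_\nu^{(n)} > \d \, n^{1/s}\bigr\} .
\]
Lemma \ref{ETdiff} with $x = \d n^{1/s}$ gives $Q(E_\w T_\nu - E_\w \bar T_\nu^{(n)} > \d n^{1/s}) \leq B \d^{-s} n^{-1} e^{-\d' b_n}$, which multiplied by $n$ tends to $0$ since $b_n = \lfloor \log^2 n \rfloor$. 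Taking $n \to \infty$ in the union bound and applying $(\ast)$ on the left yields
\[
K_\infty (x+\d)^{-s} \leq \liminf_{n\to\infty} n \, Q\bigl(E_\w \bar T_\nu^{(n)} > x \, n^{1/s}, \, M_1 > n^{(1-\e)/s}\bigr) .
\]
Letting $\d \downarrow 0$ closes the sandwich and proves \eqref{refTnutail}.

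There is no real obstacle here once the preliminary tail result (Theorem \ref{Tnutail}), the moderate deviation estimate \eqref{TbigMsmall}, and the reflection perturbation estimate (Lemma \ref{ETdiff}) are in hand; the only thing that needs a little care is ensuring that the bound in Lemma \ref{ETdiff} is of order $o(1/n)$ uniformly in the regime $x \asymp n^{1/s}$, and this is guaranteed by the fact that $e^{-\d' \log^2 n}$ decays super-polynomially, which dominates the polynomial factor $x^{-s}$ even for $x$ of order $n^{1/s}$.
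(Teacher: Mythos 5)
Your proof is correct and follows essentially the same route as the paper: the upper bound via monotonicity of the reflected expectation together with Theorem \ref{Tnutail}, and the lower bound via an inclusion bound that discards the reflection difference (Lemma \ref{ETdiff}) and the small-$M_1$ event (\eqref{TbigMsmall}) as $o(1/n)$ errors, then letting $\d\downarrow 0$. The only cosmetic difference is that you perturb the threshold additively to $(x+\d)n^{1/s}$ where the paper uses $(1+\d)xn^{1/s}$.
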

\begin{proof}
Since adding reflections only decreases the crossing times, we can
get an upper bound using Theorem \ref{Tnutail},
that is
\begin{align}
\limsup_{n\ra\infty} n Q\left(E_\w \bar{T}_\nu^{(n)} > x n^{1/s}, M_1 > n^{(1-\e)/s} \right)
&\leq \limsup_{n\ra\infty} n Q(E_\w T_\nu > x n^{1/s}) = K_\infty x^{-s}. \label{refTnub}
\end{align}
To get a lower bound we first note that for any $\d>0$,
\begin{align}
Q\left( E_\w T_\nu > (1+\d)x n^{1/s} \right) &\leq  Q\left( E_\w
\bar{T}_\nu^{(n)} > x n^{1/s}, M_1 > n^{(1-\e)/s} \right)
 + Q\left( E_\w T_\nu - E_\w \bar{T}_\nu^{(n)} > \d x
n^{1/s} \right) \nonumber \\
&\quad + Q\left( E_\w T_\nu > (1+\d)x n^{1/s}, M_1 \leq
n^{(1-\e)/s} \right) \nonumber \\
&\leq Q\left( E_\w \bar{T}_\nu^{(n)} > x n^{1/s}, M_1 >
n^{(1-\e)/s} \right) + o(1/n), \label{r1}
\end{align}
where the second inequality is from \eqref{TbigMsmall} and Lemma
\ref{ETdiff}. Again using Theorem \ref{Tnutail} we have
\begin{align}
\liminf_{n\ra\infty} n Q\left(E_\w \bar{T}_\nu^{(n)} > x n^{1/s}, M_1 > n^{(1-\e)/s} \right) 
&\geq \liminf_{n\ra\infty} n Q\left( E_\w T_\nu > (1+\d)x n^{1/s} \right) - o(1) \nonumber \\
&= K_\infty (1+\d)^{-s}x^{-s}. \label{refTnulb}
\end{align}
Thus, by applying \eqref{refTnub}
and \eqref{refTnulb} and then letting $\d\ra 0$ we get \eqref{refTnutail}.
\end{proof}

\noindent
Our general strategy is
to show that the partial sums
\[
\frac{1}{n^{1/s}} \sum_{k=1}^n E_\w^{\nu_{k-1}} \bar{T}_{\nu_k}^{(n)} \mathbf{1}_{M_k>n^{(1-\e)/s} }
\]
converge in distribution to a stable law of parameter $s$.
To establish this,
we will need bounds on the mixing properties of the sequence $E_\w^{\nu_{k-1}}
\bar{T}_{\nu_k}^{(n)}\mathbf{1}_{M_k>n^{(1-\e)/s} }$. As in
\cite{kGPD}, we say that an array $\{ \xi_{n,k}: k\in \Z, n\in
\N\}$ which is stationary in rows is $\a-$mixing if
$\lim_{k\ra\infty}\limsup_{n\ra\infty} \a_n(k) = 0$, where
\[
\a_n(k) := \sup \left\{ |P(A\cap B)-P(A)P(B)|: A\in
\s\left(\ldots, \xi_{n,-1}, \xi_{n,0} \right),
B\in  \s \left(\xi_{n,k},\xi_{n,k+1}, \ldots  \right) \right\}.
\]
\begin{lem} \label{alphamixing}
For any $0<\e< \frac{1}{2}$, under the measure $Q$, the array of
random variables \\$\{ E_\w^{\nu_{k-1}} \bar{T}_{\nu_k}^{(n)}
\mathbf{1}_{M_k>n^{(1-\e)/s} } \}_{k\in\Z, n\in\N}$ is $\a$-mixing,
with
\[
\sup_{k\in [1,\log^2 n]}\alpha_n(k)=o(n^{-1+2\epsilon}),
\quad \alpha_n(k)=0,\quad \forall k>\log^2 n.
\]
\end{lem}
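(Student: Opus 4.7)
The plan rests on two structural facts. First, under $Q$ the ``blocks'' $\beta_j := \{\w_i\}_{\nu_{j-1}\leq i<\nu_j}$ are i.i.d. Second, by the definition of the reflected walk, once the walk first reaches $\nu_{k-1}$ the environment is modified so that it cannot backtrack past $\nu_{k-1-b_n}$; hence the crossing time $\bar{T}_{\nu_k}^{(n)}-\bar{T}_{\nu_{k-1}}^{(n)}$, started from $\nu_{k-1}$, is a measurable function of the environment on $[\nu_{k-1-b_n},\nu_k-1]$. Writing $\xi_{n,k}:=E_\w^{\nu_{k-1}}\bar{T}_{\nu_k}^{(n)}\mathbf{1}_{M_k>n^{(1-\e)/s}}$ and $F_0:=\s(\xi_{n,i}:i\leq 0)$, $G_k:=\s(\xi_{n,i}:i\geq k)$, this means each $\xi_{n,k}$ is measurable with respect to $\s(\beta_{k-b_n},\ldots,\beta_k)$. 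Note moreover that since $E_\w^{\nu_{i-1}}\bar{T}_{\nu_i}^{(n)}>0$, one has $\{\xi_{n,j}=0\}=\{M_j\leq n^{(1-\e)/s}\}$.

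For $k>\log^2 n$, since $k\in\Z$ and $b_n=\lfloor\log^2 n\rfloor$, we have $k\geq b_n+1$, hence $k-b_n\geq 1$. Then $F_0$ is measurable with respect to blocks with index $\leq 0$, while $G_k$ is measurable with respect to blocks with index $\geq k-b_n\geq 1$. By the product structure of $Q$ these two $\s$-algebras are independent, and therefore $\a_n(k)=0$.

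For $1\leq k\leq\log^2 n$, the dependency sets overlap on $I_k:=[k-b_n,0]$, with $|I_k|\leq b_n+1=O(\log^2 n)$. The plan is to introduce the event
\[
H':=\{\exists j\in I_k:\xi_{n,j}\neq 0\}=\{\exists j\in I_k:M_j>n^{(1-\e)/s}\},
\]
and to use \eqref{Mtail} together with the $Q$-stationarity of $M_j$ to bound
\[
Q(H')\leq(b_n+1)\,Q\bigl(M_1>n^{(1-\e)/s}\bigr)=O\bigl(\log^2 n\cdot n^{-(1-\e)}\bigr)=o(n^{-1+2\e}).
\]
On $H'^c$ every overlap $\xi_{n,j}$ is forced to equal $0$, so any $A\in F_0$ satisfies $A\cap H'^c=A'\cap H'^c$ for some $A'\in\s(\xi_{n,i}:i<k-b_n)$; this $A'$ is measurable with respect to blocks strictly to the left of $I_k$. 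Since any $B\in G_k$ and the event $H'^c$ itself are measurable with respect to blocks with index $\geq k-b_n$, the i.i.d.\ block structure gives $Q(A'\cap H'^c\cap B)=Q(A')\,Q(H'^c\cap B)$. Splitting $A=(A\cap H')\cup(A\cap H'^c)$ and combining this factorization with the elementary covariance bounds $|Q(A\cap H'\cap B)-Q(A\cap H')Q(B)|\leq Q(H')$ and $|Q(H'^c\cap B)-Q(H'^c)Q(B)|\leq Q(H')$ yields, after a short calculation, $|Q(A\cap B)-Q(A)Q(B)|\leq 2\,Q(H')$ uniformly in $A,B$. Taking the supremum over $A\in F_0$ and $B\in G_k$ then produces the stated $o(n^{-1+2\e})$ bound on $\a_n(k)$, uniformly in $k\in[1,\log^2 n]$.

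The only real obstacle is the bookkeeping: pinning down exactly which blocks each $\xi_{n,i}$ depends on so that the overlap set $I_k$ is sharp, and verifying that an arbitrary event $A\in F_0$ admits the claimed decomposition on $H'^c$ (essentially because on $H'^c$ every $\xi_{n,j}$ with $j\in I_k$ is the deterministic constant $0$). Once these observations are in place the mixing estimate is a direct consequence of block independence under $Q$ and the polynomial tail bound \eqref{Mtail}.
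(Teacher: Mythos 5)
Your proof is correct and follows essentially the same strategy as the paper: introduce a high-probability event on which the indicators $\mathbf{1}_{M_j>n^{(1-\e)/s}}$ kill all the variables in the overlap region, pay $2Q(\text{bad event})=O(b_n\, n^{-1+\e})=o(n^{-1+2\e})$ via the standard covariance splitting, and use independence of the blocks under $Q$ on the good event. The only (cosmetic) difference is that the paper conditions on $\{M_j\leq n^{(1-\e)/s},\,1\leq j\leq b_n\}$ so that $B$ intersected with the good event becomes measurable with respect to the environment to the right of the origin, whereas you condition on the mirror-image event over $[k-b_n,0]$ so that $A$ reduces to an event measurable with respect to blocks left of the overlap; both factorizations are valid.
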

\begin{proof}
Fix $\e \in (0,\frac{1}{2})$. For ease of notation, define
$\xi_{n,k}:= E_\w^{\nu_{k-1}} \bar{T}_{\nu_k}^{(n)}
\mathbf{1}_{M_k>n^{(1-\e)/s} }$. As we mentioned before, under $Q$
the environment is stationary
under shifts of the sequence of ladder locations and
thus $\xi_{n,k}$ is stationary in rows under $Q$.

If $k>\log^2(n)$, then because of the reflections,
$\s\left(\ldots, \xi_{n,-1}, \xi_{n,0} \right)$ and
$\s \left(\xi_{n,k},\xi_{n,k+1}, \ldots  \right)$ are independent and
so $\a_n(k)= 0$.
To handle the case when $k\leq \log^2(n)$,
fix $ A\in
\s\left(\ldots, \xi_{n,-1}, \xi_{n,0} \right)$ and $B\in  \s
\left(\xi_{n,k},\xi_{n,k+1}, \ldots \right) $, and
define the event
\[
C_{n,\e}:=\{M_j \leq n^{(1-\e)/s}, \text{for } 1\leq j \leq b_n \}
= \{\xi_{n,j} = 0, \text{for } 1\leq j \leq b_n \}.
\]
For any $j> b_n$, we have that $\xi_{n,j}$ only depends on the
environment to the right of zero. Thus,
%for any $ A\in
%\s\left(\ldots, \xi_{-1,n}, \xi_{0,n} \right)$ and $B\in  \s
%\left(\xi_{n,k},\xi_{n,k+1}, \ldots \right) $ we have that
\[
Q(A\cap B \cap C_{n,\e})= Q(A)Q(B\cap C_{n,\e})
\]
since $B \cap C_{n,\e} \in \s(\w_0,\w_1,\ldots)$. Also, note that
by \eqref{Mtail} we have $Q(C_{n,\e}^c ) \leq b_n Q(M_1 >
n^{(1-\e)/s}) = o(n^{-1+2\e})$. Therefore,
\begin{align*}
|Q(A\cap B)-Q(A)Q(B)| & \leq |Q(A\cap B) - Q(A\cap B\cap C_{n,\e})|\\
&\quad\quad  + |Q(A\cap B\cap C_{n,\e}) - Q(A)Q(B\cap C_{n,\e})| \\
&\quad\quad + Q(A)|Q(B\cap C_{n,\e})-Q(B)|
 \leq 2 Q(C_{n,\e}^c) = o(n^{-1+2\e})
\end{align*}
\end{proof}
\begin{proof}[\textbf{Proof of Theorem \ref{refstable}}]$\left.\right.$ \\
First, we show that the partial sums
\[
\frac{1}{n^{1/s}} \sum_{k=1}^n  E_\w^{\nu_{k-1}} \bar{T}_{\nu_k}^{(n)} \mathbf{1}_{M_k>n^{(1-\e)/s} }
\]
converge in distribution to a stable random variable of parameter $s$.
To this end, we will
apply \cite[Theorem 5.1(III)]{kGPD}. We now verify
the conditions of that theorem.
The first condition that needs to be satisfied is:
\[
\lim_{n\ra\infty} n Q\left( n^{-1/s} E_\w \bar{T}_{\nu}^{(n)} \mathbf{1}_{M_1>n^{(1-\e)/s} } > x \right) = K_\infty x^{-s}.
\]
However, this is exactly the content of Lemma \ref{reftail}.\\
Secondly, we need a sequence $m_n$ such that $m_n\ra\infty$,
$m_n=o(n)$ and $n\a_n(m_n) \ra 0$ and such that for any $\d>0$,
\begin{equation}
\lim_{n\ra\infty} \sum_{k=1}^{m_n} n Q\left( E_\w \bar{T}_{\nu}^{(n)}
\mathbf{1}_{M_1>n^{(1-\e)/s} } > \d n^{1/s},
E_\w^{\nu_{k}} \bar{T}_{\nu_{k+1}}^{(n)}
\mathbf{1}_{M_{k+1}>n^{(1-\e)/s} } > \d n^{1/s} \right) = 0. \label{mncond}
\end{equation}
However, by the independence of $M_1$ and $M_{k+1}$ for any
$k\geq 1$, the probability inside the sum is less
than $Q(M_1> n^{(1-\e)/s})^2$. By \eqref{Mtail} this last
expression is $ \sim C_5 n^{-2+2\e} $.
Thus letting $m_n = n^{1/2- \e}$ yields \eqref{mncond}. (Note that by
Lemma \ref{alphamixing}, $n\a_n(m_n) = 0$ for all $n$ large enough.) \\
Finally, we need to show that
\begin{equation}
\lim_{\d\ra 0}\limsup_{n\ra\infty} n E_Q \left[
n^{-1/s} E_\w \bar{T}_{\nu}^{(n)} \mathbf{1}_{M_1>n^{(1-\e)/s} }
\mathbf{1}_{E_\w \bar{T}_{\nu}^{(n)}  \leq \d} \right] = 0 \,.
\label{truncexp}
\end{equation}
Now, by \eqref{refTnub} there exists a constant $C_6>0$ such that
for any $x > 0$,
\[
Q\left( E_\w \bar{T}_{\nu}^{(n)} > x
n^{1/s} , M_1
> n^{(1-\e)/s} \right) \leq C_6 x^{-s}\frac{1}{n}.
\]
Then using this we have
\begin{align*}
n E_Q \left[ n^{-1/s} E_\w \bar{T}_{\nu}^{(n)} \mathbf{1}_{M_1>n^{(1-\e)/s} }
\mathbf{1}_{E_\w \bar{T}_{\nu}^{(n)}  \leq \d} \right] &=
n \int_0^\d Q\left( E_\w \bar{T}_{\nu}^{(n)} > x n^{1/s} ,
M_1 > n^{(1-\e)/s} \right)  dx \\
&\leq C_6 \int_0^\d x^{-s} dx = \frac{C_6 \d^{1-s}}{1-s}\,,
\end{align*}
where the last
integral is finite since $s<1$. \eqref{truncexp} follows.

Having checked all its hypotheses,
\cite[Theorem 5.1(III)]{kGPD} applies and yields
that there exists a $b'>0$ such that
\begin{equation}
Q\left( \frac{1}{n^{1/s}} \sum_{k=1}^n E_\w^{\nu_{k-1}}
\bar{T}_{\nu_k}^{(n)} \mathbf{1}_{M_k>n^{(1-\e)/s} } \leq x \right) =
L_{s,b'}(x)\,, \label{refdist}
\end{equation}
where the characteristic function for the distribution $L_{s,b'}$
is given in \eqref{char}. To get the limiting distribution of
$\frac{1}{n^{1/s}} E_\w T_{\nu_n}$ we use \eqref{reflectexpand}
and re-write this as
\begin{align}
\frac{1}{n^{1/s}} E_\w T_{\nu_n} &= \frac{1}{n^{1/s}}
\sum_{k=1}^n E_\w^{\nu_{k-1}} \bar{T}_{\nu_k}^{(n)}
\mathbf{1}_{M_k > n^{(1-\e)/s}} \label{bb}\\
&\quad\quad + \frac{1}{n^{1/s}} \sum_{k=1}^n
E_\w^{\nu_{k-1}} \bar{T}_{\nu_k}^{(n)}
\mathbf{1}_{M_k \leq n^{(1-\e)/s}}  \label{sb}\\
&\quad\quad + \frac{1}{n^{1/s}} \left( E_\w T_{\nu_n} - E_\w
\bar{T}_{\nu_n}^{(n)} \right)\,.
 \label{rb}
\end{align}
Lemma \ref{smallblocks} gives that \eqref{sb} converges in
distribution (under Q) to 0. Also, we can use Lemma \ref{ETdiff}
to show that \eqref{rb} converges in distribution to 0 as well.
Indeed, for any $\d>0$
\begin{align*}
Q\left( E_\w T_{\nu_n} - E_\w \bar{T}_{\nu_n}^{(n)} > \d n^{1/s}
\right) &\leq n Q\left( E_\w T_{\nu} - E_\w \bar{T}_{\nu}^{(n)}
> \d n^{1/s-1} \right) = \bigo\left( n^s e^{-\d' b_n} \right)
\,.\end{align*}
Therefore $n^{-1/s}E_\w T_{\nu_n}$ has the same limiting
distribution (under $Q$) as the right side of \eqref{bb}, which by
\eqref{refdist} is an $s$-stable distribution with distribution
function $L_{s,b'}$.
\end{proof}
\end{section}

%%%%%%%%%%%%%%%%%%%%%%%%% LOCALIZATION IN THE CASE S<1 %%%%%%%%%%%%%%%
\begin{section}{Localization along a subsequence}
%when $s<1$}
\label{localization} The goal of this section is to show
when $s<1$ that $P$-a.s. there exists a subsequence $t_m= t_m(\w)$
of times such that the RWRE is essentially located in a section of
the environment of length $\log^2(t_m)$. This will essentially be
done by finding a ladder time whose crossing time is \emph{much}
larger than all the other ladder times before it. As a first step
in this direction we prove that with strictly positive probability
this happens in the first $n$ ladder locations. Recall the definition
of $M_k$, c.f. \eqref{Mdef}.
\begin{lem}\label{QBB}
Assume $s<1$. Then for any $C>1$ we have
\[
\liminf_{n\ra\infty} Q\left( \exists k \in[1, n/2]: M_k \geq C \!\!\!\!\!\! \sum_{j\in[1,n]\backslash \{k\}} \!\!\!\!\!\! E_\w^{\nu_{j-1}} \bar{T}_{\nu_j}^{(n)}\right) > 0\,.
\]
\end{lem}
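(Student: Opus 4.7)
The plan is to exploit the fact that $C>1$ to turn the event in the statement into a disjoint union of ``block $k$ is uniquely dominant'' events, and then to estimate each summand using (i) the block independence produced by the reflections in $\bar{T}^{(n)}$, (ii) the tail asymptotics for $M_1$ from \eqref{Mtail}, and (iii) the stable limit of Theorem \ref{refstable}. Since each ladder crossing time satisfies $E_\w^{\nu_{k-1}} \bar{T}^{(n)}_{\nu_k} \geq M_k$, the hypothesis $M_k \geq C \sum_{j\neq k} E_\w^{\nu_{j-1}} \bar{T}^{(n)}_{\nu_j}$ with $C>1$ can be satisfied by at most one index $k$, so the probability in the statement equals $\sum_{k=1}^{\lfloor n/2\rfloor} Q(A_{k,n})$ where $A_{k,n}$ is the event that this holds for the specific $k$.

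Next I will bound each $Q(A_{k,n})$ from below. Under $Q$, the reflected environment makes $M_k$ independent of the family $\{E_\w^{\nu_{j-1}}\bar{T}^{(n)}_{\nu_j}\}_{j\notin (k, k+b_n]}$, so the only obstacle to a clean product bound is the ``nuisance'' sum $N_{k,n}:=\sum_{j=k+1}^{k+b_n} E_\w^{\nu_{j-1}}\bar{T}^{(n)}_{\nu_j}$. The hard step is therefore to show that $N_{k,n} = o(n^{1/s})$ in $Q$-probability. I expect to do this by using the explicit formula \eqref{ETnuexpand} (combined with $b_n=\lfloor \log^2 n\rfloor$ and the tail estimates from Lemmas \ref{nutail} and \ref{Wtail}) to show that the full expected crossing time over only $b_n$ blocks cannot compete with $n^{1/s}$; a union bound in $k$ together with Theorem \ref{refstable} applied to $b_n$ blocks should give $Q(N_{k,n} > \e n^{1/s}) = o(1/n)$ uniformly in $k \leq n/2$, which is what is needed.

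Granted this control on $N_{k,n}$, I can write
\[
Q(A_{k,n}) \geq \Bigl( Q(M_1 > Cn^{1/s}) - o(1/n) \Bigr)\cdot \Bigl( Q\bigl( E_\w\bar{T}^{(n)}_{\nu_n} \leq (1+o(1))n^{1/s} \bigr) - o(1) \Bigr),
\]
where I have used stationarity of the environment under ladder shifts (under $Q$) and the fact that terms with $j<k$ or $j>k+b_n$ are independent of $M_k$ in the reflected environment. By \eqref{Mtail}, $Q(M_1 > Cn^{1/s}) \sim C_5 C^{-s}/n$, and by Theorem \ref{refstable}, $Q(E_\w \bar{T}^{(n)}_{\nu_n}\leq (1+o(1))n^{1/s}) \to L_{s,b'}(1) > 0$ (using here that $n^{-1/s}(E_\w T_{\nu_n} - E_\w\bar{T}^{(n)}_{\nu_n})\to 0$ in $Q$-probability, as in the proof of Theorem \ref{refstable}). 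Summing over $k \leq n/2$ yields
\[
\liminf_{n\to\infty} Q\Bigl(\exists k\in[1,n/2]: M_k \geq C\!\!\sum_{j\in[1,n]\setminus\{k\}}\!\! E_\w^{\nu_{j-1}}\bar{T}^{(n)}_{\nu_j}\Bigr) \geq \tfrac{1}{2} C_5 C^{-s} L_{s,b'}(1) > 0.
\]

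The main obstacle is the $N_{k,n}$ estimate: the $b_n=\log^2 n$ blocks to the immediate right of $\nu_k$ are exactly the ones whose crossing times $E_\w^{\nu_{j-1}}\bar{T}^{(n)}_{\nu_j}$ are \emph{not} independent of $M_k$, so I must argue directly (via \eqref{ETnuexpand}, Lemma \ref{Wtail}, and the $\a$-type bound on large blocks among $b_n$ consecutive ones) that this short window of crossing times is negligible on the $n^{1/s}$ scale. Everything else is bookkeeping with stationarity under $Q$ and the two tail/scaling inputs already in hand.
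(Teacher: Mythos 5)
Your proposal is correct and follows essentially the same route as the paper: the disjointness observation from $C>1$, the identification of the $b_n$-block window $(k,k+b_n]$ as the only obstacle to independence, its control at scale $o(n^{1/s})$ via the explicit formula and the tail bounds (the paper does this on the typical events $F_n$ and $G_{k,n,\e}$), and the final product bound combining \eqref{Mtail} with Theorem \ref{refstable}. The only quibble is that your intermediate target $Q(N_{k,n}>\e n^{1/s})=o(1/n)$ is stronger than what is attainable or needed — the window-control failure probability is only $o(n^{-1+2\e})$, but it is subtracted from the $O(1)$ stable-limit factor rather than the $O(1/n)$ tail factor, exactly as in your final display.
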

\begin{proof}
Recall that $\bar{T}^{(n)}_x$ is the hitting time of $x$ by the RWRE modified so that it never backtracks $b_n=\lfloor \log^2(n) \rfloor$ ladder locations. \\
To prove the lemma, first note that since $C>1$ and $E_\w^{\nu_{k-1}} \bar{T}^{(n)}_{\nu_k} \geq M_k$ there can only be at most one $k\leq n$ with $M_k \geq C \sum_{k\neq j \leq n} E_\w^{\nu_{j-1}} \bar{T}_{\nu_j}^{(n)}$. Therefore
\begin{equation}
Q\left( \exists k\in[1,n/2]: M_k \geq  C \!\!\!\!\!\! \sum_{j\in[1,n]\backslash \{k\}} \!\!\!\!\!\! E_\w^{\nu_{j-1}} \bar{T}^{(n)}_{\nu_j} \right) = \sum_{k=1}^{n/2} Q\left( M_k \geq C \!\!\!\!\!\! \sum_{j\in[1,n]\backslash \{k\}} \!\!\!\!\!\! E_\w^{\nu_{j-1}} \bar{T}_{\nu_j}^{(n)}\right) \label{onebigblock}
\end{equation}
Now, define the events
\begin{equation}
F_{n}:=\{\nu_j-\nu_{j-1} \leq b_n ,\quad \forall j\in(-b_n,n] \},
\quad G_{k,n,\e}:= \{ M_j \leq n^{(1-\e)/s},\quad \forall j\in(k,k+b_n] \}. \label{FGevents}
\end{equation}
$F_n$ and $G_{k,n,\e}$ are both \emph{typical} events. Indeed,
from Lemma \ref{nutail} $Q(F_{n}^c) \leq (b_n+n) Q(\nu > b_n) =
\bigo(n e^{-C_2 b_n})$, and from \eqref{Mtail} we have
$Q(G_{k,n,\e}^c) \leq b_n Q(M_1 > n^{(1-\e)/s}) = o(n^{-1+2\e})$.
Now, from \eqref{QET} adjusted for reflections we have for any $j\in[1,n]$
that
\begin{align*}
E_\w^{\nu_{j-1}} \bar{T}_{\nu_j}^{(n)} &=(\nu_j - \nu_{j-1}) +
2\sum_{l=\nu_{j-1}}^{\nu_j-1} W_{\nu_{j-1-b_n},l} \\
%= (\nu_j -
%\nu_{j-1}) + 2\sum_{\tiny \begin{array}{c} \nu_{j-1} \leq l \leq \nu_j-1 \\
%\quad \nu_{j-1-b_n} < i \leq l \end{array}} \Pi_{i,l}
&= (\nu_j - \nu_{j-1}) + 2\sum_{ \nu_{j-1} \leq i\leq l < \nu_j} \Pi_{i,l} + 2\sum_{\nu_{j-1-b_n}< i < \nu_{j-1} \leq l < \nu_j} \Pi_{i,\nu_{j-1}-1}\Pi_{\nu_{j-1}, l} \\
&\leq (\nu_j - \nu_{j-1}) + 2\left(\nu_j-\nu_{j-1}\right)^2 M_j +
2(\nu_j-\nu_{j-1})(\nu_{j-1}-\nu_{j-1-b_n})M_j,
\end{align*}
where in the last inequality we used the facts that $\Pi_{\nu_{j-1}, i-1} \geq 1$ for $\nu_{j-1} < i < \nu_j$ and $\Pi_{i, \nu_{j-1}-1} < 1$ for all $i<
\nu_{j-1}$. Then, on the event $F_n\cap
G_{k,n,\e}$ we have for $k+1\leq j\leq k+b_n$ that
\begin{align*}
E_\w^{\nu_{j-1}} \bar{T}_{\nu_j}^{(n)} \leq b_n + 2 b_n^2
n^{(1-\e)/s} + 2 b_n^3 n^{(1-\e)/s} \leq 5 b_n^3 n^{(1-\e)/s},
\end{align*}
where for the first inequality we used that on the event $F_n\cap
G_{k,n,\e}$ we have $\nu_j-\nu_{j-1} \leq b_n$ and $M_1 \leq
n^{(1-\e)/s}$. Then, using this we get
\begin{align*}
Q\left( M_k \geq C \!\!\!\!\!\! \sum_{j\in[1,n]\backslash \{k\}} \!\!\!\!\!\! E_\w^{\nu_{j-1}} \bar{T}_{\nu_j}^{(n)}\right)
& \geq Q\left( M_k \geq C \left( E_\w
\bar{T}_{\nu_{k-1}}^{(n)} +  5b_n^4 n^{(1-\e)/s} +
E_\w^{\nu_{k+b_n}}\bar{T}_{\nu_n}^{(n)} \right), F_n, G_{k,n,\e} \right) \\
& \geq Q\left( M_k \geq C n^{1/s}, \quad \nu_{k}-\nu_{k-1} \leq b_n \right)\\
&\quad \times Q\left( E_\w \bar{T}_{\nu_{k-1}}^{(n)} +  5b_n^4
n^{(1-\e)/s} + E_\w^{\nu_{k+b_n}}\bar{T}_{\nu_n}^{(n)} \leq
n^{1/s},  \tilde{F}_n, G_{k,n,\e} \right),
\end{align*}
where $\tilde{F}_n := \{\nu_j-\nu_{j-1} \leq b_n, \quad \forall j\in(-b_n, n]\backslash\{k\} \} \supset F_n $.
In the last inequality we used the fact that
$E_\w^{\nu_{j-1}} \bar{T}_{\nu_j}^{(n)}$ is independent of
$M_k$ for $j<k$ or $j>k+b_n$. Note that we can replace
$\tilde{F}_n$ by $F_n$ in the last line above because it will only
make the probability smaller. Then, using the above and the fact that
$E_\w \bar{T}_{\nu_{k-1}}^{(n)} + E_\w^{\nu_{k+b_n}}\bar{T}_{\nu_n}^{(n)}
\leq E_\w T_{\nu_n}$ we have
\begin{align*}
Q&\left( M_k \geq C \!\!\!\!\!\! \sum_{j\in[1,n]\backslash \{k\}}
\!\!\!\!\!\! E_\w^{\nu_{j-1}} \bar{T}_{\nu_j}^{(n)}\right)\\
&\quad\geq Q\left( M_k \geq C n^{1/s}, \quad
\nu_{k}-\nu_{k-1} \leq b_n \right) Q\left(E_\w T_{\nu_n}
\leq n^{1/s}-5 b_n^4 n^{(1-\e)/s}, F_n, G_{k,n,\e}\right)\\
&\quad\geq \left( Q(M_1\geq Cn^{1/s}) -
Q(\nu > b_n)  \right)  \left( Q( E_\w T_{\nu_n}
\leq n^{1/s}(1-5b_n^4 n^{-\e/s}) ) - Q(F_n^c) - Q(G_{k,n,\e}^c)   \right)\\
&\quad \sim C_5 C^{-s} L_{s,b'}(1) \frac{1}{n}\,,
\end{align*}
where the asymptotics in the last line are from
\eqref{Mtail} and Theorem \ref{refstable}. Combining the last
display and \eqref{onebigblock} proves the lemma.
\end{proof}
In Section \ref{stablecrossing}, we showed that the proper scaling
for $E_\w T_{\nu_n}$ (or $E_\w \bar{T}^{(n)}_{\nu_n}$) was
$n^{-1/s}$. The following lemma gives a bound on the moderate
deviations, under the measure $P$.
\begin{lem}\label{mdevu}
Assume $s\leq 1$. Then for any $ \d>0$,
\[
P\left( E_\w T_{\nu_n} \geq n^{1/s + \d} \right) = o(n^{-\d s/2})
\,.\]
\end{lem}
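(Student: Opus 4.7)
The plan is to exploit the explicit formula $E_\w T_{\nu_n}=\nu_n+2\sum_{j=0}^{\nu_n-1}W_j$ obtained by summing \eqref{QET}, together with the elementary subadditivity $(a_1+\cdots+a_k)^\gamma\leq a_1^\gamma+\cdots+a_k^\gamma$ valid for nonnegative reals and any $\gamma\in(0,1]$. The hypothesis $s\leq 1$ enters precisely here: it guarantees that an exponent $\gamma<s$ can simultaneously be taken $\leq 1$, so one can combine the subadditivity trick with the finiteness of $E_P W_0^\gamma$, which itself comes from Kesten's tail asymptotic \eqref{PWtail}.

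First I would dispose of the random upper summation bound $\nu_n$. Under $P$ the block lengths $\nu_i-\nu_{i-1}$ are i.i.d.\ with exponential tails by Lemma \ref{nutail}, so for $\mu:=E_P\nu_1<\infty$ a standard Chernoff bound gives $P(\nu_n>2\mu n)\leq e^{-cn}$ for some $c>0$, which is already $o(n^{-s\delta/2})$. On the complementary event, since $W_j\geq 0$ and $2\mu n+1\leq \tfrac{1}{2}n^{1/s+\delta}$ for all $n$ large,
\[
\{E_\w T_{\nu_n}\geq n^{1/s+\delta}\}\cap\{\nu_n\leq 2\mu n\}\subseteq \Bigl\{\textstyle\sum_{j=0}^{\lfloor 2\mu n\rfloor} W_j\geq \tfrac{1}{4}n^{1/s+\delta}\Bigr\}.
\]

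Next I would fix $\gamma\in\bigl(\tfrac{s+s^2\delta/2}{1+s\delta},\,s\bigr)$; this open interval lies in $(0,1]$ for every $\delta>0$ because $s\leq 1$, and it is nonempty precisely because $s\delta/2<s\delta$. Applying Markov's inequality, the $P$-translation invariance (so $W_j\stackrel{d}{=}W_0$), and the subadditivity in turn yields
\[
P\!\left(\sum_{j=0}^{\lfloor 2\mu n\rfloor}W_j\geq \tfrac{n^{1/s+\delta}}{4}\right)\leq \frac{4^\gamma(\lfloor 2\mu n\rfloor+1)\,E_P W_0^\gamma}{n^{\gamma(1/s+\delta)}}=O\!\left(n^{1-\gamma/s-\gamma\delta}\right).
\]
Finiteness of $E_P W_0^\gamma$ follows from \eqref{PWtail} and $\gamma<s$, while the lower bound on $\gamma$ is arranged exactly so that $1-\gamma/s-\gamma\delta<-s\delta/2$, giving the desired $o(n^{-s\delta/2})$ rate.

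The main (mild) subtlety is simply calibrating $\gamma$: it must be strictly below $s$ to retain $\gamma$-th moments, yet close enough to $s$ to beat $n^{-s\delta/2}$ in the Markov estimate. The factor of $1/2$ in the exponent of the conclusion (rather than the full $s\delta$ one might anticipate from a single-big-jump heuristic for stable sums) is precisely the slack that keeps the admissible window for $\gamma$ open; nothing more refined is needed for later applications.
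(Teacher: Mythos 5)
Your proof is correct and follows essentially the same route as the paper's: both control $P(\nu_n>2\bar\nu n)$ by exponential tails of $\nu$ and then apply Markov's inequality with a $\gamma$-th moment ($\gamma<s\leq 1$), subadditivity of $x\mapsto x^\gamma$, and the finiteness of $E_P W_0^\gamma$ coming from \eqref{PWtail}. The only cosmetic difference is that the paper writes the bound through $E_\w T_{2\bar\nu n}=\sum_{k\leq 2\bar\nu n}E_\w^{k-1}T_k$ and takes $\gamma$ "arbitrarily close to $s$," whereas you work directly with $\sum_j W_j$ and exhibit the explicit admissible window for $\gamma$.
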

\begin{proof}
First, note that
\begin{equation}
P(E_\w T_{\nu_n} \geq n^{1/s+ \d}) \leq P(E_\w T_{2\bar\nu n} \geq n^{1/s+\d}) + P(\nu_n \geq 2 \bar\nu n ) \label{md1}
\,,\end{equation}
where $\bar\nu := E_P \nu$. To handle the second term on the right
hand side of \eqref{md1} we note that $\nu_n$ is the sum of
$n$ i.i.d. copies of $\nu$, and that $\nu$ has exponential tails (by Lemma \ref{nutail}).
Therefore, Cram\'er's Theorem \cite[Theorem 2.2.3]{dzLDTA}
gives that $P({\nu_n}/{n} \geq
2\bar\nu ) = \bigo( e^{-\d' n} )$
for some $\d' > 0$.\\
To handle the first term on the right hand side of \eqref{md1} we
note that for any $\gamma < s$ we have $E_P(E_\w T_1)^\gamma <
\infty$ This follows from the fact that $P(E_\w T_1
> x) = P(1 + 2W_0 > x) \sim K 2^s x^{-s}$ by
\eqref{QET} and \eqref{PWtail}. Then, by Chebychev's inequality and the fact
that $\gamma < s \leq 1$ we have
\begin{equation}
P\left( E_\w T_{2\bar\nu n} \geq n^{1/s+\d} \right) \leq
\frac{E_P\left( \sum_{k=1}^{2 \bar\nu n} E_\w^{k-1} T_k
\right)^\gamma}{n^{\gamma(1/s+\d)}} \leq \frac{2 \bar\nu n
E_P(E_\w T_1)^\gamma }{n^{\gamma(1/s+\d)}}\,. \label{Tnbig}
\end{equation}
Then, choosing $\gamma$ arbitrarily close to $s$ we can have that this last term is $o(n^{-\d s/2})$.
\end{proof}
Throughout the remainder of the chapter we will use the following subsequences of integers:
\begin{equation}
  n_k := 2^{2^k},\qquad d_k := n_k-n_{k-1} \label{nkdkdef}
\end{equation}
Note that $n_{k-1} = \sqrt{n_k}$ and so $d_k \sim n_k$ as $k\ra\infty$.
\begin{cor}\label{subseq1}
For any $k$ define
\[
\mu_k := \max \left\{ E_\w^{\nu_{j-1}} \bar{T}_{\nu_j}^{(d_k)} : n_{k-1} < j \leq n_k  \right\}.
\]
If $s<1$, then 
\[
\lim_{k\ra\infty} \frac{ E_\w^{\nu_{n_{k-1}}} \bar{T}_{\nu_{n_{k}}}^{(d_k)} - \mu_k }{E_\w \bar{T}_{\nu_{n_k}}^{(d_k)} - \mu_k} = 1, \quad P-a.s.
\]
\end{cor}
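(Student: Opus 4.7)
The plan is, with $A_k := E_\w^{\nu_{n_{k-1}}} \bar{T}_{\nu_{n_k}}^{(d_k)}$ and $B_k := E_\w \bar{T}_{\nu_{n_k}}^{(d_k)}$, to observe that the ratio in the corollary equals $1 - (B_k - A_k)/(B_k - \mu_k)$, so it suffices to verify that $(B_k - A_k)/(B_k - \mu_k) \limn 0$, $P$-a.s. Since $\mu_k \leq A_k \leq B_k$, the denominator is at least $A_k - \mu_k$, so the strategy is to produce a $P$-a.s.\ upper bound on $B_k - A_k$ and a $P$-a.s.\ lower bound on $A_k - \mu_k$ whose ratio vanishes. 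Both bounds will be furnished by Borel-Cantelli, which applies freely because the subsequence $n_k = 2^{2^k}$ (and hence $d_k \sim n_k$) grows doubly exponentially, so even polynomially small failure probabilities are easily summable.

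For the numerator, summing block crossings yields $B_k = E_\w \bar{T}_{\nu_{n_{k-1}}}^{(d_k)} + A_k$, and since adding reflections can only decrease expected hitting times, $B_k - A_k \leq E_\w T_{\nu_{n_{k-1}}}$. I would then invoke Lemma \ref{mdevu} at $n = n_{k-1}$ to obtain, for any fixed small $\d > 0$,
\[
 P \left( E_\w T_{\nu_{n_{k-1}}} \geq n_{k-1}^{1/s+\d} \right) = o \left( n_{k-1}^{-\d s/2} \right),
\]
which is summable in $k$ because $n_{k-1} = \sqrt{n_k} = 2^{2^{k-1}}$. Borel-Cantelli then gives $B_k - A_k \leq n_{k-1}^{1/s+\d} \sim d_k^{1/(2s)+\d/2}$ for all sufficiently large $k$, $P$-a.s.

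For the denominator, I would fix a small $\e > 0$ and let $N_k := \#\{j \in (n_{k-1}, n_k] : M_j > d_k^{(1-\e)/s}\}$. Since the $M_j$ for $j \geq 1$ are i.i.d.\ under $P$ with tail given by \eqref{Mtail}, $N_k$ is Binomial with mean of order $d_k^{\e}$, and standard Chernoff bounds give $P(N_k \leq 1) \leq \exp(-c\, d_k^\e)$ for some $c > 0$, again summable. Thus $P$-a.s.\ $N_k \geq 2$ for all large $k$, and I would pick an index $j^* \in (n_{k-1}, n_k]$ distinct from the maximizer $j_0$ of $\mu_k$ with $M_{j^*} > d_k^{(1-\e)/s}$. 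Dropping all but the $j = j^*$ term of the sum $A_k - \mu_k = \sum_{j \neq j_0} E_\w^{\nu_{j-1}} \bar{T}_{\nu_j}^{(d_k)}$ gives
\[
 A_k - \mu_k \geq E_\w^{\nu_{j^*-1}} \bar{T}_{\nu_{j^*}}^{(d_k)} \geq 2 M_{j^*} > 2 d_k^{(1-\e)/s},
\]
where the middle inequality follows by applying \eqref{QET} in the reflected environment and using that the reflection point lies strictly to the left of $\nu_{j^*-1}$, so the partial $W$-sums on the block still dominate $\Pi_{\nu_{j^*-1},\cdot}$, and hence $M_{j^*}$.

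Combining the two bounds gives $(B_k - A_k)/(B_k - \mu_k) \leq \frac{1}{2} d_k^{1/(2s) + \d/2 - (1-\e)/s}$, and choosing $\e < 1/2$ together with $\d < (1-2\e)/s$ renders the exponent strictly negative, forcing the ratio to zero. The one delicate step is the lower bound on $A_k - \mu_k$: without it, the corollary would fail whenever $A_k$ were typically concentrated on a single summand, and the input preventing this is the standard heavy-tail phenomenon (for $s<1$) that in a sum of $d_k$ i.i.d.\ summands with tail exponent $s$, the second-largest summand lives at the same order as the largest; here this is captured quantitatively by $N_k \geq 2$.
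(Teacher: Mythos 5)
Your proof is correct and takes essentially the same route as the paper's: the numerator $E_\w\bar{T}^{(d_k)}_{\nu_{n_{k-1}}}$ is controlled by Lemma \ref{mdevu} at scale $n_{k-1}^{1/s+\d}$, and the denominator is bounded below at scale nearly $n_k^{1/s}$ using the i.i.d.\ tails \eqref{Mtail} of the $M_j$ — the paper's bound on the probability that all but one of the first $n_k$ blocks have small $M_j$ is exactly your $P(N_k\leq 1)$ estimate in disguise. The only difference is organizational (you establish separate a.s.\ eventual bounds rather than bounding $P(\text{ratio}\leq 1-\e)$ at each $k$), so there is nothing to add.
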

\begin{proof}
Let $\e>0$. Then,
\begin{align}
\label{shortlong}
P\left( \frac{ E_\w^{\nu_{n_{k-1}}} \bar{T}_{\nu_{n_{k}}}^{(d_k)} - \mu_k }{E_\w \bar{T}_{\nu_{n_k}}^{(d_k)} - \mu_k} \leq 1-\e \right)
&= P\left( \frac{E_\w \bar{T}_{\nu_{n_{k-1}}}^{(d_k)}}{E_\w 
\bar{T}_{\nu_{n_k}}^{(d_k)} - \mu_k} \geq \e \right)\\
&\leq P\left( E_\w \bar{T}_{\nu_{n_{k-1}}}^{(d_k)} 
\geq n_{k-1}^{1/s+\d} \right) + 
P\left( E_\w \bar{T}_{\nu_{n_k}}^{(d_k)} -
\mu_k \leq \e^{-1} n_{k-1}^{1/s+\d} \right) 
\,.\nonumber\end{align}
Lemma \ref{mdevu} gives that $P\left( E_\w \bar{T}_{\nu_{n_{k-1}}}^{(d_k)} \geq n_{k-1}^{1/s+\d} \right) \leq  P\left( E_\w T_{\nu_{n_{k-1}}} \geq n_{k-1}^{1/s+\d} \right) = o(n_{k-1}^{-\d s/2})$.
To handle the second term in the right side of
\eqref{shortlong},
note that if $\d<\frac{1}{3s}$, then the subsequence 
$n_k$ grows fast enough such that for all $k$ large 
enough $n_k^{1/s-\d} \geq \e^{-1} n_{k-1}^{1/s+\d}$.  
Therefore, for $k$ sufficiently large and $\d< \frac{1}{3s}$ we have
\[
P\left( E_\w \bar{T}_{\nu_{n_k}}^{(d_k)} - \mu_k \leq 
\e^{-1} n_{k-1}^{1/s+\d} \right) \leq P\left( 
E_\w \bar{T}_{\nu_{n_k}}^{(d_k)} - \mu_k \leq  n_{k}^{1/s-\d} \right).
\]
However, $ E_\w \bar{T}^{(d_k)}_{\nu_{n_k}} - \mu_k \leq  n_{k}^{1/s-\d} $ 
implies that $M_j < E_\w^{\nu_{j-1}} \bar{T}_{\nu_j}^{(d_k)}
\leq n_k^{1/s-\d}$ for at least $n_k-1$ of the $j\leq n_k$. Thus,
since $P(M_1> n_k^{1/s-\d})\sim C_5 n_k^{-1+\d s}$, we have that
\begin{align}
P\left( E_\w \bar{T}_{\nu_{n_k}}^{(d_k)} - \mu_k \leq \e^{-1}
n_{k-1}^{1/s+\d} \right)  \leq n_k \left( 1 -
P\left( M_1 > n_k^{1/s-\d} \right) \right)^{n_k-1}
 = o(e^{-n_k^{\d s/2}})\,. \label{longsmall}
\end{align}
Therefore, for any $\e>0$ and $\d<\frac{1}{3s}$ we have that
\[
P\left( \frac{ E_\w^{\nu_{n_{k-1}}} \bar{T}_{\nu_{n_{k}}}^{(d_k)}
- \mu_k }{E_\w \bar{T}_{\nu_{n_k}}^{(d_k)} - \mu_k}
\leq 1-\e \right) = o\left(n_{k-1}^{-\d s/2}\right).
\]
By our choice of $n_k$, the sequence $n_{k-1}^{-\d s/2}$ is
summable in $k$. Applying the Borel-Cantelli lemma completes the proof.
\end{proof}
\begin{cor}\label{ssdom}
Assume $s<1$. Then $P-$a.s. there exists a random subsequence
$j_m= j_m(\w)$ such that
\[
M_{j_m} \geq m^2 E_\w \bar{T}_{\nu_{j_m-1}}^{(j_m)}
\,.\]
\end{cor}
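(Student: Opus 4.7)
The plan is to show that $P$-a.s., $\limsup_{j\to\infty} M_j/E_\w \bar{T}^{(j)}_{\nu_{j-1}} = \infty$; given this, a standard diagonal extraction produces the desired subsequence $j_m$ with $M_{j_m}/E_\w \bar{T}^{(j_m)}_{\nu_{j_m-1}} \geq m^2$. With $n_k = 2^{2^k}$ and $d_k = n_k - n_{k-1}$ as in \eqref{nkdkdef}, I will apply Lemma \ref{QBB} within each of the disjoint windows of ladder locations $(n_{k-1}, n_k]$ to produce, for each fixed $C > 1$ and infinitely many $k$, an index $j_k \in (n_{k-1}, n_{k-1}+d_k/2]$ such that $M_{j_k}$ dominates by a factor of $C$ the sum of all other block crossing times in that window. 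Combining with Corollary \ref{subseq1} will then upgrade this to $M_{j_k}/E_\w \bar{T}^{(j_k)}_{\nu_{j_k-1}} \to C$, after which $C$ is sent to infinity along a countable diagonal.

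In more detail, fix $C > 1$ and define
\[
A_k(C) := \Bigl\{\exists\, j \in (n_{k-1}, n_{k-1}+d_k/2]: M_j \geq C\!\!\!\sum_{i \in (n_{k-1}, n_k]\setminus\{j\}}\!\!\! E_\w^{\nu_{i-1}} \bar{T}^{(d_k)}_{\nu_i}\Bigr\}.
\]
Because under $Q$ the ladder-indexed environment is stationary and $A_k(C)$ is measurable with respect to the positive-index environment (where $P$ and $Q$ agree), the proof of Lemma \ref{QBB} yields $\liminf_k P(A_k(C)) \geq \tfrac{1}{2}C_5 C^{-s} L_{s,b'}(1) > 0$. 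Since $A_k(C)$ depends on environment in a window of $d_k + O(\log^2 d_k)$ ladder locations while $d_{k-1}$ grows super-exponentially, the events $(A_k(C))_{k\geq k_0}$ are ultimately independent under $P$ (either by slight modification of $A_k(C)$ to excise the $b_{d_k}$-sized overlap with the preceding window, or by invoking the $\alpha$-mixing Borel--Cantelli implicit in Lemma \ref{alphamixing}), so the second Borel--Cantelli lemma gives $P(A_k(C)\text{ i.o.}) = 1$.

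On $A_k(C)$ with witness $j_k$, set $\Sigma_k := \sum_{i \in (n_{k-1}, n_k]\setminus\{j_k\}} E_\w^{\nu_{i-1}} \bar{T}^{(d_k)}_{\nu_i}$. Since $M_{j_k} \geq C\Sigma_k$ with $C > 1$, the maximum $\mu_k$ of Corollary \ref{subseq1} is necessarily attained at $j_k$, so $\Sigma_k = E_\w^{\nu_{n_{k-1}}} \bar{T}^{(d_k)}_{\nu_{n_k}} - \mu_k$, and Corollary \ref{subseq1} gives $E_\w \bar{T}^{(d_k)}_{\nu_{n_{k-1}}}/\Sigma_k \to 0$ $P$-a.s. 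Since $j_k \leq d_k$ for all $k$ large (because $n_{k-1}+d_k/2 \leq d_k$ once $n_{k-1} \geq 3$), the reflection comparison $b_{j_k} \leq b_{d_k}$ gives $\bar{T}^{(j_k)} \leq \bar{T}^{(d_k)}$ pathwise (more restrictive reflections only speed the walk up), whence
\[
E_\w \bar{T}^{(j_k)}_{\nu_{j_k-1}} \leq E_\w \bar{T}^{(d_k)}_{\nu_{n_{k-1}}} + \!\!\!\sum_{i\in(n_{k-1},j_k-1]}\!\!\! E_\w^{\nu_{i-1}} \bar{T}^{(d_k)}_{\nu_i} \leq E_\w \bar{T}^{(d_k)}_{\nu_{n_{k-1}}} + \Sigma_k,
\]
so $M_{j_k}/E_\w \bar{T}^{(j_k)}_{\nu_{j_k-1}} \geq C\Sigma_k/(E_\w \bar{T}^{(d_k)}_{\nu_{n_{k-1}}} + \Sigma_k) \to C$ along the random subsequence of $k$'s. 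As $C$ is arbitrary, the limsup is infinite, and a diagonal choice $C = m^2$ extracts $j_m$ as claimed. The main technical obstacle is ensuring $\Sigma_k$ is not atypically small so that the tail crossing time $E_\w \bar{T}^{(d_k)}_{\nu_{n_{k-1}}}$ becomes negligible in comparison; Corollary \ref{subseq1} handles this precisely because conditioning on $A_k(C)$ forces $\mu_k$ to be attained inside the window $(n_{k-1}, n_k]$, which is exactly the setup required for that corollary to apply.
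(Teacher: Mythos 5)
Your proof is correct and follows essentially the same route as the paper's: your events $A_k(C)$ are the paper's $D_{k,C}$ (the sum over $i\neq j$ in the window equals $E_\w^{\nu_{n_{k-1}}}\bar{T}_{\nu_{j-1}}^{(d_k)}+E_\w^{\nu_j}\bar{T}_{\nu_{n_k}}^{(d_k)}$), and the chain Lemma \ref{QBB} $\to$ stationarity of $Q$ under ladder shifts $\to$ independence of well-separated windows $\to$ second Borel--Cantelli $\to$ Corollary \ref{subseq1} $\to$ monotonicity in the reflection parameter is exactly the paper's argument. The only cosmetic differences are that the paper diagonalizes directly with $C=2m^2$ rather than first proving the limsup is infinite, and it resolves the $b_{d_k}$-overlap between consecutive windows by simply restricting to the even indices $k$ (so that $n_{2k-1}-b_{d_{2k}}>n_{2k-2}$), which is the cleanest of the remedies you gesture at.
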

\begin{proof}
Recall the definitions of $n_k$ and $d_k$ in \eqref{nkdkdef}. Then for any $C>1$, define the event
\[
D_{k,C}:=\left\{\exists j\in (n_{k-1}, n_{k-1}+ d_k/2]: M_j \geq C\left(E_\w^{\nu_{n_{k-1}}}\bar{T}_{\nu_{j-1}}^{(d_k)} + E_\w^{\nu_j}\bar{T}_{\nu_{n_k}}^{(d_k)} \right) \right\}.
\]
Note that due to the reflections, the event $D_{k,C}$ depends only on the environment from $\nu_{n_{k-1}-b_{d_k}}$ to $\nu_{n_{k}}-1$. Then, since $n_{k-1} - b_{d_k} > n_{k-2}$ for all $k \geq 4$,
we have that the events $\{ D_{2k,C} \}_{k=2}^{\infty}$
are all independent. Also, since the events do not involve the
environment to the left of $0$ they have the same probability under
$Q$ as under $P$. Then since $Q$ is stationary under shifts of
$\nu_i$ we have that for $k\geq 4$,
\begin{align*}
P(D_{k,C}) = Q(D_{k,C}) = Q\left(
\exists j\in [1,d_k/2]: M_j \geq C\left(E_\w \bar{T}_{\nu_{j-1}}^{(d_k)}
+ E_\w^{\nu_j}\bar{T}_{\nu_{d_k}}^{(d_k)} \right)  \right).
\end{align*}
Thus for any $C>1$, we have by Lemma \ref{QBB} that
$\liminf_{k\ra\infty} P(D_{k,C}) > 0$.
This combined with the fact that the events $\{D_{2k,C}\}_{k=2}^\infty$
are independent gives that for any $C>1$ infinitely many of
the events $D_{2k,C}$ occur $P-a.s.$ Therefore, there exists a
subsequence $k_m$ of integers such that for each $m$,
there exists $j_m \in (n_{k_m-1} , n_{k_m-1}+d_{k_m}/2 ]$ such that
\[
M_{j_m} \geq 2m^2 \left( E_{\w}^{\nu_{n_{k_m-1}}}
\bar{T}_{\nu_{j_m-1}}^{(d_{k_m})} + E_\w^{\nu_{j_m}}
\bar{T}_{\nu_{n_{k_m}}}^{(d_{k_m})} \right) =
2m^2\left(E_\w^{\nu_{n_{k_m-1}}}
\bar{T}_{\nu_{n_{k_m}}}^{(d_{k_m})} - \mu_{k_m}\right)  ,
\]
where the second equality holds due to our choice of $j_m$, which
implies that
 $\mu_{k_m} = E_\w^{\nu_{j_m -1}}
 \bar{T}_{\nu_{j_m}}^{(d_{k_m})}$. Then,
 by Corollary \ref{subseq1} we have that for all $m$ large enough,
\[
M_{j_m}\geq 2m^2 \left( E_\w^{\nu_{k_m -1}}
\bar{T}_{\nu_{n_{k_m}}}^{(d_{k_m})} - \mu_{k_m} \right)
\geq m^2 \left( E_\w \bar{T}_{\nu_{n_{k_m}}}^{(d_{k_m})} -
\mu_{k_m} \right) \geq m^2 E_\w
\bar{T}_{\nu_{j_m-1}}^{(d_{k_m})},
\]
where the last inequality is because
$\mu_{k_m} = E_\w^{\nu_{j_m -1}} \bar{T}_{\nu_{j_m}}^{(d_{k_m})}$.
Now, for all $k$ large enough we have $n_{k-1} + d_k/2 < d_k$.
Thus, we may assume (by possibly choosing a further subsequence)
that $j_m < d_{k_m}$ as well, and since allowing less
backtracking only decreases the crossing time we have
\[
M_{j_m} \geq m^2 E_\w \bar{T}_{\nu_{j_m-1}}^{(d_{k_m})}
\geq m^2 E_\w \bar{T}_{\nu_{j_m-1}}^{(j_m)}
\,.\]
\end{proof}
The following lemma shows that the reflections that we have
been using this whole time really do not affect the random walk.
%We prove a slightly more general version than we need for this
%section because we will use this lemma again in Section \ref{gaussian}. 
Recall the coupling of $X_t$ and $\bar{X}_t^{(n)}$ 
introduced after \eqref{bdef}.
%, we have the following lemma:
\begin{lem} \label{bt}
\[
\lim_{n\ra\infty} P_\w\left( T_{\nu_{n-1}} \neq \bar{T}_{\nu_{n-1}}^{(n)} \right) = 0, \quad P-a.s.
\]
\end{lem}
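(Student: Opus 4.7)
The strategy is to couple $X$ with $\bar X^{(n)}$, reduce the event of interest to a union of backtracking events, and bound each via the hitting probability formula \eqref{Thesis_hittingprob}. Under the coupling described after \eqref{bdef}, the two walks agree until the first time $\bar X^{(n)}$ visits a reflecting site $\nu_{k-b_n}$ whose reflection is in force (i.e., after $\nu_k$ has already been reached). Thus the event $\{T_{\nu_{n-1}} \neq \bar T^{(n)}_{\nu_{n-1}}\}$ forces that, for some $k \in \{0, 1, \ldots, n-2\}$, starting from $\nu_k$ the walk $X$ visits $\nu_{k - b_n}$ before $\nu_{k+1}$. Using the strong Markov property together with \eqref{Thesis_hittingprob} and the trivial lower bound $R_{\nu_{k - b_n}, \nu_{k+1} - 1} \geq R_{\nu_k, \nu_{k+1} - 1}$, this yields
\[
P_\w\bigl( T_{\nu_{n-1}} \neq \bar T^{(n)}_{\nu_{n-1}} \bigr) \;\leq\; \sum_{k=0}^{n-2} P_\w^{\nu_k}\bigl( T_{\nu_{k-b_n}} < T_{\nu_{k+1}} \bigr) \;\leq\; \sum_{k=0}^{n-2} \Pi_{\nu_{k-b_n}, \nu_k - 1}.
\]

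Writing $\Pi_{\nu_{k-b_n}, \nu_k - 1} = \exp\{V(\nu_k) - V(\nu_{k-b_n})\}$ and setting $\xi_j := V(\nu_j) - V(\nu_{j-1}) < 0$, the remaining task is to establish, $P$-a.s.\ and for some deterministic $c > 0$, a uniform exponential bound
\[
\sum_{j=k-b_n+1}^{k} \xi_j \;\leq\; -c\,b_n \qquad \text{for all } k \in \{0,\ldots,n-2\},
\]
valid for all $n$ large enough (depending on $\w$). Summing this bound then gives $P_\w\bigl( T_{\nu_{n-1}} \neq \bar T^{(n)}_{\nu_{n-1}} \bigr) \leq n \cdot e^{-c\,b_n} = n^{1 - c \log n} \to 0$, as required.

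The uniform bound splits into two regimes. In the first, $k \geq b_n$, all indices in the sum satisfy $j \geq 1$, so the $\xi_j$ are i.i.d.\ under $P$ (the blocks $[\nu_{j-1}, \nu_j)$ depend on disjoint pieces of the i.i.d.\ product environment), with finite negative mean $-\mu$ (finiteness coming from Wald's identity and Lemma \ref{nutail}). Cram\'er's theorem gives $P\bigl(\sum_{j=1}^{b_n}\xi_j > -\mu b_n / 2\bigr) \leq e^{-c' b_n}$, and since $n \cdot e^{-c' b_n}$ is summable (as $b_n = \lfloor \log^2 n \rfloor$), a union bound over $k$ followed by Borel--Cantelli gives the desired estimate. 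In the second regime, $k < b_n$, the index window $[k-b_n+1,k]$ straddles the origin and the clean i.i.d.\ structure under $P$ is lost; I would instead decompose the sum as $V(\nu_k) + \bigl(-V(\nu_{k-b_n})\bigr)$ and appeal to the strong law of large numbers applied to the one-sided random walk $V$. Since $-E_P \log \rho > 0$, one has $V(\nu_k)/k \to -\mu$, and, using $|\nu_{-l}| \geq l$ together with $V(x)/|x| \to -E_P \log \rho$ as $x \to -\infty$, also $V(\nu_{-l})/l \to \mu'' > 0$, both $P$-a.s. A short case analysis on whether $k$ or $b_n - k$ falls below an $\w$-dependent SLLN threshold (with the shortfall absorbed as an $O(1)$ correction when $b_n$ is large) then produces the same bound $\leq -c\,b_n$. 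The main obstacle is precisely this small-$k$ regime: the cleanly stationary ladder framework is only restored under $Q$, so working directly under $P$ forces an SLLN-based treatment of both tails of the potential $V$ in place of a single Cram\'er estimate.
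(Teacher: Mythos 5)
Your step (1) (coupling plus union bound over the block index $k$ at which the first forbidden backtrack occurs) and your step (3) (a.s.\ control of $\sum_k \Pi_{\nu_{k-b_n},\nu_k-1}$ via Cram\'er for $k\geq b_n$ and an SLLN argument on both tails of $V$ for $k<b_n$) are sound in outline. The gap is in step (2). The ``trivial lower bound'' $R_{\nu_{k-b_n},\nu_{k+1}-1}\geq R_{\nu_k,\nu_{k+1}-1}$ is false: writing $i=\nu_{k-b_n}$, $x=\nu_k$, $j=\nu_{k+1}$, one has $R_{i,j-1}=R_{i,x-1}+\Pi_{i,x-1}R_{x,j-1}$, and $\Pi_{i,x-1}=\Pi_{\nu_{k-b_n},\nu_k-1}$ is precisely the exponentially small quantity you are trying to extract, so $R_{i,j-1}$ can be far smaller than $R_{x,j-1}$. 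Correspondingly the conclusion $P_\w^{\nu_k}(T_{\nu_{k-b_n}}<T_{\nu_{k+1}})\leq\Pi_{\nu_{k-b_n},\nu_k-1}$ fails: the exact value is $\Pi_{i,x-1}R_{x,j-1}/(R_{i,x-1}+\Pi_{i,x-1}R_{x,j-1})$, which is close to $1$ whenever $\Pi_{i,x-1}R_{x,j-1}\gg R_{i,x-1}$ --- e.g.\ whenever the block $(\nu_k,\nu_{k+1}]$ contains a high potential barrier ($M_{k+1}$ large), the walk started at $\nu_k$ backtracks all the way to $\nu_{k-b_n}$ before surmounting it with probability near $1$, no matter how far the potential has dropped over the preceding $b_n$ blocks. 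An honest version of your bound acquires a correction factor of order $(\nu_{k+1}-\nu_k)M_{k+1}$ divided by a lower bound on $R_{\nu_{k-b_n},\nu_k-1}$; the numerator factors are polynomially controlled, but the denominator involves quantities such as $\rho_{\nu_{k-b_n}}$ that are not bounded away from zero, since this chapter does \emph{not} assume uniform ellipticity. So step (2) cannot be repaired by a purely pointwise (quenched) estimate without additional probabilistic input.

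The paper's proof avoids quenched estimates entirely. It notes that $\{T_{\nu_{n-1}}\neq\bar T^{(n)}_{\nu_{n-1}}\}$ forces some site $i<\nu_{n-1}$ to be followed by a backtrack of at least $b_n$ \emph{sites} (since $\nu_k-\nu_{k-b_n}\geq b_n$), bounds the annealed probability by $P(\nu_{n-1}\geq 2\bar\nu(n-1))+2\bar\nu(n-1)\,\P(T_{-b_n}<\infty)$, invokes the annealed backtracking estimate $\P(T_{-k}<\infty)\leq e^{-C_7 k}$ of Gantert and Shi (which is where the hard work you are attempting is hidden), and then converts the summable annealed bound into the $P$-a.s.\ quenched statement by Chebyshev's inequality and Borel--Cantelli. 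You should either adopt that route or supply the missing control of the correction factors in your quenched bound.
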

\begin{proof}
Let $\e>0$. By Chebychev's inequality,
\[
 P\left( P_\w\left( T_{\nu_{n-1}} \neq \bar{T}_{\nu_{n-1}}^{(n)} \right)
> \e  \right) \leq \e^{-1} \P\left( T_{\nu_{n-1}} \neq \bar{T}_{\nu_{n-1}}^{(n)} 
\right) .
\]
 Thus by the Borel-Cantelli lemma it is
enough to prove that $\P\left( T_{\nu_{n-1}} \neq \bar{T}_{\nu_{n-1}}^{(n)}  
\right)$ is summable. Now, the event 
$T_{\nu_{n-1}} \neq \bar{T}_{\nu_{n-1}}^{(n)} $ implies that there is 
an $i < \nu_{n-1}$ such that after reaching $i$ for the first time,
the random walk then backtracks a distance of $b_{n}$. 
Thus, again letting $\bar\nu = E_P \nu$ we have
\begin{align*}
\P\left( T_{\nu_{n-1}} \neq \bar{T}_{\nu_{n-1}}^{(n)}  
\right) 
&\leq P(\nu_{n-1} \geq 2\bar\nu (n-1)) + \sum_{i=0}^{2\bar\nu (n-1)} \P^i(T_{i-b_{n}} 
< \infty ) \\
&=  P(\nu_{n-1} \geq 2\bar\nu (n-1)) + 2\bar\nu (n-1) \P(T_{-b_{n}} < \infty )
\end{align*}
As noted in Lemma \ref{mdevu}, $P(\nu_{n-1} \geq 2\bar\nu (n-1)) =
\bigo( e^{-\d' n})$, so we need only to show that $n\P(T_{-b_{n}} < \infty)$
is summable. However, \cite[Lemma 3.3]{gsMVSS} gives that 
there exists a constant $C_7$ such that for any $k\geq 1$\,,
\begin{equation}
\P( T_{-k} < \infty ) \leq e^{-C_7 k}\,. \label{backtracktail}
\end{equation}
Thus $n\P(T_{-b_{n}} < \infty) \leq n
e^{-C_7 b_{n}}$ which is summable by the definition of $b_n$.
\end{proof}
We define the random variable $N_t:= \max \{k: \exists n\leq t, 
X_n = \nu_k\}$ to be the maximum number of ladder 
locations crossed by the random walk by time $t$.
\begin{lem}\label{seperation}
\[
\lim_{t\ra\infty} \frac{\nu_{N_t}-X_t}{\log^2(t)} = 0, \quad \P-a.s.
\]
\end{lem}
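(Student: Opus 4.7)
The plan is to control $\nu_{N_t}-X_t$ via the sandwich
\[
-\max_{1\leq i \leq N_t+1}(\nu_i-\nu_{i-1}) \;\leq\; \nu_{N_t}-X_t \;\leq\; X_t^* - X_t,
\]
where $X_t^*:=\max_{s\leq t}X_s$. The upper bound holds because $\nu_{N_t}\leq X_t^*$ by definition of $N_t$, while the lower bound holds because $X_t\leq X_t^* < \nu_{N_t+1}$ (again by definition of $N_t$), so $X_t - \nu_{N_t} \leq \nu_{N_t+1}-\nu_{N_t}$. It therefore suffices to show that both $(X_t^*-X_t)/\log^2 t\to 0$ and $\max_{1\leq i \leq N_t+1}(\nu_i-\nu_{i-1})/\log^2 t\to 0$, $\P$-a.s.; each will follow from a Borel-Cantelli argument carried out on integer times $n$ and then upgraded to all $t$ by sandwiching with $n=\lceil t \rceil$.

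For the backtracking, fix $\delta>0$. A union bound over the running maximum $m\leq n$, together with the spatial shift-invariance of the annealed law (the environment is i.i.d.) and the estimate \eqref{backtracktail}, gives
\[
\P\!\left(\sup_{s\leq n}(X_s^*-X_s)\geq \delta\log^2 n\right) \leq \sum_{m=0}^{n}\P^m\bigl(T_{m-\lceil\delta\log^2 n\rceil}<\infty\bigr) \leq (n+1)\,e^{-C_7\delta\log^2 n},
\]
which decays faster than any polynomial and is in particular summable in $n$. Borel-Cantelli then gives that $\P$-a.s.\ this event fails for all sufficiently large $n$. For $t$ large, choosing $n=\lceil t \rceil$ yields $X_t^*-X_t < \delta\log^2 n \leq 2\delta\log^2 t$, and letting $\delta\downarrow 0$ along a countable sequence completes this step.

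For the maximum block size, the sequence $\{\nu_i-\nu_{i-1}\}_{i\geq 1}$ is i.i.d.\ under $P$ with the distribution of $\nu$, by the strong Markov property applied to the potential random walk $V(\cdot)$. Lemma \ref{nutail} gives $P(\nu>x)\leq C_1 e^{-C_2 x}$, so
\[
P\!\left(\max_{1\leq i\leq n+1}(\nu_i-\nu_{i-1})\geq \delta\log^2 n\right) \leq (n+1)\,C_1 e^{-C_2\delta\log^2 n},
\]
which is again summable in $n$. A parallel Borel-Cantelli argument, combined with the trivial bound $N_t+1 \leq t+1$ (since the walk moves by $1$ per step, so at most $t$ ladder locations can be reached by time $t$), yields the second limit.

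The bulk of the work is the backtracking estimate, whose only nontrivial input is the exponential bound \eqref{backtracktail} borrowed from \cite{gsMVSS}; the rest is routine Borel-Cantelli bookkeeping. The only mild subtlety is passing from integer times $n$ to all real $t$, which is handled by the sandwich $n=\lceil t\rceil$ together with the monotonicity of $\log^2$.
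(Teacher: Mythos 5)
Your proof is correct and follows essentially the same route as the paper's: both reduce the statement to (a) a union bound on the gaps $\nu_i-\nu_{i-1}$ via Lemma \ref{nutail} and (b) a backtracking estimate via \eqref{backtracktail}, followed by Borel--Cantelli. Your sandwich through $X_t^*$ is just a slightly more explicit organization of the same two-event decomposition the paper uses.
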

\begin{proof}
Let $\d> 0$. If we can show that $\sum_{t=1}^\infty \P(|N_t-X_t|\geq \d\log^2 t) < \infty$, then by the Borel-Cantelli lemma we will be done.
Now, the only way that $N_t$ and $X_t$ can differ by more than $\d \log^2 t$ is if either one of the gaps between the first $t$ ladder times is larger than $\d\log^2 t$ or if for some $i<t$ the random walk backtracks $\d\log^2 t$ steps after first reaching $i$. Thus,
\begin{equation}
\P(|N_t-X_t|\geq \d\log^2 t) \leq P\left(\exists j\in[1,t+1]: \nu_j-\nu_{j-1} > \d\log^2 t  \right) + t \P( T_{-\lceil \d\log^2 t \rceil } < T_1) \label{backtrack}
\end{equation}
So we need only to show that the two terms on the right hand side are summable. For the first term we use Lemma \ref{nutail} we note that
$$P\left(\exists j\in[1,t+1]: \nu_j-\nu_{j-1} > \d\log^2 t  \right)
\leq (t+1)P(\nu> \d\log^2 t) 
\leq (t+1)C_1e^{-C_2\d\log^2 t}\,,$$
which is summable in $t$. By \eqref{backtracktail} the second 
term on the right side of \eqref{backtrack} is also summable.
\end{proof}
\begin{proof}[\textbf{Proof of Theorem \ref{local}:}]$\left.\right.$\\
By Corollary \ref{ssdom}, $P$-a.s there exists a subsequence $j_m(\w)$ such that $
M_{j_m} \geq m^2 E_\w \bar{T}_{\nu_{j_m-1}}^{(j_m)}
$.  Define
$
t_m = t_m(\w) = \frac{1}{m} M_{j_m}
$ and $u_m=u_m(\w)= \nu_{j_m-1}$.
Then,
\[
 P_\w\left( \frac{X_{t_m} - u_m}{\log^2 t_m} \notin [-\d, \d] \right) \leq P_\w(N_{t_m}\neq j_m-1 ) + P_\w(|\nu_{N_{t_m}}-X_{t_m}| > \d \log^2 t_m )
\,.
\]
From Lemma \ref{seperation} the second term goes to zero as $m\ra\infty$.
Thus, we only need to show that
\begin{equation}
\lim_{m\ra\infty} P_\w( N_{t_m} = j_m -1 ) = 1. \label{loc}
\end{equation}
To see this first note that
\[
P_\w\left( N_{t_m} < j_m -1 \right) = P_\w\left( T_{\nu_{j_m-1}} > t_m
\right) \leq P_\w\left( T_{\nu_{j_m-1}} \neq
\bar{T}_{\nu_{j_m-1}}^{(j_m)} \right) +
P_\w\left( \bar{T}_{\nu_{j_m-1}}^{(j_m)} > t_m \right)
\,.
\]
By Lemma \ref{bt},
$P_\w\left( T_{\nu_{j_m-1}} \neq \bar{T}_{\nu_{j_m-1}}^{(j_m)} \right)
\ra 0$ as $m\ra\infty$, $P-a.s.$ Also, by our definition of $t_m$ and
our choice of the subsequence $j_m$ we have
\[
P_\w\left( \bar{T}_{\nu_{j_m-1}}^{(j_m)} > t_m \right)
\leq \frac{E_\w \bar{T}_{\nu_{j_m-1}}^{(j_m)}}{t_m} =
\frac{m E_\w \bar{T}_{\nu_{j_m-1}}^{(j_m)}}{M_{j_m}}\leq
\frac{1}{m} \underset{m\rightarrow\infty}{\longrightarrow} 0.
\]
It still remains to show $\lim_{m\ra\infty} P_\w \left(
N_{t_m} < j_m \right) = 1$. To prove this, first define the
stopping times $T_x^+:= \min\{n > 0:  X_n =x \} $. Then,
\begin{align*}
P_\w\left( N_{t_m} < j_m \right)
= P_\w( T_{\nu_{j_m}} > t_m )
\geq P_\w^{\nu_{j_m -1}}\left( T_{\nu_{j_m}} > \frac{1}{m} M_{j_m} \right)
\geq P_\w^{\nu_{j_m -1}} \left( T_{\nu_{j_m-1}}^+ <
T_{\nu_{j_m}} \right) ^{\frac{1}{m} M_{j_m} } .
\end{align*}
Then, using the hitting time calculations given in
\cite[(2.1.4)]{zRWRE}, we have that
\begin{align*}
P_\w^{\nu_{j_m -1}} \left( T_{\nu_{j_m-1}}^+ < T_{\nu_{j_m}} \right)
= 1-\frac{1-\w_{\nu_{j_m-1}}}{ R_{\nu_{j_m-1},\nu_{j_m}-1} }.
\end{align*}
Therefore, since $M_{j_m} \leq R_{\nu_{j_m-1},\nu_{j_m}-1}$ we have
\[
P_\w\left( N_{t_m} < j_m \right) \geq
\left( 1-\frac{1-\w_{\nu_{j_m-1}}}{ R_{\nu_{j_m-1},\nu_{j_m}-1} }
\right)^{\frac{1}{m} M_{j_m}} \geq \left( 1-\frac{1}{M_{j_m}}
\right)^{\frac{1}{m} M_{j_m}} \underset{m\rightarrow\infty}{\longrightarrow} 1,
\]
thus proving \eqref{loc} and therefore the theorem.
\end{proof}
\end{section}

%%%%%%%%%%%%%%%%%%%%%%%%%%% GAUSSIAN BEHAVIOR ON A SUBSEQUENCE %%%%%%%%%%%%%

\begin{section}{Non-local behavior on a Random Subsequence} \label{gaussian}
There are two main goals of this section. The first is to prove the
existence of random subsequences $x_m$ where the hitting times $T_{x_m}$
are approximately gaussian random variables. This result is then
used to prove the existence of random times $t_m(\w)$ in which the
scaling for the random walk is of the order $t_m^s$ instead
of $\log^2 t_m$ as in Theorem \ref{local}. However, before we can begin
proving a quenched CLT for the hitting times $T_n$ (at least
along a random subsequence), we first need to understand the
tail asymptotics of
$Var_\w T_\nu:= E_\w((T_\nu-E_w T_\nu)^2)$,
the quenched variance of $T_\nu$.

%%%%%%%%%%%%%%%%%%%% TAIL ASYMPTOTICS OF VAR T_NU %%%%%%%%%%%%%%%%%%%%%%%%

\begin{subsection}{Tail Asymptotics of $Q( Var_\w T_\nu > x)$}
The goal of this subsection is to prove the following theorem:
\begin{thm}\label{qVartail}
Let Assumptions \ref{essentialasm} and \ref{techasm} hold. Then with $K_\infty>0$ the same as in Theorem \ref{Tnutail}, we have
\begin{equation}
Q\left( Var_\w T_\nu > x \right) \sim Q\left( (E_\w T_\nu)^2 > x
\right) \sim K_\infty x^{-s/2}\quad \text{ as } x\ra\infty,
\label{vartail}
\end{equation}
and for any $\e > 0$ and $x>0$, 
\begin{equation}
Q\left( Var_\w \bar{T}_\nu^{(n)} > x n^{2/s} ,\quad M_1 >
n^{(1-\e)/s} \right)  \sim K_\infty x^{-s/2} \frac{1}{n} \quad
\text{ as } n\ra\infty. \label{rvartail}
\end{equation}
Consequently,
%A simple consequence of \eqref{vartail} and
%\eqref{rvartail} is
\begin{equation}
Q\left( Var_\w T_\nu > \d n^{1/s}, M_1\leq n^{(1-\e)/s} \right) =
o(n^{-1})\,. \label{VarbigMsmall}
\end{equation}
\end{thm}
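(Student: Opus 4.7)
The plan is to address the three claims of Theorem \ref{qVartail} in turn, relying heavily on the machinery already developed for Theorem \ref{Tnutail} and on the explicit formula \eqref{Thesis_qvar}. The middle equivalence in \eqref{vartail} is an immediate consequence of Theorem \ref{Tnutail}: taking $y = \sqrt{x}$ gives
\[
Q\bigl((E_\w T_\nu)^2 > x\bigr) = Q\bigl(E_\w T_\nu > \sqrt{x}\bigr) \sim K_\infty x^{-s/2}.
\]
Hence the substantive task is to show $Q(Var_\w T_\nu > x) \sim K_\infty x^{-s/2}$ with the same constant $K_\infty$.

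For this tail estimate, I would start from $Var_\w T_\nu = \sum_{k=0}^{\nu-1} Var_{\theta^k\w}\tau_1$ and apply \eqref{Thesis_qvar} to each shift. Using the decomposition $W_k = W_{-1}\Pi_{0,k} + \sum_{j=0}^k \Pi_{j,k}$ to separate the contribution of the environment to the left and right of the origin, and the fact that $\Pi_{0,k}\leq M_1$ for all $0\leq k<\nu$, one checks that in the tail both $Var_\w T_\nu$ and $(E_\w T_\nu)^2 = (\nu + 2\sum_{k=0}^{\nu-1}W_k)^2$ are dominated by the same quantity, essentially $4(1+W_{-1})^2 R_{0,\nu-1}^2$. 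Since $W_{-1}$ has exponential tails under $Q$ by Lemma \ref{Wtail} and $R_{0,\nu-1}$ has the same distribution as $R_0$ with tail $\sim Kx^{-s}$ by \eqref{Rtail}, this common dominant term has tail $\sim K_\infty x^{-s/2}$, with exactly the constant $K_\infty$ of Theorem \ref{Tnutail}. The remaining pieces -- the cross sums $8\sum_k\sum_{j<k}\Pi_{j+1,k}(W_j + W_j^2)$ and the finite-$\nu$ corrections -- are controlled by the same truncation and Tauberian-type arguments used in Section \ref{tailofTnu}, contributing only lower-order tail.

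The reflected version \eqref{rvartail} then follows by adapting Lemmas \ref{ETdiff} and \ref{reftail} to the variance. The reflection at $\nu_{-b_n}$ modifies $Var_\w T_\nu$ only through terms involving $\Pi_{\nu_{-b_n},\cdot}$, and an argument analogous to Lemma \ref{ETdiff} gives $Q\bigl(Var_\w T_\nu - Var_\w \bar T_\nu^{(n)} > y\bigr) = \bigo\bigl((y^{-s/2}\vee 1)e^{-\d' b_n}\bigr)$. Combining this with \eqref{vartail} and the tail \eqref{Mtail} of $M_1$, one obtains $n Q\bigl(Var_\w \bar T_\nu^{(n)} > x n^{2/s},\ M_1 > n^{(1-\e)/s}\bigr) \to K_\infty x^{-s/2}$, exactly mirroring the proof of Lemma \ref{reftail}. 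Finally, for \eqref{VarbigMsmall}, on the typical event $\{\nu \leq \log^2 n,\ W_{-1}\leq \log^2 n\}$ -- whose complement has $Q$-probability $o(n^{-k})$ for every $k$, by Lemmas \ref{nutail} and \ref{Wtail} -- the formula \eqref{Thesis_qvar} together with the deterministic bound $W_k \leq (W_{-1}+\nu)M_1$ for $0 \leq k < \nu$ gives $Var_\w T_\nu \leq C(\log n)^4 M_1^2$; under the constraint $M_1 \leq n^{(1-\e)/s}$ this is bounded by $C(\log n)^4 n^{2(1-\e)/s}$, which for suitable $\e$ is far smaller than the stated threshold, yielding the desired $o(n^{-1})$ estimate.

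The main technical obstacle is obtaining the precise constant $K_\infty$ in the tail of $Var_\w T_\nu$, as opposed to merely matching upper and lower bounds of the correct order $x^{-s/2}$ (e.g., $4M_1^2 \leq Var_\w T_\nu$ on one side and $Var_\w T_\nu \leq$ some polynomial in $(E_\w T_\nu)^2$ on the other). Achieving the correct constant requires isolating the dominant contribution $4(1+W_{-1})^2 R_{0,\nu-1}^2$ and showing that every remaining term in \eqref{Thesis_qvar} has strictly smaller tail exponent, an argument that runs parallel to Kesten's delicate analysis in \cite{kRDE} and to the proof of Theorem \ref{Tnutail} carried out in Section \ref{tailofTnu}.
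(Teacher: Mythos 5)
Your overall architecture --- reduce the tail of $Var_\w T_\nu$ to that of $(E_\w T_\nu)^2$ and inherit the constant from Theorem \ref{Tnutail} --- is the right one, and your treatment of the middle equivalence in \eqref{vartail} is exactly the paper's. But the mechanism you propose for pinning down the constant $K_\infty$ does not work. You claim that both $Var_\w T_\nu$ and $(E_\w T_\nu)^2$ are dominated by a common term, ``essentially $4(1+W_{-1})^2R_{0,\nu-1}^2$,'' whose tail is $\sim K_\infty x^{-s/2}$. That identification is wrong: by \eqref{ETnuexpand}, $E_\w T_\nu = \nu + 2(1+W_{-1})R_{0,\nu-1} + 2\sum_{i=1}^{\nu-1}R_{i,\nu-1}$, and the last sum --- products $\Pi_{i,j}$ starting at interior points $i\geq 1$ --- is \emph{not} of lower order. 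The analysis of Section \ref{tailofTnu} shows the genuinely dominant term is $2W_{\s_A-1}(1+R_{\s_A,\nu-1})$, whose tail constant $2^sK_A = 2^sK\,E_Q\!\left[W_{\s_A-1}^s(1-\Pi_{\s_A,\nu-1}^s)\mathbf{1}_{\s_A<\nu}\right]$ only converges to $K_\infty$ as $A\ra\infty$ and admits no closed form. So computing the tail of $2(1+W_{-1})R_{0,\nu-1}$ from \eqref{Rtail} and Lemma \ref{Wtail}, as you suggest, would produce a different (strictly smaller) constant, and redoing a Kesten-type dominant-term analysis for the variance is precisely what one wants to avoid.

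The paper's route sidesteps this entirely: it never isolates a dominant term for $Var_\w T_\nu$. Instead it computes the \emph{exact} algebraic difference $(E_\w T_\nu)^2 - Var_\w T_\nu = D^+(\w) - 8R_{0,\nu-1}D^-(\w)$ with explicit nonnegative $D^\pm$ (see \eqref{above}--\eqref{Ddef}), and shows in Lemma \ref{abound} and Corollary \ref{bbound} that both error terms have tail $o(x^{-s+\e})$, which is strictly lighter than $x^{-s/2}$ once $\e<s/2$. The sandwich \eqref{compare} then transfers the tail of $(E_\w T_\nu)^2$, already known with constant $K_\infty$ from Theorem \ref{Tnutail}, to $Var_\w T_\nu$ with no new tail computation; the same sandwich with reflections, \eqref{comparer}, combined with Lemma \ref{reftail} gives \eqref{rvartail}. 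You should replace your ``common dominant term'' step with this comparison. Note also that the term $8R_{0,\nu-1}D^-(\w)$ collects the contribution of the environment arbitrarily far to the left, which your ``typical event'' $\{W_{-1}\leq\log^2 n\}$ in the sketch of \eqref{VarbigMsmall} does not control --- one needs the moment bound $E_Q D^-(\w)^\gamma<\infty$ for $\gamma<s$ from Lemma \ref{piW2} --- and your hedge ``for suitable $\e$'' is inconsistent with the statement, which asserts the bound for every $\e>0$.
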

A formula for the quenched variance of crossing times is given in \cite[(2.2)]{gQCLT}. Translating to our notation and simplifying we have the formula
\begin{equation}\label{qvar}
Var_\w T_1 := E_\w (T_1 - E_\w T_1)^2 = 4(W_{0}+ W_{0}^2)+ 8 \sum_{i<0}
\Pi_{i+1,0}(W_{i}+W_{i}^2)\,.
\end{equation}
Now, given the environment the crossing times $T_j-T_{j-1}$ are independent.
Thus we get the formula
\begin{align} 
Var_\w T_\nu &= 4\sum_{j=0}^{\nu-1}(W_{j}+W_{j}^2) + 8\sum_{j=0}^{\nu-1}\sum_{i< j} \Pi_{i+1,j}(W_{i}+W_{i}^2)  \nonumber \\
&= 4\sum_{j=0}^{\nu-1}(W_{j}+W_{j}^2) + 8 R_{0,\nu-1}\left(W_{-1}+W_{-1}^2+ \sum_{i < -1} \Pi_{i+1,-1}(W_{i}+W_{i}^2)\right) \label{VarTnuexpand}\\
&\qquad + 8\sum_{0\leq i < j < \nu}\Pi_{i+1,j}(W_{i}+W_{i}^2). \nonumber
\end{align}
%In particular,
%$Var_\w
%\bar{T}_\nu^{(n)} \leq Var_\w T_\nu$, because the same
%expansion for $Var_\w \bar{T}^{(n)}_\nu$ is obtained by replacing
%$W_i$ by $W_{\nu_{-b_n}+1, i}$ and restricting the final sum in
%the second line to $\nu_{-b_n} < i < -1$.
We want to analyze the
tails of $Var_\w T_\nu$ by comparison with $(E_\w T_\nu)^2$. Using
\eqref{ETnuexpand} we have
\[
(E_\w T_\nu)^2  = \left( \nu + 2 \sum_{j=0}^{\nu-1} W_{j} \right)^2
= \nu^2 + 4 \nu\sum_{j=0}^{\nu-1} W_j + 4\sum_{j=0}^{\nu-1}W_j^2 + 8\sum_{0\leq i<j<\nu} W_i W_j.
\]
Thus, we have
\begin{align}
(E_\w T_\nu)^2 - Var_\w T_\nu &= \nu^2 +4(\nu-1)\sum_{j=0}^{\nu-1} W_{j}
+ 8\sum_{0\leq i<j<\nu} W_{i}\left( W_j - \Pi_{i+1,j}-
\Pi_{i+1,j}W_i \right)  \label{above} \\
&\quad\quad - 8 R_{0,\nu-1}\left(W_{-1}+W_{-1}^2+
\sum_{i < -1} \Pi_{i+1,-1}(W_{i}+W_{i}^2)\right) \label{below}\\
&=: D^+(\w) - 8 R_{0,\nu-1} D^-(\w)\,. \label{Ddef}
\end{align}
Note that $D^-(\w)$ and $D^+(\w)$ are non-negative random variables.
The next few lemmas show that the tails of $D^+(\w)$ and $R_{0,\nu-1} D^-(\w)$ are much smaller than the tails of $(E_\w T_\nu)^2$.
\begin{lem}\label{abound}
For any $\e>0$, we have $Q\left( D^+(\w) > x \right) = o(x^{-s+\e})$.
\end{lem}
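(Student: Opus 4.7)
The strategy is to split $D^+$ into three non-negative pieces. Using the identity $W_j = \Pi_{i+1,j}(1+W_i) + \sum_{k=i+2}^{j}\Pi_{k,j}$, write
\[
D^+(\w) = \nu^2 + 4(\nu-1)\sum_{j=0}^{\nu-1} W_j + 8\, S_1(\w),\qquad S_1(\w) := \sum_{0\le i<j<\nu} W_i \sum_{k=i+2}^{j} \Pi_{k,j}.
\]
The tail of $\nu^2$ is super-polynomial by Lemma \ref{nutail}. For the second piece, since $\Pi_{0,\ell-1}\ge 1$ for $\ell< \nu$ one has $W_j \le \Pi_{0,j}(1+W_{-1}+\nu)$, so $\sum_{j=0}^{\nu-1} W_j \le (1+W_{-1}+\nu)R_{0,\nu-1}$; combining the bound $C\nu^2(1+W_{-1})R_{0,\nu-1}$ with the tail $P(R_0>y)\sim Ky^{-s}$ from \eqref{Rtail} and the exponential tails of $\nu$ and $W_{-1}$ under $Q$ (Lemmas \ref{nutail} and \ref{Wtail}), a routine H\"older argument yields a tail of order $o(x^{-s+\e})$ for this piece.

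The real work is to bound the tail of $S_1$. A naive pointwise estimate only gives $S_1 = O(M_1^{2})$, which via Markov would produce a tail as poor as $x^{-s/2}$; the improvement comes from an independence observation. In each summand $W_i \cdot \Pi_{k,j}$, the two factors depend on the disjoint coordinate sets $(\rho_\ell)_{\ell\le i}$ and $(\rho_\ell)_{\ell=k}^{j}$ (since $k\ge i+2$). Under $Q(\cdot)=P(\cdot\mid\mathcal{R})$, only the negative indices are conditioned, so $(\rho_\ell)_{\ell\ge 0}$ remain i.i.d.\ and independent of $(\rho_\ell)_{\ell<0}$; consequently $W_i$ and $\Pi_{k,j}$ are independent under $Q$. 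Combining this with Minkowski's inequality and the fact (from convexity of $\gamma\mapsto \log E_P\rho^\gamma$ together with $E_P\log\rho<0$ and $E_P\rho^s=1$) that $E_P\rho^p<1$ for every $p\in(0,s)$, one obtains $\|\Pi_{k,j}\|_p\le 1$ and, via the decomposition $W_i = W_{-1}\Pi_{0,i} + W_i^{+}$ (using exponential moments of $W_{-1}$ under $Q$ and the geometric summability of $(E_P\rho^p)^n$), the uniform bound $\|W_i\|_p\le C_p$. Hence $E_Q\bigl[(W_i \Pi_{k,j})^p\bigr] \le C_p'$ uniformly in $i,k,j$ for every $p<s$.

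The argument then concludes by a union bound with cutoff $N := \lfloor (\log x)^2\rfloor$: on $\{\nu\le N\}$ the sum $S_1$ has at most $N^3$ non-zero summands, so $\{S_1>x\}\cap\{\nu\le N\}$ forces some $W_i\Pi_{k,j} > x/N^3$, whence by Markov
\[
Q(S_1>x) \;\le\; Q(\nu > N) + N^{3}\cdot C_p'\,(N^3/x)^p \;\le\; C_1 e^{-C_2 N} + C_p'\, N^{3(1+p)}\, x^{-p}.
\]
Choosing $p = s - \e/2$ makes the right-hand side $o(x^{-s+\e})$, which together with the bounds on the other two summands proves the lemma. The main obstacle is precisely the jump from the crude bound $S_1 \le C(\nu,W_{-1}) M_1^{2}$, which controls only moments of order $p<s/2$, to the bound $E_Q S_1^{p}<\infty$ for every $p<s$; the enforced gap $k\ge i+2$, which arises from the cancellation in $W_j - \Pi_{i+1,j}(1+W_i)$, is exactly what supplies the independence needed to close this gap.
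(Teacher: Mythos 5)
Your proof is correct and follows essentially the same route as the paper's: both isolate the cancellation $W_j-\Pi_{i+1,j}(1+W_i)=W_{i+2,j}$, exploit the index gap $k\ge i+2$ to gain independence between the two factors, and finish with a $\log^2 x$ cutoff on $\nu$ plus a union bound over the polylogarithmically many summands. The only cosmetic difference is that you take $p$-th moments with $p<s$ under $Q$, whereas the paper conditions on $\Pi_{0,i}$ and uses $E_P\Pi_{0,i}^s=1$ at the exact exponent $s$; both give $o(x^{-s+\e})$.
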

\begin{proof}
Notice first that from \eqref{ETnuexpand} we have
$\nu^2+4(\nu-1)\sum_{j=0}^{\nu-1} W_j \leq 2\nu E_\w T_\nu$. Also
we can re-write $ W_j - \Pi_{i+1,j}-\Pi_{i+1,j}W_i = W_{i+2,j}$
when $i < j-1$ (this term is zero when $i=j-1$). Therefore,
\[
Q\left( D^+(\w) > x \right) \leq Q( 2\nu E_\w T_\nu > x/2 ) +
Q\left( 8 \sum_{i=0}^{\nu-3} \sum_{j=i+2}^{\nu-1} W_i W_{i+2,j} > x/2 \right)\,.
\]
Lemma \ref{nutail} and Theorem \ref{Tnutail} give that, for any $\e>0$,
\[
 Q\left(
2\nu E_\w T_\nu >  x \right) \leq Q(2\nu > \log^2(x)) + Q\left(
E_\w T_\nu > \frac{ x}{\log^2(x)} \right) = o(x^{-s+\e}).
\]
Thus we need only prove that $Q\left( \sum_{i=0}^{\nu-3}
\sum_{j=i+2}^{\nu-1} W_i W_{i+2,j} > x \right) = o(x^{-s+\e})$ for
any $\e>0$.  Note that for $i<\nu$ we have  $W_i= W_{0,i} +
\Pi_{0,i}W_{-1} \leq \Pi_{0,i}(i+1+W_{-1})$, thus
\begin{align}
Q\left( \sum_{i=0}^{\nu-3} \sum_{j=i+2}^{\nu-1} W_i W_{i+2,j} > x \right)
&\leq Q\left((\nu+W_{-1}) \sum_{i=0}^{\nu-3} \sum_{j=i+2}^{\nu-1}\Pi_{0,i} W_{i+2,j} > x \right) \nonumber \\
&\leq Q(\nu> \log^2(x)/2) + Q(W_{-1}> \log^2(x)/2) \label{nuplusW}\\
&\quad\quad + \sum_{i=0}^{\log^2(x)-3} \; \sum_{j=i+2}^{\log^2(x)-1} P\left( \Pi_{0,i} W_{i+2,j} > \frac{x}{\log^6 (x)} \right) \label{pitimesW},
\end{align}
where we were able to switch to $P$ instead of $Q$ in the last
line because the event inside the probability only concerns the
environment to the right of $0$. Now, Lemmas \ref{nutail} and
\ref{Wtail} give that \eqref{nuplusW} is $o(x^{-s+\e})$ for any
$\e>0$, so we need only to consider \eqref{pitimesW}. Under
the measure $P$ we have that $\Pi_{0,i}$ and $W_{i+2,j}$ are
independent, and by \eqref{PWtail} we have $P(W_{i+2,j}> x)\leq
P(W_{j}>x) \leq K_1 x^{-s}$. Thus,
\[
P\left( \Pi_{0,i} W_{i+2,j} > \frac{x}{\log^6 (x)} \right) =
E_P\left[ P \left( W_{i+2,j} > \frac{x}{\log^6(x) \Pi_{0,i}}
\Biggl| \Pi_{0,i} \right)  \right] \leq K_1 \log^{6s}(x)
x^{-s} E_P[\Pi_{0,i}^s]\,.
\]
Then because $E_P \Pi_{0,i}^s = ( E_P \rho^s )^{i+1} = 1$ by
Assumption \ref{essentialasm}, we have
\[
\sum_{i=0}^{\log^2(x)-3} \; \sum_{j=i+2}^{\log^2(x)-1}
P\left( \Pi_{0,i} W_{i+2,j} > \frac{x}{\log^6 (x)} \right)
\leq K_1 \log^{4+6s}(x) x^{-s} = o(x^{-s+\e})\,.
\]
\end{proof}
\begin{lem}\label{piW2}
For any $\e>0$,
\begin{equation}
Q\left( D^-(\w) > x \right) = o(x^{-s+\e}), \label{piW2tail}
\end{equation}
and thus for any $\gamma < s$,
\begin{equation}
E_Q  D^-(\w)^\gamma < \infty. \label{EpiW2}
\end{equation}
\end{lem}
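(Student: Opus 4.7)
The plan is to decompose $D^-(\w) = W_{-1}(1+W_{-1}) + S_1 + S_2$, where $S_1 := \sum_{i\leq -2}\Pi_{i+1,-1}W_i$ and $S_2 := \sum_{i\leq -2}\Pi_{i+1,-1}W_i^2$, and to bound each piece separately. The $W_{-1}(1+W_{-1})$ piece has exponential $Q$-tails by Lemma \ref{Wtail}. For $S_1$, interchanging the order of summation and using $\Pi_{i+1,-1}\Pi_{k,i}=\Pi_{k,-1}$ yields the useful collapse
\[
S_1 = \sum_{k\leq -2}(|k|-1)\,\Pi_{k,-1}.
\]
For any $\gamma\in(0,s)$, a weighted union bound with weights $w_k = c_0/|k|^2$ together with the moment estimate $E_Q\Pi_{k,-1}^\gamma \leq (E_P\rho^\gamma)^{|k|}/P(\mathcal{R})$ gives
\[
Q(S_1>x) \leq \frac{C}{P(\mathcal{R})}\,x^{-\gamma}\sum_{k\leq -2}|k|^{3\gamma}(E_P\rho^\gamma)^{|k|} = O(x^{-\gamma}),
\]
since $E_P\rho^\gamma<1$ for $\gamma<s$; choosing $\gamma = s-\e/2$ gives the desired $o(x^{-s+\e})$.

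The main obstacle is the quadratic sum $S_2$, since each individual summand $\Pi_{i+1,-1}W_i^2$ naively only has tail of order $x^{-s/2}$ (coming from $W_i^2$). To do better, I will exploit the key identity $\Pi_{i+1,-1}W_i = \sum_{k\leq i}\Pi_{k,-1}\leq W_{-1}$ together with the exponential decay of $\Pi_{k,-1}$ under $Q$. Fix constants $c,c'$ satisfying $c<-E_P\log\rho<c'$ and $2c>c'$ (achievable by taking both close to $-E_P\log\rho$) and define
\[
L_n := \{e^{-c'|k|} \leq \Pi_{-k,-1}\leq e^{-c|k|}\text{ for all }k\geq n\}.
\]
Applying Cram\'er's theorem in both directions (as in Lemma \ref{nutail}) and passing to $Q$ gives $Q(L_n^c) = O(e^{-\delta n})$ for some $\delta>0$, so with $n := \lceil \log^2 x\rceil$ this event contributes $o(x^{-M})$ for every $M>0$.

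On $L_n$ and $|i|\geq n$, we have $U_i := \sum_{k\leq i}\Pi_{k,-1}\leq C e^{-c|i|}$ and $\Pi_{i+1,-1}^{-1}\leq e^{c'|i|}$, so
\[
\Pi_{i+1,-1}W_i^2 = \frac{U_i^2}{\Pi_{i+1,-1}} \leq C' e^{-(2c-c')|i|},
\]
making $\sum_{|i|\geq n}\Pi_{i+1,-1}W_i^2$ bounded by a constant on $L_n$. For the remaining ``near'' sum the bound $\Pi_{i+1,-1}W_i^2\leq W_{-1}W_i$ gives $\sum_{|i|<n}\Pi_{i+1,-1}W_i^2 \leq W_{-1}\sum_{|i|<n}W_i$, and a union bound together with the inherited tail $Q(W_i>y) = O(y^{-s})$ (from \eqref{PWtail}) yields $Q\bigl(\sum_{|i|<n}W_i > y\bigr) = O(n^{1+s}y^{-s})$. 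Since $W_{-1}$ has exponential $Q$-tails, splitting on $\{W_{-1}\leq (\log x)^2\}$ produces $Q\bigl(W_{-1}\sum_{|i|<n}W_i > x\bigr) = O((\log x)^{4s+2}x^{-s}) = o(x^{-s+\e})$. Collecting the three pieces establishes \eqref{piW2tail}, and \eqref{EpiW2} then follows by integrating the tail against $\gamma x^{\gamma-1}\,dx$ and choosing $\gamma' \in (\gamma, s)$ with $\gamma-1-\gamma' < -1$.
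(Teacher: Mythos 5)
Your decomposition and the treatment of $W_{-1}(1+W_{-1})$ and of $S_1$ are fine (the collapse $\Pi_{i+1,-1}W_i=\sum_{k\leq i}\Pi_{k,-1}$ and the weighted Chebychev bound with $E_P\Pi_{k,-1}^\gamma=(E_P\rho^\gamma)^{|k|}<1$ are correct), but there is a genuine gap in the treatment of $S_2$: the event $L_n$ contains the \emph{lower} bound $\Pi_{-k,-1}\geq e^{-c'k}$, and the claim $Q(L_n^c)=O(e^{-\d n})$ is not available under the standing assumptions of this chapter. Assumptions \ref{essentialasm}--\ref{techasm} do not include uniform ellipticity, so nothing controls $E_P\rho^{-\d}$; the Chernoff/Cram\'er bound for the \emph{lower} deviation $P\bigl(\frac1k\sum_{j=-k}^{-1}\log\rho_j<-c'\bigr)$ requires $E_P\rho^{t}<\infty$ for some $t<0$, and Lemma \ref{nutail} (i.e.\ \eqref{LDPrho}) only establishes the upper deviation. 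If $P(\w_0>1-\eta)$ decays slowly (or $P(\w_0=1)>0$, which the assumptions permit), then $P(\log\rho_{-1}<-c'n)$ decays only polynomially in $n$ or not at all, so with $n=\lceil\log^2x\rceil$ the error term $Q(L_n^c)$ is at best $(\log x)^{-O(1)}$ rather than $o(x^{-M})$, and the whole tail estimate for $S_2$ collapses. The lower bound on $\Pi_{i+1,-1}$ enters your argument precisely because you write $\Pi_{i+1,-1}W_i^2=U_i^2/\Pi_{i+1,-1}$ and divide.

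The fix is to avoid the division: write $\Pi_{i+1,-1}W_i^2=U_iW_i$ with $U_i=\sum_{k\leq i}\Pi_{k,-1}$, and on the one-sided event $\{\Pi_{k,-1}\leq e^{-c|k|}\ \forall\,|k|\geq n\}$ (whose complement \emph{is} controlled by \eqref{LDPrho}, with probability $o(e^{-\d_c n/2})$) bound $U_i\leq Ce^{-c|i|}$ for $|i|\geq n$; the far sum $\sum_{|i|\geq n}e^{-c|i|}W_i$ is then handled by a weighted union bound using $Q(W_i>y)\leq K_1y^{-s}/P(\mathcal{R})$ from \eqref{PWtail}, exactly as you already do for the near sum. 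This is essentially the route the paper takes: it expands $\Pi_{i+1,-1}(W_i+W_i^2)=\sum_{k\leq i}\Pi_{k,-1}(1+\Pi_{k,i}+2\sum_{l<k}\Pi_{l,i})$ and only ever invokes upper bounds on products $\Pi_{j,i}$, trading the exponential decay of $\Pi_{k,-1}$ against the polynomial tails of the $W_i$ rather than against a lower bound on $\Pi_{i+1,-1}$. With that one change your argument goes through.
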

\begin{proof}
It is obvious that \eqref{piW2tail} implies \eqref{EpiW2} and so
we will only prove the former. For any $i$ we may expand $W_i+W_i^2$ as
\begin{align*}
 W_i+W_i^2 = \sum_{k\leq i} \Pi_{k,i} + \left( \sum_{k\leq i} \Pi_{k,i} \right)^2 
&= \sum_{k\leq i} \Pi_{k,i} + \sum_{k\leq i} \Pi_{k,i}^2 + 2\sum_{k\leq i}\sum_{l < k} \Pi_{k,i}\Pi_{l,i} \\
&= \sum_{k\leq i} \Pi_{k,i} \left( 1 + \Pi_{k,i} + 2 \sum_{l<k} \Pi_{l,i} \right).
\end{align*}
Therefore, we may re-write
\begin{equation}
D^-(\w)= W_{-1}+W_{-1}^2+ \sum_{i<-1}\Pi_{i+1,-1}(W_i+W_i^2) =
\sum_{i\leq -1}\sum_{k\leq i} \Pi_{k,-1} \left( 1 + \Pi_{k,i} +
2 \sum_{l < k} \Pi_{l,i} \right). \label{piW2expand}
\end{equation}
Next, for any $c>0$ and $n\in \N$ define the event
\[
E_{c,n} := \left\{ \Pi_{j,i} \leq e^{-c(i-j+1)},\quad \forall -n\leq i \leq -1 ,\forall j \leq i-n \right\} = \bigcap_{-n\leq i\leq -1} \bigcap_{j \leq i-n} \{\Pi_{j,i} \leq e^{-c(i-j+1)} \}
.\]
Now, under the measure $Q$ we have that $\Pi_{k,-1} < 1$ for all
$k\leq -1$, and thus on the event $E_{c,n}$ we have using the representation in \eqref{piW2expand} that
%\begin{align}
%D^-(\w) \leq 
%\frac{e^{2c}-e^{c}+1}{(e^{c}-1)^3} + (2+n)\sum_{-n\leq i\leq -1} W_i +
%\frac{e^c}{e^c-1}\sum_{i<-n}e^{ci} W_i.  \label{piW2ub}
%\end{align}
\begin{align}
 D^-(\w) &= \sum_{ i\leq -1}\sum_{k\leq i} \Pi_{k,-1} \left( 1 + \Pi_{k,i} 
       + 2 \sum_{l < k} \Pi_{l,i} \right)  \nonumber \\
 & \leq \sum_{-n\leq i\leq -1} \left( \sum_{k\leq i} \Pi_{k,i}(\Pi_{i+1,-1}+\Pi_{k,-1}) + 2 \sum_{i-n<k\leq i} \: \sum_{l<k} \Pi_{l,i} + 2 \sum_{l<k \leq i-n} e^{ck} \Pi_{l,i} \right)  \nonumber \\
 &\qquad + \sum_{i<-n} \left(\sum_{k\leq i } e^{ck} + \sum_{k\leq i } e^{ck}\Pi_{k,i} + 2 \sum_{l<k\leq i } e^{ck} \Pi_{l,i} \right)  \nonumber \\
 &\leq \sum_{-n\leq i\leq -1} \left( (2+n)W_i + 2 \sum_{l<k \leq i-n} \!\!\!\! e^{ck}e^{-c(i-l+1)} \right) \nonumber \\
 &\qquad + \sum_{i<-n} \left( \frac{e^{c(i+1)}}{e^c-1} + e^{ci} W_i +  \frac{2 e^{c(i+1)}}{e^c-1} \sum_{l<i} \Pi_{l,i} \right)  \nonumber \\
 &\leq (2+n)\sum_{-n\leq i\leq -1} W_i + \frac{2 e^{-c(2n-1)}}{(e^c-1)^3(e^c+1)} 
  + \frac{e^{-c(n-1)}}{(e^c-1)^2} + \sum_{i<-n} e^{ci} W_i \left(1+ \frac{2 e^c}{e^c-1}\right) \nonumber \\
 & \leq (2+n)\sum_{-n\leq i\leq -1} W_i + \frac{ e^c(1+e^{2c}) }{(e^c-1)^3(e^c+1)}
    + \frac{3 e^c - 1}{e^c-1} \sum_{i<-n} e^{ci} W_i   \label{piW2ub}
\end{align}
Then, using \eqref{piW2ub} with $n$ replaced by $\lfloor \log^2 x \rfloor=b_x$ we have
\begin{align}
Q\left( D^-(\w)  > x \right)
\label{onE} &\leq Q\left( E_{c,b_x}^c \right) + \mathbf{1}_{\{
\frac{ e^c(1+e^{2c}) }{(e^c-1)^3(e^c+1)} > x/3\} } + Q\left( \sum_{-b_x \leq i\leq -1} W_i > \frac{x}{3(2+ b_x)} \right)  \\
&\qquad + Q\left( \sum_{i<-1} e^{ci} W_i > \frac{(e^c-1)x}{3(3e^c-1)} \right).
\nonumber
\end{align}
Now, for any $0<c<-E_P \log \rho$ Lemma \ref{nutail} gives that $Q(\Pi_{i,j}
> e^{-c (j-i+1)}) \leq \frac{A_c}{P(\mathcal{R})} e^{-\d_c (j-i+1)}$ for some $\d_c,A_c>0$. Therefore, 
\begin{equation}
Q(E_{c,n}^c) \leq  \sum_{-n\leq i\leq -1} \sum_{j \leq i-n}
Q(\Pi_{j,i} > e^{-c(i-j+1)}) \leq
\frac{n A_c e^{-\d_c n}}{P(\mathcal{R})(e^{\d_c}-1)} =
o(e^{-\d_c n/2}). \label{QEc}
\end{equation}
Thus, for any $0<c<- E_P \log\rho$ we have that
the first two terms on the right hand side of \eqref{onE} are decreasing in $x$ of order
$o(e^{-\d_c b_x /2}) = o(x^{-s+\e})$. To handle last two terms in the
right side of \eqref{onE},
note first that from \eqref{PWtail},
$Q\left( W_i > x \right) \leq \frac{1}{P(\mathcal{R})} P( W_i > x) \leq
\frac{K_1}{P(\mathcal{R})} x^{-s}$ for any $x>0$ and any $i$. Thus,
\[
Q\left( \sum_{-b_x \leq i\leq -1} W_i > \frac{x}{3(2+ b_x)} \right) \leq \sum_{-b_x\leq i\leq -1} Q\left( W_i > \frac{x}{3(2+ b_x)b_x} \right)  = o(x^{-s+\e}),
\]
and since $\sum_{i=1}^\infty e^{-c i/2} = (e^{c/2} -1)^{-1}$, we
have
\begin{align*}
Q\left( \sum_{i<-1} e^{ci} W_i > \frac{(e^c-1)x}{9e^c-3} \right) &\leq Q\left(
\sum_{i=1}^\infty e^{-c i }W_{-i} > \frac{(e^c-1)x}{9e^c-3}(e^{c/2} - 1)
\sum_{i=1}^{\infty} e^{-c i / 2} \right)\\
& \leq \sum_{i=1}^\infty Q\left( W_{-i} > \frac{(e^c-1)(e^{c/2} -
1)}{9e^c-3} x e^{c i/2} \right) \\
&\leq \frac{K_1 (9e^c-3)^s }{P(\mathcal{R})(e^c-1)^s(e^{c/2}-1)^s} x^{-s}
\sum_{i=1}^\infty  e^{-c s i /2} = \bigo(x^{-s})
\,.\end{align*}
\end{proof}
\begin{cor}\label{bbound}
For any $\e>0$, $Q\left(R_{0,\nu-1} D^-(\w) > x \right) = o(x^{-s + \e}).$
\end{cor}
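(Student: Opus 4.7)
My strategy is to exploit an independence that the passage from $P$ to $Q$ preserves, and then to deduce the tail of the product from the individual tails via a conditioning estimate. First I would observe that $R_{0,\nu-1}$ is a functional of $\{\w_i : i \geq 0\}$ alone (both $\nu$ and each $\Pi_{0,j}$ being measurable with respect to this half of the environment), whereas $D^-(\w)$ is a functional of $\{\w_i : i < 0\}$ alone. Since the conditioning event $\mathcal{R}$ defining $Q$ also sits in the left half, under $Q$ the two halves remain independent and the marginal law of $\{\w_i : i \geq 0\}$ is unchanged from $P$. Hence $R_{0,\nu-1}$ and $D^-(\w)$ are independent under $Q$, with
\[
Q(R_{0,\nu-1} > y) = P(R_{0,\nu-1} > y) \leq P(R_0 > y) \leq K_1 y^{-s}, \qquad y > 0,
\]
using $R_{0,\nu-1} \leq R_0$ and \eqref{Rtail}.

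Fix $\e > 0$ and pick $\gamma \in (s-\e/2,\, s)$, so that $M_\gamma := E_Q[D^-(\w)^\gamma] < \infty$ by \eqref{EpiW2}. Conditioning on $D^-(\w)$, using independence, and splitting on $\{D^-(\w) \leq x\}$, I would obtain
\[
Q\bigl(R_{0,\nu-1} D^-(\w) > x\bigr) \leq K_1 x^{-s}\, E_Q\!\left[D^-(\w)^s\, \mathbf{1}_{D^-(\w)\leq x}\right] + Q\bigl(D^-(\w) > x\bigr).
\]
The crude bound $D^-(\w)^s \leq x^{s-\gamma}\, D^-(\w)^\gamma$ on $\{D^-(\w)\leq x\}$ controls the first expectation by $x^{s-\gamma} M_\gamma$, making the first term $O(x^{-\gamma}) = O(x^{-s+\e/2})$. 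By Lemma \ref{piW2}, the second term is $o(x^{-s+\e/2})$. Summing, $Q(R_{0,\nu-1} D^-(\w) > x) = o(x^{-s+\e})$, as required.

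The main obstacle, and the reason this corollary is not entirely automatic from the preceding lemmas, is that both factors $R_{0,\nu-1}$ and $D^-(\w)$ carry tails of order roughly $x^{-s}$ under $Q$: a naive split such as $Q(XY>x)\leq Q(X>\sqrt{x})+Q(Y>\sqrt{x})$ would yield only $O(x^{-s/2})$, and even the standard tail-convolution formula for two independent variables with exact $x^{-s}$ tails produces a logarithmic correction above $x^{-s}$. What rescues the argument is the strictly-better-than-$x^{-s}$ tail supplied by Lemma \ref{piW2}, equivalently the moment bound $E_Q[D^-(\w)^\gamma]<\infty$ for every $\gamma<s$. Substituting the bulk of $D^-(\w)$ by this moment estimate (through $(D^-)^s\leq x^{s-\gamma}(D^-)^\gamma$ on $\{D^-\leq x\}$) and bounding the extreme tail by Lemma \ref{piW2} pushes the exponent just below $-s$, which is exactly the quantitative gain needed.
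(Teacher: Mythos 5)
Your proof is correct and follows essentially the same route as the paper's: condition on the left-half $\sigma$-field $\mathcal{F}_{-1}$, use that $R_{0,\nu-1}$ is independent of $D^-(\w)$ under $Q$ with unchanged right-half marginal, and invoke the finite $\gamma$-moments of $D^-(\w)$ for $\gamma<s$ from \eqref{EpiW2}. The only cosmetic difference is in the bookkeeping: the paper applies the weakened tail bound $P(R_{0,\nu-1}>y)\leq K_\gamma y^{-\gamma}$ directly to get $K_\gamma x^{-\gamma}E_Q[(D^-)^\gamma]$ in one line, whereas you keep the exponent $s$ on the tail of $R_{0,\nu-1}$ and instead truncate $D^-$ at level $x$ and interpolate moments — both yield $O(x^{-\gamma})$ with $\gamma=s-\e/2$.
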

\begin{proof}
From \eqref{Rtail} it is easy to see that for any $\gamma < s$ there exists a $K_\gamma > 0$ such that
$P(R_{0,\nu-1} > x) \leq P(R_{0} > x) \leq K_\gamma x^{-\gamma}$. Then, letting $\mathcal{F}_{-1} = \s(\ldots,\w_{-2}, \w_{-1})$ we have that
\begin{align*}
Q&\left(R_{0,\nu-1}D^-(\w) > x \right) = E_Q\left[ Q\left(R_{0,\nu-1} > \frac{x}{D^-(\w)}
\biggl|
\mathcal{F}_{-1} \right) \right] \leq K_\gamma x^{-\gamma} E_Q\left( D^-(\w)
\right)^{\gamma}\,.
\end{align*}
Since $\gamma < s$, the expectation in the last expression
is finite by \eqref{EpiW2}. Choosing $\gamma = s-\frac{\e}{2}$
finishes the proof.
\end{proof}

\begin{proof}[\textbf{Proof of Theorem \ref{qVartail}:}]$\left.\right.$\\
Recall from \eqref{Ddef} that
\begin{equation}
\left( E_\w T_\nu \right)^2 - D^+(\w) \leq  Var_\w T_\nu  \leq
\left( E_\w T_\nu \right)^2 + 8R_{0,\nu-1}D^-(\w)\,. \label{compare}
\end{equation}
The lower bound in \eqref{compare} gives that for any $\d>0$,
\[
Q(Var_\w T_\nu > x) \geq Q\left( (E_\w T_\nu)^2 > (1+\d)x \right)
- Q\left(D^+(\w) > \d x \right).
\]
Thus, from Lemma \ref{abound} and Theorem \ref{Tnutail} we have
that
\begin{align}
\liminf_{x\ra\infty} x^{s/2} Q(Var_\w T_\nu > x) \geq K_\infty
(1+\d)^{-s/2}\,. \label{comparel}
\end{align}
Similarly, the upper bound in \eqref{compare}  and Corollary
\ref{bbound} give that for any $\d> 0$,
\begin{align}
Q(Var_\w T_\nu > x) &\leq Q\left( (E_\w T_\nu)^2 > (1-\d)x \right)
+ Q\left( 8 R_{0,\nu-1} D^-(\w) > \d x \right)\,, \nonumber
\end{align}
and then Corollary \ref{bbound} and Theorem \ref{Tnutail} give
\begin{equation}
\limsup_{x\ra\infty} x^{s/2} Q(Var_\w T_\nu > x) \leq K_\infty
(1-\d)^{-s/2} \,. \label{compareu}
\end{equation}
Letting $\d\ra 0$ in \eqref{comparel} and \eqref{compareu} finishes the proof of \eqref{vartail}.
\\
Essentially the same proof works for \eqref{rvartail}. The
difference is that when evaluating the difference $(E_\w
\bar{T}_\nu^{(n)})^2 - Var_\w \bar{T}_\nu^{(n)}$ the upper and
lower bounds in \eqref{above} and \eqref{below} are smaller in
absolute value. This is because every instance of $W_i$ is
replaced by $W_{\nu_{-b_n}+1,i} \leq W_i$ and the sum in
\eqref{below} is taken only over $\nu_{-b_n} < i <-1$. Therefore,
the following bounds still hold:
\begin{equation}
\left( E_\w \bar{T}^{(n)}_\nu \right)^2 - D^+(\w) \leq  Var_\w
\bar{T}^{(n)}_\nu \leq \left( E_\w \bar{T}^{(n)}_\nu \right)^2 + 8
R_{0,\nu-1} D^-(\w)\,. \label{comparer}
\end{equation}
The rest of the proof then follows in the same manner, noting that
from Lemma \ref{reftail} we have $Q\left( \left(E_\w
\bar{T}_\nu^{(n)}\right)^2 > x n^{2/s} ,\quad M_1 > n^{(1-\e)/s}
\right)\sim K_\infty x^{-s/2}\frac{1}{n}$, as $n\ra\infty$.
\end{proof}
\end{subsection}

%%%%%%%%%%%%%%%%%%%%%%% SMALL BLOCKS - GAUSSIAN %%%%%%%%%%%%%%%%%%%%%%%%%

\begin{subsection}{Existence of Random Subsequence of Non-localized Behavior}
Introduce the notation:
\begin{equation}
\mu_{i,n,\w} := E_\w^{\nu_{i-1}} \bar{T}_{\nu_i}^{(n)}, \quad
\s_{i,n,\w}^2 := E_\w^{\nu_{i-1}} \left( \bar{T}_{\nu_i}^{(n)} -
\mu_{i,n,\w}  \right)^2 = Var_\w \left(\bar{T}_{\nu_i}^{(n)} - \bar{T}_{\nu_{i-1}}^{(n)}\right). \label{def}
\end{equation}
It is obvious (from the coupling of $\bar{X}_t^{(n)}$ and $X_t$) that $\mu_{i,n,\w} \nearrow E_\w^{\nu_{i-1}} T_{\nu_i}$ as $n\ra\infty$. It is also true, although not as obvious, that $\s_{i,n,\w}^2$ is increasing in $n$ to $Var_\w \left(T_{\nu_i} - T_{\nu_{i-1}} \right)$. Therefore, we will use the notation $\mu_{i,\infty,\w}:= E_\w^{\nu_{i-1}} T_{\nu_i}$ and $\s_{i,\infty,\w}^2 := Var_\w \left(T_{\nu_i} - T_{\nu_{i-1}} \right)$. 
To see that $\s_{i,n,\w}^2$ is increasing in $n$, note that the expansion for $Var_\w \bar{T}^{(n)}_\nu$ 
is the same as the expansion for $Var_\w T_\nu$ given in \eqref{VarTnuexpand} but with each
$W_i$ replaced by $W_{\nu_{-b_n}+1, i}$ and with the final sum in
the second line restricted to $\nu_{-b_n} < i < -1$.

The first goal of this subsection is to prove a CLT (along random
subsequences) for the hitting times $T_n$. We begin by showing
that for any $\e>0$ only the crossing times of ladder times with
$M_k > n^{(1-\e)/s}$ are relevant in the limiting distribution, at
least along a sparse enough subsequence. 

%The following Lemma and Corollary are proved simultaneously for both the random walk $\bar{X}_t^{(m)}$ with added reflections and the original random walk $X_t$ (that is $m=\infty$). It is important to note that the same asymptotic bounds hold for both cases (ie. the asymptotics do not depend on $m$). 
\begin{lem}\label{Vsmall}
Assume $s<2$. Then for any $\e,\d>0$ there exists an $\eta>0$ and a sequence $c_n = o(n^{-\eta})$ such that for any $m \leq \infty$
\begin{align*}
Q \left( \sum_{i=1}^n \s_{i,m,\w}^2 \mathbf{1}_{ M_i \leq n^{(1-\e)/s}} > \d n^{2/s} \right) \leq c_n.
\end{align*}
\end{lem}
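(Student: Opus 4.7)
My plan is to adapt the proof of Lemma \ref{smallblocks} to the variance setting. The main substitutions are (i) replacing the tail asymptotic $Q(E_\w T_\nu > x)\sim K_\infty x^{-s}$ by its analogue $Q(Var_\w T_\nu > x)\sim K_\infty x^{-s/2}$ from Theorem \ref{qVartail}; (ii) working at scale $n^{2/s}$ instead of $n^{1/s}$; and (iii) choosing the geometric--decomposition parameter $C\in(1,2/s)$, which is possible exactly because $s<2$. Because of the monotonicity $\s_{i,m,\w}^2\leq \s_{i,\infty,\w}^2$, it will suffice to prove everything for $m=\infty$, so I will work with $\s_{i,\infty,\w}^2=Var_\w(T_{\nu_i}-T_{\nu_{i-1}})$ below.

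The first step is to replace the indicator $\mathbf{1}_{M_i\leq n^{(1-\e)/s}}$ by $\mathbf{1}_{\s_{i,\infty,\w}^2\leq n^{2(1-\e/2)/s}}$. By a union bound and $Q$-stationarity, the cost of this replacement is at most $n\, Q(\s_{1,\infty,\w}^2>n^{2(1-\e/2)/s},\, M_1\leq n^{(1-\e)/s})$. I will bound this by splitting $\s_{1,\infty,\w}^2$ via \eqref{comparer} into $\mu_{1,\infty,\w}^2$ and the remainder $8R_{0,\nu-1}D^-(\w)$. The first piece is controlled by \eqref{TbigMsmall}, giving super-polynomial decay of $Q(\mu_{1,\infty,\w}>n^{(1-\e/2)/s}/\sqrt{2},\, M_1\leq n^{(1-\e)/s})$. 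The second piece is bounded by Corollary \ref{bbound}, giving $o(n^{-2+\e''})$ decay for any $\e''>0$. Multiplying by $n$ leaves $o(n^{-1+\e''})$, which is $o(n^{-\eta})$ for small enough $\eta>0$.

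Next, I mimic the dyadic decomposition of Lemma \ref{smallblocks}: fix $C\in(1,2/s)$ and set
\[
K_{C,\e,k,n}:=\{i\in[1,n]:\; n^{2(1-C^k\e)/s}<\s_{i,\infty,\w}^2\leq n^{2(1-C^{k-1}\e)/s}\},
\]
with $k_0$ the smallest integer satisfying $C^{k_0}\e\geq 1$. For each $k<k_0$, Chebychev together with Theorem \ref{qVartail} yields
\[
Q\!\left(\sum_{i\in K_{C,\e,k,n}}\!\s_{i,\infty,\w}^2 > \tfrac{\d}{k_0}\,n^{2/s}\right) \leq \frac{k_0\, n\, Q(Var_\w T_\nu > n^{2(1-C^k\e)/s})}{\d\, n^{2C^{k-1}\e/s}} = \bigo\!\left(n^{-C^{k-1}\e(2/s-C)}\right),
\]
which is $o(n^{-\eta})$ with $\eta:=\tfrac{\e}{2}(2/s-C)>0$. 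For the residual truncation (terms with $\s_{i,\infty,\w}^2\leq n^{2(1-C^{k_0-1}\e)/s}$), the trivial bound gives $\sum_i \s_{i,\infty,\w}^2\mathbf{1}_{\{\s_{i,\infty,\w}^2\leq n^{2(1-C^{k_0-1}\e)/s}\}}\leq n^{1+2(1-C^{k_0-1}\e)/s}$; since $C^{k_0-1}\e\geq 1/C > s/2$, the exponent is strictly below $2/s$, so the event is empty for $n$ large.

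The main obstacle will be the first step above. Unlike in Lemma \ref{smallblocks}, the constraint $M_i\leq n^{(1-\e)/s}$ does not directly bound $\s_{i,\infty,\w}^2$, since variances fluctuate on a larger scale than means. The rescue is the inequality \eqref{comparer} together with the sharper tail from Corollary \ref{bbound} --- exponent $-s+\e''$ rather than $-s/2+\e''$ for $R_{0,\nu-1}D^-(\w)$ --- giving the $o(n^{-2+\e''})$ decay needed to beat the factor of $n$ from the union bound. That $D^-(\w)$ has genuinely lighter tails than $Var_\w T_\nu$ is essential.
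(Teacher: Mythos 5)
Your proof is correct and follows essentially the same route as the paper's: reduce to $m=\infty$ by monotonicity, trade the indicator $\mathbf{1}_{M_i\leq n^{(1-\e)/s}}$ for a $\s_{i,\infty,\w}^2$-truncation at a cost of $n\,Q(Var_\w T_\nu>n^{2(1-\e')/s},\,M_1\leq n^{(1-\e)/s})$ controlled via the comparison of $Var_\w T_\nu$ with $(E_\w T_\nu)^2+8R_{0,\nu-1}D^-(\w)$, and then run the geometric decomposition of Lemma \ref{smallblocks} with $C\in(1,2/s)$ using Theorem \ref{qVartail} (the paper omits this last step, which you supply correctly). The only cosmetic differences are that the paper bounds the remainder by writing $R_{0,\nu-1}\leq\nu M_1$ on the constrained event and invoking Lemma \ref{piW2}, where you apply Corollary \ref{bbound} to the product directly (both suffice), and that \eqref{compare} rather than \eqref{comparer} is the relevant inequality for the unreflected walk.
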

\begin{proof}
Since $\s_{i,m,\w}^2 \leq \s_{i,\infty,\w}^2$ it is enough to consider only the case $m=\infty$ (that is, the walk without reflections).  
First, we need a bound on the probability of $\s_{i,\infty,\w} ^2= Var_\w (T_{\nu_i} - T_{\nu_{i-1}})$ being much larger than $M_i^2$. Note that from
\eqref{compare} we have $Var_\w T_\nu \leq (E_\w
T_\nu)^2 + 8 R_{0,\nu-1}D^-(\w)$. Then, since
$R_{0,\nu-1} \leq \nu M_1$ we have for any $\a,\b>0$ that
\begin{align*}
Q\left( Var_\w T_\nu >  n^{2\b}, M_1 \leq n^{\a}
\right) & \leq Q\left( E_\w T_\nu >
\frac{n^\b}{\sqrt{2}}, M_1 \leq n^{\a} \right) + Q\left( 8 \nu
D^-(\w) > \frac{n^{2\b-\a}}{2} \right).
\end{align*}
By \eqref{TbigMsmall}, the first term on the right is
$o(e^{-n^{(\b-\a)/5}})$. To bound the second term on the right we
use Lemma \ref{nutail} and Lemma \ref{piW2} to get that for any $\a<
\b$
\[
Q\left( 8 \nu D^-(\w) > \frac{n^{2\b-\a}}{2} \right) \leq Q(\nu >
\log^2 n) + Q\left(  D^-(\w) > \frac{ n^{2\b - \a}}{16\log^2 n}
\right) = o(n^{-\frac{s}{2}(3\b-\a)})\,.
\]
Therefore, similarly to \eqref{TbigMsmall} we have the bound
\begin{align}
Q\left( Var_\w T_\nu >  n^{2\b}, M_1 \leq n^{\a}
\right) = o(n^{-\frac{s}{2}(3\b-\a)}) \,.\label{VbigMsmall}
\end{align}
The rest of the proof is similar to the proof of Lemma
\ref{smallblocks}. First, from \eqref{VbigMsmall},
\begin{align*}
Q \left( \sum_{i=1}^n \s_{i,\infty,\w} ^2 \mathbf{1}_{ M_i \leq n^{(1-\e)/s}} > \d n^{2/s} \right) & \leq Q\left( \sum_{i=1}^n \s_{i,\infty,\w} ^2 \mathbf{1}_{\s_{i,\infty,\w} ^2 \leq n^{2(1-\frac{\e}{4} )/s}} > \d n^{2/s}  \right) \\
& \quad\quad + n Q\left( Var_\w T_\nu > n^{2(1-\frac{\e}{4})/s}, M_1 \leq n^{(1-\e)/s} \right)\\
& = Q\left( \sum_{i=1}^n \s_{i,\infty,\w} ^2 \mathbf{1}_{\s_{i,\infty,\w} ^2 \leq
n^{2(1-\frac{\e}{4} )/s}} > \d n^{2/s}  \right) + o(n^{-\e/8})\,.
\end{align*}
%\begin{align*}
%& Q \left( \sum_{i=1}^n Var_\w \left(T_{\nu_i} - T_{\nu_{i-1}} \right) \mathbf{1}_{ M_i \leq n^{(1-\e)/s}} > \d n^{2/s} \right)\\
%&\qquad \leq Q\left( \sum_{i=1}^n Var_\w \left(T_{\nu_i} - T_{\nu_{i-1}} \right) \mathbf{1}_{Var_\w \left(T_{\nu_i} - T_{\nu_{i-1}} \right) \leq n^{2(1-\frac{\e}{4} )/s}} > \d n^{2/s}  \right) \\
%&\qquad \quad\quad + n Q\left( Var_\w T_\nu > n^{2(1-\frac{\e}{4})/s}, M_1 \leq n^{(1-\e)/s} \right)\\
%&\qquad = Q\left( \sum_{i=1}^n Var_\w \left(T_{\nu_i} - T_{\nu_{i-1}} \right) \mathbf{1}_{Var_\w \left(T_{\nu_i} - T_{\nu_{i-1}} %\right) \leq
%n^{2(1-\frac{\e}{4} )/s}} > \d n^{2/s}  \right) + o(n^{-\e/8})\,.
%\end{align*}
Therefore, it is enough to prove that for any $\d,\e>0$ there
exists $\eta>0$ such that
\[
Q\left( \sum_{i=1}^n \s_{i,\infty,\w} ^2 \mathbf{1}_{\s_{i,\infty,\w} ^2 \leq
n^{2(1-\frac{\e}{4} )/s}} > \d n^{2/s}  \right) = o( n^{-\eta} )
\,.\]
We prove the above statement by choosing
$C\in (1,\frac{2}{s})$, since $s>2$, and then using Theorem \ref{qVartail}
to get bounds on the size of the set
$\left\{i\leq n: Var_\w \left(T_{\nu_i} - T_{\nu_{i-1}} \right) \in \left( n^{2(1-\e C^k)/s},
n^{2(1-\e C^{k-1})/s} \right] \right\}$ for all $k$ small enough
so that $\e C^k < 1$. This portion of the proof is similar to that of Lemma
\ref{smallblocks} and thus will be omitted.
\end{proof}
\begin{cor}\label{Vsdiff}
Assume $s<2$. 
Then for any $\d>0$ there exists an $\eta'>0$ and a sequence $c_n'=o(n^{-\eta'})$ such that for any $m\leq \infty$
%Then there exists an $\eta'> 0$ such that for any integer $m$ and any
%$\d>0$,
\[
Q\left( \left| \sum_{i=1}^n \left( \s_{i,m,\w}^2-\mu_{i,m,\w}^2
\right) \right| \geq \d n^{2/s} \right) \leq c_n'
\, .\]
%where the asympotics on the right hand side do not depend $m$.
\end{cor}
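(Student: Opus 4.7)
The plan is to work from the pointwise inequality
\[
\left| \s_{i,m,\w}^2 - \mu_{i,m,\w}^2 \right| \leq D^+(\theta^{\nu_{i-1}}\w) + 8 R_{0,\nu-1}(\theta^{\nu_{i-1}}\w) \, D^-(\theta^{\nu_{i-1}}\w),
\]
which holds for every $i\geq 1$ uniformly in $m\leq \infty$. For $m=\infty$ this is \eqref{compare} applied to the $i$-th block, using the stationarity of $Q$ under shifts by $\nu$. For finite $m$ the same inequality holds because the modifications described in the proof of \eqref{comparer} only shrink the defect terms $D^+$ and $R_{0,\nu-1}D^-$. Writing $Y_i$ for the right hand side and combining Lemma \ref{abound} with Corollary \ref{bbound} (together with a two-term union bound), one obtains that under $Q$ the $Y_i$ are stationary with common tail $Q(Y_i>x)=o(x^{-s+\e})$ for every $\e>0$.

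It therefore suffices to find an $\eta'>0$ and $c_n'=o(n^{-\eta'})$ for which
\[
Q\Bigl(\,\sum_{i=1}^n Y_i\geq \d n^{2/s}\Bigr)\leq c_n'.
\]
I would do this by truncating each $Y_i$ at level $n^{2/s}/2$. The untruncated contribution is controlled by a union bound,
\[
Q\Bigl(\max_{1\leq i\leq n}Y_i>n^{2/s}/2\Bigr)\leq n\,Q(Y_1>n^{2/s}/2)=o(n^{1-2(s-\e)/s})=o(n^{-1+2\e/s}),
\]
which is polynomially small once $\e<s/2$ (recall that $s<2$). The truncated contribution is handled by Markov's inequality, which reduces the task to estimating $E_Q[Y_1\wedge (n^{2/s}/2)]$. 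Integrating the tail gives $E_Q[Y_1\wedge x]=o(x^{(1-s+\e)_+})$ (with at worst a logarithmic factor when $s-\e=1$), so the Markov bound works out to $o(n^{-1+2\e/s})$ when $s\in(0,1]$ and to $O(n^{1-2/s})$ when $s\in(1,2)$. Both are $o(n^{-\eta'})$ for some $\eta'>0$, provided $\e$ is chosen small (and, in the range $s>1$, smaller than $s-1$ so that $Y_1$ has finite mean).

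The main technical point will be the case $s\leq 1$, where $Y_1$ has no finite mean and the truncation level must track the target scale $n^{2/s}$. What makes the calculation close there is the \emph{strict} polynomial gain $o(x^{-s+\e})$ in the tail estimates of Lemma \ref{abound} and Corollary \ref{bbound} rather than a bare $O(x^{-s})$ bound: the extra $\e$ translates into the polynomial slack $2\e/s$ that ultimately beats the normalization exponent $2/s$ coming from the event $\{\sum Y_i\geq \d n^{2/s}\}$. Note also that although the $Y_i$ are not independent under $Q$, the argument above only uses stationarity (via linearity of expectation and the union bound), so no additional mixing input is required.
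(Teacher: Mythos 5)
Your proposal is correct, but it follows a genuinely different route from the paper's proof. The paper splits the sum according to whether $M_i$ is below or above $n^{(1-\e)/s}$: the small-$M_i$ blocks are handled by bounding $\sum_i \s_{i,m,\w}^2\mathbf{1}_{M_i\le n^{(1-\e)/s}}$ and $\sum_i \mu_{i,m,\w}^2\mathbf{1}_{M_i\le n^{(1-\e)/s}}$ \emph{separately} (via Lemma \ref{Vsmall} and its analogue, which in turn rest on Theorem \ref{qVartail} and the dyadic-slicing argument of Lemma \ref{smallblocks}), while the large-$M_i$ blocks are handled by counting them (at most $n^{2\e}$ with high probability, by \eqref{Mtail}) and applying the $D^{\pm}$ tail bounds of Lemma \ref{abound} and Corollary \ref{bbound} to each difference individually. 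You instead exploit the cancellation $\left|\s_{i,m,\w}^2-\mu_{i,m,\w}^2\right|\le Y_i$ on \emph{every} block, which is legitimate: \eqref{compare} and \eqref{comparer} give exactly this bound with the unreflected defect terms, uniformly in $m\le\infty$, and the $Q$-stationarity under shifts by ladder locations transports the tail estimate $Q(Y_1>x)=o(x^{-s+\e})$ to each $Y_i$. Your truncation-plus-Markov computation then closes correctly in both regimes (the exponent bookkeeping $-1+2\e/s$ for $s\le 1$ with $\e<s/2$, and $1-2/s<0$ for $s\in(1,2)$ with $\e<s-1$), and you are right that only the marginals enter, so the dependence of the $Y_i$ is harmless. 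What your route buys is economy: it needs only Lemma \ref{abound} and Corollary \ref{bbound}, bypassing Lemma \ref{Vsmall}, Theorem \ref{qVartail} and the large-block count entirely. What the paper's route buys is that its intermediate estimate (Lemma \ref{Vsmall}) is reused elsewhere (e.g.\ in the proof of Theorem \ref{gaussianT} and again in Chapter \ref{Thesis_AppendixBallistic}), and its large/small dichotomy matches the structure of the rest of the argument; as a standalone proof of this corollary, however, yours is the cleaner one.
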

\begin{proof}
For any $\e > 0$
\begin{align}
Q\left( \left| \sum_{i=1}^n \left( \s_{i,m,\w}^2-\mu_{i,m,\w}^2
\right) \right|
\geq \d n^{2/s} \right) &\leq Q\left( \sum_{i=1}^n \s_{i,m,\w}^2\mathbf{1}_{M_i \leq n^{(1-\e)/s}} \geq \frac{\d}{3}n^{2/s} \right) \label{smM1}\\
&\quad + Q\left( \sum_{i=1}^n \mu_{i,m,\w}^2\mathbf{1}_{M_i \leq n^{(1-\e)/s}} \geq \frac{\d}{3}n^{2/s} \right)  \label{smM2}\\
&\quad + Q\left( \sum_{i=1}^n \left| \s_{i,m,\w}^2 -
\mu_{i,m,\w}^2  \right| \mathbf{1}_{M_i > n^{(1-\e)/s}} \geq
\frac{\d}{3}n^{2/s} \right). \label{lgM}
\end{align}
Lemma \ref{Vsmall} gives that \eqref{smM1} decreases
polynomially in $n$ (with a bound not depending on $m$). 
Also, essentially the same proof as in Lemmas \ref{Vsmall} and \ref{smallblocks} can be used to show that \eqref{smM2} also decreases polynomially in $n$ (again with a bound not depending on $m$). 
Finally \eqref{lgM} is bounded above by
\[
Q\left( \# \left\{ i\leq n: M_i > n^{(1-\e)/s} \right\} > n^{2\e}
\right) + n Q\left( \left|Var_\w \bar{T}_\nu^{(m)} - (E_\w
\bar{T}_\nu^{(m)})^2 \right| \geq \frac{\d}{3} n^{2/s - 2\e}
\right) ,
\]
and since by \eqref{Mtail}, $Q\left( \# \left\{ i\leq n: M_i >
n^{(1-\e)/s} \right\} > n^{2\e} \right) \leq \frac{ n Q(M_1 >
n^{(1-\e)/s}) }{n^{2\e}} \sim C_5 n^{-\e}$ we need only show that
for some $\e>0$ the second term above is decreasing faster than a power of $n$.
However, from \eqref{comparer} we have $\left|Var_\w
\bar{T}_\nu^{(m)} - (E_\w \bar{T}_\nu^{(m)})^2 \right| \leq
D^+(\w) + 8 R_{0,\nu-1} D^-(\w)$. Thus
\[
 n Q\left( \left|Var_\w \bar{T}_\nu^{(m)} - (E_\w
 \bar{T}_\nu^{(m)})^2 \right| \geq \frac{\d}{3} n^{2/s - 2\e}
 \right) \leq n Q\left( D^+(\w) + 8 R_{0,\nu-1} D^-(\w)> \frac{\d}{3} n^{2/s - 2\e} \right),
\]
and for any $\e<\frac{1}{2s}$ Lemma \ref{abound} and
Corollary \ref{bbound} give that the last term above decreases faster than some power of $n$.
%
%
%Lemma \ref{abound} and
%Corollary \ref{bbound} give that 
%\begin{align*}
% & n Q\left( \left|Var_\w \bar{T}_\nu^{(m)} - (E_\w
% \bar{T}_\nu^{(m)})^2 \right| \geq \frac{\d}{3} n^{2/s - 2\e}
% \right) \\
%&\qquad \leq n Q\left( D^+(\w) + 8 R_{0,\nu-1} D^-(\w)> \frac{\d}{3} n^{2/s - 2\e} \right) = o(n^{-1+4\e s}).
%\end{align*}
%Thus, for $\e<\frac{1}{4s}$ we have that $n Q\left( \left|Var_\w \bar{T}_\nu^{(m)} - (E_\w
% \bar{T}_\nu^{(m)})^2 \right| \geq \frac{\d}{3} n^{2/s - 2\e}
% \right) $ decreases polynomially in $n$ (with bounds not depending on $m$). 
%
%
%$Q\left( \left|Var_\w
%\bar{T}_\nu^{(m)} - (E_\w \bar{T}_\nu^{(m)})^2 \right|
%> x \right) = o(x^{-s+\e'})$ for any $\e'>0$ (note: these asympotics do not depend on $m$). Thus, for
%$\e<\frac{1}{4s}$,
%\[
% n Q\left( \left|Var_\w \bar{T}_\nu^{(m)} - (E_\w
% \bar{T}_\nu^{(m)})^2 \right| \geq \frac{\d}{3} n^{2/s - 2\e}
% \right) = o(n^{-1+4\e s})\,,
%\]
%which finishes the proof.
\end{proof}
%%%%%%%%%%%%%%%%%%%%%%%%%%%%%%%%%%%%%%%%%%%%%%%%%%%%%%%%%%%%%%%%%%%
%%%%%%%%%    SUBSEQUENCE ARGUMENT STARTS HERE  %%%%%%%%%%%%%%%%%%%%
%%%%%%%%%%%%%%%%%%%%%%%%%%%%%%%%%%%%%%%%%%%%%%%%%%%%%%%%%%%%%%%%%%%
Since $T_{\nu_n} = \sum_{i=1}^n (T_{\nu_i} - T_{\nu_{i-1}})$
is the sum of independent (quenched) random variables, in order to
prove a CLT we cannot have any of the first $n$ crossing times of
blocks dominating all the others (note this is exactly what
happens in the localization behavior we saw in Section
\ref{localization}). Thus, we look for a random subsequence where
none of the crossing times of blocks are dominant. 
%We will choose
%our subsequence as a further subsequence of $n_k = 2^{2^k}$ so
%that we can use some of the previous lemmas as well. \\
Now, for any $\d\in(0,1]$ and any positive integer $a < n/2$
define the event
\[
\mathcal{S}_{\d,n,a} := \left\{ \#\left\{ i\leq \d n:
\mu_{i,n,\w}^2 \in [n^{2/s}, 2n^{2/s}) \right\} = 2a, \quad
\mu_{j,n,\w}^2 < 2 n^{2/s} \quad \forall j\leq \d n \right\}
\,.\]
On the event $\mathcal{S}_{\d, n, a}$, $2a$ of the first $\d
n$ crossings times from $\nu_{i-1}$ to $\nu_i$ have roughly the
same size expected crossing times $\mu_{i,n,\w}$, and the rest are
all smaller (we work with $\mu_{i,n,\w}^2$ instead of
$\mu_{i,n,\w}$ so that comparisons with $\s_{i,n,\w}^2$ are slightly
easier). We want a lower bound on the probability of
$\mathcal{S}_{\d,n,a}$. The difficulty in getting a lower bound is
that the $\mu^2_{i,n,\w}$ are not independent. However, we can
force all the large crossing times to be independent by forcing
them to be separated by at least $b_n$ ladder locations.

Let $\mathcal{I}_{\d,n,a}$ be the collection of all subsets $I$ of
$[1,\d n]\cap \Z$ of size $2 a$ with the property that any two
distinct points in $I$ are separated by at least $2b_n$. Also,
define the event
\[
A_{i,n}:= \left\{ \mu_{i,n,\w}^2 \in \left[ n^{2/s},2 n^{2/s}
\right) \right\}.
\]
Then, we begin with a simple lower bound.
\begin{align}
Q( \mathcal{S}_{\d,n,a} ) &\geq Q\left( \bigcup_{ I\in
\mathcal{I}_{\d,n,a}} \left( \bigcap_{i\in I} A_{i,n}
\bigcap_{j\in[1,\d n]\backslash I} \left\{ \mu_{j,n,\w}^2 <
n^{2/s} \right\} \right) \right) \nonumber \\
& = \sum_{I\in \mathcal{I}_{\d,n,a}} Q\left( \bigcap_{i\in I}
A_{i,n} \bigcap_{j\in[1,\d n]\backslash I} \left\{ \mu_{j,n,\w}^2
<
 n^{2/s} \right\} \right)\,. \label{simlb}
\end{align}
Now, recall the definition of the event $G_{i,n,\e}$ from
\eqref{FGevents}, and define the event
\[
H_{i,n,\e} := \left\{ M_j \leq n^{(1-\e)/s} \text{ for all } j\in
[i-b_n,i) \right\}\,.
\]
Also, for any $I\subset \Z$ let $d(j,I):= \min \{ |j-i| : i\in I
\}$ be the minimum distance from $j$ to the set $I$. Then, with
minimal cost, we can assume that for any $I\in
\mathcal{I}_{\d,n,a}$ and any $\e>0$ that all $j\notin I$ such
that $d(j,I)\leq b_n$ have $M_j \leq n^{(1-\e)/s}$. Indeed,
\begin{align}
Q\left( \bigcap_{i\in I} A_{i,n} \right.&\left. \bigcap_{j\in[1,\d
n]\backslash
I} \left\{ \mu_{j,n,\w}^2 <  n^{2/s} \right\} \right) \nonumber \\
& \geq Q\left( \bigcap_{i\in I} \left( A_{i,n} \cap G_{i,n,\e} \cap
H_{i,n,\e} \right) \bigcap_{j\in[1,\d n]:d(j,I)>b_n} \left\{
\mu_{j,n,\w}^2 <
 n^{2/s} \right\} \right) \nonumber \\
&\quad - Q\left(\bigcup_{j\notin{I}, d(j,I)\leq b_n} \left\{
\mu_{j,n,\w}^2
\geq  n^{2/s}  , M_j \leq n^{(1-\e)/s} \right\} \right) \nonumber \\
&\geq \prod_{i\in I} Q(A_{i,n} \cap H_{i,n,\e} ) Q\left(
\bigcap_{i\in I} G_{i,n,\e} \bigcap_{j\in[1,\d n]:d(j,I)>b_n} \left\{
\mu_{j,n,\w}^2 <  n^{2/s} \right\} \right) \nonumber \\
& \quad - 4 a b_n Q\left( E_\w T_\nu \geq n^{1/s}, M_1 \leq
n^{(1-\e)/s} \right)\,. \label{Ilb}
\end{align}
From Theorem  \ref{Tnutail} and Lemma \ref{reftail} we have
$Q(A_{i,n}) \sim K_\infty (1-2^{-s/2}) n^{-1}$. We wish to show
the same asymptotics are true for $Q(A_{i,n} \cap H_{i,n,\e})$
as well. From \eqref{Mtail} we have $Q(H_{i,n,\e}^c) \leq b_n
Q(M_1
> n^{(1-\e)/s}) = o(n^{-1+2\e})$. Applying this, along with
\eqref{Mtail} and \eqref{TbigMsmall}, gives that for $\e>0$,
\begin{align*}
Q(A_{i,n}) &\leq Q( A_{i,n} \cap H_{i,n,\e} ) + Q\left(M_1 >
n^{(1-\e)/s}\right)Q(H_{i,n,\e}^c) + Q\left( E_\w T_\nu > n^{1/s}
, M_1 \leq n^{(1-\e)/s} \right)  \\
& = Q( A_{i,n} \cap H_{i,n,\e}
 ) + o(n^{-2+3\e}) + o(e^{-n^{\e/(5s)}})\,.
\end{align*}
Thus, for any $\e <\frac{1}{3}$ there exists a $C_\e> 0$ such that
\begin{equation}
Q(A_{i,n}\cap H_{i,n,\e}) \geq C_\e n^{-1}. \label{AHlb}
\end{equation}
To handle the next probability in \eqref{Ilb}, note that
\begin{align}
Q\left( \bigcap_{i\in I} G_{i,n,\e} \bigcap_{j\in[1,\d
n]:d(j,I)>b_n} \left\{ \mu_{j,n,\w}^2 <  n^{2/s} \right\} \right)
& \geq Q\left(\bigcap_{j\in[1,\d n]} \left\{ \mu_{j,n,\w}^2 <
n^{2/s} \right\} \right) - Q\left(
\bigcup_{i\in I} G_{i,n,\e}^c \right) \nonumber \\
& \geq Q\left( E_\w T_{\nu_n} < n^{1/s} \right) - 2 a Q(
G_{i,n,\e}^c ) \nonumber \\
&= Q\left( E_\w T_{\nu_n} < n^{1/s} \right) - a o(n^{-1+2\e})\,.
\label{Glb}
\end{align}
Finally, from \eqref{TbigMsmall} we have $ 4 a b_n Q\left( E_\w
T_\nu \geq n^{1/s}, M_1 \leq n^{(1-\e)/s} \right) = a o\left(
e^{-n^{\e/(6 s)}} \right) $. This, along with \eqref{AHlb} and
\eqref{Glb} applied to \eqref{simlb} gives
\begin{align*}
Q\left( \mathcal{S}_{\d,n,a} \right) \geq \#(\mathcal{I}_{\d,n,a})
\left[\left( C_\e n^{-1} \right)^{2a}\left( Q\left( E_\w T_{\nu_n}
< n^{1/s} \right) - a o(n^{-1+2\e}) \right) -  a o\left(
e^{-n^{\e/(6 s)}} \right) \right]\,.
\end{align*}
An obvious upper bound for $\#(\mathcal{I}_{\d,n,a})$ is
$\binom{\d n}{2a} \leq \frac{(\d n)^{2a}}{(2a)!}$. To get a lower
bound on $\#(\mathcal{I}_{\d,n,a})$ we note that any set
$I\in\mathcal{I}_{\d,n,a}$ can be chosen in the following way:
first choose an integer $i_1 \in [1,\d n]$ ($\d n$ ways to do
this). Then, choose an integer $i_2 \in [1,\d n] \backslash \{
j\in \Z: |j-i_1| \leq 2b_n \}$ (at least $\d n - 1 - 4b_n$ ways to
do this). Continue this process until $2a$ integers have been
chosen. When choosing $i_j$, there will be at least $\d n -
(j-1)(1+4b_n)$ integers available. Then, since there are $(2a)!$
orders in which to choose each set if $2a$ integers we have
\[
\frac{(\d n)^{2a}}{(2a)!} \geq \#(\mathcal{I}_{\d,n,a}) \geq
\frac{1}{(2a)!} \prod_{j=1}^{2a} \left(\d n - (j-1)(1+4b_n)\right)
\geq \frac{(\d n)^{2a}}{(2a)!} \left( 1 - \frac{(2a-1)(1+4b_n)}{\d
n} \right)^{2a}
\,.\]
Therefore, applying the upper and lower bounds on
$\#(\mathcal{I}_{\d,n,a})$ we get
\begin{align*}
Q\left( \mathcal{S}_{\d,n,a} \right) & \geq  \frac{(\d
C_\e)^{2a}}{(2a)!} \left( 1 - \frac{(2a-1)(1+4b_n)}{\d n}
\right)^{2a} \left( Q\left( E_\w T_{\nu_n} < n^{1/s} \right) - a
o(n^{-1+2\e}) \right)  \\
&\qquad - \frac{(\d n)^{2a}}{(2a)!} a o\left( e^{-n^{\e/(6 s)}}
\right)
\,.\end{align*}
Recall the definitions of $d_k$ in \eqref{nkdkdef} and define
\begin{equation}
a_k:=\lfloor\log\log k\rfloor \vee 1, \quad\text{and}\quad
\d_k:=a_k^{-1}. \label{adeltadef}
\end{equation}
Now, replacing $\d,n$ and $a$ in the above by $\d_k, d_k$ and
$a_k$ respectively we have
\begin{align}
Q\left( \mathcal{S}_{\d_k,d_k,a_k} \right) & \geq  \frac{(\d_k
C_\e)^{2a_k}}{(2a_k)!} \left( 1 - \frac{(2a_k-1)(1+4b_{d_k})}{\d_k
d_k} \right)^{2a_k} \left( Q\left( E_\w T_{\nu_{d_k}} < d_k^{1/s}
\right) - a_k
o(d_k^{-1+2\e}) \right)  \nonumber \\
&\qquad - \frac{(\d_k d_k)^{2a_k}}{(2a_k)!} a_k o\left(
e^{-d_k^{\e/(6 s)}} \right) \nonumber \\
& \geq  \frac{(\d_k C_\e)^{2a_k}}{(2a_k)!} \left( 1+ o(1)\right)
\left( L_{s,b'}(1) - o(1) \right) - o(1/k). \label{sublb}
\end{align}
The last inequality is a result of the definitions of $\d_k, a_k,$
and $d_k$ (it's enough to recall that $d_k \geq 2^{2^{k-1}}$, $a_k
\sim \log\log k$, and $\d_k \sim \frac{1}{\log\log k} $), as well
as Theorem \ref{refstable}. Also, since $\d_k = a_k^{-1}$ we get
from Sterling's formula that $ \frac{(\d_k C_\e)^{2a_k}}{(2a_k)!}
\sim \frac{ ( C_\e e / 2)^{2a_k}}{\sqrt{2\pi a_k}}$. Thus since
$a_k\sim \log\log k$, we have that $\frac{1}{k} = o\left(
\frac{(\d_k C_\e)^{2a_k}}{(2a_k)!} \right)$. This, along with
\eqref{sublb}, gives that $Q\left( \mathcal{S}_{\d_k,d_k,a_k}
\right) > \frac{1}{k}$ for all $k$ large enough.\\
We now have a good lower bound on the probability of not having
any of the crossing times of the first $\d_k d_k$ blocks dominating
all the others.  However for the purpose of proving Theorem
\ref{nonlocal} we need a little bit more. We also need that none
of the crossing times of succeeding blocks are too large either.
Thus, for any $0<\d<c$ and $n\in \N$ define the events
\[
U_{\d,n,c}:=\left\{ \sum_{i=\d n + 1}^{cn} \mu_{i,n,\w} \leq 2
n^{1/s} \right\}, \quad \tilde{U}_{\d,n,c}:=\left\{ \sum_{i=\d n +
b_n+1}^{cn} \mu_{i,n,\w} \leq n^{1/s} \right\}
\,.\]
\begin{lem} \label{smallblocklemma}
Assume $s<1$. Then there exists a sequence $c_k \ra \infty$, $c_k = o( \log a_k )$
such that
\[
\sum_{k=1}^{\infty}  Q\left( \mathcal{S}_{\d_k,d_k,a_k} \cap U_{\d_k,d_k,c_k} \right) = \infty
\,.\]
\end{lem}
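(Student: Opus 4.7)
The plan is to establish, for each \emph{fixed} integer $c \geq 1$, the divergence
\[
\sum_{k=1}^\infty Q(\mathcal{S}_{\d_k,d_k,a_k} \cap U_{\d_k,d_k,c}) = \infty,
\]
and then to extract the desired $c_k\to\infty$ with $c_k=o(\log a_k)$ by a standard diagonal construction. The structural fact that makes this attack possible is that, under $Q$, the blocks of environment between consecutive positive ladder locations are i.i.d., so events depending on disjoint collections of blocks are independent. Because $\mu_{i,d_k,\w}$ is determined by the blocks with indices in $[i-b_{d_k},i]$ (owing to the reflection at $\nu_{i-1-b_{d_k}}$), the event $\mathcal{S}_{\d_k,d_k,a_k}$ depends only on the blocks in $[1-b_{d_k},\d_k d_k]$, while the buffered event $\tilde U_{\d_k,d_k,c}$ depends only on the blocks in $[\d_k d_k+1,cd_k]$. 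These two collections are disjoint, whence
\[
Q(\mathcal{S}_{\d_k,d_k,a_k}\cap\tilde U_{\d_k,d_k,c})=Q(\mathcal{S}_{\d_k,d_k,a_k})\,Q(\tilde U_{\d_k,d_k,c}).
\]

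I would next show that for each fixed $c$, $\liminf_{k\to\infty}Q(\tilde U_{\d_k,d_k,c})\geq L_{s,b'}(c^{-1/s})>0$. Stationarity of $Q$ under ladder shifts gives $Q(\tilde U_{\d_k,d_k,c})=Q(E_\w\bar T^{(d_k)}_{\nu_{m_k}}\leq d_k^{1/s})$ with $m_k=(c-\d_k)d_k-b_{d_k}\sim cd_k$. Lemma~\ref{ETdiff} together with a union bound over $i\leq m_k$ gives $E_\w T_{\nu_{m_k}}-E_\w\bar T^{(d_k)}_{\nu_{m_k}}=o(d_k^{1/s})$ in $Q$-probability, so Theorem~\ref{refstable} and continuity of $L_{s,b'}$ at $c^{-1/s}$ yield the liminf. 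To pass from $\tilde U$ to $U$, introduce the buffer event
\[
A_k:=\Bigl\{\sum_{i=\d_k d_k+1}^{\d_k d_k+b_{d_k}}\mu_{i,d_k,\w}\leq d_k^{1/s}\Bigr\},
\]
so that $\tilde U_{\d_k,d_k,c}\cap A_k\subseteq U_{\d_k,d_k,c}$. A union bound combined with the stationary tail asymptotics $Q(\mu_{1,d_k,\w}>x)\leq Cx^{-s}$ (Theorem~\ref{Tnutail}) gives $Q(A_k^c)\leq Cb_{d_k}^{s+1}/d_k$, which is doubly exponentially small in $k$ since $d_k\sim2^{2^k}$, in particular $o(1/k)$. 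Combining this with the lower bound $Q(\mathcal{S}_{\d_k,d_k,a_k})>1/k$ already established in the text preceding the lemma, we obtain, for every fixed $c$ and all $k$ sufficiently large,
\[
Q(\mathcal{S}_{\d_k,d_k,a_k}\cap U_{\d_k,d_k,c})\geq Q(\mathcal{S}_{\d_k,d_k,a_k})\,Q(\tilde U_{\d_k,d_k,c})-Q(A_k^c)\geq\frac{L_{s,b'}(c^{-1/s})}{2k},
\]
so the series diverges for each fixed $c$.

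The final step is a diagonal construction: for each integer $\ell\geq1$, successively pick $K_\ell$ (with $K_\ell\uparrow\infty$ as fast as needed to ensure $\ell^2\leq\log a_{K_\ell}$) large enough that $\sum_{k=K_\ell}^{K_{\ell+1}-1}Q(\mathcal{S}_{\d_k,d_k,a_k}\cap U_{\d_k,d_k,\ell})\geq1$, and set $c_k:=\ell$ for $k\in[K_\ell,K_{\ell+1})$. Then $c_k\to\infty$, $c_k=o(\log a_k)$, and $\sum_kQ(\mathcal{S}_{\d_k,d_k,a_k}\cap U_{\d_k,d_k,c_k})\geq\sum_\ell 1=\infty$, which is the claim. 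The main obstacle is the strictly positive liminf for $Q(\tilde U_{\d_k,d_k,c})$: one must combine Theorem~\ref{refstable} (a statement about $E_\w T_{\nu_n}$ rather than its reflected analogue) with a sharp enough estimate on the reflection correction to conclude that the limiting mass is strictly positive; once this is in hand, the remaining ingredients (the disjoint-block independence under $Q$, the negligible buffer estimate $A_k$, and the diagonalization) are essentially bookkeeping.
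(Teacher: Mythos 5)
Your proof is correct and follows essentially the same route as the paper: the key inequality $Q(\mathcal{S}_{\d,n,a}\cap U_{\d,n,c})\geq Q(\mathcal{S}_{\d,n,a})\,Q(\tilde{U}_{\d,n,c})-Q\bigl(\sum_{i=1}^{b_n}\mu_{i,n,\w}>n^{1/s}\bigr)$, the lower bound on $Q(\tilde U)$ via Theorem \ref{refstable} (for which the crude bound $\sum_{i=\d n+b_n+1}^{cn}\mu_{i,n,\w}\leq E_\w T_{\nu_{cn}}$ suffices, making your detour through Lemma \ref{ETdiff} unnecessary), the buffer estimate from Theorem \ref{Tnutail}, and the precomputed bound $Q(\mathcal{S}_{\d_k,d_k,a_k})>1/k$ are all exactly the paper's ingredients. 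The only difference is bookkeeping at the end: you prove divergence for each fixed $c$ and diagonalize, whereas the paper defines $c_k$ directly as $(\max\{c:Q(E_\w T_{\nu_{cd_k}}\leq d_k^{1/s})\geq 1/\log k\})\wedge\log\log a_k$ and checks the terms are at least $1/(k\log k)$; both are standard and valid.
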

\begin{proof}
For any $\d<c$ and $a<n/2$ we have
\begin{align}
Q\left( \mathcal{S}_{\d,n,a} \cap U_{\d,n,c} \right) & \geq
Q\left( \mathcal{S}_{\d,n,a}\right) Q\left( \tilde{U}_{\d,n,c}
\right) - Q\left( \sum_{i=1}^{b_n} \mu_{i,n,\w}
> n^{1/s}  \right) \nonumber \\
& \geq Q\left( \mathcal{S}_{\d,n,a}\right) Q\left( E_\w
T_{\nu_{cn}} \leq n^{1/s}\right) - b_n Q\left( E_\w T_\nu >
\frac{n^{1/s}}{ b_n }\right) \nonumber \\
&\geq Q\left( \mathcal{S}_{\d,n,a}\right) Q\left( E_\w
T_{\nu_{cn}} \leq n^{1/s} \right) - o(n^{-1/2}), \label{SU}
\end{align}
where the last inequality is from Theorem \ref{Tnutail}. Now,
define $c_1=1$ and for $k>1$ let
\[
c_k' := \max \left\{ c \in \N : Q\left( E_\w T_{\nu_{c d_k}} \leq
d_k^{1/s} \right) \geq \frac{1}{\log k} \right\} \vee 1
\,.\]
Note that by Theorem \ref{refstable} we have that $c_k'\ra\infty$,
and so we can define $c_k = c_k' \wedge \log\log (a_k)$. Then
applying \eqref{SU} with this choice of $c_k$ we have
\[
\sum_{k=1}^\infty Q\left( \mathcal{S}_{\d_k,d_k,a_k} \cap
U_{\d_k,d_k,c_k} \right) \geq \sum_{k=1}^\infty \left[Q\left(
\mathcal{S}_{\d_k,d_k,a_k}\right) Q\left( E_\w T_{\nu_{c_k d_k}}
\leq d_k^{1/s} \right) - o(d_k^{-1/2})\right] = \infty,
\]
and
the last
sum is infinite because $d_k^{-1/2}$ is summable and for
all $k$ large enough we have
\[
Q\left( \mathcal{S}_{\d_k,d_k,a_k}\right) Q\left( E_\w T_{\nu_{c_k
d_k}} \leq d_k^{1/s} \right) \geq \frac{1}{k\log k}.
\]
\end{proof}
\begin{cor}\label{smallblocks2}
Assume $s<1$, and let $c_k$ be as in Lemma
\ref{smallblocklemma}. Then,
$P$-a.s. there exists a random subsequence $n_{k_m}=n_{k_m}(\w)$
of $n_k=2^{2^k}$ such that for the sequences $\a_m, \b_m,$ and
$\gamma_m$ defined by
\begin{equation}
\a_m := n_{k_m-1},\qquad \b_m := n_{k_m-1} +\d_{k_m} d_{k_m} ,
\qquad \gamma_m:= n_{k_m-1} + c_{k_m} d_{k_m}, \label{abgdef}
\end{equation}
we have that for all $m$
\begin{equation}
\max_{ i\in(\a_m, \b_m] } \mu_{i,d_{k_m},\w}^2  \leq
2d_{k_m}^{2/s} \leq \frac{1}{a_{k_m}} \sum_{i=\a_m+1}^{\b_m}
\mu_{i,d_{k_m},\w}^2, \label{smbk}
\end{equation}
and
\[
 \sum_{\b_m + 1}^{\gamma_m} \mu_{i,d_{k_m},\w} \leq 2 d_{k_m}^{1/s}\,.
\]
\end{cor}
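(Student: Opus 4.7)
The plan is to derive Corollary \ref{smallblocks2} as a second Borel-Cantelli consequence of Lemma \ref{smallblocklemma}. For each $k \geq 1$, let $E_k$ be the $P$-event obtained by translating $\mathcal{S}_{\d_k,d_k,a_k} \cap U_{\d_k,d_k,c_k}$ by $n_{k-1}$ block indices; explicitly, $E_k$ demands that $\mu_{i,d_k,\w}^2 < 2 d_k^{2/s}$ for every $i \in (n_{k-1}, n_{k-1} + \d_k d_k]$, that at least $2a_k$ of those indices satisfy $\mu_{i,d_k,\w}^2 \geq d_k^{2/s}$, and that $\sum_{i = n_{k-1} + \d_k d_k + 1}^{n_{k-1} + c_k d_k} \mu_{i,d_k,\w} \leq 2 d_k^{1/s}$. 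A direct check shows that on $E_k$ both conclusions of the corollary hold with $k_m := k$: the first inequality of \eqref{smbk} is immediate; the second follows from $\sum_{i \in (n_{k-1}, n_{k-1}+\d_k d_k]} \mu_{i,d_k,\w}^2 \geq 2 a_k d_k^{2/s}$ upon dividing by $a_k$; the final sum bound is just the translated $U$-condition.

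The first ingredient is the identity $P(E_k) = Q(\mathcal{S}_{\d_k,d_k,a_k} \cap U_{\d_k,d_k,c_k})$. Because $\bar{T}_{\nu_i}^{(d_k)}$ started at $\nu_{i-1}$ cannot venture further left than $\nu_{i-1-b_{d_k}}$, the variable $\mu_{i,d_k,\w}$ is measurable with respect to the environment in the $b_{d_k}+1$ consecutive blocks indexed by $[i - b_{d_k}, i]$. Hence $E_k$ depends only on the blocks indexed by $(n_{k-1} - b_{d_k}, n_{k-1} + c_k d_k]$. Since the blocks $\{[\nu_{i-1}, \nu_i)\}_{i \geq 1}$ are i.i.d.\ under $P$ with some common block law $\mathcal{B}$, and the conditioning on $\mathcal{R}$ extends this i.i.d.\ structure to all $i \in \Z$ under $Q$, the joint law of any finite window of consecutive blocks coincides under $P$ and $Q$. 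The claimed equality follows.

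Next, I would extract a sub-subsequence $k_m = 2m$ along which the $E_{k_m}$ become mutually independent. Since $n_k = 2^{2^k}$ grows doubly exponentially while $c_k = o(\log\log\log k)$ and $b_{d_k} = O(\log^2 n_k)$, the elementary identity $n_{k+2} = n_k^4$ yields $n_{2m-1} - b_{d_{2m}} > n_{2m-3} + c_{2m-2} d_{2m-2}$ for all $m$ beyond some $m_0$. Thus for $m \geq m_0$ the block-index ranges supporting $E_{k_m}$ are pairwise disjoint, and the i.i.d.\ block structure under $P$ renders $\{E_{k_m}\}_{m \geq m_0}$ mutually independent.

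Finally, the proof of Lemma \ref{smallblocklemma} supplies the bound $Q(\mathcal{S}_{\d_k,d_k,a_k} \cap U_{\d_k,d_k,c_k}) \geq c(k \log k)^{-1}$ for some constant $c>0$ and all $k$ large, so $\sum_{m \geq m_0} P(E_{k_m}) \geq c \sum_m (2m \log(2m))^{-1} = \infty$. The second Borel-Cantelli lemma then gives that $E_{k_m}$ occurs for infinitely many $m$, $P$-a.s., and relabeling this random sub-subsequence of $\{n_k\}$ yields the corollary. The main subtlety is rigorously tracking how the dynamic reflection in $\bar{T}^{(d_k)}$ restricts the block measurability of $\mu_{i,d_k,\w}$, which in turn is what allows the stationarity of the i.i.d.\ block sequence (rather than mere site-stationarity of $P$) to be invoked to equate the shifted probabilities under $P$ with the unshifted ones under $Q$.
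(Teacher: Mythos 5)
Your proposal is correct and follows essentially the same route as the paper: translate $\mathcal{S}_{\d_k,d_k,a_k}\cap U_{\d_k,d_k,c_k}$ to the block window $(n_{k-1}-b_{d_k}, n_{k-1}+c_kd_k]$, equate the $P$- and $Q$-probabilities via the i.i.d.\ block structure to the right of the origin, note independence along even $k$ from the doubly exponential growth of $n_k$, and conclude with the second Borel--Cantelli lemma. If anything, your extraction of the pointwise bound $\geq c(k\log k)^{-1}$ from the proof of Lemma \ref{smallblocklemma} is slightly more careful than the paper, which passes from divergence of the full series to divergence along even indices without comment.
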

\begin{proof}
Define the events
\begin{align*}
\mathcal{S}_k' &:= \left\{ \#\left\{ i \in (n_{k-1}, n_{k-1}+\d_k d_k] :
\mu_{i,d_k,\w}^2 \in [d_k^{2/s}, 2d_k^{2/s}) \right\} = 2a_k \right\} \\
&\qquad \cap \left\{ \mu_{j,d_k,\w}^2 < 2 d_k^{2/s} \quad \forall
j\in (n_{k-1}, n_{k-1}+\d_k d_k] \right\}\,,
\end{align*}
\[
U_k' := \left\{ \sum_{n_{k-1}+ \d_k d_k + 1}^{n_{k-1}+c_k d_k}
\mu_{i,d_{k},\w} \leq 2d_{k}^{1/s} \right\}\,.
\]
Note that due to the reflections of the random walk, the event
$\mathcal{S}_k' \cap U_k' $ depends on the environment between
ladder locations $n_{k-1}-b_{d_k}$ and $n_{k-1}+c_kd_k$. Thus, for
$k_0$ large enough $\{\mathcal{S}_{2k}' \cap U_{2k}'
\}_{k=k_0}^{\infty}$ is an independent sequence of events.
Similarly, for $k$ large enough $\mathcal{S}_k'\cap U_k'$ does not
depend on the environment to left of the origin. Thus
\[
P(\mathcal{S}_k'\cap U_k') =Q(\mathcal{S}_k'\cap U_k')=Q\left( \mathcal{S}_{\d_k,d_k,a_k} \cap U_{\d_k,d_k,c_k}
\right)
\]
for all $k$ large enough. Lemma \ref{smallblocklemma} then gives
that $\sum_{k=1}^\infty P(\mathcal{S}_{2k}'\cap U_{2k}') =
\infty$, and the Borel-Cantelli lemma then implies that infinitely
many of the events $\mathcal{S}_{2k}' \cap U_{2k}'$ occur $P-a.s$.
Finally, note that $\mathcal{S}_{k_m}'$ implies the event in
\eqref{smbk}.
\end{proof}
%%%%%%%%%%%%%%%%%%%%%%%%%%%%%%%%%%%%%%%%%%%%%%%%%%%%%%%%%%%%%%%%%%%%%%%%%%%%%
%%%%%%%%%%%%%%%%%%%%%% GAUSSIAN ARGUMENT BEGINS HERE %%%%%%%%%%%%%%%%%%%%%%%%
%%%%%%%%%%%%%%%%%%%%%%%%%%%%%%%%%%%%%%%%%%%%%%%%%%%%%%%%%%%%%%%%%%%%%%%%%%%%%
Before proving a quenched CLT (along a subsequence) for the hitting times
$T_n$, we need one more lemma that gives us some control on the
quenched tails of crossing times of blocks. We can get this from
an application of Kac's moment formula. Let $\bar{T}_y$ be the
hitting time of $y$ when we add a reflection at the starting point
of the random walk. Then Kac's moment formula  \cite[(6)]{fpKMF} 
and the Markov property give that 
$E_\w^x (\bar{T}_y)^j \leq j! \left( E_\w^x
\bar{T}_y \right)^j$ (note that because
of the reflection at $x$, $E_\omega^x(\bar{T}_y)
\geq E_\omega^{x'}(\bar{T}_y)$ for any $x'\in (x,y)$). Thus,
\begin{equation}
E_\w^{\nu_{i-1}} (\bar{T}_{\nu_i}^{(n)})^j \leq
E_\w^{\nu_{i-1-b_n}} (\bar{T}_{\nu_i} )^j \leq j! \left(
E_\w^{\nu_{i-1-b_n}} \bar{T}_{\nu_i} \right)^j \leq j! \left(
E_\w^{\nu_{i-1-b_n}} \bar{T}_{\nu_{i-1}} + \mu_{i,n,\w} \right)^j.
\label{Kac}
\end{equation}
\begin{lem} \label{momentbound}
For any $\e< \frac{1}{3}$, there exists an $\eta>0$ such that
\[
Q\left( \exists i\leq n,\quad j\in\N: M_i > n^{(1-\e)/s},\quad
E_\w^{\nu_{i-1}} ( \bar{T}_{\nu_i}^{(n)} )^j
> j! 2^j \mu_{i,n,\w}^j \right) = o(n^{-\eta})
\,.\]
\end{lem}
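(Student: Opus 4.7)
The plan is to combine the Kac bound in \eqref{Kac} with an independence argument under $Q$. The inequality \eqref{Kac} shows that if $E_\w^{\nu_{i-1-b_n}} \bar{T}_{\nu_{i-1}} \leq \mu_{i,n,\w}$, then $E_\w^{\nu_{i-1}}(\bar{T}_{\nu_i}^{(n)})^j \leq j!(2\mu_{i,n,\w})^j = j!\,2^j \mu_{i,n,\w}^j$ for \emph{every} $j\in\N$ simultaneously. Since $\mu_{i,n,\w} \geq M_i$, the event in the lemma is therefore contained in
\[
\bigcup_{i\leq n} \left\{ M_i > n^{(1-\e)/s}, \; E_\w^{\nu_{i-1-b_n}} \bar{T}_{\nu_{i-1}} > M_i \right\}.
\]

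The indices $i \leq b_n$ contribute at most $b_n \, Q(M_1 > n^{(1-\e)/s}) = \bigo(b_n n^{-(1-\e)}) = o(n^{-\eta})$ by \eqref{Mtail}. For $i > b_n$ the random variable $M_i$ depends only on the $i$th block of the environment, while $E_\w^{\nu_{i-1-b_n}} \bar{T}_{\nu_{i-1}}$ (with reflection at the starting point $\nu_{i-1-b_n}$) depends only on blocks $i-b_n,\ldots,i-1$; these are therefore independent under $Q$, and by the ladder-location stationarity of $Q$ the latter has the same distribution as $Y := E_\w \bar{T}_{\nu_{b_n}}$. Fixing $\gamma \in (0,\, s\wedge 1)$, the crude bound $Y \leq \sum_{k=1}^{b_n} E_\w^{\nu_{k-1}} T_{\nu_k}$ together with the subadditivity of $x\mapsto x^\gamma$ and Theorem \ref{Tnutail} (which yields $C_\gamma := E_Q (E_\w T_\nu)^\gamma < \infty$ for $\gamma<s$) gives the Markov bound
\[
Q(Y > m) \leq b_n C_\gamma \, m^{-\gamma}.
\]

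Conditioning on $M_i$ and using the tail $Q(M_1 \in dm) \leq C' m^{-s-1} \, dm$ from \eqref{Mtail}, the contribution of each $i > b_n$ is bounded by
\[
\int_{n^{(1-\e)/s}}^\infty Q(Y > m) \, Q(M_1 \in dm)
\leq C_\gamma C' b_n \int_{n^{(1-\e)/s}}^\infty m^{-\gamma-s-1} \, dm
= \bigo\!\left(b_n\, n^{-(1-\e)(1+\gamma/s)}\right).
\]
Summing over $i \leq n$ produces a total upper bound of order $b_n\, n^{\,1-(1-\e)(1+\gamma/s)}$. The main obstacle is to arrange that the exponent be strictly negative, i.e.\ to choose $\gamma > s\e/(1-\e)$ while keeping $\gamma < s\wedge 1$. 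For $\e < 1/3$ and $s < 2$, the interval $(s\e/(1-\e),\, s\wedge 1)$ is non-empty, so a suitable $\gamma$ exists and some $\eta > 0$ can be extracted from the resulting exponent, concluding the proof.
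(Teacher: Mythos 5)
Your proof follows the same skeleton as the paper's: Kac's formula \eqref{Kac} reduces the bad event to $\left\{ M_i > n^{(1-\e)/s},\ E_\w^{\nu_{i-1-b_n}} \bar{T}_{\nu_{i-1}} > \mu_{i,n,\w} \right\}$, and the independence of the $i$th block from the $b_n$ blocks preceding it does the rest. Where you diverge is in the final estimate, and this is where two issues arise. First, you keep the random threshold $M_i$ and integrate against "$Q(M_1 \in dm) \leq C' m^{-s-1}\,dm$"; but \eqref{Mtail} gives only the tail asymptotics $Q(M_1>x)\sim C_5 x^{-s}$, not a density bound ($M_1$ need not have a density at all). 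This is repairable: since $m\mapsto Q(Y>m)$ is decreasing, integration by parts against the tail $Q(M_1>m)$ yields the same order $\bigo(b_n\, a^{-\gamma-s})$ with $a=n^{(1-\e)/s}$, so the final bound survives. Second, and more substantively, your choice of $\gamma$ is constrained to $\gamma< s\wedge 1$ (the cap at $1$ coming from the subadditivity of $x\mapsto x^\gamma$), and the resulting exponent is negative only when $\e < 1/(1+s)$ for $s>1$; you therefore need $s<2$. The lemma carries no restriction on $s$ (per the chapter's convention it holds for all $s>0$), and the paper's proof indeed works for every $s$: it observes that on the bad event $\mu_{i,n,\w}\geq M_i>n^{(1-\e)/s}$, so the random threshold can be replaced by the deterministic one, and the probability factors as $Q(M_1>n^{(1-\e)/s})\cdot Q(E_\w^{\nu_{-b_n}}T_0>n^{(1-\e)/s})$. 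A union bound over the $b_n$ blocks plus Theorem \ref{Tnutail} makes the second factor $\bigo(b_n^{1+s}n^{-1+\e})$ with the full exponent $s$ playing no role in the conclusion, giving $o(n^{-1+3\e})$ outright. Your argument is valid for the regimes in which the lemma is actually invoked ($s<1$ and $s\in(1,2)$), but you should either state the restriction to $s<2$ explicitly or adopt the deterministic-threshold decoupling, which is both shorter and uniform in $s$.
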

\begin{proof}
We use \eqref{Kac} to get
\begin{align*}
Q\left( \exists i\leq n,\quad j\in\N: M_i > n^{(1-\e)/s}, \right.
& \left. \quad E_\w^{\nu_{i-1}} ( \bar{T}_{\nu_i}^{(n)} )^j
> j! 2^j \mu_{i,n,\w}^j \right) \\
& \leq Q\left( \exists i\leq n : M_i > n^{(1-\e)/s}, \quad
E_\w^{\nu_{i-1-b_n}}
\bar{T}_{\nu_{i-1}} > \mu_{i,n,\w} \right)\\
& \leq n Q\left( M_1 > n^{(1-\e)/s} , \quad E_\w^{\nu_{-b_n}} T_0
> n^{(1-\e)/s} \right)\\
& = n Q\left( M_1 > n^{(1-\e)/s} \right) Q\left( E_\w^{\nu_{-b_n}}
T_0 > n^{(1-\e)/s}\right)\,,
\end{align*}
where the second inequality is due to a union bound and the fact
that $\mu_{i,n,\w} > M_i$. Now, by \eqref{Mtail} we have $n
Q\left( M_1 > n^{(1-\e)/s} \right) \sim C_5 n^{\e}$, and by
Theorem \ref{Tnutail}
\[
Q\left( E_\w^{\nu_{-b_n}} T_0 > n^{(1-\e)/s}\right) \leq b_n
Q\left( E_\w T_\nu > \frac{n^{(1-\e)/s}}{b_n} \right) \sim
K_\infty b_n^{1+s} n^{-1+\e}\,.
\]
Therefore, $Q\left( \exists i\leq n,\quad j\in\N: M_i >
n^{(1-\e)/s},\quad E_\w^{\nu_{i-1}} ( \bar{T}_{\nu_i}^{(n)} )^j
> j! 2^j \mu_{i,n,\w}^j \right) = o(n^{-1+3\e})$.
\end{proof}
\begin{thm} \label{gaussianT}
Let Assumptions \ref{essentialasm} and \ref{techasm} hold, and let $s<1$. Then $P-a.s.$ there exists a random subsequence $n_{k_m}=n_{k_m}(\w)$
of $n_k=2^{2^k}$ such that for $\a_m$, $\b_m$ and $\gamma_m$ as in
\eqref{abgdef} and any sequence $x_m \in [\nu_{\b_m},
\nu_{\gamma_m} ]$, we have
\begin{equation}
\lim_{m\ra\infty} P_\w \left( \frac{ T_{x_m} - E_\w T_{x_m} } {
\sqrt{v_{m,\w}} } \leq y \right) = \Phi(y)\,,
\end{equation}
where
\[
v_{m,\w} := \sum_{i=\a_m+1}^{\b_m} \mu_{i,d_{k_m}, \w}^2 .
\]
\end{thm}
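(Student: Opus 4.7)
The plan is to apply the Lindeberg--Feller CLT for triangular arrays of independent (quenched) random variables to the block-decomposition of the hitting time $\bar{T}^{(d_{k_m})}_{x_m}$, and then to transfer the result to the unreflected hitting time $T_{x_m}$ using (a straightforward variant of) Lemma \ref{bt}. To set up, fix the random subsequence $n_{k_m}$ and sequences $\a_m, \b_m, \gamma_m$ provided by Corollary \ref{smallblocks2}; the key properties we shall exploit are \eqref{smbk} together with the control $\sum_{i=\b_m+1}^{\gamma_m} \mu_{i,d_{k_m},\w} \leq 2 d_{k_m}^{1/s}$, and the consequence $\sqrt{v_{m,\w}} \geq \sqrt{2 a_{k_m}}\, d_{k_m}^{1/s}$, which tells us that the scale $\sqrt{v_{m,\w}}$ grows strictly faster than $d_{k_m}^{1/s}$.

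Since the quenched mean $E_\w\bigl(\bar{T}^{(d_{k_m})}_{\nu_{\gamma_m}} - \bar{T}^{(d_{k_m})}_{\nu_{\b_m}}\bigr) \leq 2 d_{k_m}^{1/s} = o(\sqrt{v_{m,\w}})$, Markov's inequality reduces the general case $x_m \in [\nu_{\b_m},\nu_{\gamma_m}]$ to proving the Gaussian limit for $x_m = \nu_{\b_m}$. Write
\[
\bar{T}^{(d_{k_m})}_{\nu_{\b_m}} - E_\w \bar{T}^{(d_{k_m})}_{\nu_{\b_m}} = \sum_{i=1}^{\a_m} \bigl(Y_i - \mu_{i,d_{k_m},\w}\bigr) + \sum_{i=\a_m+1}^{\b_m} \bigl(Y_i - \mu_{i,d_{k_m},\w}\bigr),
\]
where $Y_i := \bar{T}^{(d_{k_m})}_{\nu_i} - \bar{T}^{(d_{k_m})}_{\nu_{i-1}}$ are independent under $P_\w$. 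The first sum is $o(\sqrt{v_{m,\w}})$ in $P_\w$-probability, $P$-a.s.: by Chebyshev its quenched variance is $\sum_{i\leq\a_m}\s_{i,d_{k_m},\w}^2$, and by Theorem \ref{qVartail} applied to the partial sum (and Borel--Cantelli along $n_k = 2^{2^k}$) this has annealed order $\a_m^{2/s} = n_{k_m-1}^{2/s} = o(d_{k_m}^{2/s}) = o(v_{m,\w}/a_{k_m})$.

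It then remains to apply Lindeberg--Feller to the middle sum normalized by $\sqrt{v_{m,\w}}$. The variance condition $\sum_{i=\a_m+1}^{\b_m} \s_{i,d_{k_m},\w}^2 / v_{m,\w} \to 1$ follows from Corollary \ref{Vsdiff} applied to the partial sums up to $\b_m$ and up to $\a_m$ (with Borel--Cantelli along the fast-growing subsequence): the resulting discrepancy between the sum of $\s^2$ and the sum of $\mu^2$ over the middle range is $o(d_{k_m}^{2/s}) = o(v_{m,\w}/a_{k_m})$.

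The main obstacle is the Lindeberg condition, which I would handle by splitting the middle-range blocks according to whether $M_i > d_{k_m}^{(1-\e)/s}$ or $M_i \leq d_{k_m}^{(1-\e)/s}$ for a small $\e > 0$. For small-$M$ blocks, the contribution to the Lindeberg sum is dominated by the total variance $\sum_{i:M_i\leq d_{k_m}^{(1-\e)/s}} \s_{i,d_{k_m},\w}^2$, which by Lemma \ref{Vsmall} and Borel--Cantelli is $o(d_{k_m}^{2/s}) = o(v_{m,\w}/a_{k_m})$, hence negligible relative to $v_{m,\w}$. For large-$M$ blocks, the Kac-type moment bound in Lemma \ref{momentbound} gives $E_\w(Y_i-\mu_{i,d_{k_m},\w})^4 \leq C\,\mu_{i,d_{k_m},\w}^4$ uniformly, so Chebyshev yields
\[
E_\w\Bigl[(Y_i - \mu_{i,d_{k_m},\w})^2 \mathbf{1}_{\{|Y_i - \mu_{i,d_{k_m},\w}| > \e\sqrt{v_{m,\w}}\}}\Bigr] \leq \frac{C\,\mu_{i,d_{k_m},\w}^4}{\e^2\, v_{m,\w}}.
\]
Summing over $i$ and using $\mu_{i,d_{k_m},\w}^2 \leq 2 d_{k_m}^{2/s} \leq v_{m,\w}/a_{k_m}$ from \eqref{smbk} bounds the Lindeberg sum by $O(1/a_{k_m}) = o(1)$. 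Transferring the resulting Gaussian limit for $\bar{T}^{(d_{k_m})}_{x_m}$ to $T_{x_m}$ is then routine via an extension of Lemma \ref{bt} that covers hitting times up to $\nu_{\gamma_m}$, since the bound $P_\w(T_{\nu_M} \neq \bar{T}^{(d_{k_m})}_{\nu_M})$ is controlled by $M\,\P(T_{-b_{d_{k_m}}}<\infty)$, which still goes to zero since $b_{d_{k_m}} = \log^2 d_{k_m}$ beats any polynomial in $d_{k_m}$.
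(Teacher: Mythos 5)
Your architecture matches the paper's: the same subsequence from Corollary \ref{smallblocks2}, negligibility of the initial segment up to $\nu_{\a_m}$ and of the tail from $\nu_{\b_m}$ to $x_m$, and Lindeberg--Feller on the middle blocks with the small-$M_i$/large-$M_i$ split handled by Lemma \ref{Vsmall} and Lemma \ref{momentbound} respectively. Two of your steps are legitimate (and slightly cleaner) variants: you dispose of the tail $[\nu_{\b_m},x_m]$ by Markov's inequality on the nonnegative increment using $\sum_{i=\b_m+1}^{\gamma_m}\mu_{i,d_{k_m},\w}\leq 2d_{k_m}^{1/s}=o(\sqrt{v_{m,\w}})$, where the paper uses Chebyshev on the variance (which costs an extra appeal to Corollary \ref{Vsdiff}); and for the Lindeberg condition on large-$M_i$ blocks you take $j=4$ in the moment bound of Lemma \ref{momentbound} and apply Chebyshev, getting a Lindeberg sum of order $\max_i\mu_{i,d_{k_m},\w}^2/(\e^2 v_{m,\w})\leq 2/(\e^2 a_{k_m})\to 0$, where the paper sums the full exponential series to get $E_\w e^{\bar{T}^{(d_k)}_{\nu_i}/(4\mu_{i,d_k,\w})}\leq 2$ and integrates the tail. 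Both variants are correct. One small citation slip: the bound $Var_\w T_{\nu_{\a_m}}=o(d_{k_m}^{2/s})$ is not a direct application of Theorem \ref{qVartail} (a single-block tail estimate); it is the moderate-deviation statement of Lemma \ref{Varasymp}, whose proof requires a separate truncation argument via $E_P(Var_\w T_1)^\gamma<\infty$ for $\gamma<s/2$.

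There is, however, one concrete omission in your de-reflection step. You transfer the Gaussian limit from $\bar{T}^{(d_{k_m})}_{x_m}$ to $T_{x_m}$ using only the estimate $P_\w(T_{x_m}\neq\bar{T}^{(d_{k_m})}_{x_m})\to 0$. That controls the difference of the random variables, but the theorem centers by $E_\w T_{x_m}$ while your Lindeberg--Feller argument centers by $E_\w\bar{T}^{(d_{k_m})}_{x_m}$; these are different deterministic quantities, and a vanishing probability of disagreement does not bound their difference (on the rare event where the walks disagree, $T-\bar{T}$ can be enormous, and indeed $E_\w(T_\nu-\bar{T}^{(n)}_\nu)$ involves the unbounded factor $W_{\nu_{-b_n}-1}$ from \eqref{reflectexpand}). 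You must separately show $E_\w(T_{x_m}-\bar{T}^{(d_{k_m})}_{x_m})\to 0$ (or at least $=o(\sqrt{v_{m,\w}})$), which is \eqref{ETETbar} in the paper: it follows from Lemma \ref{ETdiff} plus a Borel--Cantelli argument, together with an extra step to upgrade the conclusion from $Q$-a.s.\ to $P$-a.s., since the quantity $E_\w(T_{\nu_n}-\bar{T}^{(n)}_{\nu_n})$ depends on the environment to the left of the origin (the paper writes it as $A(\w)+B(\w)W_{-1}$ with $A,B$ measurable with respect to $\s(\w_0,\w_1,\dots)$). With that addition your proof is complete.
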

\begin{proof}
Let $n_{k_m}(\w)$ be the random subsequence specified in Corollary
\ref{smallblocks2}. For ease of notation, set
$\tilde{a}_m = a_{k_m}$ and
$ \tilde{d}_m = d_{k_m}\,.$
We have
\[\max_{i\in(\a_m, \b_m]} \mu_{i,\tilde{d}_m,\w}^2 \leq
2 \tilde{d}_m^{2/s} \leq
\frac{1}{\tilde{a}_m}\sum_{i=\a_m+1}^{\b_m}
\mu_{i,\tilde{d}_m,\w}^2 = \frac{v_{m,\w}}{\tilde{a}_m},\quad
\text{and} \quad \sum_{i=\b_m+1}^{\gamma_m} \mu_{i,\tilde{d}_m,\w}
\leq 2 \tilde{d}_m^{1/s}.
\]
Now, let $\{ x_m \}_{m=1}^\infty$ be any sequence of integers
(even depending on $\w$) such that $x_m \in [\nu_{\b_m},
\nu_{\gamma_m}]$. Then, since $(T_{x_m} - E_\w T_{x_m}) =
(T_{\nu_{\a_m}} - E_\w T_{\nu_{\a_m}}) +
(T_{x_m} - T_{\nu_{\a_m}} - E_\w^{\nu_{\a_m}}
T_{x_m}),$
it is enough to prove
\begin{equation}
\frac{T_{\nu_{\a_m}} - E_\w T_{\nu_{\a_m}}}{\sqrt{v_{m,\w}}}
\limdw 0,\quad\text{and}\quad \frac{T_{x_m} - T_{\nu_{\a_m}} -
E_\w^{\nu_{\a_m}} T_{x_m}}{\sqrt{v_{m,\w}}} \limdw Z\sim N(0,1)
\label{irrelevantb}
\end{equation}
where we use the notation $Z_n \limdw Z$ to denote quenched
convergence in distribution, that is $\lim_{n\ra\infty} P_\w( Z_n
\leq z ) = P_\w( Z \leq z )$, $P-a.s$. For the first term in
\eqref{irrelevantb} note that for any $\e > 0$, we have from
Chebychev's inequality and $v_{m,\w} \geq
\tilde{d}_m^{2/s}$, that
\[
P_\w\left( \left| \frac{T_{\nu_{\a_m}} - E_\w
T_{\nu_{\a_m}}}{\sqrt{v_{m,\w}}} \right| \geq \e \right) \leq
\frac{Var_\w T_{\nu_{\a_m}}}{\e^2 v_{m,\w} } \leq \frac{Var_\w
T_{\nu_{\a_m}}}{\e^2 \tilde{d}_m^{2/s}}
\,.\]
Thus, the first claim in \eqref{irrelevantb} will be proved if we can
show that $Var_\w T_{\nu_{\a_m}} = o(\tilde{d}_m^{2/s})$. For this
we need the following lemma:
\begin{lem} \label{Varasymp}
Assume $s\leq 2$. Then for any $\d > 0$,
\[
P\left( Var_\w T_{\nu_n} \geq n^{2/s + \d} \right) = o( n^{-\d
s/4} )\,.
\]
\end{lem}
\begin{proof}
First, we claim that
\begin{equation}
    \label{eq-100407a}
    E_P (Var_\w T_1)^\gamma < \infty\; \text{for any}\;
\gamma < \frac{s}{2}\,.
\end{equation}
Indeed, from
\eqref{qvar}, we have that for any $\gamma < \frac{s}{2} \leq 1$
\begin{align*}
E_P (Var_\w T_1)^\gamma &\leq 4^\gamma E_P( W_0 + W_0^2 )^\gamma + 8^\gamma \sum_{i<0} E_P \left( \Pi_{i+1,0}^\gamma (W_i+W_i^{2})^\gamma  \right)\\
&= 4^\gamma E_P( W_0 + W_0^2 )^\gamma + 8^\gamma
\sum_{i=1}^{\infty} (E_P \rho_0^\gamma )^i E_P ( W_0 + W_0^2
)^\gamma,
\end{align*}
where we used that $P$ is i.i.d. in the last equality.
Since
$E_P \rho_0^\gamma < 1$ for
any $\gamma \in (0,s)$, we have that \eqref{eq-100407a} follows
as soon as $E_P(W_0+W_0^2)^\gamma < \infty$. However, from \eqref{PWtail} we
 get that $E_P(W_0+W_0^2)^\gamma < \infty$
  when $\gamma < \frac{s}{2}$.
  
As in Lemma \ref{mdevu} let $\bar \nu = E_P \nu$. Then,
\[
P\left( Var_\w T_{\nu_n} \geq n^{2/s + \d} \right) \leq P( Var_\w
T_{2\bar{\nu} n} \geq n^{2/s+\d} ) + P(\nu_n \geq 2\bar{\nu} n)\,.
\]
As in Lemma \ref{mdevu}, the second term is $\bigo\left(e^{-\d' n} \right)$ for some $\d'>0$. 
To handle the first term on the right side, we
note that for any $\gamma < \frac{s}{2} \leq 1$
\begin{equation}
    \label{eq-100407b}
P( Var_\w T_{2\bar{\nu} n} \geq n^{2/s+\d} ) \leq \frac{ E_P\left(
\sum_{k=1}^{2\bar{\nu} n} Var_\w (T_k-T_{k-1}) \right)^\gamma
}{n^{\gamma(2/s + \d)}} \leq \frac{2\bar{\nu}n E_P(Var_\w
T_1)^\gamma }{n^{\gamma(2/s+\d)}}.
\end{equation}
Then since $E_P (Var_\w T_1)^\gamma < \infty$ for any $\gamma <
\frac{s}{2}$, we can choose $\gamma$ arbitrarily close to
$\frac{s}{2}$ so that the last term on the right of \eqref{eq-100407b}
is $o(n^{-\d s
/4})$.
\end{proof}
As a result of Lemma \ref{Varasymp} and the Borel-Cantelli
lemma, we have that $Var_\w T_{\nu_{n_k}} = o(n_k^{2/s+\d})$ for
any $\d > 0$. Therefore, for any $\d\in(0,\frac{2}{s})$ we have $Var_\w T_{\nu_{\a_m}} = o( \a_m^{2/s
+\d}) = o(n_{k_m-1}^{2/s+\d}) = o(\tilde{d}_m^{2/s})$
(in the last equality we use that $d_k\sim n_k$ to grow much faster than exponentially in $k$).

For the next step in the proof, we show that reflections can be
added without changing the limiting distribution. Specifically, we show that it is enough to prove the following lemma, whose proof we postpone:
\begin{lem}\label{lem-latenight}
    With notation as in Theorem \ref{gaussianT}, we have
    \begin{equation}
\lim_{m\ra\infty} P_\w^{\nu_{\a_m}} \left( \frac{
\bar{T}^{(\tilde{d}_m)}_{x_m} - E_\w^{\nu_{\alpha_m}} \bar{T}^{(\tilde{d}_m)}_{x_m}
} { \sqrt{v_{m,\w}} } \leq y \right) = \Phi(y)\,. \label{refgaussian}
\end{equation}
\end{lem}
\noindent
Assuming Lemma \ref{lem-latenight}, we complete the
proof of Theorem \ref{gaussianT}.
It is enough to show that
\begin{equation}
\label{eq-maybelast}
\lim_{m\ra\infty} P_\w^{\nu_{\a_m}} (
\bar{T}^{(\tilde{d}_m)}_{x_{k_m}} \neq T_{x_m} ) = 0,
\quad\text{and}\quad \lim_{m\ra\infty} E_\w^{\nu_{\a_m}} (
T_{x_m}- \bar{T}^{(\tilde{d}_m)}_{x_{k_m}} ) = 0.
\end{equation}
Recall that the coupling introduced after \eqref{bdef} gives that $ T_{x_m} -  \bar{T}^{(\tilde{d}_m)}_{x_m}
 \geq 0$. Thus,
\[
 P_\w^{\nu_{\a_m}}( \bar{T}^{(\tilde{d}_m)}_{x_m}
\neq T_{x_m} ) = P_\w^{\nu_{\a_m}}\left( T_{x_m} -
\bar{T}^{(\tilde{d}_m)}_{x_m} \geq 1 \right) \leq E_\w^{\nu_{\a_m}} (
T_{x_m} - \bar{T}^{(\tilde{d}_m)}_{x_m} ).
\]
Then, since
$x_m \leq \nu_{\gamma_m}$ and $
\gamma_m=n_{k_m-1}+c_{k_m}\tilde{d}_m \leq n_{k_m+1}$ for all $m$
large enough, \eqref{eq-maybelast} will follow from
%it is enough to prove
\begin{equation}
\lim_{k\ra\infty} E_\w^{\nu_{n_{k-1}}} \left( T_{\nu_{n_{k+1}}} -
\bar{T}^{(d_k)}_{\nu_{n_{k+1}}} \right) = 0, \quad
P-a.s.\label{ETETbar}
\end{equation}
To prove \eqref{ETETbar}, we argue as follows.
From Lemma \ref{ETdiff} we have that for any $\e>0$
\[
Q\left(\! E_\w^{\nu_{n_{k-1}}} \left(\! T_{\nu_{n_{k+1}}} -
\bar{T}^{(d_k)}_{\nu_{n_{k+1}}}\! \right) > \e \!\right) \leq n_{k+1}
Q\left(\! E_\w T_{\nu} - E_\w \bar{T}^{(d_k)}_{\nu} >
\frac{\e}{n_{k+1}}\! \right) = n_{k+1} \bigo\left(\! n_{k+1}^s e^{-\d'
b_{d_k}}\! \right)
.
\]
Since $n_k \sim d_k$, the last term on the
right is summable. Therefore, by the Borel-Cantelli lemma,
\begin{equation}
\lim_{k\ra\infty} E_\w^{\nu_{n_{k-1}}} \left( T_{\nu_{n_{k+1}}} -
\bar{T}^{(d_k)}_{\nu_{n_{k+1}}} \right) = 0, \quad  Q-a.s.
\label{QETETbar}
\end{equation}
This is almost the same as \eqref{ETETbar}, but with $Q$ instead
of $P$. To use this to prove \eqref{ETETbar} note that for $i>b_n$
using \eqref{reflectexpand} we can write
\[
E_\w^{\nu_{i-1}} T_{\nu_i} - E_\w^{\nu_{i-1}} \bar{T}^{(n)}_{\nu_i}
= A_{i,n}(\w) + B_{i,n}(\w) W_{-1}\,,
\]
where $A_{i,n}(\w)$ and $B_{i,n}(\w)$ are non-negative random variables depending only
on the environment to the right of 0. Thus, $E_\w^{\nu_{n_{k-1}}}
\left( T_{\nu_{n_{k+1}}} - \bar{T}^{(d_k)}_{\nu_{n_{k+1}}} \right)
= A_{d_k}(\w) + B_{d_k}(\w) W_{-1}$ where $A_{d_k}(\w)$ and $B_{d_k}(\w)$ are non-negative and only depend on
the environment to the right of zero (so $A_{d_k}$ and $B_{d_k}$ have the same
distribution under $P$ as under $Q$). Therefore \eqref{ETETbar}
follows from \eqref{QETETbar}, which finishes the proof of the theorem.
\end{proof}
%noindent
%{\it Proof of Lemma \ref{lem-latenight}}. \
\begin{proof}[\textit{
    Proof of Lemma \ref{lem-latenight}}.] \
Clearly, it suffices to show
%Since $
%\bar{T}^{(\tilde{d}_m)}_{x_m} -
%\bar{T}^{(\tilde{d}_m)}_{\nu_{\a_m}} - E_\w^{\nu_{\a_m}}
%\bar{T}^{(\tilde{d}_m)}_{x_m} = \left(
%\bar{T}^{(\tilde{d}_m)}_{\nu_{\b_m}}-\bar{T}^{(\tilde{d}_m)}_{\nu_{\a_m}}
%- E_\w^{\nu_{\a_m}}\bar{T}^{(\tilde{d}_m)}_{\nu_{\b_m}} \right) + \left(
%\bar{T}^{(\tilde{d}_m)}_{x_m}-\bar{T}^{(\tilde{d}_m)}_{\nu_{\b_m}}
%- E_\w^{\nu_{\b_m}}\bar{T}^{(\tilde{d}_m)}_{x_m} \right)
%$, it is enough to show
the following claims:
\begin{equation}
\frac{\bar{T}^{(\tilde{d}_m)}_{x_m}-\bar{T}^{(\tilde{d}_m)}_{\nu_{\b_m}}
- E_\w^{\nu_{\b_m}}\bar{T}^{(\tilde{d}_m)}_{x_m} }{
\sqrt{v_{m,\w}} } \limdw 0, \label{irrelevant}
\end{equation}
and
\begin{equation}
\frac{\bar{T}^{(\tilde{d}_m)}_{\nu_{\b_m}}-\bar{T}^{(\tilde{d}_m)}_{\nu_{\a_m}}
- E_\w^{\nu_{\a_m}}\bar{T}^{(\tilde{d}_m)}_{\nu_{\b_m}} }{
\sqrt{v_{m,\w}} }  \limdw Z \sim N(0,1) \,.\label{relevant}
\end{equation}
To prove \eqref{irrelevant}, we note that
\begin{align*}
P_\w\left( \left| \frac{\bar{T}^{(\tilde{d}_m)}_{x_m} -
\bar{T}^{(\tilde{d}_m)}_{\nu_{\b_m}} - E_\w^{\nu_{\b_m}}
\bar{T}^{(\tilde{d}_m)}_{x_m}}{\sqrt{v_{m,\w}}}  \right| \geq \e
\right) & \leq \frac{ Var_\w
(\bar{T}^{(\tilde{d}_m)}_{x_m}-\bar{T}^{(\tilde{d}_m)}_{\nu_{\b_m}})
}{\e^2 v_{m,\w} } \leq \frac{\sum_{i=\b_m+1}^{\gamma_m}
\s_{i,\tilde{d}_m,\w} ^2 }{\e^2 \tilde{a}_m \tilde{d}_m^{2/s}},
\end{align*}
where the last inequality is because $x_m \leq \nu_{\gamma_m}$ and
$v_{m,\w} \geq  \tilde{a}_m \tilde{d}_m^{2/s}$. However, by
Corollary \ref{Vsdiff} and the Borel-Cantelli lemma,
\begin{align*}
\sum_{i=\b_m+1}^{\gamma_m} \s_{i,\tilde{d}_m,\w} ^2 =
\sum_{i=\b_m+1}^{\gamma_m} \mu_{i,\tilde{d}_m,\w} ^2 +
o\left((c_{k_m} \tilde{d}_m)^{2/s}\right)
\,.\end{align*}
The application of Corollary \ref{Vsdiff} uses the fact that for
$k$ large enough the reflections ensure that the events in
question do not involve the environment to the left of zero and
thus have the same probability under $P$ or $Q$. (This type of
argument will be used a few more times in the remainder of the
proof without mention.) By our choice of the subsequence
$n_{k_m}$ we have
\[
\sum_{i=\b_m+1}^{\gamma_m} \mu_{i,\tilde{d}_m,\w} ^2  \leq \left(
\sum_{i=\b_m+1}^{\gamma_m} \mu_{i,\tilde{d}_m,\w} \right)^2 \leq 4
\tilde{d}_m^{2/s}\,.
\]
Therefore, 
\[
\lim_{m\ra\infty} P_\w\left( \left|
\frac{\bar{T}^{(\tilde{d}_m)}_{x_m} -
\bar{T}^{(\tilde{d}_m)}_{\nu_{\b_m}} - E_\w^{\nu_{\b_m}}
\bar{T}^{(\tilde{d}_m)}_{x_m}}{\sqrt{v_{m,\w}}} \right| \geq \e
\right) \leq \lim_{m\ra\infty} \frac{ 4 \tilde{d}_m^{2/s} +
o\left((c_{k_m} \tilde{d}_m)^{2/s}\right) }{\e^2 \tilde{a}_m
\tilde{d}_m^{2/s}} = 0,\;P-a.s.
\]
where the last limit equals zero because $c_k = o(\log a_k)$.

It only remains to prove
\eqref{relevant}. Since re-writing we express
\[
\bar{T}^{(\tilde{d}_m)}_{\nu_{\b_m}}-\bar{T}^{(\tilde{d}_m)}_{\nu_{\a_m}}
- E_\w^{\nu_{\a_m}}\bar{T}^{(\tilde{d}_m)}_{\nu_{\b_m}} =
\sum_{i=\a_m+1}^{\b_m} \left( (\bar{T}^{(\tilde{d}_m)}_{\nu_i}-
\bar{T}^{(\tilde{d}_m)}_{\nu_{i-1}}) - \mu_{i,\tilde{d}_m,\w}
\right)
\]
as the sum of independent, zero-mean random variables (quenched),
we need only show the Lindberg-Feller condition. That is, we need
to show
\begin{equation}
\lim_{m\ra\infty} \frac{1}{v_{m,\w}} \sum_{i=\a_m+1}^{\b_m}
\s_{i,\tilde{d}_m,\w}^2 = 1, \quad P-a.s.\label{LF1}
\end{equation}
and for all $\e > 0$
\begin{equation}
\lim_{m\ra\infty} \frac{1}{v_{m,\w}} \sum_{i=\a_m+1}^{\b_m}
E_\w^{\nu_{i-1}} \left[ \left(
\bar{T}^{(\tilde{d}_m)}_{\nu_i}-\mu_{i,\tilde{d}_m,\w} \right)^2
\mathbf{1}_{ |
\bar{T}^{(\tilde{d}_m)}_{\nu_i}-\mu_{i,\tilde{d}_m,\w} | > \e
\sqrt{v_{m,\w}})}  \right] = 0, \quad P-a.s. \label{LF2}
\end{equation}
To prove \eqref{LF1} note that
\begin{align*}
\frac{1}{v_{m,\w}} \sum_{i=\a_m+1}^{\b_m} \s_{i,\tilde{d}_m,\w}^2
= 1 + \frac{\sum_{i=\a_m+1}^{\b_m} \left( \s_{i,\tilde{d}_m,\w}^2 -
\mu_{i,\tilde{d}_m,\w}^2 \right) }{v_{m,\w}}.
\end{align*}
However, another application of Corollary \ref{Vsdiff} and the Borel-Cantelli Lemma implies that
\[
\sum_{i=\a_m+1}^{\b_m} (\s_{i,\tilde{d}_m,\w}^2 -
\mu_{i,\tilde{d}_m,\w}^2) = o\left((\d_{k_m} \tilde{d}_m)^{2/s}\right). 
\]
Recalling that
 $v_{m,\w} \geq \tilde{a}_m \tilde{d}_m^{2/s}$, we have that \eqref{LF1} is
 proved.

To prove \eqref{LF2} we break the sum up into two parts depending
on whether $M_i$ is ``small" or ``large". Specifically, for $\e'\in(0,\frac{1}{3})$
we decompose the sum as
\begin{align}
&  \frac{1}{v_{m,\w}} \sum_{i=\a_m+1}^{\b_m} E_\w^{\nu_{i-1}} \left[ \left( \bar{T}^{(\tilde{d}_m)}_{\nu_i}-\mu_{i,\tilde{d}_m,\w} \right)^2 \mathbf{1}_{ | \bar{T}^{(\tilde{d}_m)}_{\nu_i}-\mu_{i,\tilde{d}_m,\w}  | > \e \sqrt{v_{m,\w}})}  \right] \mathbf{1}_{M_i \leq \tilde{d}_m^{(1-\e')/s}}\label{Msmall} \\
&\quad + \frac{1}{v_{m,\w}} \sum_{i=\a_m+1}^{\b_m}
E_\w^{\nu_{i-1}} \left[ \left(
\bar{T}^{(\tilde{d}_m)}_{\nu_i}-\mu_{i,\tilde{d}_m,\w} \right)^2
\mathbf{1}_{ |
\bar{T}^{(\tilde{d}_m)}_{\nu_i}-\mu_{i,\tilde{d}_m,\w} | > \e
\sqrt{v_{m,\w}}}  \right]\mathbf{1}_{M_i >
\tilde{d}_m^{(1-\e')/s}} . \label{Mlarge}
\end{align}
We get an upper bound for \eqref{Msmall} by first omitting the
indicator function inside the expectation,
and then expanding the sum to be up
to $n_{k_m} \geq \b_m$. Thus \eqref{Msmall} is bounded above by
\[
 \frac{1}{v_{m,\w}}
\sum_{i=\a_m+1}^{\b_m} \s_{i,\tilde{d}_m,\w}^2 \mathbf{1}_{M_i
\leq \tilde{d}_m^{(1-\e')/s}}  \leq \frac{1}{v_{m,\w}}
\sum_{i=n_{k_m-1}+1}^{n_{k_m}} \s_{i,\tilde{d}_m,\w}^2
\mathbf{1}_{M_i \leq \tilde{d}_m^{(1-\e')/s}} .
\]
However, since $d_k$ grows exponentially fast, the Borel-Cantelli lemma and
Lemma \ref{Vsmall} give that
\begin{equation}
\sum_{i=n_{k-1}+1}^{n_k} \s_{i,d_k,\w}^2 \mathbf{1}_{M_i \leq
d_k^{(1-\e')/s}} = o(d_k^{2/s}). \label{smV}
\end{equation}
Therefore, since our choice of the subsequence $n_{k_m}$ gives
that $v_{m,\w} \geq \tilde{d}_m^{2/s}$, we have
that \eqref{Msmall} tends to zero as $m\ra\infty$.

To get an upper bound for \eqref{Mlarge}, first note that our
choice of the subsequence $n_{k_m}$ gives that $\e \sqrt{v_{m,\w}}
\geq \e \sqrt{\tilde{a}_m} \mu_{i,\tilde{d}_m,\w}$ for any $i\in
(\a_m, \b_m]$. Thus, for $m$ large enough we can replace the
indicators inside the expectations in \eqref{Mlarge} by the
indicators of the events $\{ \bar{T}_{\nu_i}^{(\tilde{d}_m)}  >
(1+\e \sqrt{\tilde{a}_m}) \mu_{i,\tilde{d}_m,\w} \}$. Thus, for
$m$ large enough and $i\in(\a_m, \b_m]$, we have
\begin{align}
& E_\w^{\nu_{i-1}} \left[ \left(
\bar{T}^{(\tilde{d}_m)}_{\nu_i}-\mu_{i,\tilde{d}_m,\w} \right)^2
\mathbf{1}_{ |
\bar{T}^{(\tilde{d}_m)}_{\nu_i}-\mu_{i,\tilde{d}_m,\w} | >
\e \sqrt{v_{m,\w}}}  \right] \nonumber \\
&\qquad \leq E_\w^{\nu_{i-1}} \left[ \left(
\bar{T}^{(\tilde{d}_m)}_{\nu_i}-\mu_{i,\tilde{d}_m,\w} \right)^2
\mathbf{1}_{ \bar{T}^{(\tilde{d}_m)}_{\nu_i}> (1+\e
\sqrt{\tilde{a}_m})
\mu_{i,\tilde{d}_m,\w} }  \right]  \nonumber \\
&\qquad =  \e^2\tilde{a}_m \mu_{i,\tilde{d}_m,\w}^2 P_\w^{\nu_{i-1}}
\left( \bar{T}_{\nu_i}^{(\tilde{d}_m)} > (1+\e \sqrt{\tilde{a}_m}) \mu_{i,\tilde{d}_m,\w}
\right)  \label{truncmoment} \\
&\qquad\qquad + \int_{1+\e\sqrt{\tilde{a}_m}}^{\infty} P_\w^{\nu_{i-1}}
\left( \bar{T}_{\nu_i}^{(\tilde{d}_m)} > x \mu_{i,\tilde{d}_m,\w}
\right) 2(x-1)\mu_{i,\tilde{d}_m,\w}^2 dx \,. \nonumber
\end{align}
We want to use Lemma \ref{momentbound} get an upper bounds on the probabilities in the last line above. Lemma \ref{momentbound} and the Borel-Cantelli lemma give
that for $k$ large enough, $E_\w^{\nu_{i-1}}\left(
\bar{T}^{(d_k)}_{\nu_i} \right)^j \leq 2^j j! \mu_{i,d_k,\w}^j$,
for all $n_{k-1}<i\leq n_k$ such that $M_i > d_k^{(1-\e')/s}$.
Multiplying by $(4\mu_{i,d_k,\w})^{-j}$ and summing over $j$ gives
that $E_\w^{\nu_{i-1}} e^{ \bar{T}^{(d_k)}_{\nu_i} /(4
\mu_{i,d_k,\w}) } \leq 2$. Therefore, Chebychev's inequality gives that 
\[
P_\w^{\nu_{i-1}} \left( \bar{T}_{\nu_i}^{(d_k)} > x \mu_{i,d_k,\w}
\right) \leq e^{- x/4} E_\w^{\nu_{i-1}} e^{
\bar{T}^{(d_k)}_{\nu_i} / (4 \mu_{i,d_k,\w})} \leq 2 e^{-x/4}
\,.\]
Thus, for all $m$ large enough and for all $i$ with $\a_m<i\leq \b_m
\leq n_{k_m}$ and $M_i > \tilde{d}_m^{(1-\e')/s}$ we have from \eqref{truncmoment} that
\begin{align*}
& E_\w^{\nu_{i-1}} \left[ \left( \bar{T}^{(\tilde{d}_m)}_{\nu_i}-\mu_{i,\tilde{d}_m,\w} \right)^2
\mathbf{1}_{ |\bar{T}^{(\tilde{d}_m)}_{\nu_i}-\mu_{i,\tilde{d}_m,\w} | > \e \sqrt{v_{m,\w}}}  \right] \\
&\qquad \leq \e^2\tilde{a}_m \mu_{i,\tilde{d}_m,\w}^2 2 e^{-(1+\e\sqrt{\tilde{a}_m})/4} 
+ \int_{1+\e\sqrt{\tilde{a}_m}}^{\infty} 2e^{-x/4} 2(x-1)\mu_{i,\tilde{d}_m,\w}^2 dx \\
&\qquad = \left( 2\e^2\tilde{a}_m +  16(4+\e \sqrt{\tilde{a}_m}) \right) e^{-(1+\e\sqrt{\tilde{a}_m})/4}
\mu_{i,\tilde{d}_m,\w}^2 
\end{align*}
%
%Thus, for all $m$ large enough and for all $i$ with $\a_m<i\leq \b_m
%\leq n_{k_m}$ and $M_i > \tilde{d}_m^{(1-\e')/s}$ we have that
%\[
% \e^2\tilde{a}_m \mu_{i,\tilde{d}_m,\w}^2 P_\w^{\nu_{i-1}}
%\left( \bar{T}_{\nu_i}^{(\tilde{d}_m)} > (1+\e \sqrt{\tilde{a}_m}) \mu_{i,\tilde{d}_m,\w} \right) 
%\leq \e^2\tilde{a}_m \mu_{i,\tilde{d}_m,\w}^2 2 e^{-(1+\e\sqrt{\tilde{a}_m})/4},
%\]
%and
%\begin{align*}
%\int_{1+\e\sqrt{\tilde{a}_m}}^{\infty} P_\w^{\nu_{i-1}} \left(
%\bar{T}_{\nu_i}^{(\tilde{d}_m)} > x \mu_{i,\tilde{d}_m,\w} \right)
%2(x-1)\mu_{i,\tilde{d}_m,\w}^2 dx 
%&\leq \int_{1+\e\sqrt{\tilde{a}_m}}^{\infty} 2e^{-x/4}
%2(x-1)\mu_{i,\tilde{d}_m,\w}^2 dx \\
%&= 16(4+\e \sqrt{\tilde{a}_m}) e^{-(1+\e\sqrt{\tilde{a}_m})/4}
%\mu_{i,\tilde{d}_m,\w}^2\,.
%\end{align*}
%
Recalling the definition of $v_{m,\w}=\sum_{i=\a_m+1}^{\b_m}
\mu_{i,\tilde{d}_m,\w}^2$, we have that as $m\ra\infty$,
\eqref{Mlarge} is bounded above by
\begin{align*}
&\lim_{m\ra\infty} \frac{1}{v_{m,\w}} \sum_{i=\a_m+1}^{\b_m}
\left( 2\e^2\tilde{a}_m +  16(4+\e \sqrt{\tilde{a}_m}) \right) e^{-(1+\e\sqrt{\tilde{a}_m})/4}
\mu_{i,\tilde{d}_m,\w}^2 \mathbf{1}_{M_i > \tilde{d}_m^{(1-\e')/s}} \\
&\qquad \leq \lim_{m\ra\infty} \left( 2\e^2\tilde{a}_m +  16(4+\e \sqrt{\tilde{a}_m}) \right)
e^{-(1+\e\sqrt{\tilde{a}_m})/4} = 0
\,.\end{align*}
This finishes the proof of \eqref{LF2} and thus of Lemma \ref{lem-latenight}.
\end{proof}
\begin{proof}[\textbf{Proof of Theorem
\ref{nonlocal}:}]$\left.\right.$\\
Note first that from Lemma \ref{mdevu} and the
Borel-Cantelli lemma, we have that for any $\e>0$, $E_\w
T_{\nu_{n_k}} = o(n_k^{(1+\e)/s})$, $P-a.s.$ This is equivalent to
\begin{equation}
    \label{eq-100407d}
\limsup_{k\ra\infty} \frac{\log E_\w T_{\nu_{n_k}}}{\log n_k} \leq
\frac{1}{s}, \quad P-a.s.
\end{equation}
We can also get bounds on the
probability of $E_\w T_{\nu_n}$ being small. Since $E_\w^{\nu_{i-1}} T_{\nu_i} \geq M_i$ we have
\[
P\left(E_\w T_{\nu_n} \leq n^{(1-\e)/s}\right) \leq P\left(
M_i \leq n^{(1-\e)/s}, \quad \forall i\leq
n \right) \leq \left( 1 - P\left(M_1 > n^{(1-\e)/s}
\right)\right)^n,
\]
and since $P(M_1 > n^{(1-\e)/s}) \sim C_5 n^{-1+\e}$, see
\eqref{Mtail},
we have
$P\left(E_\w T_{\nu_n} \leq n^{(1-\e)/s}\right) \leq
e^{-n^{\e/2}}$.  Thus, by the Borel-Cantelli lemma, for any $\e>0$
we have that $E_\w T_{\nu_{n_k}} \geq n_k^{(1-\e)/s}$ for all $k$
large enough, $P-a.s.$, or equivalently
\begin{equation}
    \label{eq-100407c}
\liminf_{k\ra\infty} \frac{\log E_\w T_{\nu_{n_k}}}{\log n_k} \geq
\frac{1}{s}, \quad P-a.s.
\end{equation}
Let $n_{k_m}$ be the subsequence specified in Theorem
\ref{gaussianT}, and define $t_m:= E_\w T_{n_{k_m}}$. Then,
by \eqref{eq-100407d} and \eqref{eq-100407c},
 $\lim_{m\ra\infty} \frac{\log
t_m}{\log n_{k_m}} = {1}/{s}$.\\
For any $t$ define $X_t^*:= \max \{ X_n : n\leq t \}$. Then, for
any $x\in (0,\infty)$ we have
\begin{align*}
P_\w\left( \frac{X_{t_m}^*}{n_{k_m}} < x \right) &= P\left(
X_{t_m}^* < x n_{k_m} \right)
= P_\w\left( T_{x n_{k_m}} > t_m \right)\\
&= P_\w\left( \frac{T_{x n_{k_m}} - E_\w T_{x
n_{k_m}}}{\sqrt{v_{m,\w}}}
> \frac{ E_\w T_{n_{k_m}}-E_\w T_{x n_{k_m}}}{\sqrt{v_{m,\w}}} \right)
\,.\end{align*}
Now, with notation as in Theorem \ref{gaussianT}, we have that for
all $m$ large enough $\nu_{\b_m} < x n_{k_m} < \nu_{\gamma_m}$
(note that this also uses the fact that $\nu_n/n \ra E_P \nu$,
$P-a.s.$). Thus $\frac{T_{x n_{k_m}} - E_\w T_{x
n_{k_m}}}{\sqrt{v_{m,\w}}} \limdw Z\sim N(0,1)$. Then, we will have
proved that $ \lim_{m\ra\infty} P_\w\left( \frac{X_{t_m}^*}{n_{k_m}} <
x \right) = \frac{1}{2}$ for any $x\in(0,\infty)$ if we can show
\begin{equation}
\lim_{m\ra\infty} \frac{ E_\w T_{n_{k_m}}-E_\w T_{x
n_{k_m}}}{\sqrt{v_{m,\w}}} = 0\,,\quad P-a.s.  \label{sd}
\end{equation}
For $m$ large enough we have $n_{k_m}, x n_{k_m} \in (\nu_{\b_m} , \nu_{\gamma_m})$. 
Thus, for $m$ large enough,
\begin{align*}
\left|\frac{ E_\w T_{x n_{k_m}}-E_\w T_{n_{k_m}}}{\sqrt{v_{m,\w}}}\right| \leq
\frac{E_\w^{\nu_{\b_m}} T_{\nu_{\gamma_m}} }{\sqrt{v_{m,\w}}} =
\frac{1}{\sqrt{v_{m,\w}}} \left( E_\w^{\nu_{\b_m}} \left(
T_{\nu_{\gamma_m}} - \bar{T}_{\nu_{\gamma_m}}^{(\tilde{d}_m)}
\right) + \sum_{i=\b_m+1}^{\gamma_m} \mu_{i,\tilde{d}_m,\w}
\right)\,.
\end{align*}
Since $\a_m \leq \b_m \leq \gamma_m \leq n_{k_m+1}$ for all
$m$ large enough, we can apply \eqref{ETETbar} to get
\[
\lim_{m\ra\infty} E_\w^{\nu_{\b_m}} \left( T_{\nu_{\gamma_m}} -
\bar{T}_{\nu_{\gamma_m}}^{(\tilde{d}_m)} \right) \leq
\lim_{m\ra\infty} E_\w^{\nu_{\a_m}} \left( T_{\nu_{n_{k_m+1}}} -
\bar{T}_{\nu_{n_{k_m+1}}}^{(\tilde{d}_m)} \right)=0.
\]
Also, from our choice of $n_{k_m}$ we have that
$\sum_{i=\b_m+1}^{\gamma_m} \mu_{i,\tilde{d}_m,\w} \leq 2
\tilde{d}_m^{1/s}$ and $v_{m,\w} \geq \tilde{a}_m
\tilde{d}_m^{2/s}$. Thus \eqref{sd} is proved. Therefore
\[
\lim_{m\ra\infty} P_\w\left(\frac{X_{t_m}^*}{n_{k_m}} \leq x
\right) = \frac{1}{2}, \quad \forall x\in(0,\infty),
\]
and obviously $\lim_{m\ra\infty}
P_\w\left(\frac{X_{t_m}^*}{n_{k_m}} < 0 \right) = 0$ since
$X_n$ is transient to the right $\P-a.s.$ due to Assumption
\ref{essentialasm}. Finally, note that
\[
\frac{X_t^* - X_t}{\log^2 t} =  \frac{X_t^* - \nu_{N_t}}{\log^2 t}
+ \frac{\nu_{N_t} - X_t}{\log^2 t} \leq \frac{\max_{i\leq t}
(\nu_i - \nu_{i-1}) }{\log^2 t} + \frac{\nu_{N_t} - X_t}{\log^2 t}
\,.\]
However, Lemma \ref{seperation} and an easy application of
Lemma \ref{nutail} and the Borel-Cantelli lemma gives that
\[
\lim_{t\ra\infty} \frac{X_t^* - X_t}{\log^2 t} = 0, \quad \P-a.s.
\]
This finishes the proof of the theorem.
\end{proof}
\end{subsection}
\end{section}

%%%%%%%%%%%%%%%%% ASYMPTOTICS OF THE TAIL OF EXPECTED CROSSING TIME %%%%%%%%%%%

\begin{section}{Asymptotics of the tail of $E_\w T_\nu$} \label{tailofTnu}
Recall that
$
E_\w T_\nu = \nu + 2\sum_{j=0}^{\nu-1} W_j
= \nu + 2 \sum_{i\leq j, 0\leq j < \nu} \Pi_{i,j} ,
$
and for any $A>1$ define
\[
\s = \s_A = \inf\{ n \geq 1: \Pi_{0,n-1} \geq A \}\,.
\]
Note that $\s-1$ is a stopping time for the sequence $\Pi_{0,k}$. For any $A>1$, $\{\s > \nu \} = \{ M_1 < A \}$. Thus we have by \eqref{TbigMsmall} that for any $A>1$,
\begin{equation}
Q(E_\w T_\nu > x, \s > \nu) = Q(E_\w T_\nu > x, M_1 < A) =  o(x^{-s}). \label{long}
\end{equation}
Thus, we may focus on the tail estimates $Q(E_\w T_\nu > x, \s < \nu)$ in which case we can use the following expansion of $E_\w T_\nu$:
\begin{align}
E_\w T_\nu &= \nu + 2 \sum_{i<0\leq j<\s-1} \Pi_{i,j} + 2 \sum_{0 \leq i \leq j<\s-1} \Pi_{i,j}  + 2 \sum_{\s\leq i\leq j < \nu} \Pi_{i,j}+2 \sum_{i\leq \s-1 \leq j < \nu} \Pi_{i,j}\nonumber\\
&= \nu + 2 W_{-1}R_{0,\s-2} + 2 \sum_{j=0}^{\s-2} W_{0,j} + 2 \sum_{i=\s}^{\nu-1} R_{i,\nu-1} + 2 W_{\s-1}(1+R_{\s,\nu-1})\,.\label{expand}
\end{align}
We will show that the dominant term in \eqref{expand} is the last term: $2 W_{\s-1}(1+R_{\s,\nu-1})$. A few easy consequences of Lemmas \ref{nutail} and \ref{Wtail} are that the tails of the first three terms in the expansion \eqref{expand} are negligible. The following statements are true for any $\d>0$ and any $A>1$:
\begin{equation}
Q(\nu > \d x)=P(\nu>\d x) = o(x^{-s})\,,\label{first}
\end{equation}
\begin{align}
Q(2W_{-1}R_{0,\s-2}>\d x, \s < \nu) &\leq Q(W_{-1}> \sqrt{\d x}) + P(2R_{0,\s-2} > \sqrt{\d x},\s<\nu) \nonumber \\
&\leq Q(W_{-1}> \sqrt{\d x}) + P(2 \nu A > \sqrt{\d x}) \label{second}
= o(x^{-s}),
\end{align}
    \begin{equation}
        Q\left(2 \sum_{j=0}^{\s-2} W_{0,j} > \d x, \s<\nu\right)
    \leq P\left(2 \sum_{j=1}^{\s-1} j A > \d x, \s< \nu\right)
        %\nonumber \\
\leq P( \nu^2 A > \d x) \label{third}
= o(x^{-s}).
\end{equation}
In the first inequality in \eqref{third}, we used the fact that $\Pi_{i,j} \leq \Pi_{0,j}$ for any $0<i<\nu$ since $\Pi_{0,i-1} \geq 1$.

The fourth term in \eqref{expand} is not negligible, but we can make it arbitrarily small by taking $A$ large enough.
\begin{lem} \label{fourth}
For all $\d>0$, there exists an $A_0=A_0(\d)<\infty$ such that
\[
P\left( 2 \sum_{\s_A\leq i < \nu} R_{i,\nu-1} > \d x \right) < \d x^{-s}, \quad \forall A\geq A_0(\d)\,.
\]
\end{lem}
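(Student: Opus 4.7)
The strategy is to apply the strong Markov property of the potential walk $V$ at the stopping time $\sigma_A$, then decompose the post-$\sigma_A$ trajectory into secondary descending ladder excursions. The key prefactor is $P(\sigma_A<\nu)=P(M_1\geq A)\leq 2C_5 A^{-s}$ (for $A$ large), which comes from \eqref{Mtail} and provides the desired $A^{-s}$ smallness.

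On the event $\{\sigma_A<\nu\}$ define the secondary descending ladder times $\tau_0=\sigma_A$ and $\tau_j=\inf\{k>\tau_{j-1}:V(k)<V(\tau_{j-1})\}$, and let $N=\inf\{j:V(\tau_j)<0\}$, so $\tau_N=\nu$. For $i\in[\tau_{j-1},\tau_j-1]$ one has $V(i)\geq V(\tau_{j-1})>V(\tau_j)$, hence $\Pi_{i,\tau_j-1}\leq 1$, and the splitting $R_{i,\nu-1}=R_{i,\tau_j-1}+\Pi_{i,\tau_j-1}R_{\tau_j,\nu-1}\leq R_{i,\tau_j-1}+R_{\tau_j,\nu-1}$, summed over $i$ within each block and then over $j$, yields the master bound
\[
\sum_{\sigma_A\leq i<\nu}R_{i,\nu-1}\leq \sum_{j=1}^N I_j+\sum_{j=1}^N(\tau_j-\tau_{j-1})R_{\tau_j,\nu-1},\qquad I_j:=\sum_{\tau_{j-1}\leq i\leq l<\tau_j}\Pi_{i,l}.
\]
By the strong Markov property at each $\tau_j$, after recentering the $I_j$ are independent copies of the first-excursion sum $\sum_{0\leq i\leq l<\nu}\Pi_{i,l}$ under $P$, for which a polynomial tail $P(I_j>y)\leq C y^{-s}$ can be extracted from \eqref{Mtail} (the dominant contribution to $I_j$ comes from the single apex product). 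The number $N$ of secondary blocks is the number of descending ladder steps needed to descend from height $h:=V(\sigma_A)\geq\log A$ down below $0$, which by renewal theory applied to the i.i.d. ladder-height increments of $V$ satisfies $N=O(\log A)$ with exponentially decaying tails.

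A union bound over the $O(\log A)$ independent blocks, combined with the Kesten tail of $R_{\sigma_A,\nu-1}$ to handle the tail-to-end piece (observing that $\sum_j(\tau_j-\tau_{j-1})=\nu-\sigma_A\leq\nu$ with exponential tails for $\nu$, and $R_{\tau_j,\nu-1}$ is bounded in a useful way via the ladder ordering), yields $P\big(\sum_{\sigma_A\leq i<\nu}R_{i,\nu-1}>\delta x/2\mid\sigma_A<\nu\big)\leq C_\delta(\log A)^{1+s}x^{-s}$. Multiplying by $P(\sigma_A<\nu)\leq 2C_5 A^{-s}$ produces an overall bound of order $(\log A)^{1+s}A^{-s}x^{-s}$; choosing $A_0(\delta)$ so that $2C_5 C_\delta(\log A)^{1+s}\leq \delta A^s$ gives the required estimate $<\delta x^{-s}$ for all $A\geq A_0$.

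The main obstacle will be obtaining a clean tail $P(I_j>y)\leq Cy^{-s}$ without circularly invoking Theorem \ref{Tnutail}, which is the very object of this section: the crude bound $I_j\leq(\tau_j-\tau_{j-1})^2 M_j^{(2)}$ picks up spurious logarithmic factors in $y$ through the block-length factor. One must therefore use a more careful argument in which the apex contribution dominates $I_j$ up to a factor with finite $L^s$-norm (using that, conditionally on a large excursion, the block length $\tau_j-\tau_{j-1}$ is controlled by $\log M_j^{(2)}$ only through a sub-power correction that is absorbed by the slack in $A_0$). A secondary issue is that the tail-to-end terms $R_{\tau_j,\nu-1}$ are coupled across $j$; this is handled by the geometric decay $\Pi_{\tau_j,\tau_{j+1}-1}<1$ inherent in the ladder structure, which lets the whole tail-to-end piece be dominated by a single $R_{\sigma_A,\nu-1}$ times a geometric series.
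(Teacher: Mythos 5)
Your architecture has two genuine gaps, either of which is fatal as written. First, the block tails: your union bound over the $O(\log A)$ secondary blocks needs $P(I_j>y)\leq Cy^{-s}$ with the exact exponent $s$ and no logarithmic correction, but $I_j$ is (a shifted copy of) $\sum_{0\leq i\leq l<\nu}\Pi_{i,l}=\sum_{i=0}^{\nu-1}R_{i,\nu-1}$, which up to the additive terms $\nu$ and $2W_{-1}R_{0,\nu-1}$ in \eqref{ETnuexpand} is $E_\w T_\nu$ itself; a clean order-$y^{-s}$ bound for this quantity is essentially the content of Theorem \ref{Tnutail}, which the present lemma is being used to prove. You acknowledge the circularity but do not resolve it, and the elementary bound $I_j\leq \nu^2 M_1$ only yields $y^{-s+\e}$, which the union bound cannot absorb. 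Second, the cross terms: your "geometric domination" runs in the wrong direction. From $R_{\s_A,\nu-1}=R_{\s_A,\tau_j-1}+\Pi_{\s_A,\tau_j-1}R_{\tau_j,\nu-1}$ one gets $R_{\tau_j,\nu-1}\leq R_{\s_A,\nu-1}/\Pi_{\s_A,\tau_j-1}=R_{\s_A,\nu-1}e^{V(\s_A)-V(\tau_j)}$, and since $V(\tau_j)$ strictly decreases along the ladder, this factor grows with $j$, reaching order $e^{V(\s_A)}\geq A$ at $j=N$; there is no domination by a single $R_{\s_A,\nu-1}$ times a convergent series. (A secondary issue: $V(\s_A)$ exceeds $\log A$ by the overshoot $\log\rho_{\s_A-1}$, which is unbounded unless $\rho$ has bounded support, so even "$N=O(\log A)$" requires the kind of moment control carried out in Lemma \ref{finitemoment}.)

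The intended proof is far more elementary and needs no ladder decomposition after $\s_A$. Bound $R_{i,\nu-1}\leq R_i$, write $\frac{\d x}{2}=\frac{\d x}{2}\frac{6}{\pi^2}\sum_{i\geq 1} i^{-2}$ and union bound term by term; observe that $\{\s_A\leq i<\nu\}$ is measurable with respect to $(\rho_0,\dots,\rho_{i-1})$ while $R_i$ depends only on $(\rho_i,\rho_{i+1},\dots)$, so the two probabilities factor. Kesten's bound $P(R_i>y)\leq K_1y^{-s}$ from \eqref{Rtail} then gives $P(2\sum_{\s_A\leq i<\nu}R_{i,\nu-1}>\d x)\leq K_1(3\d/\pi^2)^{-s}x^{-s}E_P[\nu^{2s+1}\mathbf{1}_{\s_A<\nu}]$, and the last expectation tends to $0$ as $A\ra\infty$ by dominated convergence since $E_P\nu^{2s+1}<\infty$ and $P(\s_A<\nu)\ra 0$. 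The smallness in $A$ thus comes for free from weighting $P(\s_A<\nu)$ against an integrable function of $\nu$, with the $x^{-s}$ decay supplied entirely by Kesten's tail for the independent variables $R_i$.
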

\begin{proof}
This proof is essentially a copy of the proof of Lemma 3 in \cite{kksStable}.
\begin{align*}
P\left( 2 \sum_{\s_A\leq i < \nu} R_{i,\nu-1} > \d x \right)
&\leq P\left( \sum_{\s_A \leq i < \nu} R_i > \frac{\d}{2} x \right) = P\left( \sum_{i=1}^\infty \mathbf{1}_{\s_A \leq i < \nu} R_i > \frac{\d}{2} x \frac{6}{\pi^2} \sum_{i=1}^\infty i^{-2} \right)\\
&\leq \sum_{i=1}^\infty P\left( \mathbf{1}_{\s_A \leq i < \nu} R_i >  x \frac{3\d}{\pi^2} i^{-2} \right)\,.
\end{align*}
However, since the event $\{\s_A \leq i < \nu\}$ depends only on $\rho_j$ for $j<i$, and $R_i$ depends only on $\rho_j$ for $j\geq i$, we have that
\[
P\left( 2 \sum_{\s_A\leq i < \nu} R_{i,\nu-1} > \d x \right) \leq \sum_{i=1}^\infty P\left( \s_A \leq i < \nu\right) P\left( R_i >  x \frac{3\d}{\pi^2} i^{-2} \right)\,.
\]
Now, from \eqref{Rtail} we have that there exists a $K_1 > 0$  such that $P(R_0 > x) \leq K_1 x^{-s}$ for all $x>0$. We then conclude that
\begin{align}
P\left( \sum_{\s_A\leq i < \nu} R_{i,\nu-1} > \d x \right)
&\leq  K_1 \left( \frac{3\d}{\pi^2}\right)^{-s}x^{-s} \sum_{i=1}^\infty P\left( \s_A \leq i < \nu\right) i^{2s}\nonumber\\
&= K_1 \left( \frac{3\d}{\pi^2}\right)^{-s}x^{-s} E_P\left[\sum_{i=1}^{\infty} \mathbf{1}_{\s_A\leq i < \nu} i^{2s}  \right]\nonumber\\
&\leq  K_1 \left( \frac{3\d}{\pi^2}\right)^{-s}x^{-s} E_P[ \nu^{2s+1} \mathbf{1}_{\s_A < \nu}]\,.\label{small}
\end{align}
Since $E_P \nu^{2s+1}<\infty$ and $\lim_{A\ra\infty} P(\s_A<\nu) = 0$,
we have that the right side of
\eqref{small} can be made less than $\d x^{-s}$ by choosing $A$ large enough.
\end{proof}
We need one more lemma before analyzing the dominant term in \eqref{expand}.
\begin{lem}\label{finitemoment}
$E_Q\left[ W_{\s_A-1}^s\mathbf{1}_{\s_A<\nu} \right]< \infty$ for any $A>1$.
\end{lem}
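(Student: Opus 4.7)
The plan is to decompose $W_{\sigma_A-1}$ into its contributions from indices $<0$ and $\geq 0$, bound the ``forward'' piece using the constraint $\sigma_A<\nu$, and then reduce the problem to a moment estimate for the stopping time $\sigma_A$ under a Cram\'er tilt of $P$.

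First, I would split the sum defining $W_{\sigma_A-1}$ at the origin to obtain
\[
W_{\sigma_A-1} \;=\; \Pi_{0,\sigma_A-1}\,W_{-1} \;+\; W_{0,\sigma_A-1}.
\]
On $\{\sigma_A<\nu\}$, every index $1\le k\le \sigma_A-1$ satisfies $k-1\le \nu-2$, so by the very definition of $\nu$, $\Pi_{0,k-1}\ge 1$. Writing $\Pi_{k,\sigma_A-1}=\Pi_{0,\sigma_A-1}/\Pi_{0,k-1}$ and summing over $k=0,\ldots,\sigma_A-1$ gives $W_{0,\sigma_A-1}\le \sigma_A\,\Pi_{0,\sigma_A-1}$. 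Thus, using $(a+b)^s\le 2^s(a^s+b^s)$,
\[
W_{\sigma_A-1}^s\,\mathbf{1}_{\sigma_A<\nu} \;\le\; 2^s\,\Pi_{0,\sigma_A-1}^s\,(W_{-1}^s+\sigma_A^s)\,\mathbf{1}_{\sigma_A<\nu}.
\]

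Next, since $W_{-1}$ depends only on $\{\rho_i\}_{i<0}$ while $\Pi_{0,\sigma_A-1}$, $\sigma_A$ and $\mathbf{1}_{\sigma_A<\nu}$ depend only on $\{\rho_i\}_{i\ge 0}$, and since the conditioning event $\mathcal{R}$ defining $Q$ constrains only the former, these two collections are $Q$-independent and the $Q$-law of the forward variables coincides with their $P$-law. Hence
\[
E_Q\left[W_{\sigma_A-1}^s\mathbf{1}_{\sigma_A<\nu}\right] \;\le\; 2^s\,E_Q[W_{-1}^s]\cdot E_P[\Pi_{0,\sigma_A-1}^s\mathbf{1}_{\sigma_A<\nu}] \;+\; 2^s\,E_P[\Pi_{0,\sigma_A-1}^s\,\sigma_A^s\,\mathbf{1}_{\sigma_A<\nu}],
\]
and $E_Q[W_{-1}^s]<\infty$ by Lemma \ref{Wtail}, which provides exponential tails of $W_{-1}$ under $Q$.

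To handle the two remaining expectations I would introduce the Cram\'er tilt $\tilde P$ defined by $d\tilde P/dP|_{\mathcal{F}_n}:=\Pi_{0,n-1}^s$ on $\mathcal{F}_n=\sigma(\rho_0,\ldots,\rho_{n-1})$; this is a probability measure because the $\rho_i$'s are i.i.d.\ and $E_P\rho^s=1$, and under $\tilde P$ the $\log\rho_i$'s remain i.i.d. Since $\sigma_A$ is an $\mathcal{F}$-stopping time, optional stopping yields
\[
E_P\!\left[\Pi_{0,\sigma_A-1}^s\,f(\sigma_A)\,\mathbf{1}_{\sigma_A<\infty}\right] \;=\; E_{\tilde P}\!\left[f(\sigma_A)\,\mathbf{1}_{\sigma_A<\infty}\right]
\]
for any nonnegative measurable $f$. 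Under $\tilde P$, the drift $E_{\tilde P}[\log\rho]=E_P[\rho^s\log\rho]$ is strictly positive (by convexity of $\gamma\mapsto E_P\rho^\gamma$ together with $E_P\log\rho<0$ and $E_P\rho^s=1$), and $E_{\tilde P}[\rho^{-\theta}]=E_P[\rho^{s-\theta}]<1$ for every $\theta\in(0,s)$ (again by convexity). Cram\'er's theorem then gives $\tilde P(\sigma_A>n)\le e^{-\alpha n}$ for some $\alpha>0$ and all $n$ sufficiently large, so $E_{\tilde P}[\sigma_A^s]<\infty$ and $E_{\tilde P}[\mathbf{1}_{\sigma_A<\infty}]\le 1$, yielding the required finiteness.

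The main obstacle is purely technical: ensuring that $\sigma_A$ has a finite $s$-th moment under the tilted measure. This reduces to verifying positive drift and the existence of a one-sided exponential moment for $\log\rho$ under $\tilde P$, both of which follow from Assumptions \ref{essentialasm}--\ref{techasm} via the convexity of $\gamma\mapsto E_P\rho^\gamma$.
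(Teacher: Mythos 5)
Your proof is correct, and while the first half coincides with the paper's argument, the second half takes a genuinely different route. Like the paper, you reduce to bounding $W_{\s_A-1}\le(\s_A+W_{-1})\Pi_{0,\s_A-1}$ on $\{\s_A<\nu\}$ and factor out $E_Q[W_{-1}^s]<\infty$ by the independence of the environment left and right of the origin. The divergence is in how the remaining forward expectation $E_P[\Pi_{0,\s_A-1}^s\,\s_A^s\,\mathbf{1}_{\s_A<\nu}]$ is controlled. The paper uses the overshoot bound $\Pi_{0,\s_A-1}\le A\rho_{\s_A-1}$, decomposes over $\{\s_A=k\}$, and exploits the independence of $\rho_{k-1}$ from $\{k\le\nu\}$ together with $E_P\rho^s=1$ to reduce everything to $\sum_k k^s P(\nu\ge k)<\infty$, i.e., to the exponential tail of $\nu$ from Lemma \ref{nutail}; the constraint $\s_A<\nu$ is thus essential to its argument. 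You instead discard that constraint entirely, tilt by the mean-one martingale $\Pi_{0,n-1}^s$ (Wald's likelihood-ratio identity at the stopping time $\s_A$), and observe that under the tilted law the potential has positive drift and a one-sided exponential moment ($E_{\tilde P}\rho^{-\theta}=E_P\rho^{s-\theta}<1$), so $\s_A$ itself has exponential tails and $E_{\tilde P}\s_A^s<\infty$. Your route buys independence from the tail estimate on $\nu$ and handles unbounded $\rho$ without the overshoot trick, at the cost of setting up the exponential change of measure; the paper's route is more elementary, using only the single identity $E_P\rho^s=1$ term by term. Both arguments are valid for all $s>0$.
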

\begin{proof}
First, note that on the event $\{\s_A < \nu\}$ we have that $\Pi_{i,\s_A-1} \leq \Pi_{0,\s_A-1}$ for any $i\in[0,\s_A)$. Thus, 
\[
W_{\s_A-1} = W_{0,\s_A-1} + \Pi_{0,\s_A-1}W_{-1} \leq (\s_A + W_{-1})\Pi_{0,\s_A-1}.
\]
Also, note that $\Pi_{0,\s_A -1} \leq A \rho_{\s_A-1}$ by the definition of $\s_A$. Therefore
\[
E_Q\left[ W_{\s_A-1}^s\mathbf{1}_{\s_A<\nu} \right] \leq E_Q\left[ (\s_A + W_{-1})^s A^s\rho_{\s_A-1}^s \mathbf{1}_{\s_A<\nu} \right]
\]
Therefore, it is enough to prove that both $E_Q\left[ W_{-1}^s\rho_{\s_A-1}^s \mathbf{1}_{\s_A<\nu} \right]$ and $E_Q\left[ \s_A^s\rho_{\s_A-1}^s \mathbf{1}_{\s_A<\nu} \right]$ are finite (note that this is trivial if we assume that $\rho$ has bounded support). Since $W_{-1}$ is independent of $\rho_{\s_A-1}^s \mathbf{1}_{\s_A<\nu}$ we have that 
\[
E_Q\left[ W_{-1}^s\rho_{\s_A-1}^s \mathbf{1}_{\s_A<\nu} \right] = E_Q[W_{-1}^s]E_P[\rho_{\s_A-1}^s \mathbf{1}_{\s_A<\nu}],
\]
where we may take the second expectation over $P$ instead of $Q$ because the random variable only depends on the environment to the right of zero. By Lemma \ref{Wtail} we have that $E_Q[W_{-1}^s]<\infty$. Also, $E_P[\rho_{\s_A-1}^s \mathbf{1}_{\s_A<\nu}] \leq E_P[\s_A^s\rho_{\s_A-1}^s \mathbf{1}_{\s_A<\nu}]$, and so the Lemma will be proved once we prove the latter is finite.  However,
\begin{align*}
E_P\left[ \s_A^s \rho_{\s_A-1}^s \mathbf{1}_{\s_A<\nu} \right] = \sum_{k=1}^\infty E_P \left[ k^s \rho_{k-1}^s \mathbf{1}_{\s_A = k <\nu} \right] \leq \sum_{k=1}^\infty k^s E_P\left[\rho_{k-1}^s \mathbf{1}_{ k \leq \nu} \right],
\end{align*} 
and since the event $\{k\leq\nu\}$ depends only on $(\rho_0,\rho_1,\ldots \rho_{k-2})$ we have that $E_P\left[ \rho_{k-1}^s\mathbf{1}_{k\leq\nu}\right] = E_P\rho^s P(\nu\geq k)$ since $P$ is a product measure. Then since $E_P \rho^s = 1$ we have that 
\[
E_P\left[ \s_A^s \rho_{\s_A-1}^s \mathbf{1}_{\s_A<\nu} \right] \leq \sum_{k=1}^\infty k^s P(\nu\geq k). 
\]
This last sum is finite by Lemma \ref{nutail}.
\end{proof}
Finally, we turn to the asymptotics of the tail of $2W_{\s-1}(1+R_{\s,\nu-1})$, which is the dominant term in \eqref{expand}.
\begin{lem}\label{fifth}
For any $A>1$, there exists a constant $K_A\in (0,\infty)$ such that
\[
\lim_{x\ra\infty} x^s Q\left( W_{\s-1}(1+R_{\s,\nu-1})>x , \s < \nu \right) = K_A
\,.\]
%where we use the convention that $W_{\s-1} = R_{\s,\nu-1} = 0$ when $\s> \nu$.
\end{lem}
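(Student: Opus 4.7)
The plan is to factor the heavy-tailed and light-tailed components. Writing $W_{\s-1} = \Pi_{0,\s-1}(W_{-1} + V_\s)$ with $V_\s := \sum_{k=0}^{\s-1}\Pi_{0,k-1}^{-1}$ (and the convention $\Pi_{0,-1}:=1$), one obtains
\[
W_{\s-1}(1+R_{\s,\nu-1}) = (W_{-1}+V_\s)\,Y_A, \qquad Y_A := \Pi_{0,\s-1}(1+R_{\s,\nu-1})\mathbf{1}_{\s<\nu}.
\]
Under $Q$, $W_{-1}$ depends only on $\{\rho_i\}_{i<0}$ while $Y_A$ and $V_\s$ depend only on $\{\rho_i\}_{i\geq 0}$, so $W_{-1}$ is independent of the pair $(V_\s,Y_A)$. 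On the event $\{\s<\nu\}$ we have $\Pi_{0,k-1}\geq 1$ for all $k\leq \s$, hence $V_\s\leq \s \leq \nu$, which has exponential tails by Lemma \ref{nutail}; and $W_{-1}$ has exponential tails under $Q$ by Lemma \ref{Wtail}. In particular both $W_{-1}$ and $V_\s$ admit moments of all orders.

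The central step is the annealed tail estimate
\[
Q(Y_A>x)=P(Y_A>x)\sim c_A\,x^{-s},\qquad x\ra\infty,
\]
for some $c_A\in(0,\infty)$; the equality $Q=P$ holds since the event involves only $\{\rho_i\}_{i\geq 0}$. This is established by a renewal-theoretic analysis of the random walk $S_n:=\log\Pi_{0,n-1}$, which has negative drift but satisfies $E_P e^{sS_1}=E_P\rho^s=1$ by Assumption \ref{essentialasm}. On $\{\s<\nu\}$, $S_n$ first upcrosses level $\log A$ at time $\s$ while staying non-negative, and an exponential tilt together with the ladder-excursion arguments used to derive \eqref{Mtail} from \cite[Theorem 1]{iEV} yield $P(\Pi_{0,\s-1}>x,\s<\nu)\sim c'_A\,x^{-s}$. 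The additional multiplicative factor $1+R_{\s,\nu-1}$, averaged over the joint law of the overshoot and the subsequent descent excursion (whose distribution converges in an appropriate sense as the overshoot becomes large), contributes a finite positive multiplicative constant by a standard continuity argument for regularly varying distributions, yielding the displayed asymptotic for $Y_A$.

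With the tail of $Y_A$ in hand, Breiman's lemma applied to the independent pair $(W_{-1},Y_A)$ produces $Q(W_{-1}\,Y_A>x)\sim E_Q[W_{-1}^s]\,c_A\,x^{-s}$. The cross term $V_\s\,Y_A$ is not independent of $Y_A$, but is controlled by conditioning on the environment through time $\s-1$: after this conditioning $V_\s$ becomes a constant, while $Y_A$ retains a (conditional) regularly varying tail of index $-s$. Averaging against the law of the environment up to $\s-1$ and using the moment bound of Lemma \ref{finitemoment} to justify exchange of limits produces an additional finite additive constant $\kappa_A\in[0,\infty)$. Assembling the two contributions, one obtains
\[
K_A=\bigl(E_Q[W_{-1}^s]+\kappa_A\bigr)\,c_A\in(0,\infty),
\]
with finiteness resting on Lemmas \ref{Wtail} and \ref{finitemoment} and positivity on $W_{-1}>0$ $Q$-a.s.\ together with $c_A>0$. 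The principal obstacle is the renewal-theoretic tail estimate for $Y_A$; once this power-law behavior is secured, the remaining algebra reduces to routine applications of Breiman's lemma and the light-tail bounds already in place.
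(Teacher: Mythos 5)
Your decomposition $W_{\s-1}=\Pi_{0,\s-1}(W_{-1}+V_\s)$ is algebraically correct, and the independence of $W_{-1}$ from $(V_\s,Y_A)$ under $Q$ is fine. But the step you call central --- the claim $P(\Pi_{0,\s-1}>x,\:\s<\nu)\sim c'_A x^{-s}$ with $c'_A>0$ --- is false, and it is where the whole argument breaks. By definition of $\s=\s_A$ we have $\Pi_{0,\s-2}<A$, hence $\Pi_{0,\s-1}\leq A\rho_{\s-1}$ on $\{\s<\infty\}$; since $E_P\rho^s=1<\infty$ forces $y^sP(\rho>y)\ra 0$, one gets $x^sP(\Pi_{0,\s-1}>x,\s<\nu)\ra 0$ (and if $\rho$ is bounded, as it is for uniformly elliptic environments, the probability is identically zero for large $x$). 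You have conflated the first-passage overshoot over the \emph{fixed} level $A$, which inherits only the tail of a single $\rho$, with the Iglehart asymptotic \eqref{Mtail} for the \emph{all-time maximum} $M_1$ exceeding a level $x\ra\infty$; these are different objects. The heavy tail of $W_{\s-1}(1+R_{\s,\nu-1})$ does not come from $\Pi_{0,\s-1}$ at all: it comes from the environment strictly to the right of $\s$, via Kesten's theorem \eqref{Rtail} applied to $R_\s$, which, conditionally on $\mathcal{F}_{\s-1}$, is distributed as $R_0$ and independent of the past. The role of $W_{\s-1}$ (including its factor $\Pi_{0,\s-1}$) is that of the \emph{light} multiplier with finite $s$-th moment on $\{\s<\nu\}$ (Lemma \ref{finitemoment}), exactly opposite to the role you assign it. The truncation from $R_\s$ down to $R_{\s,\nu-1}$ then has to be handled explicitly --- the paper writes $R_\s=R_{\s,\nu-1}+\Pi_{\s,\nu-1}R_\nu$, computes the tails of $W_{\s-1}(1+R_\s)$ and of $W_{\s-1}\Pi_{\s,\nu-1}R_\nu$ separately by conditioning on $\mathcal{F}_{\s-1}$ and $\mathcal{F}_{\nu-1}$, shows the two summands cannot both be large, and extracts $K_A=K\,E_Q[W_{\s-1}^s(1-\Pi_{\s,\nu-1}^s)\mathbf{1}_{\s<\nu}]$ by inclusion--exclusion; your "subsequent descent excursion contributes a multiplicative constant" does not engage with this.

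A secondary but still genuine error: even granting a tail $Q(Y_A>x)\sim c_Ax^{-s}$, you cannot assemble $Q((W_{-1}+V_\s)Y_A>x)$ as the sum of separate contributions from $W_{-1}Y_A$ and $V_\s Y_A$. The two products share the heavy factor $Y_A$, so the correct Breiman-type statement conditions on everything except the regularly varying part and yields a constant proportional to $E_Q[(W_{-1}+V_\s)^s\cdot(\,\cdots)]$, which is not $E_Q[W_{-1}^s]+\kappa_A$ for $s\neq 1$. So both the mechanism producing the power law and the bookkeeping of the constant need to be redone along the lines of conditioning on $\mathcal{F}_{\s-1}$ and invoking \eqref{Rtail}.
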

\begin{proof}
The strategy of the proof is as follows. First, note that on the event $\{\s<\nu\}$ we have $W_{\s-1}(1 + R_\s) = W_{\s-1}(1 + R_{\s,\nu-1}) + W_{\s-1}\Pi_{\s,\nu-1}R_\nu$. We will begin by analyzing the asymptotics of the tails of $W_{\s-1}(1 + R_\s)$ and $W_{\s-1}\Pi_{\s,\nu-1}R_\nu$. Next we will show that $W_{\s-1}(1 + R_{\s,\nu-1})$ and $W_{\s-1}\Pi_{\s,\nu-1}R_\nu$ are essentially independent in the sense that they cannot both be large. This will allow us to use the asymptotics of the tails of $W_{\s-1}(1 + R_\s)$ and $W_{\s-1}\Pi_{\s,\nu-1}R_\nu$ to compute the asymptotics of the tails of $W_{\s-1}(1 + R_{\s,\nu-1})$. \\
To analyze the asymptotics of the tail of $W_{\s-1}(1 + R_\s)$, we first recall from \eqref{Rtail} that there exists a $K>0$ such that $P(R_0 > x)\sim K x^{-s}$. Let $\mathcal{F}_{\s-1} = \s(\ldots,\w_{\s-2}, \w_{\s-1})$ be the $\s-$algebra generated by the environment to the left of $\s$. Then on the event $\{\s<\infty\}$,  $R_\s$ has the same distribution as $R_0$ and is independent of $\mathcal{F}_{\s-1}$. Thus,
\begin{align}
\lim_{x\ra\infty} x^s Q(W_{\s-1}(1+R_\s) > x, \s<\nu) &= \lim_{x\ra\infty} E_Q\left[ x^s Q\left(\left. 1+R_\s>\frac{x}{W_{\s-1}} , \s<\nu \right| \mathcal{F}_{\s-1} \right)  \right] \nonumber \\
&= K E_Q \left[ W_{\s-1}^s \mathbf{1}_{\s<\nu} \right].  \label{Rstail}
\end{align}
A similar calculation yields
\begin{align}
\lim_{x\ra\infty} x^s Q\left( W_{\s-1}\Pi_{\s,\nu-1} R_\nu > x, \s<\nu \right)
&= \lim_{x\ra\infty} E_Q \left[ x^s Q\left(\left. R_\nu > \frac{x}{W_{\s-1}\Pi_{\s,\nu-1}}, \s<\nu \right| \mathcal{F}_{\nu-1}  \right)  \right] \nonumber\\
&= E_Q \left[ W_{\s-1}^s \Pi_{\s,\nu-1}^s \mathbf{1}_{\s<\nu} \right] K. \label{prodtail}
\end{align}
Next, we wish to show that 
\begin{equation}
\lim_{x\ra\infty} x^s Q\left(W_{\s-1}(1+R_{\s,\nu-1}) > \e x, \, W_{\s-1}\Pi_{\s,\nu-1}R_\nu> \e x,\, \s<\nu\right) = 0\,. \label{both}
\end{equation} 
Since $\Pi_{\s,\nu-1} < \frac{1}{A}$ on the event $\{\s<\nu\}$ we have for any $\e>0$ that
\begin{align}
& x^s Q\left(W_{\s-1}(1+R_{\s,\nu-1}) > \e x,\, W_{\s-1}\Pi_{\s,\nu-1}R_\nu> \e x,\, \s<\nu \right) \nonumber
\\ &\qquad\leq  x^s Q\left(W_{\s-1}(1+R_{\s,\nu-1}) > \e x, W_{\s-1}R_\nu> A \e x, \s<\nu \right) \nonumber \\
&\qquad = x^s E_Q \left[ Q\left(1+R_{\s,\nu-1} > \frac{\e x}{W_{\s-1}} | \mathcal{F}_{\s-1}\right) Q\left( R_\nu> A \frac{\e x}{W_{\s-1}} |\mathcal{F}_{\s-1} \right) \mathbf{1}_{\s<\nu} \right] \nonumber \\
&\qquad \leq E_Q \left[ x^s Q\left(1+R_{\s} > \frac{\e x}{W_{\s-1}} | \mathcal{F}_{\s-1}\right) Q\left( R_\nu> A \frac{\e x}{W_{\s-1}} |\mathcal{F}_{\s-1} \right) \mathbf{1}_{\s<\nu} \right]\,, \label{bothbig}
\end{align}
where the equality on the third line is because $R_{\s,\nu-1}$ and $R_\nu$ are independent when $\s<\nu$ (note that $\{ \s < \nu \} \in \mathcal{F}_{\s-1}$), and the last inequality is because $R_{\s,\nu-1} \leq R_\s$. Now, conditioned on $\mathcal{F}_{\s-1}$, $R_\s$ and $R_\nu$ have the same distribution as $R_0$. Then, since by \eqref{Rtail} for any $\gamma\leq s$ there exists a $K_\gamma>0$ such that $P(1+R_0 > x)\leq  K_\gamma x^{-\gamma}$, we have that the integrand in \eqref{bothbig} is bounded above by $K_\gamma^2\e^{-2\gamma} W_{\s-1}^{2\gamma}\mathbf{1}_{\s<\nu}  x^{s-2\gamma},$ $Q-a.s.$ Choosing $\gamma=\frac{s}{2}$ gives that the integrand in \eqref{bothbig} is $Q-a.s.$ bounded above by $K^2_{\frac{s}{2}} \e^{-s} W_{\s-1}^s \mathbf{1}_{\s<\nu}$ which by Lemma \ref{finitemoment} has finite mean. However, if we choose $\gamma =s$ then we get that the integrand of \eqref{bothbig} tends to zero $Q-a.s.$ as $x\ra\infty$. Thus, by the dominated convergence theorem we have that \eqref{both} holds. \\
Now, since $R_\s = R_{\s,\nu-1} + \Pi_{\s,\nu-1}R_\nu$, we have that for any $\e>0$
\begin{align*}
Q(W_{\s-1}(1+R_\s)>(1+\e)x, \s<\nu) &\leq Q(W_{\s-1}(1+R_{\s,\nu-1})>\e x,\, W_{\s-1} \Pi_{\s,\nu-1}R_\nu > \e x,\, \s<\nu )\\
&\qquad + Q(W_{\s-1}(1+R_{\s,\nu-1}) > x,\s<\nu)  \\
&\qquad + Q(W_{\s-1}\Pi_{\s,\nu-1}R_\nu> x,\s<\nu)\,.
\end{align*}
Applying \eqref{Rstail},
\eqref{prodtail} and \eqref{both} we get that for any $\e>0$
\begin{equation}
\liminf_{x\ra\infty} x^s Q(W_{\s-1}(1+R_{\s,\nu-1}) > x, \s<\nu) \geq K E_Q[W_{\s-1}^s\mathbf{1}_{\s<\nu}](1+\e)^{-s} -  K E_Q[W_{\s-1}^s\Pi_{\s,\nu-1}^s\mathbf{1}_{\s<\nu}]\,.\label{lb}
\end{equation}
Similarly, for a bound in the other direction we have
\begin{align*}
Q(W_{\s-1}(1+R_\s)>x, \s<\nu)
&\geq Q(W_{\s-1}(1+R_{\s,\nu-1}) > x \text{, or } W_{\s-1}\Pi_{\s,\nu-1}R_\nu > x, \s<\nu) \\
&= Q(W_{\s-1}(1+R_{\s,\nu-1})>x, \s<\nu) \\
&\qquad + Q(W_{\s-1}\Pi_{\s,\nu-1}R_\nu>x, \s< \nu) \\
&\quad\quad - Q(W_{\s-1}(1+R_{\s,\nu-1})>x, W_{\s-1}
\Pi_{\s,\nu-1}R_\nu>x, \s < \nu)\,.
\end{align*}
Thus, again applying \eqref{Rstail},\eqref{prodtail} and \eqref{both} we get
\begin{equation}
\limsup_{x\ra\infty} x^s Q(W_{\s-1}(1+R_{\s,\nu-1}) > x, \s<\nu) \leq K E_Q[W_{\s-1}^s\mathbf{1}_{\s<\nu}] -  K E_Q[W_{\s-1}^s\Pi_{\s,\nu-1}^s\mathbf{1}_{\s<\nu}]\,.
\label{ub}
\end{equation}
Finally, applying \eqref{lb} and \eqref{ub} and letting $\e\ra 0$, we get that
\[
\lim_{x\ra\infty} x^s Q(W_{\s-1}(1+R_{\s,\nu-1}) > x, \s<\nu)  =  K E_Q[W_{\s-1}^s(1-\Pi_{\s,\nu-1}^s)\mathbf{1}_{\s<\nu}] =: K_A,
\]
and $K_A \in (0,\infty)$ by Lemma \ref{finitemoment} and the fact that $1-\Pi_{\s,\nu-1} \in  (1-\frac{1}{A}, 1)$.
\end{proof}
Finally, we are ready to analyze the tail of $E_\w T_\nu$ under the measure $Q$.
\begin{proof}[\textbf{Proof of Theorem \ref{Tnutail}}:]$\left.\right.$\\
Let $\d>0$, and choose $A\geq A_0(\d)$ as in Lemma
\ref{fourth}.
%\ref{small}.
Then using \eqref{expand} we have
\begin{align*}
Q(E_\w T_\nu > x)
& = Q(E_\w T_\nu > x, \s>\nu)+  Q(E_\w T_\nu > x, \s<\nu)\\
& \leq Q(E_\w T_\nu > x, \s>\nu)+ Q(\nu> \d x) + Q(2W_{-1}R_{0,\s-2}> \d x, \s<\nu) \\
&\quad\quad + Q\left(2\sum_{j=0}^{\s-2} W_{0,j} > \d x, \s<\nu\right) + Q\left(2\sum_{\s\leq i < \nu} R_{i,\nu-1} > \d x\right)\\
&\quad\quad + Q(2W_{\s-1}(1+R_{\s,\nu-1})>(1-4\d)x, \s<\nu)\,.
\end{align*}
Thus combining
equations \eqref{long}, \eqref{first}, \eqref{second}, and \eqref{third}, and Lemmas \ref{fourth} and \ref{fifth}, we get that
\begin{equation}
\limsup_{x\ra\infty} x^s Q(E_\w T_\nu > x) \leq \d + 2^{s}K_A(1-4\d)^{-s}. \label{ub2}
\end{equation}
The lower bound is easier, since $Q(E_\w T_\nu > x) \geq Q(2W_{\s-1}(1+R_{\s,\nu-1})>x, \s<\nu)$. Thus
\begin{equation}
\liminf_{x\ra\infty} x^s Q(E_\w T_\nu > x) \geq 2^{s}K_A\,. \label{lb2}
\end{equation}
From \eqref{ub2} and \eqref{lb2} we get that $\overline{K}:=\limsup_{A\ra\infty} 2^s K_A < \infty$. Therefore, letting $\underline{K}:= \liminf_{A\ra\infty} 2^s K_A$ we have from \eqref{ub2} and \eqref{lb2} that 
\[
\overline{K} \leq  \liminf_{x\ra\infty} x^s Q(E_\w T_\nu > x)  \leq  \limsup_{x\ra\infty} x^s Q(E_\w T_\nu > x)   \leq \d+\underline{K}(1-4\d)^{-s}
\]
Then, letting $\d\ra 0$ completes the proof of the theorem with $K_\infty = \underline{K}=\overline{K}$. 
\end{proof}
\end{section}

\end{chapter}

\begin{chapter}{Quenched Limits: Ballistic Regime}\label{Thesis_AppendixBallistic}

This chapter consists of the article \emph{Quenched Limits for Transient, Ballistic, Sub-Gaussian One-Dimensional Random Walk in Random Environment}, by Jonathon Peterson, which was recently accepted for publication by the Annales de l'Institut Henri Poincar\'e - Probabilit\'es et Statistiques.
This article contains the full proofs of Theorem \ref{Thesis_qCLT}, Theorem \ref{Thesis_qEXP}, Proposition \ref{Thesis_generalprop}, and the first part of Theorem \ref{Thesis_qETVarStable} (sketches of these proofs were provided in Chapter \ref{1dlimitingdist}).  

In order to keep this chapter relatively self-contained, the above mentioned article has been left mostly unchanged. Therefore, much of the introductory material in Section \ref{sl1_Introduction} has already appeared in Chapters \ref{Thesis_Introduction} and \ref{1dlimitingdist}. 
Also, many of the results of this chapter build on the previous results of Chapter \ref{Thesis_AppendixZeroSpeed}.
The notation used in this chapter is consistent with the notation in the previous chapters.

\newpage

\begin{section}{Introduction, Notation, and Statement of Main Results}
Let $\Omega = [0,1]^\Z$ and let $\mathcal{F}$ be the Borel $\s-$algebra on $\Omega$. A random environment is an $\Omega$-valued random variable $\w = \{\w_i\}_{i\in\Z}$ with distribution $P$. We will assume that $P$ is an i.i.d. product measure on $\Omega$.
The \emph{quenched} law $P_\w^x$ for a random walk $X_n$ in the environment $\w$ is defined by
\[
P_\w^x( X_0 = x ) = 1 \quad \text{and} \quad
P_\w^x\left( X_{n+1} = j | X_n = i \right) =
\begin{cases}
\w_i &\quad \text{if } j=i+1, \\
1-\w_i &\quad \text{if } j=i-1.
\end{cases}
\]
$\Z^\N$ is the space for the paths of the random walk $\{X_n\}_{n\in\N}$
and $\mathcal{G}$ denotes the $\s-$algebra generated by the cylinder sets.
Note that for each $\w \in \Omega$, $P_\w$ is a probability measure
on $( \Z^\N, \mathcal{G} )$, and for each $G\in \mathcal{G}$,
$P_\w^x(G):(\Omega, \mathcal{F}) \ra [0,1]$ is a measurable
function of $\w$.  Expectations under the law $P_\w^x$ are denoted $E_\w^x$.
The \emph{annealed} law for the random walk in random
environment $X_n$ is defined by
\[
\P^x(F\times G) = \int_F P_\w^x(G)P(d\w),
\quad F\in \mathcal{F},  G\in \mathcal{G}\!.
\]
For ease of notation, we will use $P_\w$ and $\P$ in place
of $P_\w^0$ and $\P^0$ respectively. We will also use $\P^x$ to
refer to the marginal on the space of paths, i.e. $\P^x(G)=
\P^x(\Omega\times G) = E_P\left[ P^x_\w(G) \right]$ for
$G\in \mathcal{G}$. Expectations under the law $\P$ will be written $\E$.

A simple criterion for recurrence of a one-dimensional RWRE and a formula for the speed of
transience was given by Solomon in \cite{sRWRE}. For any integers
$i\leq j$, let
\begin{equation}
\rho_i := \frac{1-\w_i}{\w_i}  \quad \text{and}\quad
\Pi_{i,j} := \prod_{k=i}^j \rho_k\,. \label{ballistic_rhodef}
\end{equation}
Then, $X_n$ is transient to the right (resp., to the left)
if $E_P(\log \rho_0) < 0$ (resp. $E_P \log \rho_0 > 0$) and recurrent
if $E_P (\log \rho_0) = 0$. (Henceforth we will write $\rho$ instead of
$\rho_0$ in expectations involving only $\rho_0$.) In the case where
$E_P \log\rho < 0$ (transience to the right),
Solomon established the following law of large numbers
\begin{equation}
v_P:= \lim_{n\ra\infty} \frac{X_n}{n} =
\lim_{n\ra\infty} \frac{n}{T_n} = \frac{1}{\E T_1}, \quad \P-a.s. \label{ballistic_XTLLN}
\end{equation}
where $T_n:= \min\{k \geq 0:X_k=n\}$. 
For any integers $i<j$, let
\begin{equation}
W_{i,j} := \sum_{k=i}^j \Pi_{k,j}, \quad \text{and}
\quad W_j := \sum_{k\leq j} \Pi_{k,j}\,. \label{ballistic_Wdef}
\end{equation}
When $E_P \log \rho< 0$, it was shown in 
%\cite{sRWRE},
\cite{zRWRE}
that
\begin{equation}
E_\w^j T_{j+1} = 1+2W_j < \infty, \quad P-a.s., \label{ballistic_QET}
\end{equation}
and thus $v_P =
1/(1+2E_P W_0)$. Since $P$ is a product measure, $E_P W_0 =
\sum_{k=1}^\infty \left(E_P \rho\right)^k$. In particular, $v_P > 0$ if
$E_P \rho < 1$.

Kesten, Kozlov, and Spitzer \cite{kksStable} determined the
annealed limiting distribution of a RWRE with $E_P \log \rho < 0$, i.e.,
transient to the right. They derived the limiting distributions for the walk by first establishing a stable
limit law of index $s$ for $T_n$, where $s$ is defined by the equation
$
E_P\rho^s = 1$. 
In particular, they showed that when $s\in(1,2)$, there exists a  $b>0$ such that
\begin{equation}
\lim_{n\ra\infty} \P\left( \frac{T_n-\E T_n}{n^{1/s}} \leq x \right) =
L_{s,b}(x)\, \label{ballistic_annealedstableT}
\end{equation}
and
\begin{equation}
\lim_{n\ra\infty} \P\left( \frac{X_n - n v_P}{v_P^{1+1/s} n^{1/s}} \leq x
\right) = 1-L_{s,b}(-x), \label{ballistic_annealedstableX}
\end{equation}
where $L_{s,b}$ is the distribution function for a stable random
variable with characteristic function
\[
\hat{L}_{s,b}(t)= \exp\left\{ -b|t|^s \left(
1-i\frac{t}{|t|}\tan(\pi s/2)  \right) \right\}. 
\]
While the annealed limiting distributions for transient
one-dimensional RWRE have been known for quite a while, the corresponding
quenched limiting distributions have remained largely unstudied
until recently. 
In Chapter \ref{Thesis_AppendixQCLT} we proved that when $s>2$ a quenched CLT holds with a
random (depending on the environment)
centering. Goldsheid \cite{gQCLT} has provided an independent proof of this fact. 
%In the case when $s>2$, Goldsheid \cite{gQCLT} and Peterson \cite{pThesis}
%independently proved that a quenched CLT holds with a
%random (depending on the environment)
%centering. 
Previously, in \cite{kmCLT} and \cite{zRWRE}
it had only been shown that the limiting statements for
the quenched CLT with random centering held in probability (rather than almost surely). In the case when $s<1$, it was shown in Chapter \ref{Thesis_AppendixZeroSpeed} that no quenched limiting distribution exists for the RWRE. In particular, it was shown that $P-a.s.$ there exist two different random sequences $t_k$ and $t_k'$ such that the behavior of the RWRE is either localized (concentrated in a interval of size $\log^2 t_k'$) or spread out (scaling of order $t_k^{s}$). 

In this chapter, we analyze the quenched limiting distributions of a
transient, one-dimensional RWRE in the case $s\in(1,2)$. We show that, as in the case when $s<1$, there is no quenched limiting distribution of the random walk. 
As was done when $s<1$, this is shown by first showing that there is no quenched limiting distribution for the hitting times $T_n$ of the random walk. However, in contrast to the case $s<1$, the existence of a positive speed for the random walk in the case $s\in(1,2)$ allows for a more direct transfer of limiting distributions from $T_n$ to $X_n$ (see Proposition \ref{ballistic_generalprop}). 
%However, in Section \ref{ballistic_gp} we show that the existence of a positive speed for the random walk allows us to transfer limiting distributions from $T_n$ to $X_n$ more easily than was possible in the case when $s<1$. 
%As in \cite{pzSL1}, a consequence of our main Theorems is that the annealed stable behavior of $T_n$ comes from fluctuations in the environment. 

Throughout the Chapter, we will
make the following assumptions:
\begin{asm} \label{ballistic_essentialasm}
$P$ is a product measure on $\Omega$ such that
\begin{equation}
E_P \log\rho < 0 \quad\text{and}\quad E_P \rho^s = 1 \text{ for
some } s>0 . \label{ballistic_zerospeedregime}
\end{equation}
\end{asm}
\begin{asm}
The distribution of $\log \rho$ is non-lattice under
$P$ and $E_P ( \rho^s \log\rho ) < \infty$.  \label{ballistic_techasm}
\end{asm}
\noindent\textbf{Remarks:}\\
\textbf{1.} Assumption \ref{ballistic_essentialasm}
contains the essential assumptions for our results. The technical conditions contained in Assumption \ref{ballistic_techasm} were also invoked in \cite{kksStable} and in Chapter \ref{Thesis_AppendixZeroSpeed}. \\
\textbf{2.} Since $E_P \rho^\gamma$ is a convex function of
$\gamma$, the two statements in \eqref{ballistic_zerospeedregime} give that
$E_P \rho^\gamma < 1$ for all
$0<\gamma<s$ and $E_P \rho^\gamma > 1$ for all $\gamma > s$. In particular this implies that $v_P > 0 $ if and only if $ s > 1$. The main results of this Chapter are for $s\in(1,2)$, but many statements hold for a wider range of $s$. If no mention is made of bounds on $s$, then it is assumed that the statement holds for all $s>0$.\\
\textbf{3.} The cases $s\in\{1,2\}$ are not covered here or in Chapter \ref{Thesis_AppendixZeroSpeed}. It is not clear whether or not a quenched CLT holds in the case $s=2$, but we suspect that the results for $s=1$ will be similar to those of the cases $s\in(0,1)$ and $s\in(1,2)$,  i.e., quenched limiting distributions for the random walk do not exist. However, since $s=1$ is the bordering case between the zero-speed and positive-speed regimes, the analysis is likely to be more technical (as was also the case in \cite{kksStable}). 

Let $\Phi(x)$ and $\Psi(x)$ be the distribution functions for a Gaussian and exponential random variable respectively. That is, 
\[
\Phi(x):= \int_{-\infty}^x \frac{1}{\sqrt{2\pi}} e^{-t^2/2} dt \quad\text{and}\quad \Psi(x):= \begin{cases} 0 & x < 0, \\ 1-e^{-x} & x\geq 0. \end{cases}
\]
%$\Phi(x):= \int_{-\infty}^x \frac{1}{\sqrt{2\pi}} e^{-t^2/2} dt$ and
% $\Psi(x):= \begin{cases} 0 & x < 0 \\ 1-e^{-x} & x\geq 0 \end{cases}$. 
%$\Psi(x) := 0$ for all $x<0$ and $\Psi(x) = 1-e^{-x}$ for $x\geq 0$.  
%Having introduced the necessary notation, we are now ready to state the other main results of this paper. 
%The following two theorems show that $T_n$ and $X_n$ do not have quenched limit laws (in contrast to the annealed limit laws \eqref{ballistic_annealedstableT} and \eqref{ballistic_annealedstableX}). 
Our main results are the following:
\begin{thm}\label{ballistic_qCLT}
Let Assumptions \ref{ballistic_essentialasm} and \ref{ballistic_techasm} hold and let $s\in(1,2)$. Then, $P-a.s.$, there exists a random subsequence $n_{k_m}=n_{k_m}(\w)$ of $n_k=2^{2^k}$ and non-deterministic random variables $v_{k_m,\w}$, such that 
\[
\lim_{m\ra\infty} P_\w\left( \frac{ T_{n_{k_m}} - E_\w T_{n_{k_m}} }{ \sqrt{v_{k_m,\w}} } \leq x \right) = \Phi(x), \qquad \forall x\in\R,
\]
and 
\[
\lim_{m\ra\infty} P_\w\left( \frac{ X_{t_m} - n_{k_m} }{v_P \sqrt{v_{k_m,\w}} } \leq x \right) = \Phi(x), \qquad \forall x\in\R,
\]
where $t_m=t_m(\w):= \left\lfloor E_\w T_{n_{k_m}} \right\rfloor$.
\end{thm}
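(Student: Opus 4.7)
My plan is to closely follow the strategy behind Theorem \ref{Thesis_nonlocal} of Chapter \ref{Thesis_AppendixZeroSpeed} to produce a random subsequence along which the hitting times $T_{n_{k_m}}$ are quenched-Gaussian, and then invoke the general transfer mechanism of Proposition \ref{Thesis_generalprop} to pass this CLT to $X_{t_m}$. The essential structural change from the zero-speed setting is that, in the ballistic regime, the sum of expected crossing times $\sum_i \mu_{i,n,\w}$ grows linearly in $n$, so the event $U_{\d,n,c}$ used in Lemma \ref{smallblocklemma} is no longer informative; one must instead control the sum of quenched variances.

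First I would construct the random subsequence. I retain the event $\mathcal{S}_{\d,n,a}$ from the proof of Theorem \ref{Thesis_nonlocal}, which forces exactly $2a$ of the first $\d n$ blocks to have expected crossing times of order $n^{1/s}$ and the remainder strictly smaller, but I replace $U_{\d,n,c}$ by
\begin{equation*}
U_{\d,n} := \left\{ \sum_{i=\lfloor \eta n \rfloor + 1}^n \s_{i,n,\w}^2 < 2 n^{2/s} \right\}.
\end{equation*}
By Theorem \ref{qVartail} (valid for $s<2$) one has $Q(\s_{i,n,\w}^2 > x) \sim K_\infty x^{-s/2}$, and a stable limit law for $\sum_i \s_{i,n,\w}^2$ of index $s/2$ can be obtained by re-running the mixing/independence machinery of Section \ref{stablecrossing} (Lemmas \ref{alphamixing} and \ref{reftail}, Theorem \ref{refstable}) with variances in place of expectations. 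This ensures that $Q(U_{\d,n})$ is bounded below uniformly for $\eta$ small. Combined with the $Q$-stationarity under ladder-shifts and the $b_n$-reflection device, an adaptation of Lemma \ref{smallblocklemma} then yields a lower bound on $Q(\mathcal{S}_{\d_k,d_k,a_k} \cap U_{\d_k,d_k})$ whose sum over $k$ diverges. Since the events along the even indices $n_{2k}$ are independent (again thanks to the $b_n$-reflections), Borel--Cantelli produces the desired random subsequence $n_{k_m}(\w)$ of $n_k = 2^{2^k}$.

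Second, on this subsequence I would verify the Lindberg--Feller conditions for the quenched triangular array
\begin{equation*}
\left\{\, \bar{T}^{(d_{k_m})}_{\nu_i} - \bar{T}^{(d_{k_m})}_{\nu_{i-1}} - \mu_{i,d_{k_m},\w} \,\right\}_{i=\a_m+1}^{\b_m}, \qquad \a_m = n_{k_m-1},
\end{equation*}
normalized by $\sqrt{v_{k_m,\w}}$. The event $\mathcal{S}_{\d,n,a}$ precludes any single variance $\s_{i,d_{k_m},\w}^2$ from dominating the sum, delivering the Feller condition on maximal variance; the Lindberg truncation is handled exactly as in the proof of Theorem \ref{gaussianT}, by splitting the index set according to whether $M_i$ is small or large, applying Lemma \ref{Vsmall} to the small-$M_i$ contribution, and using the exponential moment bound derived from Kac's formula (Lemma \ref{momentbound}) to control the large-$M_i$ contribution via Chebychev. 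Exactly the same argument applies uniformly to any sequence $x_m \sim n_{k_m}$, so the hypothesis of Proposition \ref{Thesis_generalprop} is met with $F = \Phi$. Applying that proposition immediately yields both the quenched CLT for $T_{n_{k_m}}$ and, via $1 - \Phi(-y) = \Phi(y)$, the quenched CLT for $X_{t_m}$ asserted in Theorem \ref{ballistic_qCLT}.

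The main obstacle is establishing the lower bound on $Q(\mathcal{S}_{\d_k,d_k,a_k} \cap U_{\d_k,d_k})$ needed for Borel--Cantelli. In Chapter \ref{Thesis_AppendixZeroSpeed} the analogous bound was driven by Theorem \ref{refstable}, the stable limit law for $E_\w T_{\nu_n}$; here I need a matching stable limit law of index $s/2$ for $\sum \s_{i,n,\w}^2$, obtained by re-running the entire argument of Section \ref{stablecrossing} with Theorem \ref{qVartail} playing the role of Theorem \ref{Tnutail}. The non-trivial technical point is verifying the analogue of condition \eqref{truncexp} (truncated first moment vanishing in the small-parameter limit): since variances have index $s/2$ rather than $s$, the relevant integrability condition is $s/2 < 1$, which still holds throughout $s \in (1,2)$; once this check is made, the remaining steps are a largely mechanical adaptation of Chapter \ref{Thesis_AppendixZeroSpeed}.
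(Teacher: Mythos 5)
Your proposal is correct and follows essentially the same route as the paper: the paper replaces $U_{\d,n,c}$ by the variance event $U_{\eta,n}$, proves the index-$s/2$ stable law for $\sum_i \s_{i,n,\w}^2$ (Theorem \ref{ballistic_Varstable}, with the truncation condition reducing to $s/2<1$ exactly as you note), runs Borel--Cantelli on the independent even-indexed events, verifies Lindeberg--Feller as in Theorem \ref{gaussianT}, and concludes via Proposition \ref{ballistic_generalprop}. The only detail worth flagging is that applying the proposition requires choosing $\eta<1/\bar\nu$ so that every $x_m\sim n_{k_m}$ eventually lies in $(\nu_{\b_m},\nu_{\gamma_m}]$.
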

\begin{thm}\label{ballistic_qEXP}
Let Assumptions \ref{ballistic_essentialasm} and \ref{ballistic_techasm} hold and let $s\in(1,2)$. Then, $P-a.s.$, there exists a random subsequence $n_{k_m}=n_{k_m}(\w)$ of $n_k=2^{2^k}$ and non-deterministic random variables $v_{k_m,\w}$, such that 
\[
\lim_{m\ra\infty} P_\w\left( \frac{ T_{n_{k_m}} - E_\w T_{n_{k_m}} }{ \sqrt{v_{k_m,\w}} } \leq x \right) = \Psi(x+1), \qquad \forall x\in\R,
\]
and 
\[
\lim_{m\ra\infty} P_\w \left( \frac{X_{t_m} - n_{k_m}}{v_P \sqrt{v_{k_m,\w}} } \leq x \right) = 1-\Psi(-x+1), \qquad \forall x\in\R,
\] 
where $t_m=t_m(\w):= \left\lfloor E_\w T_{n_{k_m}} \right\rfloor$.
\end{thm}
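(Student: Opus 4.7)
The plan is to apply Proposition \ref{ballistic_generalprop} with $F = \Psi(\cdot+1)$, which reduces both conclusions of the theorem to the following single statement: for $P$-a.e.\ environment $\w$ there exists a random subsequence $n_{k_m}=n_{k_m}(\w)$ of $n_k=2^{2^k}$ such that, writing $\a_m=n_{k_m-1}$, for any sequence $x_m\sim n_{k_m}$,
\[
P_\w^{\nu_{\a_m}}\!\left(\frac{\bar{T}^{(d_{k_m})}_{x_m}-E_\w^{\nu_{\a_m}}\bar{T}^{(d_{k_m})}_{x_m}}{\sqrt{v_{k_m,\w}}}\leq y\right)\longrightarrow \Psi(y+1),\quad \forall y\in\R.
\]
The guiding idea is opposite to the one used in Theorem \ref{ballistic_qCLT}: whereas there one produces subsequences where many comparable large blocks contribute and Lindeberg--Feller yields a Gaussian, here one looks for subsequences along which a single block's crossing time dominates all others, and then invokes Corollary \ref{Thesis_explimit} to show that the crossing time of that block is asymptotically exponential with mean $\mu_{i_m,d_{k_m},\w}$.

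The construction of the subsequence parallels Lemma \ref{Thesis_QBB} and Corollary \ref{Thesis_ssdom}, but phrased in an $\ell^2$ sense. The aim is to prove that for each $C>1$ and $\eta>0$ small,
\[
\liminf_{n\to\infty}Q\!\left(\exists\, k\in(0,\eta n]:\ M_k>n^{(1-\e)/s},\ \s_{k,n,\w}^2\geq C\!\!\!\sum_{j\in(0,n]\setminus\{k\}}\!\!\!\s_{j,n,\w}^2\right)>0.
\]
This follows by the same decomposition as in Lemma \ref{Thesis_QBB}: one sums over $k$, uses $Q(M_k>n^{(1-\e)/s})\sim C_5 n^{-1+\e}$ from \eqref{Thesis_Mtail}, the independence of $M_k$ from blocks more than $b_n$ ladder times away, and the fact (from the second half of Theorem \ref{Thesis_qETVarStable}) that $\sum_j\s_{j,n,\w}^2$ is of order $n^{2/s}$ with a non-degenerate $s/2$-stable limit, so that the event $\{\s_{k,n,\w}^2\geq C\sum_{j\neq k}\s_{j,n,\w}^2\}$ retains positive conditional probability as $n\to\infty$. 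Combining this with the by-now-standard independence along the rapidly growing subsequence $n_k=2^{2^k}$ and Borel--Cantelli (exactly as in the proof of Corollary \ref{Thesis_smallblocks2}) yields $P$-a.s.\ a random $n_{k_m}$ and $i_m\in(\a_m,\a_m+\eta d_{k_m}]$ with $M_{i_m}>d_{k_m}^{(1-\e)/s}$ and
\[
\s_{i_m,d_{k_m},\w}^2\ \geq\ m\!\!\!\sum_{j\in(\a_m,n_{k_m}]\setminus\{i_m\}}\!\!\!\s_{j,d_{k_m},\w}^2.
\]

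Given such a subsequence, Lemma \ref{Thesis_mgflem} forces $\s_{i_m,d_{k_m},\w}^2/\mu_{i_m,d_{k_m},\w}^2\to 1$ (the second moment of an Exp(1) minus one is one), so that $\sqrt{v_{k_m,\w}}=\mu_{i_m,d_{k_m},\w}(1+o(1))$. Writing
\[
\bar{T}^{(d_{k_m})}_{x_m}-E_\w^{\nu_{\a_m}}\bar{T}^{(d_{k_m})}_{x_m}=\bigl(\bar{T}^{(d_{k_m})}_{\nu_{i_m}}-\bar{T}^{(d_{k_m})}_{\nu_{i_m-1}}-\mu_{i_m,d_{k_m},\w}\bigr)+R_m,
\]
where $R_m$ is the centered sum of the remaining (quenched-independent) block crossing times from $\nu_{\a_m}$ to $\nu_{x_m'}$ (with $x_m'$ the ladder index closest to $x_m$, the $o(\log^2 n)$ gap between $x_m$ and $\nu_{x_m'}$ handled as in Lemma \ref{Thesis_nu}), Chebyshev's inequality combined with the dominance property gives $\mathrm{Var}_\w(R_m)/v_{k_m,\w}\to 0$; for blocks in $(n_{k_m},x_m]$, since $x_m\sim n_{k_m}$, the short-block contribution is $o(d_{k_m}^{2/s})$ by essentially the same tail bound used for $U_{\delta,n}$ in Theorem \ref{Thesis_qCLT}. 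Corollary \ref{Thesis_explimit} applied to the dominant term then yields $(\bar{T}^{(d_{k_m})}_{\nu_{i_m}}-\bar{T}^{(d_{k_m})}_{\nu_{i_m-1}})/\mu_{i_m,d_{k_m},\w}\to \xi\sim\mathrm{Exp}(1)$ in quenched distribution, and the desired $\Psi(y+1)$ limit follows after rescaling by $\sqrt{v_{k_m,\w}}=\mu_{i_m,d_{k_m},\w}(1+o(1))$.

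The main obstacle is step two, namely the probabilistic construction of the dominant-block subsequence. One needs to combine \emph{simultaneously}: (a) a single block of size $M_{i_m}>d_{k_m}^{(1-\e)/s}$ dominating the $\ell^2$-sum of the other variances in the block $(\a_m,n_{k_m}]$, so that the scaling $\sqrt{v_{k_m,\w}}$ really collapses onto $\mu_{i_m,d_{k_m},\w}$; (b) the starting variance $\mathrm{Var}_\w T_{\nu_{\a_m}}$ contributing negligibly, which forces the rapid growth condition $n_k=2^{2^k}$ already exploited in Proposition \ref{ballistic_generalprop}; (c) enough independence along $\{n_k\}$ to drive a Borel--Cantelli argument. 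Item (a) is the only genuinely new computation and requires the non-degeneracy of the $s/2$-stable limit from Theorem \ref{Thesis_qETVarStable}; once this is in hand, the remainder of the argument is a routine assembly of the ingredients already prepared in Chapter \ref{Thesis_AppendixZeroSpeed}.
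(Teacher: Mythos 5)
Your proposal is correct and follows essentially the same route as the paper: reduction to Proposition \ref{ballistic_generalprop} with $F=\Psi(\cdot+1)$, a dominant-block event of positive probability at each scale (the paper's Lemma \ref{ballistic_onebigblock}, proved exactly as you describe via the tail of $M_k$, local independence, and the $s/2$-stable law for the quenched variance), Borel--Cantelli along $n_k=2^{2^k}$, and Corollary \ref{Thesis_explimit} for the dominant block. The only cosmetic difference is that the paper obtains $\s_{i_m,d_{k_m},\w}^2/\mu_{i_m,d_{k_m},\w}^2\to1$ directly from the variance-to-squared-mean comparison (Lemma \ref{ballistic_VarET2compare}) rather than from the Laplace-transform convergence, which would additionally require the uniform integrability supplied by Lemma \ref{momentbound}.
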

%\begin{thm}\label{ballistic_qEXP}
%Let Assumptions \ref{ballistic_essentialasm} and \ref{ballistic_techasm} hold. Then $P-a.s.$ there exist a random subsequence $n_{k_m}=n_{k_m}(\w)$ of $n_k=2^{2^k}$ such that 
%\[
%\lim_{m\ra\infty} P_\w\left( \frac{ T_{n_{k_m}} - E_\w T_{n_{k_m}} }{ \sqrt{v_{k_m,\w}} } \leq x \right) = \begin{cases} 0 & x<-1 \\ 1-e^{-x+1} & x\geq -1 \end{cases}, \qquad \forall x\in\R,
%\]
%and
%\[
%\lim_{m\ra\infty} P_\w \left( \frac{X_{t_m} - n_{k_m}}{v_P \sqrt{v_{k_m,\w}} } \leq x \right) = \begin{cases} e^{x-1} & x\leq 1 \\ 1 & x > 1 \end{cases}, \qquad \forall x\in\R, 
%\] 
%where $t_m=t_m(\w):= \left\lfloor E_\w T_{n_{k_m}} \right\rfloor$.
%\end{thm}
\noindent\textbf{Remarks:}\\
\textbf{1.} Note that Theorems \ref{ballistic_qCLT} and \ref{ballistic_qEXP}
preclude the possiblity of quenched analogues of the annealed
statements \eqref{ballistic_annealedstableT} and \eqref{ballistic_annealedstableX}. \\
\textbf{2.} 
The choice of Gaussian and exponential distributions in Theorems \ref{ballistic_qCLT} and \ref{ballistic_qEXP} represent the two extremes of the quenched limiting distributions that can be found along random subsequences. In fact, it will be shown in Corollary \ref{ballistic_explimit} that $T_n$ is approximately the sum of a finite number of exponential random variables with random (depending on the environment) parameters. 
The exponential limits in Theorem \ref{ballistic_qEXP} are obtained when one of the exponential random variables has a much larger parameter than all the others. 
The Gaussian limits in Theorem \ref{ballistic_qCLT} are obtained when the exponential random variables with the largest parameters all have roughly the same size. 
We expect, in fact, that any distribution which is the sum of (or limit of sums of) exponential random variables can be obtained as a quenched limiting distribution of $T_n$ along a random subsequence. \\
%The choice of Gaussian and exponential distributions in Theorems \ref{ballistic_qCLT} and \ref{ballistic_qEXP} are the two extremes of what quenched limiting distributions can be found along random subsequences. In fact, it will be shown in Corollary \ref{ballistic_explimit} that $T_n$ is approximately the sum of a finite number of exponential random variables with random (depending on the environment) parameters. Thus, we expect in fact that any distribution which is the sum of (or limit of sums of) exponential random variables can be acheived as a quenched limiting distribution of $T_n$ along a random subsequence. \\
\textbf{3.} The sequence $n_k=2^{2^k}$ in Theorems \ref{ballistic_qCLT} and \ref{ballistic_qEXP} is chosen only for convenience. In fact, for any sequence $n_k$ growing sufficiently fast, $P-a.s.$, there will be a random subsequence $n_{k_m}(\w)$ such that the conclusions of Theorems \ref{ballistic_qCLT} and \ref{ballistic_qEXP} hold. \\
\textbf{4.} The definition of $v_{k_m,\w}$ is given below in \eqref{ballistic_dkvkdef}. By an argument similar to the proof of Theorem \ref{ballistic_Varstable}, it can be shown that 
$\lim_{n\ra\infty} P\left( n_k^{-2/s} v_{k,\w} \leq x \right) = L_{\frac{s}{2},b}(x)$ for some $b>0$. Also, from \eqref{ballistic_XTLLN}, we have that $t_m \sim \E T_1 n_{k_m}$. Thus, the scaling in Theorems \ref{ballistic_qCLT} and \ref{ballistic_qEXP} is of the same order as the annealed scaling, but it cannot be replaced by a deterministic scaling.

%Before continuing with the proofs of the main results we recall some notation and results from \cite{pzSL1}. 
Define the ``ladder locations'' $\nu_i$ of the environment by
\begin{align}
\nu_0 = 0, \quad\text{and}\quad \nu_i =
\begin{cases}
\inf\{n > \nu_{i-1}: \Pi_{\nu_{i-1},n-1} < 1\}, &\quad  i \geq 1,\\
\sup \{j < \nu_{i+1}: \Pi_{k,j-1}<1,\quad \forall k<j \}, &\quad  i \leq -1
\,.\end{cases}
\label{ballistic_nudef}
\end{align}
Throughout the remainder of the chapter we will let $\nu=\nu_1$. 
We will sometimes refer to sections of the environment between
$\nu_{i-1}$ and $\nu_i -1$ as ``blocks'' of the environment. Note
that the block between $\nu_{-1}$ and $\nu_0 -1$ is different from
all the other blocks between consecutive ladder locations (in particular, $\Pi_{\nu_{-1},\nu_0-1} \geq 1$ is possible), and that all the other blocks have the same distribution as the block from $0$ to $\nu-1$.  As in Chapter \ref{Thesis_AppendixZeroSpeed}, we define
the measure $Q$ on environments by $Q(\cdot)=P(\cdot\,|\mathcal{R})$, where
\[
\mathcal{R}:=\{ \w\in\Omega: \Pi_{-k,-1} < 1,\quad\forall k \geq 1\} = \left\{ \w \in \Omega: \sum_{i=-k}^{-1} \log \rho_i < 0, \quad \forall k \geq 1 \right\}.
\]
Note that $P(\mathcal{R}) > 0$ since $E_P \log \rho < 0$. 
$Q$ is defined so that the blocks of the environment between ladder locations are i.i.d.
under $Q$, all with the same distribution as the block
from $0$ to $\nu -1$ under $P$. In particular, $P$ and $Q$ agree on $\s( \w_i: i\geq 0)$. 

For any random variable $Z$, define the quenched variance $Var_\w Z := E_\w (Z-E_\w Z)^2$. 
In Theorem \ref{refstable}, it was proved that when $s\in(0,1)$, $n^{-1/s} E_\w T_{\nu_n}$ converges in distribution (under $Q$) to a stable distribution of index $s$. 
Correspondingly, when $s<2$ we will prove the following theorem:
\begin{thm} \label{ballistic_Varstable}
Let Assumptions \ref{ballistic_essentialasm} and \ref{ballistic_techasm} hold and let $s<2$. Then, there exists a $b>0$ such that 
\begin{equation}
\lim_{n\ra\infty} Q\left(\frac{Var_\w T_{\nu_n}}{n^{2/s}} \leq x \right) = \lim_{n\ra\infty} Q\left( \frac{1}{n^{2/s}} \sum_{i=1}^n \left( E_\w^{\nu_{i-1}} T_{\nu_i} \right)^2 \leq x \right) = L_{\frac{s}{2},b}(x) \, .  \label{ballistic_stableET2}
\end{equation}
\end{thm}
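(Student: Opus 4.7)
The plan is to mimic the proof of Theorem \ref{refstable}, replacing expected crossing times by their squares throughout, so that the stable index becomes $s/2$. Since crossings of disjoint blocks are quenched-independent, $Var_\w T_{\nu_n} = \sum_{i=1}^n \s_{i,\infty,\w}^2$; Corollary \ref{Vsdiff} (applied with $m=\infty$) then gives $\sum_{i=1}^n (\s_{i,\infty,\w}^2 - \mu_{i,\infty,\w}^2) = o(n^{2/s})$ in $Q$-probability, so the two partial sums in \eqref{ballistic_stableET2} differ by $o(n^{2/s})$, and it suffices to establish the $L_{s/2,b}$ limit for just one of them. I will work with $\frac{1}{n^{2/s}} \sum_{i=1}^n \mu_{i,\infty,\w}^2$.

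As in the decomposition \eqref{bb}--\eqref{rb} from the proof of Theorem \ref{refstable}, I will split this partial sum into three pieces: the sum of the reflected squared means $\mu_{i,n,\w}^2$ on ``large'' blocks ($M_i > n^{(1-\e)/s}$); the corresponding sum on ``small'' blocks; and the reflection discrepancy $\sum_{i=1}^n(\mu_{i,\infty,\w}^2 - \mu_{i,n,\w}^2)$. The small-block piece is controlled by a direct analogue of Lemma \ref{smallblocks}: using \eqref{TbigMsmall} and a dyadic decomposition of the range of $\mu_{i,n,\w}^2$ governed by Theorem \ref{Tnutail}, one proves
\[
Q\left( \sum_{i=1}^n \mu_{i,n,\w}^2 \mathbf{1}_{M_i \leq n^{(1-\e)/s}} > \d n^{2/s}\right) = o(1),
\]
where the hypothesis $s<2$ enters because the dyadic growth rate $C$ must be chosen in $(1,2/s)$. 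The reflection discrepancy is handled by factoring $\mu_{i,\infty,\w}^2 - \mu_{i,n,\w}^2 = (\mu_{i,\infty,\w}-\mu_{i,n,\w})(\mu_{i,\infty,\w}+\mu_{i,n,\w})$ and invoking Lemma \ref{ETdiff}, whose proof exposes the factor $e^{-\d' b_n}$ coming from the $\Pi_{\nu_{-b_n},-1}$ term in \eqref{reflectexpand}; this factor is strong enough to kill any polynomial growth of $\mu_{i,\infty,\w}+\mu_{i,n,\w}$ obtained from crude a priori bounds of order $n^{1/s+\eta}$ on the sum of the $\mu_{i,\infty,\w}$ (cf.\ Lemma \ref{mdevu}).

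With these reductions done, the heart of the matter is a stable limit of index $s/2$ for the partial sums of the $\a$-mixing array $\{\mu_{i,n,\w}^2 \mathbf{1}_{M_i > n^{(1-\e)/s}}\}$, which I establish by verifying the hypotheses of \cite[Theorem 5.1(III)]{kGPD} exactly as in Theorem \ref{refstable}. The key tail asymptotic,
\[
\lim_{n\ra\infty} n\, Q\left( n^{-2/s} \mu_{1,n,\w}^2 > x,\ M_1 > n^{(1-\e)/s} \right) = K_\infty x^{-s/2},\qquad x>0,
\]
is immediate from Lemma \ref{reftail} via the substitution $x \mapsto \sqrt{x}$. The $\a$-mixing condition is inherited from Lemma \ref{alphamixing}, and the ``no two large blocks close together'' condition \eqref{mncond} is verified with $m_n = n^{1/2-\e}$ using independence of $M_1, M_{k+1}$ together with $Q(M_1 > n^{(1-\e)/s})^2 = \bigo(n^{-2+2\e})$. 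The only genuinely new point, which is where $s < 2$ is essential a second time, is the truncated-expectation bound
\[
n\, E_Q\left[n^{-2/s} \mu_{1,n,\w}^2 \mathbf{1}_{M_1 > n^{(1-\e)/s},\ \mu_{1,n,\w}^2 \leq \d n^{2/s}}\right] \leq C_6 \int_0^\d y^{-s/2}\,dy = \frac{C_6\, \d^{1-s/2}}{1-s/2},
\]
which is finite and vanishes as $\d \ra 0$ precisely because $s/2 < 1$. Kobus's theorem then supplies a constant $b > 0$ and the asserted $L_{s/2,b}$ limit. The hardest part, as in Theorem \ref{refstable}, is carrying out the small-block and reflection-discrepancy reductions cleanly for the squared variables; the stable-limit step itself is more or less a mechanical translation.
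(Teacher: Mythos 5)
Your proposal is correct and follows essentially the same route as the paper's proof in Section \ref{ballistic_qvs}: the reduction of $Var_\w T_{\nu_n}$ to $\sum_i (E_\w^{\nu_{i-1}}T_{\nu_i})^2$ via Corollary \ref{Vsdiff} with $m=\infty$, the three-way split into reflection discrepancy, small blocks, and large blocks (the paper's \eqref{ballistic_switchET2}--\eqref{ballistic_bigET2}, with the discrepancy handled by the same factorization plus Lemma \ref{ETdiff}), and the application of Kobus's theorem with the tail asymptotic $K_\infty x^{-s/2}n^{-1}$ from Theorem \ref{ballistic_VETtail} and the truncated-moment integral that is finite precisely because $s/2<1$. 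No gaps.
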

\noindent\textbf{Remarks:} \\
\textbf{1.} The constant $b$ in the above theorem may not be the same as in \eqref{ballistic_annealedstableT} and \eqref{ballistic_annealedstableX}. \\
\textbf{2.} Theorem \ref{ballistic_Varstable} can be used to show that $\lim_{n\ra\infty} P\left(\frac{Var_\w T_{n}}{n^{2/s}} \leq x \right) = L_{\frac{s}{2},b'}(x)$ for some $b'>0$, but we will not prove this since we do not use it for the other results in this chapter.

A major difficulty in analyzing $T_{\nu_n}$ is that the crossing time from $\nu_{i-1}$ to $\nu_i$ depends on the entire environment to the left of $\nu_i$. Thus, $Var_\w (T_{\nu_i} - T_{\nu_{i-1}})$ and $Var_\w (T_{\nu_j} - T_{\nu_{j-1}})$ are not independent even if $|i-j|$ is large. 
In order to make the crossing times of blocks that are far apart essentially
independent, we introduce some reflections to the RWRE. For
$n=1,2,\ldots$, define
\begin{equation}
b_n:= \lfloor \log^2(n) \rfloor. \label{ballistic_bdef}
\end{equation}
Let $\bar{X}_t^{(n)}$ be the random walk that is the same as $X_t$
with the added condition that, after reaching $\nu_k$, the
environment is modified by setting $\w_{\nu_{k-b_n}} = 1 $ (i.e.,
never allow the walk to backtrack more than $\log^2(n)$ blocks). 
We couple $\bar{X}_t^{(n)}$ with the random walk $X_t$ in such a way that $\bar{X}_t^{(n)} \geq X_t$ with equality holding until
the first time $t$ when the walk $\bar{X}_t^{(n)}$ reaches a modified environment location.
Denote by $\bar{T}_{x}^{(n)}$ the corresponding hitting
times for the walk $\bar{X}_t^{(n)}$. 
It was shown in Lemma \ref{bt} that $\lim_{n\ra\infty} P_\w( T_{\nu_n} \neq \bar{T}_{\nu_n}^{(n)} ) = 0$, $P-a.s.$, so that, in fact, with high probability the added reflections do not affect the walk at all before $T_{\nu_n}$. 
For ease of notation, let 
\[
\mu_{i,n,\w} := E_\w^{\nu_{i-1}} \bar{T}^{(n)}_{\nu_i}, \quad\text{and}\quad \s_{i,n,\w}^2 := Var_\w \left( \bar{T}^{(n)}_{\nu_i} - \bar{T}^{(n)}_{\nu_{i-1}} \right). 
\]

The structure of the chapter is as follows.
In Section \ref{ballistic_gp}, we prove the following general proposition that allows us to easily transfer quenched limit laws from subsequences of $T_n$ to $X_n$.
\begin{prop} \label{ballistic_generalprop}
Let Assumptions \ref{ballistic_essentialasm} and \ref{ballistic_techasm} hold and let $s\in(1,2)$. Also, let $n_k$ be a sequence of integers growing fast enough so that $\lim_{k\ra\infty} \frac{n_k}{n_{k-1}^{1+\d}} = \infty$ for some $\d>0$, and let
\begin{equation}
d_k:= n_k-n_{k-1}\quad\text{and}\quad v_{k,\w} := \sum_{i=n_{k-1}+1}^{n_k} \s_{i,d_k,\w}^2 = Var_\w \left( \bar{T}^{(d_{k})}_{\nu_{n_k}} - \bar{T}^{(d_{k})}_{\nu_{n_{k-1}}} \right) \, . \label{ballistic_dkvkdef}
\end{equation}
Assume that $F$ is a continuous distribution function for which, $P-a.s.$, there exists a subsequence $n_{k_m}= n_{k_m}(\w)$ such that, for $\a_m:= n_{k_m-1}$,
\[
\lim_{m\ra\infty} P_\w^{\nu_{\a_m}}\left( \frac{\bar{T}^{(d_{k_m})}_{x_m} - E_\w^{\nu_{\a_m}}\bar{T}^{(d_{k_m})}_{x_m} }{\sqrt{v_{k_m,\w}}} \leq y \right) = F(y), \quad \forall y\in \R,
\]
for any sequence $x_m \sim n_{k_m}$. Then, $P-a.s.$, for all $y\in \R$,
\begin{equation}
\lim_{m\ra\infty} P_\w\left( \frac{T_{x_m} - E_\w T_{x_m} }{\sqrt{v_{k_m,\w}}} \leq y \right) = F(y), \label{ballistic_Tlim}
\end{equation}
for any $x_m\sim n_{k_m}$, and 
\begin{equation}
\lim_{m\ra\infty} P_\w\left( \frac{X_{t_m} - n_{k_m} }{ v_P\sqrt{v_{k_m,\w}}} \leq y \right) = 1-F(-y),   \label{ballistic_Xlim}
\end{equation}
where $t_m:= \left\lfloor E_\w T_{n_{k_m}} \right\rfloor$. 
\end{prop}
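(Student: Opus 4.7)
The plan is to first establish \eqref{ballistic_Tlim} by reducing to the assumed convergence of $\bar{T}^{(d_{k_m})}_{x_m}$, and then to deduce \eqref{ballistic_Xlim} from \eqref{ballistic_Tlim} via the hitting-time/position duality, exploiting that $v_P > 0$ forces the scaling $\sqrt{v_{k_m,\w}}$ to be much smaller than $n_{k_m}$.

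For \eqref{ballistic_Tlim} I would split
\[
T_{x_m} - E_\w T_{x_m} = (T_{\nu_{\a_m}} - E_\w T_{\nu_{\a_m}}) + (T_{x_m} - T_{\nu_{\a_m}} - E_\w^{\nu_{\a_m}} T_{x_m}).
\]
The second summand has the same quenched law as the walk started at $\nu_{\a_m}$, and the reflected version $\bar{T}^{(d_{k_m})}_{x_m}$ differs from it negligibly: Lemma \ref{bt} together with a Markov bound yields $P_\w^{\nu_{\a_m}}(T_{x_m} \neq \bar{T}^{(d_{k_m})}_{x_m}) \to 0$, $P$-a.s., and an argument in the spirit of \eqref{ETETbar} (based on Lemma \ref{ETdiff}) yields $E_\w^{\nu_{\a_m}}(T_{x_m} - \bar{T}^{(d_{k_m})}_{x_m}) \to 0$. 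The first summand is handled by Chebyshev,
\[
P_\w\!\left(\bigl|T_{\nu_{\a_m}} - E_\w T_{\nu_{\a_m}}\bigr| \geq \e\sqrt{v_{k_m,\w}}\right) \leq \frac{Var_\w T_{\nu_{\a_m}}}{\e^2 v_{k_m,\w}}.
\]
Lemma \ref{Varasymp} and Borel-Cantelli give $Var_\w T_{\nu_{n_{k-1}}} = O(n_{k-1}^{2/s + \eta})$ for every $\eta > 0$, $P$-a.s., while the nondegeneracy of $F$ forces $\sqrt{v_{k_m,\w}}$ to be at least of the annealed order $n_{k_m}^{1/s}$ up to lower-order factors. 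The growth hypothesis $n_k/n_{k-1}^{1+\d} \to \infty$ then makes the ratio vanish, and continuity of $F$ upgrades the resulting convergence in \eqref{ballistic_Tlim} to be uniform in $y$.

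For \eqref{ballistic_Xlim} I would replace $X_{t_m}$ by $X_{t_m}^* := \max_{n \leq t_m} X_n$ at cost $o(\log^2 t_m)$ via Lemma \ref{seperation}. Fix $y \in \R$ and set $x_m(y) := \lceil n_{k_m} + y v_P \sqrt{v_{k_m,\w}}\rceil$; since $\sqrt{v_{k_m,\w}} = o(n_{k_m})$, we have $x_m(y) \sim n_{k_m}$, so the uniform form of \eqref{ballistic_Tlim} applies. The duality $\{X_{t_m}^* < x_m(y)\} = \{T_{x_m(y)} > t_m\}$ gives
\[
P_\w\!\left(\frac{X_{t_m}^* - n_{k_m}}{v_P\sqrt{v_{k_m,\w}}} < y\right) = P_\w\!\left(\frac{T_{x_m(y)} - E_\w T_{x_m(y)}}{\sqrt{v_{k_m,\w}}} > \frac{t_m - E_\w T_{x_m(y)}}{\sqrt{v_{k_m,\w}}}\right),
\]
so it remains to show that the threshold on the right converges to $-y$.

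This last convergence is the main obstacle. For $y > 0$ (the case $y < 0$ is symmetric),
\[
\frac{t_m - E_\w T_{x_m(y)}}{\sqrt{v_{k_m,\w}}} = \frac{-1}{\sqrt{v_{k_m,\w}}} \sum_{i=n_{k_m}+1}^{x_m(y)} E_\w^{i-1}T_i + O\!\left(\tfrac{1}{\sqrt{v_{k_m,\w}}}\right),
\]
and since $\E T_1 = 1/v_P$, the claim reduces to the quenched law of large numbers
\[
\frac{1}{\sqrt{v_{k_m,\w}}} \sum_{i=n_{k_m}+1}^{\lceil n_{k_m} + y v_P \sqrt{v_{k_m,\w}}\rceil} \left(E_\w^{i-1}T_i - \tfrac{1}{v_P}\right) \longrightarrow 0, \quad P\text{-a.s.}
\]
Because $s > 1$, the stationary sequence $E_\w^{i-1}T_i - 1/v_P$ has mean zero and finite $\gamma$-th moment for every $\gamma < s$, so partial sums of length $N$ are $O(N^{1/\gamma}) = o(N)$ by a Zygmund-Marcinkiewicz estimate combined with the truncation/block-mixing device used in the proof of Lemma \ref{Thesis_sum}. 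With $N \asymp \sqrt{v_{k_m,\w}} \asymp n_{k_m}^{1/s}$ this gives fluctuations of order $n_{k_m}^{1/(\gamma s)} = o(\sqrt{v_{k_m,\w}})$ once $\gamma$ is taken close enough to $s$. The delicate step is upgrading this to an almost-sure statement along a random subsequence; I would do so by proving a maximal inequality controlling the supremum of $\bigl|\sum_{i=a}^b(E_\w^{i-1}T_i - 1/v_P)\bigr|$ over windows $[a,b] \subset [1,2n_k]$ with $|b-a| \leq n_k^{1/s+\eta}$ along the deterministic sequence $n_k$, and then invoking Borel-Cantelli. The growth hypothesis $n_k/n_{k-1}^{1+\d} \to \infty$ is precisely what makes the resulting tail probabilities summable in $k$.
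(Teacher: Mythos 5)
Your overall architecture matches the paper's: the same decomposition of $T_{x_m}-E_\w T_{x_m}$ into the contribution up to $\nu_{\a_m}$ (killed by Chebyshev) plus the reflected piece, the same removal of reflections via the exponential estimate $\E(T_1-\tilde T_1^{(n)})=\frac{2}{1-E_P\rho}(E_P\rho)^{b_n}$, and the same duality $\{X^*_{t_m}<x_m(y)\}=\{T_{x_m(y)}>t_m\}$ reducing \eqref{ballistic_Xlim} to the convergence of the threshold $(t_m-E_\w T_{x_m(y)})/\sqrt{v_{k_m,\w}}$ to $-y$. One secondary remark: you justify the lower bound $\sqrt{v_{k_m,\w}}\gtrsim n_{k_m}^{1/s}$ by appealing to the ``nondegeneracy of $F$,'' which is not a rigorous deduction from the hypothesis; the clean route is the environment-only bound $\s_{i,n,\w}^2\geq M_i^2$ together with the tail of $M_1$, which gives $v_{k,\w}\in(d_k^{2/s-\d},d_k^{2/s+\d})$ along the whole deterministic sequence (Corollary \ref{ballistic_vkasym}), hence along any subsequence.

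The genuine gap is in your treatment of the key averaging step. You propose to prove
\[
\sup_{[a,b]\subset[1,2n_k],\ |b-a|\leq n_k^{1/s+\eta}}\Bigl|\sum_{i=a}^{b}\bigl(E_\w^{i-1}T_i-\tfrac{1}{v_P}\bigr)\Bigr|=o\bigl(\sqrt{v_{k,\w}}\bigr),
\]
but this statement is false. Since $P(E_\w T_1>x)\sim Kx^{-s}$, already the maximum of the \emph{single} summands, $\max_{i\leq 2n_k}|E_\w^{i-1}T_i-\tfrac1{v_P}|$, is of order $n_k^{1/s}$, i.e.\ of the same order as $\sqrt{v_{k,\w}}\asymp d_k^{1/s}$; a supremum over all windows can only be larger. (Relatedly, the naive union bound over $\sim n_k$ starting points multiplied by a per-window bound of order $N^{-(s-1)/2}$ with $N=n_k^{1/s+\eta}$ is not summable for $s$ near $1$.) The fix is to observe that you never need arbitrary windows: the window in the threshold computation is anchored at the deterministic point $n_{k_m}$ (the randomness of $k_m$ is absorbed by proving the statement along the full sequence $n_k$ and applying Borel--Cantelli, and the random endpoint is absorbed by a max over $m$ in a deterministic range once $v_{k,\w}\in[d_k^{2/s-2\d},d_k^{2/s+2\d}]$ is known). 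By stationarity of $P$, $\{E_\w^{n_k}T_{n_k+m}\}_m$ has the same law as $\{E_\w T_m\}_m$, so one only needs the one-parameter maximal inequality $P\bigl(\max_{m\leq N}|E_\w T_m-m\E T_1|\geq N^{1-\d'}\bigr)=o(N^{-(s-1)/2})$ for $\d'<\frac{s-1}{2s}$, proved by the reflection/blocking plus Doob--Kolmogorov plus Marcinkiewicz--Zygmund argument you have in mind; summability in $k$ then follows exactly from the growth hypothesis $n_k/n_{k-1}^{1+\d}\ra\infty$. With that correction your argument closes.
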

Then in Sections \ref{ballistic_qGauss} and \ref{ballistic_exponential}, we use Theorem \ref{ballistic_Varstable} to find subsequences $n_{k_m}(\w)$ that allow us to apply Proposition \ref{ballistic_generalprop}. To find a subsequence that gives Gaussian behavior of $T_{n_{k_m}}$, we find a subsequence where none of the crossing times of the first $n_{k_m}$ blocks is too much larger than all the others and then use the Linberg-Feller condition for triangular arrays. In contrast, to find a subsequence that gives exponential behavior of $T_{n_{k_m}}$, we first prove that the crossing times of ``large'' blocks is approximately exponential in distribution. Then, we find a subsequence where the crossing time of one of the first $n_{k_m}$ blocks dominates the total crossing time of the first $n_{k_m}$ blocks. Finally, Section \ref{ballistic_qvs} contains the proof of Theorem \ref{ballistic_Varstable}, which is similar to the proof of Theorem \ref{refstable}.

%First, recalling the definitions of $\Pi_{i,j}$ and $W_i$ in \eqref{ballistic_rhodef} and \eqref{ballistic_Wdef} we recall the following simple Lemmas:
%The following theorem is instrumental in our proofs:
Before continuing with the proofs of the main theorems, we recall some notation and results from Chapters \ref{1dlimitingdist} and  \ref{Thesis_AppendixZeroSpeed} that will be used throughout the Chapter.
First, recall that from Lemma \ref{nutail} there exist constants $C_1,C_2>0$ such that 
\begin{equation}
P(\nu > x) \leq C_1 e^{-C_2 x}, \quad \forall x\geq 0. \label{ballistic_nutail}
\end{equation}
%the following lemma from \cite{pzSL1}:
%\begin{lem}[\textbf{Lemma 2.1 in \cite{pzSL1}}] \label{ballistic_nutail}
%There exist constants $C_1,C_2>0$ such that $P(\nu > x) \leq C_1
%e^{-C_2 x}$ for all $x\geq 0$.
%\end{lem}
%\begin{lem}[\textbf{Lemma 2.2 in \cite{pzSL1}}] \label{ballistic_Wtail}
%There exist constants $C_3,C_4>0$ such that $Q(W_{-1} > x) \leq
%C_3 e^{-C_4  x}$ for all $x\geq 0$.
%\end{lem}
%In particular, \eqref{ballistic_nutail} implies that $E_P \nu < \infty$. 
Then, since $\nu_n = \sum_{i=1}^n \nu_i - \nu_{i-1}$ and the $\nu_i-\nu_{i-1}$ are i.i.d., the law of large numbers implies that
\begin{equation}
\lim_{n\ra\infty} \frac{\nu_n}{n} = E_P \nu =: \bar \nu < \infty, \qquad P-a.s. \label{ballistic_nuLLN}
\end{equation}
In Chapter \ref{Thesis_AppendixZeroSpeed} the following formulas for the quenched expectation and variance of $T_\nu$ were given:
\begin{equation}
E_\w T_\nu = \nu + 2 \sum_{j=0}^{\nu-1} W_j, \quad\text{and}\quad Var_\w T_\nu = 4\sum_{j=0}^{\nu-1}(W_{j}+W_{j}^2) + 8\sum_{j=0}^{\nu-1}\sum_{i< j} \Pi_{i+1,j}(W_{i}+W_{i}^2). \label{ballistic_qvformula}
\end{equation}
Note that since the added reflections only decrease the crossing times, obviously $T_\nu \geq \bar{T}^{(n)}_\nu$ and $E_\w T_\nu \geq E_\w \bar{T}^{(n)}_\nu$ for any $n$. Also, since \eqref{ballistic_qvformula} holds for any environment $\w$, the formula for $Var_\w \bar{T}^{(n)}_\nu$ is the same as in \eqref{ballistic_qvformula}, but with $\rho_{\nu_{-b_n}}$ replaced by 0. In particular, this shows that $Var_\w T_\nu \geq Var_\w \bar{T}^{(n)}_\nu$ for any $n$. As in Chapter \ref{Thesis_AppendixZeroSpeed}, for any integer $i$, let
\begin{equation}
M_i:=\max \{ \Pi_{\nu_{i-1}, j} : j\in[\nu_{i-1},\nu_i) \} \, . \label{ballistic_Mdef}
\end{equation} 
Then, \cite[Theorem 1]{iEV} implies that there exists a constant $C_3<\infty$ such that 
\begin{equation}
Q(M_i > x) = P(M_1 > x) \sim C_3 x^{-s}. 	\label{ballistic_Mtail}
\end{equation}
Note that $M_1 \leq \max_{0\leq j<\nu} W_j$. Therefore,
from the formulas for $E_\w T_\nu$ and $Var_\w T_\nu$ in \eqref{ballistic_qvformula}, it is easy to see that $E_\w T_\nu  \geq M_1$ and $Var_\w T_\nu  \geq M_1^2$. (The same is also true for $\bar{T}_\nu^{(n)}$.) 
%Finally, recall that from \cite[Theorem 5.1]{pzSL1} there exists a constant $K_\infty\in(0,\infty)$ such that 
%\begin{equation}
%Q\left( Var_\w T_\nu > x \right) \sim Q\left( (E_\w T_\nu)^2 > x \right) \sim K_\infty x^{-s/2}, \qquad \text{as } x\ra\infty. \label{ballistic_VETtail}
%\end{equation}
%Moreover, \cite[Lemma 3.3]{pzSL1} gives that for any $\e>0$ and $x>0$ 
%\begin{align}
%Q\left( \left(E_\w \bar{T}^{(n)}_\nu\right)^2 > x n^{2/s}, \: M_1 > n^{(1-\e)/s} \right) \sim K_\infty x^{-s/2} \frac{1}{n}, \label{ballistic_VETtail2} 
%\end{align}
%as $n\ra\infty$.
Finally, recall the following results from Chapter \ref{Thesis_AppendixZeroSpeed}:
\begin{thm}[\textbf{Lemma \ref{reftail} \& Theorem \ref{qVartail}}] \label{ballistic_VETtail}
There exists a constant $K_\infty \in (0,\infty)$ such that 
\[
Q\left( Var_\w T_\nu > x \right) \sim Q\left( (E_\w T_\nu)^2 > x \right) \sim K_\infty x^{-s/2}, \qquad \text{as } x\ra\infty.
\]
Moreover, for any $\e>0$ and $x>0$, 
\begin{align*}
Q\left( Var_\w \bar{T}^{(n)}_\nu > x n^{2/s}, \: M_1 > n^{(1-\e)/s} \right) \sim Q\left( \left(E_\w \bar{T}^{(n)}_\nu\right)^2 > x n^{2/s}, \: M_1 > n^{(1-\e)/s} \right) \sim K_\infty x^{-s/2} \frac{1}{n}, 
\end{align*}
as $n\ra\infty$. 
%
%\[
%Q\left( Var_\w \bar{T}^{(n)}_\nu > x n^{2/s}, \quad M_1 > n^{(1-\e)/s} \right) \sim K_\infty x^{-s/2} \frac{1}{n}, \qquad \text{as } n\ra\infty.
%\]
%Consequently, for any $\e>0$ and $x>0$
%\[
%Q\left( Var_\w T_\nu > x n^{2/s}, \quad M_1 \leq n^{(1-\e)/s} \right) = o(n^{-1}). 
%\]
\end{thm}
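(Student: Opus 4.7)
The plan is to derive all four asymptotic statements from a single quenched comparison between $Var_\w T_\nu$ and $(E_\w T_\nu)^2$, leveraging the heavy-tailed asymptotics already proved in Theorem \ref{Tnutail} and Lemma \ref{reftail}. The observation that makes the squared-mean statements essentially free is that $Q(E_\w T_\nu > y) \sim K_\infty y^{-s}$ immediately yields $Q((E_\w T_\nu)^2 > x) = Q(E_\w T_\nu > \sqrt{x}) \sim K_\infty x^{-s/2}$, and an identical substitution in Lemma \ref{reftail} gives $Q((E_\w \bar T_\nu^{(n)})^2 > x n^{2/s},\, M_1 > n^{(1-\e)/s}) \sim K_\infty x^{-s/2}/n$. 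Thus the whole theorem reduces to showing that the variance differs from the squared mean by something with a strictly lighter tail.

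To this end, I would use the explicit quenched formulas \eqref{ballistic_qvformula} to write $(E_\w T_\nu)^2 - Var_\w T_\nu = D^+(\w) - 8 R_{0,\nu-1}\, D^-(\w)$ for explicit nonnegative functionals $D^\pm$ of the environment (the same decomposition, and the same $D^\pm$, continue to dominate the corresponding expression for $\bar T_\nu^{(n)}$, since reflection only removes some of the terms). The core estimate is that for every $\e > 0$ both $Q(D^+(\w) > x)$ and $Q(R_{0,\nu-1}D^-(\w) > x)$ are $o(x^{-s+\e})$, which is strictly smaller than $x^{-s/2}$ once $\e < s/2$ (legitimate since $s < 2$). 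For $D^+(\w)$, after expansion the dominant pieces are $\nu\, E_\w T_\nu$, handled via the exponential tail of $\nu$ from \eqref{ballistic_nutail} combined with Theorem \ref{Tnutail}, and $\sum_{0 \leq i < j < \nu} W_i\, W_{i+2,j}$, which is controlled via the $P$-independence of $\Pi_{0,i}$ and $W_{i+2,j}$, the Kesten-type tail bound $P(W_j > x) \leq K_1 x^{-s}$, and the identity $E_P \Pi_{0,i}^s = 1$. For $D^-(\w)$ the analogous bound is obtained by splitting on the deterministic event $E_{c,b_x} = \{\Pi_{j,i} \leq e^{-c(i-j+1)} \text{ for all } -b_x \leq i \leq -1,\, j \leq i - b_x\}$; Cramér's bound \eqref{LDPrho} makes $Q(E_{c,b_x}^c)$ stretched-exponentially small, while on $E_{c,b_x}$ one can truncate the infinite sum and reduce to tail estimates for the finitely many $W_i$ with $-b_x \leq i \leq -1$. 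The factor $R_{0,\nu-1}$ is then absorbed by conditioning on $\mathcal F_{-1} = \sigma(\dots,\w_{-2},\w_{-1})$ and using $P(R_{0,\nu-1} > y) \leq K_\gamma y^{-\gamma}$ for any $\gamma < s$.

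With these tail bounds in hand, the sandwich
\[
(E_\w T_\nu)^2 - D^+(\w) \;\leq\; Var_\w T_\nu \;\leq\; (E_\w T_\nu)^2 + 8 R_{0,\nu-1} D^-(\w)
\]
yields, for every $\delta > 0$,
\[
Q\!\left((E_\w T_\nu)^2 > (1{+}\delta)x\right) - Q(D^+ > \delta x) \;\leq\; Q(Var_\w T_\nu > x) \;\leq\; Q\!\left((E_\w T_\nu)^2 > (1{-}\delta)x\right) + Q\!\left(R_{0,\nu-1} D^- > \tfrac{\delta x}{8}\right).
\]
Multiplying by $x^{s/2}$, sending $x \to \infty$ and then $\delta \to 0$ proves the first assertion. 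The second assertion follows from the same sandwich applied to $\bar T_\nu^{(n)}$ with $x$ replaced by $xn^{2/s}$: the right-hand tail bounds on $D^\pm$ do not depend on $n$ and so, after multiplication by $n$, the error terms disappear, while the leading term is supplied by Lemma \ref{reftail}. The main obstacle is the tail bound for $D^-(\w)$: because $W_{-1}$ has only exponential tails under $Q$ but $W_i$ for $i < -1$ carry both the heavy tail $W_i^2$ and multiplicative factors $\Pi_{i+1,-1}$ which are only \emph{typically} small under $Q$, one must carefully separate the generic exponential decay of $\Pi_{i+1,-1}$ from its atypical large values via the event $E_{c,b_x}$ and a union bound, which is precisely where the gain from $-s$ to $o(x^{-s+\e})$ is delicate but essential.
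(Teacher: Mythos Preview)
Your proposal is correct and follows essentially the same approach as the paper: the decomposition $(E_\w T_\nu)^2 - Var_\w T_\nu = D^+(\w) - 8R_{0,\nu-1}D^-(\w)$, the $o(x^{-s+\e})$ tail bounds on $D^+$ (via $\nu E_\w T_\nu$ and the $\Pi_{0,i}/W_{i+2,j}$ independence argument) and on $R_{0,\nu-1}D^-$ (via the event $E_{c,b_x}$ and conditioning on $\mathcal{F}_{-1}$), and the final $\delta$-sandwich are exactly the contents of Lemmas \ref{abound}, \ref{piW2}, Corollary \ref{bbound}, and the proof of Theorem \ref{qVartail}. Your observation that the same $D^\pm$ dominate the reflected expression (since reflection only deletes terms) is likewise the paper's argument for \eqref{rvartail}.
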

%One immediate consequence of Theorem \ref{ballistic_VETtail} is that essentially the only way for $Var_\w T_\nu$ to be ``large'' is for $M_1$ to be ``large'' as well. 

\end{section}

\begin{section}{Converting Time Limits to Space Limits}\label{ballistic_gp}
%A formula for the quenched variance of crossing times is given in \cite[(2.2)]{gQCLT}. Translating to our notation and simplifying we have the formula
%\begin{equation}\label{ballistic_qvar}
%Var_\w T_1 := E_\w (T_1 - E_\w T_1)^2 = 4(W_{0}+ W_{0}^2)+ 8 \sum_{i<0} \Pi_{i+1,0}(W_{i}+W_{i}^2)
%\end{equation}
In this section, we develop a general method for transferring a quenched limit law for a subsequence of $T_n$ to a quenched limit law for a subsequence of $X_n$. We begin with some lemmas analyzing the a.s. asymptotic behavior of the quenched variance and mean of the hitting times.
\begin{lem} \label{ballistic_Vmdublb}
Assume $s\leq 2$. Then, for any $\d > 0$, 
\[
Q\left( Var_\w \bar{T}_{\nu_n}^{(n)} \notin \left(n^{2/s - \d}, n^{2/s+\d}\right) \right) \leq \frac{1}{P(\mathcal{R})} P\left( Var_\w \bar{T}_{\nu_n}^{(n)} \notin \left(n^{2/s - \d}, n^{2/s+\d}\right) \right) = o\left( n^{-\d s/4} \right)  \,.
\]
\end{lem}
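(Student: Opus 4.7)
The first inequality is immediate: since $Q(\,\cdot\,) = P(\,\cdot \cap \mathcal{R})/P(\mathcal{R})$, we trivially have $Q(A) \leq P(A)/P(\mathcal{R})$ for every measurable $A$. So the real task is to bound the $P$-probability by $o(n^{-\d s/4})$, and I would split the complement of $(n^{2/s-\d},n^{2/s+\d})$ into its upper tail $\{Var_\w \bar{T}^{(n)}_{\nu_n} \geq n^{2/s+\d}\}$ and lower tail $\{Var_\w \bar{T}^{(n)}_{\nu_n} \leq n^{2/s-\d}\}$, and treat each separately.

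For the upper tail, my plan is to use the fact (noted just after \eqref{ballistic_qvformula}) that introducing the reflections can only decrease the quenched variance, so $Var_\w \bar{T}^{(n)}_{\nu_n} \leq Var_\w T_{\nu_n}$. Then Lemma \ref{Varasymp} (which was proved in Chapter \ref{Thesis_AppendixZeroSpeed} for all $s\leq 2$) gives immediately
\[
P\left(Var_\w \bar{T}^{(n)}_{\nu_n} \geq n^{2/s+\d}\right) \leq P\left(Var_\w T_{\nu_n} \geq n^{2/s+\d}\right) = o(n^{-\d s/4}),
\]
so no new work is needed here.

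For the lower tail, the key observation is that the reflections make the individual block crossings independent under $P_\w$, so
\[
Var_\w \bar{T}^{(n)}_{\nu_n} = \sum_{i=1}^{n} \s_{i,n,\w}^2 \geq \max_{1\leq i \leq n} \s_{i,n,\w}^2.
\]
Applying the variance formula \eqref{ballistic_qvformula} to the reflected environment restricted to the $i$-th block, and retaining only the diagonal term $4\sum_j W_j^2$, one gets $\s_{i,n,\w}^2 \geq 4 M_i^2$: indeed, for some $j\in[\nu_{i-1},\nu_i)$ the partial product $\Pi_{\nu_{i-1},j}$ equals $M_i$, and since $\Pi_{\nu_{i-1},j}\leq W_j$ even in the reflected environment, one of the $W_j^2$ in the block sum is at least $M_i^2$. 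Hence
\[
P\left(Var_\w \bar{T}^{(n)}_{\nu_n} \leq n^{2/s-\d}\right) \leq P\left(\max_{1\leq i\leq n} M_i^2 \leq n^{2/s-\d}\right) = \left(1 - P(M_1 > n^{1/s-\d/2})\right)^{n},
\]
because the $M_i$ are i.i.d.\ under $P$. The tail estimate \eqref{ballistic_Mtail} yields $P(M_1>n^{1/s-\d/2})\sim C_3 n^{-1+\d s/2}$, and therefore the right-hand side is bounded above by $\exp(-\tfrac12 C_3 n^{\d s/2})$ for $n$ large, which decays faster than any polynomial and is in particular $o(n^{-\d s/4})$. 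Combining the two tail bounds proves the lemma.

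There is no real obstacle here; the only point requiring a moment's thought is the block-wise lower bound $\s_{i,n,\w}^2 \geq M_i^2$, and this is harmless because setting $\rho_{\nu_{-b_n}} = 0$ only removes (positive) terms from the sums in \eqref{ballistic_qvformula}, while the surviving diagonal term $4W_j^2$ with $W_j \geq M_i$ is already enough.
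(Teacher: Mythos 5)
Your proposal is correct and follows essentially the same route as the paper: the trivial comparison $Q(A)\leq P(A)/P(\mathcal{R})$, the upper tail via $Var_\w \bar{T}^{(n)}_{\nu_n}\leq Var_\w T_{\nu_n}$ and Lemma \ref{Varasymp}, and the lower tail via $\s_{i,n,\w}^2\geq M_i^2$ together with the i.i.d.\ tail estimate \eqref{ballistic_Mtail}. The only difference is that you spell out the justification of the block-wise bound $\s_{i,n,\w}^2\geq M_i^2$ from \eqref{ballistic_qvformula}, which the paper simply asserts earlier; this is a harmless (and correct) elaboration.
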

\begin{proof}
The first inequality in the lemma is trivial since for any $A\in\mathcal{F}$, it follows from the definition of $Q$ that $Q(A)= \frac{P(A\cap \mathcal{R})}{P(\mathcal{R})} \leq \frac{P(A)}{P(\mathcal{R})}$. Next, note that when $s\leq 2$, Lemma \ref{Varasymp} implies
\begin{equation}
P\left( Var_\w \bar{T}_{\nu_n}^{(n)} \geq n^{2/s+\d} \right) \leq P\left( Var_\w T_{\nu_n} \geq n^{2/s + \d} \right) = o( n^{-\d
s/4} )\,. \label{ballistic_nrub}
\end{equation}
Also, since $Var_\w (\bar{T}_{\nu_{i}}^{(n)} - \bar{T}_{\nu_{i-1}}^{(n)}) \geq M_i^2$, 
\[
P\left( Var_\w \bar{T}_{\nu_n}^{(n)} \leq n^{2/s - \d} \right) \leq P\left( M_1^2 \leq n^{2/s-\d}\right)^n = \left( 1 - P\left( M_1 > n^{1/s-\d/2} \right) \right)^n = o\left( e^{-n^{\d s/4}} \right) \, ,
\] 
where the last equality is from \eqref{ballistic_Mtail}.
\end{proof}
\begin{cor}\label{ballistic_vkasym}
Assume $s\leq 2$. Then, for any $\d>0$,
%\[
%P\left( Var_\w \left( \bar{T}_{\nu_{n_k}}^{(d_k)} - \bar{T}_{\nu_{n_{k-1}}}^{(d_k)} \right) \notin \left( d_k^{2/s -\d}, d_k^{2/s + \d} \right) \right) = o\left( d_k^{-\d s /4} \right).
%\]
\[
P\left( v_{k,\w} \notin \left( d_k^{2/s -\d}, d_k^{2/s + \d} \right) \right) = o\left( d_k^{-\d s /4} \right).
\]
Consequently, if $s<2$, then $\sqrt{v_{k,\w}} = o(d_k)$, $P-a.s.$ 
\end{cor}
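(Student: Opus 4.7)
The plan is to reduce $v_{k,\w}$ to a shifted copy of the variance appearing in Lemma \ref{ballistic_Vmdublb} and then apply that lemma together with Borel--Cantelli. I will argue as follows.

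First, I will use the shift invariance of $Q$ under block shifts to reduce the problem to the setting of Lemma \ref{ballistic_Vmdublb}. Recall that $v_{k,\w} = Var_\w\bigl(\bar{T}^{(d_k)}_{\nu_{n_k}} - \bar{T}^{(d_k)}_{\nu_{n_{k-1}}}\bigr)$. The random walk inside this variance, started from $\nu_{n_{k-1}}$, is reflected $b_{d_k} = \lfloor \log^2(d_k) \rfloor$ ladder blocks behind its current maximum, so the crossing time depends only on the environment between $\nu_{n_{k-1}-b_{d_k}}$ and $\nu_{n_k}-1$. For all $k$ sufficiently large we have $n_{k-1} - b_{d_k} > 0$, so this region lies entirely to the right of the origin; in particular the event $\{v_{k,\w} \notin (d_k^{2/s-\delta}, d_k^{2/s+\delta})\}$ lies in $\sigma(\w_i : i \geq 0)$, on which $P$ and $Q$ agree. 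Under $Q$, the blocks between consecutive ladder locations are i.i.d., so the distribution of $v_{k,\w}$ coincides with that of $Var_\w \bar{T}^{(d_k)}_{\nu_{d_k}}$, which is exactly the object bounded in Lemma \ref{ballistic_Vmdublb}.

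Combining these two observations, for all $k$ large enough,
\[
P\!\left( v_{k,\w} \notin \left( d_k^{2/s-\delta}, d_k^{2/s+\delta} \right) \right) = Q\!\left( Var_\w \bar{T}^{(d_k)}_{\nu_{d_k}} \notin \left( d_k^{2/s-\delta}, d_k^{2/s+\delta} \right) \right) = o\!\left( d_k^{-\delta s/4} \right),
\]
by Lemma \ref{ballistic_Vmdublb} applied with $n=d_k$. This proves the first assertion.

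For the consequence, assume $s<2$ and choose $\delta \in (0, 2 - 2/s)$, so that $2/s + \delta < 2$. Since $n_k = 2^{2^k}$ and $d_k \sim n_k$, the sequence $d_k$ grows doubly exponentially, and in particular $\sum_k d_k^{-\delta s/4} < \infty$. By the first part and the Borel--Cantelli lemma, $P$-a.s.\ we have $v_{k,\w} \leq d_k^{2/s+\delta}$ for all $k$ sufficiently large, so
\[
\frac{\sqrt{v_{k,\w}}}{d_k} \leq d_k^{1/s + \delta/2 - 1} \underset{k\to\infty}{\longrightarrow} 0, \qquad P\text{-a.s.},
\]
which is exactly the claim $\sqrt{v_{k,\w}} = o(d_k)$, $P$-a.s. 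There is no real obstacle here beyond the bookkeeping of identifying the distribution of $v_{k,\w}$ under $Q$ with that of a single-block variance; the doubly exponential growth of $d_k$ makes the Borel--Cantelli step automatic.
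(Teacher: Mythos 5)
Your proof is correct and follows essentially the same route as the paper: reduce $v_{k,\w}$ to $Var_\w \bar{T}^{(d_k)}_{\nu_{d_k}}$ via the measurability with respect to $\s(\w_i: i\geq 0)$ and the block-stationarity of $Q$, apply Lemma \ref{ballistic_Vmdublb}, and then conclude by Borel--Cantelli. The only cosmetic difference is that the paper fixes the specific exponent $\d=(s-1)/s$ in the second step where you keep a generic $\d\in(0,2-2/s)$; both choices require $s>1$, which holds in the chapter's standing setting.
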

\begin{proof}
Recall from \eqref{ballistic_dkvkdef} that by definition $v_{k,\w} = Var_\w \left( \bar{T}_{\nu_{n_k}}^{(d_k)} - \bar{T}_{\nu_{n_{k-1}}}^{(d_k)} \right)$. Also, note that the conditions on $n_k$ ensure that $n_k$ grows faster than exponentially and that $d_k\sim n_k$. Thus, for all $k$ large enough $v_{k,\w}$ only depends on the environment to the right of zero. Therefore for all $k$ large enough
\begin{align*}
P\left( v_{k,\w} \notin \left( d_k^{2/s -\d}, d_k^{2/s + \d} \right) \right) &= Q\left( Var_\w \left( \bar{T}_{\nu_{n_k}}^{(d_k)} - \bar{T}_{\nu_{n_{k-1}}}^{(d_k)} \right) \notin \left( d_k^{2/s -\d}, d_k^{2/s + \d} \right) \right) \\
&= Q\left( Var_\w  \bar{T}_{\nu_{d_k}}^{(d_k)} \notin \left( d_k^{2/s -\d}, d_k^{2/s + \d} \right) \right) = o\left( d_k^{-\d s /4} \right),
\end{align*}
where the last equality is from Lemma \ref{ballistic_Vmdublb}. Now, for the second claim in the corollary, first note that $2 > \frac{2}{s} + \frac{s-1}{s}$ since $s>1$. Therefore, for any $\e>0$ and for all $k$ large enough we have 
\[
P\left( v_{k,\w} > \e d_k^2 \right) \leq P\left( v_{k,\w} >  d_k^{2/s+(s-1)/s} \right) = o\left( d_k^{-(s-1)/4} \right).
\]
This last term is summable since $d_k$ grows faster than exponentially. Thus the Borel-Cantelli Lemma gives that $v_{k,\w} = o(d_k^2)$, $P-a.s.$
\end{proof}

\begin{cor} \label{ballistic_begVar}
Assume $s\leq 2$. Then 
\[
\lim_{k\ra\infty} \frac{Var_\w T_{\nu_{n_{k-1}}} }{ v_{k,\w}  } = 0, \quad P-a.s.
\]
\end{cor}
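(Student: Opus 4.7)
The plan is to bound the numerator and denominator separately via Borel--Cantelli, and then exploit the fast growth of the sequence $n_k$ to conclude. Specifically, I will show $P$-a.s.\ that, for all $k$ large enough, $Var_\w T_{\nu_{n_{k-1}}} \leq n_{k-1}^{2/s+\d}$ and $v_{k,\w} \geq d_k^{2/s-\d}$ for a sufficiently small $\d>0$, and then verify that under the hypothesis $n_k/n_{k-1}^{1+\d_0}\to\infty$ the ratio of these bounds tends to zero.

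For the upper bound on the numerator, I would invoke Lemma \ref{Varasymp} (proved earlier in the zero-speed chapter), which gives $P(Var_\w T_{\nu_n} \geq n^{2/s+\d}) = o(n^{-\d s/4})$ for every $\d>0$. Applying this along the subsequence $n_{k-1}$ and using that $n_{k-1}$ grows faster than any polynomial in $k$ (since the sequence $n_k$ grows faster than exponentially under the hypothesis $n_k/n_{k-1}^{1+\d_0}\to\infty$), the tail sum $\sum_k n_{k-1}^{-\d s/4}$ is finite, so the Borel--Cantelli lemma yields $Var_\w T_{\nu_{n_{k-1}}} \leq n_{k-1}^{2/s+\d}$ eventually, $P$-a.s.

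For the lower bound on the denominator, I would apply Corollary \ref{ballistic_vkasym}, which gives $P(v_{k,\w} \leq d_k^{2/s-\d}) = o(d_k^{-\d s/4})$. Again, the fast growth of $d_k \sim n_k$ makes this summable, so Borel--Cantelli gives $v_{k,\w} \geq d_k^{2/s-\d}$ eventually, $P$-a.s. Combining the two bounds, $P$-a.s.\ for all $k$ sufficiently large,
\[
\frac{Var_\w T_{\nu_{n_{k-1}}}}{v_{k,\w}} \leq \frac{n_{k-1}^{2/s+\d}}{d_k^{2/s-\d}}.
\]
Since $d_k \sim n_k$ and $n_k \geq n_{k-1}^{1+\d_0}$ for all $k$ large, the right-hand side is bounded above, up to a constant, by $n_{k-1}^{2/s+\d - (1+\d_0)(2/s-\d)} = n_{k-1}^{\d(2+\d_0) - \d_0\cdot(2/s)}$. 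Choosing $\d < \frac{2\d_0}{s(2+\d_0)}$ makes this exponent strictly negative, so the ratio tends to zero.

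The only thing to be careful about is that the estimates in Lemma \ref{Varasymp} and Corollary \ref{ballistic_vkasym} yield probabilities that involve $Q$ versus $P$; this is a non-issue since for $k$ large enough the events in question depend only on the environment to the right of the origin (because of the reflections at $\nu_{n_{k-1}-b_{d_k}}$ and $n_{k-1}$ grows faster than $b_{d_k}=\lfloor\log^2 d_k\rfloor$), on which $P$ and $Q$ agree up to the factor $1/P(\mathcal{R})$, which does not affect summability. There is no serious obstacle here; this is essentially a routine quantitative combination of the a.s.\ upper bound on the quenched variance of $T_{\nu_n}$ with the a.s.\ lower bound on $v_{k,\w}$ established in the preceding lemmas.
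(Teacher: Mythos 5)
Your proof is correct and follows essentially the same route as the paper: both arguments rest on the tail bound of Lemma \ref{Varasymp} (via \eqref{ballistic_nrub}) for the numerator and Corollary \ref{ballistic_vkasym} for the denominator, combined with Borel--Cantelli and the growth condition on $n_k$; the only cosmetic difference is that you apply Borel--Cantelli separately to each bound while the paper applies it once to the combined event $\{Var_\w T_{\nu_{n_{k-1}}} \geq \e v_{k,\w}\}$ after splitting on the intermediate threshold $d_k^{2/s-\d}$.
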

\begin{proof}
By the Borel-Cantelli Lemma it is enough to prove that for any $\e>0$ 
\[
\sum_{k=1}^{\infty} P\left( Var_\w T_{\nu_{n_{k-1}}} \geq \e v_{k,\w} \right) < \infty
\]
However, for any $\d>0$ we have
\begin{equation}
P\left( Var_\w T_{\nu_{n_{k-1}}} \geq \e v_{k,\w} \right) \leq P\left( Var_\w T_{\nu_{n_{k-1}}} \geq \e d_k^{2/s - \d} \right) + P\left( v_{k,\w} \leq d_k^{2/s-\d} \right). \label{ballistic_Varmd}
\end{equation}
By Corollary \ref{ballistic_vkasym} the last term in \eqref{ballistic_Varmd} is summable for any $\d>0$. To show that the second to last term in \eqref{ballistic_Varmd} is also summable first note that the conditions on the sequence $n_k$ give that there exists a $\d >0$ such that $\e d_k^{2/s - \d} \geq n_{k-1}^{2/s+\d}$ for all $k$ large enough. Thus, for some $\d>0$ and all $k$ large enough we have
\[
P\left( Var_\w T_{\nu_{n_{k-1}}} > \e d_k^{2/s + \d} \right) \leq P\left( Var_\w T_{\nu_{n_{k-1}}} > n_{k-1}^{2/s - \d} \right) = o(n_{k-1}^{-\d s/ 4}),
\]
where the last equality is from \eqref{ballistic_nrub}. 
\end{proof}
%The following lemma is the key to proving Proposition \ref{ballistic_generalprop}.
\begin{lem} \label{ballistic_ETaverage}
%For $n_k$ and $v_{k,\w}$ defined as in \eqref{ballistic_nkdkdef} and \eqref{ballistic_vkdef}, we have
Assume $s\in(1,2)$. Then 
$\E T_1 <\infty$, and $P-a.s.$
\begin{equation}
\lim_{k\ra\infty} \frac{E_\w T_{n_k+\lceil x \sqrt{v_{k,\w}} \rceil } - E_\w T_{n_k}}{ \sqrt{v_{k,\w}}} 
%= \lim_{k\ra\infty} \frac{1}{\sqrt{v_{k,\w}}} \sum_{i=n_k + 1}^{n_k+x \sqrt{v_{k,\w}}} E_\w^{i-1} T_i
= x \E T_1, \quad \forall x\in\R. \label{ballistic_ETavg}
\end{equation}
\end{lem}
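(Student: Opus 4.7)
The finiteness of $\E T_1$ is quick: since $s>1$ implies $E_P\rho < 1$ (as noted after Assumption \ref{ballistic_essentialasm}), the formula \eqref{ballistic_QET} gives $\E T_1 = 1 + 2E_P W_0 = 1 + 2\sum_{k\geq 1}(E_P\rho)^k < \infty$. For the limit statement I treat $x>0$; the case $x<0$ is entirely analogous (running the sum backward) and $x=0$ is trivial. Writing $m_{k,\w}:=\lceil x\sqrt{v_{k,\w}}\rceil$ and using $E_\w T_{n_k+m} - E_\w T_{n_k} = \sum_{i=n_k+1}^{n_k+m} E_\w^{i-1}T_i$, I decompose
\[
\frac{E_\w T_{n_k+m_{k,\w}} - E_\w T_{n_k}}{\sqrt{v_{k,\w}}} \;=\; \frac{m_{k,\w}}{\sqrt{v_{k,\w}}}\cdot \frac{1}{m_{k,\w}}\sum_{i=n_k+1}^{n_k+m_{k,\w}} E_\w^{i-1}T_i.
\]
By Corollary \ref{ballistic_vkasym}, $\sqrt{v_{k,\w}}\to\infty$, so the first factor converges to $x$. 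Hence the remaining task is to prove that the second factor converges to $\E T_1 = E_P(E_\w T_1)$ $P$-a.s.

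The difficulty is that the number of summands $m_{k,\w}$ is random. To bypass this I reduce to a uniform law of large numbers over a deterministic range. Fix $\delta>0$ small (to be chosen). Let $A_k:=\lfloor d_k^{1/s-\delta}\rfloor$ and $B_k:=\lceil d_k^{1/s+\delta}\rceil$. By Corollary \ref{ballistic_vkasym}, $d_k^{2/s-2\delta} \leq v_{k,\w} \leq d_k^{2/s+2\delta}$ for all $k$ large enough, $P$-a.s., so $m_{k,\w}\in[A_k,B_k]$ eventually. Using the shift-invariance of $P$, it therefore suffices to prove
\[
\sup_{A_k\leq m\leq B_k}\left|\frac{1}{m}\sum_{i=1}^{m}\bigl(E_\w^{i-1}T_i-\E T_1\bigr)\right| \underset{k\to\infty}{\longrightarrow} 0, \qquad P\text{-a.s.}
\]

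Set $S_m:=\sum_{i=1}^{m}(E_\w^{i-1}T_i-\E T_1)$. By \eqref{ballistic_QET} and \eqref{PWtail}, $E_P|E_\w T_1|^\gamma <\infty$ for every $\gamma\in(1,s)$. The key estimate I will establish is a maximal $L^\gamma$-bound
\[
E_P \max_{m\leq N}|S_m|^\gamma \;\leq\; C_\gamma\, N, \qquad \gamma\in(1,s).
\]
Given such a bound, Chebyshev yields
\[
P\!\left(\max_{m\leq B_k}|S_m|>\epsilon A_k\right)\leq C_\gamma\epsilon^{-\gamma}B_k A_k^{-\gamma} = C'_\gamma\epsilon^{-\gamma}d_k^{(1-\gamma)/s+\delta(1+\gamma)}.
\]
Choosing $\gamma$ close to $s$ (so $(1-\gamma)/s$ is close to $(1-s)/s<0$) and $\delta$ small enough makes the exponent strictly negative; since $d_k\sim 2^{2^k}$ grows at double-exponential rate, this is summable in $k$, and the Borel--Cantelli lemma delivers the uniform LLN.

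The main obstacle is the maximal moment inequality, because $s<2$ implies $E_P(E_\w T_1)^2=\infty$, which rules out the second-moment arguments used in Chapter \ref{Thesis_AppendixQCLT}. I handle this by exploiting the Markov structure $W_i=\rho_i(1+W_{i-1})$ on $[0,\infty)$, which is $L^\gamma$-geometrically ergodic since $E_P\rho^\gamma<1$ for $\gamma\in(1,s)$. Concretely, I truncate $E_\w^{i-1}T_i$ by $1+2\sum_{j=i-L}^{i-1}\Pi_{j,i-1}$, producing an $L$-dependent stationary sequence to which the classical Marcinkiewicz--Zygmund inequality applies, yielding the maximal bound with constant depending on $L$. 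The tail $2\sum_{j<i-L}\Pi_{j,i-1}$ has $L^\gamma$-norm bounded by a constant times $(E_P\rho^\gamma)^{L/2}$, and a final choice $L=L_k\sim C\log d_k$ keeps both error sources negligible on the scale $A_k$. Combined with the scheme above, this completes the proof.
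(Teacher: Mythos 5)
Your proposal is correct and follows essentially the same route as the paper: Corollary \ref{ballistic_vkasym} localizes $v_{k,\w}$ so the random number of summands is trapped in a deterministic window $[d_k^{1/s-\d},d_k^{1/s+\d}]$, and the heart of the matter is the maximal deviation bound for $E_\w T_m - m\,\E T_1$, which the paper isolates as Lemma \ref{ballistic_maxLD} and proves exactly as you sketch — truncation of $E_\w^{i-1}T_i=1+2W_{i-1}$ to finite-range dependence (via the reflected walk $\tilde{T}^{(n)}$, i.e.\ your cutoff at distance $L$), Marcinkiewicz--Zygmund with $\gamma\in(1,s)$, and Borel--Cantelli exploiting the doubly exponential growth of $d_k$. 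The one point to make explicit is that Marcinkiewicz--Zygmund does not apply directly to your $L$-dependent truncated sequence: you must first split the index set into $L$ residue classes modulo $L$ (each of which is genuinely i.i.d.) and apply Doob plus Marcinkiewicz--Zygmund within each class, which costs only a harmless factor polynomial in $L\sim\log d_k$ — this is precisely the blocking performed in the paper's proof of Lemma \ref{ballistic_maxLD}.
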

\begin{proof}
%Since $s>1$, $E_P \rho < 1$. Therefore, $E_P W_0 < \infty$ and so by \eqref{ballistic_QET} $\E T_1 = 1 + 2 E_P W_0 < \infty$ as well. 
Now, since $\frac{E_\w T_{n_k+\lceil x \sqrt{v_{k,\w}} \rceil } - E_\w T_{n_k}}{ \sqrt{v_{k,\w}}}$ is monotone in $x$ it is enough to prove that for arbitrary $x\in\Q$ the limiting statement in \eqref{ballistic_ETavg} holds. Obviously this is true when $x=0$ since both sides are zero. For the remainder of the proof we'll assume $x>0$. The proof for $x<0$ is essentially the same (recall that by Corollary \ref{ballistic_vkasym} $v_{k,\w}= o(d_k)=o(n_k)$ when $s<2$). Note that for $x\geq 0$ then we can re-write $E_\w T_{n_k+\lceil x \sqrt{v_{k,\w}} \rceil } - E_\w T_{n_k}= E_\w^{n_k} T_{n_k + \lceil x \sqrt{v_{k,\w}} \rceil }$. By the Borel-Cantelli Lemma it is enough to show that for any $\e>0$,
\begin{equation}
\sum_{k=1}^\infty P\left( \left| E_\w^{n_k} T_{n_k+\lceil x \sqrt{v_{k,\w}} \rceil }  - \lceil x \sqrt{v_{k,\w}} \rceil  \E T_1   \right| \geq \e  \sqrt{v_{k,\w}}  \right) < \infty \, . \label{ballistic_bc}
\end{equation}
However, for any $\d>0$ we have
\begin{align}
&P\left( \left| E_\w^{n_k} T_{n_k+\lceil x \sqrt{v_{k,\w}} \rceil } - \lceil x \sqrt{v_{k,\w}} \rceil  \E T_1   \right| \geq \e\sqrt{v_{k,\w}}  \right) \nonumber \\
&\:\: \leq P\left( \exists m \in \left[ \lceil x d_k^{1/s-\d} \rceil, \lceil x d_k^{1/s+\d} \rceil \right]: \left| E_\w^{n_k} T_{n_k+ m  } -  m \E T_1   \right| \geq \frac{\e m}{x} \right) + P\left( v_{k,\w} \notin \left[d_k^{2/s-2\d}, d_k^{2/s+2\d}\right] \right) \nonumber \\
&\:\: \leq P\left( \max_{m\leq \lceil x d_k^{1/s+\d} \rceil } \left| E_\w T_m - m\E T_1 \right| \geq \e  d_k^{1/s-\d}  \right) + o( d_k^{-\d s/2} ), \label{ballistic_rv}
\end{align}
where the last inequality is due to Corollary \ref{ballistic_vkasym} and the fact that $\{E_\w^{n_k} T_{n_k+m}\}_{m\in\Z}$ has the same distribution as $\{ E_\w T_m \}_{m\in \Z}$ since $P$ is a product measure. 
Thus, we only need to show that the first term in \eqref{ballistic_rv} is summable in $k$ for some $\d>0$. For this, we need the following lemma whose proof we defer.
\begin{lem} \label{ballistic_maxLD}
Assume $s\in(1,2]$. Then for any $0<\d' < \frac{s-1}{2s}$ we have that
\[
P\left( \max_{m\leq n} \left| E_\w T_m - m \E T_1 \right| \geq n^{1-\d'} \right) =o \left( n^{-(s-1)/2} \right)  
\]
\end{lem}
Assuming Lemma \ref{ballistic_maxLD}, fix $0 < \d' < \frac{s-1}{2s}$ and then choose $0<\d < \frac{\d'}{s(2-\d')}$. We choose $\d$ and $\d'$ this way to ensure that $(1/s+\d)(1-\d') < 1/s-\d$. Therefore, for all $k$ large enough, $ \e  d_k^{1/s-\d}  > \left\lceil x d_k^{1/s+\d} \right\rceil^{1-\d'}$. Thus for all $k$ large enough we have
\begin{align*}
P\left( \max_{m\leq \lceil x d_k^{1/s+\d}\rceil } \left| E_\w T_m - m\E T_1 \right| \geq \e  d_k^{1/s-\d}  \right) &\leq P\left( \max_{m\leq \lceil x d_k^{1/s+\d}\rceil} \left| E_\w T_m - m \E T_1\right| \geq \left\lceil x d_k^{1/s+\d} \right\rceil^{1-\d'} \right)\\
&=   o\left( d_k^{-(1/s+\d)(s-1)/2} \right), \qquad \text{as } k\ra\infty.
\end{align*}
Since $s>1$ this last term is summable in $k$. 
\end{proof}
\begin{proof}[Proof of Lemma \ref{ballistic_maxLD}:]
Before proceeding with the proof we need to introduce some notation for a slightly different type of reflection. Define $\tilde{X_t}^{(n)}$ to be the RWRE modified so that it cannot backtrack a distance of $b_n$ (the definition of $\bar{X_t}^{(n)}$ is similar except the walk was not allowed to backtrack $b_n$ blocks instead). That is, after the walk first reaches location $i$, we modify the environment by setting $\w_{i-b_n}=1$. Let $\tilde{T_x}^{(n)}$ be the corresponding hitting times of the walk $\tilde{X_t}^{(n)}$. Then
\begin{align}
P\left( \max_{m\leq n} \left| E_\w T_m - m \E T_1 \right| \geq n^{1-\d'} \right)
& \leq  P\left( E_\w T_n - E_\w \tilde{T}_n^{(n)}  \geq \frac{n^{1-\d'}}{3} \right)  
+  P\left( \E T_1 - \E \tilde{T}_1^{(n)}  \geq \frac{n^{-\d'}}{3} \right)\nonumber \\
&\quad +  P\left( \max_{m\leq n} \left| E_\w \tilde{T}_m^{(n)} - m \E \tilde{T}_1^{(n)} \right| \geq \frac{n^{1-\d'}}{3} \right) \nonumber \\
& \leq 3 n^{-1+\d'}  (\E T_n - \E \tilde{T}_n^{(n)})  
+ \mathbf{1}_{\E T_1 - \E \tilde{T}_1^{(n)} \geq n^{-\d'}/3} \nonumber \\
&\quad + P\left( \max_{m\leq n} \left| E_\w \tilde{T}_m^{(n)} - m \E \tilde{T}_1^{(n)} \right| \geq \frac{n^{1-\d'}}{3} \right) \label{ballistic_aref} 
\end{align}
%\begin{align}
%P\left( \max_{m\leq n} \left| E_\w T_m - m \E T_1 \right| \geq n^{1-\d'} \right) &\leq  P\left( \left| E_\w T_n - E_\w \tilde{T}_n^{(n)} \right| \geq \frac{n^{1-\d'}}{3} \right)  \nonumber \\
%&\qquad +  P\left( \max_{m\leq n} \left| E_\w \tilde{T}_m^{(n)} - m \E \tilde{T}_1^{(n)} \right| \geq \frac{n^{1-\d'}}{3} \right) \nonumber \\
%&\qquad +  P\left( \left| n \E \tilde{T}_1^{(n)}  - n \E T_1 \right| \geq \frac{n^{1-\d'}}{3} \right)\nonumber \\
%&\leq 3 n^{-1+\d'} (\E T_n - \E \tilde{T}_n^{(n)})  \label{ballistic_aref} \\
%&\qquad + P\left( \max_{m\leq n} \left| E_\w \tilde{T}_m^{(n)} - m \E \tilde{T}_1^{(n)} \right| \geq \frac{n^{1-\d'}}{3} \right) \nonumber\\
%&\qquad + \mathbf{1}_{\E T_1 - \E \tilde{T}_1^{(n)} \geq n^{-\d'}/3} \nonumber
%\end{align}
Now, from \eqref{ballistic_QET} we get that $ E_\w T_1 - E_\w \tilde{T}_1^{(n)}  =  (1+2W_0)-(1+2W_{-b_n+1,0}) = 2 \Pi_{-b_n+1,0} W_{-b_n} $, and thus since $P$ is a product measure
\begin{equation}
\E T_n - \E \tilde{T}_n^{(n)} =  n E_P \left( E_\w T_1 - E_\w \tilde{T}_1^{(n)} \right) = \frac{ 2 n }{1-E_P \rho} (E_P \rho)^{b_n+1}. \label{ballistic_ett}
\end{equation}
Since $E_P \rho < 1$ and $b_n \sim \log^2 n$ the above decreases faster than any power of $n$. Thus by \eqref{ballistic_aref} we need only to show that $P\left( \max_{m\leq n} \left| E_\w \tilde{T}_m^{(n)} - m \E \tilde{T}_1^{(n)} \right| \geq \frac{n^{1-\d'}}{3} \right) = o(n^{-(s-1)/2})$. 
For ease of notation we define $\kappa_{i}^{(n)}:=E_\w^{i-1} \tilde{T}_i^{(n)} - \E \tilde{T}_1^{(n)}$. Thus, since $E_\w \tilde{T}_m^{(n)} - m \E \tilde{T}_1^{(n)} = \sum_{i=1}^m \kappa_{i}^{(n)} = \sum_{i=1}^{b_n} \sum_{j=0}^{\left\lfloor \frac{m-i}{b_n} \right\rfloor} \kappa_{jb_n+i}^{(n)} $, we have
%\[
%E_\w \tilde{T}_m^{(n)} - m \E \tilde{T}_1^{(n)} = \sum_{i=1}^m \kappa_{i}^{(n)} = \sum_{i=1}^{b_n} \sum_{j=0}^{\left\lfloor \frac{m-i}{b_n} \right\rfloor} \kappa_{jb_n+i}^{(n)} 
%\]
%Therefore,
\begin{align}
P\left( \max_{m\leq n} \left| E_\w \tilde{T}_m^{(n)} - m \E \tilde{T}_1^{(n)} \right| \geq \frac{n^{1-\d'}}{3} \right) 
&\leq P\left(  \max_{m\leq n} \sum_{i=1}^{b_n} \left| \sum_{j=0}^{\left\lfloor \frac{m-i}{b_n} \right\rfloor} \kappa_{jb_n+i}^{(n)} \right| \geq \frac{n^{1-\d'}}{3}  \right) \nonumber \\
%&\leq P\left( \sum_{i=1}^{b_n} \max_{m\leq n}  \left| \sum_{j=0}^{\left\lfloor \frac{m-i}{b_n} \right\rfloor} \kappa_{jb_n+i}^{(n)} \right| \geq \frac{n^{1-\d'}}{3}  \right) \nonumber \\
&\leq \sum_{i=1}^{b_n} P\left( \max_{m\leq n}  \left| \sum_{j=0}^{\left\lfloor \frac{m-i}{b_n} \right\rfloor} \kappa_{jb_n+i}^{(n)} \right| \geq \frac{n^{1-\d'}}{3 b_n}  \right) \nonumber \\
&= \sum_{i=1}^{b_n} P\left( \max_{l\leq \left\lfloor \frac{n-i}{b_n} \right\rfloor}  \left| \sum_{j=0}^{l} \kappa_{jb_n+i}^{(n)} \right| \geq \frac{n^{1-\d'}}{3 b_n}  \right). \label{ballistic_depldb}
\end{align}
Due to the reflections of the random walk, $\kappa_i^{(n)}$ depends only on the environment between $i-b_n$ and $i-1$. Thus, for each $i$ $\{\kappa_{jb_n+i}^{(n)}\}_{j=0}^{\infty}$ is a sequence of $i.i.d.$ random variables with zero mean, and so $\{\sum_{j=0}^l \kappa_{jb_n+i}^{(n)}\}_{l\geq 0}$ is a martingale. Now, let $\gamma \in (1,s)$. Then, by the Doob-Kolmogorov inequality, for any integer $N$ we have
\[
P\left( \max_{l\leq N}  \left| \sum_{j=0}^{l} \kappa_{jb_n+i}^{(n)} \right| \geq \frac{n^{1-\d'}}{3 b_n}  \right) \leq 3^\gamma b_n^\gamma n^{-\gamma+\gamma\d'} E_P\left| \sum_{j=0}^N \kappa_{jb_n+i}^{(n)}  \right|^\gamma.
\]
Since $\{\kappa_{jb_n+i}^{(n)}\}_{j=0}^{\infty}$ is a sequence of independent, zero-mean random variables, the Marcinkiewicz-Zygmund inequality \cite[Theorem 2]{cPTIIM} implies that there exists a constant $B_\gamma<\infty$ depending only on $\gamma > 1$ such that 
\[
E_P\left| \sum_{j=0}^N \kappa_{jb_n+i}^{(n)}  \right|^\gamma \leq B_\gamma E_P \left| \sum_{j=0}^N \left(\kappa_{jb_n +i}^{(n)}\right)^2 \right|^{\gamma/2} \leq B_\gamma E_P \left( \sum_{j=0}^N \left| \kappa_{jb_n +i}^{(n)}\right|^\gamma \right) = B_\gamma (N+1) E_P|\kappa_1^{(n)}|^\gamma, 
\]
where the second inequality is because $\gamma < s \leq 2$ implies $\gamma/2 < 1$. 
Now, recall from \cite{kksStable} that $P(E_\w T_1 > x) \sim K x^{-s}$ for some $K>0$. Therefore, since $\gamma < s$ we have that $E_P |E_\w T_1|^\gamma < \infty$.  Thus, it's easy to see that $E_P|\kappa_1^{(n)}|^\gamma = E_P \left| E_\w \tilde{T}_1^{(n)} - \E \tilde{T}_1^{(n)} \right|^\gamma $ is uniformly bounded in $n$. So, there exists a constant $B_\gamma '$ depending on $\gamma \in (1,s)$ such that
\[
P\left( \max_{l\leq N}  \left| \sum_{j=0}^{l} \kappa_{jb_n+i}^{(n)} \right| \geq \frac{n^{1-\d'}}{3 b_n}  \right) \leq B'_\gamma b_n^\gamma n^{-\gamma+\gamma\d'} (N+1),
\]
and thus by \eqref{ballistic_depldb}
\[
P\left( \max_{m\leq n} \left| E_\w \tilde{T}_m^{(n)} - m \E \tilde{T}_1^{(n)} \right| \geq \frac{n^{1-\d'}}{3} \right) \leq B'_\gamma b_n^{\gamma+1} n^{-\gamma+\gamma\d'} \left( \frac{n}{b_n}+1 \right) = \bigo\left( b_n^\gamma n^{1-\gamma + \gamma \d'} \right).
\]
Since by assumption we have $\d'<\frac{s-1}{2s}$, we may choose $\gamma < s$ arbitrarily close to $s$ so that $b_n^\gamma n^{-\gamma +1+ \gamma \d'} = o\left(n^{-(s-1)/2}\right)$.
\end{proof}
\begin{proof}[\textbf{Proof of Proposition \ref{ballistic_generalprop}:}] \ \\
Recall the definition of $\a_m:= n_{k_m-1}$. To prove \eqref{ballistic_Tlim} it is enough to prove that $\forall \e>0$
\begin{equation}
\lim_{m\ra\infty} P_\w\left( \left| \frac{T_{\nu_{\a_m}} - E_\w T_{\nu_{\a_m}} }{\sqrt{v_{k_m,\w}}} \right| \geq \e \right) = 0,  \quad P-a.s. \label{ballistic_rmstart}
\end{equation}
and
\begin{equation}
\lim_{m\ra\infty} P_\w^{\nu_{\a_m}} \left( T_{x_m} \neq \bar{T}^{(d_{k_m})}_{x_m} \right) = 0, \quad\text{and}\quad \lim_{m\ra\infty} E_\w^{\nu_{\a_m}} \left( T_{x_m} - \bar{T}^{(d_{k_m})}_{x_m} \right) = 0, \quad P-a.s.  \label{ballistic_addreflections}
\end{equation}
To prove \eqref{ballistic_rmstart}, note that by Chebychev's inequality 
\[
P_\w\left( \left| \frac{T_{\nu_{\a_m}} - E_\w T_{\nu_{\a_m}} }{\sqrt{v_{k_m,\w}}} \right| \geq \e \right) \leq \frac{ Var_\w T_{\nu_{\a_m}} }{ \e^2 v_{k_m,\w} },
\]
which by Corollary \ref{ballistic_begVar} tends to zero $P-a.s.$ as $m\ra\infty$. Secondly, to prove \eqref{ballistic_addreflections}, note that since
\[
P_\w^{\nu_{\a_m}} \left( T_{x_m} \neq \bar{T}^{(d_{k_m})}_{x_m} \right) = P_\w^{\nu_{\a_m}} \left( T_{x_m} - \bar{T}^{(d_{k_m})}_{x_m} \geq 1 \right) \leq E_\w^{\nu_{\a_m}} \left( T_{x_m} - \bar{T}^{(d_{k_m})}_{x_m} \right),
\]
it is enough to prove only the second claim \eqref{ballistic_addreflections}. However, since $x_m \leq 2 n_{k_m}$ for all $m$ large enough, it is enough to prove
\begin{equation}
\lim_{k\ra\infty}  E_\w \left( T_{2n_k} - \bar{T}^{(d_k)}_{2n_k} \right) = 0, \quad P-a.s. \label{ballistic_rdz}
\end{equation}
To prove \eqref{ballistic_rdz}, note that for any $\e>0$ that 
\begin{equation}
P\left(  E_\w \left( T_{2n_k} - \bar{T}^{(d_k)}_{2n_k} \right)  \geq \e \right) \leq \frac{\E \left( T_{2n_k} - \bar{T}^{(d_k)}_{2n_k} \right)}{\e}  \leq \frac{\E \left( T_{2n_k} - \tilde{T}^{(d_k)}_{2n_k} \right)}{\e}  = \frac{2 n_k \E \left( T_1 - \tilde{T}^{(d_k)}_1 \right)}{\e} . \label{ballistic_rdz2}
\end{equation}
However, from \eqref{ballistic_ett} we have that $\E \left( T_1 - \tilde{T}^{(d_k)}_1 \right) = \frac{2}{1-E_P \rho} (E_P \rho)^{b_{d_k}}$ which decreases faster than any power of $n_k$ (since $E_P \rho < 1$ and $d_k\sim n_k$), and thus the last term in \eqref{ballistic_rdz2} is summable. Therefore, applying the Borel-Cantelli Lemma gives \eqref{ballistic_rdz} which completes the proof of \eqref{ballistic_Tlim}. 
%recall that by \cite[(80)]{pzSL1} we have
%\[
%\lim_{k\ra\infty} E_\w^{\nu_{n_{k-1}}} \left( T_{\nu_{n_{k+1}}} - \bar{T}^{(d_{k})}_{\nu_{n_{k+1}}} \right) = 0, \quad P-a.s.
%\]
%Thus, \eqref{ballistic_Tlim} is proved. 
Note, moreover, that the convergence in \eqref{ballistic_Tlim} must be uniform in $y$ since $F$ is continuous.

To prove \eqref{ballistic_Xlim}, let $X_t^*:=\max \left\{ X_n: n\leq t \right\}$ and let
\[
 x_m(y):= \left \lceil n_{k_m}+ y \, v_P \sqrt{v_{k_m,\w}} \right \rceil, \qquad y\in \R.
\]
%for any $y\in \R$ let $ x_m(y):= \left \lceil n_{k_m}+ y \, v_P \sqrt{v_{k_m,\w}} \right \rceil$, and let $X_t^*:= \max \left\{ X_n: n\leq t \right\}$. 
Using this notation,
\begin{align}
P_\w \left( \frac{X_{t_m}^* - n_{k_m}}{v_P \sqrt{v_{k_m,\w}} } < y \right) &= P_\w \left( X_{t_m}^* < x_m(y) \right) \nonumber = P_\w \left( T_{ x_m(y) } > t_m \right) \nonumber \\
& = P_\w \left( \frac{ T_{ x_m(y) } - E_\w T_{ x_m(y) } }{ \sqrt{v_{k_m,\w}} } > \frac{t_m - E_\w T_{  x_m(y)  } }{\sqrt{v_{k_m,\w}}} \right) . \label{ballistic_timespace}
\end{align}
Now, recalling the definition of $t_m:= \left\lfloor E_\w X_{n_{k_m}} \right\rfloor$, by Lemma \ref{ballistic_ETaverage} we have
\[
\lim_{m\ra\infty} \frac{t_m - E_\w T_{ x_m(y) } }{\sqrt{v_{k_m,\w}}} =
\lim_{m\ra\infty} \frac{ \left\lfloor E_\w T_{n_{k_m}} \right\rfloor - E_\w T_{  n_{k_m} + y v_P \sqrt{v_{k_m,\w}}  } }{\sqrt{v_{k_m,\w}}} = -y, \quad \forall y\in \R \quad P-a.s.,
\]
where we used the fact that $v_P \E T_1 = 1$ due to \eqref{ballistic_XTLLN}. Also, by Corollary \ref{ballistic_vkasym} we have $P-a.s.$ that $\sqrt{v_{k,\w}} = o(d_k) = o(n_k)$ since $s<2$, and therefore $x_m(y) \sim n_{k_m}$.
Thus since the convergence in \eqref{ballistic_Tlim} is uniform in $y$, \eqref{ballistic_timespace} gives that 
\begin{equation}
\lim_{m\ra\infty} P_\w \left( \frac{X_{t_m}^* - n_{k_m}}{v_P \sqrt{v_{k_m,\w}} } < y \right) = 1-F(-y), \quad \forall y\in \R \quad P-a.s. \label{ballistic_Xtstarlim}
\end{equation}
Now, \eqref{ballistic_XTLLN} gives that $t_m \sim (\E T_1)n_{k_m}$, $P-a.s.$ Therefore, an easy argument involving Lemma \ref{seperation} and \eqref{ballistic_nutail} gives that $X_{t_m}^*-X_{t_m} = o( \log^2 t_m ) =o(\log^2 n_{k_m})$, $\P-a.s$. Also, Corollary \ref{ballistic_vkasym} and the Borel-Cantelli Lemma give $P-a.s.$ that $v_{k,\w} \geq d_k^{2/s-\d} \sim n_k^{2/s - \d}$ for any $\d>0$ and all $k$ large enough. Therefore, $\P-a.s.$ we have that $\lim_{m\ra\infty} \frac{ X_{t_m}^* - X_{t_m} }{\sqrt{v_{k_m,\w}}} = 0$. Combining this with \eqref{ballistic_Xtstarlim} completes the proof of \eqref{ballistic_Xlim}. 
%The following Lemma completes the proof of \eqref{ballistic_Xlim}.
%\begin{lem} \label{ballistic_Xtstar}
%Define $t_k':= E_\w T_{n_k}$. Then 
%\[
%\lim_{k\ra\infty} \frac{ X_{t_k'}^* - X_{t_k'} }{\sqrt{v_{k,\w}}} = 0, \quad \P-a.s.
%\] 
%\end{lem}
%\begin{proof}
%First, note that \eqref{ballistic_XTLLN} gives that $t_k' \sim (\E T_1)n_k$, $P-a.s$. 
%Next, an easy argument involving \cite[Lemma 4.6]{pzSL1} and \eqref{ballistic_nutail} gives that $X_{t_k'}^*-X_{t_k'} = o( \log^2 t_k' ) =o(\log^2 n_k)$. Finally, Corollary \ref{ballistic_vkasym} and the Borel-Cantelli Lemma give that $v_{k,\w} \geq d_k^{2/s-\d} \sim n_k^{2/s - \d}$ for any $\d>0$ and all $k$ large enough.  
%\end{proof}
\end{proof}
\noindent\textbf{Remark:} For the last conclusion of Proposition \ref{ballistic_generalprop} to hold it is crucial that $s>1$. The dual nature of $X_t^*$ and $T_n$ always allows the transfer of probabilities from time to space. However, if $s\leq 1$ then $\E T_1 = \infty$ and the averaging behavior of Lemma \ref{ballistic_ETaverage} does not occur. 
\end{section}

\begin{section}{Quenched CLT Along a Subsequence} \label{ballistic_qGauss}
For the remainder of the Chapter we will fix the sequence $n_k:= 2^{2^k}$ and let $d_k$ and $v_{k,\w}$ be defined accordingly as in \eqref{ballistic_dkvkdef}. Note that this choice of $n_k$ satisfies the conditions in Proposition \ref{ballistic_generalprop} for any $\d < 1$ since $n_k=n_{k-1}^2$. Our first goal in this section is to prove the following theorem, which when applied to Proposition \ref{ballistic_generalprop} proves Theorem \ref{ballistic_qCLT}.
\begin{thm}\label{ballistic_Tngaussian}
Assume $s<2$. Then for any $\eta\in(0,1)$, $P-a.s.$ there exists a subsequence $n_{k_m}=n_{k_m}(\w, \eta)$ of $n_k=2^{2^k}$ such that for $\a_m,\beta_m$ and $\gamma_m$ defined by
\begin{equation}
\a_m:= n_{k_m-1}, \quad \b_m:= n_{k_m-1} + \left\lfloor \eta d_{k_m} \right\rfloor, \quad\text{and}\quad \gamma_m:= n_{k_m} \label{ballistic_abgdef}
\end{equation}
and any sequence $x_m \in \left(\nu_{\beta_m} , \nu_{\gamma_m} \right]$  we have
\[
\lim_{m\ra\infty} P_\w^{\nu_{\a_m}} \left( \frac{\bar{T}^{(d_{k_m})}_{x_m} - E_\w \bar{T}^{(d_{k_m})}_{x_m} }{\sqrt{v_{k_m,\w}}} \leq x \right) = \Phi(x).  
\]
%where $v_{k_m,\w} := \sum_{i=n_{k_m-1}}^{n_{k_m}} \s_{i,d_{k_m},\w}^2 = Var_\w \left( \bar{T}^{(d_{k_m})}_{\nu_{\gamma_m}} - \bar{T}^{(d_{k_m})}_{\nu_{\a_m}} \right)$. 
\end{thm}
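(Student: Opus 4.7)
The plan is to produce the random subsequence $n_{k_m}(\w)$ by a Borel--Cantelli argument that ensures along the subsequence no single block's crossing-time variance $\s_{i,d_{k_m},\w}^2$ dominates the total variance $v_{k_m,\w}$, and then to verify the Lindberg--Feller condition for the triangular array of quenched-independent centered increments $\bar T^{(d_{k_m})}_{\nu_i} - \bar T^{(d_{k_m})}_{\nu_{i-1}} - \mu_{i,d_{k_m},\w}$ for $i\in(\a_m,x_m]$ (the increments are quenched-independent by the Markov property of the reflected walk). This is the direct variance analogue of the argument used to prove Theorem \ref{Thesis_nonlocal}: in place of $\mu^2_{i,n,\w}$ we use $\s^2_{i,n,\w}$, and Theorem \ref{refstable} is replaced by Theorem \ref{ballistic_Varstable}.

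Concretely, mimicking the construction in Corollary \ref{smallblocks2}, I would define, for $\eta \in (0,1)$,
\[
\mathcal{S}_{\eta,n,a} := \bigcup_{\substack{I\subset(0,\eta n]\\ \#I=2a}} \biggl(\bigcap_{i\in I}\{\s_{i,n,\w}^2 \in [n^{2/s},2n^{2/s})\}\cap \bigcap_{j\in(0,\eta n]\setminus I}\{\s_{j,n,\w}^2 < n^{2/s}\}\biggr),
\]
and $U_{\eta,n} := \{\sum_{i=\lfloor\eta n\rfloor+1}^{n}\s_{i,n,\w}^2 < 2n^{2/s}\}$. Theorem \ref{ballistic_VETtail} gives $Q(\s_{1,n,\w}^2 \in [n^{2/s},2n^{2/s}),\, M_1 > n^{(1-\e)/s}) \sim K_\infty(1-2^{-s/2})/n$, and the small-$M_i$ contribution is negligible by \eqref{VbigMsmall}; combining this with the $Q$-stationarity and the approximate independence that follows from imposing $b_n$-separation among the selected indices gives the combinatorial lower bound
\[
Q(\mathcal{S}_{\eta,n,a}\cap U_{\eta,n}) \gtrsim \frac{(\eta c)^{2a}}{(2a)!}\bigl(Q(U_{\eta,n}) - o(1)\bigr).
\]
By Theorem \ref{ballistic_Varstable}, $Q(U_{\eta,n})$ has a strictly positive limit, so with $a_k := \lfloor\log\log k\rfloor\vee 1$ and $n=d_k$ the bound is not summable; the reflection structure makes the events at scales $d_{2k}$ (say) asymptotically independent, and Borel--Cantelli delivers a $P$-a.s.\ random subsequence $n_{k_m}$ along which $\mathcal{S}_{\eta,d_{k_m},a_{k_m}}\cap U_{\eta,d_{k_m}}$ holds.

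On this subsequence one has $\sum_{i=\a_m+1}^{\b_m}\s_{i,d_{k_m},\w}^2 \geq 2a_{k_m} d_{k_m}^{2/s}$, $\sum_{i=\b_m+1}^{\gamma_m}\s_{i,d_{k_m},\w}^2 \leq 2 d_{k_m}^{2/s}$, and $\max_{\a_m<i\leq\gamma_m}\s_{i,d_{k_m},\w}^2 \leq 2d_{k_m}^{2/s}$, so for any $x_m \in (\nu_{\b_m},\nu_{\gamma_m}]$ the partial variance $\sum_{i=\a_m+1}^{x_m}\s_{i,d_{k_m},\w}^2$ coincides with $v_{k_m,\w}$ up to a factor $1+O(1/a_{k_m})\to 1$, while the ratio of the largest summand to the total is $O(1/a_{k_m})\to 0$. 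For the Lindberg truncation, I would split the sum according to whether $M_i \leq d_{k_m}^{(1-\e)/s}$ or not: the small-$M_i$ contribution vanishes by Lemma \ref{Vsmall}, and for large $M_i$, combining Lemma \ref{momentbound} with Chebychev's inequality gives the exponential tail $P_\w^{\nu_{i-1}}(\bar T^{(d_{k_m})}_{\nu_i} > x\mu_{i,d_{k_m},\w}) \leq 2e^{-x/4}$, from which the truncated second moments of the centered increments are bounded by $e^{-c\sqrt{a_{k_m}}}\s_{i,d_{k_m},\w}^2$, so their normalized sum is $o(1)$.

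The main obstacle, as in the proof of Theorem \ref{gaussianT}, is ensuring that the normalizer $v_{k_m,\w}$ from \eqref{ballistic_dkvkdef} --- which is the quenched variance over the \emph{entire} range $(n_{k_m-1},n_{k_m}]$ --- correctly normalizes the partial hitting-time sum up to $x_m$ for \emph{every} $x_m \in (\nu_{\b_m},\nu_{\gamma_m}]$ simultaneously. This is the purpose of the event $U_{\eta,d_{k_m}}$: it forces the variance contribution from blocks in $(\b_m,\gamma_m]$ to be of strictly smaller order than the variance already accumulated in $(\a_m,\b_m]$ on $\mathcal{S}_{\eta,d_{k_m},a_{k_m}}$, so the precise location of $x_m$ inside $(\nu_{\b_m},\nu_{\gamma_m}]$ does not affect the Gaussian limit.
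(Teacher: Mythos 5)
Your proposal is correct and follows essentially the same route as the paper: the event $U_{\eta,n}$, the choice $a_k=\lfloor\log\log k\rfloor\vee 1$, the Borel--Cantelli argument via independence of the reflected events at well-separated scales, and the Lindberg--Feller verification split according to $M_i\lessgtr n^{(1-\e)/s}$ using Lemma \ref{Vsmall} and the Kac/Chebychev exponential tail from Lemma \ref{momentbound} all match. The only (cosmetic) difference is that you build the event $\mathcal{S}_{\eta,n,a}$ directly from the variances $\s^2_{i,n,\w}$, whereas the paper states it in terms of $\mu^2_{i,n,\w}$ and passes between the two via Corollary \ref{Vsdiff}; both work since Theorem \ref{ballistic_VETtail} gives identical tail asymptotics for the two quantities.
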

The proof of Theorem \ref{ballistic_Tngaussian} is similar to the proof of Theorem \ref{gaussianT}. The key is to find a random subsequence where none of the variances $\s_{i,d_{k_m},\w}^2$ with $i\in (n_{k_m-1},n_{k_m}]$ is larger than a fraction of $v_{k_m,\w}$. 
To this end, let $\#(I)$ denote the cardinality of the set $I$, and for any $\eta\in(0,1)$ and any positive integer $a < n/2$
define the events
\[
\mathcal{S}_{\eta,n,a} := \bigcup_{  \substack{ I\subset [1,\eta n] \\ \#(I) = 2a }} \!\! \left( \bigcap_{i\in I} \left\{  \mu_{i,n,\w}^2 \in[n^{2/s},2n^{2/s}) \right\} \bigcap_{j\in[1,\eta n]\backslash I} \left\{ \mu_{j,n,\w}^2 < n^{2/s} \right\} \right) 
\,.
\]
%\[
%\mathcal{S}_{\eta,n,a} := \bigcup_{ \tiny{\begin{array}{c} I\subset [1,\eta n] \\ \#(I) = 2a \end{array}}} \!\! \left( \bigcap_{i\in I} \left\{  \mu_{i,n,\w}^2 \in[n^{2/s},2n^{2/s}) \right\} \bigcap_{j\in[1,\eta n]\backslash I} \left\{ \mu_{j,n,\w}^2 < n^{2/s} \right\} \right) 
%\,.
%\]
%
%\[
%\mathcal{S}_{\eta,n,a} := \left\{ \#\left\{ i\leq \eta n:
%\s_{i,n,\w}^2 \in [n^{2/s}, 2n^{2/s}) \right\} = 2a, \quad
% \s_{j,n,\w}^2 < 2 n^{2/s} \quad \forall j\leq \eta n \right\}
%\,,
%\]
and
\[
U_{\eta,n} := \left\{ \sum_{i\in(\eta n, n]} \s_{i,n,\w}^2 < 2 n^{2/s}   \right\} .
\]
On the event $\mathcal{S}_{\eta, n, a}$, $2a$ of the first $\eta
n$ crossings times from $\nu_{i-1}$ to $\nu_i$ have roughly the
same size variance and the rest are
all smaller. Define
\begin{equation}
a_k:=\lfloor\log\log k\rfloor \vee 1. \label{ballistic_akdef}
\end{equation} 
Then, we have the following Lemma:
\begin{lem}\label{ballistic_smbklemma}
Assume $s<2$. Then for any $\eta \in (0,1)$, we have $Q\left( \mathcal{S}_{\eta,d_k,a_k} \cap U_{\eta,d_k} \right)\geq \frac{1}{k}$ for all $k$ large enough.
\end{lem}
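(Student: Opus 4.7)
The plan is to adapt the proof of Lemma \ref{smallblocklemma} from Chapter \ref{Thesis_AppendixZeroSpeed}, replacing the tail asymptotic for $E_\w T_\nu$ by the squared-version tail asymptotic for $(E_\w T_\nu)^2$ from Theorem \ref{ballistic_VETtail}, and replacing the stable limit for $n^{-1/s} E_\w T_{\nu_n}$ by the stable limit for $n^{-2/s} Var_\w T_{\nu_n}$ from Theorem \ref{ballistic_Varstable}. The main obstacle, as in that earlier lemma, is to disentangle the weak dependence between the $\mu_{i,d_k,\w}^2$'s, which are not $Q$-independent; this is handled by restricting the ``large blocks'' in $\mathcal{S}_{\eta,d_k,a_k}$ to be mutually separated by more than $2b_{d_k}$ ladder locations, so that the reflection coupling decouples them.

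First I would lower-bound $Q(\mathcal{S}_{\eta,d_k,a_k})$ by a counting-separated-subsets argument. Let $\mathcal{I}_{\eta, d_k, a_k}$ denote the collection of subsets $I \subset [1, \eta d_k]$ of size $2a_k$ whose elements are pairwise at least $2b_{d_k}+1$ apart, and for each $i \in I$ enforce (at negligible probabilistic cost) the auxiliary events requiring $M_j \leq d_k^{(1-\e)/s}$ for every $j$ within $b_{d_k}$ of $i$. Under the reflection coupling, the separation makes the events $\{\mu_{i,d_k,\w}^2 \in [d_k^{2/s}, 2d_k^{2/s})\}$ over $i \in I$ (combined with their auxiliary events) mutually $Q$-independent, and jointly independent of the event that all remaining $\mu_{j,d_k,\w}^2 < d_k^{2/s}$. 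Each individual probability is at least $C_\e d_k^{-1}$ by Theorem \ref{ballistic_VETtail}, while the ``all others small'' event contains $\{\sum_{j=1}^{d_k} (E_\w^{\nu_{j-1}} T_{\nu_j})^2 < d_k^{2/s}\}$, whose $Q$-probability tends to $L_{s/2,b}(1) > 0$ by Theorem \ref{ballistic_Varstable}. Counting $\#\mathcal{I}_{\eta, d_k, a_k} \geq \frac{(\eta d_k)^{2a_k}}{(2a_k)!}(1+o(1))$ then yields
\[
Q(\mathcal{S}_{\eta, d_k, a_k}) \geq \frac{(\eta C_\e)^{2a_k}}{(2a_k)!}\bigl(L_{s/2,b}(1) - o(1)\bigr).
\]

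For $U_{\eta,d_k}$ I would simply use the domination $\sum_{i \in (\eta d_k, d_k]} \s_{i, d_k, \w}^2 \leq Var_\w T_{\nu_{d_k}}$, so Theorem \ref{ballistic_Varstable} gives $Q(U_{\eta, d_k}) \geq Q(Var_\w T_{\nu_{d_k}} < 2 d_k^{2/s}) \to L_{s/2, b}(2) > 0$. To couple this with $\mathcal{S}_{\eta, d_k, a_k}$, introduce the truncated event $\tilde{U}_{\eta, d_k}$ whose defining sum runs over $i \in (\eta d_k + b_{d_k}, d_k]$; this event depends only on ladder blocks with index strictly greater than $\eta d_k$ and is therefore $Q$-independent of $\mathcal{S}_{\eta, d_k, a_k}$. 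The discarded head $\sum_{i = \eta d_k + 1}^{\eta d_k + b_{d_k}} \s_{i, d_k, \w}^2$ is $o(d_k^{2/s})$ with $Q$-probability $1 - o(1)$ by a union bound against Theorem \ref{ballistic_VETtail}, so $Q(\tilde{U}_{\eta, d_k})$ remains bounded below by a positive constant.

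Combining the two ingredients, for some $c > 0$ and all $k$ large,
\[
Q(\mathcal{S}_{\eta, d_k, a_k} \cap U_{\eta, d_k}) \geq Q(\mathcal{S}_{\eta, d_k, a_k}) \cdot Q(\tilde{U}_{\eta, d_k}) - o(1) \geq c \cdot \frac{(\eta C_\e)^{2a_k}}{(2a_k)!}.
\]
Since $a_k = \lfloor \log\log k \rfloor \vee 1$, Stirling gives $(2a_k)! \leq (2a_k)^{2a_k} = \exp\bigl(O(\log\log k \cdot \log\log\log k)\bigr)$, which grows far more slowly than $k$, so the lower bound above exceeds $1/k$ for all $k$ sufficiently large. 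Executing this plan is largely bookkeeping: careful truncations and independence-via-reflection arguments that closely parallel the proof of Lemma \ref{smallblocklemma}, the only conceptual change being that squared quantities appear throughout and the relevant stable index is $s/2$.
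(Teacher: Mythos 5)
Your proposal is correct and follows essentially the same route as the paper: decouple $\mathcal{S}_{\eta,d_k,a_k}$ from $U_{\eta,d_k}$ via the truncated event $\tilde{U}_{\eta,d_k}$ (whose defining sum starts at $\eta d_k + b_{d_k}$), control the discarded head by a union bound against Theorem \ref{ballistic_VETtail}, lower-bound $Q(\mathcal{S}_{\eta,d_k,a_k})$ by the separated-subsets counting argument from the zero-speed chapter with $(E_\w^{\nu_{i-1}}T_{\nu_i})^2$ and the index-$s/2$ stable law replacing the first-moment quantities, and finish with Stirling applied to $(2a_k)!$ with $a_k\sim\log\log k$. No gaps.
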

\begin{proof}
First we reduce the problem to getting a lower bound on $Q(\mathcal{S}_{\eta,d_k,a_k} )$. Define
\[
\tilde{U}_{\eta,n} := \left\{ \sum_{i\in(\eta n + b_n, n]} \s_{i,n,\w}^2 < n^{2/s} \right\}. 
\]
Note that $\mathcal{S}_{\eta,n,a}$ and $\tilde{U}_{\eta,n}$ are independent events since $\tilde{U}_{\eta,n}$ only depends on the environment to the right of the $\nu_{\lceil \eta n \rceil}$. Thus,
\begin{align*}
Q\left( \mathcal{S}_{\eta,n,a} \cap U_{\eta, n}\right) &\geq Q\left( \mathcal{S}_{\eta,n,a} \cap \tilde{U}_{\eta,n} \right) - Q\left( \sum_{i\in(\eta n, \eta n +b_n]} \s_{i,n,\w}^2 > n^{2/s} \right) \\
&\geq Q\left( \mathcal{S}_{\eta,n,a} \right) Q\left( \tilde{U}_{\eta,n} \right) - b_n Q\left( Var_\w \bar{T}^{(n)}_\nu > \frac{n^{2/s}}{b_n} \right).
%&\geq Q\left( \mathcal{S}_{\eta,n,a} \right) Q\left( Var_\w T_{\nu_n} < n^{2/s} \right) - o(n^{-1/2}),
\end{align*}
Now, Theorem \ref{ballistic_Varstable} gives that $Q\left( \tilde{U}_{\eta,n} \right) \geq 
Q\left( Var_\w T_{\nu_n} < n^{2/s} \right) = L_{\frac{s}{2},b}(1) + o(1)$, and Theorem \ref{ballistic_VETtail} gives that $b_n Q\left( Var_\w \bar{T}^{(n)}_\nu > \frac{n^{2/s}}{b_n} \right) \sim K_\infty b_n^{1+s} n^{-1}$. Thus,
\[
Q\left( \mathcal{S}_{\eta,d_k,a_k} \cap U_{\eta, d_k}\right) \geq Q(\mathcal{S}_{\eta,d_k,a_k})( L_{\frac{s}{2},b}(1) + o(1) ) - \bigo(b_{d_k}^{1+s} d_k^{-1}), \quad \text{as }k\ra\infty,
\]
and so to prove the lemma it is enough to show that $\lim_{k\ra\infty} k \, Q(\mathcal{S}_{\eta,d_k,a_k}) = \infty$.  
A lower bound for $Q( \mathcal{S}_{\eta,n,a})$ was derived in the argument preceeding Lemma \ref{smallblocklemma} in Chapter \ref{Thesis_AppendixZeroSpeed}.
A similar argument gives that for any $\e<\frac{1}{3}$ there exists a constant $C_\e>0$ such that
\begin{align}
Q\left( \mathcal{S}_{\eta,n,a} \right) & \geq  \frac{(\eta
C_\e)^{2a}}{(2a)!} \left( 1 - \frac{(2a-1)(1+4b_n)}{\eta n}
\right)^{2a} \left( Q\left( \sum_{i=1}^n \left( E_\w^{\nu_{i-1}} T_{\nu_i} \right)^2 < n^{2/s} \right) - a \, o(n^{-1+2\e}) \right)  \nonumber \\
&\qquad - \frac{(\eta n)^{2a}}{(2a)!} a \, o\left(
e^{-n^{\e/(6s)}} \right) \label{ballistic_calSlb}
\,,\end{align}
where asymptotics of the form $o(\cdot\,)$ in \eqref{ballistic_calSlb} are uniform in $\eta$ and $a$ as $n\ra\infty$. The proof of \eqref{ballistic_calSlb} is exactly the same as in Chapter \ref{Thesis_AppendixZeroSpeed} with the exception that 
$Q\left( \bigcap_{j\in[1,n]} \left\{ \mu_{j,n,\w}^2 < n^{2/s} \right\} \right)$ in \eqref{Glb} is bounded below by
%the lower bound for $Q\left( \bigcap_{j\in[1,n]} \left\{ \mu_{j,n,\w}^2 < n^{2/s} \right\} \right)$  in \eqref{Glb} is 
$Q\left( \sum_{i=1}^n \left( E_\w^{\nu_{i-1}} T_{\nu_{i}} \right)^2 < n^{2/s} \right)$ instead of $Q\left( E_\w T_{\nu_n} < n^{1/s} \right)$. 
%the bound 
%\[
%Q\left( \bigcap_{j\in[1,n]} \left\{ \mu_{j,n,\w}^2 < n^{2/s} \right\} \right) \geq Q\left( E_\w T_{\nu_n} < n^{1/s} \right)
%\]
%is replaced by the bound 
%\[
%Q\left( \bigcap_{j\in[1,n]} \left\{ \mu_{j,n,\w}^2 < n^{2/s} \right\} \right) \geq Q\left( \sum_{i=1}^n \left( E_\w^{\nu_{i-1}} T_{\nu_{i}} \right)^2 < n^{2/s} \right).
%\]
%Recall the definitions of $d_k$ and $a_k$ in \eqref{ballistic_nkdkdef} and \eqref{ballistic_akdef} respectively.
Then, replacing $n$ and $a$ in \eqref{ballistic_calSlb} by $d_k$ and
$a_k$ respectively, we have for $\e<\frac{1}{3}$ that 
\begin{align}
&Q\left( \mathcal{S}_{\eta,d_k,a_k} \right) \nonumber \\
&\quad \geq  \frac{(\eta
C_\e)^{2a_k}}{(2a_k)!} \left( 1 - \frac{(2a_k-1)(1+4b_{d_k})}{\eta
d_k} \right)^{2a_k} \left( Q\left( \sum_{i=1}^{d_k} \left( E_\w^{\nu_{i-1}} T_{\nu_i} \right)^2  < d_k^{2/s}
\right) - a_k
o(d_k^{-1+2\e}) \right)  \nonumber \\
&\qquad - \frac{(\eta d_k)^{2a_k}}{(2a_k)!} a_k o\left(
e^{-d_k^{\e/(6s)}} \right) \nonumber \\
& \quad=  \frac{(\eta C_\e)^{2a_k}}{(2a_k)!} \left( 1+ o(1)\right)
\left( L_{\frac{s}{2},b}(1) - o(1) \right) - o\left( \frac{1}{k} \right). \label{ballistic_sublb}
\end{align}
The last equality is a result of Theorem \ref{ballistic_Varstable} and the definitions of $a_k$
and $d_k$ in \eqref{ballistic_akdef} and \eqref{ballistic_dkvkdef}. 
%(it's enough to recall that $d_k \geq 2^{2^{k-1}}$ and $a_k \sim \log\log k$). 
Also, since $a_k\sim \log\log k$ we have that $\lim_{k\ra\infty} k \frac{C^{2a_k}}{(2a_k)!} = \infty $ for any constant $C>0$. Therefore, \eqref{ballistic_sublb} implies that $\lim_{k\ra\infty} k\, Q\left( \mathcal{S}_{\eta,d_k,a_k} \right) = \infty$. 
\end{proof}
\begin{cor}\label{ballistic_smallblocks2}
Assume $s<2$. Then for any $\eta\in(0,1)$,
$P$-a.s. there exists a random subsequence $n_{k_m}=n_{k_m}(\w,\eta)$
of $n_k=2^{2^k}$ such that for the sequences $\a_m,\b_m,$ and $\gamma_m$ defined as in \eqref{ballistic_abgdef} we have that for all $m$
\begin{equation}
\max_{ i\in(\a_m, \b_m] } \mu_{i,d_{k_m},\w}^2  \leq
2d_{k_m}^{2/s} \leq \frac{1}{a_{k_m}} \sum_{i=\a_m+1}^{\b_m}
\mu_{i,d_{k_m},\w}^2, \quad\text{and}\quad  \sum_{i=\b_m + 1}^{\gamma_m} \s_{i,d_{k_m},\w}^2 < 2 d_{k_m}^{2/s} .\label{ballistic_smbk}
\end{equation}
\end{cor}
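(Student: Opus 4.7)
The plan is to translate the quantitative lower bound of Lemma \ref{ballistic_smbklemma} into an almost-sure statement via the second Borel--Cantelli lemma applied along a sparse enough subsequence to guarantee independence. First I would introduce translated analogues of $\mathcal{S}_{\eta,d_k,a_k}$ and $U_{\eta,d_k}$ that sit inside the window $(n_{k-1},n_k]$ rather than $[1,d_k]$. Specifically, with $\a_k := n_{k-1}$ set
\[
\mathcal{S}_k' := \bigcup_{\substack{I\subset(\a_k,\,\a_k+\lfloor \eta d_k\rfloor]\\ \#(I)=2a_k}} \left( \bigcap_{i\in I}\bigl\{\mu_{i,d_k,\w}^2\in[d_k^{2/s},2d_k^{2/s})\bigr\} \bigcap_{j\notin I}\bigl\{\mu_{j,d_k,\w}^2<d_k^{2/s}\bigr\}\right),
\]
\[
U_k' := \left\{ \sum_{i=\a_k+\lfloor \eta d_k\rfloor+1}^{n_k} \s_{i,d_k,\w}^2 < 2 d_k^{2/s} \right\}.
\]

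Next I would show that these events are essentially local. Because $\bar{X}^{(d_k)}$ is forbidden from backtracking more than $b_{d_k}$ ladder locations, each $\mu_{i,d_k,\w}$ and $\s_{i,d_k,\w}$ with $i\in(\a_k,n_k]$ depends only on the environment between $\nu_{n_{k-1}-b_{d_k}}$ and $\nu_{n_k}-1$. With the choice $n_k=2^{2^k}$, one has $b_{d_k}=O((\log n_k)^2)=O(4^k)$, which is negligible compared to $n_{k-1}-n_{k-2}\sim n_{k-1}=2^{2^{k-1}}$, so for all $k$ large enough this window lies strictly to the right of $\nu_{n_{k-2}}$ and in particular to the right of the origin. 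Consequently, $\mathcal{S}_k'\cap U_k'$ is measurable with respect to $\s(\w_i:i\geq 0)$, on which $P$ and $Q$ coincide; and since under $Q$ the sequence of blocks between consecutive ladder locations is i.i.d., shifting the index by $n_{k-1}$ gives $Q(\mathcal{S}_k'\cap U_k')=Q(\mathcal{S}_{\eta,d_k,a_k}\cap U_{\eta,d_k})\geq 1/k$ by Lemma \ref{ballistic_smbklemma}, for all $k$ large.

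Restricting to even indices $k=2l$ makes $\{\mathcal{S}_{2l}'\cap U_{2l}'\}_{l\geq l_0}$ a sequence of independent events under $P$: the window for index $2l$ ends at $\nu_{n_{2l}}$, whereas the window for index $2l+2$ starts at $\nu_{n_{2l+1}-b_{d_{2l+2}}}$, and the same growth comparison used above shows $n_{2l+1}-b_{d_{2l+2}}>n_{2l}$ eventually. Since $\sum_l (2l)^{-1}=\infty$, the second Borel--Cantelli lemma yields $P$-a.s.\ that infinitely many $\mathcal{S}_{2l}'\cap U_{2l}'$ occur. Let $n_{k_m}=n_{k_m}(\w,\eta)$ denote the corresponding random subsequence (along even indices), and recall $\b_m=\a_m+\lfloor \eta d_{k_m}\rfloor$ and $\gamma_m=n_{k_m}$.

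It remains to read off \eqref{ballistic_smbk} from the definitions. Occurrence of $\mathcal{S}_{k_m}'$ means that exactly $2a_{k_m}$ of the indices $i\in(\a_m,\b_m]$ satisfy $\mu_{i,d_{k_m},\w}^2\in[d_{k_m}^{2/s},2d_{k_m}^{2/s})$, and the remainder satisfy $\mu_{i,d_{k_m},\w}^2<d_{k_m}^{2/s}$. This instantly gives the upper bound $\max_{i\in(\a_m,\b_m]}\mu_{i,d_{k_m},\w}^2<2d_{k_m}^{2/s}$ as well as the lower bound $\sum_{i=\a_m+1}^{\b_m}\mu_{i,d_{k_m},\w}^2\geq 2a_{k_m}d_{k_m}^{2/s}$, which together are exactly the first two inequalities in \eqref{ballistic_smbk}. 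The third inequality is just $U_{k_m}'$ rewritten in the $(\a_m,\b_m,\gamma_m)$ notation. The main obstacle will be the localization bookkeeping in the second paragraph --- verifying rigorously that $b_{d_k}\ll n_{k-1}-n_{k-2}$ so that $P=Q$ on the relevant events and that the even-indexed events are independent --- but once that is in hand Borel--Cantelli does the rest.
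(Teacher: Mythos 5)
Your proposal is correct and follows essentially the same route as the paper's own proof: the same translated events $\mathcal{S}_k'$ and $U_k'$, the same localization argument via the reflections to get $P=Q$ on these events and independence along even indices, and the second Borel--Cantelli lemma combined with Lemma \ref{ballistic_smbklemma}. The only cosmetic difference is that you spell out the growth comparison $b_{d_k}\ll n_{k-1}-n_{k-2}$ a bit more explicitly than the paper does.
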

%and
%\begin{equation}
%\sum_{i=\b_m + 1}^{\gamma_m} \s_{i,d_{k_m},\w}^2 < 2 d_{k_m}^{2/s} .  \label{ballistic_smbk2}
%\end{equation}
\begin{proof}
Define the sequence of events
\[
\mathcal{S}_k' := \bigcup_{ \substack{ I\subset (n_{k-1}, n_{k-1}+\eta d_k] \\ \#(I) = 2a_k} } \!\! \left( \bigcap_{i\in I} \left\{  \mu_{i,d_k,\w}^2 \in[d_k^{2/s},2d_k^{2/s}) \right\} \bigcap_{j\in(n_{k-1},n_{k-1}+\eta d_k]\backslash I} \left\{ \mu_{j,d_k,\w}^2 < d_k^{2/s} \right\} \right) 
\,,
\]
%\[
%\mathcal{S}_k' := \bigcup_{ \tiny{\begin{array}{c} I\subset (n_{k-1}, n_{k-1}+\eta d_k] \\ \#(I) = 2a_k \end{array}}} \!\! \left( \bigcap_{i\in I} \left\{  \mu_{i,d_k,\w}^2 \in[d_k^{2/s},2d_k^{2/s}) \right\} \bigcap_{j\in(n_{k-1},n_{k-1}+\eta d_k]\backslash I} \left\{ \mu_{j,d_k,\w}^2 < d_k^{2/s} \right\} \right) 
%\,,
%\]
%\begin{align*}
%\mathcal{S}_k' &:= \left\{ \#\left\{ i \in (n_{k-1}, n_{k-1}+\eta d_k] :
%\s_{i,d_k,\w}^2 \in [d_k^{2/s}, 2d_k^{2/s}) \right\} = 2a_k \right\} \\
%&\qquad \cap \left\{ \s_{j,d_k,\w}^2 < 2 d_k^{2/s} \quad \forall
%j\in (n_{k-1}, n_k ] \right\}\,,
%\end{align*}
and
\[
U_k' := \left\{ \sum_{i \in (n_{k-1}+ \eta d_k , n_k]} \s_{i,d_{k_m},\w}^2 < 2 d_{k_m}^{2/s} \right\}
\]
Note that due to the reflections of the random walk, the event
$\mathcal{S}_k' \cap U_k' $ depends on the environment between
ladder locations $n_{k-1}-b_{d_k}$ and $n_k$. Thus, since $n_{k-1} - b_{d_k} > n_{k-2}$ for all $k\geq 4$, we have that 
$\{\mathcal{S}_{2k}' \cap U_{2k}'\}_{k=2}^{\infty}$ is an independent sequence of events.
Similarly, for $k$ large enough $\mathcal{S}_k' \cap U_k'$ does not
depend on the environment to left of the origin. Thus
\[
P(\mathcal{S}_k' \cap U_k') =Q(\mathcal{S}_k' \cap U_k' )=Q\left( \mathcal{S}_{\eta,d_k,a_k} \cap U_{\eta,d_k} \right)
\]
for all $k$ large enough. Lemma \ref{ballistic_smbklemma} then gives
that $\sum_{k=1}^\infty P(\mathcal{S}_{2k}' \cap U_{2k}') =
\infty$, and the Borel-Cantelli Lemma then implies that infinitely
many of the events $\mathcal{S}_{2k}' \cap U_{2k}' $ occur $P-a.s$.
Therefore, $P-a.s.$ there exists a subsequence $k_m=k_m(\w,\eta)$ such that $\mathcal{S}_{k_m}' \cap U_{k_m}'$ occurs for each $m$. Finally, note that the event $\mathcal{S}_{k_m}' \cap U_{k_m}'$ implies \eqref{ballistic_smbk}. 
\end{proof}
\begin{proof}[\textbf{Proof of Theorem \ref{ballistic_Tngaussian}:}] \ \\
First, recall that Corollary \ref{Vsdiff} gives that there exists an $\eta'>0$ such that 
\begin{equation}
Q\left( \left| \sum_{i=1}^n \left( \s_{i,m,\w}^2-\mu_{i,m,\w}^2
\right) \right| \geq \d n^{2/s} \right) = o(n^{-\eta'}) \qquad \forall \d>0, \quad \forall m\in \N. \label{ballistic_c56}
\end{equation}
This can be applied along with the Borel-Cantelli Lemma to prove that
\begin{equation}
\sum_{i=n_{k-1}+1}^{n_{k-1} + \lfloor \eta d_k \rfloor } \!\!\! \left( \s_{i,d_k,\w}^2 - \mu_{i,d_k,\w}^2 \right) =  o\left(d_k^{2/s}\right), \quad P-a.s. \label{ballistic_vmucompare}
\end{equation}
Thus, $P-a.s.$ we may assume that \eqref{ballistic_vmucompare} holds and that there exists a subsequence $n_{k_m}=n_{k_m}(\w,\eta)$ such that condition \eqref{ballistic_smbk} in Corollary \ref{ballistic_smallblocks2} holds. Then, it is enough to prove that
\begin{equation}
\lim_{m\ra\infty} P_\w^{\nu_{\a_m}} \left( \frac{\bar{T}^{(d_{k_m})}_{\nu_{\b_m}} - E_\w^{\nu_{\a_m}} \bar{T}^{(d_{k_m})}_{\nu_{\b_m}}}{\sqrt{v_{k_m,\w}}} \leq y \right) = \Phi(y), \label{ballistic_reducegaussian}
\end{equation}
and
\begin{equation}
\lim_{m\ra\infty} P_\w^{\nu_{\b_m}} \left( \left| \frac{\bar{T}^{(d_{k_m})}_{x_m} - E_\w^{\nu_{\b_m}} \bar{T}^{(d_{k_m})}_{x_m}}{\sqrt{v_{k_m,\w}}} \right| \geq \e \right) = 0, \quad \forall \e>0. \label{ballistic_endsmall}
\end{equation}
To prove \eqref{ballistic_endsmall}, note that by Chebychev's inequality
\[
P_\w^{\nu_{\b_m}} \left( \left| \frac{\bar{T}^{(d_{k_m})}_{x_m} - E_\w^{\nu_{\b_m}} \bar{T}^{(d_{k_m})}_{x_m}}{\sqrt{v_{k_m,\w}}} \right| \geq \e \right) \leq \frac{Var_\w \left( \bar{T}^{(d_{k_m})}_{x_m} - \bar{T}^{(d_{k_m})}_{\b_m} \right) }{\e^2 v_{k_m,\w}} \leq \frac{\sum_{i=\b_m+1}^{\gamma_m} \s_{i,d_{k_m},\w}^2 }{\e^2 v_{k_m,\w}} 
%\leq \frac{1}{\e^2 2 a_{k_m}},
\]
However, by \eqref{ballistic_vmucompare} and our choice of the subsequence $n_{k_m}$ we have that $\sum_{i=\b_m+1}^{\gamma_m} \s_{i,d_{k_m},\w}^2 < 2 d_{k_m}^{2/s}$, and $v_{k_m,\w} \geq \sum_{i=\a_m+1}^{\b_m} \s_{i,d_{k_m},\w}^2 = \sum_{i=\a_m+1}^{\b_m} \mu_{i,d_{k_m},\w}^2 + o\left( d_{k_m}^{2/s} \right)\geq a_{k_m} d_{k_m}^{2/s} + o\left( d_{k_m}^{2/s} \right)$. Thus
\begin{equation}
\lim_{m\ra\infty} \frac{\sum_{i=\b_m+1}^{\gamma_m} \s_{i,d_{k_m},\w}^2 }{v_{k_m,\w}} = 0, \label{ballistic_s2v}
\end{equation}
which proves \eqref{ballistic_endsmall}.
To prove \eqref{ballistic_reducegaussian}, it is enough to show that the Lindberg-Feller condition is satisfied. That is we need to show
\begin{equation}
\lim_{m\ra\infty} \frac{1}{v_{k_m,\w}} \sum_{i=\a_m+1}^{\b_m} \s_{i,d_{k_m},\w}^2 = 1, \label{ballistic_LF1}
\end{equation}
and 
\begin{equation}
\lim_{m\ra\infty} \frac{1}{v_{k_m,\w}} \sum_{i=\a_m+1}^{\b_m} E_\w^{\nu_{i-1}} \left[ \left( \bar{T}^{(d_{k_m})}_{\nu_i} - \mu_{i,d_{k_m},\w} \right)^2 \mathbf{1}_{ | \bar{T}^{(d_{k_m})}_{\nu_i}-\mu_{i,d_{k_m},\w}  | > \e \sqrt{v_{m,\w}}}  \right] = 0, \quad \forall \e>0. \label{ballistic_LF2}
\end{equation}
To show \eqref{ballistic_LF1} note that the definition of $v_{k_m,\w}$ and our choice of the subsequence $n_{k_m}$ give that
\[
\frac{1}{v_{k_m,\w}} \sum_{i=\a_m+1}^{\b_m} \s_{i,d_{k_m},\w}^2 = 1 - \frac{1}{v_{k_m,\w}} \sum_{i=\b_m+1}^{\gamma_m} \s_{i,d_{k_m,\w}}^2 = 1-o(1),
\]
where the last equality is from \eqref{ballistic_s2v}. To prove \eqref{ballistic_LF2}, first note that an application of Lemma \ref{Vsmall} gives that for any $\e'>0$
\[
\sum_{i=n_{k-1}+1}^{n_{k-1}+\lfloor \eta d_k \rfloor} \s_{i,d_k,\w}^2 \mathbf{1}_{M_i \leq d_k^{(1-\e')/s}} = o\left( d_k^{2/s} \right) ,\quad P-a.s. , 
\]
where $M_i$ is defined as in \eqref{ballistic_Mdef}. 
Then, since $v_{k_m,\w} \geq a_{k_m} d_{k_m}^{2/s} + o\left( d_{k_m}^{2/s} \right)$ we can reduce the sum in \eqref{ballistic_LF2} to blocks where $M_i > d_{k_m}^{(1-\e')/s}$. That is, it is enough to prove that for some $\e'>0$ and every $\e>0$
\begin{align}
\lim_{m\ra\infty} \frac{1}{v_{k_m,\w}} \sum_{i=\a_{m}+1}^{\b_m} E_\w^{\nu_{i-1}} \left[ \left( \bar{T}^{(d_{k_m})}_{\nu_i}-\mu_{i,d_{k_m},\w} \right)^2 \mathbf{1}_{ | \bar{T}^{(d_{k_m})}_{\nu_i}-\mu_{i,d_{k_m},\w}  | > \e \sqrt{v_{k_m,\w}}}  \right]\mathbf{1}_{M_i > d_{k_m}^{(1-\e')/s}} = 0 . \label{ballistic_Mlarge}
\end{align}
To get an upper bound for \eqref{ballistic_Mlarge}, first note that our
choice of the subsequence $n_{k_m}$ gives that for $m$ large enough $v_{k_m,\w} \geq \frac{1}{2} \sum_{i=\a_m+1}^{\b_m} \mu_{i,d_{k_m},\w}^2 \geq \frac{a_{k_m}}{2} \mu_{i,d_{k_m},\w}$ for any $i\in
(\a_m, \b_m]$. Thus, for $m$ large enough we can replace the
indicators inside the expectations in \eqref{ballistic_Mlarge} by the
indicators of the events $\left\{ \bar{T}_{\nu_i}^{(d_{k_m})}  >
(1+\e \sqrt{a_{k_m}/2}) \mu_{i,d_{k_m},\w} \right\}$. Thus, for
$m$ large enough and $i\in(\a_m, \b_m]$, we have
\begin{align*}
& E_\w^{\nu_{i-1}} \left[ \left(
\bar{T}^{(d_{k_m})}_{\nu_i}-\mu_{i,d_{k_m},\w} \right)^2
\mathbf{1}_{ |
\bar{T}^{(d_{k_m})}_{\nu_i}-\mu_{i,d_{k_m},\w} | >
\e \sqrt{v_{k_m,\w}}}  \right] \\
&\qquad \leq E_\w^{\nu_{i-1}} \left[ \left(
\bar{T}^{(d_{k_m})}_{\nu_i}-\mu_{i,d_{k_m},\w} \right)^2
\mathbf{1}_{ \bar{T}^{(d_{k_m})}_{\nu_i}> (1+\e
\sqrt{a_{k_m}/2})
\mu_{i,d_{k_m},\w} }  \right] \\
&\qquad = \int_{1+\e\sqrt{a_{k_m}/2}}^{\infty} P_\w^{\nu_{i-1}}
\left( \bar{T}_{\nu_i}^{(d_{k_m})} > x \mu_{i,d_{k_m},\w}
\right) 2(x-1)\mu_{i,d_{k_m},\w}^2 \, dx
\,.\end{align*}
We want to get an upper bound on the probabilities inside the integral. If $\e'<\frac{1}{3}$ we can use Lemma \ref{momentbound} to get that for $k$ large enough, $E_\w^{\nu_{i-1}}\left( \bar{T}^{(d_{k})}_{\nu_i} \right)^j \leq 2^j j! \mu_{i,d_k,\w}^j$ for all $n_{k-1}<i\leq n_k$ such that $M_i > d_k^{(1-\e')/s}$.
Multiplying by $(4\mu_{i,d_k,\w})^{-j}$ and summing over $j$ gives
that $E_\w^{\nu_{i-1}} e^{ \bar{T}^{(d_k)}_{\nu_i} /(4
\mu_{i,d_k,\w}) } \leq 2$. Therefore, Chebychev's inequality gives
\[
P_\w^{\nu_{i-1}} \left( \bar{T}_{\nu_i}^{(d_k)} > x \mu_{i,d_k,\w}
\right) \leq e^{- x/4} E_\w^{\nu_{i-1}} e^{
\bar{T}^{(d_k)}_{\nu_i} / (4 \mu_{i,d_k,\w})} \leq 2 e^{-x/4}
\,.\]
Thus, for all $m$ large enough we have for all $\a_m<i\leq \b_m
\leq n_{k_m}$ with $M_i > d_{k_m}^{(1-\e')/s}$ that
\begin{align*}
\int_{1+\e\sqrt{a_{k_m}/2}}^{\infty} P_\w^{\nu_{i-1}} \left(
\bar{T}_{\nu_i}^{(d_{k_m})} > x \mu_{i,d_{k_m},\w} \right)
2(x-1)\mu_{i,d_{k_m},\w}^2 dx 
&\leq \mu_{i,d_{k_m},\w}^2 \int_{1+\e\sqrt{a_{k_m}/2}}^{\infty} 4(x-1)e^{-x/4}
 dx\\
&= \mu_{i,d_{k_m},\w}^2 \: o\!\left( e^{-a_{k_m}^{1/4}} \right) \,.
%&= 16(4+\e \sqrt{a_{k_m}/2}) e^{-(1+\e\sqrt{a_{k_m}/2})/4}
%\mu_{i,d_{k_m},\w}^2 = \mu_{i,d_{k_m},\w}^2 o\left( e^{-a_{k_m}^{1/4}} \right) \,.
\end{align*}
Therefore we have that as $m\ra\infty$,
\eqref{ballistic_Mlarge} is bounded above by
\begin{align}
\lim_{m\ra\infty} o\left( e^{-a_{k_m}^{1/4}} \right) \frac{1}{v_{k_m,\w}} \left( \sum_{i=\a_m+1}^{\b_m} \mu_{i,d_{k_m},\w}^2 \mathbf{1}_{M_i > d_{k_m}^{(1-\e')/s}} \right)
\,. \label{ballistic_finalest}
\end{align}
%\begin{align}
%\lim_{m\ra\infty} \left( 16(4+\e \sqrt{a_{k_m}/2}) e^{-(1+\e\sqrt{a_{k_m}/2})/4} \right) \frac{1}{v_{k_m,\w}} \left( \sum_{i=\a_m+1}^{\b_m} \mu_{i,d_{k_m},\w}^2 \mathbf{1}_{M_i > d_{k_m}^{(1-\e')/s}} \right)
%\,. \label{ballistic_finalest}
%\end{align}
However, since 
\begin{align*}
\frac{1}{v_{k_m,\w}} \sum_{i=\a_m+1}^{\b_m} \mu_{i,d_{k_m},\w}^2 
&\leq \frac{1}{ \sum_{i=\a_m+1}^{\b_m} \s_{i,d_{k_m},\w}^2 } \left( \sum_{i=\a_m+1}^{\b_m} \s_{i,d_{k_m},\w}^2
 + o\left( d_{k_m}^{2/s} \right) \right) \\
&\leq 1 + \frac{o\left( d_{k_m}^{2/s} \right)}{2 a_{k_m} d_{k_m}^{2/s} + o\left( d_{k_m}^{2/s} \right)},
\end{align*}
we have that \eqref{ballistic_finalest} tends to zero as $m\ra\infty$. 
This finishes the proof of \eqref{ballistic_LF2} and thus of Theorem \ref{ballistic_Tngaussian}.
\end{proof}
\begin{proof}[\textbf{Proof of Theorem \ref{ballistic_qCLT}:}] \ \\
Choose $\eta \in (0,1)$ such that $\eta < \frac{1}{\bar\nu}$ where $\bar\nu = E_P \nu$, and then choose $n_{k_m}$ as in Theorem \ref{ballistic_Tngaussian}. Then for $\b_m$ and $\gamma_m$ defined as in \eqref{ballistic_abgdef}, we have that \eqref{ballistic_nuLLN} and the fact that $d_k\sim n_k$ give 
\[
\lim_{m\ra\infty} \frac{\nu_{\b_m}}{n_{k_m}} = \eta \bar{\nu} < 1 < \bar\nu = \lim_{m\ra\infty} \frac{\nu_{\gamma_m}}{n_{k_m}}. 
\]
Thus $x_m \sim n_{k_m} \Ra x_m \in [\nu_{\b_m}, \nu_{\gamma_m} ]$ for all $m$ large enough. Therefore, the conditions of Proposition \ref{ballistic_generalprop} are satisfied with $F(x)= \Phi(x)$. 
\end{proof}
\end{section}

\begin{section}{Quenched Exponential Limits} \label{ballistic_exponential}
\begin{subsection}{Analysis of $T_\nu$ when $M_1$ is Large} \label{ballistic_Laplace}
The goal of this subsection is to analyze the quenched distribution of $\bar{T}^{(n)}_\nu$ on ``large'' blocks (i.e. when $M_1>n^{(1-\e)/s}$). We want to show that conditioned on $M_1$ being large, $\bar{T}^{(n)}_\nu / E_\w \bar{T}^{(n)}_\nu$ is approximately exponentially distributed. We do this by showing that the quenched Laplace transform $E_\w \exp \left\{-\l \frac{\bar{T}^{(n)}_\nu}{E_\w \bar{T}^{(n)}_\nu}\right\}$ is approximately $\frac{1}{1+\l}$ on such blocks. 

As was done in \cite{eszStable}, we analyze the quenched Laplace transform of $\bar{T}^{(n)}_\nu$ by decomposing $\bar{T}^{(n)}_\nu$
into a series of excursions away from 0. An excursion is a ``failure'' if the random walk returns to zero before hitting $\nu$ (i.e. if $T_\nu > T_0^+:= \min\{ k > 0: X_k = 0 \}$), and a ``success'' if the random walk reaches $\nu$ before returning to zero (note that classifying an excursion as a failure/sucess is independent of any modifications to the environment left of zero since if the random walk ventures to the left at all, it must be in a failure excursion). Define $p_\w:=P_\w ( T_\nu<T^+_0)$, and let $N$ be a geometric random variable with parameter $p_\w$ (i.e. $P(N=k) = p_\w (1-p_\w)^k$ for $k\in \N$). Also, let $\{F_i\}_{i=1}^\infty$ be an i.i.d. sequence (also independent of $N$) with $F_1$ having the same distribution as $\bar{T}_\nu^{(n)}$ conditioned on $\left\{ \bar{T}^{(n)}_\nu > T_0^+ \right\}$, and let $S$ be a random variable with the same distribution as $T_\nu$ conditioned on $\left\{ T_\nu < T_0^+ \right\}$ and independent of everything else (note that for sucess excursions we can ignore added reflections to the left of zero). Thus, we have that 
\begin{equation}
\bar{T}^{(n)}_\nu \stackrel{Law}{=} S + \sum_{i=1}^N F_i \qquad\text{(quenched).} \label{ballistic_Tdec}
\end{equation}
In a slight abuse of notation we will still use $P_\w$ for the probabilities of $F_i, S,$ and $N$ to emphasize that their distributions are dependent on $\w$. 
The following results are easy to verify:
\begin{equation}
E_\w N = \frac{1-p_\w}{p_\w}  \quad\text{and}\quad E_\w \bar{T}^{(n)}_\nu = E_\w S + (E_\w N)( E_\w F_1), \label{ballistic_ETdec}
\end{equation}
\begin{align}
Var_\w \bar{T}^{(n)}_\nu &= (E_\w N)( Var_\w F_1) + (E_\w F)^2 (Var_\w N) + Var_\w S \nonumber \\
&= (E_\w N )(E_\w F^2) + (E_\w F)^2 (Var_\w N - E_\w N) + Var_\w S \nonumber \\
&= (E_\w N )(E_\w F^2) + (E_\w F)^2(E_\w N)^2 + Var_\w S, \label{ballistic_VTdec}
\end{align}
and 
\begin{align*}
E_\w e^{-\l \bar{T}^{(n)}_\nu} = E_\w e^{-\l S} E_\w\left[ \left(E_\w e^{-\l F_1}\right)^N \right] = E_\w e^{-\l S} \frac{p_\w}{1-(1-p_\w)\left(E_\w e^{-\l F_1}\right)}, \quad \forall \l\geq 0.
\end{align*}
Also, since $e^{-x} \geq 1-x$ for any $x\in\R$ we have for any $\l\geq 0$ that
\begin{align*}
E_\w e^{-\l \bar{T}^{(n)}_\nu} &\geq \left(1-\l E_\w S \right) \frac{p_\w}{1-(1-p_\w)\left(1- \l E_\w F_1\right)} 
=  \frac{1-\l E_\w S }{1 + \l(E_\w N)(E_\w F_1)}
\geq  \frac{1-\l E_\w S}{1 + \l E_\w \bar{T}^{(n)}_\nu},
\end{align*}
where the first equality and the last inequality are from the formulas for $E_\w N$ and $E_\w \bar{T}^{(n)}_\nu$ given in \eqref{ballistic_ETdec}. Similarly, since $e^{-x} \leq 1-x+\frac{x^2}{2}$ for all $x\geq 0$ we have that for any $\l \geq 0$ that
\begin{align*}
E_\w e^{-\l \bar{T}^{(n)}_\nu} &\leq \frac{p_\w}{1-(1-p_\w)\left(1- \l E_\w F_1 + \frac{\l^2}{2} E_\w F_1^2\right)} \\
&= \frac{1}{1+\l(E_\w N)(E_\w F_1) - \frac{\l^2}{2} (E_\w N)( E_\w F_1^2)}\\
&= \frac{1}{1+\l(E_\w N)(E_\w F_1) - \frac{\l^2}{2} (Var_\w \bar{T}^{(n)}_\nu - (E_\w N)^2(E_\w F_1)^2 - Var_\w S)}\\
&\leq \frac{1}{1+\l(E_\w \bar{T}^{(n)}_\nu - E_\w S) - \frac{\l^2}{2} (Var_\w \bar{T}^{(n)}_\nu - (E_\w \bar{T}^{(n)}_\nu - E_\w S)^2 )},
\end{align*}
where the first equality and last inequality are from \eqref{ballistic_ETdec} and the second equality is from \eqref{ballistic_VTdec}. Therefore, replacing $\l$ by $\l/(E_\w \bar{T}^{(n)}_\nu)$ we get 
\begin{equation}
 E_\w e^{-\l \frac{\bar{T}^{(n)}_\nu}{E_\w \bar{T}^{(n)}_\nu}} \geq \left(1-\l \frac{E_\w S}{E_\w \bar{T}^{(n)}_\nu} \right) \frac{1}{1 + \l } \, , \label{ballistic_mgflb}
\end{equation}
and
\begin{align}
E_\w e^{-\l \frac{\bar{T}^{(n)}_\nu}{E_\w \bar{T}^{(n)}_\nu}}
&\leq \frac{1}{1+\l - \l \frac{E_\w S}{E_\w \bar{T}^{(n)}_\nu } - \frac{\l^2}{2} \left(\frac{Var_\w \bar{T}^{(n)}_\nu}{(E_\w \bar{T}^{(n)}_\nu)^2 } -\frac{(E_\w \bar{T}^{(n)}_\nu - E_\w S)^2}{(E_\w \bar{T}^{(n)}_\nu)^2 } \right)} \nonumber \\
%&= \frac{1}{1+\l - (\l+\l^2) \frac{E_\w S}{E_\w \bar{T}^{(n)}_\nu } - \frac{\l^2}{2} \left(\frac{Var_\w \bar{T}^{(n)}_\nu}{(E_\w \bar{T}^{(n)}_\nu)^2 } - 1 -\frac{(E_\w S)^2}{(E_\w \bar{T}^{(n)}_\nu)^2 } \right)} \nonumber \\
&\leq \frac{1}{1+\l - (\l+\l^2) \frac{E_\w S}{E_\w \bar{T}^{(n)}_\nu } - \frac{\l^2}{2} \left(\frac{Var_\w \bar{T}^{(n)}_\nu}{(E_\w \bar{T}^{(n)}_\nu)^2 } - 1 \right)} \, . \label{ballistic_mgfub}
\end{align}
Therefore, we have reduced the problem of showing $E_\w e^{-\l \frac{\bar{T}^{(n)}_\nu}{E_\w \bar{T}^{(n)}_\nu}} \approx \frac{1}{1+\l}$ when $M_1$ is large to showing that $\frac{E_\w S}{E_\w \bar{T}^{(n)}_\nu } \approx 0$ and $\frac{Var_\w \bar{T}^{(n)}_\nu}{(E_\w \bar{T}^{(n)}_\nu)^2 } \approx 1$ when $M_1$ is large. 
%A simple union bound gives the following corollary.
%\begin{cor} \label{ballistic_VaroverET2}
%For any $\e,\d>0$ we have
%\[
%Q\left( \exists i\in[1,n]: \frac{Var_\w (\bar{T}^{(n)}_{\nu_i} - \bar{T}^{(n)}_{\nu_{i-1}})}{(E_\w^{\nu_{i-1}} \bar{T}^{(n)}_{\nu_i})^2} \geq 1 + %n^{-\d} ,\quad  M_i > n^{(1-\e)/s} \right) = o(n^{-1+2\e+\d s+\e'}), \quad \forall \e'>0
%\]
%\[
%Q\left( \exists i\in[1,n]: \frac{\s_{i,n,\w}^2}{\mu_{i,n,\w}^2} \geq 1 + n^{-\d} ,\quad  M_i > n^{(1-\e)/s} \right) = o(n^{-1+2\e+\d s+\e'}), \quad \forall \e'>0
%\]
%\end{cor}
In order to analyze $E_\w S$, we define a modified environment which is essentially the environment the random walker ``sees'' once it is told that it reaches $\nu$ before returning to zero. A simple computation similar to the one in \cite[Remark 2 on pages 222-223]{zRWRE} gives that the random walk conditioned to reach $\nu$ before returning to zero is a homogeneous markov chain with transition probabilities given by $\bw_i := P_\w^i(X_1=i+1 | T_\nu < T^+_0 )$. 
%To see this, let $\tilde{P}_\w$ be defined by $\tilde{P}_\w(\cdot) := P_\w(\cdot | T_\nu < T_0^+)$. Then, for any path $x_1,\ldots, x_n$ with $x_i\in (0,\nu)$ for all $i$ we have 
%\begin{align*}
%\tilde{P}_\w \left( X_{n+1} = x_n + 1 | X_i = x_i, \forall i\leq n \right) &= \frac{\tilde{P}_\w \left( X_{n+1} = x_n + 1 ,\quad X_i = x_i, \forall i\leq n \right)}{\tilde{P}_\w \left(X_i = x_i, \forall i\leq n \right) }\\
%&= \frac{P_\w \left( X_{n+1} = x_n + 1 ,\quad X_i = x_i, \forall i\leq n, \quad T_\nu < T_0^+ \right)}{P_\w \left(X_i = x_i, \forall i\leq n , \quad T_\nu < T_0^+ \right) }\\
%&= \frac{P_\w \left( X_i = x_i, \forall i\leq n \right) P_\w^{x_n}\left( X_1 = x_n + 1,  T_\nu < T_0^+ \right)}{P_\w \left(X_i = x_i, \forall i\leq n \right)P_\w^{x_n}\left(T_\nu < T_0^+ \right) } \\
%&= P_\w^{x_n}(X_1=x_n+1 | T_\nu < T^+_0 ) = \bw_{x_n},
%\end{align*}
%where we used the Markov property in the third equality. 
Then the definition of $\bw_i$ gives that $\bw_0=\bw_1 =1$, and for $i \in [2,\nu)$ we have $\bw_i = \frac{\w_i P_\w^{i+1}( T_\nu < T_0 )}{P_\w^i ( T_\nu < T_0)}$. Using the hitting time formulas in \cite[(2.1.4)]{zRWRE} we have 
\begin{equation}
\bw_i = \frac{\w_i R_{0,i}}{R_{0,i-1}} \quad \forall i\in[2,\nu), \quad\text{where}\quad R_{0,i}:= \sum_{j=0}^i \Pi_{0,j}. \label{ballistic_Rdef}
\end{equation}
%$\bw_i = \frac{\w_i R_{0,i}}{R_{0,i-1}}$ for all $ i\in[2,\nu)$. 
Let $\bar\rho_i:=\frac{1-\bw_i}{\bw_i}$ and define $\bar{\Pi}_{i,j},$ and $\bar{W}_{i,j}$ analogously to $\Pi_{i,j}$ and $W_{i,j}$ 
%in \eqref{ballistic_rhodef} and \eqref{ballistic_Wdef} 
using $\bar\rho_i$ in place of $\rho_i$. Then the above formulas for $\bw_i$ give that $\bar\rho_0=\bar\rho_1=0$ and $\bar\rho_i = \rho_i \frac{R_{0,i-2}}{R_{0,i}} \quad\forall i\in[2,\nu)$. Thus,
\begin{equation}
\bar{\Pi}_{i,j} = \Pi_{i,j} \frac{R_{0,i-2}R_{0,i-1}}{R_{0,j-1}R_{0,j}},\quad\forall 2\leq i\leq j<\nu. \label{ballistic_rhobarformulas}
\end{equation}
Note that since $R_{0,i}\leq R_{0,j}$ for any $0\leq i\leq j$ we have from \eqref{ballistic_rhobarformulas} that 
\begin{equation}
\bar\Pi_{i,j}\leq \Pi_{i,j} \quad \text{for any } 0\leq i\leq j<\nu \label{ballistic_PibarlessPi}
\end{equation}
Now, since $E_\w S = E_{\bw} T_\nu$ we get from \eqref{ballistic_qvformula} that $E_\w S = \nu + 2\sum_{j=2}^{\nu-1} \bar{W}_{2,j} = \nu + 2\sum_{j=2}^{\nu-1} \sum_{i=2}^j \bar{\Pi}_{i,j}$.
Therefore, letting $\bar{M}_1:= \max \{ \bar{\Pi}_{i,j}: 0 \leq i\leq j<\nu \}$ we get the bound 
\begin{equation}
E_\w S \leq \nu + 2\nu^2 \bar{M}_1. \label{ballistic_ESbound}
\end{equation}
Thus, we need to get bounds on the tail of $\bar{M}_1$. To this end, recall the definition of $M_1$ in \eqref{ballistic_Mdef} and define $\tau := \max \{ k \in [1,\nu]: \Pi_{0,k-1} = M_1 \}$. Then, define
\begin{equation}
M^-:= \min \{ \Pi_{i,j}: 0<i\leq j < \tau \} \wedge 1, \quad\text{and}\quad M^+:= \max \{ \Pi_{i,j} : \tau < i \leq j < \nu  \}\vee 1 \, .\label{ballistic_Ddef}
\end{equation} 
\begin{lem} \label{ballistic_Mpmtail}
For any $\e,\d>0$ we have
\begin{equation}
P( M^- < n^{-\d} , M_1 > n^{(1-\e)/s} ) = o(n^{-1+\e-\d s + \e'}), \quad \forall \e' > 0, \label{ballistic_xdown}
\end{equation}
and
\begin{equation}
P( M^+ > n^{\d} , M_1 > n^{(1-\e)/s} ) = o(n^{-1+\e-\d s + \e'}), \quad \forall \e' > 0, \label{ballistic_xup}
\end{equation}
\end{lem}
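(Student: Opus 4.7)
Introduce the potential walk $V(k):=\sum_{i=0}^{k-1}\log\rho_i$, so that $\Pi_{i,j}=e^{V(j+1)-V(i)}$, $\nu=\inf\{k\ge 1:V(k)<0\}$, $M_1=e^{\max_{1\le k\le\nu-1}V(k)}$, and $\tau\in[1,\nu-1]$ is the last time $V$ attains this maximum. Setting $H:=\tfrac{1-\e}{s}\log n$ and $D:=\d\log n$, the event $\{M_1>n^{(1-\e)/s}\}$ coincides with $\{T_H<\nu\}$ for $T_H:=\inf\{k:V(k)\ge H\}$, while on $\{\nu>1\}$ the event $\{M^-<n^{-\d}\}$ coincides with $\{\sigma_D\le\tau\}$, where $\sigma_D:=\inf\{k\ge 1:V(k)\le\bar V(k-1)-D\}$ is the first time the walk has dropped by at least $D$ from its running maximum $\bar V(j):=\max_{0\le l\le j}V(l)$.

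The main tool is the Cram\'er change of measure $d\tilde P/dP\bigr|_{\mathcal F_k}:=\Pi_{0,k-1}^s=e^{sV(k)}$, which is a probability measure since $E[\rho^s]=1$; under $\tilde P$ the increment $\log\rho$ has strictly positive mean $\mu':=E[\rho^s\log\rho]$ (finite by Assumption \ref{ballistic_techasm}), and $e^{-sV(k)}$ is a nonnegative $\tilde P$-martingale (as $\tilde E[\rho^{-s}]=E[\rho^s\rho^{-s}]=1$). I will also use repeatedly the Doob estimate
$$P^v\bigl(\max V\ge H\text{ before }V<0\bigr)\le e^{s(v-H)},$$
obtained by optional stopping of the $P$-martingale $e^{sV(k)}$ at the exit from $(0,H)$.

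To prove \eqref{ballistic_xdown}, split $P(\sigma_D\le\tau,\,V(\tau)\ge H)$ according to whether $\bar V(\sigma_D)$ exceeds $H+D$. On $\{\bar V(\sigma_D)>H+D\}$ the walk reaches level $H+D$ before $\sigma_D$, so this contribution is at most $P(T_{H+D}<\nu)\le Ce^{-s(H+D)}=Cn^{-(1-\e)-\d s}$ by the Iglehart tail. On $\{\bar V(\sigma_D)\le H+D\}$ one has $V(\sigma_D)\le\bar V(\sigma_D)-D\le H$, and applying the Doob estimate at $\sigma_D$ via strong Markov (when the walk reaches $H$ after $\sigma_D$), or Cram\'er at $T_H$ (when the walk has already reached $H$), together with the Cram\'er identity $E[\mathbf 1_A e^{sV(\sigma_D)}]=\tilde P(A)$ for $A\in\mathcal F_{\sigma_D}$, gives
$$\le Ce^{-sH}\,\tilde P\bigl(\sigma_D<\nu,\,\bar V(\sigma_D)\le H+D\bigr).$$
Under $\tilde P$ the positive drift guarantees, via the renewal theorem applied to the ladder process, that the number of upward ladder epochs of $V$ with running max in $[D,H+D]$ is $\bigo(H+D)$; at each such epoch the probability of a drop of size $\ge D$ during the subsequent ladder-to-ladder excursion is at most $e^{-sD}$ by the $\tilde P$-martingale $e^{-sV}$. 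A union bound yields $\tilde P(\sigma_D<\nu,\bar V(\sigma_D)\le H+D)\le C(H+D)e^{-sD}$, so combining the two cases produces $P(M^-<n^{-\d},M_1>n^{(1-\e)/s})=\bigo(\log n\cdot n^{-(1-\e)-\d s})$, which is $o(n^{-1+\e-\d s+\e'})$ for every $\e'>0$.

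The bound \eqref{ballistic_xup} follows by the identical argument applied to the time-reversed walk $W(k):=V(\nu-k)-V(\nu)$: since $P$ is a product measure, the increments $-\log\rho_{\nu-1-k}$ of $W$ are i.i.d.~with the positive mean $-E[\log\rho]$, so the analogous Cram\'er tilt (which integrates to $1$ by the same relation $E[\rho^s]=1$) turns $W$ into a positive-drift walk; an upward fluctuation of $V$ in $(\tau,\nu-1]$ becomes a downward fluctuation of $W$ in the corresponding initial interval $[0,\nu-\tau)$, and the same split-plus-tilt argument yields the identical bound. The principal difficulty is the sub-step $\tilde P(\sigma_D<\nu,\bar V(\sigma_D)\le H+D)\le C(H+D)e^{-sD}$: a naive union bound over time steps diverges, and one must structure the count around the upward ladder epochs of the positive-drift tilted walk, using the renewal theorem for the $\bigo(H+D)$ ladder factor and the per-epoch $e^{-sV}$-martingale bound for the decisive $e^{-sD}$.
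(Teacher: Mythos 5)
Your argument for \eqref{ballistic_xdown} is essentially sound, and it is a genuinely different (and much heavier) route than the paper's. One step is misdescribed, though fixable: in the sub-case $T_H\le\s_D$ your ``Cram\'er at $T_H$'' does not produce the factor $e^{-sH}$, because the change of measure at $\s_D$ gives $P(A)=\tilde E[\mathbf 1_A e^{-sV(\s_D)}]$ and on that sub-case you have no useful lower bound on $V(\s_D)$ beyond $V(\s_D)\ge 0$. The correct unified argument is that on $\{\s_D\le\tau\}$ one has $\tau>\s_D$ strictly, so \emph{after} $\s_D$ the walk must still reach level $V(\tau)\ge\max(H,\bar V(\s_D))\ge H$ before $\nu$; the strong Markov property and the Doob bound then give the factor $e^{s(V(\s_D)-H)}$ in both sub-cases at once, and the rest of your renewal/ladder estimate goes through (finiteness of the mean ladder height under $\tilde P$ uses $E_P[\rho^s\log\rho]<\infty$ from Assumption \ref{ballistic_techasm}).

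The proof of \eqref{ballistic_xup}, however, has a genuine gap. The increments of $W(k)=V(\nu-k)-V(\nu)$ are \emph{not} i.i.d.: $\nu$ is determined by the whole path, so the law of $(\rho_{\nu-1},\dots,\rho_0)$ is the product measure conditioned on $\{\nu=n\}$ and then reversed, not a product measure. The duality lemma only identifies the reversed path with a fresh i.i.d. walk \emph{together with} the reversed conditioning event $\{W(j)\ge W(n)>0,\ 1\le j\le n\}$, summed over $n$. Once this is done correctly the argument is no longer ``identical'': the fresh walk has increments of mean $-E_P\log\rho>0$, so reaching height $H$ before its first argmax is typical rather than rare, and your Case 1 bound $P(T_{H+D}<\nu)\le e^{-s(H+D)}$ has no analogue. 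The factor $e^{-sH}$ must instead be extracted from the reversed conditioning event (the walk must descend from its maximum back to its overall minimum at time $n$), which requires a different decomposition that you have not supplied; also note that the tilt by $e^{-sW}$ gives $W$ \emph{negative} drift, contrary to what you assert. For comparison, the paper proves both bounds symmetrically and elementarily: after discarding $\{\nu>b_n\}$ using the exponential tail of $\nu$, it takes a union bound over the $\bigo(b_n^3)$ index triples and applies Markov's inequality with the $s$-th moment to the product over \emph{disjoint} index sets, $\Pi_{0,i-1}\Pi_{j+1,k}>n^{(1-\e)/s+\d}$ for \eqref{ballistic_xdown} and $\Pi_{0,k}\Pi_{i,j}>n^{(1-\e)/s+\d}$ for \eqref{ballistic_xup}, using $E_P\rho^s=1$; no change of measure or fluctuation theory is needed, and the two bounds become mirror images of each other.
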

\begin{proof}
Since $\Pi_{0,\tau-1} = M_1$ by definition we have
\begin{align}
P( M^- < n^{-\d} , M_1 > n^{(1-\e)/s} ) &\leq P\left( \exists 0 < i\leq j < \tau - 1: \Pi_{i,j} < n^{-\d}, \quad \Pi_{0,\tau-1} > n^{(1-\e)/s}   \right) \nonumber \\
&\leq P(\tau > b_n) + \sum_{ 0<i\leq j < k < b_n }P\left( \Pi_{i,j} < n^{-\d}, \quad \Pi_{0,k} > n^{(1-\e)/s}   \right) \nonumber \\
&\leq P(\nu > b_n) + \sum_{ 0<i\leq j < k < b_n }P\left( \Pi_{0,i-1}\Pi_{j+1,k} > n^{(1-\e)/s + \d}   \right) \label{ballistic_xfluc}.
\end{align}
Since \eqref{ballistic_nutail} gives that $P(\nu > b_n) \leq C_1 e^{-C_2 b_n}$ we need only handle the second term in \eqref{ballistic_xfluc} to prove \eqref{ballistic_xdown}.
However, Chebychev's inequality and the fact that $P$ is a product measure give that
\[
P\left( \Pi_{0,i-1}\Pi_{j+1,k} > n^{(1-\e)/s + \d}   \right) \leq n^{-1+\e-\d s} (E_P \rho^s)^{i+k-j} = n^{-1+\e-\d s}.
\]
Since the number of terms in the sum in \eqref{ballistic_xfluc} is at most $(b_n)^3 = o(n^{\e'})$ we have proved \eqref{ballistic_xdown}. The proof of \eqref{ballistic_xup} is similar:
\begin{align*}
P( M^+ > n^{\d} , M_1 > n^{(1-\e)/s} ) &\leq P\left( \exists \tau < i\leq j < \nu: \Pi_{i,j} > n^{\d}, \quad \Pi_{0,\tau-1} > n^{(1-\e)/s}   \right) \nonumber \\
&\leq P(\nu > b_n) + \sum_{ 0\leq k <i\leq j < b_n }P\left( \Pi_{0,k}\Pi_{i,j} > n^{(1-\e)/s + \d}   \right)\\
&\leq C_1 e^{-C_2 b_n} + (b_n)^3 n^{-1+\e-\d s} = o(n^{-1+\e-\d s + \e'})
\end{align*}
\end{proof}
\begin{cor} \label{ballistic_Sbig}
For any $\e,\d>0$ we have
\[
P\left( E_\w S \geq n^{5\d} , M_1 > n^{(1-\e)/s} \right) = o(n^{-1+\e-\d s + \e'}), \quad \forall \e'>0. 
\]
\end{cor}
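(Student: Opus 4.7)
The plan is to combine the deterministic bound \eqref{ballistic_ESbound}, $E_\w S \leq \nu + 2\nu^2 \bar{M}_1$, with the tail estimates of Lemma \ref{ballistic_Mpmtail} and the exponential tail $P(\nu > x) \leq C_1 e^{-C_2 x}$ from \eqref{ballistic_nutail}. Setting $b_n = \lfloor \log^2 n \rfloor$, I would write
\[
\{E_\w S \geq n^{5\d}\} \cap \{M_1 > n^{(1-\e)/s}\} \;\subseteq\; \{\nu > b_n\} \;\cup\; \mathcal{B}^- \;\cup\; \mathcal{B}^+ \;\cup\; \bigl(\mathcal{G} \cap \{E_\w S \geq n^{5\d}\}\bigr),
\]
where $\mathcal{B}^- = \{M^- \leq n^{-\d}\} \cap \{M_1 > n^{(1-\e)/s}\}$, $\mathcal{B}^+ = \{M^+ \geq n^{\d}\} \cap \{M_1 > n^{(1-\e)/s}\}$, and $\mathcal{G}$ is the good event on which $\{\nu \leq b_n\}$, $\{M^- > n^{-\d}\}$, $\{M^+ < n^{\d}\}$, and $\{M_1 > n^{(1-\e)/s}\}$ all hold. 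By \eqref{ballistic_nutail}, $P(\nu > b_n)$ decays faster than any polynomial, and by Lemma \ref{ballistic_Mpmtail} each of $P(\mathcal{B}^\pm)$ is $o(n^{-1+\e-\d s + \e'})$. So the entire task reduces to showing that the last piece is empty for all sufficiently large $n$, that is, that $E_\w S < n^{5\d}$ \emph{deterministically} on $\mathcal{G}$.

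Since $\nu \leq b_n$ on $\mathcal{G}$, \eqref{ballistic_ESbound} reduces this to a deterministic bound of the form $\bar{M}_1 \leq C\, b_n^2\, n^{2\d}$, from which $\nu + 2\nu^2\bar{M}_1 \leq b_n + 2 C b_n^4 n^{2\d} < n^{5\d}$ for all $n$ large enough. To prove this bound I would start from \eqref{ballistic_rhobarformulas},
\[
\bar\Pi_{i,j} \;=\; \Pi_{i,j}\cdot \frac{R_{0,i-2}\, R_{0,i-1}}{R_{0,j-1}\, R_{0,j}}, \qquad 2 \leq i \leq j < \nu,
\]
use the universal upper bound $R_{0,k} \leq \nu\, M_1 \leq b_n M_1$ (since $\Pi_{0,l} \leq M_1$ for all $l$), and carefully extract lower bounds on $R_{0,j-1}R_{0,j}$ via the peak location $\tau$. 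When $j \geq \tau$, both $R_{0,j-1}$ and $R_{0,j}$ are at least $\Pi_{0,\tau-1} = M_1$, so the $R$-quotient is at most $b_n^2$, and the factor $\Pi_{i,j}$ is handled by splitting $\Pi_{i,j} = \Pi_{i,\tau-1}\,\rho_\tau\,\Pi_{\tau+1,j}$ with $\rho_\tau \leq 1$ and $\Pi_{\tau+1,j} \leq M^+ \leq n^\d$. When $j < \tau$, I instead use the identity $R_{0,j} = R_{0,i-1} + \Pi_{0,i-1}R_{i,j}$ together with the lower bound $R_{i,j} \geq \Pi_{i,i} = \rho_i$, which in combination with $M^-$ furnishes the required cancellation.

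The main obstacle is the case $i \leq \tau \leq j$, where $\Pi_{i,j}$ carries the potentially large climb factor $\Pi_{i,\tau-1}$, which can be of order $M_1$; here the cancellation must be done by hand. The key identity is that for $j \geq \tau - 1$, $R_{0,j}$ contains $\Pi_{0,\tau-1} = \Pi_{0,i-1}\Pi_{i,\tau-1}$ as a summand, so
\[
\Pi_{i,\tau-1}\cdot \frac{R_{0,i-1}}{R_{0,j}} \;\leq\; \frac{R_{0,i-1}}{\Pi_{0,i-1}} \;=\; \sum_{l=0}^{i-1} \frac{1}{\Pi_{l+1,i-1}} \;\leq\; \frac{\nu}{M^-} \;\leq\; b_n\, n^{\d},
\]
where the last step uses $\Pi_{l+1,i-1} \geq M^-$ for $0 \leq l \leq i-2 \leq \tau - 2$ together with the convention that the $l = i-1$ term equals $1$. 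Feeding this estimate back together with $R_{0,i-2} \leq b_n M_1$, $R_{0,j-1} \geq M_1$, and $\Pi_{\tau+1,j} \leq M^+ \leq n^\d$ into the formula for $\bar\Pi_{i,j}$ yields $\bar\Pi_{i,j} \leq b_n^2\, n^{2\d}$ on $\mathcal{G}$, which is the required deterministic bound and completes the proof after the probability bookkeeping described above.
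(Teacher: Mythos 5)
Your proposal is correct and follows the same basic route as the paper: start from $E_\w S \leq \nu + 2\nu^2\bar M_1$, control $\bar M_1$ through the formula \eqref{ballistic_rhobarformulas} using $\nu$, $M^-$ and $M^+$, and then invoke \eqref{ballistic_nutail} and Lemma \ref{ballistic_Mpmtail}. The differences are organizational and, in one respect, to your credit. The paper proves the pathwise bound $\bar M_1 \leq \nu^2 M^+/(M^-)^3$ (hence $E_\w S \leq \nu + 2\nu^4 M^+/(M^-)^3$) and then applies the tail estimates, whereas you condition on a good event where $\nu \leq b_n$, $M^- > n^{-\d}$, $M^+ < n^\d$ and show $E_\w S < n^{5\d}$ deterministically there; these are the same union bound dressed differently. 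For the deterministic estimate, the paper treats only the blocks $2\leq i\leq j\leq\tau$ (via $R_{0,k} \leq \Pi_{0,k}(k+1)/M^-$ for $k<\tau$) and $\tau<i\leq j<\nu$ (via \eqref{ballistic_PibarlessPi}), leaving the mixed case $i\leq\tau<j$ implicit; your cancellation identity $\Pi_{i,\tau-1}R_{0,i-1}/R_{0,j} \leq R_{0,i-1}/\Pi_{0,i-1} \leq \nu/M^-$, combined with $R_{0,j-1}\geq M_1$, $R_{0,i-2}\leq \nu M_1$, $\rho_\tau<1$ and $\Pi_{\tau+1,j}\leq M^+$, handles exactly that case explicitly and correctly.

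One detail to fix in your $j<\tau$ case: the lower bound $R_{i,j}\geq \Pi_{i,i}=\rho_i$ only gives $\Pi_{i,j}R_{0,i-1}/R_{0,j} \leq \Pi_{i+1,j}\,R_{0,i-1}/\Pi_{0,i-1}$, and there is no a priori upper bound on $\Pi_{i+1,j}$ for indices below $\tau$ ($M^-$ controls these products only from below). Use $R_{i,j}\geq\Pi_{i,j}$ instead, which yields $\Pi_{i,j}R_{0,i-1}/R_{0,j}\leq R_{0,i-1}/\Pi_{0,i-1}\leq \nu/M^-$ exactly as in your mixed case, with $R_{0,i-2}/R_{0,j-1}\leq 1$ handling the remaining ratio; alternatively the paper's bound $\bar\Pi_{i,j}\leq i^2/(M^-)^3$ works. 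With that substitution the argument is complete.
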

\begin{proof}
Recall that \eqref{ballistic_ESbound} gives $E_\w S \leq \nu + 2 \nu^2 \bar{M}_1$.
We will use $M^-$ and $M^+$ to get bounds on $\bar{M}_1$. First, note that for any $i\in[0,\tau)$ we have
\[
R_{0,i} = \sum_{k=0}^i \Pi_{0,k} = \Pi_{0,i} + \sum_{k=0}^{i-1} \frac{\Pi_{0,i}}{\Pi_{k+1,i}} \leq \Pi_{0,i}\left(\frac{i+1}{M^-}\right).
\]
Note also that $R_{0,j} \geq \Pi_{0,j}$ holds for any $j\geq 0$. Thus, for any $2\leq i \leq  j \leq \tau$  we have
\begin{align*}
\bar{\Pi}_{i,j} &= \Pi_{i,j} \frac{R_{0,i-2}R_{0,i-1}}{R_{0,j-1}R_{0,j}} 
\leq \Pi_{i,j} \left(\frac{i}{M^-}\right)^2 	\frac{\Pi_{0,i-2}\Pi_{0,i-1}}{\Pi_{0,j-1}\Pi_{0,j}} 
=\left(\frac{i}{M^-}\right)^2 	\frac{1}{\Pi_{i-1,j-1}} \leq \frac{i^2}{(M^-)^3}.
\end{align*}
Also, from \eqref{ballistic_PibarlessPi} we have that $\bar{\Pi}_{i,j} \leq \Pi_{i,j} \leq M^+$ for $\tau < i\leq j<\nu$. 
Therefore we have that $\bar{M}_1 \leq \frac{\nu^2 M^+}{(M^-)^3}$ (note that here we used that $M^-\leq 1$ and $M^+ \geq 1$). Thus,
%Then, since \eqref{ballistic_ESbound} gives that $E_\w S \leq \nu + 2 \nu^2 \bar{M}_1$ we have  
\[
P\left( E_\w S \geq n^{5\d} , M_1 > n^{(1-\e)/s} \right) \leq P\left( \nu + \frac{2 \nu^4 M^+}{(M^-)^3} > n^{5\d} , M_1 > n^{(1-\e)/s} \right).
\]
An easy argument using \eqref{ballistic_nutail} and Lemma \ref{ballistic_Mpmtail} finishes the proof. 
\end{proof}
%Define the events 
%\[
%E_{i,n} :=  \left\{ \exists \l\geq 0: E_\w^{\nu_{i-1}} e^{-\l \frac{T_{\nu_i}}{E_\w^{\nu_{i-1}} T_{\nu_i}}} \notin \left( \frac{1-\l n^{-  \frac{\e}{s}}}{1+\l(1- n^{- \frac{\e}{s}})} , \frac{1}{1+\l-(\l+\l^2)n^{- \frac{\e}{s}} - \frac{\l^2}{2} n^{-\frac{2\e}{s}}} \right)  \right\}
%\]
\begin{lem} \label{ballistic_VarET2compare}
For any $\e,\d>0$ we have
\[
Q\left( \left| \frac{Var_\w \bar{T}^{(n)}_\nu}{(E_\w \bar{T}^{(n)}_\nu)^2} - 1 \right| \geq  n^{-\d} ,\quad  M_1 > n^{(1-\e)/s} \right) = o(n^{-2+2\e+\d s+\e'}), \quad \forall \e'>0
\]
\end{lem}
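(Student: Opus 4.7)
The plan is to reduce this statement to the tail bounds on $D^+(\w)$ and $R_{0,\nu-1}D^-(\w)$ that were established in Chapter \ref{Thesis_AppendixZeroSpeed}, exactly as in the proof of Theorem \ref{qVartail}. The starting point is the sandwich \eqref{comparer}, which states
\[
\left(E_\w \bar{T}^{(n)}_\nu\right)^2 - D^+(\w) \leq Var_\w \bar{T}^{(n)}_\nu \leq \left(E_\w \bar{T}^{(n)}_\nu\right)^2 + 8 R_{0,\nu-1} D^-(\w),
\]
so that
\[
\left|Var_\w \bar{T}^{(n)}_\nu - \left(E_\w \bar{T}^{(n)}_\nu\right)^2\right| \leq D^+(\w) + 8 R_{0,\nu-1} D^-(\w).
\]

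Next I would use the observation (already noted after \eqref{ballistic_Mtail}) that $E_\w \bar{T}^{(n)}_\nu \geq M_1$. Consequently, on the event $\{M_1 > n^{(1-\e)/s}\}$ we have $\left(E_\w \bar{T}^{(n)}_\nu\right)^2 > n^{2(1-\e)/s}$, so that
\[
\left\{\left|\frac{Var_\w \bar{T}^{(n)}_\nu}{(E_\w \bar{T}^{(n)}_\nu)^2} - 1\right| \geq n^{-\d}, \; M_1 > n^{(1-\e)/s}\right\} \subseteq \left\{D^+(\w) + 8 R_{0,\nu-1} D^-(\w) \geq n^{2(1-\e)/s - \d}\right\}.
\]

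Thus the problem reduces to bounding the probability on the right-hand side, for which a union bound and the tail estimates from Lemma \ref{abound} and Corollary \ref{bbound} give, for any $\e_0>0$,
\[
Q\!\left(D^+(\w) \geq \tfrac{1}{2} n^{2(1-\e)/s - \d}\right) + Q\!\left(8 R_{0,\nu-1} D^-(\w) \geq \tfrac{1}{2} n^{2(1-\e)/s - \d}\right) = o\!\left(n^{-(2(1-\e)/s - \d)(s-\e_0)}\right).
\]
A direct computation gives $(2(1-\e)/s - \d)(s-\e_0) = 2-2\e-\d s - \e_0(2(1-\e)/s - \d)$, so the exponent on the right is $-2+2\e+\d s + \e_0 \cdot (2(1-\e)/s - \d)$. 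Since $\e_0>0$ was arbitrary, this last quantity can be made smaller than any prescribed $\e'>0$, yielding the claimed bound $o(n^{-2+2\e+\d s+\e'})$.

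No real obstacle is expected here: all the analytic work has already been done in Lemmas \ref{abound} and \ref{piW2} and Corollary \ref{bbound}. The only care needed is the bookkeeping of exponents to confirm that the combined tail estimate on $D^+(\w)+8R_{0,\nu-1}D^-(\w)$ is sharp enough. In particular, one should keep in mind that the exponent $s$ (rather than $s/2$) is the correct one to use, because we are controlling each of $D^+(\w)$ and $R_{0,\nu-1}D^-(\w)$ individually (each of whose tail decays like $x^{-s+\e_0}$), rather than the tail of $(E_\w \bar T^{(n)}_\nu)^2$ itself which decays like $x^{-s/2}$.
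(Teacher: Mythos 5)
Your proof is correct and follows essentially the same route as the paper's: the sandwich \eqref{comparer}, the lower bound $E_\w \bar{T}^{(n)}_\nu \geq M_1$ to convert the relative error into an absolute one of size $n^{2(1-\e)/s-\d}$, a union bound over $D^+(\w)$ and $8R_{0,\nu-1}D^-(\w)$, and the polynomial tail estimates of Lemma \ref{abound} and Corollary \ref{bbound}, with the same bookkeeping of exponents. Nothing is missing.
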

\begin{proof}
Recall that from \eqref{comparer} we have that there exist explicit non-negative random variables $D^+(\w)$ and $D^-(\w)$ such that
\[
\left( E_\w \bar{T}^{(n)}_\nu \right)^2 -D^+(\w) \leq Var_\w \bar{T}^{(n)}_\nu  \leq
\left( E_\w \bar{T}^{(n)}_\nu \right)^2 + 8R_{0,\nu-1}D^-(\w), 
\]
%where $D^-(\w) := W_{-1}+W_{-1}^2+
%\sum_{i < -1} \Pi_{i+1,-1}(W_{i}+W_{i}^2)$ and $D^+(\w):= 8\sum_{0\leq i<j<\nu} W_i(W_j-\Pi_{i+1,j}-\Pi_{i+1,j}W_i)$.
where $R_{0,\nu-1}$ is defined as in \eqref{ballistic_Rdef}.
Therefore, since $E_\w \bar{T}^{(n)}_\nu \geq M_1$, we have
\begin{align}
&Q\left( \left| \frac{Var_\w \bar{T}^{(n)}_\nu}{(E_\w \bar{T}^{(n)}_\nu)^2} - 1 \right| \geq  n^{-\d} ,  M_1 > n^{(1-\e)/s} \right) \nonumber \\
%&\leq Q\left( 8 R_{0,\nu-1} D^-(\w) > n^{-\d}(E_\w \bar{T}^{(n)}_\nu)^2, M_1 > n^{(1-\e)/s} \right) + Q\left( D^+(\w) > n^{-\d}(E_\w \bar{T}^{(n)}_\nu)^2, M_1 > n^{(1-\e)/s} \right) \nonumber \\
&\qquad \leq Q\left( 8 R_{0,\nu-1} D^-(\w) > n^{(2-2\e)/s - \d} \right) + Q\left( D^+(\w) >n^{(2-2\e)/s-\d} \right). \label{ballistic_Dpm}
\end{align}
However, Lemma \ref{abound} and Corollary \ref{bbound} give respectively that $Q(D^+(\w) > x) = o(x^{-s+\e''})$ and $Q\left( R_{0,\nu-1} D^-(\w) > x \right) = o(x^{-s+\e''})$ for any $\e''>0$. Therefore, both terms in \eqref{ballistic_Dpm} are of order
$o\left( n^{-2+2\e +\d s + \e''((2-2\e)/s - \d)} \right)$. The lemma then follows since $\e''>0$ is arbitrary. 
\end{proof}

For any $i$, define the scaled quenched Laplace transforms 
$\phi_{i,n}(\l) := E_\w^{\nu_{i-1}} \exp\left\{-\l \frac{\bar{T}^{(n)}_{\nu_i}}{\mu_{i,n,\w}} \right\}$.
\begin{lem} \label{ballistic_mgflem}
Let $\e<\frac{1}{8}$, and define $\e':= \frac{1-8\e}{5} > 0$. Then
\[
Q\left( \exists \l \geq 0: \phi_{1,n}(\l) \notin \left[ \frac{1-\l n^{-\e /s}}{1+\l} , \frac{1}{1+\l-\left(\l+\frac{3 \l^2}{2}\right)n^{-\e/s} } \right],\: M_1 > n^{(1-\e)/s} \right) = o\left( n^{-1-\e'} \right). 
\]
%\[
%Q\left( \phi_{1,d_k}(\l) \notin \left[ \frac{1-\l\frac{1}{k}}{1+\l} , \frac{1}{1+\l-\left(\l+3\l^2/2\right) \frac{1}{k}} \right] \right) = o\left( d_k^{-\e/2} \right). 
%\]
\end{lem}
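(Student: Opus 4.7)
The plan is to derive the stated two-sided bounds on $\phi_{1,n}(\l)$ directly from the Laplace transform inequalities \eqref{ballistic_mgflb} and \eqref{ballistic_mgfub} already derived in Subsection \ref{ballistic_Laplace}, showing that the two ``bad'' events that can spoil those bounds each have $Q$-probability of order $o(n^{-1-\e'})$. More precisely, since $\phi_{1,n}(\l) = E_\w e^{-\l \bar T^{(n)}_\nu/E_\w\bar T^{(n)}_\nu}$, on the event
\[
G_n := \left\{ \frac{E_\w S}{E_\w \bar T^{(n)}_\nu} \leq n^{-\e/s} \right\} \cap \left\{ \left|\frac{Var_\w \bar T^{(n)}_\nu}{(E_\w \bar T^{(n)}_\nu)^2} - 1 \right| \leq n^{-\e/s} \right\}
\]
the inequality \eqref{ballistic_mgflb} yields $\phi_{1,n}(\l) \geq \frac{1-\l n^{-\e/s}}{1+\l}$ for every $\l\geq 0$, and \eqref{ballistic_mgfub} yields $\phi_{1,n}(\l) \leq \frac{1}{1+\l - (\l + 3\l^2/2)n^{-\e/s}}$ for every $\l\geq 0$, since
\[
(\l+\l^2)\frac{E_\w S}{E_\w\bar T^{(n)}_\nu} + \frac{\l^2}{2}\left|\frac{Var_\w \bar T^{(n)}_\nu}{(E_\w \bar T^{(n)}_\nu)^2}-1\right| \leq \left(\l + \tfrac{3\l^2}{2}\right)n^{-\e/s}.
\]
Hence the $Q$-probability in the statement of the lemma is bounded by $Q(G_n^c \cap \{M_1 > n^{(1-\e)/s}\})$, and it suffices to bound each of the two defining events of $G_n^c$.

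For the first event, using $E_\w \bar T^{(n)}_\nu \geq M_1 > n^{(1-\e)/s}$ we get $\{E_\w S/E_\w\bar T^{(n)}_\nu > n^{-\e/s}\} \cap \{M_1 > n^{(1-\e)/s}\} \subset \{E_\w S > n^{(1-2\e)/s}\} \cap \{M_1 > n^{(1-\e)/s}\}$. Applying Corollary \ref{ballistic_Sbig} with $5\d = (1-2\e)/s$ (and recalling $Q(A) \leq P(A)/P(\mathcal R)$), this set has $Q$-measure $o(n^{-1+\e - (1-2\e)/5 + \e''})$ for every $\e''>0$. For the second event, Lemma \ref{ballistic_VarET2compare} applied with $\d = \e/s$ gives directly a $Q$-bound of $o(n^{-2+2\e+\e+\e''}) = o(n^{-2+3\e+\e''})$ for every $\e''>0$.

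The final step is the arithmetic check that both exponents are strictly less than $-1-\e' = -(6-8\e)/5$ for $\e < 1/8$. For the first bound, $-1+\e - (1-2\e)/5 = -(6-7\e)/5$, and $-(6-7\e)/5 < -(6-8\e)/5$ since $\e>0$, so one can choose $\e''$ small enough. For the second bound, $-2+3\e < -(6-8\e)/5$ is equivalent to $7\e/5 < 4/5$, i.e.\ $\e < 4/7$, which holds since $\e<1/8$, so again $\e''$ can be absorbed. Taking the union of the two bad events completes the proof.

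I do not expect any serious obstacle: essentially all the analytic work is already done in \eqref{ballistic_mgflb}--\eqref{ballistic_mgfub}, Corollary \ref{ballistic_Sbig}, and Lemma \ref{ballistic_VarET2compare}. The only point requiring care is the bookkeeping of exponents just described, together with ensuring that the inequalities $\phi_{1,n}(\l) \geq \frac{1-\l n^{-\e/s}}{1+\l}$ and $\phi_{1,n}(\l) \leq \frac{1}{1+\l - (\l+3\l^2/2)n^{-\e/s}}$ are valid uniformly in $\l \geq 0$ on $G_n$, which follows because the bounds \eqref{ballistic_mgflb} and \eqref{ballistic_mgfub} already hold for all $\l\geq 0$ pointwise in $\w$.
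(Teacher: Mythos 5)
Your proof is correct and follows essentially the same route as the paper's: both reduce the statement via the inequalities \eqref{ballistic_mgflb} and \eqref{ballistic_mgfub} to the two bad events $\{E_\w S/E_\w \bar{T}^{(n)}_\nu > n^{-\e/s}\}$ and $\{|Var_\w \bar{T}^{(n)}_\nu/(E_\w \bar{T}^{(n)}_\nu)^2-1| > n^{-\e/s}\}$ on $\{M_1 > n^{(1-\e)/s}\}$, and bound these by Corollary \ref{ballistic_Sbig} and Lemma \ref{ballistic_VarET2compare} with the same exponent arithmetic. Your bookkeeping (exponents $-(6-7\e)/5$ and $-2+3\e$, both beaten by $-(6-8\e)/5$ for $\e<1/8$) matches, and is in fact slightly sharper than the paper's stated $o(n^{-2+4\e})$ for the variance term.
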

\begin{proof}
Recall from \eqref{ballistic_mgflb} and \eqref{ballistic_mgflb} that 
%\[
%\phi_{1,n}(\l) \in \left[ \left(1-\l \frac{E_\w S}{E_\w \bar{T}^{(n)}_\nu} \right) \frac{1}{1 + \l } , \frac{1}{1+\l - (\l+\l^2) \frac{E_\w S}{E_\w \bar{T}^{(n)}_\nu } - \frac{\l^2}{2} \left(\frac{Var_\w \bar{T}^{(n)}_\nu}{(E_\w \bar{T}^{(n)}_\nu)^2 } - 1 \right)} \right]
%\]
\[
\left(1-\l \frac{E_\w S}{E_\w \bar{T}^{(n)}_\nu} \right) \frac{1}{1 + \l }  \leq \phi_{1,n}(\l) \leq \frac{1}{1+\l - (\l+\l^2) \frac{E_\w S}{E_\w \bar{T}^{(n)}_\nu } - \frac{\l^2}{2} \left(\frac{Var_\w \bar{T}^{(n)}_\nu}{(E_\w \bar{T}^{(n)}_\nu)^2 } - 1 \right)}
\]
for all $\l\geq 0$. Therefore
\begin{align*}
&Q\left( \exists \l \geq 0: \phi_{1,n}(\l) \notin \left[ \frac{1-\l n^{-\e /s}}{1+\l} , \frac{1}{1+\l-\left(\l+3 \l^2 /2\right)n^{-\e/s} } \right],\: M_1 > n^{(1-\e)/s} \right) \\
&\qquad \leq Q\left( \frac{E_\w S}{E_\w \bar{T}^{(n)}_\nu} \geq n^{-\e/s}, \quad M_1 \geq n^{(1-\e)/s}  \right) + Q\left( \frac{Var_\w \bar{T}^{(n)}_\nu }{(E_\w \bar{T}^{(n)}_\nu)^2} - 1 \geq n^{-\e/s} , \quad M_1 \geq n^{(1-\e)/s} \right)
\end{align*}
Now, since $E_\w \bar{T}^{(n)}_\nu \geq M_1$ we have
\[
Q\left( \frac{E_\w S}{E_\w \bar{T}^{(n)}_\nu} \geq n^{-\e/s}, \: M_1 \geq n^{(1-\e)/s}  \right)  \leq Q\left( E_\w S \geq n^{(1-2\e)/s}, \: M_1 \geq n^{(1-\e)/s}  \right) = o\left( n^{-(6-8\e)/5} \right),
\]
where the last equality is from Corollary \ref{ballistic_Sbig}. Also, by Lemma \ref{ballistic_VarET2compare} we have 
\[
Q\left( \frac{Var_\w \bar{T}^{(n)}_\nu }{(E_\w \bar{T}^{(n)}_\nu)^2} - 1 \geq n^{-\e/s} , \quad M_1 \geq n^{(1-\e)/s} \right) = o\left( n^{-2+4\e} \right).
\]
Then, since $-2+4\e < \frac{-6+8\e}{5}$ when $\e<\frac{1}{8}$ the lemma is proved. 
\end{proof} 
\begin{cor} \label{ballistic_explimit}
Let $\e<\frac{1}{8}$. Then $P-a.s.$, for any sequence $i_k=i_k(\w)$ such that $i_k\in (n_{k-1},n_k]$ and $M_{i_k} > d_k^{(1-\e)/s}$ we have
\begin{equation}
\lim_{k\ra\infty} \phi_{i_k,d_k}(\l) = \frac{1}{1+\l},\quad \forall \l\geq 0, \label{ballistic_mgfbounds}
\end{equation}
and thus
\begin{equation}
\lim_{k\ra\infty} P_\w^{\nu_{i_k-1}} \left( \bar{T}^{(d_k)}_{\nu_{i_k}} > x \mu_{i_k,d_k,\w} \right) =  \Psi(x), \quad \forall x\in \R. \label{ballistic_expl}
\end{equation}
%\begin{equation}
%\lim_{k\ra\infty} P_\w^{\nu_{i_k-1}} \left( \bar{T}^{(d_k)}_{\nu_{i_k}} > x \mu_{i_k,d_k,\w} \right) =  e^{-x}, \quad \forall x\geq 0. \label{ballistic_expl}
%\end{equation}
\end{cor}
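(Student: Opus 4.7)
The strategy is to promote the single-block, $Q$-probability estimate of Lemma \ref{ballistic_mgflem} to a $P$-almost-sure statement holding uniformly over all ``large'' blocks within $(n_{k-1},n_k]$, and then to read off the distributional limit \eqref{ballistic_expl} from \eqref{ballistic_mgfbounds} via the continuity theorem for Laplace transforms. With $\e < 1/8$ and $\e' = (1-8\e)/5$ fixed, define for each $k$ and $i \in (n_{k-1},n_k]$ the bad event
\[
B_{i,k} := \left\{\exists \l \geq 0 : \phi_{i,d_k}(\l) \notin \left[\frac{1 - \l d_k^{-\e/s}}{1+\l},\frac{1}{1+\l - (\l + 3\l^2/2)d_k^{-\e/s}}\right],\ M_i > d_k^{(1-\e)/s}\right\}.
\]
Two observations about the probability of $B_{i,k}$: first, since $n_{k-1} = 2^{2^{k-1}}$ grows doubly exponentially while $b_{d_k} \sim (\log d_k)^2 \sim 4^k(\log 2)^2$, for $k$ large we have $n_{k-1} > b_{d_k}$, so $B_{i,k}$ depends only on the environment to the right of the origin and hence $P(B_{i,k}) = Q(B_{i,k})$; second, under $Q$ the blocks between ladder locations form an i.i.d.\ sequence, so the ladder-shift stationarity gives $Q(B_{i,k}) = Q(B_{1,d_k})$. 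Lemma \ref{ballistic_mgflem} then yields $P(B_{i,k}) = o(d_k^{-1-\e'})$, and a crude union bound gives
\[
P\Bigl(\bigcup_{i \in (n_{k-1},n_k]} B_{i,k}\Bigr) \leq d_k \cdot o(d_k^{-1-\e'}) = o(d_k^{-\e'}),
\]
which is summable in $k$ by the doubly exponential growth of $d_k$. The Borel-Cantelli lemma then shows that $P$-almost surely only finitely many of these unions occur.

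Consequently, for $P$-a.e.\ $\w$ and for every random sequence $i_k = i_k(\w) \in (n_{k-1},n_k]$ with $M_{i_k} > d_k^{(1-\e)/s}$, the bounds
\[
\frac{1 - \l d_k^{-\e/s}}{1+\l} \leq \phi_{i_k,d_k}(\l) \leq \frac{1}{1+\l - (\l + 3\l^2/2)d_k^{-\e/s}}
\]
hold for all $\l \geq 0$ once $k$ is large enough. Sending $k \to \infty$ (so $d_k^{-\e/s} \to 0$) squeezes both sides to $1/(1+\l)$, proving \eqref{ballistic_mgfbounds}. The limit $1/(1+\l)$ is the Laplace transform of an $\mathrm{Exp}(1)$ random variable, whose distribution function is continuous on $\R$; thus by the continuity theorem the quenched law of $\bar{T}^{(d_k)}_{\nu_{i_k}}/\mu_{i_k,d_k,\w}$ under $P_\w^{\nu_{i_k - 1}}$ converges weakly to $\mathrm{Exp}(1)$, yielding \eqref{ballistic_expl} at every $x \in \R$.

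There is no real obstacle here beyond routine bookkeeping: one must check that $n_k = 2^{2^k}$ grows fast enough both to allow the $Q$-to-$P$ reduction (that is, $n_{k-1} > b_{d_k}$ eventually) and to make the summability hypothesis of Borel-Cantelli hold after the union bound over $d_k$ candidate blocks. Both are immediate from the doubly exponential growth of $n_k$, so the passage from the one-block tail estimate of Lemma \ref{ballistic_mgflem} to the almost-sure uniform convergence of quenched Laplace transforms along the random subsequence is entirely automatic.
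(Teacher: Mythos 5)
Your proof is correct and follows essentially the same route as the paper: reduce from $P$ to $Q$ using that for large $k$ the events depend only on the environment to the right of the origin, apply the ladder-shift stationarity of $Q$ and Lemma \ref{ballistic_mgflem} with a union bound over the $d_k$ blocks to get a summable $o(d_k^{-\e'})$ bound, invoke Borel--Cantelli, squeeze the Laplace transform to $1/(1+\l)$, and conclude \eqref{ballistic_expl} by the continuity theorem. No gaps.
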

\begin{proof}
For $i\in(n_{k-1},n_k]$ and all $k$ large enough $\phi_{i,d_k}(\l)$ only depends on the environment to the right of zero, and thus has the same distribution under $P$ and $Q$. Therefore, Lemma \ref{ballistic_mgflem} gives that there exists an $\e'>0$ such that 
\begin{align*}
&P\left( \exists i\in (n_{k-1},n_k], \l\geq 0: \phi_{i,d_k}(\l) \notin \left[ \frac{1-\l d_k^{-\e /s}}{1+\l} , \frac{1}{1+\l-\left(\l+\frac{3 \l^2}{2}\right)d_k^{-\e/s} } \right],\: M_i > d_k^{(1-\e)/s} \right) \\
&\quad \leq d_k Q\left( \exists \l\geq 0: \phi_{1,d_k}(\l) \notin \left[ \frac{1-\l d_k^{-\e /s}}{1+\l} , \frac{1}{1+\l-\left(\l+\frac{3 \l^2}{2}\right)d_k^{-\e/s} } \right],\: M_1 > d_k^{(1-\e)/s} \right) \\
&\quad = o\left( d_k^{-\e'} \right).
\end{align*}
Since this last term is summable in $k$, the Borel-Cantelli Lemma gives that $P-a.s.$ there exists a $k_0=k_0(\w)$ such that for all $k\geq k_0$ we have
\[
i\in (n_{k-1},n_k] \text{ and } M_i\geq d_k^{(1-\e)/s} \Ra \phi_{i,d_k}(\l) \in \left[ \frac{1-\l d_k^{-\e /s}}{1+\l} , \frac{1}{1+\l-\left(\l+\frac{3 \l^2}{2}\right)d_k^{-\e/s} } \right] \quad \forall \l\geq 0,
\]
which proves \eqref{ballistic_mgfbounds}. Then, \eqref{ballistic_expl} follows immediately because $\frac{1}{1+\l}$ is the Laplace transform of an exponential disribution. 
\end{proof}
\end{subsection}

\begin{subsection}{Quenched Exponential Limits Along a Subsequence}
In the previous subsection we showed that the time to cross a single large block is approximately exponential. In this section we show that there are subsequences in the environment where the crossing time of a single block dominates the crossing times of all the other blocks. In this case the crossing time of all the blocks is approximately exponentially distributed. 
%\begin{lem}
%Assume $s<1$. Then for any $\eta>0$ we have
%\[
%\liminf_{n\ra\infty} Q\left( \exists i\in[1, n/2]: \frac{\s_{i,n,\w}^2}{Var_\w \bar{T}^{(n)}_{\nu_n}} \geq 1-\eta \right) > 0
%\]
%\end{lem}
Recall the definition of $M_i$ in \eqref{ballistic_Mdef}. For any integer $n\geq 1$, and constants $C>1$ and $\eta>0$, define the event
\[
\mathcal{D}_{n,C,\eta}:= \left\{ \exists i\in\left[1,\eta n \right]: M_i^2 \geq C \sum_{j: i\neq j\leq n} \s_{j,n,\w}^2  \right\}
\]
\begin{lem} \label{ballistic_onebigblock}
Assume $s<2$. Then for any $C>1$ and $\eta>0$ we have $\liminf_{n\ra\infty} Q\left( \mathcal{D}_{n,C,\eta} \right) > 0$. 
\end{lem}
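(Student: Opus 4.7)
The plan is to adapt the argument of Lemma \ref{QBB} (from Chapter \ref{Thesis_AppendixZeroSpeed}) to the variances $\sigma_{j,n,\omega}^2$ in place of the expected crossing times $E_\omega^{\nu_{j-1}} \bar{T}^{(n)}_{\nu_j}$. First, since $\sigma_{i,n,\omega}^2 \geq M_i^2$ and $C>1$, at most one index $i$ can satisfy the defining inequality of $\mathcal{D}_{n,C,\eta}$. Consequently one has the disjoint union lower bound
\[
Q(\mathcal{D}_{n,C,\eta}) = \sum_{i=1}^{\lfloor \eta n\rfloor} Q\!\left( M_i^2 \geq C\!\!\sum_{j\in[1,n]\setminus\{i\}}\!\! \sigma_{j,n,\omega}^2\right),
\]
so it suffices to produce a uniform lower bound of order $1/n$ on each summand.

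Next I would isolate the dependence between $M_i$ and the variances $\sigma_{j,n,\omega}^2$. Because of the reflections at $\nu_{\cdot-b_n}$, $\sigma_{j,n,\omega}^2$ only depends on the environment from $\nu_{j-1-b_n}$ to $\nu_j-1$, so it is independent of $M_i$ whenever $j<i$ or $j>i+b_n$. For the $b_n$ ``contaminated'' terms $j\in(i,i+b_n]$, I would use the closed form \eqref{ballistic_qvformula} (as was done for $E_\omega^{\nu_{j-1}}\bar{T}^{(n)}_{\nu_j}$ in Lemma \ref{QBB}) together with the block-length bound \eqref{ballistic_nutail} to show that on the typical event $F_{n}=\{\nu_j-\nu_{j-1}\leq b_n \text{ for all }j\in(-b_n,n]\}\cap G_{i,n,\e}$ where $G_{i,n,\e}=\{M_j\leq n^{(1-\e)/s}\ \forall j\in (i,i+b_n]\}$, one gets a polynomial-in-$b_n$ bound
\[
\sum_{j=i+1}^{i+b_n} \sigma_{j,n,\omega}^2 \;\leq\; \operatorname{poly}(b_n)\, n^{2(1-\e)/s} = o(n^{2/s}).
\]
Both $Q(F_n^c)$ and $Q(G_{i,n,\e}^c)$ are $o(1)$ uniformly in $i$ by \eqref{ballistic_nutail} and \eqref{ballistic_Mtail}.

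Now, using this independence and the bound above,
\begin{align*}
Q\!\left( M_i^2 \geq C\!\!\sum_{j\neq i}\!\sigma_{j,n,\omega}^2\right)
&\geq Q\!\left( M_i^2 \geq C n^{2/s},\; F_n\cap G_{i,n,\e}\right)\\
&\qquad\times Q\!\left(\sum_{j\in[1,n],\,|j-i|>b_n}\!\!\sigma_{j,n,\omega}^2 \;\leq\; n^{2/s}\bigl(1-o(1)\bigr)\right).
\end{align*}
By \eqref{ballistic_Mtail} the first factor is $(C_3 C^{-s/2}+o(1))/n$. For the second factor, since the discarded $O(b_n)$ ladder indices contribute at most $o(n^{2/s})$ in $Q$-probability (using the tail bound on $\sigma^2_{\cdot,n,\omega}$ via Theorem \ref{ballistic_VETtail}), the second factor is bounded below by $Q(\sum_{j=1}^n \sigma_{j,n,\omega}^2 \leq n^{2/s})-o(1)$, which by Theorem \ref{ballistic_Varstable} converges to $L_{s/2,b}(1)>0$.

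Putting these pieces together and summing,
\[
\liminf_{n\to\infty} Q(\mathcal{D}_{n,C,\eta}) \;\geq\; \eta\, C_3 C^{-s/2}\, L_{s/2,b}(1)\;>\;0,
\]
which is the claim. The main technical obstacle is the third paragraph's bound on the ``contaminated'' variances $\sigma_{j,n,\omega}^2$ for $j\in(i,i+b_n]$: unlike the expected crossing times, the variance formula \eqref{ballistic_qvformula} has a quadratic structure in $W_{\cdot}$, so one must carefully exploit the block-length restriction and the size of $M_j$ on $G_{i,n,\e}$ to keep the contribution $o(n^{2/s})$. Once that estimate is in hand, the rest of the argument tracks Lemma \ref{QBB} almost verbatim with $M_k^2$ replacing $M_k$ and Theorem \ref{ballistic_Varstable} replacing Theorem \ref{refstable}.
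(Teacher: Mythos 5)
Your proposal follows the paper's proof almost exactly: the same disjointness observation reducing $Q(\mathcal{D}_{n,C,\eta})$ to a sum of $\lfloor \eta n\rfloor$ identical terms, the same use of the reflections to make $\s_{j,n,\w}^2$ independent of $M_i$ for $|j-i|>b_n$, the same typical events $F_n$ and $G_{i,n,\e}$, and the same combination of \eqref{ballistic_Mtail} with Theorem \ref{ballistic_Varstable} to extract a lower bound of order $1/n$ per term.

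There is one concrete point where your sketch, as written, breaks. You correctly identify the contaminated window $j\in(i,i+b_n]$ as the main obstacle, but the bound you claim there, $\sum_{j=i+1}^{i+b_n}\s_{j,n,\w}^2\leq \operatorname{poly}(b_n)\,n^{2(1-\e)/s}$, is not attainable from $F_n\cap G_{i,n,\e}$ alone. Expanding \eqref{ballistic_qvformula} for $\bar{T}^{(n)}$, one finds $\s_{j,n,\w}^2 \leq (\nu_j-\nu_{j-1-b_n})^4\bigl(12M_j+4M_j^2+8M_j\sum_{k=j-b_n}^{j}M_k\bigr)$, and for $j\in(i,i+b_n]$ the inner sum contains $M_i$ itself. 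On the event you are bounding from below, $M_i$ is only bounded \emph{below} (by $\sqrt{C}\,n^{1/s}$), so the cross term $M_jM_i$ is uncontrolled. The paper fixes this by restricting to $M_i\in[n^{1/s},2n^{1/s}]$ in the first factor (which still has probability $\sim C_3(1-2^{-s})/n$ by \eqref{ballistic_Mtail}); with that restriction the contaminated sum is $O(b_n^9\, n^{(2-\e)/s})$ — note the exponent is $(2-\e)/s$, not $2(1-\e)/s$, precisely because of the $M_jM_i$ cross term — which is still $o(n^{2/s})$ as needed. A second, smaller point: your factorization into two independent $Q$-probabilities is not quite clean, since $F_n$ and $G_{i,n,\e}$ involve the environment of block $i$; the paper removes the block-$i$ constraint from the second factor (replacing $F_n$ by $\tilde F_n$) before invoking independence, and then reinserts it at no cost since that only shrinks the probability. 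With these two repairs your argument coincides with the paper's.
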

\begin{proof}
First, note that since $\s_{i,n,\w}^2 \geq M_i^2$ and $C>1$ we have 
\begin{equation}
Q\left(\mathcal{D}_{n,C,\eta}\right) = \sum_{i=1}^{\eta n} Q\left( M_i^2 \geq C \sum_{j: i\neq j\leq n} \s_{j,n,\w}^2 \right). \label{ballistic_disjoint}
\end{equation}
Thus, we want to get a lower bound on $Q\left( M_i^2 \geq C \sum_{j:i\neq j\leq n} \s_{j,n,\w}^2 \right)$ that is uniform in $i$. 
The following formula for the quenched variance of $\bar{T}^{(n)}_\nu$ can be deduced from \eqref{ballistic_qvformula} by setting $\rho_{\nu_{-b_n}}=0$:
\begin{align*}
Var_\w \bar{T}^{(n)}_\nu &= 4\sum_{j=0}^{\nu-1}(W_{\nu_{-b_n}+1,j}+W_{\nu_{-b_n}+1,j}^2) + 8\sum_{j=0}^{\nu-1}\sum_{i=\nu_{-b_n}+1}^j \Pi_{i+1,j}(W_{\nu_{-b_n}+1,i}+W_{\nu_{-b_n}+1,i}^2)\\
&\leq 4\sum_{j=0}^{\nu-1}(W_{\nu_{-b_n}+1,j}+W_{\nu_{-b_n}+1,j}^2) + 8\sum_{j=0}^{\nu-1}\sum_{i=\nu_{-b_n}+1}^j W_{\nu_{-b_n}+1,j}(1+W_{\nu_{-b_n}+1,i})\\
&\leq 4\sum_{j=0}^{\nu-1}(W_{\nu_{-b_n}+1,j}+W_{\nu_{-b_n}+1,j}^2) + 8\left( \sum_{j=0}^{\nu-1} W_{\nu_{-b_n}+1,j} \right)\left( \sum_{i=\nu_{-b_n}+1}^{\nu-1} (1+W_{\nu_{-b_n}+1,i}) \right)  ,
\end{align*}
where the first inequality is because $W_{\nu_{-b_n}+1,j} = W_{i+1,j}+\Pi_{i+1,j}W_{\nu_{-b_n}+1,i}$. 
Next, note that if $\nu_{k-1}\leq j < \nu_{k}$ for some $k>-b_n$, then 
\[
W_{\nu_{-b_n}+1,j} = \sum_{l=\nu_{-b_n}+1}^j \Pi_{l,j} = \sum_{l=\nu_{-b_n}+1}^{\nu_{k-1}-1} \Pi_{l,\nu_{k-1}-1}\Pi_{\nu_{k-1},j} + \sum_{l=\nu_{k-1}}^j \Pi_{l,j} \leq (\nu_k-\nu_{-b_n}) M_k,
\]
where the last inequality is because, under $Q$, $\Pi_{l,\nu_{k-1}-1} < 1$ for all $l<\nu_{k-1}$.
Therefore,
\begin{align*}
Var_\w \bar{T}^{(n)}_\nu &\leq 4 \nu_1 \left( (\nu_1-\nu_{-b_n})M_1 +  (\nu_1-\nu_{-b_n})^2M_1^2 \right) \\
&\qquad + 8 \left(\nu_1 (\nu_1-\nu_{-b_n})M_1 \right)
\left( (\nu_1-\nu_{-b_n}) + \sum_{i=-b_n+1}^{1} (\nu_k-\nu_{k-1})(\nu_k-\nu_{-b_n}) M_k \right)\\
%&\leq  12\nu_1(\nu_1-\nu_{-b_n})^2 M_1 + 4\nu_1 (\nu_1-\nu_{-b_n})^2M_1^2 + 8 \nu_1(\nu_1-\nu_{-b_n})^2 M_1 \sum_{k=-b_n+1}^1 (\nu_k-\nu_{k-1}) M_k \\
&\leq \left( \nu_1-\nu_{-b_n} \right)^4 \left( 12 M_1 + 4 M_1^2 + 8 M_1 \sum_{k=-b_n+1}^1 M_k \right). 
\end{align*}
Similarly, we have that 
$\s_{j,n,\w}^2 \leq \left( \nu_j - \nu_{j-1-b_n} \right)^4 \left( 12 M_j + 4  M_j^2 + 8 M_j \sum_{k=j-b_n}^j M_k \right)$ $Q-a.s.$ for any $j$. 
Now, define the events
\begin{equation}
F_n:= \bigcap_{j\in(-b_n,n]} \left\{ \nu_j-\nu_{j-1} \leq b_n \right\}, \quad\text{and}\quad G_{i,n,\e}:= \bigcap_{j\in[i-b_n,i+b_n]\backslash\{i\}} \left\{ M_j \leq n^{(1-\e)/s} \right\} \label{ballistic_FGdef}
\end{equation}
Then, on the event $F_n\cap G_{i,n,\e} \cap \left\{ M_i \leq 2 n^{1/s} \right\}$ we have for $j\in (i,i+b_n]$ that
\begin{align*}
\s_{j,n,\w}^2 &\leq b_n^4(b_n+1)^4 \left( 12 n^{(1-\e)/s} + 4 n^{(2-2\e)/s} + 8 n^{(1-\e)/s}(b_n n^{(1-\e)/s} + 2 n^{1/s})\right)\\
&\leq  b_n^5(b_n+1)^4 \left( 12 n^{(1-\e)/s} + 12 n^{(2-2\e)/s} + 16 n^{(2-\e)/s} \right) \leq 80 b_n^9 n^{(2-\e)/s},
\end{align*}
where the last inequality holds for all $n$ large enough.
Therefore, for all $n$ large enough
\begin{align*}
&Q\left( M_i^2 \geq C \sum_{j:i\neq j\leq n} \!\!\!\! \s_{j,n,\w}^2 \right) \\
&\quad \geq Q\left(4 n^{2/s} \geq  M_i^2 \geq C \sum_{j:i\neq j\leq n} \s_{j,n,\w}^2 , \ F_n, \ G_{i,n,\e} \right)\\
&\quad \geq Q\left( 4 n^{2/s} \geq M_i^2 \geq C \left( \sum_{j\in[1,n]\backslash [i,i+b_n]} \!\!\!\!\!\! \s_{j,n,\w}^2 + 80 b_n^9 n^{(2-\e)/s} \right) , \ F_n, \ G_{i,n,\e} \right)\\
&\quad \geq Q\left( M_i \in[n^{1/s},2n^{1/s}] , \  \nu_i-\nu_{i-1} \leq b_n \right) \\
&\quad \qquad \times Q\left(  \sum_{j\in[1,n]\backslash [i,i+b_n]} \!\!\!\!\!\! \s_{j,n,\w}^2 + 80 b_n^9 n^{(2-\e)/s} \leq \frac{n^{2/s}}{C},  \ \tilde{F}_n, \ G_{i,n,\e} \right),
\end{align*}
where $\tilde{F}_n := F_n \backslash \{ \nu_i - \nu_{i-1} \leq b_n \}$. Note that in the last inequality we used that $\s_{j,n,\w}^2$ is independent of $M_i$ for $j\notin[i,i+b_n]$. Also, note that we can replace $\tilde{F}_n$ by $F_n$ in the last line above because it will only make the probability smaller. Then, since $\sum_{j\in[1,n]\backslash [i,i+b_n]} \s_{j,n,\w}^2 \leq Var_\w T_{\nu_n}$ we have
\begin{align}
&Q\left( M_i^2 \geq C \sum_{j:i\neq j\leq n} \s_{j,n,\w}^2 \right) \nonumber \\
&\qquad\geq Q\left( M_1 \in [ n^{1/s}, 2n^{1/s} ] , \  \nu \leq b_n \right)Q\left( Var_\w T_{\nu_n} \leq n^{2/s}C^{-1} -40 b_n^7 n^{(2-\e)/s},  \ F_n, \ G_{i,n,\e} \right) \nonumber \\
&\qquad\geq \left( Q(M_1 \in [ n^{1/s}, 2n^{1/s} ] ) - Q(\nu > b_n)   \right) \nonumber \\
&\qquad\qquad \times \left( Q\left(  Var_\w T_{\nu_n} \leq n^{2/s} ( C^{-1} - 40 b_n^7 n^{-\e/s} )  \right) - Q(F_n^c) - Q(G_{i,n,\w}^c)  \right) \nonumber \\
&\qquad\sim C_3 (1-2^{-s})\frac{1}{n} \, L_{\frac{s}{2}, b}\left(C^{-1}\right), \label{ballistic_obblb}
\end{align}
where the asymptotics in the last line are from \eqref{ballistic_nutail}, \eqref{ballistic_Mtail}, and Theorem \ref{ballistic_Varstable}, as well as the fact that $Q(F_n^c) + Q(G_{i,n,\w}^c) \leq (n+b_n)Q(\nu> b_n) + 2b_n Q(M_1 > n^{(1-\e)/s}) = \bigo\left(n e^{-C_2 b_n} \right) + o(n^{-1+2\e})$ due to \eqref{ballistic_nutail} and \eqref{ballistic_Mtail}. Combining \eqref{ballistic_disjoint} and \eqref{ballistic_obblb} finishes the proof.
\end{proof}
\begin{cor} \label{ballistic_subseq}
Assume $s<2$. Then for any $\eta\in(0,1)$, $P-a.s.$ there exists a subsequence $n_{k_m}= n_{k_m}(\w,\eta)$ of $n_k=2^{2^k}$ such that for $\a_m, \b_m,$ and $\gamma_m$ defined as in \eqref{ballistic_abgdef}
we have that 
\begin{equation}
\exists i_m = i_m(\w,\eta) \in (\a_m, \b_m]:\quad M_{i_m}^2 \geq m \!\!\!\! \sum_{j\in(\a_m, \gamma_m]\backslash\{i_m\}} \!\!\!\! \s_{j,d_{k_m},\w}^2 \, .\label{ballistic_sscond}
\end{equation}
%\[
%\lim_{m\ra\infty} \frac{\max_{i\in[\a_m, \b_m]} \s_{i,d_{k_m},\w}^2 }{ \sum_{j\leq \gamma_m } \s_{j,d_{k_m},\w}^2 } = 1.
%\]
%\[
%\lim_{m\ra\infty} \frac{\max_{i\in[\a_m, \b_m]} \s_{i,d_{k_m},\w}^2 }{ Var_\w \bar{T}_{\nu_{\gamma_m}}^{(d_{k_m})} } = 1.
%\]
\end{cor}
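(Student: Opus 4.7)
The plan is to adapt the diagonal-subsequence construction used in the proof of Corollary \ref{ssdom}. For each integer $m\geq 1$ and each $k\geq 1$, introduce the event
\[
E_{k,m} := \left\{ \exists\, i\in (n_{k-1},\, n_{k-1}+\lfloor \eta d_k\rfloor]:\ M_i^2 \geq m \!\!\!\! \sum_{j\in(n_{k-1},\, n_k]\setminus\{i\}} \!\!\!\! \s_{j,d_k,\w}^2 \right\}.
\]
Note that $E_{k_m,m}$ is precisely \eqref{ballistic_sscond}, so it suffices to produce, $P$-a.s., a strictly increasing sequence $k_m=k_m(\w)$ with $E_{k_m,m}$ realized for every $m$.

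First I would verify that, for $k$ large enough (depending on $m$), the event $E_{k,m}$ depends only on the environment in the window $[\nu_{n_{k-1}-b_{d_k}},\, \nu_{n_k-1}]$. Indeed, $\s_{j,d_k,\w}^2$ depends only on the environment between $\nu_{j-1-b_{d_k}}$ and $\nu_j-1$, so the union of these windows for $j\in(n_{k-1},n_k]$, together with the contribution from $M_i$, is contained in the stated interval. Since $n_{k-1}-b_{d_k}>0$ for $k$ large, $E_{k,m}$ involves only the environment to the right of $0$, and hence $P(E_{k,m})=Q(E_{k,m})$. Moreover, by the shift-invariance of $Q$ under shifts by ladder locations, applying the ladder-shift $\theta^{\nu_{n_{k-1}}}$ identifies $E_{k,m}$ with the event $\mathcal{D}_{d_k,m,\eta}$, so
\[
P(E_{k,m}) \;=\; Q(\mathcal{D}_{d_k,m,\eta}).
\]
By Lemma \ref{ballistic_onebigblock}, $\liminf_{k\to\infty} P(E_{k,m}) =: c_m>0$ for each fixed $m$.

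Next, the superexponential growth $n_k=2^{2^k}$ and $b_{d_k}=O(\log^2 d_k)$ ensure that $n_{2k-1}-b_{d_{2k}}>n_{2k-2}$ for all $k$ large. Consequently, the dependence windows of $E_{2k,m}$ and $E_{2(k-1),m}$ are disjoint, so the family $\{E_{2k,m}\}_{k\geq k_0(m)}$ is independent under $P$. Combined with $P(E_{2k,m})\geq c_m/2>0$ eventually, the second Borel–Cantelli lemma yields, $P$-a.s., that the set $\mathcal{K}_m:=\{k:\ E_{k,m}\text{ occurs}\}$ is infinite.

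Finally, intersect this full-measure event over $m\in\N$ (a countable intersection, still of full measure). On this event, $\mathcal{K}_m$ is infinite for every $m$, so I may recursively define $k_0:=0$ and $k_m:=\min\{k\in\mathcal{K}_m : k>k_{m-1}\}$, obtaining a strictly increasing subsequence with $E_{k_m,m}$ realized for each $m$. Taking $i_m$ to be any index witnessing $E_{k_m,m}$ completes the proof.

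The only non-routine step is the bookkeeping for the reflection-dependence window of $E_{k,m}$ and verifying the disjointness that yields independence along the even subsequence; both reduce to the index inequality $n_{k-1}-b_{d_k}>n_{k-2}$, which follows immediately from the doubly-exponential growth of $n_k$. Everything else is a direct application of Lemma \ref{ballistic_onebigblock}, the $P=Q$ identification of events measurable with respect to the environment right of $0$, and a diagonal extraction.
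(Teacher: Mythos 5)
Your proposal is correct and follows essentially the same route as the paper: the paper's proof likewise introduces the shifted events $\mathcal{D}_{k,C,\eta}'$ (your $E_{k,m}$), identifies their $P$- and $Q$-probabilities via the reflection-dependence window and ladder-shift invariance, invokes Lemma \ref{ballistic_onebigblock} for the positive liminf, and uses independence along the even subsequence with Borel–Cantelli before a diagonal extraction with $C=m$. Your write-up merely makes the final diagonal step slightly more explicit than the paper does.
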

\begin{proof}
Define the events 
\[
\mathcal{D}_{k,C,\eta}':= \left\{ \exists i\in (n_{k-1}, n_{k-1}+\eta d_k]: M_i^2 \geq C \sum_{j\in(n_{k-1},n_k]\backslash\{i\}  } \s_{j,d_k,\w}^2  \right\}.
\]
Note that since $Q$ is invariant under shifts of the $\nu_i$, $Q(\mathcal{D}_{k,C,\eta}') = Q(\mathcal{D}_{d_k,C,\eta})$. Also, due to the reflections of the random walk the event $\mathcal{D}_{k,C,\eta}'$ only depends on the environment between $\nu_{n_{k-1}-b_{d_k}}$ and $\nu_{n_k}$. Thus, for $k$ large enough $\mathcal{D}_{k,C,\eta}'$ only depends on the environment to the right of zero and therefore $P( \mathcal{D}_{k,C,\eta}') =  Q(\mathcal{D}_{k,C,\eta}') = Q(\mathcal{D}_{d_{k},C,\eta})$. Therefore $\liminf_{k\ra\infty} P( \mathcal{D}_{k,C,\eta}') >0$. Also, since $n_{k-1}-b_{d_k} > n_{k-2}$ for all $k\geq 4$, we have that $\{ \mathcal{D}_{2k,C,\eta}' \}_{k=2}^\infty$ is an independent sequence of events. Thus, we get that for any $C>1$ and $\eta\in(0,1)$, infinitely many of the events $\mathcal{D}_{k,C,\eta}$ occur $P-a.s.$ Therefore, $P-a.s.$ there is a subsequence $k_m=k_m(\w)$ such that $\w\in \mathcal{D}_{k_m,m,\eta}$ for all $m$. In particular, for this subsequence $k_m$ we have that \eqref{ballistic_sscond} holds.
\end{proof}
\begin{thm}\label{ballistic_Tnexplimit}
Assume $s<2$. Then for any $\eta\in(0,1)$, $P-a.s.$ there exists a subsequence $n_{k_m}=n_{k_m}(\w, \eta)$ of $n_k=2^{2^k}$ such that for $\a_m,\beta_m$ and $\gamma_m$ defined as in \eqref{ballistic_abgdef} and any sequence $x_m \in \left(\nu_{\beta_m} , \nu_{\gamma_m} \right]$  we have
\[
\lim_{m\ra\infty} P_\w^{\nu_{\a_m}} \left( \frac{\bar{T}_{x_m}^{(d_{k_m})} - E_\w^{\nu_{\a_m}} \bar{T}_{x_m}^{(d_{k_m})} }{\sqrt{v_{k_m,\w}}} \leq x \right) = \Psi(x+1), \quad \forall x\in\R.
\]
%\[
%\lim_{m\ra\infty} P_\w^{\nu_{\a_m}} \left( \frac{\bar{T}_{x_m}^{(d_{k_m})} - E_\w^{\nu_{\a_m}} \bar{T}_{x_m}^{(d_{k_m})} }{\sqrt{v_{k_m,\w}}} \leq x \right) = \begin{cases} 1 - e^{-x+1} & x\geq -1 \\ 0 & x < -1 \end{cases},
%\]
%where $v_{k_m,\w} := \sum_{i=n_{k_m-1}}^{n_{k_m}} \s_{i,d_{k_m},\w}^2 = Var_\w \left( \bar{T}^{(d_{k_m})}_{\nu_{\gamma_m}} - \bar{T}^{(d_{k_m})}_{\nu_{\a_m}} \right)$. 
\end{thm}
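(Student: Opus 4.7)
The plan is to couple the subsequence furnished by Corollary \ref{ballistic_subseq} (applied with the given $\eta$) with the single-block exponential limit of Corollary \ref{ballistic_explimit}. First I would pick the random subsequence $n_{k_m}=n_{k_m}(\w,\eta)$ and the random indices $i_m\in(\a_m,\b_m]$ from Corollary \ref{ballistic_subseq}, so that
\begin{equation*}
M_{i_m}^2 \geq m \!\!\!\! \sum_{j\in(\a_m,\gamma_m]\setminus\{i_m\}} \!\!\!\! \s_{j,d_{k_m},\w}^2.
\end{equation*}
Summing the $i_m$ term on both sides gives $\s_{i_m,d_{k_m},\w}^2 \leq v_{k_m,\w} \leq \s_{i_m,d_{k_m},\w}^2(1+1/m)$, so $\s_{i_m,d_{k_m},\w}^2/v_{k_m,\w}\to 1$. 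Combined with the $P$-a.s. bound $v_{k_m,\w}\geq d_{k_m}^{2/s-\d}$ from Corollary \ref{ballistic_vkasym} and Borel-Cantelli (for any $\d>0$), we get $M_{i_m}\geq d_{k_m}^{1/s-\d/2}$, which for $\d$ small enough makes $M_{i_m}>d_{k_m}^{(1-\e)/s}$ with $\e<1/8$. This verifies the hypothesis of Corollary \ref{ballistic_explimit} along the subsequence $k_m$.

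Next I would show $\mu_{i_m,d_{k_m},\w}/\sqrt{v_{k_m,\w}}\to 1$. This follows from the previous paragraph once we know $\s_{i_m,d_{k_m},\w}^2/\mu_{i_m,d_{k_m},\w}^2\to 1$. The latter is a $P$-a.s. consequence of Lemma \ref{ballistic_VarET2compare} combined with a union bound over $i\in(n_{k-1},n_k]$ and Borel-Cantelli: the estimate $o(n^{-2+2\e+\d s+\e'})$ multiplied by $d_k$ is summable in $k$ for sufficiently small $\d,\e,\e'$. Hence, by Corollary \ref{ballistic_explimit} and the Slutsky-type argument,
\begin{equation*}
\frac{\bar T^{(d_{k_m})}_{\nu_{i_m}}-\bar T^{(d_{k_m})}_{\nu_{i_m-1}}-\mu_{i_m,d_{k_m},\w}}{\sqrt{v_{k_m,\w}}} = \frac{\mu_{i_m,d_{k_m},\w}}{\sqrt{v_{k_m,\w}}}\left(\frac{\bar T^{(d_{k_m})}_{\nu_{i_m}}-\bar T^{(d_{k_m})}_{\nu_{i_m-1}}}{\mu_{i_m,d_{k_m},\w}}-1\right)\limdw Y-1,
\end{equation*}
where $Y\sim\mathrm{Exp}(1)$, and $P(Y-1\leq x)=\Psi(x+1)$.

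For any $x_m\in(\nu_{\b_m},\nu_{\gamma_m}]$ let $J_m$ be the unique index with $x_m\in(\nu_{J_m-1},\nu_{J_m}]$, so $J_m\in(\b_m,\gamma_m]$ and in particular $J_m\neq i_m$. Split
\begin{align*}
\bar T^{(d_{k_m})}_{x_m} - E_\w^{\nu_{\a_m}}\bar T^{(d_{k_m})}_{x_m} &= \bigl(\bar T^{(d_{k_m})}_{\nu_{i_m}}-\bar T^{(d_{k_m})}_{\nu_{i_m-1}}-\mu_{i_m,d_{k_m},\w}\bigr) \\
&\quad + \!\!\!\! \sum_{j\in(\a_m,J_m-1]\setminus\{i_m\}} \!\!\!\! \bigl(\bar T^{(d_{k_m})}_{\nu_j}-\bar T^{(d_{k_m})}_{\nu_{j-1}}-\mu_{j,d_{k_m},\w}\bigr) \\
&\quad + \bigl(\bar T^{(d_{k_m})}_{x_m}-\bar T^{(d_{k_m})}_{\nu_{J_m-1}} - E_\w^{\nu_{J_m-1}}\bar T^{(d_{k_m})}_{x_m}\bigr).
\end{align*}
By the strong Markov property (quenched), the blocks are independent, and for the partial piece the identity $\bar T^{(d_{k_m})}_{\nu_{J_m}}=\bar T^{(d_{k_m})}_{x_m}+(\bar T^{(d_{k_m})}_{\nu_{J_m}}-\bar T^{(d_{k_m})}_{x_m})$ with quenched independence gives $Var_\w^{\nu_{J_m-1}}(\bar T^{(d_{k_m})}_{x_m})\leq \s_{J_m,d_{k_m},\w}^2$. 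Thus the quenched variance of the last two lines totals at most $\sum_{j\neq i_m}\s_{j,d_{k_m},\w}^2+\s_{J_m,d_{k_m},\w}^2\leq 2v_{k_m,\w}/m$, so Chebyshev bounds the non-dominant contribution divided by $\sqrt{v_{k_m,\w}}$ by $O_{P_\w}(1/\sqrt{m})\to 0$. Combining this with the previous display yields the claim.

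The main obstacle I anticipate is the partial-block term, since $E_\w^{\nu_{J_m-1}}\bar T^{(d_{k_m})}_{x_m}$ is a non-ladder expectation whose a priori size I do not control directly from the dominance hypothesis; the key observation that rescues the argument is that the quenched variance of the partial hitting time is bounded by the full block variance $\s_{J_m,d_{k_m},\w}^2$ via the strong Markov decomposition, so the Chebyshev bound suffices without any additional hypothesis on $x_m$.
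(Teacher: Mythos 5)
Your overall architecture is exactly the paper's: isolate the dominant block $i_m$ from Corollary \ref{ballistic_subseq}, kill everything else by Chebyshev using the dominance condition, and apply Corollary \ref{ballistic_explimit} together with $\mu_{i_m,d_{k_m},\w}/\sqrt{v_{k_m,\w}}\to 1$ (the latter via $\s_{i_m}^2/v_{k_m,\w}\to1$ and Lemma \ref{ballistic_VarET2compare}). Your handling of the partial block at $x_m$ — bounding $Var_\w^{\nu_{J_m-1}}(\bar T^{(d_{k_m})}_{x_m})$ by $\s_{J_m,d_{k_m},\w}^2$ via the strong Markov decomposition — is the same observation the paper uses implicitly when it bounds $Var_\w(\bar T^{(d_{k_m})}_{x_m}-\bar T^{(d_{k_m})}_{\nu_{\a_m}})$ by $\sum_{j\le\gamma_m}\s_{j,d_{k_m},\w}^2$, so that part is fine.

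There is one genuine gap: your derivation of $M_{i_m}>d_{k_m}^{(1-\e)/s}$. You argue that $\s_{i_m,d_{k_m},\w}^2\ge v_{k_m,\w}/2\ge d_{k_m}^{2/s-\d}/2$ and conclude $M_{i_m}\ge d_{k_m}^{1/s-\d/2}$, but the only deterministic inequality available is $\s_{i,n,\w}^2\ge M_i^2$, which points the wrong way: a lower bound on the block variance does not imply a lower bound on $M_{i_m}$. This matters because both Corollary \ref{ballistic_explimit} and Lemma \ref{ballistic_VarET2compare} require the hypothesis $M_{i_m}>d_{k_m}^{(1-\e)/s}$, so the rest of your argument hangs on this step. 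Two repairs are available. The clean one (the paper's) is to note that the dominance condition $M_{i_m}^2\ge m\sum_{j\ne i_m}\s_{j,d_{k_m},\w}^2\ge m M_j^2$ forces $M_{i_m}=\max_{j\in(n_{k_m-1},n_{k_m}]}M_j$, and that $\max_{j\in(n_{k-1},n_k]}M_j>d_k^{(1-\e)/s}$ eventually holds $P$-a.s.\ by \eqref{ballistic_Mtail} and Borel--Cantelli, since $P(\max_j M_j\le d_k^{(1-\e)/s})=(1-P(M_1>d_k^{(1-\e)/s}))^{d_k}=o(e^{-d_k^{\e/2}})$. Alternatively, you could make your own route rigorous by invoking the estimate \eqref{VbigMsmall} (large variance forces large $M_1$ except on an event of probability $o(n^{-1-\e'})$), a union bound over $i\in(n_{k-1},n_k]$, and Borel--Cantelli; but as written the step is asserted as a deterministic implication and is not one.
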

\begin{proof}
First, note that 
\[
 P\left( \max_{j\in (n_{k-1}, n_{k}]} M_j \leq d_k^{(1-\e)/s} \right) = \left( 1- P\left(M_1 > d_k^{(1-\e)/s}\right) \right)^{d_k} = o\left( e^{-d_k^{\e/2}} \right),
\]
where the last equality is due to \eqref{ballistic_Mtail}. Therefore, the Borel-Cantelli Lemma gives that, $P-a.s.$,
\begin{equation}
\max_{j\in (n_{k-1}, n_{k}]} M_j > d_k^{(1-\e)/s} \quad \text{ for all } k \text{ large enough.} \label{ballistic_maxMlarge}
\end{equation}
Therefore, $P-a.s.$ we may assume that \eqref{ballistic_maxMlarge} holds, the conclusion of Corollary \ref{ballistic_explimit} holds, and that there exist subsequences $n_{k_m}=n_{k_m}(\w,\eta)$ and $i_m=i_m(\w,\eta)$ as specified in Corollary \ref{ballistic_subseq}.
Then, by the choice of our subsequence $n_{k_m}$, only the crossing of the largest block (i.e. from $\nu_{i_m-1}$ to $\nu_{i_m}$) is relevant in the limiting distribution. Indeed,
\begin{align*}
&P_\w^{\nu_{\a_m}} \left( \left| \frac{ \left( \bar{T}_{\nu_{i_m-1}}^{(d_{k_m})} - E_\w ^{\nu_{\a_m}}\bar{T}_{\nu_{i_m-1}}^{(d_{k_m})} \right) + \left( \bar{T}_{x_m}^{(d_{k_m})} - \bar{T}_{\nu_{i_m}}^{(d_{k_m})}  - E_\w^{\nu_{i_m}} \bar{T}_{x_m}^{(d_{k_m})} \right) }{\sqrt{v_{k_m,\w}}} \right| \geq \e \right) \\
&\qquad \leq \frac{ Var_\w \left( \bar{T}_{x_m}^{(d_{k_m})} - \bar{T}_{\nu_{\a_m}}^{(d_{k_m})} \right) - \s_{i_m,d_{k_m},\w}^2 } { \e^2 v_{k_m,\w} }  \leq \frac{ \sum_{j\in (\a_m,\gamma_m]\backslash \{i_m\}} \s_{j,d_{k_m},\w}^2 }{ \e^2 M_{i_m}^2 } \leq \frac{1}{\e^2 m},
\end{align*}
where in the second to last inequality we used that $v_{k_m,\w} \geq \s_{i_m,d_{k_m},\w}^2 \geq M_{i_m}^2$, and the last inequality is due to our choice of the sequence $i_m$. 
Thus we have reduced the proof of the Theorem to showing that
\begin{equation}
\lim_{m\ra\infty} P_\w^{\nu_{i_m-1}} \left( \frac{\bar{T}_{\nu_{i_m}}^{(d_{k_m})} - \mu_{i_m,d_{k_m},\w} }{\sqrt{v_{k_m,\w}}} \leq x \right) = \Psi(x+1), \quad \forall x\in\R. 
\end{equation}
%\begin{equation}
%\lim_{m\ra\infty} P_\w^{\nu_{i_m-1}} \left( \frac{\bar{T}_{\nu_{i_m}}^{(d_{k_m})} - E_\w^{\nu_{i_m-1}} \bar{T}_{\nu_{i_m}}^{(d_{k_m})} }{\sqrt{v_{k_m,\w}}} \leq x \right) = \begin{cases} 1 - e^{-x+1} & x\geq -1 \\ 0 & x < -1 \end{cases}. 
%\end{equation}
Now, since $i_m$ is chosen so that $M_{i_m} = \max_{j\in (n_{k_m-1}, n_{k_m}]} M_j$, we have that $M_{i_m} \geq d_{k_m}^{(1-\e)/s}$ for any $\e>0$ and all $m$ large enough. Then, the conclusion of Corollary \ref{ballistic_explimit} gives that
% Therefore, Corollary \ref{ballistic_explimit} gives that 
\[
\lim_{m\ra\infty} P_\w^{\nu_{i_m-1}} \left( \frac{ \bar{T}^{(d_{k_m})}_{\nu_{i_m}} }{ \mu_{i_m,d_{k_m},\w} } \leq x \right) = \Psi(x).
\]
%\[
%\lim_{m\ra\infty} P_\w^{\nu_{i_m-1}} \left( \frac{ \bar{T}^{(d_{k_m})}_{\nu_{i_m}} }{ \mu_{i_m,d_{k_m},\w} } \leq x \right) = \begin{cases} 1 - e^{-x} & x\geq 0 \\ 0 & x<0 \end{cases}.
%\]
Thus, the proof will be complete if we can show
\begin{equation}
\lim_{m\ra\infty} \frac{\mu_{i_m,d_{k_m},\w}}{\sqrt{v_{k_m,\w}}} = 1. \label{ballistic_vovermuim}
\end{equation}
However, by our choice of $n_{k_m}$ and $i_m$ we have 
\[
\s_{i_m,d_{k_m},\w}^2 \geq M_{i_m}^2 \geq  m \sum_{j\in (\a_m, \gamma_m] \backslash\{i_m\} } \s_{j,d_{k_m}\w}^2 = m\left( v_{k_m,\w} - \s_{i_m,d_{k_m},\w}^2 \right),
\]
which implies that
\begin{equation}
1\leq \frac{ v_{k_m,\w} }{ \s_{i_m,d_{k_m},\w}^2 } \leq \frac{m+1}{m} \underset{m\ra\infty}{\longrightarrow} 1. \label{ballistic_vovers}
\end{equation}
Also, we can use Lemma \ref{ballistic_VarET2compare} to show that for $k$ large enough and $\e>0$ 
\begin{align*}
&P\left( \exists i\in(n_{k-1},n_k]: \left| \frac{\s_{i,d_k,\w}^2}{\mu_{i,d_k,\w}^2} - 1 \right| \geq d_k^{-\e/s},\quad M_i \geq d_k^{(1-\e)/s} \right) \\
%&\quad = Q\left( \exists i\in [1,d_k] : \left| \frac{\s_{i,d_k,\w}^2}{\mu_{i,d_k,\w}^2} - 1 \right| \geq d_k^{-\e/s}, M_i \geq d_k^{(1-\e)/s} \right) \\
&\quad \leq d_k Q\left( \left| \frac{ Var_\w \bar{T}^{(d_k)}_\nu }{ \left( E_\w \bar{T}^{(d_k)}_\nu \right)^2 } - 1 \right| \geq d_k^{-\e/s},\quad M_1 \geq d_k^{(1-\e)/s} \right) = o\left(d_k^{-1+4\e}\right).
\end{align*}
Then, for $\e<\frac{1}{4}$ the Borel-Cantelli Lemma gives that $P-a.s.$ there exists a $k_0=k_0(\w)$ such that for $k\geq k_0$ and $i\in(n_{k-1}, n_k]$ with $M_i\geq d_k^{(1-\e)/s}$ we have $\left| \frac{\s_{i,d_k,\w}^2}{\mu_{i,d_k,\w}^2} - 1 \right| < d_k^{-\e/s}$. In particular, since $M_{i_m} \geq d_{k_m}^{(1-\e)/s}$ for all $m$ large enough, we have that
\begin{equation}
\lim_{m\ra\infty} \frac{ \s_{i_m,d_{k_m},\w}^2 }{ \mu_{i_m,d_{k_m},\w}^2 } = 1. \label{ballistic_soverm}
\end{equation}
Since \eqref{ballistic_vovers} and \eqref{ballistic_soverm} imply \eqref{ballistic_vovermuim}, the proof is complete. 
\end{proof}
\begin{proof}[\textbf{Proof of Theorem \ref{ballistic_qEXP}:}] \ \\
As in the proof of Theorem \ref{ballistic_qCLT} this follows from Proposition \ref{ballistic_generalprop}.
\end{proof}
\end{subsection}
\end{section}

\begin{section}{Stable Behavior of the Quenched Variance}\label{ballistic_qvs}
Recall from Theorem \ref{ballistic_VETtail} that $Q\left( Var_\w T_\nu > x \right) \sim K_\infty x^{-s/2}$. Since the sequence of random variables $\left\{ Var_\w (T_{\nu_i} - T_{\nu_{i-1}}) \right\}_{i\in \N}$ is stationary under $Q$ (and weakly dependent) it is somewhat natural to expect that $n^{-2/s} Var_\w T_{\nu_n}$ converges in distribution (under $Q$) to a stable law of index $\frac{s}{2}<1$.  

\begin{proof}[\textbf{Proof of Theorem \ref{ballistic_Varstable}:}] \ \\
Obviously it is enough to prove that the second equality in \eqref{ballistic_stableET2} holds and that
\begin{equation}
\lim_{n\ra\infty} Q\left( \left| Var_\w T_{\nu_n} - \sum_{i=1}^n (E_\w^{\nu_{i-1}} T_{\nu_i})^2 \right| > \d n^{2/s} \right) = 0, \quad \forall \d>0. \label{ballistic_VET2diff} 
\end{equation}
%Note that \eqref{ballistic_VET2diff} is the analogue of \cite[Corollary 5.6]{pzSL1} for the random walk without added reflections. While Lemma 5.5 and Corollary 5.6 in \cite{pzSL1} were both proved for the random walk modified with reflections, the proofs hold unaltered for the random walk without reflections. 
However, \eqref{ballistic_VET2diff} is the statement of Corollary \ref{Vsdiff} with $m=\infty$.
Thus it is enough to prove the second equality in \eqref{ballistic_stableET2}. To this end, first note that 
\begin{align}
\frac{1}{n^{2/s}}\sum_{i=1}^n \left( E_\w^{\nu_{i-1}} T_{\nu_i} \right)^2 &= 
\frac{1}{n^{2/s}} \sum_{i=1}^n \left( \left( E_\w^{\nu_{i-1}} T_{\nu_i} \right)^2 - \left( E_\w^{\nu_{i-1}} \bar{T}^{(n)}_{\nu_i} \right)^2 \right) \label{ballistic_switchET2} \\
&\qquad + \frac{1}{n^{2/s}} \sum_{i=1}^n \left( E_\w^{\nu_{i-1}} \bar{T}^{(n)}_{\nu_i} \right)^2 \mathbf{1}_{M_i \leq n^{(1-\e)/s}}  \label{ballistic_smallET2} \\
&\qquad + \frac{1}{n^{2/s}} \sum_{i=1}^n \left( E_\w^{\nu_{i-1}} \bar{T}^{(n)}_{\nu_i} \right)^2 \mathbf{1}_{M_i > n^{(1-\e)/s}}. \label{ballistic_bigET2}
\end{align}
Therefore, it is enough to show that \eqref{ballistic_switchET2} and \eqref{ballistic_smallET2} converge to $0$ in distribution (under $Q$) and that
\begin{equation}
\lim_{n\ra\infty} Q\left( \frac{1}{n^{2/s}} \sum_{i=1}^n \left( E_\w^{\nu_{i-1}} \bar{T}^{(n)}_{\nu_i} \right)^2 \mathbf{1}_{M_i > n^{(1-\e)/s}} \leq x \right) = L_{\frac{s}{2},b}(x) \label{ballistic_bigET2stable}
\end{equation}
for some $b>0$. To prove that \eqref{ballistic_switchET2} converges to $0$ in distribution, first note that factoring gives
\[
 \left( E_\w^{\nu_{i-1}} T_{\nu_i} \right)^2 - \left( E_\w^{\nu_{i-1}} \bar{T}^{(n)}_{\nu_i} \right)^2  \leq 2 E_\w^{\nu_{i-1}} T_{\nu_i} \left(  E_\w^{\nu_{i-1}} T_{\nu_i} - E_\w^{\nu_{i-1}} \bar{T}^{(n)}_{\nu_i} \right).
\]
Therefore, for any $\d>0$
\begin{align}
&Q\left( \sum_{i=1}^n \left( \left( E_\w^{\nu_{i-1}} T_{\nu_i} \right)^2 - \left( E_\w^{\nu_{i-1}} \bar{T}^{(n)}_{\nu_i} \right)^2 \right) > \d n^{2/s} \right) \nonumber \\
&\qquad \leq  Q\left( \sum_{i=1}^n 2 E_\w^{\nu_{i-1}} T_{\nu_i} \left(  E_\w^{\nu_{i-1}} T_{\nu_i} - E_\w^{\nu_{i-1}} \bar{T}^{(n)}_{\nu_i}  \right) > \d n^{2/s} \right) \nonumber \\
&\qquad \leq n Q\left(  E_\w T_{\nu} - E_\w \bar{T}^{(n)}_{\nu}  > 1 \right) + Q\left( 2 E_\w T_{\nu_n} > \d n^{2/s} \right). \label{ballistic_switchET2b}
\end{align}
Then, Lemma \ref{ETdiff} and Theorem \ref{refstable} give that both terms in \eqref{ballistic_switchET2b} tend to zero as $n\ra\infty$. The proof that \eqref{ballistic_smallET2} converges in distribution to $0$ is essentially a counting argument. Since the $M_i$ are all independent and from \eqref{ballistic_Mtail} we know the asymptotics of $Q(M_i > x)$, we can get good bounds on the number of $i\leq n$ with $M_i\in(n^\a, n^\b]$. Then, since by \eqref{TbigMsmall} we have $Q\left( E_\w^{\nu_{i-1}} \bar{T}^{(n)}_{\nu_i} \geq n^\b, M_i\leq n^\a\right) = o\left( e^{-n^{(\b-\a)/5}} \right)$ we can also get good bounds on the number of $i\leq n$ with $E_\w^{\nu_{i-1}} \bar{T}^{(n)}_{\nu_i} \in(n^\a, n^\b]$. The details of this argument are essentially the same as the proof of Lemma \ref{Vsmall} and will thus be ommitted. 
Finally, we will use \cite[Theorem 5.1(III)]{kGPD} to prove \eqref{ballistic_bigET2stable}. 
Now, Theorem \ref{ballistic_VETtail} gives that $Q\left( \left( E_\w T_{\nu} \right)^2 \mathbf{1}_{M_1 > n^{(1-\e)/s}} > x n^{2/s} \right) \sim K_\infty x^{-s/2} n^{-1}$, and Lemma \ref{alphamixing} gives bounds on the mixing of the array $\left\{ \left( E_\w^{\nu_{i-1}} T_{\nu_i} \right)^2 \mathbf{1}_{M_i > n^{(1-\e)/s}} \right\}_{i\in\Z, n\in\N}$.
This is enough to verify the first two conditions of \cite[Theorem 5.1(III)]{kGPD}. The final condition that needs to be verified is
\begin{equation}
\lim_{\d\ra 0}\limsup_{n\ra\infty} n E_Q \left[
n^{-2/s} (E_\w \bar{T}_{\nu}^{(n)})^2 \mathbf{1}_{M_1>n^{(1-\e)/s} }
\mathbf{1}_{n^{-1/s} E_\w \bar{T}_{\nu}^{(n)}  \leq \d} \right] = 0 \,.
\label{ballistic_truncexp}
\end{equation}
By Theorem \ref{ballistic_VETtail} we have that 
there exists a constant $C_4>0$ such that
for any $x > 0$,
\[
Q\left( E_\w \bar{T}_{\nu}^{(n)} > x
n^{1/s} , M_1
> n^{(1-\e)/s} \right) \leq Q\left( E_\w T_\nu > x
n^{1/s} \right) \leq C_4 x^{-s}\frac{1}{n}.
\]
Then using this we have
\begin{align*}
& n E_Q \left[ n^{-2/s} \left( E_\w \bar{T}_{\nu}^{(n)} \right)^2 \mathbf{1}_{M_1>n^{(1-\e)/s} }
\mathbf{1}_{n^{-1/s}E_\w \bar{T}_{\nu}^{(n)}  \leq \d} \right] \\
&\quad = n \int_0^{\d^2} Q\left( \left(E_\w \bar{T}_{\nu}^{(n)}\right)^2 > x n^{2/s} ,
M_1 > n^{(1-\e)/s} \right)  dx \\
&\quad \leq C_4 \int_0^{\d^2} x^{-s/2} dx = \frac{C_4 \d^{2-s}}{1-s/2}\,,
\end{align*}
where the last
integral is finite since $s<2$. \eqref{ballistic_truncexp} follows, and therefore by \cite[Theorem 5.1(III)]{kGPD} we have that \eqref{ballistic_bigET2stable} holds.
\end{proof}
\end{section}

\textbf{Acknowledgments.} I would like to thank Olivier Zindy for his helpful comments regarding the analysis of the quenched Laplace transform of $\bar{T}^{(n)}_\nu$ in Section \ref{ballistic_Laplace}. 

\end{chapter}

\begin{chapter}{Large Deviations for RWRE on $\Z^{d}$}\label{mdLDP}

We now turn to some results for multidimensional RWRE. 
Unless otherwise mentioned, we will consider only nearest neighbor RWRE with i.i.d. and uniformly elliptic environments.
%We will consider here only i.i.d. uniformly elliptic nearest neighbor laws on environments $P$ that are also.  
Section \ref{mdprelims} is a review some of the basic results, notation, and open problems for multidimensional RWRE. Section \ref{RWRELDP} is a survey of the large deviation results that are known for multidimensional RWRE. 
The main results of the chapter are contained in Section \ref{LDPnewresults}, where we prove a new result on differentiability properties of the annealed rate function when the law on environments is ``non-nestling.''

%In this chapter, we will use the notation $\| \xi \|$ for the $L^2$ norm of $\xi$ and $\| \xi \|_1$ for the $L^1$ norm of $\xi$.

\begin{section}{Preliminaries of Multi-dimensional RWRE}\label{mdprelims}
%We now turn to some results for multidimensional RWRE. 
While RWRE on $\Z$ are quite well understood, much less is known about RWRE on $\Z^d$. 
%This is mainly due to the fact that RWRE on $\Z^d$ are not reversible for $d\geq 2$. 
In particular, even in the case of i.i.d., uniformly elliptic environments with $d\geq 3$, 
the 0-1 law for transience in a given direction is still an open problem. 
Let $S^{d-1}:= \{ \xi \in \R^d : \| \xi \| = 1 \}$, and for any $\ell \in S^{d-1}$ let $A_\ell := \{ \lim_{n\ra 0} X_n \cdot \ell = +\infty \}$ be the event of transience in the direction $\ell$. 
%it is still an open problem to determine a 0-1 law for the RWRE in a given direction:
For uniformly elliptic environments satisfying certain strong mixing conditions (in particular, for uniformly elliptic, i.i.d. environments), it is known \cite{zRWRE} that $\P(A_\ell\cup A_{-\ell})\in \{0,1\}$. 
This prompts the following question:

\begin{op}[0-1 Law]\label{01law}
%Let $X_n$ be a RWRE in $\R^d$, with $d\geq 3$, and let $P$ be a uniformly elliptic and i.i.d. law on the environments. 
Is it true that $\P(A_\ell)\in\{0,1\}$?
\end{op}

%\noindent\textbf{Open Problem (0-1 Law).} \emph{Let $X_n$ be a RWRE in $\R^d$, with $d\geq 3$, and let $P$ be a uniformly elliptic and i.i.d. law on the environments. Is it true that $\P(A_\ell)\in\{0,1\}$?}

%\noindent\textbf{Open Problem (0-1 Law)} \emph{Let $X_n$ be a RWRE in $\R^d$ with $d\geq 3$, and let $P$ be a uniformly elliptic and i.i.d. law on environments. For $\ell\in S^{d-1} :=  \{ \xi \in \R^d : \| \xi \| = 1\}$, let $A_{\ell}:= \{\lim_{n\ra\infty} X_n\cdot \ell = +\infty  \}$. Is it true that $\P(A_\ell)\in\{0,1\}$?}

If the answer to Question \ref{01law} is affirmative (or negative), then we say that the \emph{0-1 law holds} (or \emph{fails}).
For i.i.d., uniformly elliptic laws on environments, the 0-1 law holds when $d=2$ \cite{zm0-1Law}, but Question \ref{01law} is still an open problem when $d\geq 3$. Question \ref{01law} is also still an open problem when $d=2$ and the environment is i.i.d. but not uniformly elliptic. 
%There are known examples of non-i.i.d., mixing laws on environments for which the 0-1 law fails. 
When $d=2$ there are examples of ergodic, elliptic laws on environments for which the 0-1 law fails \cite{zm0-1Law}, and when $d\geq 3$ there are examples of mixing, uniformly elliptic laws on environments for which the 0-1 law fails \cite{bzzTrees}. 
%When $d=2$, the $0-1$ law holds for uniformly elliptic, i.i.d. laws on environments, but there are examples of ergodic, (non-uniformly) elliptic laws on environments for which the 0-1 law fails \cite{zm0-1Law}. 
%If the law on environments is i.i.d., but not uniformly elliptic, Question \ref{01law} is still an open problem.
%When $d\geq 3$, question . For $d\geq 3$, if the environment is only assumed to be uniformly elliptic and mixing, there are examples where the 0-1 law fails \cite{bzzTrees}. 

In general, it is not known if a law of large numbers exists (i.e., if $\lim_{n\ra\infty} \frac{X_n}{n}$ is constant, $\P-a.s.$).
However, for i.i.d., uniformly elliptic laws on environments, it is known that there are at most two limiting speeds of the random walk \cite{zRWRE,zLLN}. That is, there exists an $\ell \in S^{d-1}$ such that
\begin{equation}
\lim_{n\ra\infty} \frac{X_n}{n} = v_+ \mathbf{1}_{A_\ell}+ v_- \mathbf{1}_{A_{-\ell}},\quad \P-a.s., \label{twospeeds}
\end{equation}
where $v_+ = c_1 \ell$ and $v_- = -c_2 \ell$ for some $c_1,c_2 \geq 0$. (A recent result of Berger \cite{bVel} shows that when $d\geq 5$, $v_-$ and $v_+$ cannot both be non-zero.) 
Thus, if it can be shown that a 0-1 law holds, then \eqref{twospeeds} would imply a law of large numbers. 
%Thus, if the $0-1$ law can be proven for uniformly elliptic i.i.d. environments, then the above will immediately imply a law of large numbers. 
If $\lim_{n\ra\infty} \frac{X_n}{n}$ is constant, $\P-a.s.$, then we will denote the limit by $v_P$. 

There are known conditions for laws on environments that imply a law of large numbers for the RWRE. 
%Zerner \cite{zLDP} introduced the following classification of laws on environments that is helpful in distinguishing between different regimes of long term behavior for RWRE:
%The following helpful classification of laws on environments was introduced by Zerner \cite{zLDP}:
Recall the following terminology originally introduced by Zerner \cite{zLDP}:
\begin{defn}
Let $d(\w):= E_\w X_1$ be the drift at the origin in the environment $\w$,
%Define the drift at the origin in environment $\w$ to be 
%$d(\w):= E_\w X_1$,
and let $\mathcal{K}:= conv \left( supp \left( d(\w) \right) \right)$ be the convex hull of the support, under $P$, of all possible drifts. Then, we say that the law on environments $P$ is \emph{nestling} if $0\in \mathcal{K}$ and \emph{non-nestling} if $0\notin \mathcal{K}$. We say that $P$ is \emph{non-nestling in direction} $\ell\in S^{d-1}$ if $\inf_{x\in \mathcal{K}} x \cdot \ell > 0$ (or equivalently, if $P( E_\w X_1 \cdot \ell > \e ) = 1$ for some $\e > 0$). 
\end{defn}
\noindent If $P$ is non-nestling, it is known that the 0-1 law holds and also that a law of large numbers holds with limiting velocity $v_P \neq 0$. In fact, this follows from the fact that non-nestling laws $P$ also satisfy what is known as Kalikow's condition \cite{kGRW}, which implies that the 0-1 law holds and that $v_P\neq 0$ (see \cite{szLLN}). Since Kalikow's condition holds for some (but not all) nestling laws $P$ (see \cite{kGRW} for examples), it is still not known for general i.i.d. nestling laws on environments if there can exist two limiting velocities $v_+ $ and $v_- $. 
%A recent result of Berger \cite{bVel}, however, shows that when $d\geq 5$, $v_-$ and $v_+$ cannot both be non-zero.
%It is also known that Kalikow's condition implies that $(X_n-n v_P)/\sqrt{n}$ converges in distribution (both quenched and annealed) to a centered Gaussian vector. 

A useful tool in much of the recent progress on multidimensional RWRE is what are referred to as regeneration times. Fix an $\ell \in S^{d-1}$ such that $ c \ell \in \Z^d$ for some $c>0$. Then, the regeneration times in direction $\ell$ are 
\[
\tau_1 := \inf \{ n > 0 : X_k \cdot \ell < X_n \cdot \ell \leq X_m \cdot \ell , \quad \forall k<n, \quad \forall m\geq n \},
\]
and
\[
\tau_i := \inf \{ n > \tau_{i-1} : X_k \cdot \ell < X_n \cdot \ell \leq X_m \cdot \ell , \quad \forall k<n, \quad \forall m\geq n \}, \quad\mbox{for } i > 1.
\]
%\[
% \tau_1 := \inf \{ n > 0 : X_k \cdot \ell < X_n \cdot \ell \leq X_m \cdot \ell \}, \quad\text{and}\quad \tau_i := \inf \{ n > \tau_{i-1} : X_k \cdot \ell < X_n \cdot \ell \leq X_m \cdot \ell \}.
%\]
%Note: earlier we used the notation $\tau_i=T_i-T_{i-1}$ for RWRE on $\Z$. Since we are now in the multidimensional context there will hopefully be no confusion. 
\textbf{Remark:} The condition that $c \ell \in \Z^d$ is chosen to allow for a simpler definition of regeneration times that agrees with the one given by Sznitman and Zerner \cite{szLLN} (set $a = \frac{1}{c}$ in the definition of regeneration times in \cite{szLLN}). If $P$ is non-nestling in direction $\ell \in S^{d-1}$, then $P$ is also non-nestling for all $\ell'\in S^{d-1}$ in a neighborhood of $\ell$. Therefore, if $P$ is non-nestling, then we can always find an $\ell\in S^{d-1}$ such that $P$ is non-nestling in direction $\ell$, and $c \ell \in \Z^d$ for some $c>0$. 

The regeneration times $\tau_i$ are obviously not stopping times since they depend on the future of the random walk. The advantage of working with regeneration times is that they introduce an i.i.d. structure. Let $D:= \{ X_n \cdot \ell \geq 0, \; \forall n \geq 0 \}$, and when $\P(D)>0$, let $\bP$ be the annealed law of the RWRE conditioned on the event $D$ (i.e., $\bP(\,\cdot \,) :=\P( \cdot \, | D )$). Let expectations under the measure $\bP$ be denoted $\bE$. 
\begin{thm}[Sznitman and Zerner \cite{szLLN}]\label{Thesis_regindep}
 Assume $\P(A_\ell) = 1$. Then $\P(D)> 0$, and 
\[
 (X_{\tau_1}, \tau_1), (X_{\tau_2} -X_{\tau_1}, \tau_2 - \tau_1), \ldots ,(X_{\tau_{k+1}} -X_{\tau_k}, \tau_{k+1} - \tau_k), \ldots
\]
are independent random variables. Moreover, the above sequence is i.i.d. under $\bP$.  
\end{thm}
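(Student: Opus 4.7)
The plan is threefold: first establish $\P(D)>0$, then prove a one-step renewal factorization at time $\tau_1$, and finally iterate to obtain the full i.i.d.\ statement.

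For $\P(D)>0$, the hypothesis $\P(A_\ell)=1$ forces $M:=\inf_{n\geq 0} X_n\cdot\ell>-\infty$, $\P$-a.s., so there exists $K\in\N$ with $\P(M\geq -K)>0$. A standard uniform ellipticity argument --- follow a deterministic nearest-neighbor path from $0$ up to some $y$ with $y\cdot\ell\geq K$ while staying in $\{z:z\cdot\ell\geq 0\}$, then restart --- combined with translation invariance of $P$ will give $\P(D)\geq \kappa^{cK}\cdot \P(M\geq -K)>0$, where $\kappa$ is the ellipticity constant. The only delicate point is ensuring that the two events (deterministic lifting and restart) depend on disjoint regions of the environment, which I would handle by a small detour that keeps the lifting path disjoint from the upper half-space controlling the restarted walk, up to the single site $y$.

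For the one-step factorization, fix $n\in\N$, $x\in\Z^d$, and a nearest-neighbor path $(0=x_0,x_1,\ldots,x_n=x)$ with $x_i\cdot\ell<x\cdot\ell$ for $i<n$. The key observation is that the event
\[
E(n,x_0,\ldots,x_n):=\{\tau_1=n,\ X_0=x_0,\ldots,X_n=x_n\}
\]
is measurable with respect to the environment $\omega$ restricted to $\{x_0,\ldots,x_{n-1}\}\subset\{y:y\cdot\ell<x\cdot\ell\}$, whereas the continuation event $\{X_{n+k}\cdot\ell\geq x\cdot\ell\ \forall k\geq 0\}$ is measurable with respect to $\omega$ restricted to $H_x:=\{y:y\cdot\ell\geq x\cdot\ell\}$. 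Since $P$ is a product measure these two environmental regions are independent, so summing over admissible paths and using translation invariance to identify the shifted continuation $(X_{n+k}-x)_{k\geq 0}$ on the event $D$ with an independent copy of $(X_k)_{k\geq 0}$ under $\bP$ will yield
\[
\P(\tau_1=n,\ X_{\tau_1}=x,\ (X_{\tau_1+k}-x)_{k\geq 0}\in A)=\P(\tau_1=n,\ X_{\tau_1}=x)\cdot \bP((X_k)_{k\geq 0}\in A).
\]
Dividing by $\P(D)$ converts this into independence of $(\tau_1,X_{\tau_1})$ and the shifted future under $\bP$, with the shifted future again distributed as $\bP$.

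An induction on $k$ then gives the full statement: applying the one-step factorization to the shifted walk $(X_{\tau_k+j}-X_{\tau_k})_{j\geq 0}$, which under $\bP$ has law $\bP$ by the inductive hypothesis, forces $(X_{\tau_{k+1}}-X_{\tau_k},\tau_{k+1}-\tau_k)$ to be independent of the history and distributed as $(X_{\tau_1},\tau_1)$ under $\bP$. The main technical obstacle is the careful measurability bookkeeping in the factorization: one must verify that $\{\tau_1=n,X_n=x\}$ really is measurable with respect to the environment on $\{x_0,\ldots,x_{n-1}\}$ (it is, since the definition of $\tau_1$ refers only to the strict past), and that the ``no return below level $x\cdot\ell$'' condition on the continuation corresponds exactly to $D$ for the shifted walk. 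Once this is laid out, independence is immediate from $P$ being i.i.d.
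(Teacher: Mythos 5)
The paper does not prove this theorem; it quotes it from Sznitman and Zerner \cite{szLLN}, and your architecture (establish $\P(D)>0$, prove a one-step factorization across the hyperplane at level $X_{\tau_1}\cdot\ell$, iterate) is exactly the standard route. The genuine gap is your central measurability claim: you assert that $\{\tau_1=n,\,X_0=x_0,\ldots,X_n=x_n\}$ is measurable with respect to the environment on $\{x_0,\ldots,x_{n-1}\}$ ``since the definition of $\tau_1$ refers only to the strict past.'' It does not: the definition contains the clause $X_n\cdot\ell\leq X_m\cdot\ell$ for all $m\geq n$, a condition on the entire future --- this is precisely why the $\tau_i$ are not stopping times, as the paper itself emphasizes. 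As literally written, your factorization multiplies $\P(E(n,x_0,\ldots,x_n))$, which already contains the future condition, by the probability of the continuation event, so it does not parse. The repair must be made explicit: on $\{(X_0,\ldots,X_n)=(x_0,\ldots,x_n)\}\cap\{X_m\cdot\ell\geq x_n\cdot\ell\ \forall m\geq n\}$, the event $\{\tau_1=n\}$ reduces to a deterministic property of the finite path (every strict record time $n'<n$ must be broken at some $m\in(n',n)$, necessarily \emph{before} $n$ because after $n$ the walk stays above $x_n\cdot\ell>x_{n'}\cdot\ell$). Hence $\{\tau_1=n,(X_i)_{i\leq n}=(x_i)\}$ splits as an admissible cylinder event, whose quenched probability $\prod_{i<n}\w(x_i,x_{i+1})$ is measurable with respect to $\w$ on the path sites (all at level $<x_n\cdot\ell$), intersected with the future half-space event, whose quenched probability is measurable with respect to $\w$ on $\{y:y\cdot\ell\geq x_n\cdot\ell\}$. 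Only after this decomposition do the product structure of $P$ and translation invariance yield your displayed identity. For the iteration you also need that identity conditioned on the \emph{full} path up to $\tau_k$, not merely on $(\tau_k,X_{\tau_k})$; your sum over paths does deliver this, but it should be said.

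For $\P(D)>0$, the disjointness you propose to engineer by a ``small detour'' is impossible: the lifting path must terminate at $y$ and stay at level $\geq 0$, so its final stretch necessarily lies in the half-space $\{z:z\cdot\ell\geq y\cdot\ell-K\}$ that controls the restarted walk. But no disjointness is needed. By the quenched Markov property, $P_\w(E_1\cap E_2)=P_\w(E_1)\,P_\w^y(E_2)$, and uniform ellipticity gives the \emph{pointwise} bound $P_\w(E_1)\geq\kappa^{cK}$; integrating over $P$ and using translation invariance yields $\P(E_1\cap E_2)\geq\kappa^{cK}\,\P(M\geq-K)>0$ directly. (Alternatively, the soft Sznitman--Zerner argument avoids the construction entirely: if $\P(D)=0$, then $P$-a.s.\ the walk started from any site eventually drops strictly below its starting level, and iterating along successive minima forces $\inf_n X_n\cdot\ell=-\infty$ a.s., contradicting $\P(A_\ell)=1$.)
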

\noindent\textbf{Remarks:} \\
\textbf{1.} If $P$ is non-nestling in direction $\ell$, then $\P(A_\ell)=1$ and so Theorem \ref{Thesis_regindep} holds. \\
\textbf{2.} The assumption that $\P(A_\ell)=1$ in Theorem \ref{Thesis_regindep} is only needed to ensure that $\tau_1 < \infty$. In fact, what is shown in \cite{szLLN} is that $\P(A_\ell ) > 0$ implies that $\P(D)>0$ and that $(X_{\tau_1}, \tau_1), (X_{\tau_2} -X_{\tau_1}, \tau_2 - \tau_1), \ldots$ are i.i.d. under $\bP$. 

%Let expectations under the measure $\bP$ defined above be denoted $\bE$. 
%When Kalikow's condition holds in direction $\ell$ (and in particular when $P$ is non-nestling in direction $\ell$), it is shown in \cite[Theorem 2.3]{szLLN} that
As mentioned above, for i.i.d., uniformly elliptic environments, $\P(A_\ell)=1$ also implies a law of large numbers. Thus, a consequence of Theorem \ref{Thesis_regindep} is the following formula for the limiting velocity $v_P$:
\begin{equation}
 v_P = \lim_{n\ra\infty} \frac{X_n}{n} = \frac{ \bE X_{\tau_1}}{\bE \tau_1}, \qquad \P-a.s. \label{vPregenformula}
\end{equation}

Recently, Sznitman introduced conditions, known as \emph{conditions $(T)$ and $(T')$ relative to a direction $\ell$}, that are more general than Kalikow's condition and which also imply a law of large numbers with non-zero limiting velocity and an annealed  central limit theorem \cite{sConditionT}.
Sznitman also described in \cite{sEffective} a criterion that can
be checked by considering the environment restricted to a box
$B_n=[-n,n]^d$, with the property that $(T')$ holds if and only if the condition holds for
some box $B_n$. Such a criterion is not known to exist for Kalikow's
condition.

%Sznitman also showed in \cite{sEffective} that condition $(T')$ is equivalent to a criterion that, for certain laws on environments, can be verified by only examining the law of the RWRE on a sufficiently large finite box. Such a criterion that can be verified by only checking a single finite box is not known to exist for Kalikow's condition. 
%
%We will not give the definitions of Kalikow's condition or conditions $(T)$ or $(T')$  here since they are quite technical and we will not need them for our main results.

\end{section}

\begin{section}{Large Deviations for RWRE on $\Z^d$}\label{RWRELDP}
In this section, we will review some of the known results for large deviations of RWRE on $\Z^d$. We recall the following terminology from \cite{dzLDTA}: A \emph{good rate function} is a lower semi-continuous $[0,\infty]$-valued function $h(x)$ with the property that $\{x: h(x) \leq a\}$ is compact for every $a<\infty$. A sequence $\R^d$-valued random variables $\xi_n$ is said to satisfy a \emph{large deviation principle} (LDP) with good rate function $I(x)$ if for any Borel $\Gamma \subset \R^d$,
\[
-\inf_{x\in\overset{\circ}{\Gamma} } I(x) \leq \liminf_{n\ra\infty} \frac{1}{n} \log P\left( \xi_n \in \Gamma \right) \leq \limsup_{n\ra\infty} \frac{1}{n} \log  P\left( \xi_n \in \Gamma \right) \leq -\inf_{x\in\overline{\Gamma}} I(x). 
\]
The random variables $\xi_n$ satisfy a \emph{weak large deviation principle} if the above inequalities hold for all bounded Borel $\Gamma \subset \R^d$.

\begin{subsection}{Large Deviations: $d=1$}\label{onedimLDP}
Comets, Gantert, and Zeitouni \cite{cgzLDP} give a rather complete treatment of quenched and annealed large deviations for one-dimensional RWRE. In \cite{cgzLDP}, quenched large deviations are first obtained for the hitting times $T_n$ and $T_{-n}$ using an argument similar to the proof of the G\"artner-Ellis Theorem (see Theorem 2.3.6 in \cite{dzLDTA}). The LDPs for the hitting times are then transferred to a quenched LDP for $X_n$. Finally, Varadhan's Lemma \cite[Theorem 4.3.1]{dzLDTA} is used to derive the annealed large deviations from the quenched large deviations. Even for random walks in i.i.d. environments, this method for deriving an annealed LDP require an understanding of the quenched LDP for random walks in ergodic environments.

One advantage to the approach used in \cite{cgzLDP} to derive an annealed LDP
%this approach to an annealed LDP 
is that the annealed rate function is given in terms of a variational formula involving the quenched rate function and the specific entropy of measures on environments. 
Another advantage is that qualitative behavior of both the quenched and annealed rate functions are derived. 
In particular, the rate function (quenched or annealed) in the negative direction is equal to the sum of the rate function in the positive direction and a linear function with slope $E_P \log \rho_0$. (This implies that for transient RWRE, the rate functions are not differentiable at the origin.) 
Other differentiability properties of the rate function, while not mentioned in \cite{cgzLDP}, are obtained without too much difficulty from the formulas given for the rate functions there. Also, if $P$ is nestling, then the quenched and annealed rate functions are zero on the interval $[0,v_P]$. 

The one-dimensional quenched LDP was first derived by Greven and den Hollander in \cite{gdhLDP} using homogenization techniques. 
Greven and den Hollander were also able to show the qualitative behavior of the quenched rate function mentioned above. 
%In \cite{gdhLDP}, the quenched rate function is formulated as the solution to a variational problem. Then, the formula for the quenched rate function given in \cite{cgzLDP} is shown to be the solution of this variational problem. 
Rosenbluth \cite{rThesis} has also recently derived the same formula for the one-dimensional quenched rate function as a special case of a multidimensional quenched LDP. 
Rosenbluth's approach also uses homogenization techniques, and he formulates the quenched rate function as the solution to a variational problem. 
In the one-dimensional case, Rosenbluth is able to solve this variational formula to obtain the simpler form of the quenched rate function which also appears in \cite{cgzLDP} and \cite{gdhLDP}. 
%Rosenbluth's approach also uses homogenization techniques, and his formula for the rate function is much less explicit in higher dimensions. 
\end{subsection}

\begin{subsection}{Large Deviations: $d\geq 2$}
Although a law of large numbers is not known to hold for general i.i.d. environments, Varadhan \cite{vLDP} has given both a quenched and annealed LDP.
%
% for certain i.i.d. random walks with bounded jumps (in particular for nearest neighbor RWRE with i.i.d. and uniformly elliptic environments). 
%
%Recall that a \emph{good rate function} is a lower semi-continuous non-negative function $h(x)$ with the property that $\{x: h(x) \leq a\}$ is compact for every $a<\infty$.
% and that a large deviation principle holds for a sequence of measures $\mu_n$ with good rate function $h(x)$ if for any closed set $C$ ...
\begin{thm}[Varadhan \cite{vLDP}]\label{annealedLDP}
Let $X_n$ be a nearest neighbor RWRE on $\Z^d$, and let $P$ be a uniformly elliptic, i.i.d. measure on environments. Then, there exist convex (non-random) functions $h(v)$ and $H(v)$ such that $\frac{X_n}{n}$ satisfies both a quenched and an annealed large deviation principle with good rate functions $h(v)$ and $H(v)$, respectively. That is, for any Borel subset $\Gamma \subset \R^d$, 
\[
-\inf_{v\in\overset{\circ}{\Gamma} } h(v) \leq \frac{1}{n} \log \liminf_{n\ra\infty} P_\w\left( \frac{X_n}{n} \in \Gamma \right) \leq \limsup_{n\ra\infty} \frac{1}{n} \log P_\w\left( \frac{X_n}{n} \in \Gamma \right) \leq -\inf_{v\in\overline{\Gamma}} h(v), 
\]
for $P-$almost every environment $\w$, and 
\[
-\inf_{v\in\overset{\circ}{\Gamma} } H(v) \leq \frac{1}{n} \log \liminf_{n\ra\infty} \P \left( \frac{X_n}{n} \in \Gamma \right) \leq \frac{1}{n} \log \limsup_{n\ra\infty} \P\left( \frac{X_n}{n} \in \Gamma \right) \leq -\inf_{v\in\overline{\Gamma}} H(v).
\]
\end{thm}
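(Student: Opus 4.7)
My plan is to establish both LDPs via a sub-additivity approach, first handling the annealed case and then bootstrapping to the quenched case through concentration of measure.

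For the annealed LDP, I would fix a rational direction $v\in \Q^d$ and choose $n_0\in\N$ so that $n_0 v\in \Z^d$. The Markov property applied at time $m n_0$, together with the translation invariance of the i.i.d.\ law $P$, yields
\[
\P(X_{(m+n)n_0}=(m+n)n_0 v)\;\geq\;\P(X_{m n_0}=m n_0 v)\,\P(X_{n n_0}=n n_0 v),
\]
so that $a_n:=-\log \P(X_{n n_0}=n n_0 v)$ is sub-additive and Fekete's lemma yields a finite limit $H(v):=\lim_n a_n/(n n_0)$. A variant of this argument with mixed endpoints gives convexity of $H$ on $\Q^d$, while uniform ellipticity provides a Lipschitz estimate (a detour of length $k$ costs at most a factor $\kappa^k$), which permits continuous extension of $H$ to all of $\R^d$ and forces its effective domain to lie in the closed unit ball. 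Exponential tightness is automatic since $|X_n|\leq n$ deterministically. The pointwise estimates are upgraded to a full LDP by using convexity to obtain the upper bound on closed convex sets, and by covering open sets with small balls around rational directions for the lower bound.

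For the quenched LDP, the direct sub-additivity argument fails because after $m$ steps the walk sits at a random location in a random environment. I would proceed in two stages. First, for $v\in \Q^d$, I would show that $f_n(\w):=-\tfrac{1}{n n_0}\log P_\w(X_{n n_0}=n n_0 v)$ concentrates around its $P$-mean: writing $f_n-E_P f_n$ as a telescoping sum of Doob martingale increments obtained by revealing the environment one site at a time over the box $\{z\in\Z^d:|z|\leq n\}$, uniform ellipticity caps each increment by $O(1/n)$, and the Azuma--Hoeffding inequality gives $P(|f_n-E_P f_n|>\delta) \leq 2\exp(-cn\delta^2)$. Borel--Cantelli then reduces the $P$-a.s.\ convergence of $f_n$ to the convergence of its mean. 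For the latter, the bound $P_\w(X_{(m+n)n_0}=(m+n)n_0 v) \geq P_\w(X_{mn_0}=mn_0 v)\,P_{\theta^{mn_0 v}\w}(X_{nn_0}=nn_0 v)$, followed by $E_P\log$ and invoking shift-invariance of $P$, makes $b_n:=-E_P\log P_\w(X_{nn_0}=nn_0 v)$ sub-additive; Fekete again produces the limit $h(v)$. Convex-Lipschitz extension, exponential tightness, and the upgrade to a full LDP proceed exactly as in the annealed case, and goodness of both $h$ and $H$ follows from lower semi-continuity (automatic from the convex limits) together with compactness of the effective domain.

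The main obstacle will be the quenched concentration step: the bound on a single-site perturbation of $\log P_\w(X_{nn_0}=nn_0 v)$ must hold uniformly in $\w$ and over the $O(n^d)$ relevant sites in order for Azuma--Hoeffding to yield an exponential rate, and only then does Borel--Cantelli convert the $L^1$ convergence into the a.s.\ statement required for a quenched LDP. Uniform ellipticity is essential here, since changing one transition probability perturbs every traversal of that site by a bounded multiplicative factor, producing the $O(1/n)$ increment needed. Without uniform ellipticity one would require a finer Efron--Stein style estimate and would likely only obtain convergence in $P$-probability of $f_n$, which was in fact the state of affairs in the earlier works of Zerner \cite{zLDP} and others before Varadhan's argument closed the gap to $P$-almost sure convergence.
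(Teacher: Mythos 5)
Your proposal inverts the actual difficulty of the two statements, and both halves contain a genuine gap. For the annealed LDP, the superadditivity
$\P(X_{(m+n)n_0}=(m+n)n_0 v)\geq \P(X_{mn_0}=mn_0 v)\,\P(X_{nn_0}=nn_0 v)$
does not follow from ``the Markov property at time $mn_0$,'' because under $\P$ the walk is \emph{not} a Markov chain: it has long-term memory through the environment. Quenched, one does have
$P_\w(X_{(m+n)n_0}=(m+n)n_0 v)\geq P_\w(X_{mn_0}=mn_0 v)\,P_{\theta^{mn_0 v}\w}(X_{nn_0}=nn_0 v)$,
but after taking $E_P$ the two factors depend on overlapping regions of the environment (the second segment of the path may revisit sites seen by the first), so the expectation of the product does not dominate the product of expectations, and Fekete's lemma cannot be applied. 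Restricting to paths that stay in disjoint half-spaces would restore independence but changes the exponential rate for general $v$ (this is precisely the issue of whether $H=\bar J$, which is delicate and is only resolved near $v_P$ in this thesis). This failure is exactly why Varadhan's proof of the annealed LDP is the hard part: he passes to the history process $W_n=(-X_n,-X_n+X_1,\ldots,0)$, which \emph{is} Markov on a huge state space, proves a process-level LDP there, and obtains the annealed LDP for $X_n/n$ by contraction.

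For the quenched LDP, your concentration step is both broken and unnecessary. A single site $z$ can be visited $\Theta(n)$ times by a path of length $nn_0$, so resampling $\w_z$ changes $\log P_\w(X_{nn_0}=nn_0 v)$ by $\Theta(n)$ and hence changes $f_n$ by $\Theta(1)$, not $O(1/n)$; with $\Theta(n^d)$ sites in the relevant box, Azuma--Hoeffding then yields a bound of the form $\exp(-c\delta^2 n^{-d})$, which is vacuous. The correct (and standard) route is the one you dismissed: the quenched point-to-point exponents are handled by a direct sub-additivity argument, applying Kingman's subadditive ergodic theorem to $g_{m,n}(\w):=-\log P_{\theta^{mn_0 v}\w}(X_{(n-m)n_0}=(n-m)n_0 v)$, which satisfies $g_{0,m+n}\leq g_{0,m}+g_{m,m+n}$ pathwise and is stationary and ergodic under the shift $\theta^{n_0 v}$ because $P$ is i.i.d.; this gives $P$-a.s.\ convergence to a deterministic $h(v)$ with no concentration inequality needed. (Your closing historical remark is also off: Zerner's and Varadhan's quenched LDPs are almost-sure statements via sub-additive ergodicity; the ``in probability versus almost sure'' gap discussed in this thesis concerns the quenched CLT, not the LDP.)
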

\noindent\textbf{Remark:} In \cite{vLDP}, Varadhan actually proves a more general theorem. In particular, he shows that the conclusion of Theorem \ref{annealedLDP} holds for for RWRE with bounded jumps in i.i.d. environments with certain strong uniform ellipticity conditions. 
%Since we are only concerned with nearest neighbor RWRE in this chapter, we will not give Varadhan's precise conditions for RWRE with bounded jumps.

Varadhan's proof of the quenched LDP is based on a simple sub-additivity argument, but the argument does not give much information about the quenched rate function $h(v)$. In particular, the argument only shows that $h$ is convex. The proof of the annealed LDP in \cite{vLDP} is much more complicated. A RWRE $X_n$ is not a Markov chain (annealed) since it has ``long term memory.'' Therefore, Varadhan studies the path of the environment shifted to end at the origin 
\[
 W_n = (-X_n, -X_n+X_1 , \ldots, -X_n + X_{n-1}, 0).
\]
Since $W_n$ incorporates the history of the walk, it is a Markov chain on a very large state space $\mathrm{W}$. Varadhan then shows that a process level LDP holds for $W_n$. That is, a LDP holds for the measure valued process $\mathcal{R}_n := \frac{1}{n} \sum_{j=1}^n \d_{W_j}$ with good rate function $\mathcal{J}(\mu)$, which is infinite unless $\mu$ is a stationary measure on the space $\overline{\mathrm{W}}$, which is a specified compactification of $\mathrm{W}$. The annealed LDP for $X_n/n$ is then obtained by contraction, and the rate function $H(v)$ is given by 
\[
 H(v) = \inf_{\mu \in \mathcal{E}, \; m(\mu) = v} \mathcal{J}(\mu),
\]
where $\mathcal{E}$ is the set of ergodic measures on $\mathrm{W}$ and $m(\mu)$ is the average step size of the ergodic measure $\mu$. 

Since Varadhan's derivation of an annealed LDP requires an understanding of process level large deviations on the huge state space $\overline{\mathrm{W}}$, it is difficult to derive much qualitative information about the rate function $H(v)$ using Varadhan's formula for $H(v)$. Nevertheless, Varadhan was able to prove the following statement about the zero set of the quenched and annealed rate functions:
\begin{thm}[Varadhan \cite{vLDP}] \label{Thesis_zeroset}
The zero sets of the quenched and annealed rate functions in Theorem \ref{annealedLDP} are identical. That is, $h(v) = 0 \iff H(v) = 0$. Moreover, the zero set of the rate functions $Z=\{v: H(v) = 0\}$ has the following description:\\
\textbf{Non-nestling:} If $P$ is non-nestling, then the zero set is a single point $Z=\{ v_P \}$. \\
\textbf{Nestling:} If $P$ is nestling, then the zero set is a line segment containing the origin. If $\lim_{n\ra\infty} \frac{X_n}{n} = v_P, \; \P-a.s.$, then $Z=[0,v_P]$. Otherwise $Z=[v_-,v_+]$, where $v_-$ and $v_+$ are the two possible limiting velocities. 
%\indent\textbf{Non-nestling:} $Z= \{ v_P \}$.\\
%\indent\textbf{Nestling:} If $\lim_{n\ra\infty} \frac{X_n}{n} = v_P, \; \P-a.s.$ then $Z=[0,v_P]$. Otherwise $Z=[v_-,v_+]$, where $v_-$ and $v_+$ are the two possible limiting velocities. 
\end{thm}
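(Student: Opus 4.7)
The plan is to first establish that the two zero sets coincide and then use the dichotomy nestling vs.\ non-nestling to identify the common zero set. The implication $h(v) = 0 \Rightarrow H(v) = 0$ is immediate from the identity $\P(X_n/n \in \Gamma) = E_P[P_\w(X_n/n \in \Gamma)]$, which gives $\frac{1}{n}\log \P(X_n/n \in \Gamma) \geq \frac{1}{n}\log P_\w(X_n/n \in \Gamma) - \frac{1}{n}\log(1/P(\w \in \mathcal{G}))$ for any set $\mathcal{G}$ of environments of positive $P$-measure. Taking $\mathcal{G}$ to be the set where the quenched LDP lower bound holds yields $H \leq h$, so every quenched zero is automatically an annealed zero. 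The converse is the real point of the first part, and I expect to prove it by an i.i.d.\ concatenation argument: if $H(v) = 0$, then for any $\e, \d > 0$, the set $\mathcal{G}_n := \{\w : P_\w(|X_n/n - v| < \d) \geq e^{-\e n}\}$ has $P$-measure bounded below by $e^{-\e n}$; by the Borel--Cantelli lemma along a well-chosen sparse subsequence and the independence of environments in disjoint half-spaces, one can concatenate blocks from $\mathcal{G}_{n_k}$ to manufacture, $P$-a.s., trajectories that travel at average velocity arbitrarily close to $v$ at arbitrarily small quenched exponential cost; hence $h(v) \leq \e$, and letting $\e \downarrow 0$ gives $h(v) = 0$.

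For the description of the common zero set $Z = \{v : H(v) = 0\}$, observe that $Z$ is convex (since $H$ is convex) and is contained in the compact set $\mathcal{K}$ of admissible drifts (since $X_n/n \in \mathcal{K} + o(1)$ with probability approaching one by the standard drift decomposition). In the non-nestling case, $\P(A_\ell) = 1$ for some $\ell \in S^{d-1}$ with $c\ell \in \Z^d$, and Theorem \ref{Thesis_regindep} together with the uniform lower bound on the drift in direction $\ell$ gives that the regeneration increments $(X_{\tau_{i+1}} - X_{\tau_i}, \tau_{i+1} - \tau_i)$ are i.i.d.\ under $\bP$ with exponential moments. By Cram\'er's theorem applied to the renewal representation, $X_n/n$ has an annealed LDP with rate function strictly positive away from the mean $\bE X_{\tau_1}/\bE \tau_1 = v_P$ given by \eqref{vPregenformula}; thus $Z = \{v_P\}$.

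In the nestling case with a law of large numbers $v_P$, the same strict convexity argument applied via regeneration times (or the deterministic convergence of $X_n/n$ to $v_P$) still forces $Z \subseteq \{v : v = t v_P,\ t \in [0,1]\}$, since any velocity $v \neq tv_P$ for $t \in [0,1]$ lies outside the closed convex hull of limit points of $X_n/n$ in a sense that is exponentially costly. The reverse inclusion $[0, v_P] \subseteq Z$ is the nontrivial part and will be obtained by a slowdown construction: since $0 \in \mathcal{K}$, there exist arbitrarily small (in $P$-probability but not exponentially small) boxes in which the local drift points inward, acting as traps that hold the walk for any prescribed number of steps at only sub-exponential cost. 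For any $t \in [0,1]$, I will build an event on which the walker executes approximately $tn$ steps of ``normal'' ballistic motion towards $v_P$ and spends the remaining $(1-t)n$ steps pinned near a trap, and show the annealed probability of this event decays sub-exponentially, giving $H(t v_P) = 0$. The case without a law of large numbers is reduced to the same argument applied separately on the events $A_\ell$ and $A_{-\ell}$, yielding $Z = [v_-, v_+]$.

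The main obstacle will be the converse direction $H(v) = 0 \Rightarrow h(v) = 0$ for the first part: transferring a sub-exponential annealed probability into a $P$-a.s.\ sub-exponential quenched probability requires combining the measure-theoretic estimate on $\mathcal{G}_n$ with the independence of the environment in disjoint regions in a way that is robust under $d \geq 2$, where the walk can meander between blocks. Controlling these excursions, most likely via a careful use of regeneration times in a fixed reference direction $\ell$ and the sub-additive ergodic theorem applied to the concatenated cost, will be the technical heart of the argument.
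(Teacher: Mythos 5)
You should first note that the paper offers no proof of this statement: it is quoted verbatim as a theorem of Varadhan \cite{vLDP}, so your proposal can only be measured against Varadhan's argument, which works at the level of the process-valued empirical measure $\mathcal{R}_n$ and its rate function $\mathcal{J}$, identifying the zero set of $H$ as the set of mean velocities of ergodic measures $\mu$ with $\mathcal{J}(\mu)=0$. Your easy direction is fine ($H\leq h$ follows from $\P=E_P[P_\w(\cdot)]$, so $h(v)=0\Rightarrow H(v)=0$), and your trap-based slowdown construction for the nestling lower bound is the right idea and is essentially how one shows $h(tv_P)=0$ for $t\in[0,1]$ (traps of diameter $C\log n$ exist within distance $n$ of the origin $P$-a.s.\ and detain the walk for polynomial time at negligible quenched cost). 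But two of your steps have genuine gaps.

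First, the concatenation argument for $H(v)=0\Rightarrow h(v)=0$ does not work as stated when $d\geq 2$. The event $\mathcal{G}_n$ concerns $P_\w(|X_n/n-v|<\d)$, which depends on the environment in an unbounded region, so "blocks" from $\mathcal{G}_{n_k}$ are not supported on disjoint half-spaces and cannot be glued by independence; and for $v$ in the interior of the zero set (in particular $v=0$ in the nestling case) there is no transience direction in which to regenerate, so your proposed fix via regeneration times is unavailable precisely where it is needed. The standard way around this --- and the reason the theorem is usually organized differently --- is to avoid the abstract implication entirely: one identifies $Z_H$ explicitly as the segment $[v_-,v_+]$ (resp.\ $\{v_P\}$) and then proves $h=0$ on that explicit set by the slowdown construction, after which $H\leq h$ closes the equivalence. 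Second, your inclusion $Z\subseteq[0,v_P]$ is asserted, not proved: the law of large numbers only gives $\P(|X_n/n-v|<\d)\to 0$ for $v\neq v_P$, not exponential decay, so "exponentially costly" does not follow from convexity plus the LLN. Showing $H(v)>0$ off the segment is a substantial part of Varadhan's proof and is where the process-level entropy structure is actually used; even in the non-nestling case your Cram\'er-plus-regeneration argument only controls velocities reachable through regenerations in the direction $\ell$ (indeed, Section \ref{LDPnewresults} of this thesis only establishes the matching upper bound in a neighborhood of $v_P$ for $d>1$), so velocities with $v\cdot\ell\leq 0$ require separate backtracking estimates.
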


We end this section by briefly mentioning some of the other large deviation results for multidimensional RWRE. 
Rassoul-Agha \cite{rLDP} extended the approach of Varadhan to get an LDP for certain non-i.i.d. laws on environments and other non-Markov random walks on $\Z^d$. 
Zerner \cite{zLDP} also proved a quenched LDP using a sub-additivity argument. However, unlike Varadhan's subadditive argument, Zerner's method involved hitting times and was restricted to nestling laws on environments. Recent results of Yilmaz \cite{ySTLDP, yQLDP} take a different approach, using homogenization techniques to derive quenched LDP results. The techniques used in \cite{ySTLDP} are similar to those used in \cite{krvStochHomog} for diffusions in a random environment, and, in \cite{yQLDP}, it is shown that, for ``space-time'' RWRE, the quenched and annealed rate functions are identical in a neighborhood of the critical velocity $v_P$.

\end{subsection}
\end{section}

\begin{section}{Differentiability of the Annealed Rate Function}\label{LDPnewresults}
%In this section we show that if $P$ is non-nestling, then the annealed rate function $H(v)$ is analytic in a neighborhood of $v_P$. 
In this section, we will study the annealed rate function $H(v)$ from Theorem \ref{annealedLDP}. (Recall that we are only considering nearest neighbor RWRE in this chapter.) 
As mentioned in Subsection \ref{onedimLDP}, many differentiability properties of the annealed rate function are known when $d=1$. 
Until now, however, no differentiability properties of $H(v)$ were known when $d\geq 2$. 
Our main result is the following theorem:
\begin{thm}\label{Thesis_LDPdiff}
Let $X_n$ be a nearest neighbor RWRE on $\Z^d$, and let $P$ be a uniformly elliptic, i.i.d., and non-nestling measure on environments. Then, the annealed rate function $H(v)$ is analytic in a neighborhood of $v_P$. 
\end{thm}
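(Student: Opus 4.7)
The plan is to build an alternative representation $\bar{J}$ of the annealed rate function on a neighborhood of $v_P$ from the i.i.d.\ regeneration pairs of Theorem~\ref{Thesis_regindep}, verify analyticity of $\bar{J}$ via the implicit function theorem, and then match $\bar{J}$ with $H$. As noted in the remark following the definition of regeneration times, one may choose $\ell \in S^{d-1}$ with $c\ell \in \Z^d$ in a direction in which $P$ is non-nestling. Under non-nestling it is standard (see, e.g., \cite{sConditionT}) that $\tau_1$ has finite exponential moments under $\bP$, and because the walk is nearest-neighbor one has $\|X_{\tau_1}\|_1 \leq \tau_1$, so
\begin{equation*}
\Lambda(\theta,\lambda) := \log \bE\bigl[\exp(\theta \cdot X_{\tau_1} - \lambda \tau_1)\bigr]
\end{equation*}
is real-analytic on an open neighborhood $U \subset \R^d \times \R$ of the origin.

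Since $\Lambda(0,0) = 0$ and $\partial_\lambda \Lambda(0,0) = -\bE\tau_1 < 0$, the implicit function theorem produces a unique real-analytic $\lambda^*(\theta)$ on a neighborhood $V \subset \R^d$ of $0$ satisfying $\Lambda(\theta,\lambda^*(\theta)) = 0$. I then set
\begin{equation*}
\bar{J}(v) := \sup_{\theta \in V}\bigl\{\theta \cdot v - \lambda^*(\theta)\bigr\}.
\end{equation*}
Differentiating $\Lambda(\theta,\lambda^*(\theta)) = 0$ at $\theta = 0$ and invoking \eqref{vPregenformula} gives $\nabla \lambda^*(0) = \bE X_{\tau_1}/\bE\tau_1 = v_P$. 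Uniform ellipticity forces the Hessian of $\lambda^*$ at $0$ to be positive definite (it is the covariance of $X_{\tau_1}$ under a tilted version of $\bP$), so a second application of the implicit function theorem shows that the maximizer $\theta(v)$ depends analytically on $v$ in a neighborhood of $v_P$ and that $\bar{J}$ is real-analytic there.

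The main obstacle is to show $\bar{J}(v) = H(v)$ on a neighborhood of $v_P$. For $H(v) \leq \bar{J}(v)$ I would tilt the i.i.d.\ regeneration increments $(X_{\tau_i}-X_{\tau_{i-1}}, \tau_i-\tau_{i-1})$ by the factor $\exp(\theta(v)\cdot(X_{\tau_i}-X_{\tau_{i-1}}) - \lambda^*(\theta(v))(\tau_i-\tau_{i-1}))$; under this tilted law the mean pair becomes $(v\,\bar t, \bar t)$ for some $\bar t = \bar t(v) > 0$, so typical paths complete $k_n \sim n/\bar t$ regenerations with $X_{\tau_{k_n}} \approx nv$, and reversing the tilt yields the required exponential lower bound with rate $\bar{J}(v)$. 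For the reverse inequality $H(v) \geq \bar{J}(v)$ the exponential Chebyshev inequality
\begin{equation*}
\P\bigl(\theta \cdot X_n \geq n\,\theta\cdot v\bigr) \leq e^{-n\,\theta\cdot v}\,\E\bigl[e^{\theta \cdot X_n}\bigr]
\end{equation*}
reduces matters to proving $\limsup_n n^{-1}\log \E[e^{\theta \cdot X_n}] \leq \lambda^*(\theta)$ for $\theta \in V$; this is the delicate step, because one must reconcile the deterministic time $n$ with the random regeneration times. Writing $K_n$ for the number of regenerations completed by time $n$, the bound $\|X_n - X_{\tau_{K_n}}\|_1 \leq \tau_{K_n+1} - \tau_{K_n}$ together with the i.i.d.\ property of the regeneration increments under $\bP$ collapses $\E[e^{\theta\cdot X_n}]$ to essentially $\bE[e^{\theta\cdot X_{\tau_1} - \lambda^*(\theta)\tau_1}]^{K_n} = 1$ up to boundary corrections that are absorbed by the exponential tails of $\tau_1$. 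Combining the two bounds gives $\bar{J} = H$ near $v_P$, whence $H$ is analytic on a neighborhood of $v_P$.
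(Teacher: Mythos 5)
Your proposal is correct in outline and shares the paper's overall architecture: use the i.i.d.\ regeneration structure of Theorem~\ref{Thesis_regindep} to build an auxiliary rate function $\bar{J}$, prove $\bar{J}$ is analytic near $v_P$ via the implicit function theorem, and then identify $\bar{J}$ with $H$ locally by matching upper and lower large deviation bounds. The technical implementation differs at two points. First, the paper defines $\bar{J}(v)=\inf_{0<s\leq 1}s\bar{I}(v/s,1/s)$, where $\bar{I}$ is the $(d+1)$-dimensional Cram\'er rate function of $(X_{\tau_1},\tau_1)$, and applies the implicit function theorem to locate the optimal $s(v)$; you instead define $\lambda^*(\theta)$ implicitly by $\log\bE[e^{\theta\cdot X_{\tau_1}-\lambda^*(\theta)\tau_1}]=0$ and set $\bar{J}(v)=\sup_\theta\{\theta\cdot v-\lambda^*(\theta)\}$. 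These are dual descriptions of the same function near $v_P$ (a minimax exchange converts one into the other), and your route is arguably cleaner for the analyticity step, since one application of the implicit function theorem on the zero set of the joint log-MGF replaces the paper's combination of Legendre duality in $d+1$ variables with a one-dimensional optimization. Second, for the upper bound the paper uses the pointwise Chernoff bound $\bP(X_{\tau_k}=x,\tau_k=t)\leq e^{-t\bar{J}(x/t)}$ together with a path decomposition over the regeneration time nearest $n$, whereas you use exponential Chebyshev on half-spaces followed by the compact-set G\"artner--Ellis upper bound and continuity of $\bar{J}$; both are legitimate.

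Two points deserve more care than your sketch gives them. The positive definiteness of $D^2\lambda^*(0)$ is not the covariance of $X_{\tau_1}$: differentiating $\Lambda(\theta,\lambda^*(\theta))=0$ twice shows it equals $(\bE\tau_1)^{-1}$ times the covariance matrix of $X_{\tau_1}-\tau_1 v_P$ under $\bP$, and one must observe that this vector is not supported on a proper subspace (uniform ellipticity does give this, and the paper is equally brief about the corresponding strict convexity of $\bL$). More substantively, the step $\limsup_n n^{-1}\log\E[e^{\theta\cdot X_n}]\leq\lambda^*(\theta)$ is where all the work of the upper bound lives, and it is also where the restriction to a neighborhood of $v_P$ enters: when you decompose at the last regeneration before $n$ and bound the overshoot by $\tau_{K_n+1}-\tau_{K_n}$, the resulting boundary factor is only integrable when $\|\theta\|$ and $|\lambda^*(\theta)|$ are small compared to the exponential tail rate of $\tau_1$; this is the analogue of the paper's requirement that $\|\nabla\bar{J}\|<C_2/4$ on the neighborhood, and it is precisely why neither argument yields $\bar{J}=H$ globally for $d>1$. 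With those two points made explicit, your argument goes through.
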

%Previously, no differentiability properties were known for $H$ for general non-nestling environments. 
The variational formula for $H(v)$ given in \cite{vLDP} is very hard to work with. 
%Our approach is to define a function $\bar{J}(v)$ which has a more straightforward definition, and then show that $J(v)=H(v)$ in a neighborhood of $v_P$. 
Instead of approaching a LDP through the Markov chain $W_n$ on the huge state space $\overline{W}$, we take advantage of the i.i.d. structure present in regeneration times. From Cram\'er's Theorem, differentiability properties of the large deviation rate functions for sums of i.i.d. random variables can easily be obtained. We are then able to transfer these differentiability properties to a new function $\bar{J}$ defined in terms of large deviations for $(X_{\tau_k}, \tau_k)$, and then show that $\bar{J}(v)=H(v)$ in a neighborhood of $v_P$ when $P$ is non-nestling. 

We conclude the section by showing that when $d=1$ and $X_n \ra +\infty$, the equality $\bar{J}(v)=H(v)$ holds for all $v\geq 0$ (for both nestling and non-nestling laws $P$). 

\begin{subsection}{The Rate Function $\bar{J}$}
Since $P$ is non-nestling, for the remainder of this section, we fix a direction $\ell\in S^{d-1}$ such that $c \ell \in \Z^d$, for some $c>0$, and $P$ is non-nestling in direction $\ell$. 
%For the following, we will make the following assumption:
%\begin{asm}\label{Thesis_ueiid}
%$P$ is an uniformly elliptic and i.i.d. measure on environments. 
%\end{asm}
%\begin{asm}\label{Thesis_direction}
%$P$ is non-nestling.
%There exists a direction $\ell \in S^{d-1}$ such that $\P(A_\ell) = 1$.
%\end{asm}
%Fix $\ell$ as in Assumption \ref{Thesis_direction} and let $\tau_i$ be the regeneration times in direction $\ell$.
Let $\tau_i$ be the regeneration times in direction $\ell$.
For $\l\in\R^{d}\times \R=\R^{d+1}$, let
\[
\bL(\l):= \log \bE e^{\l \cdot (X_{\tau_1}, \tau_1)}.
\]
By Theorem \ref{Thesis_regindep}, 
$(X_{\tau_1}, \tau_1), (X_{\tau_2} - X_{\tau_{1}} , \tau_2 - \tau_{1} ), \ldots$ 
%$\left\{ (X_{\tau_i} - X_{\tau_{i-1}} , \tau_i - \tau_{i-1} ) \right\}_{i=1}^\infty$
is an $i.i.d.$ sequence under $\bP$. Therefore, Cram\'er's Theorem \cite[Theorem 6.1.3]{dzLDTA} implies that $\frac{1}{n}(X_{\tau_n}, \tau_n)$ satisfies a weak LDP under $\bP$ with convex, good rate function 
\[
 \bar{I}(x,t):= \inf_{\l\in \R^{d+1}} \l\cdot(x,t) - \bL(\l). 
\]
In particular, for any open, convex subset $G\subset \R^{d+1}$,
\begin{equation}
 \lim_{k\ra\infty} \frac{1}{k} \log \bP\left( \frac{1}{k}(X_{\tau_k}, \tau_k) \in  G \right) = - \inf_{(x,t)\in G} \bar{I}(x,t). \label{Thesis_XtauLDPlb}
\end{equation}
Let $H_\ell := \{ v\in \R^d : v\cdot \ell > 0 \}$. Then, for $v\in H_\ell$, let
%For $v\in \R^d$ such that $v \cdot \ell > 0$, let
\[
\bar{J}(v):= \inf_{0<s\leq 1} s \bar{I}\left(\frac{v}{s},\frac{1}{s}\right).
%= \inf_{1\leq t < \infty} \frac{1}{t} \bar{I}( vt, t).
\]
Having defined the function $\bar{J}$, we now mention a few of its properties. 
\begin{lem}
$\bar{J}$ is a convex function on $H_\ell$, and $\bar{J}(v_P) = 0$. 
\end{lem}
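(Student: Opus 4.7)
The plan is to obtain both statements directly from convex-analytic properties of the perspective transformation of $\bar{I}$, together with the classical fact that Cram\'er's rate function vanishes at the mean.

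For convexity, I would work with the perspective function $\tilde g(v,s) := s\,\bar{I}(v/s, 1/s)$, defined for $v \in \R^d$ and $s > 0$. Since $\bar{I}$ is convex (as a Legendre transform of $\bL$), a standard fact in convex analysis is that its perspective $\tilde g$ is jointly convex in $(v,s)$ on $\R^d \times (0,\infty)$. By definition $\bar{J}(v) = \inf_{s \in (0,1]} \tilde g(v,s)$, and since $(0,1]$ is convex, partial infimization of a jointly convex function over a convex set of the remaining variable yields a convex function; carrying this out by hand, for $v_1, v_2 \in H_\ell$, $\alpha \in [0,1]$, and arbitrary $s_1, s_2 \in (0,1]$, one checks $\alpha(v_1,s_1)+(1-\alpha)(v_2,s_2) = (\alpha v_1 + (1-\alpha)v_2, \alpha s_1 + (1-\alpha)s_2)$ still has second coordinate in $(0,1]$, applies joint convexity of $\tilde g$, and infimizes over $s_1, s_2$. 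Since $H_\ell$ is a half-space it is convex, so the inequality is meaningful.

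For $\bar{J}(v_P)=0$, I would take $s_* := 1/\bE\tau_1$. Because $P$ is non-nestling in direction $\ell$, Sznitman's exponential tail bounds for the regeneration time give $\bE e^{c\tau_1}<\infty$ for some $c>0$, and the nearest-neighbor assumption forces $|X_{\tau_1}| \leq \tau_1$, so $\bL$ is finite in a neighborhood of the origin and hence differentiable there with $\nabla\bL(0)=\bE(X_{\tau_1},\tau_1)$. The standard Cram\'er identity then yields $\bar{I}\bigl(\bE X_{\tau_1},\bE\tau_1\bigr)=0$. Using the regeneration formula $v_P = \bE X_{\tau_1}/\bE\tau_1$ from \eqref{vPregenformula}, we get $s_* \bar{I}(v_P/s_*, 1/s_*) = s_* \bar{I}(\bE X_{\tau_1},\bE\tau_1) = 0$, so $\bar{J}(v_P) \leq 0$; non-negativity of $\bar{I}$ gives the reverse. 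Finally, $\tau_1\geq 1$ a.s.\ implies $\bE\tau_1\geq 1$, so $s_* \in (0,1]$ as required, and non-nestling in direction $\ell$ gives $v_P\cdot\ell>0$, so $v_P\in H_\ell$ and the statement is meaningful.

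There is no real obstacle here; the only point that requires a little care is making sure $s_*\leq 1$ and $v_P \in H_\ell$, both of which come for free from non-nestling together with the nearest-neighbor assumption. The perspective-convexity argument is routine once one knows $\bar{I}$ is convex, which is immediate from its definition as a Legendre transform.
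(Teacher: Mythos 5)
Your proof is correct and follows essentially the same route as the paper: joint convexity of $(v,s)\mapsto s\,\bar{I}(v/s,1/s)$ (which the paper gets by writing it as $\sup_{\l}\,\l\cdot(v,1)-s\bL(\l)$, a supremum of affine functions, rather than by citing the perspective-function fact) followed by partial minimization over $s\in(0,1]$, and then $\bar{I}(\bE X_{\tau_1},\bE\tau_1)=0$ evaluated at $s=1/\bE\tau_1$. The only cosmetic difference is that the paper deduces the vanishing of $\bar{I}$ at the mean from the law of large numbers, the LDP lower bound, and lower semicontinuity, whereas you use the standard Cram\'er/Jensen identity directly; both are valid.
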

\begin{proof}
We wish to show that $\bar{J}(t v_1 + (1-t) v_2) \leq t \bar{J}(v_1) + (1-t) \bar{J}(v_2)$ for any $v_1,v_2 \in H_\ell$ and $t\in[0,1]$. Obviously, we may assume that $\bar{J}(v_1),\bar{J}(v_2) < \infty$, since otherwise the inequality holds trivially. 
For $s\in(0,1]$ and $v\in H_\ell$, let
\[
f(v,s):= s \bar{I}\left(\frac{v}{s},\frac{1}{s}\right) = \sup_{\l \in \R^{d+1}} \l \cdot( v, 1) - s \bL(\l). 
\]
Since $f(\cdot, \cdot)$ is the supremum of a family of linear functions, $f(\cdot, \cdot)$ is a convex function on $H_\ell \times (0,1]$. 
For $\d>0$, the definition of $\bar{J}$ implies that there exist $s_1,s_2 \in (0,1]$ such that $f(v_1,s_1) < \bar{J}(v_1) + \d$ and $f(v_2,s_2) < \bar{J}(v_2) + \d$.
%Also, the definition of $\bar{J}(v)$ implies that if $\bar{J}(v_1),\bar{J}(v_2) < \infty$, then for any $\d>0$ there exist $s_1,s_2\in(0,1]$ such that $f(s_1,v_1) < \bar{J}(v_1) + \d$ and $f(s_2,v_2) < \bar{J}(v_2) + \d$. 
Therefore,
\begin{align*}
\bar{J}(t v_1 + (1-t)v_2) = \inf_{s\in(0,1]} f(tv_1+(1-t)v_2,s) &\leq f\left(t v_1 + (1-t)v_2,t s_1 + (1-t) s_2\right)\\
& \leq t f(v_1, s_1) + (1-t)f(v_2,s_2) \\
&< t\bar{J}(v_1) + (1-t)\bar{J}(v_2) + 2\d,
\end{align*}
where the second to last inequality is due to the convexity of $f(v,s)$.
Letting $\d\ra 0$ finishes the proof of the first part of the lemma.

For the second part of the lemma, note that \eqref{vPregenformula} implies that $v_P = \frac{\bE X_{\tau_1}}{\bE\tau_1}$. Then, the law of large numbers implies that 
\[
\lim_{k\ra\infty} \bP\left( \left\| \frac{1}{k}\left( X_{\tau_k}, \tau_k \right) - ( v_P \bE \tau_1, \bE \tau_1) \right\| < \d \right) = 1, \qquad \forall \d>0.
\]
Thus, \eqref{Thesis_XtauLDPlb} implies that 
\[
 \inf_{\|(x,t)-(v_P \bE \tau_1, \bE \tau_1)\| < \d} \bar{I}(x,t) = 0, \qquad \forall \d>0. 
\]
Since $\bar{I}$ is lower semi-continuous, this implies that $ \bar{I}(v_P \bE \tau_1, \bE \tau_1) = 0.$
Then, the definition of $\bar{J}$ and the fact that $\bar{I}$ is non-negative imply that $\bar{J}(v_P)=0$. 
%Therefore \eqref{Thesis_XtauLDPlb} gives that $\bar{I}(v_P \bE\tau_1, \bE \tau_1) = 0$. The definition of $\bar{J}$ then implies that $\bar{J}(v_P)=0$. 
\end{proof}

\begin{lem}\label{Thesis_analytic}
There exists an $\eta_0>0$ such that $\bar{J}(v)$ is analytic in $\{ v: \|v-v_P\| < \eta_0 \}$. 
\end{lem}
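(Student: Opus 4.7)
The plan is to exploit the i.i.d.\ regeneration structure available in the non-nestling case and reduce the analyticity of $\bar{J}$ to a standard application of the analytic implicit function theorem at an interior, non-degenerate critical point.

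First I would verify that $\bar{\Lambda}(\l)$ is analytic in an open neighborhood of $\l=0$. Since $P$ is non-nestling in direction $\ell$, it satisfies Kalikow's condition (and a fortiori Sznitman's conditions $(T)$ and $(T')$), which is known to imply $\bE e^{c\tau_1}<\infty$ for some $c>0$. Because the walk is nearest neighbor, $\|X_{\tau_1}\|\le\tau_1$, so in fact $\bE e^{\l\cdot(X_{\tau_1},\tau_1)}<\infty$ for all $\l$ in a ball around $0$. Standard MGF arguments then give that $\bar{\Lambda}$ is real-analytic on this neighborhood. Uniform ellipticity and the fact that different regeneration paths produce different values of $(X_{\tau_1},\tau_1)$ with positive probability guarantee that no non-trivial linear combination of the coordinates of $(X_{\tau_1},\tau_1)$ is $\bP$-a.s.\ constant; hence the Hessian of $\bar{\Lambda}$ at $0$, which equals the covariance of $(X_{\tau_1},\tau_1)$ under $\bP$, is positive definite, and $\bar{\Lambda}$ is strictly convex near $0$.

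Next I would invoke the Legendre duality between analytic strictly convex functions and their convex conjugates: since $\nabla\bar{\Lambda}(0)=\bE(X_{\tau_1},\tau_1)=(v_P\bE\tau_1,\bE\tau_1)$ and the Hessian there is positive definite, the rate function $\bar{I}$ is real-analytic (and strictly convex) in an open neighborhood $U$ of the point $(x_0,t_0):=(v_P\bE\tau_1,\bE\tau_1)$, with $\bar{I}(x_0,t_0)=0$ and $\nabla\bar{I}(x_0,t_0)=0$. Define
\[
g(v,s):=s\,\bar{I}\!\left(\tfrac{v}{s},\tfrac{1}{s}\right),\qquad v\in H_\ell,\ s\in(0,1],
\]
so that $\bar{J}(v)=\inf_{s\in(0,1]}g(v,s)$. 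The map $(v,s)\mapsto(v/s,1/s)$ is analytic, so $g$ is real-analytic on the open set where $(v/s,1/s)\in U$. At $v=v_P$ the unique choice $s^{*}=1/\bE\tau_1$ lands on $(x_0,t_0)$, and since $\tau_1\ge 1$ and is not a.s.\ constant under $\bP$ we have $\bE\tau_1>1$, so $s^*\in(0,1)$ lies strictly in the interior.

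Finally I would apply the analytic implicit function theorem to the stationarity equation $\partial_sg(v,s)=0$. A direct computation, using $\bar{I}(x_0,t_0)=0$ and $\nabla\bar{I}(x_0,t_0)=0$, gives $\partial_sg(v_P,s^{*})=0$ and
\[
\partial_{s}^{2}g(v_P,s^{*})=s^{*}\,y'(s^{*})^{T}\,(\mathrm{Hess}\,\bar{I})(x_0,t_0)\,y'(s^{*}),
\]
where $y(s)=(v_P/s,1/s)$, so $y'(s^{*})\neq 0$. The Hessian of $\bar{I}$ at its unique minimum is positive definite (it equals the inverse of the Hessian of $\bar{\Lambda}$ at $0$), hence $\partial_{s}^{2}g(v_P,s^{*})>0$. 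The analytic implicit function theorem thus yields a real-analytic map $v\mapsto s^{*}(v)$, defined on some neighborhood of $v_P$, satisfying $\partial_sg(v,s^{*}(v))=0$ and $s^{*}(v_P)=s^{*}$; by continuity $s^{*}(v)\in(0,1)$ on this neighborhood, so it realizes the infimum in the definition of $\bar{J}$. Therefore $\bar{J}(v)=g(v,s^{*}(v))$ is real-analytic on a neighborhood $\{v:\|v-v_P\|<\eta_0\}$ of $v_P$, proving the lemma. The main obstacle to fill in rigorously is the first step — establishing that non-nestling genuinely gives analyticity and strict convexity of $\bar{\Lambda}$ near the origin — since this is where the probabilistic input (exponential moments of $\tau_1$ and non-degeneracy of $(X_{\tau_1},\tau_1)$) enters; once this is in place the rest is Legendre duality and a one-variable implicit function argument.
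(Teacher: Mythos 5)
Your proposal is correct and follows essentially the same route as the paper: finiteness of $\bL$ near $0$ via exponential tails of $\tau_1$, analyticity and strict convexity of $\bar{I}$ near $\nabla\bL(\mathbf{0})=(v_P\bE\tau_1,\bE\tau_1)$ by Legendre duality, and then the analytic implicit function theorem applied to $\partial_s f(v,s)=0$ with non-degeneracy coming from positive definiteness of $D^2\bar{I}$. Your explicit checks that the Hessian of $\bL$ at $0$ is the (positive definite) covariance of $(X_{\tau_1},\tau_1)$ and that $s^*=1/\bE\tau_1$ lies strictly inside $(0,1]$ are minor refinements of details the paper leaves implicit.
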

\begin{proof}
First, we claim that $\bL(\l)$ is finite for all $\l$ in a neighborhood of the origin. For, since $\|X_{\tau_1}\| \leq \tau_1$, we have $\bE e^{\l \cdot (X_{\tau_1}, \tau_1)} \leq \bE e^{\|\l\| \|(X_{\tau_1},\tau_1)\|} \leq \bE e^{\sqrt{2}\|\l\| \tau_1}$. However, it was shown in \cite[Theorem 2.1]{sSlowdown} that $\tau_1$ has exponential tails under $\P$ (and therefore also under $\bP$). Thus, $ \bE e^{\l \cdot (X_{\tau_1}, \tau_1)} < \infty$ if $\| \l \|$ is sufficiently small. 

Since $\bL(\l)$ is the logarithm of a non-degenerate moment generating function, $\bL$ is strictly convex and analytic in a neighborhood of the origin. 
Then, since $\bar{I}$ is the Fenchel-Legendre transform of $\bL$, $\bar{I}$ is strictly convex and analytic in a neighborhood of $(v_P \bE \tau_1, \bE\tau_1) = \nabla \bL(\mathbf{0})$ (see Lemma \ref{Appendix_analytic} in Appendix \ref{Aanalytic}).
%The definition of $\bar{I}(x,y)$ as the Fenchel-Legendre transform of $\bL(\l)$ then implies that $\bar{I}(x,y)$ is strictly convex and analytic in a neighborhood of $(v_P \bE \tau_1, \bE\tau_1) = \nabla \bL(\mathbf{0})$ (see Lemma \ref{Appendix_analytic} in Appendix \ref{Aanalytic}). 
Therefore, $f(v,s)= s \bar{I}(v/s, 1/s)$ is analytic for $(v,s)$ in a neighborhood of $(v_P, 1/\bE \tau_1)$. 
Thus, it is enough to show that there exists an analytic function $s(v)$ in a neighborhood of $v_P$ such that $\bar{J}(v) = f(v,s(v))$. To this end, first note that $\bar{J}(v_P) = f(v_P, 1/\bE \tau_1) = \inf_{s\in(0,1]} f(v_P, s) = 0$, and therefore, since $f$ is non-negative and analytic in a neighborhood of $(v_P, 1/\bE \tau_1)$, we have that $\frac{\del f}{\del s} (v_P, 1/\bE \tau_1 ) = 0$.
Also, since $f(v,s)$ is a convex function, $\frac{\del f}{\del s} (v,s_0) = 0$ implies that $\bar{J}(v) = f(v,s_0)$. Therefore, it is enough to find an analytic function $s(v)$ in a neighborhood of $v_P$ such that $\frac{\del f}{\del s} (v,s(v))= 0$. 
A version of the implicit function theorem \cite[Theorem 7.6]{fgHolomorphic} gives the existence of such a function $s(v)$ if we can show that
%To show the existence of such a function $s(v)$, by the implicit function theorem \cite[Theorem 7.6]{fgHolomorphic} 
%it is enough to check that  
\begin{equation}
\frac{\del^2 f}{\del s^2} (v_P, 1/\bE \tau_1) \neq 0. \label{Thesis_2derivative}
\end{equation}

To see that \eqref{Thesis_2derivative} holds, note that the definition of $f(v,s)$ implies
%\[
%\frac{\del^2}{\del s^2} f(v,s) = \frac{1}{s} \left(\frac{v}{s}, \frac{1}{s}\right)^t \cdot D^2 \bar{I} \left(\frac{v}{s}, \frac{1}{s}\right) \cdot %\left( \frac{v}{s} , \frac{1}{s} \right)  ,
%\]
\begin{equation}
\frac{\del^2 f}{\del s^2} (v,s) = \frac{1}{s^3} \left(v, 1 \right) \cdot D^2 \bar{I}\left( \frac{v}{s}, \frac{1}{s} \right) \cdot \binom{v}{1}, \label{Thesis_2derivI}
\end{equation}
where $D^2 \bar{I}$ is the matrix of second derivatives for $\bar{I}$. However, since $\bar{I}(x,y)$ is strictly convex in a neighborhood of $(v_P \bE \tau_1, \bE\tau_1)$, $D^2 \bar{I}(x,y)$ is strictly positive definite for $(x,y)$ in a neighborhood of $(v_P \bE \tau_1, \bE\tau_1)$. Thus, from \eqref{Thesis_2derivI} we see that $\frac{\del^2 f}{\del s^2} (v_P, 1/\bE \tau_1) > 0$ and so \eqref{Thesis_2derivative} holds. 
\end{proof}
\end{subsection}

%For any $v\in \R^d$ with $\|v\|_1 < 1$ and $v\cdot \ell>0$, one would hope that $\P(X_n \approx n v) \approx \P(X_{\tau_k} \approx n v, \; \tau_k \approx n )$ for some $k = s n$. Assuming that the first regeneration time doesn't have too large of an effect, the latter probability should be roughly the same under $\bP$, in which case it 

\begin{subsection}{LDP Lower Bound}
%Let $\| \xi \|_1$ denote the $L^1$ norm of the vector $\xi$. 
%Our first step in comparing $\bar{J}(v)$ and $H(v)$ is the following large deviation lower bound:
We now prove the following large deviation lower bound:
\begin{prop}[Lower Bound] \label{Thesis_ldplb}
For any $v \in H_\ell$, 
\[
\lim_{\d\ra 0} \liminf_{n\ra\infty} \frac{1}{n} \log \P( \| X_n - nv \| < n \d ) \geq - \bar{J}(v).
\]
\end{prop}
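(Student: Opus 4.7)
The plan is to exploit the i.i.d.\ structure of the regeneration increments $(X_{\tau_j}-X_{\tau_{j-1}},\tau_j-\tau_{j-1})$ under $\bP$ to apply Cram\'er's theorem, and then transfer the resulting lower bound from the random times $\tau_k$ back to the deterministic time $n$ via the trivial one-step estimate $\|X_n-X_{\tau_k}\|\leq|n-\tau_k|$. The definition of $\bar J$ as an infimum over $s\in(0,1]$ is engineered precisely so that, for an appropriate choice of $k=k_n\approx sn$, the event $\{\tau_{k_n}\approx n,\,X_{\tau_{k_n}}\approx nv\}$ has annealed probability of order $e^{-n\bar J(v)}$.

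We may assume $\bar J(v)<\infty$, otherwise the claim is vacuous. Fix $\eta>0$ and choose $s^*\in(0,1]$ with $s^*\bar I(v/s^*,1/s^*)<\bar J(v)+\eta$; in particular $\bar I(v/s^*,1/s^*)<\infty$. Set $k_n:=\lfloor s^*n\rfloor$, and for $\e>0$ let $B_\e\subset\R^{d+1}$ denote the open Euclidean ball of radius $\e$ around $(v/s^*,1/s^*)$. The sequence $(X_{\tau_j}-X_{\tau_{j-1}},\tau_j-\tau_{j-1})_{j\geq 1}$ is i.i.d.\ under $\bP$ by Theorem \ref{Thesis_regindep}, and (as already noted in the proof of Lemma \ref{Thesis_analytic}) its moment generating function $e^{\bL(\cdot)}$ is finite in a neighborhood of the origin. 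Cram\'er's theorem therefore furnishes the LDP lower bound
\[
 \liminf_{n\to\infty}\tfrac{1}{n}\log\bP\!\left(\tfrac{1}{k_n}(X_{\tau_{k_n}},\tau_{k_n})\in B_\e\right)\ \geq\ -s^*\inf_{(x,t)\in B_\e}\bar I(x,t).
\]

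On the event $\{k_n^{-1}(X_{\tau_{k_n}},\tau_{k_n})\in B_\e\}$, using $|k_n/s^*-n|\leq 1/s^*$ we have $|\tau_{k_n}-n|\leq s^*n\e+O(1)$ and $\|X_{\tau_{k_n}}-nv\|\leq s^*n\e+O(1)$. Since $\|X_n-X_{\tau_{k_n}}\|\leq|n-\tau_{k_n}|$, the triangle inequality yields $\|X_n-nv\|\leq 2s^*n\e+O(1)$, which is less than $n\d$ once $\e<\d/(4s^*)$ and $n$ is large. Using $\P(\cdot)\geq\P(D)\bP(\cdot)$ with $\P(D)>0$ (by Theorem \ref{Thesis_regindep}, since non-nestling in direction $\ell$ gives $\P(A_\ell)=1$), we conclude that for every $\d>0$,
\[
 \liminf_{n\to\infty}\tfrac{1}{n}\log\P(\|X_n-nv\|<n\d)\ \geq\ -s^*\inf_{(x,t)\in B_\e}\bar I(x,t).
\]
Letting $\e\downarrow 0$ and invoking lower semi-continuity of $\bar I$ gives $\inf_{B_\e}\bar I\to\bar I(v/s^*,1/s^*)$, so the right-hand side tends to $-s^*\bar I(v/s^*,1/s^*)\geq-\bar J(v)-\eta$. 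Taking $\d\downarrow 0$ followed by $\eta\downarrow 0$ completes the proof. The only subtle point is the passage from $(X_{\tau_{k_n}},\tau_{k_n})$ to $X_n$; this is cheap because on the good event the time gap is already of order $n\e$, which the trivial unit-step bound converts into a spatial gap of the same order, well within the tolerance $n\d$.
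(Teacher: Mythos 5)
Your proposal is correct and follows essentially the same route as the paper's proof: both apply Cram\'er's theorem to the i.i.d.\ regeneration increments under $\bP$, transfer from $(X_{\tau_{k_n}},\tau_{k_n})$ with $k_n\approx sn$ to $X_n$ via the nearest-neighbor bound $\|X_n-X_{\tau_{k_n}}\|\le|n-\tau_{k_n}|$, and optimize over the time-change parameter to recover $\bar J(v)$. The only differences are cosmetic (the paper parametrizes by $t=1/s\ge 1$ and shrinks $\d$ before optimizing, whereas you fix a near-optimal $s^*$ and shrink a ball $B_\e$).
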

\begin{proof}
Let $\| \xi \|_1$ denote the $L^1$ norm of the vector $\xi$. Then, it is enough to prove the statement of the proposition with $\| \cdot \|_1$ in place of $\| \cdot \|$. 
Also, since $\P( \| X_n - nv \|_1 < n \d ) \geq \P(D) \bP( \| X_n - nv \|_1 < n \d )$, it is enough to prove the statement of the proposition with $\bP$ in place of $\P$. That is, it is enough to show
\[
\lim_{\d\ra 0} \liminf_{n\ra\infty} \frac{1}{n} \log \bP( \| X_n - nv \|_1 < n \d ) \geq - \bar{J}(v).
\] 

Now, for any $\d>0$ and any integer $k$, since the walk is a nearest neighbor walk,
\[
\bP( \| X_n - nv \|_1 < 4 n\d ) \geq \bP\left( \| X_{\tau_k} - nv \|_1 < 2 n \d, \; | \tau_k - n | < 2n\d \right). 
\]
For any $t\geq 1$, let $k_n=k_n(t):= \lfloor n/t \rfloor$, so that $n - t < k_n t \leq n$ for all $n$. Thus, for any $\d>0$ and $t\geq 1$, and for all $n$ large enough (so that $n\d > t$), 
\begin{align*}
\bP( \| X_n - nv \|_1 < 4 n\d ) &\geq \bP\left( \| X_{\tau_{k_n}} - nv \|_1 < 2 n \d, \; | \tau_{k_n} - n | < 2n\d \right)
\\
&\geq \bP\left( \| X_{\tau_{k_n}} - k_n t v\|_1 <  k_n t \d, \; | \tau_{k_n} - k_n t | < k_n t \d \right).
\end{align*}
Therefore, for any $\d>0$ and $t\geq 1$,
\begin{align*}
&\liminf_{n\ra\infty} \frac{1}{n} \log \bP( \| X_n - nv \|_1 < 4 n\d )  \\
&\qquad \geq \liminf_{n\ra\infty} \frac{1}{n} \log \bP\left( \| X_{\tau_{k_n}} - k_n t v\|_1 <  k_n t \d, \; | \tau_{k_n} - k_n t | < k_n t \d \right) \\
&\qquad \geq \frac{1}{t} \liminf_{n\ra\infty} \frac{1}{k_n} \log \bP\left( \| X_{\tau_{k_n}} - k_n t v\|_1 <  k_n t \d, \; | \tau_{k_n} - k_n t | < k_n t \d \right) \\
&\qquad = \frac{1}{t} \liminf_{k\ra\infty} \frac{1}{k} \log \bP\left( \| X_{\tau_{k}} - k t v\|_1 <  k t \d, \; | \tau_{k} - k t | < k t \d \right)\\
&\qquad = -\frac{1}{t} \inf_{ \substack{  \| x - tv\|_1 < t\d \\  | y - t | < t \d } } \bar{I}(x,y),
\end{align*}
where the last equality is from \eqref{Thesis_XtauLDPlb}. 
Then, taking $\d\ra 0$ we get that for any $t\geq 1$, 
\[
\lim_{\d\ra 0} \liminf_{n\ra\infty} \frac{1}{n} \log \bP( \| X_n - nv \|_1 < 4 n\d ) \geq -\frac{1}{t} \bar{I}(vt,t). 
\]
Since the above is true for any $t$, the proof is completed by taking the supremum of the right hand side over all $t\geq 1$ and 
%noting that $\bar{J}(v) = \inf_{t\geq 1} t^{-1} \bar{I}(vt,t)$ by definition. 
recalling the definition of $\bar{J}$.  
\end{proof}
\begin{cor}\label{Thesis_ldplbC}
$\bar{J}(v) \geq H(v)$ for all $v\in H_\ell$. 
\end{cor}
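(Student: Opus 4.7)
The plan is to combine the lower bound from Proposition \ref{Thesis_ldplb} with the annealed LDP upper bound from Theorem \ref{annealedLDP} applied to small balls around $v$. Fix $v\in H_\ell$ and $\d>0$, and apply the annealed upper bound to the closed ball $\overline{B(v,\d)}:=\{u\in\R^d:\|u-v\|\leq\d\}$. Since $\P(\|X_n-nv\|<n\d)\leq \P(X_n/n\in\overline{B(v,\d)})$, Theorem \ref{annealedLDP} gives
\[
\limsup_{n\ra\infty}\frac{1}{n}\log \P(\|X_n-nv\|<n\d)\leq -\inf_{u\in\overline{B(v,\d)}}H(u).
\]
Proposition \ref{Thesis_ldplb} supplies the matching lower bound $-\bar{J}(v)$ for the corresponding $\liminf$, so chaining the two inequalities through the $\d\downarrow 0$ limit yields
\[
-\bar{J}(v)\leq \lim_{\d\ra 0}\limsup_{n\ra\infty}\frac{1}{n}\log \P(\|X_n-nv\|<n\d)\leq -\lim_{\d\ra 0}\inf_{u\in\overline{B(v,\d)}}H(u).
\]

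The only remaining point is to identify the inner limit on the right with $H(v)$. This is where lower semicontinuity of the good rate function $H$ (guaranteed by Theorem \ref{annealedLDP}) enters: the trivial bound $\inf_{u\in\overline{B(v,\d)}}H(u)\leq H(v)$ combined with $\liminf_{u\to v}H(u)\geq H(v)$ forces $\lim_{\d\ra 0}\inf_{u\in\overline{B(v,\d)}}H(u)=H(v)$. Substituting back gives $-\bar{J}(v)\leq -H(v)$, which is the claim.

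The proof is essentially a one-line sandwiching argument, so there is no real obstacle; the work has all been done in Proposition \ref{Thesis_ldplb}. The only subtlety worth flagging in the write-up is the (routine) appeal to lower semicontinuity of $H$ to pass from an infimum over a shrinking neighborhood to the value at $v$ itself. Everything else is bookkeeping with $\limsup$ versus $\liminf$.
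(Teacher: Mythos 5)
Your proof is correct and takes essentially the same route as the paper: the paper simply invokes the standard fact that, for an LDP with (lower semicontinuous) good rate function, $\lim_{\d\ra 0}\liminf_{n\ra\infty}\frac{1}{n}\log\P(\|X_n-nv\|<n\d)=-H(v)$, and then cites Proposition \ref{Thesis_ldplb}. Your write-up merely makes explicit the $\liminf\leq\limsup$ chain and the lower semicontinuity step hidden in that standard fact, which is fine.
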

\begin{proof}
The annealed LDP in Theorem \ref{annealedLDP} implies that 
\[\lim_{\d\ra 0} \liminf_{n\ra 0} \frac{1}{n} \log \P( \| X_n - nv \| < n \d ) = -H(v).\]  
The proof then follows immediately from Proposition \ref{Thesis_ldplbC}.
\end{proof}
\noindent\textbf{Remark:} The proof of Proposition \ref{Thesis_ldplb} does not use that $P$ is non-nestling. In fact, it is enough to assume that $\P(A_\ell) > 0$ so that, by the remark after Theorem \ref{Thesis_regindep}, we can use the i.i.d. structure for regeneration times under $\bar{P}$. 
\end{subsection}

\begin{subsection}{LDP Upper Bound in a neighborhood of $v_P$}\label{Thesis_upperbound}
%Obviously, $\bar{J}(v)=\infty$ for any $v\in\R^d$ with $v\cdot \ell < 0$, and so there is no hope that $\bar{J}(v)=H(v)$ for all $v\in\R^d$. However, it is possible that $\bar{J}(v)=H(v)$ for all $v\cdot \ell >0$. 
We now wish to prove a matching large deviation upper bound to Proposition \ref{Thesis_ldplb}. 
Ideally, we would like for the upper bound to be valid for all $v \in H_\ell$, and in the next subsection we prove that this is the case when $d=1$. 
For $d>1$ we are only able to prove a matching upper bound to Proposition \ref{Thesis_ldplb} in a neighborhood of $v_P$.
%The following proposition is partial progress in this direction for $d>1$, 
However, this is enough to be able to prove Theorem \ref{Thesis_LDPdiff}.

We first prove an upper bound involving regeneration times.
\begin{lem}\label{chcvlem}
For any $t,k\in \N$ and any $x\in \Z^d$,
\[
\bP( X_{\tau_k} = x, \; \tau_k = t ) \leq e^{-t \bar{J}\left( \frac{x}{t} \right) } . 
\]
\end{lem}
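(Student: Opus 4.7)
The plan is to recognize this as a standard Chernoff/exponential Chebyshev bound for the i.i.d.\ sum $(X_{\tau_k},\tau_k) = \sum_{i=1}^k (X_{\tau_i}-X_{\tau_{i-1}}, \tau_i-\tau_{i-1})$ under $\bP$, combined with the change of variables $s=k/t$ that matches the definition of $\bar{J}$.

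First I would use Theorem \ref{Thesis_regindep}, which says that under $\bP$ the vectors $(X_{\tau_i}-X_{\tau_{i-1}}, \tau_i-\tau_{i-1})$ are i.i.d.\ with common law that of $(X_{\tau_1},\tau_1)$. Hence the moment generating function of $(X_{\tau_k},\tau_k)$ under $\bP$ equals $e^{k\bL(\l)}$ for every $\l\in\R^{d+1}$ in the domain of finiteness of $\bL$. The exponential Chebyshev inequality then gives, for every such $\l$,
\[
\bP(X_{\tau_k}=x,\ \tau_k=t) \leq e^{-\l\cdot (x,t)}\,\bE e^{\l\cdot(X_{\tau_k},\tau_k)} = e^{-\l\cdot(x,t)+k\bL(\l)}.
\]
Optimizing over $\l$ and reading $\bar{I}$ as the Fenchel--Legendre transform of $\bL$ (i.e.\ $\bar{I}(x,t)=\sup_{\l}\l\cdot(x,t)-\bL(\l)$) yields
\[
\bP(X_{\tau_k}=x,\ \tau_k=t)\ \leq\ \exp\!\left\{-\sup_{\l}\bigl[\l\cdot(x,t)-k\bL(\l)\bigr]\right\}\ =\ e^{-k\bar{I}(x/k,\,t/k)}.
\]

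Next I would rewrite the exponent using $s:=k/t$. A direct algebraic manipulation gives
\[
k\,\bar{I}\!\left(\frac{x}{k},\frac{t}{k}\right) \;=\; t\cdot s\,\bar{I}\!\left(\frac{x/t}{s},\frac{1}{s}\right).
\]
At this point the only thing left is to verify that $s\in(0,1]$, so that $s\,\bar{I}((x/t)/s,1/s)\ge \inf_{0<s'\le1}s'\,\bar{I}((x/t)/s',1/s')=\bar{J}(x/t)$. But the definition of the regeneration times forces $\tau_i\geq \tau_{i-1}+1$ for every $i\geq 1$, hence $\tau_k\geq k$, so $s=k/t\leq 1$; and $s>0$ since $k\geq 1$. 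Combining these two observations,
\[
\bP(X_{\tau_k}=x,\ \tau_k=t)\ \leq\ e^{-t\,s\,\bar{I}((x/t)/s,\,1/s)}\ \leq\ e^{-t\,\bar{J}(x/t)},
\]
as desired.

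There is no serious obstacle in this proof: the argument is just the standard Cram\'er upper bound for i.i.d.\ sums, and the only subtlety is the bookkeeping step of checking that the scale parameter $s=k/t$ lies in the range $(0,1]$ over which the infimum defining $\bar{J}$ is taken. Note also that one should tacitly observe $x/t\in H_\ell$ so that $\bar{J}(x/t)$ is defined: since $X_{\tau_k}\cdot\ell\geq X_{\tau_1}\cdot\ell>0$ on the event in question, indeed $(x/t)\cdot\ell>0$.
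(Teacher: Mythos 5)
Your proof is correct and is essentially the paper's own argument: an exponential Chebyshev bound combined with the i.i.d. regeneration structure from Theorem \ref{Thesis_regindep}, followed by the substitution $s=k/t$ and the definition of $\bar{J}$. The additional checks you make (that $s=k/t\in(0,1]$ because $\tau_k\geq k$, and that $x/t\in H_\ell$ on the event in question) are details the paper leaves implicit, and you have verified them correctly.
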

\begin{proof}
Chebychev's inequality implies that, for any $\l \in \R^{d+1}$,
\[
\bP\left( X_{\tau_k}=x, \tau_k=t \right) \leq e^{-\l\cdot(x,t)} \bE e^{\l \cdot(X_{\tau_k},\tau_k)} = e^{-k\left( \l\cdot(x/k, t/k) - \bL(\l) \right)},
\]
where in the last equality we used the i.i.d. structure of regeneration times from Theorem \ref{Thesis_regindep}. 
Thus, taking the infimum over all $\l \in \R^{d+1}$ and using the definition of $\bar{J}$ (with $s=\frac{k}{t}$),  
\[
\bP\left( X_{\tau_k}=x, \tau_k=t \right) \leq e^{-k \bar{I}\left( \frac{x}{k}, \frac{t}{k} \right) } = e^{-t \frac{k}{t} \bar{I}\left( \frac{x}{t} \frac{t}{k}, \frac{t}{k} \right) } \leq e^{-t \bar{J}\left( \frac{x}{t} \right)} .
\]
\end{proof}

We are now ready to give a matching upper bound to Proposition \ref{Thesis_ldplb} in a neighborhood of $v_P$. 
\begin{prop}[Upper Bound] \label{Thesis_ldpub}
There exists an $\eta > 0$ such that, for any $\|v-v_P\| < \eta$ and for all $\d$ sufficiently small, 
\[
\limsup_{n\ra\infty} \frac{1}{n} \log \P \left( \|X_n - n v \| < n \d \right) \leq - \inf_{\|x - v\| < \d}  \bar{J}(x).
\]
\end{prop}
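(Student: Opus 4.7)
The plan is to exploit the i.i.d.\ structure of regeneration blocks under $\bar P$, combine it with the pointwise bound provided by Lemma \ref{chcvlem}, and then use the analyticity (hence continuity) of $\bar J$ near $v_P$ from Lemma \ref{Thesis_analytic} to convert the resulting sum into the desired infimum. First, I would reduce from $\P$ to $\bar P$ using the strong Markov property at $\tau_1$:
\[
\P(\|X_n-nv\|<n\delta)\leq \P(\tau_1>n^{1/2})+\!\!\sum_{\substack{t_1\leq n^{1/2}\\ \|x_1\|\leq t_1}}\!\!\P(\tau_1=t_1,X_{\tau_1}=x_1)\,\bar P(\|X_{n-t_1}-(nv-x_1)\|<n\delta).
\]
Since $P$ is non-nestling, $\tau_1$ has exponential tails (\cite{sSlowdown}), so $\P(\tau_1>n^{1/2})$ is superpolynomially small, and for the remaining terms $(nv-x_1)/(n-t_1)=v+O(n^{-1/2})$; it therefore suffices to prove the stated bound with $\bar P$ in place of $\P$.

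Next, I would decompose according to the index $N_n:=\sup\{k:\tau_k\leq n\}$ of the last regeneration before time $n$. On the event $E_n=\{\max_{1\leq k\leq n+1}(\tau_k-\tau_{k-1})\leq C\log n\}$, the exponential tails of the regeneration increments give $\bar P(E_n^c)=o(e^{-\alpha n})$ for every $\alpha>0$ once $C$ is large enough, and on $E_n$ one has $\|X_n-X_{\tau_{N_n}}\|\leq C\log n$ and $|n-\tau_{N_n}|\leq C\log n$. Consequently, any realization in $E_n\cap\{\|X_n-nv\|<n\delta\}$ satisfies $X_{\tau_{N_n}}=x$, $\tau_{N_n}=t$ for some pair $(x,t)$ with $\|x-nv\|\leq n\delta+C\log n$ and $|t-n|\leq C\log n$. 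Since $\tau_k=t$ holds for at most one $k$,
\[
\bar P(\exists k: X_{\tau_k}=x,\tau_k=t)=\sum_{k=1}^{t}\bar P(X_{\tau_k}=x,\tau_k=t)\leq t\,e^{-t\bar J(x/t)}
\]
by Lemma \ref{chcvlem}. The number of admissible $(x,t)$ is polynomial in $n$, so
\[
\bar P\bigl(E_n\cap\{\|X_n-nv\|<n\delta\}\bigr)\leq C n^{d+2}\exp\!\Bigl(-(n-C\log n)\!\!\!\inf_{\|y-v\|\leq\delta+C\log n/n}\!\!\!\bar J(y)\Bigr).
\]

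To finish, I would choose $\eta\in(0,\eta_0)$ with $\eta_0$ as in Lemma \ref{Thesis_analytic} and take $\delta>0$ small enough that $\{y:\|y-v\|\leq 2\delta\}$ lies in the domain of analyticity (hence continuity) of $\bar J$ for every $\|v-v_P\|<\eta$. Taking logarithms, dividing by $n$, and letting $n\to\infty$, the polynomial and $\log n$ corrections vanish, and continuity of $\bar J$ yields
\[
\limsup_{n\to\infty}\tfrac{1}{n}\log\P(\|X_n-nv\|<n\delta)\leq -\lim_{n\to\infty}\!\!\!\inf_{\|y-v\|\leq\delta+C\log n/n}\!\!\!\bar J(y)=-\inf_{\|y-v\|\leq\delta}\bar J(y)=-\inf_{\|y-v\|<\delta}\bar J(y),
\]
the last equality being the standard equivalence of open and closed infima for a continuous function on the closure of a bounded open set.

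The main obstacle is the bookkeeping of the residual (incomplete) regeneration block $\tau_{N_n+1}-\tau_{N_n}$, which a priori could be as large as $n$ and would completely spoil the argument by allowing $X_{\tau_{N_n}}$ to wander far from $nv$. The non-nestling assumption enters essentially here, both because it forces $\P(A_\ell)=1$ so that $\tau_1<\infty$ almost surely and all regeneration blocks are well-defined, and because it provides the exponential tail on $\tau_1$ from \cite{sSlowdown} that restricts attention to blocks of length $O(\log n)$ at the cost of only a subexponential correction. Without these two ingredients there is no mechanism to convert the pointwise estimate of Lemma \ref{chcvlem} into a bound on $\P(\|X_n-nv\|<n\delta)$, and this is why the proposition is restricted to a neighborhood of $v_P$ in the non-nestling regime.
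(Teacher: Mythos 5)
There is a genuine gap, and it sits exactly at the point you flag as the ``main obstacle.'' You truncate the regeneration increments at $C\log n$ and assert that the exponential tails give $\bP(E_n^c)=o(e^{-\a n})$ for every $\a>0$ once $C$ is large. This is false: with $\bP(\tau_k-\tau_{k-1}>x)\leq C_1e^{-C_2x}$, a union bound gives only
\[
\bP\left(\max_{k\leq n+1}(\tau_k-\tau_{k-1})>C\log n\right)\leq C_1(n+1)e^{-C_2C\log n}=C_1(n+1)n^{-C_2C},
\]
which is polynomially, not exponentially, small (and by uniform ellipticity the tail of $\tau_1$ really is of order $e^{-cx}$, so this cannot be improved). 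Since the target bound $e^{-n\inf_{\|x-v\|<\d}\bar J(x)}$ is exponentially small for $v\neq v_P$, the contribution of $E_n^c$ dominates and forces $\limsup_n\frac1n\log\P(\|X_n-nv\|<n\d)\geq 0$ along your estimate, which destroys the conclusion.

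The fix is not cosmetic. To make the bad event exponentially negligible you must truncate at a linear threshold $\e n$, as the paper does; but then the last incomplete regeneration block can displace $(X_{\tau_k},\tau_k)$ by up to $\e n$, so the points $x/t$ at which Lemma \ref{chcvlem} is applied range over a ball of radius roughly $(\d+2\e)/(1-\e)$ around $v$, over which $\inf\bar J$ may be strictly smaller than $\inf_{\|x-v\|<\d}\bar J$. The paper compensates by extracting the exponential cost $e^{-C_2(u+s)n}$ of a long first block ($\tau_1=un$) and a long final incomplete block ($\tau_{k+1}-\tau_k>sn$), and then uses the mean value theorem together with a bound on $\|\nabla\bar J\|$ near $v_P$ (this is where the choice of $\eta$ and $\e$, and the restriction to a neighborhood of $v_P$, genuinely enter) to show that $(1-s)\bar J(x/(1-s))+C_2s\geq\inf_{\|y-v\|<\d}\bar J(y)$. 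Your argument contains no analogue of this compensation step, so even after repairing the truncation the proof does not close.
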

\begin{proof}
Recall that, since $P$ is non-nestling, \cite[Theorem 2.1]{sSlowdown} implies that $\tau_1$ has exponential tails. Thus, there exist constants $C_1,C_2>0$ such that $\max \left\{ \P( \tau_1 > x ) , \bP( \tau_1 > x ) \right\} \leq C_1 e^{-C_2 x}$ for all $x>0$. 
By Lemma \ref{Thesis_analytic}, we know that there exists an $\eta_0$ such that $\bar{J}$ is analytic on $\{v: \|v-v_P\| < \eta_0 \}$. We now introduce the following functions which will be useful in the proof:
\[
\a(r) := \sup_{v: \|v-v_P\| \leq r  } \bar{J}(v) , \qquad \b(r) := \sup_{v: \|v-v_P\| \leq r } \left\| \nabla \bar{J}(v) \right\|, \qquad r< \eta_0 . 
\]
Since $\bar{J}$ is non-negative and analytic on $\{v: \| v - v_P \| < \eta_0 \}$, and since $\bar{J}(v_P)=0$, $\a(r)$ and $\b(r)$ are continuous on $[0,\eta_0)$ and  $\a(0)=\b(0) = 0$. 
Choose $\e>0$ such that $\frac{4\e}{1-2\e} < \eta_0$ and $\b\left( \frac{4\e}{1-2\e} \right) < \frac{C_2}{4}$. Then, choose $\eta > 0$ small enough so that $ \frac{ \eta + 4\e}{1-2\e} < \eta_0$, $\b\left( \frac{ \eta + 4\e}{1-2\e} \right) < \frac{C_2}{4}$ and $\a(\eta) < \frac{C_2}{2} \wedge \e C_2$.  

Having introduced the necessary notation, we now proceed with the proof. Let $\e,\eta>0$ be chosen as above, let $v$ be such that $\| v- v_P \| < \eta$, and let $\d < \eta - \| v- v_P \|$. Now,
\begin{align}
\P( \| X_n - n v \| < n \d ) &\leq \P(\exists k\leq n: \tau_k-\tau_{k-1} \geq \e n)  \nonumber \\
&\qquad + \P\left( \exists k: \tau_1 < \e n, \; \tau_k \in (n-\e n, n], \; \| X_n - n v \| < n \d, \; \tau_{k+1} > n \right). \label{Thesis_tausmall}
\end{align}
Then, since $\bar{J}(v) \leq \a( \eta ) < \e C_2 $, 
\[
\P(\exists k\leq n: \tau_k-\tau_{k-1} \geq \e n) \leq C_1 n e^{-C_2 \e n} \leq C_1 n e^{-n\bar{J}(v)}.
\]
Thus, we need only to bound the second term in \eqref{Thesis_tausmall}.  
%To this end, note that
%\begin{align}
%&\P\left( \exists k: \tau_1 < \e n, \; \tau_k \in (n-\e n, n], \; \| X_n - n v \| < n \d, \; \tau_{k+1} > n \right) \nonumber \\
%&\qquad \leq  \sum_{k\leq n} \sum_{u\in(0,\e)} \sum_{s\in [0,\e )} \P\left( \tau_1 = u n, \; \tau_k = (1-s)n, \; \|X_{\tau_k} - n v \| < n(\d + s), \; \tau_{k+1}-\tau_k > n s \right)  \nonumber \\
%&\qquad =  \sum_{k\leq n} \sum_{u\in(0,\e)} \sum_{s\in [0,\e )} \P\left( \tau_1 = u n, \; \tau_k = (1-s)n, \; \|X_{\tau_k} - n v \| < n(\d + s) \right) \bP(\tau_1 > ns)  \nonumber \\
%& \qquad  \leq \sum_{k\leq n} \sum_{u\in(0,\e)} \sum_{s\in [0,\e )} \P\left( \tau_1 = u n\right) \bP\left( \tau_{k-1} = (1-u-s)n, \; \|X_{\tau_{k-1}} - n v \| < n(\d + s + u) \right) \bP(\tau_1 > ns)  \nonumber \\
%& \qquad  \leq \sum_{k\leq n} \sum_{u\in(0,\e)} \sum_{s\in [0,\e )} \P\left( \tau_1 = u n\right) \bP\left( \tau_{k-1} = (1-u-s)n, \; \|X_{\tau_{k-1}} - n v \| < n(\d + s + u) \right) C_1 e^{-C_2 s n} \nonumber \\
%& \qquad  \leq \sum_{k\leq n} \sum_{u\in(0,\e)} \sum_{s\in [0,\e )} C_1^2 e^{-C_2 (u+s) n} \bP\left( \tau_{k-1} = (1-u-s)n, \; \|X_{\tau_{k-1}} - n v \| < n(\d + s + u) \right), \label{Thesis_smallfirsttau}
%\end{align}
%where the sums are only over the finite number of possible $u,s$ and $x$ such that the probabilities are non-zero. 

Since the random walk is a nearest neighbor walk, $\| X_{\tau_k} - nv \| 
%\leq \| X_n - nv \| + \| X_n - X_{\tau_k} \| 
\leq \| X_n - nv \| + | n-\tau_k|$. Thus,
\begin{align}
&\P\left( \exists k: \tau_1 < \e n, \; \tau_k \in (n-\e n, n], \; \| X_n - n v \| < n \d, \; \tau_{k+1} > n \right) \nonumber \\
&\qquad \leq  \sum_{k\leq n} \sum_{u\in(0,\e)} \sum_{s\in [0,\e )} \P\left( \tau_1 = u n, \; \tau_k = (1-s)n, \; \|X_{\tau_k} - n v \| < n(\d + s), \; \tau_{k+1}-\tau_k > n s \right),  \nonumber
\end{align}
where the above sums are only over the finite number of possible $u,s$ and $x$ such that the probabilities are non-zero. However, 
\begin{align*}
&\P\left( \tau_1 = u n, \; \tau_k = (1-s)n, \; \|X_{\tau_k} - n v \| < n(\d + s), \; \tau_{k+1} > n  \right)\\
&\qquad \leq \P\left( \tau_1 = u n, \; \tau_k-\tau_1 = (1-s-u)n, \; \| X_{\tau_k} - X_{\tau_1} - nv \| \leq n (\d + s + u), \; \tau_{k+1}-\tau_k > n s \right) \\
&\qquad = \P(\tau_1 = u n) \bP\left( \tau_{k-1} = (1-s-u)n, \; \| X_{\tau_{k-1}} - nv \| \leq n(\d+s+u) \right) \bP(\tau_1 > ns),
\end{align*}
where the first inequality again uses the fact that the random walk is a nearest neighbor random walk, and the last equality uses the independence structure of regeneration times from Theorem \ref{Thesis_regindep}. Thus, since $\P(\tau_1 = u n ) \leq C_1 e^{-C_2 u n}$ and $\bP(\tau_1 > ns ) \leq C_1 e^{-C_2 s n}$,  
\begin{align}
&\P\left( \exists k: \tau_1 < \e n, \; \tau_k \in (n-\e n, n], \; \| X_n - n v \| < n \d, \; \tau_{k+1} > n \right) \nonumber \\
& \quad  \leq \sum_{k\leq n} \sum_{u\in(0,\e)} \sum_{s\in [0,\e )} C_1^2 e^{-C_2 (u+s) n} \bP\left( \tau_{k-1} = (1-s-u)n, \; \|X_{\tau_{k-1}} - n v \| < n(\d + s + u) \right). \label{Thesis_smallfirsttau}
\end{align}
%Next, note that for any $k,x$, and $t$, Chebychev's inequality implies that for any $\l \in \R^{d+1}$,
%\[
%\bP\left( X_{\tau_k}=x, \tau_k=t \right) \leq e^{-\l\cdot(x,t)} \bE e^{\l \cdot(X_{\tau_k},\tau_k)} = e^{-k\left( \l\cdot(x/k, t/k) - \bL(\l) \right)},
%\]
%where in the last equality we used the i.i.d. structure of regeneration times from Theorem \ref{Thesis_regindep}. 
%Thus, taking the infimum over all $\l \in \R^{d+1}$ and using the definition of $\bar{J}$ (letting $s=\frac{k}{t}$),  
%\begin{equation} 
%\bP\left( X_{\tau_k}=x, \tau_k=t \right) \leq e^{-k \bar{I}\left( \frac{x}{k}, \frac{t}{k} \right) } = e^{-t \frac{k}{t} \bar{I}\left( \frac{x}{t} \frac{t}{k}, \frac{t}{k} \right) } \leq e^{-t \bar{J}\left( \frac{x}{t} \right)} . \label{Thesis_chcv1}
%\end{equation}
%Applying \eqref{Thesis_chcv1}, we have that \eqref{Thesis_smallfirsttau} is bounded above by
Then, Lemma \ref{chcvlem} implies that \eqref{Thesis_smallfirsttau} is bounded above by
\begin{align}
&\sum_{k\leq n} \sum_{u\in(0,\e)} \sum_{s\in [0,\e )} \sum_{\| x - v \| < \d + u+s} e^{-n(1-s-u)\bar{J}\left( \frac{x}{1-s-u} \right)} C_1^2 e^{-C_2 (s+u) n } \nonumber \\
&\qquad \leq C_3 n^{d+3} \sup_{s\in [0,2\e)} \sup_{\|x - v\| < \d + s} e^{-n\left( (1-s)\bar{J}\left( \frac{x}{1-s} \right) + C_2 s \right)} \nonumber \\
&\qquad = C_3 n^{d+3} 
\exp \left\{ -n \left( \inf_{s\in[0,2\e)} \inf_{\|x - v\| < \d + s} (1-s)\bar{J}\left( \frac{x}{1-s} \right) + C_2 s \right) \right\}, \label{Thesis_2sups}
\end{align}
for some constant $C_3$ depending only on $\e, \eta$ and $C_1$. Therefore, to finish the proof of the proposition, it is enough to show that the infimum in \eqref{Thesis_2sups} is achieved when $s=0$. That is, it is enough to show the infimum is larger than $\inf_{\|x-v\| < \d} \bar{J}(x)$. 

To this end, note that 
\begin{align}
\inf_{s\in[0,2\e)} & \inf_{\|x - v\| < \d + s} (1-s)\bar{J}\left( \frac{x}{1-s} \right) + C_2 s \nonumber \\
& = \inf_{\|x-v\| < \d} \inf_{s\in[0,2\e)} \inf_{\|y - x\| < s} (1-s)\bar{J}\left( \frac{y}{1-s} \right) + C_2 s \nonumber \\
&\geq \inf_{\|x-v\| < \d} \inf_{s\in[0,2\e)} \inf_{\|y - x\| < s} \left[ \bar{J}(x) - (1-s)\left| \bar{J}\left( \frac{y}{1-s} \right) - \bar{J}(x)  \right|  + s( C_2 - \bar{J}(x) ) \right]. \label{Thesis_3infs}
\end{align}
Since $\d < \eta - \|v-v_P\|$, then $\|x- v\| < \d$ implies that $\| x - v_P \| < \eta$. Thus, $\|y - x \| < s$ implies that 
\[
\left\|\frac{y}{1-s} - v_P \right\| \leq  \left\|\frac{y-x}{1-s}\right\| + \left\| \frac{x}{1-s} - v_P \right\| \leq \frac{s}{1-s} + \frac{\|x-v_P\| + s}{1-s} \leq \frac{\eta+2s}{1-s} \leq \frac{\eta+4\e}{1-2\e}.
\] 
%$\|\frac{y}{1-s} \| < \frac{\eta + s}{1-s} < \frac{\eta+2\e}{1-2\e} $. 
Therefore, $x, \frac{y}{1-s} \in \{v:\|v-v_P\| < \eta_0 \}$, since $\eta$ and $\e$ were chosen so that $\eta < \frac{\eta + 4\e}{1-2\e} < \eta_0$. Since $\bar{J}$ is analytic in $\{ v: \| v-v_P \| < \eta_0 \}$, the mean value theorem implies that
%Therefore, since $\eta$ and $\e$ were chosen so that $\frac{\eta+4\e}{1-2\e} < \eta_0$, the mean value theorem gives that
\[
 \bar{J}\left( \frac{y}{1-s} \right) - \bar{J}(x) = \nabla\bar{J}(\xi) \cdot \left( \frac{y}{1-s} - x \right) 
\]
for some $\xi$ on the segment between $x$ and $y/(1-s)$. Thus,
\[
\left| \bar{J}\left( \frac{y}{1-s} \right) - \bar{J}(x)  \right| \leq \sup_{\| \xi - v_P \| < \frac{\eta+4\e}{1-2\e}} \left\| \nabla\bar{J}(\xi) \right\| \left\| \frac{y}{1-s} - x \right\| \leq \b\left( \frac{ \eta+ 4\e}{1-2\e} \right) \frac{2s}{1-s} \leq \frac{ C_2 s}{2(1-s)},
\]
where the last inequality is due to our choice of $\eta$ and $\e$. Recalling \eqref{Thesis_3infs}, we obtain 
\begin{align*}
\inf_{s\in[0,2\e)} \inf_{\|x - v\| < \d + s} (1-s)\bar{J}\left( \frac{x}{1-s} \right) + C_2 s  &\geq \inf_{\|x-v\| < \d} \inf_{s\in[0,2\e)} \inf_{\|y - x\| < s} \bar{J}(x) - \frac{C_2}{2} s + s( C_2 - \bar{J}(x) )\\
&= \inf_{\|x-v\| < \d} \inf_{s\in[0,2\e)} \bar{J}(x) + s\left( \frac{C_2}{2} - \bar{J}(x) \right)\\
&= \inf_{\|x-v\| < \d} \bar{J}(x) ,
\end{align*}
where the last inequality is because $\| x - v_P \| < \eta$, and thus $\bar{J}(x) \leq \a(\eta) < \frac{C_2}{2}$ by our choice of $\eta$. This completes the proof of the proposition.
\end{proof}
\begin{cor}\label{Thesis_ldpubC}
There exists an $\eta > 0$ such that $\bar{J}(v) \leq H(v)$ for all $\|v-v_P\| < \eta$. 
\end{cor}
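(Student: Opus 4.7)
The plan is to combine the upper bound for the annealed probabilities in Proposition \ref{Thesis_ldpub} with the large deviation lower bound for the annealed LDP in Theorem \ref{annealedLDP}. Let $\eta > 0$ be the constant from Proposition \ref{Thesis_ldpub}, and possibly shrink it so that $\{v : \|v-v_P\| < \eta\}$ is contained in the neighborhood where $\bar{J}$ is analytic (this is guaranteed by Lemma \ref{Thesis_analytic}). Fix $v$ with $\|v - v_P\| < \eta$, and let $\d > 0$ be sufficiently small for Proposition \ref{Thesis_ldpub} to apply.

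Applying the annealed LDP lower bound from Theorem \ref{annealedLDP} to the open set $G_\d = \{x \in \R^d : \|x - v\| < \d\}$, we obtain
\[
-H(v) \leq -\inf_{x \in G_\d} H(x) \leq \liminf_{n\ra\infty} \frac{1}{n} \log \P(\|X_n - nv\| < n\d).
\]
On the other hand, Proposition \ref{Thesis_ldpub} gives
\[
\limsup_{n\ra\infty} \frac{1}{n} \log \P(\|X_n - nv\| < n\d) \leq -\inf_{\|x - v\| < \d} \bar{J}(x).
\]
Chaining these two inequalities yields $\inf_{\|x - v\| < \d} \bar{J}(x) \leq H(v)$ for every sufficiently small $\d > 0$.

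Finally, we pass to the limit $\d \to 0$. Since $\bar{J}$ is analytic, and in particular continuous, on the neighborhood $\{\|v - v_P\| < \eta\}$ where we are working, it follows that
\[
\lim_{\d \to 0} \inf_{\|x - v\| < \d} \bar{J}(x) = \bar{J}(v),
\]
which gives $\bar{J}(v) \leq H(v)$ and completes the proof. The argument is essentially a soft combination of already-established bounds, with no serious obstacle; the only minor point to check is that the neighborhood from Proposition \ref{Thesis_ldpub} can be taken inside the domain of analyticity from Lemma \ref{Thesis_analytic}, which is automatic by taking the smaller of the two radii.
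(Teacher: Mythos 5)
Your proof is correct and follows essentially the same route as the paper: both arguments pit the small-ball asymptotics supplied by the annealed LDP of Theorem \ref{annealedLDP} against the upper bound of Proposition \ref{Thesis_ldpub} and then let $\d\to 0$, using the continuity of $\bar{J}$ near $v_P$ (from Lemma \ref{Thesis_analytic}) to replace $\inf_{\|x-v\|<\d}\bar{J}(x)$ by $\bar{J}(v)$. The only cosmetic difference is that you invoke only the LDP lower bound (via $\liminf\le\limsup$), whereas the paper cites the full small-ball limit $\lim_{\d\to 0}\limsup_n \frac1n\log\P(\|X_n-nv\|<n\d)=-H(v)$; both are valid.
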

\begin{proof}
The proof is similar to the proof of Corollary \ref{Thesis_ldplbC}. Theorem \ref{annealedLDP} implies that
\[
\lim_{\d\ra 0} \limsup_{n\ra\infty} \frac{1}{n} \log \P \left( \|X_n - n v \| < n \d \right) = - H(v). 
\]
The corollary then follows immediately from Proposition \ref{Thesis_ldpub}. 
\end{proof}
\end{subsection}

The proof of Theorem \ref{Thesis_LDPdiff} is now almost immediate.
%We are now able to give the
\begin{proof}[\textbf{Proof of Theorem \ref{Thesis_LDPdiff}}:]$\left.\right.$\\
Corollaries \ref{Thesis_ldplbC} and \ref{Thesis_ldpubC} imply that $H(v) = \bar{J}(v)$ in a neighborhood of $v_P$. 
%Proposition \ref{Thesis_ldplb} gives that $\bar{J}(v) \geq H(v)$ for all $v\in \R^d$, and Proposition \ref{Thesis_ldpub} gives that $\bar{J}(v) \leq H(v)$ for all $v$ in a neighborhood of $v_P$. Thus, $\bar{J}(v)=H(v)$ in a neighborhood of $v_P$. 
Lemma \ref{Thesis_analytic} then implies that $H(v)$ is analytic in a neighborhood of $v_P$.
\end{proof}

\begin{subsection}{Equality of $\bar{J}$ and $H$ when $d=1$}
For $d>1$, we only know that $\bar{J}(v)=H(v)$ in a neighborhood of $v_P$ when $P$ is non-nestling. In this subsection, we show that when $d=1$ and $\ell = 1$ (that is, $E_P \log \rho < 0$), the equality $\bar{J}(v) = H(v)$ holds for all $v\geq 0$. (Note that in this subsection we no longer assume that $P$ is non-nestling, but we still require $P$ to be i.i.d. and uniformly elliptic.) 
%When, $d=1$ and $\ell = 1$ (that is $E_P \log \rho < 0$) we can in fact prove that $H(v)=\bar{J}(v)$ for any $v\geq 0$. 
%The key is to prove the following lemma whose proof is given in Appendix \ref{Jzero}:
A crucial step in our proof of this fact is the following lemma:
\begin{lem} \label{Thesis_J0}
Assume $E_P \log \rho < 0$ and let $\bar{J}(0):= \lim_{v\ra 0^+} \bar{J}(v)$. Then, $\bar{J}(0) = H(0)$. 
\end{lem}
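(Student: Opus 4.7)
The inequality $\bar{J}(0) \geq H(0)$ is immediate. Corollary \ref{Thesis_ldplbC} gives $\bar{J}(v) \geq H(v)$ for every $v \in (0,1]$, and since the walk is nearest-neighbor, $H$ is convex and finite on $[-1,1]$, hence continuous at the interior point $v=0$. Passing to the limit $v \to 0^+$ on both sides and using the definition $\bar{J}(0):=\lim_{v\to 0^+}\bar{J}(v)$ yields $\bar{J}(0) \geq H(0)$, so the entire content of the lemma is the reverse bound $\bar{J}(0) \leq H(0)$.

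My plan for this reverse bound exploits the one-dimensional constraint that $X_{\tau_k} \geq k$ almost surely under $\bar{P}$ (each regeneration advances the walk by at least one), which forces the constraint $v/s \geq 1$ to be active in the defining infimum of $\bar{J}(v)$ for small $v$. Choosing the boundary value $s=v$ gives the explicit bound
\begin{equation*}
\bar{J}(v) \;\leq\; v\,\bar{I}(1,\,1/v), \qquad 0<v\leq 1,
\end{equation*}
and hence $\bar{J}(0) \leq \liminf_{t\to\infty} \bar{I}(1,t)/t$. The task is thereby reduced to proving $\liminf_{t\to\infty} \bar{I}(1,t)/t \leq H(0)$. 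To set this up, I would use the strong Markov property at $T_k$ together with the translation-invariance of $P$ and uniform ellipticity to establish an identity/estimate of the form
\begin{equation*}
\bar{P}\bigl(X_{\tau_k}=k,\ \tau_k=n\bigr) \;\geq\; c\,\P(T_k=n),
\end{equation*}
with $c>0$ independent of $k,n$. Combining this with the Cramér upper bound $\bar{P}(X_{\tau_k}=k,\,\tau_k=kt) \leq \exp(-k\,\bar{I}(1,t))$ gives $\bar{I}(1,t) \leq \hat{I}(t) + o(1)$ as $t\to\infty$, where $\hat{I}(t):=-\lim_n n^{-1}\log \P(T_n=nt)$ is the hitting-time rate function. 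The standard duality between walk position and hitting time (see \cite{cgzLDP}) yields $\hat{I}(t)/t = H(1/t)$, so sending $t \to \infty$ and invoking continuity of $H$ at $0$ completes the argument.

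The main obstacle will be step (ii) in the identification $\liminf_{t\to\infty}\bar{I}(1,t)/t \leq H(0)$: in the nestling case the moment generating function $\bar{L}$ may fail to be finite for $(0,\lambda_2)$ with $\lambda_2>0$, so $\tau_1$ only has polynomial tails and the Cramér rate $\bar{I}(1,t)$ grows sublinearly in $t$. One must then control the interchange of $\liminf_k$ (the Cramér averaging) and $\limsup_t$ (the passage to the boundary $v\to 0^+$) along a judicious diagonal sequence, and appeal to the explicit one-dimensional formulas from \cite{cgzLDP} to identify $\lim_{t\to\infty} \hat{I}(t)/t$ with the annealed slowdown rate $H(0)$. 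This is where the one-dimensional structure is essential; no analogous identification is available in higher dimensions, which is precisely the reason the equality $\bar{J}=H$ can only be pushed beyond a neighborhood of $v_P$ when $d=1$.
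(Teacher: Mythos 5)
Your first half is fine: $\bar{J}(0)\geq H(0)$ does follow from Corollary \ref{Thesis_ldplbC} together with continuity of the convex, finite function $H$ at $0$, and this matches the paper. The reverse inequality, however, collapses at the very first reduction. Setting $s=v$ in the infimum defining $\bar{J}(v)$ corresponds to demanding exactly one regeneration per lattice site, and for a nearest-neighbor walk this event is degenerate: under $\bP$ the first step is forced to be to the right, so $\{X_{\tau_1}=1\}=\{\tau_1=1\}$, and more generally $X_{\tau_k}=k$ forces $X_{\tau_j}=j$ for every $j\leq k$, whence each inter-regeneration block is confined to the single site $j-1$ and must have length one. Thus $\bP(X_{\tau_k}=k,\,\tau_k=n)=0$ for every $n>k$, the point $(1,t)$ lies outside the effective domain of $\bar{I}$ for $t>1$, and your bound $\bar{J}(v)\leq v\,\bar{I}(1,1/v)$, while formally valid, reads $\bar{J}(v)\leq+\infty$. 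In particular the comparison inequality $\bP(X_{\tau_k}=k,\ \tau_k=n)\geq c\,\P(T_k=n)$ is false in the strongest possible sense (left side zero, right side positive), and no choice of constant or diagonal sequence can rescue it. To make the regeneration route work one must keep the number of regenerations a vanishing fraction of both the space and the time elapsed, i.e.\ bound $\bar{J}(v)$ by $s\,\bar{I}(v/s,1/s)$ with $s$ small but decoupled from $v$, which is exactly what the paper's event $\{T_{\lfloor\e n\rfloor}\in[n,n+\e n]\}$ encodes.

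Even after this repair, the substantive content you defer as ``the main obstacle'' is the entire proof. One needs a lower bound of order $e^{-nH(0)+o(n)}$ on the probability of the slowdown event \emph{restricted to the regeneration structure} (equivalently, with the conditioning on $D$), and this cannot be extracted from the hitting-time LDP of \cite{cgzLDP} by soft duality: the conditioning destroys the translation structure that duality relies on. The paper supplies this bound in Lemma \ref{Thesis_PbarLDPT} by explicit construction --- building trap environments on $[0,n]$ in the nestling case, and running a G\"artner--Ellis computation for the constant environment $\bar\w_{\min}$ (identifying $H(0)=\bar\l$ via \cite{cgzLDP}) in the non-nestling case, with a separate treatment of $\w_{\min}=\tfrac12$. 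Your proposal neither performs this construction nor identifies a substitute for it, so the inequality $\bar{J}(0)\leq H(0)$ remains unproved.
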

The proof of Lemma \ref{Thesis_J0} is rather long and technical, and thus will be given in Appendix \ref{Jzero}. 
%Assuming Lemma \ref{Thesis_J0} for now, we have the following corollary:
\begin{cor} \label{Thesis_worstcaseslowdown}
Assume $E_P \log \rho < 0$. Then, $\left \| P_\w( X_n \leq 0 ) \right\|_{\infty} \leq e^{-n \bar{J}(0)}. $
\end{cor}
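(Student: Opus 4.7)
The plan is to combine a pathwise inclusion with the identification $\bar J(0)=H(0)$ furnished by Lemma~\ref{Thesis_J0} and an exponential Chebyshev bound on the tail of $\tau_1$.

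The first (and main substantive) step is the pathwise inclusion $\{X_n\le 0\}\subseteq\{\tau_1>n\}$. Working in the positive direction $\ell=1$, every regeneration time $\tau_k$ is a strict new maximum at which $X_{\tau_k}\ge k\ge 1$, and by the defining property of regeneration $X_m\ge X_{\tau_k}$ for every $m\ge\tau_k$. Consequently, if some $\tau_k\le n$ were to occur then $X_n\ge 1>0$, contradicting $X_n\le 0$. This inclusion is deterministic in $\omega$, and hence for every environment one has $P_\omega(X_n\le 0)\le P_\omega(\tau_1>n)$, so that $\|P_\omega(X_n\le 0)\|_\infty\le \|P_\omega(\tau_1>n)\|_\infty$.

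The second step is to bound $\|P_\omega(\tau_1>n)\|_\infty$ by $e^{-n\bar J(0)}$. Here I would apply exponential Chebyshev in the tilt $(\lambda_1,\lambda_2)=(0,\lambda)$ to Lemma~\ref{chcvlem} with $k=1$, sum the resulting estimate over $x\ge 1$ and $t>n$, and optimize over $\lambda\ge 0$. By the Fenchel--Legendre duality implicit in $\bar I(0,t)=\sup_{\lambda_2}\bigl(\lambda_2 t-\inf_{\lambda_1}\bL(\lambda_1,\lambda_2)\bigr)$ and the variational formula $\bar J(0)=\inf_{s\in(0,1]}s\bar I(0,1/s)$, the optimized Chebyshev rate coincides with $\bar J(0)$, yielding $\bar\P(\tau_1>n)\le e^{-n\bar J(0)}$. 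Lemma~\ref{Thesis_J0} then identifies $\bar J(0)$ with the annealed rate $H(0)$ from Theorem~\ref{annealedLDP}, and the explicit one-dimensional hitting-time formulas of Section~\ref{1dprelim} will be used to upgrade this annealed estimate to the desired $L^\infty(P)$ bound.

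The principal obstacle is precisely this quenched-to-annealed passage in Step 2: a direct annealed Chebyshev bound on $(X_{\tau_1},\tau_1)$ under $\bar\P$ gives only a $P$-average statement, and passing to the essential supremum without an $o(n)$ correction is not automatic. In dimension $d\ge 2$ this step would generally fail; it is only the equality $\bar J(0)=H(0)$ afforded by the specifically one-dimensional Lemma~\ref{Thesis_J0} that makes the environment-dependent prefactors subexponential and permits the clean pointwise bound claimed in the corollary.
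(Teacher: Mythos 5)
Your first step is fine: the pathwise inclusion $\{X_n\le 0\}\subseteq\{\tau_1>n\}$ is deterministic and gives $P_\w(X_n\le 0)\le P_\w(\tau_1>n)$ for every $\w$. The gap is in Step 2, and it is exactly the difficulty you flag as ``the principal obstacle'' without resolving it. All of the regeneration machinery (Theorem \ref{Thesis_regindep}, $\bL$, $\bar I$, Lemma \ref{chcvlem}) lives under the annealed measure $\bP$; an exponential Chebyshev bound on $(X_{\tau_1},\tau_1)$ controls $\bP(\tau_1>n)=E_P[P_\w(\tau_1>n\,|\,D)]$, not the essential supremum $\|P_\w(\tau_1>n)\|_\infty$, and there is no general principle letting you pass from an average over environments to a worst-case bound. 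Your assertion that Lemma \ref{Thesis_J0} ``makes the environment-dependent prefactors subexponential'' is not an argument --- the needed statement is intrinsically quenched and Lemma \ref{Thesis_J0} is purely about annealed rates. Moreover, even the annealed half of your Step 2 does not go through as written: summing the bound of Lemma \ref{chcvlem} over all admissible $x\ge 1$ and $t>n$ produces $\sup_{v}e^{-t\bar J(v)}=e^{-t\bar J(v_P)}=1$, since nothing on the event $\{\tau_1>n\}$ forces $X_{\tau_1}/\tau_1$ to be near $0$; and the corollary claims a clean bound $e^{-n\bar J(0)}$ for every $n$, with no polynomial prefactor, which a summed Chebyshev estimate cannot deliver.

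The paper's proof avoids all of this. It splits into two cases: if $P$ is nestling then $H(0)=0$ by Theorem \ref{Thesis_zeroset}, hence $\bar J(0)=0$ by Lemma \ref{Thesis_J0} and the bound is trivial; if $P$ is non-nestling it invokes the known \emph{quenched} estimate $P_\w(X_n\le 0)\le e^{-nH(0)}$, $P$-a.s.\ (equation (79) of \cite{cgzLDP}, which ultimately rests on a monotone comparison with the homogeneous worst-case environment $\bar\w_{\min}$ and an explicit computation for that walk), and then substitutes $H(0)=\bar J(0)$ from Lemma \ref{Thesis_J0}. To repair your argument you would need to supply precisely this quenched ingredient --- e.g.\ the coupling with $\bar\w_{\min}$ in the non-nestling case --- rather than any refinement of the annealed Chebyshev bound.
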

\begin{proof}
If the environment is nestling, then $\bar{J}(0)= 0$ and the statement is trivial. On the other hand, if the environment is non-nestling, 
\cite[equation (79)]{cgzLDP} implies that $ P_\w( X_n \leq 0 ) \leq e^{-n H(0)}$, $P-a.s$. The corollary then follows immediately from Lemma \ref{Thesis_J0}.
\end{proof}
Using Corollary \ref{Thesis_worstcaseslowdown}, we obtain the following improvement of Proposition \ref{Thesis_ldpub}: 
\begin{prop}[Upper Bound ($d=1$)]\label{ldpubd1}
Assume that $E_P \log \rho < 0$. Then, for any $v>0$ and $\d < v$, 
\[
\lim_{n\ra\infty} \frac{1}{n} \log \P(|X_n - nv| < \d n) \leq - \inf_{|x-v|< \d} \bar{J}(x).
\]
\end{prop}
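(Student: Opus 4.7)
The plan is to adapt the strategy of Proposition \ref{Thesis_ldpub} to the one-dimensional setting with two principal modifications. First, for possibly nestling $P$ the tail $\bP(\tau_1 > s)$ decays only polynomially (by the moment bounds on $\tau_1$ recalled in Section \ref{AnnealedLimits}), so the exponential tail bound $\bP(\tau_1>s)\le C_1 e^{-C_2 s}$ used in the proof of Proposition \ref{Thesis_ldpub} must be replaced by Corollary \ref{Thesis_worstcaseslowdown}. Second, the regeneration analysis combined with the slowdown bound must be stretched to handle all $v>0$, not merely $v$ in a neighborhood of $v_P$.

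Following Proposition \ref{Thesis_ldpub}, I would decompose by the last regeneration time $\tau_K := \max\{k : \tau_k \le n\}$. The nearest-neighbor property of $X$ gives $0 \le X_n - X_{\tau_K} \le n - \tau_K \le \tau_{K+1} - \tau_K$, so on the event $\{\tau_{K+1} - \tau_K \le \e n\}$ one has $|X_{\tau_K} - nv| < (\d + \e) n$ and $\tau_K \in ((1-\e) n, n]$. Splitting further on $(\tau_1, X_{\tau_1}) = (u, y)$, using the i.i.d.\ structure from Theorem \ref{Thesis_regindep}, and applying Lemma \ref{chcvlem}, one obtains
\[
\P(\tau_k = t,\, X_{\tau_k} = x) \le \sum_{u,y} \P(\tau_1 = u,\, X_{\tau_1} = y)\, e^{-(t-u)\, \bar{J}((x-y)/(t-u))}.
\]
A Taylor-type analysis of $\bar{J}$ (mirroring the second half of the proof of Proposition \ref{Thesis_ldpub}) then bounds the main contribution---where $u,y$ are $o(n)$---by $e^{-n \inf_{|x'-v|<\d} \bar{J}(x') + o(n)}$, up to polynomial prefactors in $n$.

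The main obstacle, and where the 1D-specific Corollary \ref{Thesis_worstcaseslowdown} enters, is the ``long-gap'' event $\{\exists k \le n : \tau_k - \tau_{k-1} > \e n\}$, together with the large-$u$ contribution in the decomposition above; both were handled in Proposition \ref{Thesis_ldpub} by the exponential tail of $\tau_1$ and constrained $v$ to a neighborhood of $v_P$. The key observation is that on the long-gap event one still has $|X_{\tau_K} - nv| < (\d + \e) n$ with $\tau_K \le (1-\e) n$, so by the scaling $s'=s/(1-\e)$ in the definition of $\bar{J}$ the regeneration-based rate becomes $(1-\e)\bar{J}(v/(1-\e))$, which by continuity of $\bar{J}$ converges to $\bar{J}(v)$ as $\e\to 0$, while the subexponential factor $\bP(\tau_1 > \e n)$ is absorbed as $o(n)$ in the exponent. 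To handle the excursions between regenerations that keep the walk below its recent maximum for $\e n$ steps, one applies (after a spatial shift) the quenched slowdown estimate $P_\w(X_s\le 0)\le e^{-s\bar{J}(0)}$ from Corollary \ref{Thesis_worstcaseslowdown}; together with Lemma \ref{Thesis_J0} (which identifies $\bar{J}(0)=H(0)$ and thus links the slowdown rate to the rate function $\bar J$), this extends the neighborhood-of-$v_P$ estimate of Proposition \ref{Thesis_ldpub} to all $v > 0$ and $\d < v$.
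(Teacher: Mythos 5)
Your proposal assembles the right ingredients (Lemma \ref{chcvlem}, Corollary \ref{Thesis_worstcaseslowdown}, Lemma \ref{Thesis_J0}) but is missing the one step that actually makes the argument work for all $v>0$, and the substitute you offer would fail. The paper does not decompose by the last regeneration time; it decomposes by the last visit to the origin $\s_n$ and the first hitting time $T_x$ of the terminal site $x$. This splits $[0,n]$ into a "stalled" piece of length $n-(s-t)$, bounded by Corollary \ref{Thesis_worstcaseslowdown} as $e^{-(n-s+t)\bar{J}(0)}$ (note the corollary bounds $P_\w(X_m\leq 0)$, which is exactly the event on $[0,\s_n]$ and, after a shift, on $[T_x,n]$ — your "below its recent maximum for $\e n$ steps" event is not of this form), and a "crossing" piece of length $s-t$, which is converted to a regeneration event and bounded by Lemma \ref{chcvlem} as $e^{-(s-t)\bar{J}(x/(s-t))}$. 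The crux is then the \emph{convexity} of $\bar{J}$: since $x/n$ is the convex combination of $0$ and $x/(s-t)$ with weights $(n-s+t)/n$ and $(s-t)/n$, and $\bar{J}(0)$ is interpreted via Lemma \ref{Thesis_J0},
\[
e^{-(n-s+t)\bar{J}(0)}\, e^{-(s-t)\bar{J}\left(\frac{x}{s-t}\right)} \leq e^{-n\bar{J}(x/n)},
\]
which gives the exact rate with no restriction on $v$ and no regularity of $\bar{J}$ beyond convexity. Your proposal never performs this combination.

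The two substitutes you propose in its place do not survive scrutiny. First, the "Taylor-type analysis mirroring Proposition \ref{Thesis_ldpub}" is precisely the part of that proof that confines $v$ to a neighborhood of $v_P$: it needs $\bar{J}$ analytic with $\|\nabla\bar{J}\|<C_2/4$ there, where $C_2$ is the exponential tail constant of $\tau_1$ — in the nestling one-dimensional case there is no such $C_2$, and $\bar{J}$ need not be differentiable away from $v_P$. Second, the claim that on the long-gap event "the rate becomes $(1-\e)\bar{J}(v/(1-\e))$, which converges to $\bar{J}(v)$ as $\e\ra 0$" is not a valid upper-bound argument: the last regeneration $\tau_K$ can occur at any time $(1-\e')n$ with $\e'$ bounded away from $0$ (not controlled by your fixed $\e$), and $(1-\e')\bar{J}(y/(1-\e'))$ can be \emph{strictly smaller} than $\bar{J}(y)$ — for instance, in the non-nestling case with $y<v_P$ and $y/(1-\e')=v_P$ it vanishes while $\bar{J}(y)>0$. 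The deficit is exactly the slowdown cost $\e' n\bar{J}(0)$ of the final stalled stretch, which must be multiplied in and then absorbed by the convexity inequality above; treating it as an $o(n)$ correction in the exponent loses the theorem.
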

\begin{proof}%[\textbf{Proof of Proposition \ref{ldpubd1}:}]$\left.\right.$\\
Let $\s_n:= \sup \{k\leq n: X_k = 0\}$ be the time of the last visit to zero before time $n$. Then, by decomposing the path of the random walk according to $\s_n$ and the first hitting time of $X_n$, 
%By decomposing the path according to the last visit to the left of the origin before time $n$ and the first visit to the current location we get that 
\begin{align}
\P( |X_n & - nv| < \d n) = \sum_{|x-nv| < \d n} \P(X_n = x)\nonumber\\
%&= \sum_{|x-nv|< \d n} \sum_{0\leq t<s\leq n} \P(X_{t-1} = -1, \; X_r\geq 0 \; \forall r\in[t,n], \; T_x = s, \; X_n = x ) \nonumber\\
&= \sum_{|x-nv|< \d n} \sum_{0\leq t<s\leq n} \P( \s_n = t, \; T_x = s, \; X_n = x ) \nonumber \\
&\leq \sum_{|x-nv|< \d n} \sum_{0\leq t<s\leq n} \left\| P_\w( X_t = 0 ) \right\|_\infty \P(T_x = s-t, \; T_{-1} > s-t) \left\| P_\w( X_{n-s} = 0 )\right\|_\infty \nonumber\\
&\leq \sum_{|x-nv|< \d n} \sum_{0\leq t<s\leq n} e^{-(n-s+t)\bar{J}(0)} \P(T_x = s-t, \; T_{-1} > s-t), \label{Thesis_pathdecomp}
\end{align}
where the last inequality is from Corollary \ref{Thesis_worstcaseslowdown}.
Next, note that
\begin{align}
\P(T_{x}=s-t, T_{-1} > s-t) = \frac{\P( T_{x}= s-t, \; T_{-1} > s-t) \P^x(T_{x-1} = \infty) }{ \P(T_{-1} = \infty) }. \label{addregen1}
\end{align}
Since $P_\w( T_{x}= s-t, \; T_{-1} > s-t)$ and $P_\w^x( T_{x-1} = \infty)$ depend on disjoint sections of the environment,  
\begin{align}
\P( T_{x}= s-t, \; T_{-1} > s-t) \P^x(T_{x-1} = \infty) &= E_P\left[ P_\w( T_{x}= s-t, \; T_{-1} > s-t) P_\w^x( T_{x-1} = \infty) \right] \nonumber \\
&= \P\left( T_{x}= s-t, \; T_{-1} > s-t, \; X_r \geq x \:\: \forall r\geq s-t \right) \nonumber \\
&= \P( \exists k: \tau_k = s-t, X_{\tau_k} = x , \; T_{-1} = \infty ). \label{addregen2}
\end{align}
Thus, \eqref{addregen1} and \eqref{addregen2} imply that
\begin{align}
\P(T_{x}=s-t, T_{-1} > s-t) = \bP( \exists k: \tau_k = s-t, X_{\tau_k} = x ) \leq n e^{-(s-t)\bar{J}\left( \frac{x}{s-t} \right)},  \label{Thesis_addregeneration}
\end{align}
%\begin{align}
%\P(T_{x}=s-t, T_{-1} > s-t) &= \frac{\P( T_{x}= s-t, \; T_{-1} > s-t, \; X_r \geq x \:\: \forall r\geq s-t)}{ \P(T_{-1} = \infty) } \nonumber \\
%&= \bP( \exists k: \tau_k = s-t, X_{\tau_k} = x ) \nonumber \\
%&\leq n e^{-(s-t)\bar{J}\left( \frac{x}{s-t} \right)},  \label{Thesis_addregeneration}
%\end{align}
where the last inequality is from Lemma \ref{chcvlem} and the fact that 
$\tau_k = s-t$ implies $k\leq s-t \leq n$. 
Combining \eqref{Thesis_pathdecomp} and \eqref{Thesis_addregeneration}, we obtain that 
\[
\P( |X_n - nv| < \d n) \leq n \sum_{|x-nv|< \d n} \sum_{0\leq t<s\leq n} e^{-(n-s+t)\bar{J}(0)} e^{-(s-t)\bar{J}\left( \frac{x}{s-t} \right)} \leq n \sum_{|x-nv|< \d n} \sum_{0\leq t<s\leq n} e^{-n \bar{J}\left(\frac{x}{n}\right)},
\]
where the last inequality is from the convexity of $\bar{J}$. Therefore,
\[
\P( |X_n - nv| < \d n) \leq 2\d n^4 \sup_{|x-v| < \d} e^{-n \bar{J}(x)}. 
\]
\end{proof}

%In light of Proposition \ref{Thesis_ldplb} and the remark that follows, Proposition \ref{ldpubd1} and Lemma \ref{Thesis_J0} imply the following theorem:
\begin{cor}
 Assume $E_P \log \rho < 0$. Then, the annealed rate function $H(v)$ is identical to $\bar{J}(v)$ for all $v\geq 0$. 
\end{cor}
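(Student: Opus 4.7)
The plan is to synthesize the two matching bounds established just above, treating the cases $v>0$ and $v=0$ separately.

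For $v>0$, the lower bound already comes for free: Corollary \ref{Thesis_ldplbC} applies to any $v\in H_\ell$ and gives $\bar J(v)\ge H(v)$. For the reverse inequality, I will combine the annealed LDP of Theorem \ref{annealedLDP}, which says
\[
H(v)=-\lim_{\d\to 0}\limsup_{n\to\infty}\frac{1}{n}\log \P(|X_n-nv|<\d n),
\]
with the sharper one-dimensional upper bound from Proposition \ref{ldpubd1}. The latter yields, for any $\d<v$,
\[
\limsup_{n\to\infty}\frac{1}{n}\log \P(|X_n-nv|<\d n)\le -\inf_{|x-v|<\d}\bar J(x),
\]
so $H(v)\ge \lim_{\d\to 0}\inf_{|x-v|<\d}\bar J(x)$. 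Since $\bar J$ is convex on $(0,\infty)$ and finite near $v$, it is continuous there, so the infimum tends to $\bar J(v)$ as $\d\to 0$, giving $H(v)\ge \bar J(v)$.

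For $v=0$, Proposition \ref{ldpubd1} does not apply (it requires $\d<v$), so instead I will invoke Lemma \ref{Thesis_J0}, which was proved precisely to handle this boundary case and gives directly $\bar J(0)=H(0)$.

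There is essentially no obstacle left at this stage—the corollary is a bookkeeping synthesis. All of the real work is hidden in Lemma \ref{Thesis_J0} (the equality at the critical velocity $0$) and in Proposition \ref{ldpubd1}, whose proof exploits the one-dimensional path decomposition around the last visit to zero together with the estimate $\|P_\w(X_n\le 0)\|_\infty\le e^{-n\bar J(0)}$ provided by Corollary \ref{Thesis_worstcaseslowdown}. The only thing worth being careful about in the write-up of the corollary itself is to note explicitly that the continuity of $\bar J$ on $(0,\infty)$—needed to pass the infimum through the limit in $\d$—follows from its convexity, and that the $v=0$ case is handled by the definition $\bar J(0):=\lim_{v\to 0^+}\bar J(v)$ combined with Lemma \ref{Thesis_J0}.
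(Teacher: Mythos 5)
Your proposal is correct and follows essentially the same route as the paper: the lower bound from Corollary \ref{Thesis_ldplbC} for $v>0$, the matching upper bound from Proposition \ref{ldpubd1}, and Lemma \ref{Thesis_J0} for the boundary case $v=0$. Your extra remark about passing the infimum $\inf_{|x-v|<\d}\bar J(x)$ to $\bar J(v)$ via continuity of the convex function $\bar J$ is a detail the paper leaves implicit, and it is a worthwhile clarification.
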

\begin{proof}
Corollary \ref{Thesis_ldplbC} and the remark that follows imply that $\bar{J}(v) \geq H(v)$ for all $v>0$, and Proposition \ref{ldpubd1} implies that $\bar{J}(v) \leq H(v)$ for all $v > 0$. Thus, $\bar{J}(v)=H(v)$ for all $v>0$. Lemma \ref{Thesis_J0} shows that equality holds for $v=0$ as well.  
\end{proof}

\end{subsection}
\end{section}
\end{chapter}

%%%%%%%%%%%%%%%%%%%%%%%%%%%  APPENDIX  %%%%%%%%%%%%%%%%%%%%%%%%%%
%\newpage
\appendix
%%%%%%%%%%%%%%%%%%%% FORMULA FOR QUENCHED VARIANCE OF HITTING TIMES %%%%%%%%%%%%%%%%%%%%%%%%%%%%%%%%%%%%%%%%%%%%%%%%%%%%%%%%
\begin{chapter}{A Formula for the Quenched Variance of Hitting Times}\label{Thesis_qvarformula}
In this appendix, we will derive a formula for the quenched variance of $\tau_1$, where $\tau_1$ is the first time a random walk starting at $0$ reaches $1$. Recall that when $E_\w \tau_1 < \infty$, we use $Var_\w\tau_1 := E_\w\left( \tau_1 - (E_\w \tau_1) \right)^2$ to denote the quenched variance of $\tau_1$. Our goal is to prove the following formula for $Var_\w \tau_1$, which was stated in \eqref{Thesis_qvar}:
\begin{align}
Var_\w\tau_1  &= \S(\w)^2 - \S(\w) + 2 \sum_{n=1}^\infty \Pi_{-n+1,0}\S(\theta^{-n}\w)^2  \label{Appendix_qvar1} \\
&= 4(W_{0}+ W_{0}^2)+ 8 \sum_{i<0} \Pi_{i+1,0}(W_{i}+W_{i}^2), \label{Appendix_qvar2} 
\end{align}
where $\S(\w)$ and $W_i$ are defined in \eqref{Thesis_QET} and \eqref{Thesis_Wdef}, respectively. 
Since $Var_\w \tau_1 = E_\w \tau_1^2 - (E_\w \tau_1)^2$, and since \eqref{Thesis_QET} implies that $E_\w \tau_1= \S(\w)$, \eqref{Appendix_qvar1} is equivalent to 
\begin{equation}
E_\w \tau_1^2 =  2\S(\w)^2 - \S(\w) + 2 \sum_{n=1}^\infty \Pi_{-n+1,0} \S(\theta^{-n}\w)^2.  \label{Appendix_qvar3} 
\end{equation}
\eqref{Appendix_qvar2} then follows from \eqref{Appendix_qvar1} by noting that $\S(\theta^{-n}\w)=1+2 W_{-n}$.

To prove \eqref{Appendix_qvar3}, we first truncate $\tau_1$ to guarantee finiteness of expectations. Let $M>0$.  Then, the decomposition of $\tau_1$ in \eqref{Thesis_taudecomposition} implies that 
\[
\tau_1\wedge M \leq 1 + \mathbf{1}_{X_1=-1}(\tau_0'\wedge M + \tau_1'\wedge M),
\]
where $\tau_0'$ is the time it takes to reach $0$ after first hitting $-1$, and $\tau_1'$ is the time it takes to go from $0$ to $1$ after first hitting $-1$. 
Squaring both sides of the above equation and taking quenched expectations, it follows from the strong Markov property that
\begin{align*}
E_\w( \tau_1\wedge M )^2 &\leq 1 + 2E_\w\left( \mathbf{1}_{X_1=-1}(\tau_0'\wedge M+\tau_1'\wedge M)\right)  \\
&\qquad + E_\w\left(\mathbf{1}_{X_1=-1}((\tau_0'\wedge M)^2+2(\tau_0'\wedge M)(\tau_1'\wedge M) + (\tau_1'\wedge M)^2)\right)\\
&= 1 + 2(1-\w_0)\left( E_{\theta^{-1}\w}(\tau_1\wedge M) + E_\w (\tau_1\wedge M)\right) \\
&\quad\quad + (1-\w_0)\left( E_{\theta^{-1}\w}(\tau_1\wedge M)^2  +2E_{\theta^{-1}\w}(\tau_1\wedge M)E_\w(\tau_1\wedge M)+ E_\w(\tau_1\wedge M)^2 \right).
\end{align*}
Solving for $E_\w(\tau_1\wedge M)^2$ gives
\begin{align*}
E_\w( \tau_1\wedge M )^2 
&\leq \frac{1}{\w_0} + 2\rho_0\left(E_{\theta^{-1}\w}(\tau_1\wedge M) + E_\w(\tau_1\wedge M) \right) \\
&\qquad + \rho_0 \left( E_{\theta^{-1}\w}(\tau_1\wedge M)^2+ 2 E_{\theta^{-1}\w}(\tau_1\wedge M)E_\w(\tau_1\wedge M)  \right) \\
&\leq  \frac{1}{\w_0} + 2\rho_0\left( \S(\theta^{-1}\w) + \S(\w) + \S(\theta^{-1}\w)\S(\w)\right) + \rho_0 E_{\theta^{-1}\w}(\tau_1\wedge M)^2 \\
&= 2 \S(\w)^2 - \frac{1}{\w_0} + \rho_0E_{\theta^{-1}\w}(\tau_1\wedge M)^2,
\end{align*}
where the second inequality holds because $E_\w \tau_1 = \S(\w)$, and the last equality is due to the fact that $\rho_0\S(\theta^{-1}\w) = \S(\w)-\frac{1}{\w_0}$.
By iterating the above inequality, we get that
\begin{align}
E_\w( \tau_1\wedge M )^2 
&\leq 2\left( \S(\w)^2 + \rho_0\S(\theta^{-1}\w)^2 + \cdots + \Pi_{-n+1,0}\S(\theta^{-n}\w)^2 \right) \label{Thesis_top}\\
&\quad\quad - \left( \frac{1}{\w_0} + \frac{1}{\w_{-1}}\rho_0 + \cdots + \frac{1}{\w_{-n}} \Pi_{-n+1,0} \right) \label{Thesis_middle}\\
& \quad\quad + \Pi_{-n,0}E_{\theta^{-n-1}\w}(\tau_1\wedge M)^2.  \label{Thesis_bottom}
\end{align}
As $n\ra\infty$, \eqref{Thesis_middle} tends to $\S(\w)$, which is finite by assumption. Also, since $\S(\w)$ is finite, $\Pi_{-n,0} \ra 0$ as $n\ra\infty$, which implies that \eqref{Thesis_bottom} tends to zero as $n\ra\infty$.
Therefore,  
\[
 E_\w( \tau_1\wedge M )^2 \leq 2\S(\w)^2 + 2 \sum_{n=1}^\infty \Pi_{-n+1,0} \S(\theta^{-n}\w)^2  - \S(\w).
\]
monotone convergence then implies that
\begin{equation}
 E_\w \tau_1^2 \leq 2\S(\w)^2 + 2 \sum_{n=1}^\infty \Pi_{-n+1,0} \S(\theta^{-n}\w)^2  - \S(\w). \label{2ndmomentub}
\end{equation}
If $E_\w \tau_1 = \S(\w) < \infty$ but $E_\w \tau_1^2 = \infty$, then obviously the above can be replace by equality. On the other hand, if $E_\w \tau_1^2 < \infty$ we can repeat this argument without truncating by $M$. That is,  
\[
 E_\w \tau_1^2 =  2 \S(\w)^2 - \frac{1}{\w_0} + \rho_0E_{\theta^{-1}\w}\tau_1^2,
\] 
which, after iterating, implies that
\begin{align*}
E_\w \tau_1^2 
&= 2\left( \S(\w)^2 +  \cdots + \Pi_{-n+1,0}\S(\theta^{-n}\w)^2 \right) - \left( \frac{1}{\w_0} +  \cdots + \frac{1}{\w_{-n}} \Pi_{-n+1,0} \right) + \Pi_{-n,0}E_{\theta^{-n-1}\w}\tau_1^2.  
\end{align*}
Omitting the last term and letting $n\ra\infty$, we obtain
\begin{equation}
 E_\w \tau_1^2 \geq 2\S(\w)^2 + 2 \sum_{n=1}^\infty \Pi_{-n+1,0} \S(\theta^{-n}\w)^2  - \S(\w),  \label{2ndmomentlb}
\end{equation}
whenever $E_\w \tau_1^2 < \infty$. Thus, \eqref{Appendix_qvar3} is implied by \eqref{2ndmomentub} and \eqref{2ndmomentlb}. 
\end{chapter}

%%%%%%%%%%%%%%%%%%%%% APPENDIX - ANALYTICITY OF LEGENDRE TRANSFORMS %%%%%%%%%%%%%%%%%%%%%%%%%%%%

\begin{chapter}{Analyticity of Fenchel-Legendre Transforms}\label{Aanalytic}
Let $F:\R^d \ra \R$ be a convex function. Then, the Fenchel-Legendre transform $F^*$ of $F$ is defined by 
\begin{equation}
 F^*(x) = \sup_{\l \in\R^d} \l \cdot x - F(\l). \label{FLdef}
\end{equation}
\begin{lemA}\label{Appendix_analytic}
Let $F$ be strictly convex and analytic on an open subset $U\subset \R^d$. Then, $F^*$ is strictly convex and analytic in 
$U' := \{ y\in \R^d : y= \nabla F(\l) \text{ for some } \l \in U \}$. 
% Let $F$ be strictly convex and analytic in a neighborhood of $\l_0 \in \R^d$. Then, $F^*$ is strictly convex and analytic in a neighborhood of $\nabla F(\l_0)$. 
\end{lemA}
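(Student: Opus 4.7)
The plan is to invert the gradient map $\lambda \mapsto \nabla F(\lambda)$ and then exhibit $F^*$ as an explicit analytic expression in terms of this inverse. First I would note that strict convexity of $F$ on the open set $U$ implies that the Hessian $D^2 F(\lambda)$ is positive definite at each $\lambda \in U$ (since $F$ is smooth on $U$, being analytic). In particular $D^2 F(\lambda)$ is invertible on $U$, so the analytic inverse function theorem applies to the map $G(\lambda) := \nabla F(\lambda)$. This gives that $G$ is a local analytic diffeomorphism near every point of $U$. Moreover, since $F$ is strictly convex, $G$ is injective on $U$ (any two distinct points of a strictly convex function have distinct gradients), so $G : U \to U'$ is an analytic bijection with analytic inverse $\Lambda : U' \to U$.

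Next I would use the standard characterization of the supremum in \eqref{FLdef}. For $x \in U'$, write $x = G(\lambda_0)$ with $\lambda_0 = \Lambda(x) \in U$. Then $\lambda_0$ is a critical point of $\lambda \mapsto \lambda \cdot x - F(\lambda)$, and by concavity of this function (strict concavity since $F$ is strictly convex) it is the unique maximizer, whether the supremum is taken over $U$ or, by analytic continuation of the concavity argument, over all of $\R^d$ (provided the sup is finite, which will be established as part of the argument by comparing $F$ to its supporting hyperplane at $\lambda_0$). Consequently
\[
F^*(x) = \Lambda(x) \cdot x - F(\Lambda(x)), \qquad x \in U'.
\]
Since $\Lambda$ is analytic on $U'$ and $F$ is analytic on $U \supset \Lambda(U')$, the right-hand side is a composition of analytic functions, hence $F^*$ is analytic on $U'$.

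Finally, for strict convexity, differentiating the above identity and using $\nabla F(\Lambda(x)) = x$ yields the standard relation $\nabla F^*(x) = \Lambda(x)$. Differentiating once more and using $D\Lambda(x) = \bigl( D^2 F(\Lambda(x)) \bigr)^{-1}$ (the derivative of the inverse map) gives
\[
D^2 F^*(x) = \bigl( D^2 F(\Lambda(x)) \bigr)^{-1},
\]
which is positive definite at each $x \in U'$ because $D^2 F$ is positive definite on $U$. Hence $F^*$ is strictly convex on $U'$.

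The main obstacle, such as it is, lies in the standard but slightly delicate point that the supremum in \eqref{FLdef} (taken over all of $\R^d$) is actually attained at the interior critical point $\Lambda(x) \in U$ rather than somewhere outside $U$; this is the only place where the hypothesis that $F$ is defined (and convex) on all of $\R^d$, together with the convexity inequality $F(\lambda) \geq F(\lambda_0) + \nabla F(\lambda_0) \cdot (\lambda - \lambda_0)$, is essential. Once this is verified, the rest of the argument is a direct application of the analytic inverse function theorem.
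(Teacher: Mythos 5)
Your proposal is correct and follows essentially the same route as the paper's proof: invert $\nabla F$ via the analytic inverse function theorem (using positive definiteness of $D^2F$), write $F^*(x) = \l(x)\cdot x - F(\l(x))$ at the interior critical point, and read off analyticity and $D^2F^*(x) = (D^2F(\l(x)))^{-1} > 0$. Your explicit remark that the supremum over all of $\R^d$ is attained at $\Lambda(x)$ because of the global supporting-hyperplane inequality is a point the paper passes over more quickly, but it is the same argument.
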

\begin{proof}
% Let $U$ be a neighborhood of $\l_0$ such that $F$ is analytic and strictly convex on $U$ and let 
%\[
%U' := \{ y\in \R^d : y= \nabla F(\l) \text{ for some } \l \in U \} 
%\]
%be the image of $U$ under the mapping $\nabla F$. 
Since $F$ is strictly convex on $U$, $\nabla F$ is one-to-one on $U$. Therefore, for any $x\in U'$, there exists a unique $\l(x) \in U$ such that $\nabla F(\l(x)) = x$. (That is, $x\mapsto \l(x)$ is the inverse function of $\nabla F$ restricted to $U$.) This implies, 
since $\l \mapsto \l \cdot x - F(\l)$ is a concave function in $\l$, that the supremum in \eqref{FLdef} is achieved with $\l=\l(x)$ when $x\in U'$. 
That is, 
\begin{equation}
 F^*(x) = \l(x) \cdot x - F\left((\l(x)\right), \qquad \forall x\in U'. \label{maxachieved}
\end{equation}

%Let $D^2 F$ be the matrix of second derivatives for $F$. Since $F$ is strictly convex on $U$, then $D^2 F(x)$ is strictly positive definite for all $x\in U$. 
Since $F$ is analytic on $U$, then $\nabla F$ is also analytic on $U$. Then, a version of the inverse function theorem \cite[Theorem 7.5]{fgHolomorphic} implies that $\l(\cdot)$ is analytic on $U'$ if 
\begin{equation}
 \det \left(D^2 F(x)\right) \neq 0 , \qquad \forall x \in U, \label{Jacobianneq0}
\end{equation}
where $D^2 F$ is the matrix of second derivatives of $F$. However, since $F$ is strictly convex on $U$, $D^2 F(x)$ is strictly positive definite for all $x\in U$. Thus, \eqref{Jacobianneq0} holds and so $x\mapsto \l(x)$ is analytic on $U'$. Recalling \eqref{maxachieved}, we then obtain that $F^*$ is also analytic on $U'$. 

An application of the chain rule to \eqref{maxachieved} implies that 
%$\nabla F^*(x) = \l(x) $ and $ D^2 F^* (x) = \left( D^2 F  (\l(x)) \right)^{-1}$ for all $x\in U'$.
\[
 \nabla F^*(x) = \l(x) \quad\text{and}\quad D^2 F^* (x) = D \l(x) = \left( D^2 F  (\l(x)) \right)^{-1}, \qquad \forall x\in U'.
% \left(\nabla F^*\right)(x) = \l(x) \quad\text{and}\quad \left(D^2 F^*\right) (x) = \left( \left( D^2 F \right) (\l(x)) \right)^{-1}.
\]
Since $D^2 F$ is strictly positive definite on $U$, the above implies that $D^2 F^*(x)$ is strictly positive definite for all $x\in U'$. Thus $F^*$ is strictly convex on $U'$. 
\end{proof}

\end{chapter}

%%%%%%%%%%%%%%%%%%%%% APPENDIX J(0)=H(0)%%%%%%%%%%%%%%%%%%%%%%%%%%%%%%%%%%%%%%%

\begin{chapter}{Proof of Lemma \ref{Thesis_J0}}\label{Jzero}
%\begin{chapter}{$\bar{J}(0) = H(0)$ when $d=1$}\label{Jzero}

Recall that for Lemma \ref{Thesis_J0} we are assuming that $P$ is uniformly elliptic and i.i.d., and that $E_P \log \rho < 0$. 
To prove Lemma \ref{Thesis_J0}, we first need the following lemma:
\begin{lemA} \label{Thesis_PbarLDPT}
Assume that $E_P \log \rho_0 < 0$. Then,
\[
\lim_{M\ra\infty} \liminf_{n\ra\infty} \frac{1}{M n} \log \bP\left( T_n \in [ M n, (M+1)n] \right) \geq -H(0).  
\]
\end{lemA}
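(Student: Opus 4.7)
The plan is to construct, for fixed large $M$, an explicit sub-event of $\{T_n\in[Mn,(M+1)n]\}\cap D$ on which the walk first ``stalls'' near the origin for time $k:=(M-1)n$ (without reaching $n$ or ever going negative) and then makes $n$ consecutive right-steps to reach $n$ exactly at time $Mn$. Precisely, define
\[
E_n=\bigl\{X_k=0,\ T_n>k,\ X_j\ge 0\ \forall j\le k\bigr\}\cap\bigl\{X_{k+j}=j,\ j=1,\dots,n\bigr\}.
\]
On $E_n$ one has $T_n=Mn\in[Mn,(M+1)n]$ and $E_n\subset D$, so that $\bP(T_n\in[Mn,(M+1)n])\ge\P(E_n)$. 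The quenched Markov property at time $k$ and uniform ellipticity on the $n$ ballistic right-steps give
\[
\P(E_n)\;\ge\;\kappa^n\,\P\bigl(X_k=0,\ T_n>k,\ X_j\ge 0\ \forall j\le k\bigr).
\]

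To lower bound the stalling probability, I would apply Theorem \ref{annealedLDP} at $v=0$: for any $\e>0$ there exist $\d>0$ and $k_0$ such that $\P(|X_k|<\d k)\ge e^{-k(H(0)+\e)}$ for all $k\ge k_0$. A pigeonhole over the (at most $2\d k+1$) integers of the correct parity in $(-\d k,\d k)$ yields some $x_0$ with $\P(X_k=x_0)\ge e^{-k(H(0)+\e)}/(2\d k+1)$, and appending $|x_0|\le\d k$ additional deterministic steps (at quenched cost $\kappa^{|x_0|}$) brings the walk to $0$ at a cost absorbed into $\e$ as $\d\to 0$. The two remaining constraints $T_n>k$ and $X_j\ge 0$ for all $j\le k$ are controlled by the strong-Markov decomposition at $T_n$ (resp.\ at $T_{-1}$) together with the backtracking estimate \eqref{backtracktail}, $\P(T_{-n}<\infty)\le e^{-C_7 n}$: the correction cost $e^{-C_7 n}$ is dominated by the stalling rate $e^{-kH(0)}=e^{-(M-1)nH(0)}$ precisely when $M\le 1+C_7/H(0)$, and in that regime one obtains $\P(X_k=0,\ T_n>k,\ X_j\ge 0\ \forall j\le k)\ge e^{-k(H(0)+\e'')}$ with $\e''\to 0$ as $\e\to 0$.

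For $M>1+C_7/H(0)$ the naive argument above breaks, since $e^{-C_7 n}$ then dominates the stalling rate. The fix is to iterate: partition $[0,k]$ into sub-intervals of length at most $C_7 n/H(0)$ and apply the above estimate on each, using shift-invariance of $P$ together with short ballistic ``resetting'' phases between successive pieces (each contributing only $\kappa^{O(1)n}$, absorbed into $\e$ as $M\to\infty$). Combining the estimates and passing to $\liminf$ first in $n$, then $M\to\infty$, then $\e\to 0$, and using lower semi-continuity of the convex rate function $H$ at $0$, yields the required bound. The main obstacle is exactly this iteration: one must verify that the decomposition into sub-intervals and the connecting ballistic transitions do not accumulate extra cost beyond $H(0)$ per unit time, which requires careful bookkeeping of the walk's position at each sub-interval boundary. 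A cleaner alternative would be to first establish the annealed LDP for $T_n/n$ with rate function $I_T(u)=uH(1/u)$, as in the one-dimensional theory of \cite{cgzLDP}; the lemma then follows immediately from $\lim_{M\to\infty}I_T(M)/M=\lim_{v\to 0^+}H(v)\le H(0)$.
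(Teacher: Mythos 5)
Your reduction to the event ``stall near the origin for time $k=(M-1)n$, then march straight to $n$'' has the right shape, and the reduction $\bP(\cdot)\ge \P(E_n)$ together with the $\kappa^{n}$ cost of the final ballistic stretch is fine. The genuine gap is the passage from the LDP lower bound $\P(|X_k|<\d k)\ge e^{-k(H(0)+\e)}$ to a lower bound on $\P(X_k=0,\;T_n>k,\;X_j\ge 0\ \forall j\le k)$. The event $\{|X_k|<\d k\}$ constrains only the endpoint, not the path, so the two path constraints cannot be imposed by subtracting error probabilities: $\P(\exists j\le k:X_j<0)$ tends to the constant $\P(T_{-1}<\infty)$, which is not exponentially small, and $\P(T_n\le k)$ tends to $1$ because $k=(M-1)n$ exceeds the typical value of $T_n$; neither quantity is $o(e^{-kH(0)})$, so $\P(A\cap B)\ge \P(A)-\P(B^c)$ yields a negative right-hand side. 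The estimate \eqref{backtracktail} bounds excursions of depth $b_n$, not the event of ever going negative, so it does not help here. Your iteration for large $M$ does not repair this and is structurally inconsistent: each ``ballistic resetting phase'' displaces the walk by $O(n)$ into fresh environment, so over $\Theta(M)$ sub-intervals the walk moves by $\Theta(Mn)\gg n$ and reaches $n$ long before time $Mn$, contradicting $T_n\ge Mn$; if instead you reset within the window $[0,n)$, you lose the independence across sub-intervals that the product bound requires. The ``cleaner alternative'' inherits the same difficulty: the LDP for $T_n/n$ from \cite{cgzLDP} does not carry the constraint $T_{-1}>T_n$ that is needed to pass from $\P$ to $\bP$.

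The paper sidesteps the abstract LDP entirely and instead constructs explicit environments on which the \emph{constrained} quenched probability can be bounded or computed directly. After the i.i.d.\ splitting $\bP(T_n\in[Mn,(M+1)n])\ge \P(T_n\in[Mn,(M+1)n],\,T_{-1}>T_n)$, it treats three cases according to $\w_{\min}$. In the nestling case ($H(0)=0$) it plants a trap environment $\mathcal{T}_n$ (potential rising on $[0,n/2)$, falling on $(n/2,n)$) and forces $Mn$ returns to $\lfloor n/2\rfloor$ before exiting $[0,n]$, which automatically encodes both $T_{-1}>T_n$ and $T_n>Mn$ at subexponential cost. In the non-nestling case it uses the homogeneous minimal environment $\bar\w_{\min}$, computes $E_{\bar\w_{\min}}[e^{\l T_n}\mathbf{1}_{T_n<T_{-1}}]$ in closed form, applies the G\"artner--Ellis lower bound to the law of $T_n$ conditioned on $\{T_n<T_{-1}\}$, and identifies the resulting rate $\bar\l$ with $H(0)$ via \cite{cgzLDP}. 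If you want to keep your strategy, you would need to replace the subtraction step by a comparable explicit construction that builds the positivity and slowdown constraints into the event from the start.
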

\begin{proof}
First, note that
\begin{align*}
\bP\left( T_n \in [M n, (M+1)n] \right) &= \frac{1}{\P(T_{-1}= \infty)} \P\left( T_n \in [M n, (M+1)n],\; T_{-1} = \infty \right) \\
&\geq \frac{1}{\P(T_{-1}= \infty)} E_P \left[ P_\w \left( T_n \in [M n, (M+1)n], \; T_{-1} > T_n \right) P_{\theta^n \w} (T_{-1} = \infty) \right]\\
&= \P \left( T_n \in [M n, (M+1)n], \; T_{-1} > T_n  \right),
\end{align*}
where in the last equality we used that the environment is i.i.d.  
Therefore, it is enough to prove
\begin{equation}
\lim_{M\ra\infty} \liminf_{n\ra\infty} \frac{1}{M n} \log \P \left( T_n \in [M n, (M+1)n], \; T_{-1} > T_n  \right) \geq -H(0).  \label{Trapping}
\end{equation}

The idea of the proof of \eqref{Trapping} is to construct an environment which is most likely to make both $T_{-1}$ and $T_n$ large. 
Let $\w_{\min}:= \inf \{ x > 0: P(\w_0 \leq x) > 0 \}$.
%To this end, let $\w_{\min}$ and $\w_{\max}$ be respectively the essential infimum and essential supremum of the law of $\w_0$ under $P$. That is, $\w_{\min}:= \inf \{ x > 0: P(\w_0 \leq x) > 0 \}$ and $\w_{\max} := \sup \{ x>0: P(\w_0 \leq x ) < 1 \}$. 
%
%Since $P$ is uniformly elliptic we have $0<\w_{\min} \leq \w_{\max} < 1$. 
The proof of \eqref{Trapping} is divided into three cases: $\w_{\min}< \frac{1}{2}$, $\w_{\min}>\frac{1}{2}$, and $\w_{\min}=\frac{1}{2}$. 

\noindent\textbf{Case I: $\w_{\min} < \frac{1}{2}.$} 

$E_P \log \rho_0 < 0$ and $ \w_{\min} < \frac{1}{2}$ imply that $P$ is nestling. Therefore, Theorem \ref{Thesis_zeroset} gives that $H(0)= 0$. 
Now, the event $\{ T_n \in [M n, (M+1)n], \; T_{-1} > T_n \}$ is implied by not reaching $-1$ or $n$ by time $M n$ and then taking $n$ consecutive steps in the positive direction. 
Thus, 
\[
\P \left( T_n \in [M n, (M+1)n], \; T_{-1} > T_n  \right) \geq \w_{\min}^n \P \left( T_n \wedge T_{-1} > M n  \right).
\]
Since $P$ is uniformly elliptic, $\w_{\min} > 0$ and therefore $\lim_{M\ra\infty}\lim_{n\ra\infty} \frac{1}{Mn} \log \w_{\min}^{n} = 0$. Thus, to prove \eqref{Trapping}, it is enough to show that
\begin{equation}
 \lim_{M\ra\infty} \liminf_{n\ra\infty} \frac{1}{M n} \log \P \left( T_n \wedge T_{-1} > M n \right) = 0.  \label{nestTrapping}
\end{equation}

We now define a collection of environments on which the event $\{T_n \wedge T_{-1} > M n \}$ is likely.
Let 
\[
 \mathcal{T}_n := \left\{ \w_{x} \geq \frac{1}{2}, \quad \forall x \in [0, \lfloor n/2 \rfloor) \right\} \cap \left\{\w_x \leq \frac{1}{2}, \quad \forall x \in (\lfloor n/2\rfloor , n ) \right\}.
\]
%$$
%\mathcal{T}_n := \left\{ \w_{x} \geq \frac{1}{2} \quad \forall 0\leq x < \lfloor n/2 \rfloor, \; \w_x \leq \frac{1}{2} \quad \forall \lfloor n/2\rfloor < x < n \right\}.
%$$ 
Now, we can force the event $ \{T_n \wedge T_{-1} > M n \} $ to happen by forcing $M n$ visits to $\lfloor n/2 \rfloor$ before first reaching $n$ or $-1$. That is, letting $T_x^+ := \inf \{ k > 0: X_k = x \}$ be the first return time to $x$, 
\begin{align}
 \P \left( T_n \wedge T_{-1} > M n  \right) 
%&\geq E_P\left[ P_\w\left( T_n \in [M n, (M+1)n], \; T_{-1} > T_n  \right) \mathbf{1}_{\mathcal{T}_n} \right] \nonumber \\
& \geq E_P\left[ P_\w\left( T_{\lfloor n/2\rfloor} < T_{-1} \right) P_\w^{\lfloor n/2 \rfloor} \left( T_{\lfloor n/2 \rfloor}^+ < (T_{-1} \wedge T_n)  \right)^{Mn} \mathbf{1}_{\mathcal{T}_n} \right] \nonumber \\
&\geq P(\mathcal{T}_n) \inf_{\w \in \mathcal{T}_n} P_\w\left( T_{\lfloor n/2\rfloor} < T_{-1} \right) P_\w^{\lfloor n/2 \rfloor} \left( T_{\lfloor n/2 \rfloor}^+ < (T_{-1} \wedge T_n)  \right)^{Mn}. \label{forceTrap}
\end{align}
Since $E_P \log \rho < 0$ and $\w_{\min} < \frac{1}{2}$, we have that $P(\w_0 \geq \frac{1}{2} ) , P(\w_0 \leq \frac{1}{2} ) > 0$. Therefore,  
\begin{align}
 \lim_{M\ra\infty} \liminf_{n\ra\infty} \frac{1}{M n} \log P(\mathcal{T}_n ) &= \lim_{M\ra\infty} \liminf_{n\ra\infty} \frac{1}{M n} \log \left( P\left(\w_0 \geq \frac{1}{2} \right)^{\lfloor n/2 \rfloor} P\left(\w_0 \leq \frac{1}{2} \right)^{n-\lfloor n/2 \rfloor -1} \right) \nonumber \\
& = \lim_{M\ra\infty} \frac{1}{2 M} \log \left(  P\left(\w_0 \geq \frac{1}{2} \right) P\left(\w_0 \leq \frac{1}{2} \right) \right) = 0. \label{PTrap} 
\end{align}
A coupling argument with a simple random walk implies that
\[
 P_\w\left( T_{\lfloor n/2\rfloor} < T_{-1} \right) \geq \frac{1}{1+\lfloor n/2 \rfloor}, \quad\mbox{ and }\quad P_\w^{\lfloor n/2 \rfloor} \left( T_{\lfloor n/2 \rfloor}^+ < (T_{-1} \wedge T_n)  \right) 
%\geq \frac{n-\lfloor n/2\rfloor -1}{n-\lfloor n/2\rfloor} 
\geq 1-\frac{2}{n}, \qquad \forall \w \in \mathcal{T}_n.
\]
Therefore,
\begin{align}
&\lim_{M\ra\infty} \liminf_{n\ra\infty} \frac{1}{M n} \log \left( \inf_{\w \in \mathcal{T}_n} P_\w\left( T_{\lfloor n/2\rfloor} < T_{-1} \right) P_\w^{\lfloor n/2 \rfloor} \left( T_{\lfloor n/2 \rfloor}^+ < (T_{-1} \wedge T_n)  \right)^{Mn} \right) \nonumber \\
&\qquad \geq  \lim_{M\ra\infty} \liminf_{n\ra\infty} \frac{1}{M n} \log \left( \frac{1}{1+\lfloor n/2 \rfloor} \left(1-\frac{2}{n}\right)^{Mn}  \right) = 0. \label{Pmanyreturns}
\end{align}
Applying \eqref{PTrap} and \eqref{Pmanyreturns} to \eqref{forceTrap}, we obtain \eqref{nestTrapping}.

\noindent\textbf{Case II: $\w_{\min}  > \frac{1}{2}.$}

%The idea of the proof is to construct an environment that is most likely to make $T_n$ large. To this end, let $\w_{\min}$ be the essential infimum of the law of $\w_0$ under $P$, and let $\bar{\w}_{\min}$ be the environment with $\w_x=\w_{\min}$ for all $x$. 
%Also, let $\rho_{\max}:= \frac{1-\w_{\min}}{\w_{\min}}$. 
%
%We first consider the case where $P(\w_0 = \w_{\min}) > 0$. 
We will prove \eqref{Trapping} in the case where $P(\w_0 = \w_{\min}) > 0$.
(The case where $P(\w_0 = \w_{\min}) = 0$ is then handled by approximation.) Let $\bar{\w}_{\min}$ be the environment with $\w_x=\w_{\min}$ for all $x$. Then,
\begin{align*}
 & \P \left( T_n \in [M n, (M+1)n],\; T_n < T_{-1} \right) \\
 &\qquad \geq P( \w_0 = \w_{\min} )^{n} P_{\bar{\w}_{\min}}\left( T_n \in [M n, (M+1)n],\; T_n < T_{-1} \right)\\
 &\qquad = P( \w_0 = \w_{\min} )^{n} P_{\bar{\w}_{\min}}\left( T_n < T_{-1} \right) P_{\bar{\w}_{\min}}\left( T_n \in [M n, (M+1)n] \bigl | \; T_n < T_{-1} \right).
\end{align*}
Letting $\rho_{\max}:= \frac{1-\w_{\min}}{\w_{\min}} < 1$, 
we have that
$P_{\bar{\w}_{\min}}\left( T_n < T_{-1} \right) \geq P_{\bar{\w}_{\min}}\left( T_{-1} = \infty \right) = 1- \rho_{\max} > 0$. 
Since 
$\lim_{M_\ra\infty} \liminf_{n\ra\infty} \frac{1}{M n} \log  P( \w_0 = \w_{\min} )^{n} = 0$
%$\lim_{M_\ra\infty} \liminf_{n\ra\infty} \frac{1}{M n} \log \left( P( \w_0 = \w_{\min} )^{n} \frac{1}{1+\rho_{\max} + \cdots + \rho_{\max}^{n}} \right) = 0$
, to complete the proof of the lemma it is enough to prove that
\begin{equation}
\lim_{M\ra\infty} \liminf_{n\ra\infty} \frac{1}{M n} \log P_{\bar{\w}_{\min}}\left( T_n \in [M n, (M+1)n]\, | \, T_n < T_{-1} \right) \geq -H(0). \label{conditionalLDP}
\end{equation}
%Let $\bar{h}(v)$ be the large deviation rate function for a random walk in the constant environment $\bar{\w}_{\min}$. 
%Then, from \cite{cgzLDP} we know that 
%\[
%H(0) = \bar{\l} := -\frac{1}{2} \log\left( 4 \w_{\min} (1-\w_{\min}) \right). 
%\]
%Note that it is straightforward to check that if $\bar{h}(v)$ is the large deviation rate function for a simple random walk in the constant environment $\bar{\w}_{\min}$, then $\bar{h}(0) = \bar{\l}$ as well. 

Let $\bar{\l} := -\frac{1}{2} \log\left( 4 \w_{\min} (1-\w_{\min}) \right)$, and recall from \cite[proof of Lemma 4]{cgzLDP} that 
\[
\phi(\l) := E_{\bar{\w}_{\min}} e^{\l T_1} = 
\begin{cases} 
	\frac{1 - \sqrt{1-e^{2(\l-\bar{\l})}}}{2(1-\w_{\min}) e^{\l}} & \quad\mbox{ if } \l \leq \bar{\l},\\
	\infty & \quad\mbox{ if } \l > \bar{\l}. 
\end{cases}
\]
We claim that 
\begin{equation}
\lim_{n\ra\infty} \frac{1}{n} \log E_{\bar{\w}_{\min}} \left[  e^{\l T_n} | T_n < T_{-1} \right] = \log \phi(\l), \qquad \forall \l<\infty. \label{Thesis_logMGFlimit}
\end{equation}
To see this, first note that 
\begin{align}
E_{\bar{\w}_{\min}} \left[ e^{\l T_n} | T_n < T_{-1} \right] &= \frac{1}{P_{\bar{\w}_{\min}}( T_n < T_{-1} )} E_{\bar{\w}_{\min}} \left[ e^{\l T_n} \mathbf{1}_{ T_n < T_{-1}} \right] \nonumber\\
&= \left( \frac{1- \rho_{\max}}{1-\rho_{\max}^n} \right) E_{\bar{\w}_{\min}} \left[ e^{\l T_n} \mathbf{1}_{ T_n < T_{-1}} \right]. \label{Thesis_indicator}
\end{align}
Since $\rho_{\max} < 1$, to prove \eqref{Thesis_logMGFlimit} it is enough to show that $\lim_{n\ra\infty} \frac{1}{n} \log E_{\bar{\w}_{\min}} \left[ e^{\l T_n} \mathbf{1}_{ T_n < T_{-1}} \right] = \phi(\l)$. 
For $\l\leq \bar{\l}$, let $\psi_{n,\l}(x):= E_{\bar{\w}_{\min}}^x \left[ e^{\l T_n} \mathbf{1}_{ T_n < T_{-1}} \right]$ for $-1\leq x \leq n$. Then,
$\psi_{n, \l}(-1) = 0$, $\psi_{n, \l}(n) = 1$, and 
\begin{equation*}
\psi_{n, \l}(x) = \w_{\min} e^{\l} \psi_{n, \l}(x+1) + (1-\w_{\min}) e^{\l} \psi_{n, \l}(x-1), \qquad \forall -1 < x < n. \label{psirecursion}
\end{equation*}
This system of equations can be solved explicitly. In particular,
\begin{equation}
\psi_{n, \l}(0) = E_{\bar{\w}_{\min}} \left[ e^{\l T_n} \mathbf{1}_{ T_n < T_{-1}} \right] 
%= \phi(\l)^n \frac{ 1 - \rho_{\max} \phi(\l)^{2}}{1-\rho_{\max}^{n+1}\phi(\l)^{2n+2}}  
= \phi(\l)^n \left( \sum_{k=0}^n \rho_{\max}^k \phi(\l)^{2k} \right)^{-1} . \label{Thesis_mgfformula}
\end{equation}
%More generally .... $\psi_{n, \l}(x) = \frac{\phi(\l)^n}{1-\rho_{\max}^{n+1}\phi(\l)^{2n+2}} \left( \phi(\l)^{-x} - \rho_{\max}^{x+1} \phi(\l)^{x+2} \right)$. 
Since $\phi(\l) \leq \phi(\bar{\l}) = \rho_{\max}^{-1/2}$ for all $\l \leq \bar{\l}$, we have that $\sum_{k=0}^n \rho_{\max}^k \phi(\l)^{2k} \leq n+1$. 
Thus, for $\l\leq \bar{\l}$, the limit in \eqref{Thesis_logMGFlimit} follows from \eqref{Thesis_indicator} and \eqref{Thesis_mgfformula}. 
For $\l > \bar{\l}$, the limit in \eqref{Thesis_logMGFlimit} then follows from the fact that $\log E_{\bar{\w}_{\min}} \left[ e^{\l T_n} \mathbf{1}_{ T_n < T_{-1}} \right]$ is a convex function of $\l$ and $\lim_{\l \ra\bar{\l}-} \phi'(\l) = +\infty$. 

It is easily checked that 
%$\left( \log \phi(\l) \right)' = \left( 1-e^{2(\l-\bar{\l})} \right)^{-1}$, and thus
for any $t\in(1,\infty)$, there exists a unique $\l<\bar{\l}$ such that $\left( \log \phi(\l) \right)' = t$. 
Then, since \eqref{Thesis_logMGFlimit} holds, the G\"artner-Ellis Theorem \cite[Theorem 2.3.6]{dzLDTA} implies that 
\begin{equation*}
 \liminf_{n\ra\infty} \frac{1}{n} \log P_{\bar{\w}_{\min}}\left( T_n \in [A n, B n]\, | \, T_n < T_{-1} \right) \geq - \inf_{t\in(A,B)} r(t),  \qquad \forall 1 < A < B < \infty,  %\label{GElb}
\end{equation*}
where $r(t)=\sup_{\l} \l t - \log \phi(\l) =  t \bar{\l} + \frac{t}{2} \log(1-t^{-2}) + \frac{1}{2} \log\left( \rho_{\max} \frac{t+1}{t-1} \right)$ is the Fenchel-Legendre transform of $\log \phi(\l)$.
Since $r(t)$ is increasing for $t>(2\w_{\min} -1)^{-1}$,  $\inf_{t\in(M,M+1)} r(t) = r(M)$ for all $M$ large enough. Therefore,
\[
\lim_{M\ra\infty} \liminf_{n\ra\infty} \frac{1}{M n} \log P_{\bar{\w}_{\min}}\left( T_n \in [M n, (M+1)n]\, | \, T_n < T_{-1} \right) \geq - \lim_{M\ra\infty} \frac{1}{M} r(M) = -\bar{\l}. 
\]
Finally, it was shown in \cite[Lemma 4 and proof of Theorem 1]{cgzLDP} that $H(0)=\bar{\l}$. This completes the proof of \eqref{conditionalLDP}, and thus also the proof of the lemma,  when $\w_{\min} > \frac{1}{2}$.

\noindent\textbf{Case III: $\w_{\min} = \frac{1}{2}.$}

The proof when $\w_{\min} = \frac{1}{2}$ is essentially the same as in the case $\w_{\min} > \frac{1}{2}$.
In particular, it is enough to show \eqref{conditionalLDP}.
The same argument as above shows that 
\[
 \lim_{n\ra\infty} \frac{1}{n} \log E_{\bar{\w}_{\min}} \left[  e^{\l T_n} | T_n < T_{-1} \right] = \log \phi(\l), \qquad \forall \l<\infty,
\]
where $ \phi(\l):= E_{\bar{\w}_{\min}} e^{\l T_1} = \frac{1-\sqrt{1-e^{2\l}}}{e^{\l}}$. Since $0$ is not in the interior of $\{ \l \in \R : \phi(\l) < \infty \}$,
we cannot directly apply the G\"artner-Ellis Theorem as was done above.
%However, the lower bound in the G\"artner-Ellis Theorem is obtained by the standard exponential change in measure. 
However, it is still true that for any $t\in(1,\infty)$, there exists a unique $\l < 0$ such that $\left( \log \phi(\l) \right)' = t$. Thus, the standard exponential change in measure argument which gives the lower bound in the G\"artner-Ellis Theorem is still valid for bounded subsets of $(1,\infty)$. Therefore, 
\[
 \liminf_{n\ra\infty} \frac{1}{n} \log P_{\bar{\w}_{\min}}\left( T_n \in [A n, B n]\, | \, T_n < T_{-1} \right) \geq - \inf_{t\in(A,B)} r(t),  \qquad \forall 1 < A < B < \infty, 
\]
where $r(t) = \frac{t}{2} \log( 1- t^{-2} ) - \frac{1}{2} \log \left( \frac{t-1}{t+1} \right)$. Since $r(t)$ is decreasing with $\lim_{t\ra\infty} r(t) = 0$, 
\[
 \lim_{M\ra\infty} \liminf_{n\ra\infty} \frac{1}{M n} \log P_{\bar{\w}_{\min}}\left( T_n \in [M n, (M+1)n]\, | \, T_n < T_{-1} \right) \geq - \lim_{M\ra\infty} \frac{1}{M} r(M+1) = 0.
\]
Note that $H(0)= 0$ since $\w_{\min}=0$ implies that $P$ is nestling. Thus, \eqref{conditionalLDP} holds when $\w_{\min}=\frac{1}{2}$ as well. 
\end{proof}

\begin{proof}[\textbf{Proof of Lemma \ref{Thesis_J0}:}]$\left.\right.$\\
From Corollary \ref{Thesis_ldplbC} and the remark that follows, we know that 
\[
\bar{J}(0)=\lim_{v\ra 0^+} \bar{J}(v) \geq \lim_{v\ra 0^+} H(v) = H(0).
\] 
Thus, we only need to show that $\bar{J}(0) \leq  H(0)$. From Lemma \ref{Thesis_PbarLDPT} (replacing $M$ by $\frac{1}{\e}$ and $n$ by $\lfloor \e n \rfloor$), 
\begin{equation*}
\lim_{\e\ra 0+} \liminf_{n\ra\infty} \frac{1}{n} \log \bP\left( T_{\lfloor \e n \rfloor } \in [n, n+ \e n] \right) \geq -H(0).  \label{Thesis_Ten}
\end{equation*}
Therefore, it is enough to show 
\begin{equation}
 \lim_{\e\ra 0^+} \limsup_{n\ra\infty} \frac{1}{n} \log \bP\left( T_{\lfloor \e n \rfloor } \in [n, n+ \e n] \right) \leq -\bar{J}(0). \label{Thesis_Ten2}
\end{equation}

For an upper bound on $ \bP( T_{\lfloor \e n \rfloor } \in [n, n+ \e n] ) $, note that
\begin{align}
\bP\left( T_{\lfloor \e n \rfloor } \in [n, n+ \e n] \right) 
& = \frac{\P\left(  T_{\lfloor \e n \rfloor } \in [n, n+ \e n] , \; T_{-1}=\infty \right) }{\P(T_{-1} = \infty)} \nonumber \\
&\leq \frac{ \P\left(  T_{\lfloor \e n \rfloor } \in [n, n+ \e n] , \;  T_{\lfloor \e n \rfloor } < T_{-1} \right) }{\P(T_{-1} = \infty)} \nonumber \\
&= \frac{ \P\left(  T_{\lfloor \e n \rfloor } \in [n, n+ \e n] , \; T_{\lfloor \e n \rfloor } < T_{-1} \right) \P^{\lfloor \e n \rfloor} (T_{\lfloor \e n \rfloor -1} = \infty) }{\P(T_{-1} = \infty)^2}. \label{forceregen1} 
%&=\frac{ \P\left(  \exists k  : \tau_k \in [n,n+\e n], \; X_{\tau_k} = \lfloor \e n \rfloor, \; T_{-1} = \infty \right) }{\P(T_{-1} = \infty)^2} \nonumber \\
%&= \frac{\bP\left(  \exists k : \tau_k \in [n,n+\e n], \; X_{\tau_k} = \lfloor \e n \rfloor \right)}{\P(T_{-1} = \infty)} \label{forceregen}
\end{align}
Since $P_\w\left( T_{\lfloor \e n \rfloor } \in [n, n+ \e n] , \; T_{-1} < T_{\lfloor \e n \rfloor } \right)$ and $P_\w^{\lfloor \e n \rfloor} \left(T_{\lfloor \e n \rfloor -1} = \infty \right)$ depend on disjoint sections of the environment, 
%Now, since $P_\w\left( T_{\lfloor \e n \rfloor } \in [n, n+ \e n] , \; T_{-1} < T_{\lfloor \e n \rfloor } \right) \in \s( \w_i : 0 \leq i < \lfloor \e n \rfloor )$ and $P_\w^{\lfloor \e n \rfloor} \left(T_{\lfloor \e n \rfloor -1} = \infty \right) \in \s( \w_i : i\geq \lfloor \e n \rfloor )$ and since $P$ is i.i.d. we have that
\begin{align*}
&\P\left(  T_{\lfloor \e n \rfloor } \in [n, n+ \e n] , \; T_{-1} < T_{\lfloor \e n \rfloor } \right) \P^{\lfloor \e n \rfloor} \left(T_{\lfloor \e n \rfloor -1} = \infty \right) \\
&\qquad = E_P \left[ P_\w \left( T_{\lfloor \e n \rfloor } \in [n, n+ \e n] , \; T_{-1} < T_{\lfloor \e n \rfloor } \right) P_\w^{\lfloor \e n \rfloor} \left(T_{\lfloor \e n \rfloor -1} = \infty \right) \right] \\
%&\qquad = E_P \left[ P_\w\left( T_{\lfloor \e n \rfloor } \in [n, n+ \e n] , \; T_{-1} < T_{\lfloor \e n \rfloor }, \; X_k \geq \lfloor \e n \rfloor \quad \forall k \geq T_{\lfloor \e n \rfloor } \right) \right] \\
&\qquad = \P \left( T_{\lfloor \e n \rfloor } \in [n, n+ \e n] , \; T_{-1} < T_{\lfloor \e n \rfloor }, \; X_k \geq \lfloor \e n \rfloor \quad \forall k \geq T_{\lfloor \e n \rfloor } \right) \\
&\qquad =  \P\left(  \exists k  : \tau_k \in [n,n+\e n], \; X_{\tau_k} = \lfloor \e n \rfloor, \; T_{-1} = \infty \right).
\end{align*}
Therefore, \eqref{forceregen1} implies that
\begin{align}
\bP\left( T_{\lfloor \e n \rfloor } \in [n, n+ \e n] \right) 
&=\frac{ \P\left(  \exists k  : \tau_k \in [n,n+\e n], \; X_{\tau_k} = \lfloor \e n \rfloor, \; T_{-1} = \infty \right) }{\P(T_{-1} = \infty)^2} \nonumber \\
&= \frac{\bP\left(  \exists k : \tau_k \in [n,n+\e n], \; X_{\tau_k} = \lfloor \e n \rfloor \right)}{\P(T_{-1} = \infty)}. \label{forceregen}
\end{align}
But then, 
\begin{align}
\bP\left(\exists k: \tau_k \in [n, n+\e n] , X_{\tau_k} = \lfloor \e n \rfloor \right) &\leq \sum_{k\leq \e n} \: \sum_{t \in [1,1+\e]} \bP( \tau_k = t n , X_{\tau_k} = \lfloor \e n \rfloor ) \nonumber \\
&\leq (\e n)^2 \sup_{t\in[1,1+\e]} e^{-n t \bar{J}\left( \frac{\lfloor \e n \rfloor }{n t} \right ) }, \label{Thesis_regnearn}
\end{align}
where the last inequality is due to Lemma \ref{chcvlem}.
Thus, \eqref{forceregen} and \eqref{Thesis_regnearn} imply that 
\[
 \lim_{\e\ra 0^+} \limsup_{n\ra\infty} \frac{1}{n} \log \bP\left( T_{\lfloor \e n \rfloor } \in [n, n+ \e n] \right) \leq - \lim_{\e\ra 0^+} \limsup_{n\ra\infty}  \inf_{t\in[1,1+\e]} t \bar{J}\left( \frac{\lfloor \e n \rfloor}{n t} \right) = -\bar{J}(0),
\]
where the last equality is due to the fact that $\bar{J}(0) = \lim_{v\ra 0^+} \bar{J}(v)$ by definition. This finishes the proof of \eqref{Thesis_Ten2} and thus also the proof of the lemma.
%Therefore, \eqref{Thesis_Ten}, \eqref{forceregen}, and \eqref{Thesis_regnearn} give that 
%\[
% -H(0) \leq - \lim_{\e\ra 0^+} \limsup_{n\ra\infty}  \inf_{t\in[1,1+\e]} t \bar{J}\left( \frac{\lfloor \e n \rfloor}{n t} \right) = - \lim_{\e\ra 0^+}  \inf_{t\in[1,1+\e]} t \bar{J}\left( \frac{ \e }{ t} \right) \leq -\bar{J}(0),
%\]
%where the last equality is due to the fact that $\bar{J}$ continuous at 0 since by definition $\bar{J}(0) = \lim_{v\ra 0^+} \bar{J}(v)$. 
\end{proof}

\end{chapter}

%%%%%%%%%%%%%%%%%%%%%%%%%%%  BIBLIOGRAPHY  %%%%%%%%%%%%%%%%%%%%%%%
%\bibliographystyle{JPalpha}
%\bibliography{Thesis}

\begin{thebibliography}{MWRZ04}

\bibitem[Ali99]{aRWRE}
S.~Alili, \emph{Asymptotic behaviour for random walks in random environments},
  J. Appl. Probab. \textbf{36} (1999), no.~2, 334--349.

\bibitem[BD96]{bdLDSM}
Wlodzimierz Bryc and Amir Dembo, \emph{Large deviations and strong mixing},
  Ann. Inst. H. Poincar\'e Probab. Statist. \textbf{32} (1996), no.~4,
  549--569.

\bibitem[Ber08]{bVel}
Noam Berger, \emph{Limiting velocity of high-dimensional random walk in random
  environment}, Ann. Probab. \textbf{36} (2008), no.~2, 728--738.

\bibitem[BG08]{bgLRW}
Erwin Bolthausen and Ilya Goldsheid, \emph{Lingering random walks in random
  environment on a strip}, Comm. Math. Phys. \textbf{278} (2008), no.~1,
  253--288.

\bibitem[Bil99]{bCOPM}
Patrick Billingsley, \emph{Convergence of probability measures}, second ed.,
  Wiley Series in Probability and Statistics: Probability and Statistics, John
  Wiley \& Sons Inc., New York, 1999, A Wiley-Interscience Publication.

\bibitem[BZ08]{bzQCLT}
Noam Berger and Ofer Zeitouni, \emph{A quenched invariance principle for
  certain ballistic random walks in i.i.d. environments}, Preprint, available
  at arXiv:math/0702306v3, 2008.

\bibitem[BZZ06]{bzzTrees}
Maury Bramson, Ofer Zeitouni, and Martin P.~W. Zerner, \emph{Shortest spanning
  trees and a counterexample for random walks in random environments}, Ann.
  Probab. \textbf{34} (2006), no.~3, 821--856.

\bibitem[CGZ00]{cgzLDP}
Francis Comets, Nina Gantert, and Ofer Zeitouni, \emph{Quenched, annealed and
  functional large deviations for one-dimensional random walk in random
  environment}, Probab. Theory Related Fields \textbf{118} (2000), no.~1,
  65--114.

\bibitem[CT78]{cPTIIM}
Yuan~Shih Chow and Henry Teicher, \emph{Probability theory -- independence,
  interchangeability, martingales}, Springer-Verlag, New York, 1978.

\bibitem[DPZ96]{dpzTE1D}
Amir Dembo, Yuval Peres, and Ofer Zeitouni, \emph{Tail estimates for
  one-dimensional random walk in random environment}, Comm. Math. Phys.
  \textbf{181} (1996), no.~3, 667--683.

\bibitem[DZ98]{dzLDTA}
Amir Dembo and Ofer Zeitouni, \emph{Large deviations techniques and
  applications}, second ed., Applications of Mathematics (New York), vol.~38,
  Springer-Verlag, New York, 1998.

\bibitem[ESZ08]{eszStable}
Nathana{\"e}l Enriquez, Christophe Sabot, and Olivier Zindy, \emph{Limit laws
  for transient random walks in random environment on $\mathbb{Z}$}, Preprint,
  available at arXiv:math/0703660v3, 2008.

\bibitem[FG02]{fgHolomorphic}
Klaus Fritzsche and Hans Grauert, \emph{From holomorphic functions to complex
  manifolds}, Graduate Texts in Mathematics, vol. 213, Springer-Verlag, New
  York, 2002.

\bibitem[FP99]{fpKMF}
P.~J. Fitzsimmons and Jim Pitman, \emph{Kac's moment formula and the
  {F}eynman-{K}ac formula for additive functionals of a {M}arkov process},
  Stochastic Process. Appl. \textbf{79} (1999), no.~1, 117--134.

\bibitem[GdH94]{gdhLDP}
Andreas Greven and Frank den Hollander, \emph{Large deviations for a random
  walk in random environment}, Ann. Probab. \textbf{22} (1994), no.~3,
  1381--1428.

\bibitem[Gol07]{gQCLT}
Ilya~Ya. Goldsheid, \emph{Simple transient random walks in one-dimensional
  random environment: the central limit theorem}, Probab. Theory Related Fields
  \textbf{139} (2007), no.~1-2, 41--64.

\bibitem[Gol08]{gSCLT}
Ilya~Ya. Goldsheid, \emph{Linear and sublinear growth and the clt for hitting
  times of a random walk in random environment on a strip}, Probab. Theory
  Related Fields \textbf{141} (2008), no.~3-4, 471--511.

\bibitem[GS02]{gsMVSS}
Nina Gantert and Zhan Shi, \emph{Many visits to a single site by a transient
  random walk in random environment}, Stochastic Process. Appl. \textbf{99}
  (2002), no.~2, 159--176.

\bibitem[Igl72]{iEV}
Donald~L. Iglehart, \emph{Extreme values in the {$GI/G/1$} queue}, Ann. Math.
  Statist. \textbf{43} (1972), 627--635.

\bibitem[Kal81]{kGRW}
Steven~A. Kalikow, \emph{Generalized random walk in a random environment}, Ann.
  Probab. \textbf{9} (1981), no.~5, 753--768.

\bibitem[Kes73]{kRDE}
Harry Kesten, \emph{Random difference equations and renewal theory for products
  of random matrices}, Acta Math. \textbf{131} (1973), 207--248.

\bibitem[KKS75]{kksStable}
H.~Kesten, M.~V. Kozlov, and F.~Spitzer, \emph{A limit law for random walk in a
  random environment}, Compositio Math. \textbf{30} (1975), 145--168.

\bibitem[KM84]{kmCLT}
S.~M. Kozlov and S.~A. Molchanov, \emph{Conditions for the applicability of the
  central limit theorem to random walks on a lattice}, Dokl. Akad. Nauk SSSR
  \textbf{278} (1984), no.~3, 531--534.

\bibitem[Kob95]{kGPD}
Maria Kobus, \emph{Generalized {P}oisson distributions as limits of sums for
  arrays of dependent random vectors}, J. Multivariate Anal. \textbf{52}
  (1995), no.~2, 199--244.

\bibitem[Koz85]{kARW}
S.~M. Kozlov, \emph{The method of averaging and walks in inhomogeneous
  environments}, Russian Math. Surveys \textbf{40} (1985), no.~2, 73--145.

\bibitem[KRV06]{krvStochHomog}
Elena Kosygina, Fraydoun Rezakhanlou, and S.~R.~S. Varadhan, \emph{Stochastic
  homogenization of {H}amilton-{J}acobi-{B}ellman equations}, Comm. Pure Appl.
  Math. \textbf{59} (2006), no.~10, 1489--1521.

\bibitem[Mol94]{mLRM}
S.~Molchanov, \emph{Lectures on random media}, Lectures on probability theory
  (Saint-Flour, 1992), Lecture Notes in Math., vol. 1581, Springer, Berlin,
  1994, pp.~242--411.

\bibitem[MWRZ04]{mrzStable}
Eddy Mayer-Wolf, Alexander Roitershtein, and Ofer Zeitouni, \emph{Limit
  theorems for one-dimensional transient random walks in {M}arkov
  environments}, Ann. Inst. H. Poincar\'e Probab. Statist. \textbf{40} (2004),
  no.~5, 635--659.

\bibitem[RA04]{rLDP}
Firas Rassoul-Agha, \emph{Large deviations for random walks in a mixing random
  environment and other (non-{M}arkov) random walks}, Comm. Pure Appl. Math.
  \textbf{57} (2004), no.~9, 1178--1196.

\bibitem[RAS06]{rsBFD}
Firas Rassoul-Agha and Timo Sepp{\"a}l{\"a}inen, \emph{Ballistic random walk in
  a random environment with a forbidden direction}, ALEA Lat. Am. J. Probab.
  Math. Stat. \textbf{1} (2006), 111--147 (electronic).

\bibitem[RAS08]{rsQCLT}
Firas Rassoul-Agha and Timo Sepp{\"a}l{\"a}inen, \emph{Almost sure functional
  central limit theorem for ballistic random walk in random environment},
  Preprint, available at arXiv:0705.4116v3, 2008.

\bibitem[Roi06]{rTRWS}
Alexander Roitershtein, \emph{Transient random walks on a strip in a random
  environment}, Preprint, available at arXiv:math/0603392v1, 2006.

\bibitem[Ros06]{rThesis}
Jeffrey Rosenbluth, \emph{Quenched large deviations for multidimensional random
  walk in random environment: a variational formula}, Ph.D. thesis, New York
  University, 2006, Available at arXiv:0804.1444v1.

\bibitem[Sin83]{sRRWRE}
Ya.~G. Sinai, \emph{The limiting behavior of a one-dimensional random walk in a
  random medium}, Theory Probab. Appl. \textbf{27} (1983), no.~2, 256--268.

\bibitem[Sol75]{sRWRE}
Fred Solomon, \emph{Random walks in a random environment}, Ann. Probability
  \textbf{3} (1975), 1--31.

\bibitem[SZ99]{szLLN}
Alain-Sol Sznitman and Martin Zerner, \emph{A law of large numbers for random
  walks in random environment}, Ann. Probab. \textbf{27} (1999), no.~4,
  1851--1869.

\bibitem[Szn00]{sSlowdown}
Alain-Sol Sznitman, \emph{Slowdown estimates and central limit theorem for
  random walks in random environment}, J. Eur. Math. Soc. (JEMS) \textbf{2}
  (2000), no.~2, 93--143.

\bibitem[Szn01]{sConditionT}
Alain-Sol Sznitman, \emph{On a class of transient random walks in random
  environment}, Ann. Probab. \textbf{29} (2001), no.~2, 724--765.

\bibitem[Szn02]{sEffective}
Alain-Sol Sznitman, \emph{An effective criterion for ballistic behavior of
  random walks in random environment}, Probab. Theory Related Fields
  \textbf{122} (2002), no.~4, 509--544.

\bibitem[Var03]{vLDP}
S.~R.~S. Varadhan, \emph{Large deviations for random walks in a random
  environment}, Comm. Pure Appl. Math. \textbf{56} (2003), no.~8, 1222--1245,
  Dedicated to the memory of J\"urgen K. Moser.

\bibitem[Yil08a]{ySTLDP}
Atilla Yilmaz, \emph{Large deviations for random walk in a space-time product
  environment}, Preprint, available at arXiv:0711.4872v2, 2008.

\bibitem[Yil08b]{yQLDP}
Atilla Yilmaz, \emph{Quenched large deviations for random walk in a random
  environment}, Preprint, available at arXiv:0804.0262v1, 2008.

\bibitem[Zei04]{zRWRE}
Ofer Zeitouni, \emph{Random walks in random environment}, Lectures on
  probability theory and statistics, Lecture Notes in Math., vol. 1837,
  Springer, Berlin, 2004, pp.~189--312.

\bibitem[Zer98]{zLDP}
Martin P.~W. Zerner, \emph{Lyapounov exponents and quenched large deviations
  for multidimensional random walk in random environment}, Ann. Probab.
  \textbf{26} (1998), no.~4, 1446--1476.

\bibitem[Zer02]{zLLN}
Martin P.~W. Zerner, \emph{A non-ballistic law of large numbers for random
  walks in i.i.d.\ random environment}, Electron. Comm. Probab. \textbf{7}
  (2002), 191--197 (electronic).

\bibitem[ZM01]{zm0-1Law}
Martin P.~W. Zerner and Franz Merkl, \emph{A zero-one law for planar random
  walks in random environment}, Ann. Probab. \textbf{29} (2001), no.~4,
  1716--1732.

\end{thebibliography}

\providecommand{\bysame}{\leavevmode\hbox to3em{\hrulefill}\thinspace}
\providecommand{\MR}{\relax\ifhmode\unskip\space\fi MR }
% \MRhref is called by the amsart/book/proc definition of \MR.
\providecommand{\MRhref}[2]{%
  \href{http://www.ams.org/mathscinet-getitem?mr=#1}{#2}
}
\providecommand{\href}[2]{#2}

\end{document}